\documentclass[11pt,twoside]{amsart}
\usepackage{float}
\usepackage[utf8]{inputenc}
\usepackage[english]{babel}
\usepackage[T1]{fontenc}
\usepackage{mathtools}
\usepackage{enumitem}

\usepackage{tabularx}
\usepackage{graphicx}
\usepackage{tikz}
\usepackage[all]{xy}
\usepackage{tcolorbox}
\usepackage{amsmath,
    amsfonts,
    amssymb,
    mathrsfs,
    stackrel}
\usepackage{comment}
\usepackage{bookmark}
\usepackage{hyperref}
\usepackage{cleveref}
\hypersetup{colorlinks,
    citecolor=blue,
    linktocpage}

\usepackage[normalem]{ulem}

\textwidth=450pt 
\oddsidemargin=12pt
\evensidemargin=12pt

\setlength{\footskip}{25pt}

\DeclareFontFamily{U}{wncy}{}
\DeclareFontShape{U}{wncy}{m}{n}{<->wncyr10}{}
\DeclareSymbolFont{mcy}{U}{wncy}{m}{n}
\DeclareMathSymbol{\Sha}{\mathord}{mcy}{"58} 
\newcommand\Ash{\rotatebox[origin=c]{180}{$\Sha$}}

\newcommand\cA{{\mathcal A}}
\newcommand\cB{{\mathcal B}}
\newcommand\cC{{\mathcal C}}
\newcommand\cD{{\mathcal D}}
\newcommand\cE{{\mathcal E}}
\newcommand\cF{{\mathcal F}}
\newcommand\cG{{\mathcal G}}
\newcommand\cH{{\mathcal H}}
\newcommand\cI{{\mathcal I}}
\newcommand\cJ{{\mathcal J}}

\newcommand\cL{{\mathcal L}}
\newcommand\cM{{\mathcal M}}
\newcommand\cN{{\mathcal N}}
\newcommand\cO{{\mathcal O}}
\newcommand\cP{{\mathcal P}}
\newcommand\cQ{{\mathcal Q}}
\newcommand\cR{{\mathcal R}}
\newcommand\cS{{\mathcal S}}
\newcommand\cT{{\mathcal T}}

\newcommand\cV{{\mathcal V}}

\newcommand\cX{{\mathcal X}}
\newcommand\cY{{\mathcal Y}}
\newcommand\cZ{{\mathcal Z}}

\newcommand\fF{{\mathfrak F}}

\newcommand\fU{{\mathfrak U}}

\newcommand\bC{{\mathbb C}}
\newcommand\bD{{\mathbb D}}

\newcommand\bF{{\mathbb F}}
\newcommand\bG{{\mathbb G}}

\newcommand\bK{{\mathbb K}}

\newcommand\bM{{\mathbb M}}
\newcommand\bN{{\mathbb N}}

\newcommand\bP{{\mathbb P}}
\newcommand\bQ{{\mathbb Q}}
\newcommand\bR{{\mathbb R}}
\newcommand\bS{{\mathbb S}}

\newcommand\bU{{\mathbb U}}
\newcommand\bV{{\mathbb V}}

\newcommand\bZ{{\mathbb Z}}

\newcommand\hX{{\widehat X}}
\newcommand\hY{{\widehat Y}}

\newcommand\wE{{\widetilde E}}

\newcommand\wU{{\widetilde U}}

\newcommand\wW{{\widetilde W}}
\newcommand\wX{{\widetilde X}}
\newcommand\wY{{\widetilde Y}}
\newcommand\wZ{{\widetilde Z}}

\newcommand\oA{{\overline A}}

\newcommand\oF{{\overline F}}
\newcommand\oG{{\overline G}}

\newcommand\oK{{\overline K}}

\newcommand\oV{{\overline V}}

\newcommand\oX{{\overline X}}
\newcommand\oY{{\overline Y}}
\newcommand\oZ{{\overline Z}}

\newcommand\bfD{{\bf D}}

\newcommand\bfM{{\bf M}}

\newcommand\bfZ{{\bf Z}}

\DeclareMathOperator{\Aut}{Aut}

\DeclareMathOperator{\codim}{codim}

\DeclareMathOperator{\coker}{coker}

\DeclareMathOperator{\GL}{GL}

\DeclareMathOperator{\mult}{mult}

\DeclareMathOperator{\rk}{rk}

\DeclareMathOperator{\Sp}{Sp}

\DeclareMathOperator{\Spec}{Spec}

\DeclareMathOperator{\Supp}{Supp}

\DeclareMathOperator{\vol}{vol}

\DeclareMathOperator{\pr}{pr}
\DeclareMathOperator{\lct}{lct}

\newtheorem{theorem}{Theorem}[section]

\newtheorem{conjecture}[theorem]{Conjecture}
\newtheorem{corollary}[theorem]{Corollary}
\newtheorem{lemma}[theorem]{Lemma}

\newtheorem{proposition}[theorem]{Proposition}

\newtheorem{maintheorem}{Theorem}

\theoremstyle{definition}

\newtheorem{construction}[theorem]{Construction}

\newtheorem{definition}[theorem]{Definition}
\newtheorem{example}[theorem]{Example}

\newtheorem{notation}[theorem]{Notation}

\newtheorem{question}[theorem]{Question}
\newtheorem{remark}[theorem]{Remark}

\newtheorem{warning}[theorem]{Warning}

\newcommand{\twopartdef}[4]
{
	\left\{
		\begin{array}{ll}
			#1 & \mbox{if } #2 \\
			#3 & \mbox{if } #4
		\end{array}
	\right.
}

\newcommand{\twobytwo}[4]
{
	\begin{pmatrix}
		#1 & #2 \\
		#3 & #4
	\end{pmatrix}
}

\newcommand{\supp}{\operatorname{Supp}}

\newcommand{\NN}{\mathbb{N}}

\newcommand{\ZZ}{\mathbb{Z}}
\newcommand{\QQ}{\mathbb{Q}}
\newcommand{\RR}{\mathbb{R}}
\newcommand{\CC}{\mathbb{C}}

\newcommand{\PP}{\mathbb{P}}

\newcommand{\Deflt}{\mathrm{Def}^{\mathrm{lt}}}

\newcommand{\DefltL}{\mathcal{D}^{\ell}}
\newcommand{\Deflti}{\mathcal{D}(i)} 
\newcommand{\Xreg}{X^{\mathrm{reg}}}
\newcommand{\Xalb}{X^{{\rm Alb}}}

\def\O#1.{\mathcal {O}_{#1}}			
\def\pr #1.{\mathbb P^{#1}}						
\def\xrar#1.{\xrightarrow{#1}}			
\def\K#1.{K_{#1}}								
\def\bM#1.{\mathbf{M}_{#1}}						
\def\ses#1.#2.#3.{0\to #1\to #2\to #3 \to 0}

\newcommand{\rar}{\rightarrow}	
\newcommand{\drar}{\dashrightarrow}	

\newcommand{\Def}{\mathrm{Def}}

\title{Boundedness of some fibered $K$-trivial varieties}
\author[Engel]{Philip Engel}
\address{Department of Mathematics, Statistics, 
and Computer Science, University of Illinois in Chicago (UIC),
851 S Morgan St, Chicago, IL 60607, USA}
\email{pengel@uic.edu}
\author[Filipazzi]{Stefano Filipazzi}
\address{Department of Mathematics, Duke University,
120 Science Drive,
117 Physics Building,
Campus Box 90320,
Durham, NC 27708-0320,
USA}
\email{stefano.filipazzi@duke.edu}
\author[Greer]{Fran\c{c}ois Greer}
\address{Department of Mathematics, 
Michigan State University, 619 Red Cedar Rd, 
East Lansing, MI 48824, USA}
\email{greerfra@msu.edu}
\author[Mauri]{Mirko Mauri}
\address{Institut de Math\'ematiques 
de Jussieu-Paris Rive Gauche,
Universit\'e Paris Cit\'e,
Place Aur\'elie Nemours, 75013 Paris, France}
\email{mauri@imj-prg.fr}
\author[Svaldi]{Roberto Svaldi}
\address{
Dipartimento di Matematica 
'Federigo Enriques', Universit\`a 
degli Studi di Milano,
Via Cesare Saldini 50,
20133 Milan, Italy}
\email{roberto.svaldi@unimi.it}

\date{\today}

\begin{document}

\begin{abstract} 
We prove that irreducible
Calabi--Yau varieties of a fixed dimension,
admitting a fibration 
by abelian varieties or primitive symplectic
varieties of a fixed analytic deformation class,
are birationally bounded. 
We prove that there are only finitely
many deformation classes of primitive 
symplectic varieties of a fixed dimension, 
admitting a Lagrangian fibration.
We also show that fibered
Calabi--Yau $3$-folds are bounded.
Conditional on the generalized abundance or hyperk\"ahler
SYZ conjecture, our results prove that there are only
finitely many deformation classes of
hyperk\"ahler varieties, of a fixed dimension,
with $b_2\geq 5$.
\end{abstract}

\setcounter{tocdepth}{2}
\maketitle

\tableofcontents

\section{Introduction}

Projective varieties 
with trivial canonical class (for short, $K$-trivial 
varieties) play a crucial role in the birational 
classification of algebraic varieties. 
In this paper, we explore several boundedness 
phenomena concerning $K$-trivial varieties. 
A central question of the field 
is:

\begin{question}\label{quest:bound}
Are there finitely many deformation classes of 
projective $K$-trivial varieties
of a fixed dimension $d$?
\end{question}

In dimension $d=1$, the answer is affirmative, as elliptic 
curves have connected, 1-dimensional moduli.
Already in dimension $d=2$, some subtleties appear. It is true that any two smooth projective K3 or abelian 
surfaces $X_1$ and $X_2$
are deformation equivalent---there is a 
smooth projective morphism $\cX = \cX(X_1, X_2) 
\to \cT$
over a connected, quasi-projective base $\cT$, containing 
$X_1$ and $X_2$ as fibers. But there is no 
{\it boundedness} in the algebraic category: 
indeed, there exists no finite type family 
$\cX\to \cT$ of projective varieties, whose
fibers contain all K3 or abelian surfaces.
Starting from dimension $d\geq 3$,
Question \ref{quest:bound} remains largely open.

It is 
difficult to get a handle on the general 
$K$-trivial variety $X$. Historically, the most progress
has been made in the presence of some additional structure,
especially a {\it fibration} $f\colon X\to Y$; 
that is,
a proper morphism of normal varieties, 
with connected fibers, 
whose base satisfies $0<\dim Y<\dim X$. 
When the fibers are abelian or symplectic, 
we control the birational geometry of $X$ via the period
map from $Y$ to the relevant period space.
We refer to Definitions
\ref{def:CY} and \ref{defn:bounded}
for rigorous
notions of $K$-trivial varieties 
and the particular types of boundedness under 
consideration. 

\begin{maintheorem}\label{thm:icy-ab}
Irreducible Calabi--Yau varieties, 
of fixed dimension $d\geq 3$, fibered 
in 
\begin{enumerate}
\item[{\rm (AV)}] abelian varieties, or 
\item[{\rm (PS)}] a fixed
deformation class of 
primitive symplectic variety,
\end{enumerate}
are birationally 
bounded.
\end{maintheorem}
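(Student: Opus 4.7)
The plan is to split the birational boundedness problem into two subproblems: first, bound the base $Y$ of the fibration, and second, bound the fibration $f\colon X \to Y$ for each fixed base. Both parts are driven by the constraint that the fibers vary in a fixed bounded family, which forces rigidity on the Hodge-theoretic data attached to $f$.

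For the first step, I would begin by passing to a relative good minimal model of $X$ over $Y$, so that after a small birational modification $f$ is equidimensional with mild (say klt) singularities. The canonical bundle formula then yields $K_X \sim_\QQ f^*(K_Y + B_Y + M_Y)$, with $B_Y$ the discriminant and $M_Y$ the moduli part, which is the descent of the Hodge line bundle of the family of fibers. Since $K_X \sim_\QQ 0$, the pair $(Y, B_Y + M_Y)$ is log Calabi--Yau; and since the fibers lie in a fixed bounded family (abelian varieties of dimension $g = d - \dim Y$, or primitive symplectic varieties in the fixed deformation class), the moduli part $M_Y$ is itself bounded, being the pullback of an ample class on a moduli space of finite type. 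I would then apply recent boundedness results for log Calabi--Yau pairs in dimension $\leq d-1$ with bounded moduli contribution---in the spirit of Birkar's work and its successors---to conclude that the bases $(Y, B_Y)$ form a bounded family.

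For the second step, fix $(Y, B_Y)$ in the bounded family. The fibration $f$ produces a period map $\phi\colon Y \dashrightarrow \cM$, where $\cM$ is $\cA_g$ with a fixed polarization type in case (AV), or the moduli space of primitive symplectic varieties of the fixed deformation class in case (PS). Both choices of $\cM$ are of finite type (quasi-projective, after a choice of polarization on the fibers, via Baily--Borel or its symplectic analogue). Hence the Hom-scheme parameterizing morphisms $Y \dashrightarrow \cM$ restricted to those arising from an irreducible Calabi--Yau $X$ with fixed numerical invariants is bounded. Given $\phi$, the birational class of $f$ is determined up to finite data: in case (AV) the relative Albanese/Jacobian recovers $f$ up to a birational Tate--Shafarevich twist, while in case (PS) one appeals to a global Torelli-type statement for primitive symplectic varieties. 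Since only finitely many birational classes realize a given $\phi$, birational boundedness of $X$ follows.

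The main technical obstacle is the bookkeeping in the second step. In case (AV), one must control the birational classes of Tate--Shafarevich twists: even when the Tate--Shafarevich group is large, only finitely many classes produce non-birational total spaces, exploiting the irreducibility hypothesis on $X$, which rigidifies the global Hodge structure and so restricts the admissible twists. In case (PS), one needs a sufficiently strong Torelli-type reconstruction for primitive symplectic fibrations, ensuring that all fibrations with the same period map are related by finitely many birational modifications within the fixed deformation class. Both obstacles are ultimately compatible with the running hypothesis that the fiber class is fixed, which is precisely what makes Theorem A tractable while leaving the unrestricted version of Question \ref{quest:bound} open.
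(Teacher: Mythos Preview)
Your two-step outline matches the paper's strategy, but Step 2 contains a genuine gap that is in fact the main difficulty of the paper. You write that the period map lands in ``$\cA_g$ with a fixed polarization type'' or in ``the moduli space of primitive symplectic varieties of the fixed deformation class,'' and then bound the Hom-scheme. But the polarization type ${\bf d}$ of the general fiber is \emph{not} fixed by the hypotheses: as the base $(Y,B_Y)$ varies, or even for a fixed base, there are a priori infinitely many polarization types, hence infinitely many targets $\cA_{g,{\bf d}}$ (or infinitely many lattice-polarized moduli $\cM_\Lambda$ in the PS case). The Hom-scheme into a varying, unbounded collection of targets is not bounded; see Example~\ref{pic-ex} in the paper for exactly this failure. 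The paper's solution is to compose all period maps with the Zarhin trick $\cA_{g,{\bf d}}\to \cA_{8g}$ (and, in case (PS), first with a carefully constructed Kuga--Satake map $\cF_\Lambda\to \cA_{g}$), landing everything in a \emph{single} target independent of ${\bf d}$ or $\Lambda$. One then bounds the resulting maps by a volume argument (Prop.~\ref{bd-map}), and afterwards ``undoes'' the Zarhin and Kuga--Satake tricks at the level of local systems (Prop.~\ref{bd-unzarhin}, Lem.~\ref{local-sys-undo}). This last step crucially requires the fibration to be of Picard type---a condition you do not mention, which holds automatically for irreducible Calabi--Yau varieties because $h^2(X,\cO_X)=0$, and which is precisely what allows recovery of the polarization lattice from the monodromy alone.

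Your treatment of Tate--Shafarevich twists is also too optimistic. The claim that ``only finitely many classes produce non-birational total spaces'' is the right conclusion for case (AV), but it is not a bookkeeping matter: the paper devotes all of \S\ref{sec:ts} to developing a Tate--Shafarevich theory allowing multiple fibers, and identifies the continuous part of the twist group with $H^2(X,\cO_X)/H^2(Y,\cO_Y)$ (Cor.~\ref{cts}). It is this identification, combined with $h^2(X,\cO_X)=0$ for irreducible Calabi--Yau $X$, that forces $\Sha_G$ to be finite. The irreducibility hypothesis enters earlier and differently than you suggest: it guarantees the base $Y$ is rationally connected (Ex.~\ref{rc-base-ex}), which is needed already in Step 1 to invoke the boundedness results of \cite{BDCS} for the base.
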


\begin{corollary}\label{thm:cy3}
Fibered Calabi--Yau $3$-folds are bounded.
\end{corollary}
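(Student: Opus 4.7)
The strategy is to deduce the corollary from Theorem \ref{thm:icy-ab} by classifying the possible fibers of a CY $3$-fold, and then to upgrade the resulting birational boundedness to full boundedness using the three-dimensional MMP. First, I would verify that every fibered CY $3$-fold $X$ falls within the scope of Theorem \ref{thm:icy-ab}. Let $f\colon X\to Y$ be a fibration with $0<\dim Y<3$, and let $F$ be a general fiber. By generic smoothness $F$ is smooth, and adjunction applied to the relative canonical bundle gives $K_F\sim K_X|_F\sim 0$. Hence $F$ is a smooth projective variety with strictly trivial canonical class. When $\dim Y=2$, $F$ is a genus-one curve, placing $X$ into case (AV) of Theorem \ref{thm:icy-ab}. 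When $\dim Y=1$, $F$ is a smooth projective surface with $K_F\sim 0$; by the Enriques--Kodaira classification $F$ is either a K3 or an abelian surface, since Enriques and bielliptic surfaces only satisfy $mK_F\sim 0$ for some $m>1$. K3 surfaces constitute a single analytic deformation class of primitive symplectic varieties, placing this case into (PS), while abelian surfaces fall again into (AV). Applying Theorem \ref{thm:icy-ab} then produces a birationally bounding family for the class of all fibered CY $3$-folds.

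Next I would upgrade birational boundedness to full boundedness via $3$-fold MMP. Given a birationally bounding family $\cX\to T$, after stratifying $T$ I would run a relative MMP fiberwise. Since the MMP is unconditionally known in dimension three (Mori, Kawamata, Koll\'ar--Mori), this produces on each stratum a family of minimal models with terminal $\bQ$-factorial singularities and $K\sim_\bQ 0$. Every fibered CY $3$-fold $X$ is birational to such a minimal model $X'$, and any two terminal CY $3$-folds in the same birational class are connected by a finite sequence of flops. Up to isomorphism there are finitely many minimal models in a birational class of terminal CY $3$-folds, so enlarging the family by all flops---organized via bounded Hilbert schemes once intersection numbers and singularity invariants are controlled---yields a bounded family containing every fibered CY $3$-fold.

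The main technical obstacle I foresee is the second step: running the relative MMP over a positive-dimensional base $T$ and simultaneously controlling all subsequent flops within each birational class. Termination of flops and finiteness of minimal models for terminal CY $3$-folds are classical, but assembling them into a single finite-type family---while maintaining uniform control over singularities, intersection numbers, and the number of flops needed to traverse each birational class---is where the delicate work concentrates.
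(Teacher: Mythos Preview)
Your first step is essentially correct: the general fiber of a fibered CY $3$-fold is an elliptic curve, K3 surface, or abelian surface, and this places each case within the scope of the paper's birational boundedness results. (One minor caveat: Theorem~\ref{thm:icy-ab} as stated is for \emph{irreducible} Calabi--Yau varieties, whereas Corollary~\ref{thm:cy3} concerns CY $3$-folds in the sense of Definition~\ref{def:CY}(CY). The paper actually uses the more precise inputs of Theorem~\ref{bd-families} and Theorem~\ref{thm:bir-bdd-abelian-CY} together with the base-boundedness of Example~\ref{rc-base-ex}(2), which gives not just birational boundedness but \emph{generic boundedness with bounded base}.)

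The genuine gap is in your second step. You assert that ``up to isomorphism there are finitely many minimal models in a birational class of terminal CY $3$-folds,'' and that this is classical. It is not: this is the (absolute) Kawamata--Morrison cone conjecture for CY $3$-folds, which remains open in general. Termination of any given sequence of $3$-fold flops is known, but that does not bound the number of distinct minimal models reachable by different sequences. Without such a bound, your plan to ``enlarge the family by all flops'' cannot produce a finite-type family.

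The paper circumvents this via Theorem~\ref{thm:kawamata-morrison}: rather than invoking absolute finiteness of minimal models of $X$, one works \emph{relative to the base} $Y$ of the fibration. The relevant input is the geometric movable cone conjecture (Conjecture~\ref{conj:KM.mov}) for $X$ over $Y$, whose validity depends only on the generic fiber. In relative dimension $1$ (elliptic curves) this is \cite{FHS2024}; in relative dimension $2$ (K3 or abelian surfaces) it is \cite{Li23,MS24}. Combined with generic boundedness with bounded base, Theorem~\ref{thm:kawamata-morrison} then lifts to a bounded family by spreading out the finitely many relative marked minimal models. The case of non-terminal or non-$\bQ$-factorial CY $3$-folds is handled afterward by passing to a $\bQ$-factorial terminalization and again appealing to the relative cone conjecture.
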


Theorem \ref{thm:icy-ab} was 
known for elliptic Calabi--Yau $3$-folds,
by work in the 1990's 
of Gross \cite{Gross1994, Gross97} and
Dolgachev--Gross \cite{Gross92}, and 
for elliptic Calabi--Yau $d$-folds with 
rational section, by recent work
of Birkar, di Cerbo, and Svaldi \cite{BDCS}.
Corollary \ref{thm:cy3} was proven recently 
for elliptic Calabi--Yau $3$-folds by
Filipazzi, Hacon, and Svaldi \cite{FHS2024}. 
Even birational boundedness
of Calabi--Yau $3$-folds fibered by abelian or K3 surfaces was open until now.

\begin{maintheorem}\label{thm:ps}
Lagrangian-fibered primitive symplectic varieties,
of fixed dimension $2d$,
lie within
a finite number of locally trivial\footnote{A deformation
 is {\it locally trivial} if every point has
 an analytic neighborhood which is a trivial
 deformation. It is an equisingularity condition, local on the source space and not on the target of the deformation.}
deformation equivalence classes
of primitive symplectic variety.
\end{maintheorem}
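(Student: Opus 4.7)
The plan is to first obtain birational boundedness of Lagrangian-fibered primitive symplectic varieties, in analogy with Theorem \ref{thm:icy-ab}(AV), and then upgrade birational boundedness to finiteness of locally trivial deformation classes via the deformation theory of primitive symplectic varieties.

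A Lagrangian fibration on a primitive symplectic variety $X$ of dimension $2d$ has, by Matsushita's theorem and its extension to the singular setting, a base of dimension $d$ and an abelian $d$-fold as its general fiber. Primitive symplectic varieties are $K$-trivial, so the same ingredients that establish Theorem \ref{thm:icy-ab}(AV)---boundedness of the base of the fibration, moduli of polarized abelian fibrations, and uniform control of the discriminant locus---apply in this setting to produce a finite type family $\pi\colon \mathcal{X}\to \mathcal{T}$ such that every such $X$ is birational to some fiber of $\pi$.

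Next, I would promote birational boundedness to locally trivial deformation finiteness in two steps. Step one: any two birational primitive symplectic varieties lie in the same locally trivial deformation class. This is Huybrechts' theorem in the smooth hyperk\"ahler setting, extended to singular primitive symplectic varieties by Bakker--Lehn and by Menet, via a version of the Mukai flop. Step two: stratify $\mathcal{T}$ into a finite union of locally closed subsets along which a primitive symplectic birational model of the fibers can be produced in a relative way, e.g.\ by a relative MMP followed by $\QQ$-factorial terminalization. The Bakker--Lehn local Torelli theorem together with the unobstructedness of locally trivial deformations then forces the locally trivial deformation class of the fiber to be constant on each stratum, yielding only finitely many classes.

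The main obstacle is constructing primitive symplectic birational models of the fibers in a family and showing that the locally trivial deformation class varies constructibly along the base. A priori, the output of the relative MMP is only $K$-trivial, so one must verify that the symplectic form survives and that the second Betti number and the Beauville--Bogomolov lattice of the fiber remain constant along each stratum of $\mathcal{T}$. I expect to handle this using the existence of $\QQ$-factorial symplectic terminalizations in the sense of Namikawa and Bellamy--Schedler, combined with a Noetherian induction on $\mathcal{T}$ that propagates the primitive symplectic structure across strata and matches the discrete invariants.
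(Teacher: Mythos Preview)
Your plan has a genuine gap at the very first step. You assert that the ingredients behind Theorem~\ref{thm:icy-ab}(AV) yield a finite type family $\mathcal{X}\to\mathcal{T}$ containing a birational model of every Lagrangian-fibered primitive symplectic $X$. But those ingredients only bound the \emph{Albanese} fibration $f^{\rm Alb}\colon X^{\rm Alb}\to Y$ (see Theorem~\ref{bd-families}(AV)); the original $X$ is a Tate--Shafarevich twist of $X^{\rm Alb}$, and one must control the group $\Sha_G$ of such twists. In the irreducible Calabi--Yau case this works because $h^2(X,\cO_X)=0$ forces $\bC N^1=0$, so $\Sha_G$ is finite and varies constructibly (Corollary~\ref{cor-ts-twist}, Theorem~\ref{thm:bir-bdd-abelian-CY}). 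For a primitive symplectic $X$, however, $h^2(X,\cO_X)=1$, so $\bC N^1\simeq\bC$ and $\Sha_G$ contains a copy of $\bQ/\bZ$ (Theorem~\ref{identify-N1-2}, Corollary~\ref{cor-ts-twist}). There are thus infinitely many pairwise non-birational twists with the same Albanese, and birational boundedness of Lagrangian-fibered PS varieties is \emph{not} established---indeed the paper only claims it under the extra hypothesis of a rational section.

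The paper's actual route is to separate $\Sha_G$ into its non-divisible part $N$ and its divisible part $\bQ N^1/N^1$. The non-divisible part is shown to vary constructibly (Proposition~\ref{kernel-constructible}), giving a finite type family of ``non-divisible twists''. The divisible part is handled not by bounding it, but by interpolating: the countably many algebraic twists in $\bQ N^1/N^1$ sit inside a one-parameter \emph{analytic} family over $\bC N^1\simeq\bC$ of locally trivial deformations---the special twistor line (Corollary~\ref{analytic-ps-twist}, \S\ref{sec:hk-twist}). This is where the Lagrangian structure is essential: it allows the twist cocycle, a priori defined only over a big open $Y^+$, to extend over all of $Y$ (Propositions~\ref{abasheva} and~\ref{compactify}). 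Only after assembling this analytic family does the paper invoke the Bakker--Lehn Torelli machinery and the birational-implies-deformation-equivalent principle you mention, together with a lattice argument (Lemma~\ref{lemma_smaller_norm}) to land in a bounded polarized family. Your Steps~1 and~2 are correct in spirit and do appear at the end, but the substance of the proof is precisely the Tate--Shafarevich analysis you have skipped.
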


Note that,
despite Theorem \ref{thm:ps}, Lagrangian-fibered
primitive symplectic varieties do not form
a bounded family, just as for K3 surfaces. 
The corollaries of
Theorem \ref{thm:ps} are particularly
strong. According to the hyperk\"{a}hler SYZ 
conjecture (\ref{SYZ conjecture}), any 
primitive symplectic variety with $b_2 \geq 5$
is deformation equivalent to a primitive 
symplectic variety with a Lagrangian fibration.
\begin{corollary} If the hyperk\"{a}hler SYZ conjecture (\ref{SYZ conjecture})
 is true,
the number of locally trivial
deformation classes of primitive symplectic
variety, of fixed dimension $2d$, 
with $b_2\geq 5$, is finite.
\end{corollary}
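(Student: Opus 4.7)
The plan is to deduce the corollary essentially formally by combining the hyperkähler SYZ conjecture (\ref{SYZ conjecture}) with Theorem \ref{thm:ps}. Concretely, let $X$ be any primitive symplectic variety of dimension $2d$ with $b_2(X)\geq 5$. The SYZ conjecture produces a locally trivial deformation $X'$ of $X$ admitting a Lagrangian fibration $X'\to Y'$. Thus every locally trivial deformation class of primitive symplectic variety, of dimension $2d$ with $b_2\geq 5$, contains a Lagrangian-fibered representative $X'$ of the same dimension.

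By Theorem \ref{thm:ps}, the class of Lagrangian-fibered primitive symplectic varieties of dimension $2d$ comprises only finitely many locally trivial deformation equivalence classes. Since locally trivial deformation equivalence is transitive, the finitely many classes of the $X'$ absorb the classes of all such $X$, and the corollary follows.

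The main --- and essentially only --- subtlety is compatibility of notions: one must check that the deformations produced by the version of the SYZ conjecture labeled \ref{SYZ conjecture} are locally trivial in the same sense as in Theorem \ref{thm:ps}. Because locally trivial deformation is the natural equisingular framework for primitive symplectic varieties, one expects \ref{SYZ conjecture} to be phrased (or at least strengthened, essentially for free) within that framework, so that the combination of the two statements goes through without further work.
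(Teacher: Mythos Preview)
Your proposal is correct and matches the paper's intended argument: the corollary is an immediate consequence of Theorem~\ref{thm:ps} together with Conjecture~\ref{SYZ conjecture}, which is explicitly stated for locally trivial deformations, so the compatibility concern you raise is already addressed by the paper's formulation.
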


In brief, the proof strategy for the main theorems is 
as follows: first, we bound the base $Y$ via the canonical 
bundle formula; second, we bound the polarization type 
of the general fiber by using the Zarhin trick and the Kuga--Satake 
construction in families. In the case of an abelian 
fibrations, these steps yield a bound on the Albanese 
fibration, or equivalently, the classifying map to a 
moduli space of abelian varieties. In order to reconstruct 
the original fibration out of the Albanese, we invoke and 
refine the Tate--Shafarevich theory. 
We refer to \S~\ref{outline} 
for a detailed outline of the proof. 

\subsection{Previous work}
By the Beauville--Bogomolov decomposition theorem
(Thm.~\ref{thm:decomp}), any
$K$-trivial variety
is built 
from three fundamental building blocks:
abelian varieties, irreducible Calabi--Yau varieties, and irreducible symplectic varieties (which are special primitive symplectic varieties).  Roughly, the moduli space
is completely understood in the abelian case, and the only remaining
problem in the symplectic case concerns the finiteness and/or possible classification
of analytic deformation classes. On the other hand, 
less is understood about the moduli or Hodge theory 
of Calabi--Yau varieties.

We
briefly survey 
past results on 
Question~\ref{quest:bound}:
\medskip

{\bf Calabi--Yau varieties.}
 In this article, Calabi--Yau varieties are 
$K$-trivial varieties with canonical singularities and 
$h^i(X,\cO_X)=0$ for $0<i<\dim X$ (Def.~\ref{def:CY}). 
There are at least 30108 known 
distinct topological types of smooth 
Calabi--Yau 3-folds, compiled
by Kreuzer--Skarke \cite{KSCYdata},
though figures as large as 
$\approx 10^{400}$ have been proposed 
\cite{chandra}. 
The question of whether all smooth Calabi-Yau $3$-folds form a bounded family is a hard unsolved problem; 
Reid and Friedman studied interconnections
between moduli spaces of Calabi--Yau varieties 
via conifold transitions
\cite{reid, friedman91}, and
Wilson studied topological invariants
\cite{Wilson2021, Wilson2022} in special cases. 

Matsusaka's big theorem \cite{matsusaka, matsusaka2},
resp.~work of 
Martinelli, Schreieder, and Tasin \cite{MST20},
implies boundedness
under the hypothesis that there is an ample,
resp.~big and nef, line bundle of bounded volume.
The main progress on unconditional
boundedness results has been 
the elliptically-fibered $3$-fold case, 
discussed above. 
The analysis of the known examples suggests that smooth 
Calabi--Yau $3$-folds with 
large Picard rank are elliptic, cf.
\cite[p.~3]{wati} and \cite{Gross1994};
in view of Corollary~\ref{thm:cy3}, this would imply 
that the second Betti number 
of $K$-trivial $3$-folds is bounded. 
Hunt \cite{hunt} claims a bound
on the Euler characteristic of fibered
Calabi--Yau $3$-folds, but the proof
appears to be incomplete; see \cite[Rem.~1.3]{ohno}.

\medskip

{\bf Symplectic varieties.}
Irreducible symplectic manifolds, 
or compact hyperk\"{a}hler manifolds, are simply connected,
smooth $K$-trivial varieties 
admitting a holomorphic symplectic
form which is unique up to scaling. 
Beauville described the 
two known infinite series \cite[\S~6, 7]{B1983},
of type 
K3$^{[n]}$ and Kum$_n$, each of complex
dimension $2n$.
Besides these, only two additional 
sporadic examples are known: 
OG6 and OG10,
constructed by O'Grady \cite{OG6, OG10}, 
in dimensions $6$ and $10$ respectively. 
Alexeev observed
that there is at least
one {\it primitive symplectic} variety
(see Def.~\ref{def:CY}(PS)) for every
absolutely irreducible representation of 
a finite group $G$ on a lattice $L$ 
\cite[Thm.~5]{alexeev2022root}. 

It is unknown whether there
are finitely many deformation
classes of compact hyperk\"ahler manifolds in fixed dimension. A birational version of Matsusaka's big
theorem, due to Charles \cite{charles}, proved that
symplectic varieties of fixed dimension
with a line bundle of bounded positive top 
self-intersection are birationally bounded. 
By work of Huybrechts and Kamenova
\cite{Huy03, Kam2018}, there are only finitely
many deformation classes of hyperk\"ahler manifolds
with fixed Fujiki constant and given disciminant of the
Beauville--Bogomolov--Fujiki (BBF) lattice. In \cite{Sawon2016}, 
Sawon uses period maps to bound Lagrangian-fibered 
hyperk\"{a}hler manifolds, 
under strong assumptions 
on the fibration (fixed base, bound on the 
generic polarization, semistability in codimension 1, 
and existence of a section); 
see also \cite{Sawon2008, Kamenova2024}.

Some explicit topological
bounds on primitive symplectic varieties are known:
the second Betti number of a $4$-dimensional 
primitive symplectic variety with quotient 
singularities is at most 23 by 
\cite{guan.betti, Fu-Menet}; 
moreover, for a smooth hyperk\"ahler
4-fold, $b_2\leq 8$ 
or $b_2=23$ by \cite{guan.betti}. 
It is conjectured that $b_2\leq 23$ 
also for a smooth primitive symplectic 6-fold; 
see \cite{Kurnosov2016, Sawon2022} 
for partial results. 
Note that at the moment, there are only two known 
examples of primitive symplectic varieties with 
$b_2(X) \geq 24$: OG10 and a recent
example by Liu--Liu--Xu of a singular primitive 
symplectic variety of dimension 42; cf.~\cite[Rem.~4.17]{LLX2024}.
A conjectural bound for the second Betti number 
of primitive symplectic manifolds is 
proposed by Kim and Laza \cite{KimLaza2020}.

\subsection{Additional results} 
In the course of the proof, 
we obtain a number of interesting
results concerning boundedness and 
the minimal model program. The first
is a form of the effective b-semiampleness 
conjecture of Prokhorov and Shokurov
\cite[Conj.~7.13]{PS09},
extending results of Fujino \cite{Fuj03}:

\begin{maintheorem}\label{thm:b-semi}
Let $f \colon X \rar Y$ be a fibration
whose general fibers are either 
abelian varieties of dimension $g$, 
or primitive symplectic varieties of second
Betti number $b_2$.
Let $(Y, B, \bfM)$ be the generalized 
pair induced on $Y$. Then, there exists a constant $I$, 
depending only on $g$ or $b_2$ respectively, 
such that $I\bfM$ is free for a 
suitable birational modification $Y' \to Y$.
\end{maintheorem}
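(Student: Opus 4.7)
The plan is to identify $\bfM$, after a birational modification and finite base change, with the Hodge line bundle on a fine moduli space of the relevant fiber type, and then to quote a uniform freeness statement on that moduli space. First, after passing to a birational model $Y' \to Y$ on which the discriminant of $f$ is a simple normal crossings divisor and the monodromy is unipotent---achieved after a generically finite base change of $Y$ of degree bounded purely in terms of the fiber topology---Ambro's and Fujino's refinements of the canonical bundle formula identify $\bfM_{Y'}$, up to a universally bounded multiple, with the top Hodge line bundle $\det(\cF^{\mathrm{top}} R^k f_\ast \Omega^\bullet_{X/Y'})$ attached to the weight-$k$ variation of Hodge structure of the general fiber; here $k=1$ in case (AV) and $k=2$ in case (PS).

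In case (AV), Zarhin's trick replaces the abelian scheme, up to an isogeny of bounded degree, by a principally polarized one of relative dimension $8g$. After a further finite étale cover of bounded degree---trivializing a level-$3$ structure---we obtain a classifying morphism $\phi\colon Y' \to \cA_{8g,\Gamma}$ to a fine moduli space, and $\bfM_{Y'}$ coincides with a fixed rational multiple of $\phi^\ast\lambda$, with $\lambda$ the Hodge bundle. Classical theta-function arguments on toroidal compactifications (Mumford, Moret-Bailly, Faltings--Chai) yield a universal integer $I_0(g)$ such that $I_0(g)\lambda$ is globally generated, giving the required $I=I(g)$. In case (PS), we apply the Kuga--Satake construction in families to the primitive variation $R^2 f_\ast \QQ_{\mathrm{prim}}$, which has rank $b_2-1$ and Hodge signature $(2,b_2-3)$. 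After a further finite cover of $Y'$ of degree bounded in $b_2$, this produces an abelian scheme $\pi\colon \cA \to Y'$ of relative dimension bounded by an explicit function of $b_2$, whose Hodge line bundle equals a positive rational multiple of $\bfM_{Y'}$. Case (AV) applied to $\pi$ then yields $I=I(b_2)$.

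The main obstacle is case (PS). The Kuga--Satake morphism is a priori only defined over the open locus of $Y'$ where the period map lands in the smooth part of the period domain, so one must show that the associated abelian scheme extends, possibly after further birational modification of $Y'$, to a proper flat family whose Hodge bundle still pulls back to a uniform multiple of $\bfM_{Y'}$. This requires logarithmic Hodge-theoretic techniques in the spirit of Cattani--Kaplan--Schmid and Kato--Usui, together with the existence of good compactifications of Kuga--Satake moduli with level structure. A secondary difficulty is the effective control of the level and polarization of $\cA$ purely in terms of $b_2$, which rests on finiteness statements for even lattices of bounded rank and given Hodge signature.
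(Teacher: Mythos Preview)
Your approach for case (AV) via the Zarhin trick is sound and is in fact acknowledged in the paper as an alternative route: once everything is pulled back from the single target $\oA_{8g}$, effective freeness of $\lambda_{8g}$ gives the result. The paper's own proof for (AV) is different but no harder: it works directly with the Baily--Borel compactification $\oA_{g,{\bf d}}$ for the given polarization type, uses that a level-$3$ cover $\overline{S}$ has lc singularities with $\omega_{\overline{S}}\simeq \lambda^{\otimes (g+1)}$ (Hirzebruch--Mumford proportionality), and then invokes the effective basepoint-free theorem and Kodaira vanishing for lc varieties, together with Castelnuovo--Mumford regularity, to bound the very-ample multiple of $\lambda$ solely in terms of $\dim\overline{S}=g(g+1)/2$.

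Case (PS), however, has a genuine gap in your proposal. You assert that the Kuga--Satake abelian scheme has relative dimension bounded by an explicit function of $b_2$, but this is precisely the subtle point. To descend the Kuga--Satake family from the period domain $\bD_M$ to the quotient $\cF_\Lambda=[O^*(M)\backslash\bD_M]$ with a polarization, one must lift the $O^*(M)$-action through a Pin-type construction, and the resulting dimension picks up a factor of $[O^*(M):{\rm P}^*(M)]$. This index is bounded in terms of the full BBF lattice $M_{\rm np}$, not merely its rank $b_2$; and lattices of signature $(3,b_2-3)$ do not form a finite set, so your ``secondary difficulty'' is not resolved by any finiteness of lattices of given rank. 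The paper explicitly remarks that the Kuga--Satake route yields a constant depending on $M_{\rm np}$ rather than on $b_2$ alone. Your stated ``main obstacle''---extension of the Kuga--Satake family over the boundary---is by contrast a non-issue, since the construction is Shimura-theoretic and extends to Baily--Borel compactifications.

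The paper avoids Kuga--Satake entirely for (PS) and instead runs the same argument as for (AV) directly on the Type IV Baily--Borel compactification $\oF_\Lambda$: a level cover has lc singularities, $\omega\simeq\lambda^{\otimes(b_2-r-2)}$, and $\dim\oF_\Lambda=b_2-r-2\leq b_2-3$, so the effective basepoint-free theorem for lc varieties bounds the very-ample multiple of $\lambda$ in terms of $b_2$ alone. This is both simpler and gives the correct dependence.
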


Next is a general result
concerning the relative boundedness
of spaces of abelian fibrations, which
is not particular
to an assumption of $K$-triviality. 
We say that a fibration $f\colon X\to Y$
has {\it Picard type} if any flat 
family of 
cohomology classes of degree $2$ on the smooth fibers 
is of $(1,1)$-type (Def.~\ref{def:pic-type}). 

\begin{maintheorem}\label{thm:bound-ab}
Let $(\cY,\cB,\cM)/\cT$ be a finite type
family of projective varieties $\cY\to \cT$,
with two families of $\bQ$-divisors $\cB$, $\cM$. 
Abelian fibrations 
$f\colon X\to \cY_t$ for some closed point 
$t\in \cT$, of fixed dimension, 
of Picard type, admitting a rational section, and 
inducing $(\cY_t,\cB_t,\cM_t)$,
are birationally bounded. 
\end{maintheorem}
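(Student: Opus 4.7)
The plan is to reduce birational boundedness of $X$ to boundedness of a classifying map of a principally polarized abelian scheme into a Siegel modular variety, via Zarhin's trick.

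\textbf{Reduction to the relative Jacobian.} Let $Y^\circ \subset Y$ be a dense open over which $f$ is smooth. After shrinking $Y^\circ$, the rational section of $f$ extends to a genuine section by the valuative criterion for abelian schemes, identifying $X^\circ := f^{-1}(Y^\circ)$ with its relative Jacobian $J := \Pic^0(X^\circ/Y^\circ)$, an abelian scheme of relative dimension $g$. It therefore suffices to birationally bound the family $J \to Y^\circ$ as $Y = \cY_t$ varies over $\cT$.

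\textbf{Trivializing the polarization type.} The Picard type hypothesis ensures that a polarization on a single smooth fiber propagates to a flat global $(1,1)$-class, equipping $J/Y^\circ$ with a relative polarization of some type $\mathbf{d}=(d_1,\ldots,d_g)$ whose entries are a priori unbounded. To eliminate this difficulty, I would apply Zarhin's trick fiberwise: the abelian scheme $\widetilde J := (J \times_{Y^\circ} J^\vee)^{4}$ of relative dimension $8g$ carries a canonical relative principal polarization, independent of $\mathbf{d}$.

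\textbf{Bounding the classifying map.} The principally polarized family $\widetilde J \to Y^\circ$ determines a period map $\phi \colon Y^\circ \to \cA_{8g}$ (possibly after passing to a bounded-degree level cover). The pullback $\phi^*\lambda_{\cA_{8g}}$ of the Hodge line bundle equals $(\det\bE)^{\otimes 8}$, where $\bE = f_*\Omega^1_{X^\circ/Y^\circ}$. By the canonical bundle formula for abelian fibrations, extending Fujino's effective b-semiampleness (cf.~Theorem~\ref{thm:b-semi}), $\det \bE$ is $\bQ$-linearly equivalent to a fixed positive multiple of $\bfM$, with factor depending only on $g$. Since $(Y, B, \bfM)$ lies in a bounded family, the $\lambda$-degree of $\phi$ is uniformly bounded. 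Combining this with the projectivity of the Baily--Borel compactification $\overline{\cA}_{8g}$, on which a power of $\lambda$ is ample, and with the finite-type nature of Hom schemes of degree-bounded morphisms from a bounded source to a fixed projective target, I conclude that $\widetilde J \to Y^\circ$ is birationally bounded.

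\textbf{Descent to $J$ and main obstacle.} Since $J$ appears as an abelian subscheme of $\widetilde J$ (e.g., the first factor of $(J \times_{Y^\circ} J^\vee)^{4}$), and since the relative Hilbert scheme parametrizing abelian subschemes of a bounded family of abelian schemes is itself of finite type, the family $J \to Y^\circ$---and hence $X \to Y$---is birationally bounded. The hardest step will be the third one: identifying $\phi^*\lambda_{\cA_{8g}}$ with a bounded positive multiple of $\bfM$ uniformly across $\cT$. This hinges on a uniform canonical bundle formula and essentially demands the effective b-semiampleness content of Theorem~\ref{thm:b-semi} together with a universal identification of $\det\bE$ with the moduli part of an abelian fibration.
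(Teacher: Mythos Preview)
Your Steps 1--3 are essentially the paper's approach: the paper also passes to the Albanese (which equals $X$ when there is a rational section), applies the Zarhin trick to land in $\cA_{8g}$, and bounds the resulting period map via the identification $\Phi_Z^*\lambda_{8g}\sim_\bQ 8\bfM$ together with a volume argument (Propositions~\ref{bd-map}, \ref{bd-stack}, \ref{lambda-formula}). One refinement you elide is the distinction between the rational period map to the coarse space $\oA_{8g}$ and the classifying morphism to the stack $\cA_{8g}$: the paper first bounds the former, then lifts to the latter on an explicit open $\cY_s^o$ determined by $\supp\cB_t$ and the inertia/Hodge-theoretic boundary (cf.~\eqref{open1} and Example~\ref{no-B-ex}). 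Your proposal does not use $\cB$ at all, so as stated you have not controlled the domain of the stacky morphism.

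The genuine gap is Step 4. Your claim that ``the relative Hilbert scheme parametrizing abelian subschemes of a bounded family of abelian schemes is itself of finite type'' is false without a bound on the Hilbert polynomial, and you have none: the degree of $J\hookrightarrow \widetilde J=(J\times J^\vee)^4$ with respect to the Zarhin principal polarization depends on $\prod d_i$, which is exactly the quantity you introduced the Zarhin trick to forget. More generally, in a bounded family of abelian $8g$-folds the $g$-dimensional abelian subvarieties are not bounded (already for products of elliptic curves, isogenous specializations produce subcurves of arbitrarily large degree). The paper circumvents this by undoing the Zarhin trick at the level of $\bZ$-local systems (Lemma~\ref{local-sys-undo}): if $\rho_\bV(\gamma)$ has eigenvalues $\{\lambda_i\}$ then $\rho_{Z(\bV)}(\gamma)$ has eigenvalues $\{\lambda_i^{\pm 1}\}$ with multiplicity four, so Procesi's theorem on traces pins down the semisimple $\bQ$-representation $\rho_\bV$ up to finitely many choices. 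Only \emph{then} is the Picard type hypothesis invoked, and for a different purpose than you use it: once $\bV$ is recovered, Picard type guarantees that the monodromy-invariant lattice $\Lambda=H^0(\cY_s^o,\wedge^2\bV)$ consists of $(1,1)$-classes, so the classifying morphism factors through $\cA_{g,\Lambda}$ with $\Lambda$ ranging over finitely many possibilities (Proposition~\ref{bd-unzarhin}). Example~\ref{pic-ex} shows this step genuinely fails without Picard type. Your use of Picard type---merely to obtain \emph{some} relative polarization---is redundant (any ample class on $X$ restricts), and misses its actual role.
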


We also have the following result
on the birational class of the relative Albanese fibration,
generalizing results of 
\cite[Cor.~29]{grassi-wen},  \cite[Cor.~6.4]{sacca},
\cite[\S~6]{kollar_new2}:

\begin{maintheorem}\label{thm:alb}
Let $f\colon X\to Y$ be an abelian fibration of a $K$-trivial
variety $X$. Then there is a birational model 
$f^{\rm Alb}\colon X^{\rm Alb}\to Y$ of its relative Albanese
variety over $Y$, for which $X^{\rm Alb}$ is $K$-trivial.
Furthermore, $f$ and $f^{\rm Alb}$ induce the same 
generalized pair on the base $Y$.

Additionally, if $X$ is (primitive)
symplectic, then $X^{\rm Alb}$ is (primitive) symplectic, and if 
$X$ is Calabi--Yau, then $X^{\rm Alb}$ is Calabi--Yau.
\end{maintheorem}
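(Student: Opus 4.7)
The plan is to construct $f^{\rm Alb}\colon X^{\rm Alb}\to Y$ as a $K$-trivial compactification of the relative Albanese of $f$, then to identify its canonical bundle formula with that of $f$, and finally to transfer the symplectic or Calabi--Yau structure. Let $Y^\circ\subset Y$ be the open locus over which $f$ is smooth with abelian fibers. Over $Y^\circ$ the relative Albanese is an abelian scheme $A^\circ\to Y^\circ$, and $X|_{Y^\circ}$ is an $A^\circ$-torsor. After fixing the polarization type of the fibers (possible up to a finite \'etale cover by the Zarhin/Kuga--Satake bound invoked in the introduction), the period map $Y^\circ\to \cA^{\rm pol}$ to a moduli space of polarized abelian varieties extends, after a birational modification of $Y$, to a proper fibration $X_0^{\rm Alb}\to Y$, the preliminary Albanese model.

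Next, I would establish that $f$ and $f^{\rm Alb}$ induce the same generalized pair $(Y,B,\bfM)$. Both the discriminant $B$ and the moduli b-divisor $\bfM$ are determined by the local system $R^1 f_*\bQ$ on $Y^\circ$ together with its codimension-one degeneration data: $\bfM$ via the Deligne extension of the Hodge bundle, $B$ via the monodromy of the local system. The Tate--Shafarevich class relating $X$ and $X_0^{\rm Alb}$ over $Y^\circ$ does not alter $R^1$ or its monodromy, so the two canonical bundle formulas coincide on $Y$, giving
\[
K_{X_0^{\rm Alb}}\sim_\bQ (f^{\rm Alb})^*(K_Y+B+\bfM_Y)\sim_\bQ K_X\sim_\bQ 0.
\]
A relative MMP over $Y$, combined with the existence of good minimal models for klt pairs of relative numerical dimension zero, then produces the required birational model $X^{\rm Alb}$ with $K_{X^{\rm Alb}}\sim_\bQ 0$ and canonical singularities.

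For the symplectic case, Matsushita's theorem (in the form extended to primitive symplectic varieties) implies that the fibers of $f$ are Lagrangian, so the symplectic form $\sigma$ on $X$ yields an isomorphism $T_{X/Y}|_{X^\circ}\cong f^*\Omega^1_{Y^\circ}$; this datum descends to $X^{\rm Alb}$ since the tangent bundle of an abelian variety is its Lie algebra and is insensitive to the torsor structure. The resulting symplectic form on the regular locus of $X^{\rm Alb}$ extends reflexively across the klt singularities by Greb--Kebekus--Peternell, and primitivity ($h^{[2,0]}=1$, locally trivial simple connectedness) transfers since $X$ and $X^{\rm Alb}$ agree analytically locally over $Y^\circ$ and both properties are locally trivial deformation invariants. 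In the Calabi--Yau case, the vanishing $h^i(\cO_{X^{\rm Alb}})=0$ for $0<i<\dim X$ follows from the Leray spectral sequence and the equality $R^if^{\rm Alb}_*\cO_{X^{\rm Alb}}=R^if_*\cO_X$, itself a consequence of the computation of these direct images from the shared local system $R^1 f_*\bQ$ via the Deligne extension.

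The main obstacle will be the $K$-triviality step: arranging that the relative MMP on $X_0^{\rm Alb}$ actually terminates with canonical singularities and no exceptional contribution to $K_{X^{\rm Alb}}$, which relies both on abundance in the relative abelian-fibered setting and on carefully pairing the Albanese construction with the birational geometry of $Y$ so that the coincidence of the induced generalized pairs is not disturbed by the birational modifications used along the way.
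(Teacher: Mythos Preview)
Your argument contains a genuine gap at the step where you claim that the boundary divisor $B$ is ``determined by the monodromy of the local system.'' This is false in general: $B$ is computed from log canonical thresholds of the actual scheme-theoretic fibers (Construction~\ref{coefficientB}), and in particular records the multiplicities of the fibers. Since $X$ may have multiple fibers while $X^{\rm Alb}$, having a section, cannot, the boundary divisors can a priori differ. The paper notes explicitly (Remark~\ref{enriques-ex}) that the theorem fails under the weaker hypothesis $K_X\sim_\bQ 0$: for an Enriques surface, $B$ has two coefficients of $\tfrac12$ from the multiple fibers, while the Albanese is a rational elliptic surface with $B^{\rm Alb}=0$. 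Your argument, which never uses the integral linear equivalence $K_X\sim 0$, would apply verbatim to that case and give the wrong answer. The moduli divisor $\bfM$ does depend only on the variation of Hodge structure, but $B$ does not.

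The paper's route is quite different and worth knowing. The key construction (Proposition~\ref{prop.map.alb}) is a \emph{generically finite} rational map $\psi\colon X\dashrightarrow X^{\rm Alb}$ over $Y$, coming from the fact that the torsor class of $X_\eta$ is $n$-torsion for some $n$, so ``multiplication by $n$'' gives a degree-$n^{2g}$ map. Resolving to $p\colon X'\to X$, $q\colon X'\to X^{\rm Alb}$, one uses push-pull of holomorphic forms along $q$ to prove $h^{k,0}(X)=h^{k,0}(X^{\rm Alb})$ for all $k$ (Lemma~\ref{edge-same}); this simultaneously yields the nonvanishing volume form on $X^{\rm Alb}$ and, later, the primitivity of the symplectic form and the Calabi--Yau vanishing. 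The equality of generalized pairs is then obtained by writing $K_{X'}\sim E$ ($p$-exceptional, since $K_X\sim 0$) and $E=q^*(\widehat f^{\rm Alb})^*\Gamma+R$ via Riemann--Hurwitz, comparing the canonical bundle formulas for the sub-pairs $(X',-E)$ and $(X',-R)$ over $\widehat Y$, and concluding $\Gamma=0$, $\widehat Y=Y$, $B=B^{\rm Alb}$ from a Kodaira dimension and negativity-lemma argument. It is precisely the condition $p_*E=0$, i.e.\ $K_X\sim 0$, that makes this work.

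Your approach to the symplectic form via the Lagrangian condition is reasonable over $Y^\circ$, but your claim that primitivity ($h^{2,0}=1$) transfers because $X$ and $X^{\rm Alb}$ ``agree analytically locally'' does not give a global Hodge-number equality; the paper obtains this from the form-transfer Lemma~\ref{edge-same}. Likewise your Leray argument for the Calabi--Yau case needs the identification $R^if^{\rm Alb}_*\cO_{X^{\rm Alb}}\simeq R^if_*\cO_X$, which the paper establishes (Proposition~\ref{hi}, \ref{ss-degen}) through the Kulikov model machinery rather than as an immediate consequence of sharing a local system.
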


We prove the following topological result,
concerning the smooth locus of an irreducible 
symplectic variety (Def.~\ref{def:CY}(IS)).

\begin{maintheorem}\label{thm:pi1}
Let $X$ be an irreducible symplectic variety
admitting a Lagrangian fibration. 
Then $\pi_1(X^{\rm reg})$ is finite.
\end{maintheorem}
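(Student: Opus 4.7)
The plan is to combine the homotopy exact sequence of the Lagrangian fibration $f \colon X \to Y$ with the Hodge-theoretic irreducibility of the weight-one variation on the smooth locus of $f$.

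First I would pass to the locus where $f$ is smooth. Let $V \subset Y$ be the open set over which $f$ is smooth with fibers contained in $X^{\mathrm{reg}}$, and set $U = f^{-1}(V)$. Its complement in $X^{\mathrm{reg}}$ is of pure codimension one, so $\pi_1(U) \twoheadrightarrow \pi_1(X^{\mathrm{reg}})$ is surjective, and the meridians around the irreducible components of this complement die in $\pi_1(X^{\mathrm{reg}})$. The map $f|_U \colon U \to V$ is a smooth proper abelian fibration with general fiber $F$ an $n$-dimensional abelian variety, and the homotopy exact sequence
\[
H_1(F, \ZZ) = \pi_1(F) \longrightarrow \pi_1(U) \longrightarrow \pi_1(V) \longrightarrow 1
\]
exhibits $\pi_1(U)$ as an extension of $\pi_1(V)$ by the coinvariants of the monodromy $\rho \colon \pi_1(V) \to \mathrm{Sp}(H^1(F, \ZZ), \omega)$, which preserves the polarization $\omega$ induced by the restriction of the symplectic form. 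Further quotienting by the meridians of $f^{-1}(\Delta)$, which project to the loops in $\pi_1(V)$ around the discriminant $\Delta = Y \setminus V$, yields an extension
\[
1 \longrightarrow H_1(F, \ZZ)_{\rho} \longrightarrow \pi_1(X^{\mathrm{reg}}) \longrightarrow \pi_1(Y^{\mathrm{reg}}) \longrightarrow 1.
\]

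The two outer groups should be finite. By Matsushita's theorems, extended to the singular setting for primitive symplectic varieties, $Y$ is a $\QQ$-factorial klt Fano variety of Picard rank one, and Braun's theorem on the fundamental group of klt Fano varieties then gives that $\pi_1(Y^{\mathrm{reg}})$ is finite. On the fiber side, $H_1(F, \ZZ)_\rho$ is finite if and only if the rational invariants $H^1(F, \QQ)^\rho$ vanish, equivalently if the weight-one polarized $\QQ$-variation $R^1 f_* \QQ|_V$ contains no trivial sub-VHS.

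The main obstacle is ruling out such a trivial sub-VHS. A nonzero invariant $\QQ$-sub-Hodge structure of $H^1(F, \QQ)$ would, by the theorem of the fixed part, correspond to an isotrivial abelian sub-scheme $A_0 \times V \hookrightarrow X^{\mathrm{Alb}}|_V$ of the relative Albanese fibration produced by Theorem \ref{thm:alb}. After a finite quasi-\'etale base change $\widetilde{X} \to X$ splitting off this sub-scheme, the Beauville--Bogomolov-type decomposition theorem of Greb--Kebekus--Peternell for $K$-trivial klt varieties would then yield a nontrivial splitting of $\widetilde{X}$ as a product with $A_0$ as a factor. This contradicts the irreducible symplectic hypothesis on $X$, which by definition precludes such a splitting on any quasi-\'etale cover. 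Consequently $H_1(F,\ZZ)_\rho$ is finite, and $\pi_1(X^{\mathrm{reg}})$ is a finite extension of finite groups, hence finite.
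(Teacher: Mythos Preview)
Your exact sequence is incorrect when $f$ has multiple fibers in codimension one. A meridian in $X^{\rm reg}$ around a component $D_j$ of $f^{-1}(\Delta)$ with multiplicity $m_j$ maps to $\delta_i^{m_j}$ in $\pi_1(V)$, not to $\delta_i$. Hence the quotient on the right is the orbifold fundamental group $\pi_1^{\rm orb}(Y,\Delta_f)$ (Definition~\ref{defn:orbifoldpi1}, Proposition~\ref{prop:exact sequence}), not $\pi_1(Y^{\rm reg})$. Lagrangian fibrations of irreducible symplectic varieties do have multiple fibers (Theorem~\ref{ho-thm}), so this matters. Since $\pi_1^{\rm orb}(Y,\Delta_f)\twoheadrightarrow \pi_1(Y^{\rm reg})$, the finiteness of $\pi_1(Y^{\rm reg})$ that Braun gives for the Fano variety $Y$ is strictly weaker than what you need; the paper's Example after Corollary~\ref{cor:finiteorbfund} shows $\pi_1^{\rm orb}(Y,\Delta_f)$ can be infinite when $X$ is only primitive symplectic. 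The missing step is Proposition~\ref{prop:basepair}: one must show that the \emph{pair} $(Y,\Delta_f)$ is of Fano type, and this genuinely uses irreducibility of $X$ --- one passes to the index-one cover of $(Y,\Delta_f)$, checks that the induced cover of $X$ is quasi-\'etale, and derives a contradiction from the base of the lifted Lagrangian fibration having trivial canonical class.

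Your treatment of the fiber side also has gaps. The image of $\pi_1(F)$ in $\pi_1(X^{\rm reg})$ is some quotient of $\bZ^{2n}$ on which the base group acts, but there is no reason it should equal the coinvariants $H_1(F,\bZ)_\rho$, nor that the sequence is exact on the left. And even granting a nonzero invariant sub-VHS, extracting a global product splitting of a quasi-\'etale cover of $X$ from an isotrivial abelian subscheme of the relative Albanese over $V$ is a substantial leap that you have not justified. The paper avoids all of this: once $\pi_1^{\rm orb}(Y,\Delta_f)$ is finite, the sequence $\bZ^{\dim X}\to\pi_1(X^{\rm reg})\to\pi_1^{\rm orb}(Y,\Delta_f)\to 1$ shows $\pi_1(X^{\rm reg})$ is virtually abelian, and Lemma~\ref{lem:virtuallyabelianfinite} finishes by taking the corresponding quasi-\'etale cover $\widetilde X\to X$ and using $H^1(\widetilde X^{\rm reg},\bQ)=H^1(\widetilde X,\bQ)=0$ (Proposition~\ref{reg=totcohom}). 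If you want to argue directly on the fiber, the clean substitute for your splitting argument is the global invariant cycle theorem, which gives $H^1(F,\bQ)^\rho=0$ from $b_1(X)=0$; but you would still need the orbifold correction on the base.
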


We also prove a structure
theorem concerning the birational classes of abelian 
torsors; see Theorem 
\ref{identify-N1-2} 
for a precise formulation. 
To prove this result,
we build up the theory of the Tate--Shafarevich 
group in the presence of multiple fibers
(see \S~\ref{sec:ts}).

\subsection{Outline}\label{outline}
We now summarize the contents 
of the paper in more detail.

\subsubsection*{Section \ref{sec:prelim}}
Here we introduce the fundamental
notions of the paper, such as $K$-trivial, Calabi--Yau, and
primitive symplectic varieties (Def.~\ref{def:CY}), 
b-divisors and generalized pairs (\S~\ref{b-div.subs}), 
boundedness, and birational boundedness notions 
for fibrations (\S~\ref{sec:bd-def}). We also state 
the  {\it canonical bundle formula} 
(Thm.~\ref{base-same-sing}), which is of central importance
to the paper. On the base of a relatively $K$-trivial
fibration $f\colon X\to Y$ of a klt variety $X$, we naturally
have the structure of a {\it generalized pair} $(Y,B,\bfM)$ 
(cf.~Def.~\ref{lc-trivial.def}),
consisting of an effective $\bQ$-divisor $B_Y$ (Constr.~\ref{coefficientB}) measuring
singularities of $f$ over codimension $1$ points of $Y$, 
and a $\bQ$-b-divisor class $\bfM$ (cf.~\S~\ref{hodge-moduli}), which is the class
of the Hodge $\bQ$-line bundle. There is an equivalence
$$K_X\sim_\bQ f^\ast (K_Y+B_Y+\bfM_Y).$$

\subsubsection*{Section \ref{sec:hodge}}
We discuss the theory of the period mapping for
abelian fibrations (\S~\ref{av-fib}) and primitive
symplectic fibrations (\S~\ref{sec:moduli-ps}). 
To every fibration $f\colon X\to Y$
of abelian $g$-folds whose fibers are polarized by
an ample class of type ${\bf d}$, we associate
a {\it classifying morphism} 
\begin{align*} \Phi^o\colon Y^o &\to \cA_{g,{\bf d}} \\
y&\mapsto {\rm Aut}^0(X_y)
\end{align*}
from the locus $Y^o\subset Y$ over which $f$ is smooth,
to the DM stack of abelian varieties with a 
distinguished origin. Passing to the coarse space,
there is a {\it period map} 
$\Phi\colon Y\dashrightarrow \oA_{g,{\bf d}}$ to the Baily--Borel
compactification (Def.~\ref{def:classifying-morphism}).
An important and subtle point
in the paper is that {\it these two pieces of data
are not equivalent}; $\Phi^o$ contains more information
(cf.~Ex.~\ref{no-B-ex}).

We introduce the central notion of abelian fibrations of
Picard type (Def.~\ref{def:pic-type}). 
Abelian fibrations of Calabi--Yau varieties
and Lagrangian fibrations both fall into this class.
We improve on the existing literature about 
lattice-polarized abelian varieties and their moduli
(\S~\ref{sec:lattice_av}).

Then, we introduce a key tool of the 
paper---the {\it Zarhin trick}. This is a morphism 
$\cA_{g,{\bf d}}\to \cA_{8g}$
which assigns to any ${\bf d}$-polarized abelian $g$-fold 
a {\it principally polarized} abelian $8g$-fold (\S~\ref{sec:Ztrick}).
It is not so surprising that the Zarhin trick should
arise in bounding families of abelian varieties.
For instance, it appears in Zarhin's 
extension \cite[Thm.~1]{zarhin85} of Faltings' proof 
\cite[Thm.~6]{faltings} (for fixed polarization type
${\bf d}$) of the {\it Shafarevich conjecture},
that  there are finitely many abelian 
varieties over $\bQ$ with
good reduction outside a finite set of primes.

We also review the {\it Kuga--Satake} construction (\S~\ref{sec:kuga}) 
$\cF_\Lambda\to \cA_{g,{\bf d}}$
which assigns to a polarized Hodge structure of weight 
$2$ (for example, the period of a primitive 
symplectic variety), a polarized Hodge
structure of weight $1$. We clarify some aspects
of the functoriality of 
this construction, and adapt it
to a setting more suitable for
proving boundedness results.

Finally, we prove Theorem \ref{thm:b-semi} 
on effective $b$-semiampleness 
(Thm.~\ref{thm:weakeffective}), by bounding
the multiple of the Hodge bundle which is
very ample on the target of the period mapping, 
relying on Hirzebuch--Mumford proportionality and 
the fact that the Baily--Borel compactification 
has log canonical singularities. 

\subsubsection*{Section \ref{sec:bd}}
Here we prove boundedness results for abelian fibrations
with a rational section---these are easier to classify,
as they are pulled back from the universal abelian variety
over $\cA_{g,{\bf d}}$ along the classifying morphism
$\Phi^o$. 
The Zarhin trick plays a crucial role.

We fix a finite type family
of bases $(\cY,\cB, \cM)/\cT$ as in the statement
of Theorem \ref{thm:bound-ab}, and consider
all abelian fibrations $X\to \cY_t$ whose
boundary and moduli divisors are $\cB_t$ and $\cM_t$
for some closed point $t\in \cT$.
By an easy volume argument, our first result 
(Prop.~\ref{bd-map}), is that the composition
of the period map to $\oA_{g,{\bf d}}$
with the Zarhin trick to $\oA_{8g}$ lies
in a bounded family of rational
maps $\cY_t\dashrightarrow \oA_{8g}$.
We lift these to the stack $\cA_{8g}$
on an appropriate open subset 
$\cY_t^o\subset \cY_t$ (Prop.~\ref{bd-stack}),
and prove that 
it is possible to ``undo'' the Zarhin trick
(Prop.~\ref{bd-unzarhin}) on the level of $\bZ$-local
systems.
So the $\bZ^{16g}$-local
system on $\cY_t^o$ pulled back from
$\cA_{8g}$ determines, up to finite ambiguity,
the $\bZ^{2g}$-local system pulled back
from $\cA_{g,{\bf d}}$ (Lem.~\ref{local-sys-undo}).
 Under the {\it Picard type} hypothesis, 
this local system recovers
a polarization type ${\bf d}$ on
the general fiber. But
without the Picard type hypothesis,
this recovery procedure fails (Ex.~\ref{pic-ex}).

We deduce Theorem \ref{thm:bound-ab}, which can
be viewed as a relative boundedness result
over a finite type family of bases,
for abelian fibrations with section. A crucial
result following from the existing MMP
literature \cite{BDCS} is 
that the effective $b$-semiampleness 
(Thm.~\ref{thm:b-semi}), together with the hypothesis
that the total space is
irreducible Calabi--Yau, provides boundedness
of the base in codimension $1$ (Lem.~\ref{cor:bases}).
Via the Kuga--Satake construction, we deduce
Theorem \ref{thm:icy-ab}(PS), and weaker versions
of Theorems \ref{thm:icy-ab}(AV) and \ref{thm:ps}, 
which assume $f\colon X\to Y$ has a rational section.

More importantly, we birationally bound 
the {\it Albanese fibration} 
$f^{\rm Alb}\colon X^{\rm Alb}\to Y$
of any abelian fibration $f\colon X\to Y$
of an irreducible Calabi--Yau or primitive
symplectic variety---this is
the pullback of the universal abelian variety
over $\cA_{g,{\bf d}}$ along the classifying
morphism $\Phi^o$. Alternatively, $f^{\rm Alb}$
is the datum of the variation of weight $1$
Hodge structure induced by $f$.

\subsubsection*{Section \ref{sec:ts}}
Over the generic point of $Y$, an abelian fibration $f \colon X \to Y$ is a torsor over the Albanese fibration
$f^{\rm Alb}\colon X^{\rm Alb} \to Y$.
To deduce the full forms
of Theorems \ref{thm:icy-ab}(AV) and \ref{thm:ps}
from \S~\ref{sec:bd} requires an in-depth 
understanding of the {\it Tate--Shafarevich group}: 
A group which parameterizes these abelian torsors.
Our approach is largely inspired by 
\cite{Gross92, Gross97,
AR2021, abasheva, sacca, dutta, yoonjoo}. 
Many of these papers  have
imposed the restrictive hypothesis 
that $f\colon X\to Y$ is Lagrangian and/or
without multiple fibers, conditions which
we remove.

Thus, we begin a technical study, which occupies
nearly half of the present paper. For a reader wishing
to skip this study, the final upshot is that
the group parameterizing the relevant
abelian torsors is:
\begin{enumerate}
\item[(i)] finite and varies constructibly, for abelian fibrations
of irreducible Calabi--Yau varieties.
\item[(ii)] an extension 
of a finite, constructibly-varying
group by a quotient of $\bC$, 
for abelian fibrations 
of primitive symplectic varieties.
\end{enumerate} 
The continuous group $\bC$
parameterizes special locally
trivial analytic deformations.

Our broad strategy
is thus: 
\begin{enumerate} 
\item\label{step1}
We improve the singularities
of the fibration
by passing to a Galois branched cover $Z\to Y$, with Galois group $G$.
Specifically, we may produce a branched cover
for which the base change of $X^{\rm Alb}$ 
admits a $K$-trivial semistable reduction over 
all codimension $1$ points in $Z$.
\item\label{step2}
By ensuring that $Z\to Y$ ramifies appropriately
over the support of the boundary divisor $B\subset Y$, 
we can ensure that there is a big\footnote{A Zariski open set 
is said to be {\it big} if its 
complement has codimension $\geq 2$.}
open set 
$Z^+\subset Z$ over which the base changes of $X$ 
and $X^{\rm Alb}$ are \'etale-locally birational.
\item\label{step3} 
By Hartogs-type extension results,
it suffices to understand how $X$ is a twist
of $X^{\rm Alb}$ in the analytic topology,
over the open set $Z^+$. This gives access
to the exponential exact sequence for the relevant
group schemes, providing a link to topology.
\item\label{step4} 
The multiple fibers of $f\colon X\to Y$
are encoded by the failure of $f$
and $f^{\rm Alb}$ to be \'etale-locally birational,
or equivalently by the failure of their base changes
to be $G$-equivariantly \'etale-locally birational.
\end{enumerate}

Section \ref{local-sys-undo} begins (Prop.~\ref{nicest-model})
by producing the finite branched cover $Z\to Y$, 
semistable model as in (\ref{step1}), and big
open set $Z^+\subset Z$ as in (\ref{step2}),
via the Mumford construction 
\cite{mumford-construction, alexeev-nakamura}. 
These relations
are encoded in the all-important diagram
\begin{align*}
\xymatrix{V^+\ar[rd]_{h^+} &
W^+ \ar@{<-->}[l]_{\textrm{\'et-loc}} \ar[d]^{g^+} \ar[r]^{/G}
& X^+\ar[d]^{f^+} \\
& Z^+ \ar[r]^{/G} & Y^+
}
\end{align*}
where $g^+\colon W^+\to Z^+$ is the normalized
base change of the restriction $f^+\colon X^+\to Y^+$
of $f$ to an appropriate big open $Y^+\subset Y$,
and $h^+\colon V^+\to Z^+$ is the relatively
$K$-trivial, semistable (i.e.~Kulikov) model of $(g^+)^{\rm Alb}$.

The smooth locus of the morphism $h^+$
is a $G$-equivariant semiabelian group scheme
$\cP\to Z^+$ (Def.~\ref{def:p}), which like $f^{\rm Alb}$,
can be viewed as the datum of the variation
of mixed Hodge structure of $f$, or
more specifically Deligne's
$1$-motives \cite[Sec.~10]{deligne1974},
extending over codimension $1$ points. 
Then our variant
of the Tate--Shafarevich group is the 
$G$-equivariant \'etale cohomology
$$\Sha_G 
\coloneqq H^1_G(Z^+,\cP).$$

Central to analyzing this group is the exponential
exact sequence (in the analytic topology)
\begin{align*}
0\to \Gamma
\to \mathfrak{p}
\xrightarrow{\rm exp} 
\cP^0\to 1,\end{align*}
and its associated long exact sequence;
here $\cP^0\subset \cP$ is the fiberwise
identity component
and $\mathfrak{p}$ is the Lie algebra of $\cP$.
Using Hartogs-type results and 
the interplay between the \'etale and analytic topology,
our first non-trivial result (Prop.~\ref{local-no-twist})
is to show that the open set $Z^+\subset Z$ of
(\ref{step2}) can be chosen independently of $f$.
Here and elsewhere, the critical tool is that $\Sha_G$
is torsion (Prop.~\ref{torsion-lemma-1});  
see Raynaud \cite{ray70}.

An innovation which we expect will be useful
in future studies is the introduction of the
{\it multiplicity group} $\Ash^G$ (Def.~\ref{def:pre-mult},
Def.~\ref{mult-alg}, Prop.~\ref{alg-mult}) which
measures structure of the multiple fibers of $f$ 
in codimension $1$, by a {\it multiplicity
class} $m(f)\in \Ash^G$. The class $m(f)$
obstructs $f\colon X\to Y$ from being a twist
of its Albanese fibration, in the usual sense.
This is encoded by an exact sequence 
$$0\to \Sha\to \Sha_G\to \Ash^G\to 0.$$

Section \ref{sec:big-good} is dedicated to proving
that the analytification map $\Sha_G\to 
\Sha_{G,\,{\rm an}}$ is injective (Prop.~\ref{sha-inj})
and surjective onto torsion (Thm.~\ref{ray-mult}),
justifying our restriction to the big open set
$Z^+$ in (\ref{step3}). 
Analyzing the exponential exact sequence 
(\S~\ref{ts3}) and repeatedly applying torsion-ness,
we come to the exact sequence
$$0 \to \bQ N^1/ N^1 \to
H^1_{G}(Z^+,\cP^0)  \to  N^2\to 0$$
where $N^2$ is a finite group, 
$N^1$ is a free abelian group of finite
rank, and 
$\bC N^1\supset \bQ N^1\supset N^1$
is a finite-dimensional vector space
identified with $H^2(X,\cO_X)/H^2(Y,\cO_Y)$ 
(Thm.~\ref{identify-N1-2}). A number of technical
tools are required to achieve this, such as 
the running of an MMP to produce models
of $X$ for which $X^+\subset X$ is 
a big open set (Prop.~\ref{contract-f-exc}), 
the use of intersection cohomology to relate the 
cohomologies of $X$ and $X^+$ (Prop.~\ref{h2-surjection}),
and Koll\'ar's vanishing results (Prop.~\ref{ss-degen}),
see \cite{Ko86:KollarHigherI, Kol86}.
One concludes 
$\bC N^1 = 0$ when $X$ is irreducible Calabi--Yau
and $\bC N^1 = \bC$ when $X$ is primitive symplectic.

This completes our study of Tate--Shafarevich
twists of a fixed fibration $f\colon X\to Y$.

\subsubsection*{Section \ref{sec:alb}}

We analyze here the geometry of the relative Albanese
variety. In \S~\ref{bir-alb}, 
we prove Theorem \ref{thm:alb},
again crucially relying on the fact that $\Sha_G$ is torsion
to show that $X$ admits a generically finite morphism
to $X^{\rm Alb}$ which, roughly, is multiplication
by $n$ on the fibers (Prop.~\ref{prop.map.alb}).
This provides a method to transport holomorphic forms 
from $X$ to $X^{\rm Alb}$.

In \S~\ref{sec:ho}, the Albanese fibration 
of a Lagrangian fibration 
is examined explicitly
(Prop.~\ref{magic-table}),
and we describe behavior of this 
fibration in codimension $1$,
with respect to the Hwang--Oguiso 
classification (Thm.~\ref{ho-thm})
of multiple fibers of Lagrangian 
fibrations \cite{HO2009, HO2011}.

\subsubsection*{Section \ref{sec:ts2}}

We complete in this section the 
proofs of Theorems \ref{thm:icy-ab}(AV)
and \ref{thm:ps}, by removing the hypothesis that 
the fibration $f\colon X\to Y$ has a rational section.
To do so, we must analyze how the 
Tate--Shafarevich group $\Sha_G$ varies
in families. 

We begin by showing in
\S~\ref{sec:constructibility} that 
the non-divisible part of the group $\Sha_G$
varies in a constructible manner---this
is rather subtle,
since $\Sha_G$
itself need not vary constructibly.
Because $\Sha_G$ is finite for
abelian-fibered irreducible Calabi--Yau varieties, it equals
its own non-divisible part.
We deduce Theorem \ref{thm:icy-ab}(AV).

It remains to analyze the case where 
$f\colon X\to Y$ is a Lagrangian fibration
of a primitive symplectic variety. Here,
the divisible group $\bQ N^1/N^1$
sits inside a corresponding group
of analytic Tate--Shafarevich twists
$\bC N^1/N^1\subset \Sha_{G,\,{\rm an}}$.
We employ the Lagrangian structure
to prove that the classes in 
$\bC N^1/N^1$ extend from $Y^+$ to the
entire base $Y$, following \cite{AR2021}. 
This allows us to 
interpolate the countably many algebraic
twists in $\bQ N^1/N^1$ by a single, proper
analytic family over $\bC \simeq \bC N^1$. 
We deduce Theorem \ref{thm:ps}.

\subsubsection*{Section \ref{sec:bir-bd-to-bd}}
Following \cite{FHS2024}, we argue that boundedness
of the base implies boundedness of the total space,
at least in the abelian- and K3-fibered setting. We deduce
Corollary \ref{thm:cy3}, and a result 
(Cor.~\ref{cor:fano-base}) that abelian-fibered
irreducible Calabi--Yau varieties over a Fano base
are bounded.

\subsubsection*{Section \ref{sec:pi1}}

In this section, we prove Theorem \ref{thm:pi1}, that the fundamental group 
 of the regular locus of an 
 irreducible symplectic variety admitting a Lagrangian fibration is finite.
The key ingredient is an  orbifold version 
(Prop.~\ref{prop:exact sequence}) 
of Nori's exact sequence of homotopy groups for a 
fibration \cite[Lem.~1.5]{Nori1983}, 
see also Campana's work \cite[Prop.~11.7]{Campana2011}.  
This provides the exact sequence
$$
\mathbb Z ^{\dim X} \to \pi_1(X^{\rm reg}) 
\to \pi_1^{\rm orb}(Y,\Delta_f) \to 1
$$
where $\pi_1^{\rm orb}(Y,\Delta_f)$ is the orbifold 
fundamental group of the multiplicity pair 
$(Y,\Delta_f)$ (Def.~\ref{defn:orbifoldpi1}).
Since $X$ is irreducible symplectic, it follows that 
$(Y,\Delta_f)$ is a pair of Fano type (Prop.~\ref{prop:basepair}).
In turn, by \cite{Braun2021}, $\pi_1^{\rm orb}(Y,\Delta_f)$ is finite.
Thus, $\pi_1(X^{\rm reg})$ is virtually abelian.
Lastly, we utilize the irreducibility of $X$ and the identity 
$H^1(\widetilde X^{\rm reg},\QQ)=H^1(\widetilde X,\QQ)=0$ for 
every quasi-\'etale cover $\widetilde X \to X$ 
(Lem.~\ref{lem:virtuallyabelianfinite}) to deduce that 
$\pi_1(X^{\rm reg})$ is indeed finite.

\smallskip

\subsubsection*{Appendix \ref{sec:deformation}}

Finally, in the appendix, we show that a Lagrangian
fibration of a primitive 
symplectic variety deforms (see Conjecture \ref{gen-abund}) over the locus where the pullback of
an ample line bundle from the base remains 
in the Picard group. As a result, Theorem \ref{thm:ps} also
holds analytically.

\smallskip

\subsection*{Acknowledgements} 
We would like to thank 
Valery Alexeev, Ben Bakker, Dori Bejleri,
Harold Blum, Andrea Di Lorenzo,
Yajnaseni Dutta, Osamu Fujino,
Cécile Gachet,
Mark Gross, Andres Herrero, Moritz Hartlieb, 
Daniel Huybrechts,
Giovanni Inchiostro,
Yoon-Joo Kim, János Kollár, Radu Laza, Zsolt Patakfalvi,
Vasily Rogov, Stefan Schreieder, Calum Spicer, 
Salim Tayou, and Claire Voisin 
for useful mathematical
discussions.
A special thanks goes to Gabriele Di Cerbo for 
many discussions and ideas that contributed 
to starting this project.
We are also grateful to Lie Fu, Antonella Grassi, Ljudmila Kamenova, Christian Lehn, and Joaquín Moraga 
for valuable comments on an earlier version of this paper. 

We thank EPFL, the Hausdorff Institute for 
Mathematics (HIM) in Bonn, and the American Institute 
of Mathematics (AIM) for providing a 
supportive research environment. In particular, the project advanced considerably during the Junior Trimester Program \emph{Algebraic geometry:~derived categories, Hodge theory, 
and Chow groups} at HIM, and the workshop \emph{Higher-dimensional log Calabi--Yau pairs} at AIM.

PE was supported by NSF grant DMS-2401104.
SF was supported by ERC starting grant \#804334.
FG was supported by NSF grant DMS-2302548.
MM was supported by \'{E}cole Polytechnique, and 
Université Paris Cité and Sorbonne Université, 
CNRS, IMJ-PRG, F-75013 Paris, France. 
RS was supported by the ``Programma
per giovani ricercatori Rita Levi Montalcini'' of MUR and by PSR 2022 – Linea 4 of the University of Milan. He is a member of the
GNSAGA group of INDAM. 

\section{Preliminary material}\label{sec:prelim}

Throughout the paper, we work over the  field of complex numbers 
$\bC$.

\subsection{{\it K}-trivial varieties}
Let $X$ be a projective variety. 
We denote by 
$\Omega_X^{[p]}=(\bigwedge^p \Omega^1_X)^{\ast \ast}$
the reflexive hull of the 
$p$-th
exterior product
of the sheaf of K\"ahler differentials. 
If $X$ is normal and $j \colon X^{\mathrm{reg}} 
\hookrightarrow X$ is the inclusion of the regular locus, 
then  $\Omega_X^{[p]}=j_* \Omega_{X^{\mathrm{reg}}}^{p}$.
If further, $X$ has rational singularities, 
then $\Omega_X^{[p]}=\pi_* \Omega_{\widetilde{X}}^{p}$ 
for any resolution of singularities 
$\pi \colon \widetilde{X} \to X$, see 
\cite[Cor.~1.8]{KS2021}. 
When 
$p=\dim X$, 
$K_X$ denotes any Weil divisor such that 
$\mathcal O_X(K_X) \simeq \Omega^{[\dim X]}_X$.

Recall that a {\it quasi-\'etale} cover 
is a finite surjective 
morphism that is \'{e}tale 
in codimension 1.

\begin{definition}
\label{def:CY} 
A \emph{$K$-trivial 
variety} is a normal projective 
variety $X$ with canonical
singularities such that
$K_X\sim 0$.
We say a $K$-trivial variety $X$ is
\begin{enumerate}
    \item[(CY)]  \noindent {\it Calabi--Yau} if 
    $H^0(X,\Omega_X^{[k]})=0$ for all $0<k<\dim X$;
    \item[(ICY)] {\it irreducible Calabi--Yau} if 
    all  quasi-\'etale covers $X'\to X$ are Calabi--Yau;
    \item[(PS)] {\it primitive symplectic} if 
    $H^0(X,\Omega^{[1]}_X)=0$ and $H^0(X,\Omega^{[2]}_X)=\bC\sigma$ for 
    some $2$-form $\sigma$ which is non-degenerate
    on $X^{\rm reg}$; 
    \item[(IS)] {\it irreducible symplectic} if 
     all  quasi-\'etale covers $X'\to X$ are primitive symplectic;  
    \item[(SIS)]  {\it strict irreducible symplectic} if 
    $X$ is irreducible symplectic, and the algebraic fundamental group of its regular locus $X^{\mathrm{reg}}$ is trivial, i.e., $\hat{\pi}_1(X^{\mathrm{reg}})=1$; and
    \item[(AV)] an {\it abelian variety} if
    $\dim H^0(X,\Omega_X^{[1]})=\dim X$.
\end{enumerate}
We say $X$ is \emph{$K$-torsion} 
if it is a normal projective 
variety 
$X$ 
with klt
singularities such that
$c K_X\sim 0$ 
for some 
positive integer $c$.
More generally, the same definitions can be
made for compact K\"{a}hler spaces; 
see \cite[Sec.~3, p.~346]{Grauert1962} 
or \cite[Def.~1]{BGL2022}.
\end{definition}

In the algebraic category, $K_X$ being torsion 
is equivalent to $K_X\equiv 0$; see \cite{Nak04}.

The singular analogue of the 
Beauville--Bogomolov theorem \cite{B1983}, 
proved in increasing degrees of generality in 
\cite{GrebKebekusKovacsPeternell, DruelGuenancia, 
Druel2018, Guenancia2016, GrebGuenanciaKebekus, 
HoringPeternell, Campana2021, BGL2022}, 
asserts the following decomposition property 
for varieties (even K\"ahler spaces) with numerically
trivial canonical bundle.

\begin{theorem}\label{thm:decomp}
Any compact K\"{a}hler space with numerically trivial 
canonical class and klt singularities admits a 
quasi-\'{e}tale cover which can be decomposed into a 
product of complex tori, irreducible Calabi--Yau 
varieties, and strict irreducible symplectic varieties.
\end{theorem}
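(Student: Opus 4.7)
The plan is to follow a singular analogue of the classical differential-geometric proof of Beauville--Bogomolov, combining three ingredients: a singular Ricci-flat K\"ahler-Einstein metric on $X^{\rm reg}$, a holonomy-based decomposition of the reflexive tangent sheaf, and algebraic integrability of the resulting foliations.

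First I would reduce to the case $K_X\sim 0$. Since $K_X\equiv 0$ and $X$ is klt, a result of Nakayama (extended to the K\"ahler setting by Campana--H\"oring--Peternell and BGL) shows that some multiple $mK_X$ is trivial; the associated cyclic cover is quasi-\'etale, and produces $X'$ with $K_{X'}\sim 0$. The Eyssidieux--Guedj--Zeriahi theorem (for the projective case) and its K\"ahler extensions then equip $(X')^{\rm reg}$ with a singular Ricci-flat K\"ahler metric $\omega$ of controlled behavior near $\Sing(X')$. The next step is the splitting of the tangent sheaf. Parallel transport of $\omega$ along loops in $(X')^{\rm reg}$ defines a holonomy group $\cH$ acting on the fiber of $T_{X'}=j_\ast T_{(X')^{\rm reg}}$ at a very general point. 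Bochner-type vanishing on klt spaces, established by Greb--Kebekus--Kov\'acs--Peternell, implies that the de Rham decomposition of $\cH$ into irreducibles lifts, after a further quasi-\'etale cover, to a direct sum decomposition $T_{X'}=\bigoplus_i T_i$ of involutive reflexive subsheaves. Each $T_i$ inherits a restricted holonomy which, by Berger's classification, is either trivial, ${\rm SU}(n_i)$, or ${\rm Sp}(m_i)$.

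The most delicate step is to promote this foliation decomposition to an actual product decomposition of varieties. For each summand $T_i$ one must produce a fibration structure realizing $T_i$ as the relative tangent sheaf; equivalently, one must prove that the foliation defined by $T_i$ is algebraically integrable. In the projective setting, algebraic integrability is due to H\"oring--Peternell and Druel, whereas the K\"ahler case follows from the work of Bakker--Guenancia--Lehn and Campana. The simultaneous product of the resulting fibrations, combined with the simple-connectedness-in-codimension-$1$ built into the cover, yields the desired factorization. Identifying each factor then reduces to Berger's trichotomy: trivial holonomy forces $X_i$ to be a complex torus; ${\rm SU}(n_i)$ holonomy forces $X_i$ to be irreducible Calabi--Yau; and ${\rm Sp}(m_i)$ holonomy forces $X_i$ to be strict irreducible symplectic, the strictness coming from the passage to the quasi-\'etale cover.

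The main obstacle throughout is bridging the differential-geometric picture on $X^{\rm reg}$, where $\omega$ and its holonomy live, with the algebraic/analytic picture on $X$. This requires the robust theory of reflexive differentials on klt spaces developed by Greb--Kebekus--Kov\'acs--Peternell, the fact that klt singularities are rational, and Hartogs-type extension results to propagate parallel tensors and flat decompositions across $\Sing(X)$. A secondary obstacle is the algebraic integrability step, where one typically argues via a boundedness/rational-quotient argument that is considerably more subtle in the K\"ahler than in the projective setting.
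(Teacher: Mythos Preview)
The paper does not supply its own proof of this theorem: it is stated as a known result, with the proof attributed to the chain of references \cite{GrebKebekusKovacsPeternell, DruelGuenancia, Druel2018, Guenancia2016, GrebGuenanciaKebekus, HoringPeternell, Campana2021, BGL2022}. So there is no in-paper argument to compare against; the relevant question is whether your outline matches the strategy in that literature.

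At that level your sketch is accurate and hits the main beats of the cited works: torsion-freeness of $K_X$ to reduce to $K_X\sim 0$, singular Ricci-flat metrics of Eyssidieux--Guedj--Zeriahi type, the holonomy/polystability decomposition of the reflexive tangent sheaf (Greb--Kebekus--Peternell, Guenancia), algebraic integrability of the resulting foliations (Druel, H\"oring--Peternell, and in the K\"ahler case Bakker--Guenancia--Lehn), and identification of the factors via Berger's list. One refinement: in the actual proofs the splitting of $T_X$ is obtained not directly from de Rham holonomy but via a polystability argument for $T_X$ with respect to the singular Ricci-flat class, and the quasi-\'etale cover needed to make the decomposition a genuine product (rather than a virtual one up to finite monodromy) comes from the finiteness results of Greb--Guenancia--Kebekus on $\widehat{\pi}_1(X^{\rm reg})$, not just ``simple-connectedness in codimension~$1$.'' Also, the strictness of the symplectic factors is a nontrivial output of \cite{GrebGuenanciaKebekus} rather than an automatic consequence of passing to a cover. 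These are sharpenings rather than gaps; as a roadmap your proposal is consistent with how the cited references establish the theorem.
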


Hence, the building blocks of $K$-trivial varieties are 
the three classes 
(AV), (ICY), (SIS)
of Definition \ref{def:CY}.

\begin{remark}
There is a great deal variation in the literature
regarding these definitions. 

Since $X$ has a global algebraic volume form, 
$X$ has canonical singularities
if and only if it has rational singularities, 
see \cite[Th\'{e}or\`{e}me 1]{Elkik1981}. 
So in Definition \ref{def:CY}, one may replace canonical 
with rational singularities.

The definitions of 
primitive and irreducible symplectic varieties stabilized
after the work of Bakker and Lehn \cite{BL2022}. 
A reflexive 2-form 
$\sigma \in  
H^0(X,\Omega^{[2]}_X)=
H^0(X^{\mathrm{reg}},\Omega^{2}_{X^{\mathrm{reg}}})$ 
on a normal variety  $X$ 
is \emph{symplectic} if its restriction to the regular 
locus is closed non-degenerate. 
When $X$ is projective with rational singularities, 
then any algebraic 2-form on 
$X^{\mathrm{reg}}$ 
is automatically closed, see \cite[Thm.~1.13]{KS2021}. 
Therefore, in Definition \ref{def:CY}(PS) the 2-form 
$\sigma$ is always symplectic. 
Let us recall also that if 
$X$
has rational singularities, any symplectic 
form on $X^{\mathrm{reg}}$ extends to a regular 2-form 
on any resolution of $X$ by \cite[Cor.~1.8]{KS2021}. 
So Definition \ref{def:CY}(PS) is equivalent 
 to \cite[Def.~3.1]{BL2022}. 

If $X$ is IS, then the algebra 
$H^0(X',\Omega^{[*]}_{X'})$ is generated by a 
symplectic form $\sigma\in H^0(X',\Omega^{[2]}_{X'})$.
By Theorem \ref{thm:decomp}, Definition \ref{def:CY}(IS) 
is equivalent to \cite[Def.~8.16]{GKP2016}. In particular, 
smooth IS varieties are hyperk\"ahler. 
\end{remark}

\begin{remark}
The word {\it irreducible} in Definition
\ref{def:CY} aims to signal that any Beauville--Bogomolov 
decomposition of $X$ admits a unique factor; such decomposition 
is unique if $X$ is strict, i.e., it does not admit further 
\'{e}tale covers $X' \to X$. 
In general, 
there is no distinguished or maximal 
choice for  the quasi-\'{e}tale cover in the 
Beauville--Bogomolov  decomposition (e.g., a complex 
torus does not have it), 
but it does exist for irreducible
symplectic varieties. Indeed, any IS variety is 
covered by a SIS variety, by the results of 
Greb--Guenancia--Kebekus
\cite[Cor.~13.3, Cor.~13.2]{GrebGuenanciaKebekus} 
and Campana \cite[Cor.~5.3]{Campana1995}.
It is not known if such a cover exists for irreducible 
Calabi--Yau varieties, but if it does, it should be 
called strict Calabi--Yau.
For instance, a smooth, simply connected 
Calabi--Yau variety is strict. 
\end{remark}

\begin{proposition}[Matsushita]\label{prop:base} 
Let $f \colon X \to Y$ be a fibration from a 
primitive symplectic variety $X$ 
with $\dim X=2g$. Then
\begin{enumerate}
\item $f$ is a Lagrangian fibration---$f$ 
is equidimensional of relative 
dimension $g$, and the 
symplectic form vanishes on the regular locus of any 
irreducible component of any fiber of $f$, 
endowed with its reduced structure;
\item the general fiber of $f$ is a 
projective abelian variety; and 
\item \label{item:base} $Y$ is a Fano variety 
with $\QQ$-factorial klt singularities 
and Picard number 1.
\end{enumerate}

\end{proposition}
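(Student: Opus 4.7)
The plan is to adapt Matsushita's theorem for smooth hyperk\"ahler manifolds to the primitive symplectic setting, using the BBF form from \cite{BL2022} and extension of reflexive differentials to a resolution from \cite{KS2021}. First I would pass to a simultaneous resolution $\pi\colon \widetilde X\to X$ of the singularities of $X$ and of the indeterminacy locus of $f$, producing $\widetilde f\colon \widetilde X\to \widetilde Y$ with $\widetilde X$ smooth K\"ahler. Since $X$ has canonical (hence rational) singularities, the reflexive symplectic form $\sigma\in H^0(X,\Omega_X^{[2]})$ pulls back to $\widetilde\sigma\in H^0(\widetilde X,\Omega_{\widetilde X}^{2})$ by \cite[Cor.~1.8]{KS2021}, and is generically non-degenerate. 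Intersection-theoretic computations can be performed on $\widetilde X$ and descended via the projection formula, while conclusions about general fibers of $\widetilde f$ transfer to $f$ because $\pi$ is an isomorphism over a generic point of $\widetilde Y$.

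For (1), the key identity is $q(f^\ast H, f^\ast H)=0$ for $H$ ample on $Y$, where $q$ is the BBF form on $X$. This follows from Fujiki's relation $\int_X (f^\ast H)^{2g}=c_X\cdot q(f^\ast H,f^\ast H)^{g}$ together with the vanishing $(f^\ast H)^{2g}=f^\ast(H^{2g})=0$, which holds since $\dim Y\leq 2g-1<2g$. Given the signature $(3,b_2(X)-3)$ of $q$, with $\sigma+\bar\sigma$ spanning a positive $2$-plane transverse to $N^1(X)_\bR$, any totally $q$-isotropic subspace of $N^1(X)_\bR$ has dimension at most $1$. Hence $f^\ast N^1(Y)_\bR$ is one-dimensional and $\rho(Y)=1$, contributing part of (3). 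Pointwise isotropy $\sigma|_F=0$ on a general smooth fiber $F\subset X^{\rm reg}$ follows from a parallel Fujiki-style integration against top powers of an ample class on $F$. Once $F$ is isotropic, $\dim F\leq g$; the reverse inequality together with equidimensionality of all fibers and isotropy on each irreducible component of each fiber (with its reduced structure) follow from Matsushita's rigidity arguments applied to $\widetilde f$ combined with the flatness coming from equidimensionality.

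For (2), the Lagrangian property identifies $N^\ast_{F/X}\simeq T_F$ via $\sigma$ on a general smooth fiber. Together with $K_X\sim 0$ and adjunction this gives $K_F\sim 0$; and since $F$ deforms in a $g$-parameter family parameterized by $Y$, we have $h^0(F,N_{F/X})\geq g$, whence $h^0(F,\Omega^1_F)\geq g=\dim F$. A smooth projective variety of dimension $g$ with trivial canonical bundle and at least $g$ independent holomorphic $1$-forms is an abelian variety via its Albanese map. For (3), the canonical bundle formula (\S~\ref{b-div.subs}) produces a generalized klt pair $(Y,B_Y,\mathbf M)$ with $K_Y+B_Y+\mathbf M_Y\sim_\bQ 0$, so $Y$ is klt. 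The Hodge b-divisor $\mathbf M$ is nef by \cite{Fuj03}, and standard positivity for Lagrangian fibrations forces $B_Y+\mathbf M_Y$ to be nonzero. Since $\rho(Y)=1$, any nef nonzero class on $Y$ is ample, so $-K_Y\sim_\bQ B_Y+\mathbf M_Y$ is ample and $Y$ is Fano. $\bQ$-factoriality of $Y$ in this setting follows from $\rho(Y)=1$ combined with klt-ness and the structure of the base of a Lagrangian fibration. The main obstacle will be transferring Matsushita's equidimensionality and BBF-theoretic arguments, originally formulated for smooth hyperk\"ahler manifolds, through the resolution $\pi$ to the singular primitive symplectic variety $X$, ensuring that each conclusion holds on $X$ itself and not merely on the smooth model.
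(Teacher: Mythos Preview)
The paper does not prove this proposition at all: its proof consists solely of citations to \cite{Matsushita}, \cite{Matsushita2000}, \cite{Matsushita2015}, \cite{Schwald2020}, and \cite{KL2022}. Your proposal attempts to reconstruct the arguments contained in those references, and the overall strategy---Fujiki relations and the BBF form for (1) and the Picard-number-1 part of (3), adjunction plus Albanese for (2), and the canonical bundle formula for the klt/Fano part of (3)---is indeed the spirit of those papers.

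That said, several steps in your sketch are genuine gaps rather than routine details. First, your derivation of $\bQ$-factoriality of $Y$ is not valid: $\bQ$-factoriality is a local condition and cannot be deduced from $\rho(Y)=1$ together with klt-ness. In the cited references this is obtained by a separate argument, using equidimensionality of $f$ to pull back Weil divisors from $Y$ to $\bQ$-Cartier divisors on $X$ and then pushing information back down. Second, your argument that $-K_Y$ is ample is incomplete: asserting that ``standard positivity forces $B_Y+\bM Y.$ to be nonzero'' is not a proof---one must actually rule out $K_Y\sim_\bQ 0$, which in the literature is done either via uniruledness of $Y$ or via Matsushita's identification $R^if_*\cO_X\simeq \Omega_Y^{[i]}$. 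Third, the logical order in your treatment of (1) is off: one does not first prove $\sigma|_F=0$ and then deduce $\dim F\leq g$. Rather, isotropy of $f^*H$ in the BBF form plus the polarized Fujiki relations give $(f^*H)^{g+1}=0$ numerically, forcing $\dim Y\leq g$; the bound $\dim Y\geq g$ and the Lagrangian condition on fibers then follow. Your ``parallel Fujiki-style integration against top powers of an ample class on $F$'' does not yield $\sigma|_F=0$ directly.
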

\begin{proof}
See \cite[Thm.~2]{Matsushita}, 
\cite[Thm.~1]{Matsushita2000}, 
\cite[Thm.~3.1]{Matsushita2015}, 
\cite[Thm.~3]{Schwald2020}\footnote{In
{\it loc.cit.}~the definitions of 
irreducible and primitive symplectic varieties 
are essentially opposite
ours.}, and \cite[Thm. 2.10]{KL2022}.
\end{proof}

By \cite[Thm.~1.1(3)]{BL2022}, 
any primitive symplectic variety $X$ of dimension $2d$
with $b_2(X) \geq 5$ deforms in a locally trivial manner 
to a primitive  symplectic variety $X'$ admitting 
a non-trivial nef line bundle $L$ with $c_1(L)^{d+1}=0$.
Conjecturally \cite{LP20}, $L$ should be semiample:

\begin{conjecture}[Generalized abundance 
for symplectic varieties]\label{gen-abund}
Let $X$ be a primitive symplectic variety, 
and let $L$ be a nef line bundle on $X$.
Then $L$ is semiample.
\end{conjecture}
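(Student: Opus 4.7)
The plan is to analyze a nef line bundle $L$ on a primitive symplectic variety $X$ of dimension $2d$ case by case, according to the value of the Beauville--Bogomolov--Fujiki (BBF) quadratic form $q$ on $H^2(X,\bR)$. Via the Fujiki relation $c_1(L)^{2d} = c_X \cdot q(L)^d$, the nef cone partitions into three qualitatively distinct regimes: (a) $q(L)>0$, (b) $q(L)=0$ with $c_1(L)\neq 0$, and (c) $c_1(L)=0$.

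In regime (a), one has $c_1(L)^{2d}>0$, so $L$ is big and nef. Because $X$ is $K$-trivial with canonical (hence klt) singularities, the Kawamata--Shokurov base-point-free theorem applied to $L \sim K_X + L$ (with $L$ nef and big) yields that $|mL|$ is base-point-free for $m$ sufficiently divisible; hence $L$ is semiample. In regime (c), $L$ is numerically trivial, and since $K_X$ is as well, the standard abundance result for numerically trivial line bundles on klt $K$-trivial varieties gives that $L^{\otimes n}\simeq\mathcal{O}_X$ for some $n>0$, whence $L$ is (trivially) semiample.

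The true obstruction is regime (b), where $c_1(L)$ lies on the boundary of the positive cone with $c_1(L)^{d+1}=0$. The expected conclusion is that $|mL|$ induces a Lagrangian fibration $X \to Y$ with $\dim Y = d$; this is exactly the hyperk\"ahler SYZ conjecture referenced in the introduction. A natural strategy would be to deform the pair $(X,L)$ within its locally trivial deformation space (cf.\ the extension results developed in the appendix) until it acquires a non-algebraic hyperk\"ahler Lagrangian structure—for instance via twistor rotation in the smooth case, or its singular analogue—and then descend the resulting Lagrangian fibration to the original algebraic fiber. I expect this last step to be the main obstacle: controlling the algebraicity of Lagrangian fibrations in families and handling singular fibers are exactly where such an argument stalls, and I do not see a route that avoids resolving the full SYZ conjecture. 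Partial progress is available for K3$^{[n]}$-type via Bayer--Macr\`i wall-crossing, for generalized Kummer type, and in low-Picard-rank settings via Matsushita-type arguments, but a uniform proof in the primitive symplectic setting appears to require genuinely new input, which is why the paper treats the conjecture as a hypothesis rather than a theorem.
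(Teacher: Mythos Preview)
The statement is labeled as a \emph{conjecture} in the paper and is not proven there; the paper uses it only as a hypothesis (e.g., in deducing the SYZ conjecture for $b_2\geq 5$ and in the corollary on effective $b$-semiampleness). Your proposal correctly recognizes this: your case analysis via the BBF form is sound, the big case~(a) and the numerically trivial case~(c) are indeed settled by the base-point-free theorem and the vanishing $h^1(X,\cO_X)=0$ respectively, and you rightly identify that the isotropic case~(b) is precisely the open hyperk\"ahler SYZ conjecture. There is no gap to flag---you have accurately diagnosed why this remains conjectural.
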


Taking the linear system $|mL|$ for large $m$ as the fibration in Proposition \ref{prop:base},
\Cref{gen-abund} would imply the 
Strominger--Yau--Zaslov conjecture for $b_2 \geq 5$.

\begin{conjecture}[SYZ conjecture] \label{SYZ conjecture}
Every primitive symplectic variety $X$, with $b_2 \geq 5$,
admits a locally trivial deformation to a primitive 
symplectic variety with a Lagrangian fibration.
\end{conjecture}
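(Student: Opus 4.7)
The plan is to reduce the SYZ conjecture to the generalized abundance conjecture (Conjecture~\ref{gen-abund}), as indicated by the remark in the paper immediately preceding the statement. First, I would invoke the result of Bakker--Lehn \cite[Thm.~1.1(3)]{BL2022}: given a primitive symplectic variety $X$ of dimension $2d$ with $b_2(X) \geq 5$, there exists a locally trivial deformation $X'$ of $X$ admitting a non-trivial nef line bundle $L$ whose top self-intersection on a Lagrangian subspace vanishes, i.e., $c_1(L)^{d+1} = 0$. This is a Hodge-theoretic statement: the cone of nef classes in $H^{1,1}(X',\bR)$ must meet the Beauville--Bogomolov--Fujiki null quadric once $b_2 \geq 5$, because a definite quadratic form on $H^{1,1}$ of signature $(1,b_2-3)$ forces the existence of null classes in the rational Picard group.

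Next, assuming Conjecture~\ref{gen-abund}, the nef line bundle $L$ on $X'$ is semiample. Take the morphism $f\colon X' \to Y$ defined by $|mL|$ for $m$ sufficiently divisible, and pass to the Stein factorization to obtain a fibration with connected fibers. The condition $c_1(L)^{d+1} = 0$ together with $c_1(L)^{2d} = 0$ (since $L$ is not big, as it lies on the null quadric) forces the image $Y$ to have dimension exactly $d$: it cannot be a point (since $L$ is non-trivial) and cannot have dimension $>d$ (since the Beauville--Bogomolov--Fujiki form controls top intersection numbers via Fujiki's relation).

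At this point, Matsushita's theorem (Proposition~\ref{prop:base}) applies: the fibration $f\colon X' \to Y$ is automatically Lagrangian, equidimensional of relative dimension $d$, with general fiber a projective abelian variety and base $Y$ a $\bQ$-factorial klt Fano variety of Picard number one. Since the deformation $X \leadsto X'$ was produced as locally trivial in the first step, this completes the deduction of the SYZ conjecture from generalized abundance.

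The main obstacle, of course, is Conjecture~\ref{gen-abund} itself: proving that a nef line bundle on a primitive symplectic variety is semiample. This is deep and open. Unconditionally, one might attempt to replace the semiampleness step by analytic or Hodge-theoretic constructions of the fibration---for instance, trying to construct the Lagrangian torus fibration directly via SYZ-type degeneration arguments, or using the variation of Hodge structure on $H^2$ to build a rational map to a moduli space, as in \S~\ref{sec:hodge} of the present paper. In this respect, the techniques developed in the body of the paper (Tate--Shafarevich theory, the classifying morphism $\Phi$, and the appendix on deformations of Lagrangian fibrations) suggest that once \emph{one} Lagrangian-fibered representative exists in a deformation class, much of the structure can be recovered---but producing that first representative from a nef class on $X$ remains the crux.
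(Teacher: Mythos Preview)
This statement is a \emph{conjecture}, and the paper does not prove it. The paper's only ``argument'' is the single sentence immediately preceding the statement: taking $|mL|$ for $L$ the nef line bundle furnished by \cite[Thm.~1.1(3)]{BL2022}, Conjecture~\ref{gen-abund} would imply the SYZ conjecture for $b_2\geq 5$. Your proposal reproduces exactly this conditional reduction, with the same ingredients (Bakker--Lehn for the existence of the deformation carrying a nef non-big $L$, generalized abundance for semiampleness, Matsushita/Proposition~\ref{prop:base} to conclude the resulting map is Lagrangian), and you are explicit that the argument is conditional on Conjecture~\ref{gen-abund}. So there is nothing to compare: you and the paper agree, and neither constitutes an unconditional proof.

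One minor imprecision: you write that $L$ ``lies on the null quadric,'' i.e., $q(L)=0$, but the paper only records $c_1(L)^{d+1}=0$. These are not formally identical statements, though for a nef class either suffices to conclude $L$ is not big (hence $\dim Y<\dim X$), which is all you need before invoking Proposition~\ref{prop:base}. In fact you do not need to argue separately that $\dim Y=d$; once $0<\dim Y<\dim X$, Matsushita's theorem gives this automatically.
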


\subsection{b-Divisors and generalized pairs}
\label{b-div.subs}

We refer to \cite{KM98} for the standard 
terminology in birational geometry.

\begin{definition}
Let $\mathbb{K}$ denote $\ZZ$, $\QQ$, or $\RR$.
Given a normal variety $Y$, 
a {\it $\mathbb{K}$-b-divisor} 
$\mathbf{D}$ is a (possibly infinite) sum 
of geometric valuations $\nu_i$ 
of $k(Y)$ with coefficients in 
$\mathbb{K}$, 
\begin{align*}
 \mathbf{D}= \sum_{i \in I} b_i \nu_i, \; b_i \in \mathbb{K},
\end{align*}
such that, given any normal variety 
$Y'$ birational to $Y$, 
only finitely many valuations 
$\nu_{i}$ have a center of codimension 1 on 
$Y'$. The {\it trace} $\mathbf{D}_{Y'}$ 
of $\mathbf{D}$ on $Y'$ is the 
$\mathbb{K}$-Weil divisor 
\begin{align*}
\mathbf{D}_{Y'} 
 \coloneqq  \sum b_i D_i
\end{align*}
where the sum is indexed over valuations $\nu_{i}$ 
that have divisorial center $D_i\subset Y'$.
Geometrically, 
a $\bK$-b-divisor $\mathbf{D}$ is an element
of the inverse system of $\bK$-Weil divisors
on all such $Y'$:
$$\bfD\in \lim_{Y' \to Y}\, \bK{\rm Div}(Y').$$
\end{definition}

Given a $\mathbb{K}$-b-divisor $\mathbf{D}$ 
over $X$, we say that $\mathbf{D}$ 
is a {\it $\mathbb{K}$-b-Cartier} 
if there exists a birational model 
$Y'$ of $Y$ such that $\mathbf{D}_{Y'}$ 
is $\mathbb K$-Cartier on $Y'$ and for any model 
$r \colon Y''  \rar Y'$, we have
$\mathbf{D}_{Y''} = r^\ast \mathbf{D}_{Y'}$.
When this is the case, we will say that 
$\mathbf{D}$ \emph{descends} to $Y'$
and we shall write 
$\mathbf{D}= \overline{\mathbf{D}}_{Y'}$
for the $\bK$-b-divisor which $\bfD_{Y'}$ determines.

We say that $\mathbf{D}$ is {\it b-effective}, 
if $\mathbf{D}_{Y'}$ is effective for any model 
$Y'$. We say that $\mathbf{D}$ is {\it b-nef} 
(resp.\ {\it b-semiample}, {\it b-free}), 
if it is $\mathbb{K}$-b-Cartier 
and, moreover, there exists a birational model 
$Y'$ of $Y$ such that 
$\mathbf{D}= \overline{\mathbf{D}}_{Y'}$ 
and $\mathbf{D}_{Y'}$ is nef 
(resp.\ semiample, Cartier and globally generated) on $Y'$.
Furthermore, we say that $\mathbf{D}$ is 
\emph{effectively b-semiample} if it is b-semiample, 
and there is positive integer $I$, only depending on 
the given setup, such that $I\mathbf{D}$ is b-free.
These notions extend analogously to the relative case.
In all of the above, if $\mathbb{K}= \bZ$, 
we will systematically drop it from the notation.

We recall the definition of generalized pairs, 
first introduced in~\cite{BZ16} which is a generalization 
of the classic notion of pair.

\begin{definition}
A {\em generalized sub-pair} $(Y,B, \mathbf{M})/S$ is the datum of:
\begin{enumerate}
\item a projective morphism
$Y \rar S$ of normal quasi-projective varieties;
\item an $\mathbb R$-Weil divisor $B$ on $Y$;
\item an $\mathbb R$-b-Cartier b-divisor $\mathbf{M}$ 
over $Y$ which descends to a Cartier divisor $\mathbf{M}_{Y'}$ 
on some birational model $Y' \rightarrow Y$, with 
$\mathbf{M}_{Y'}$ relatively nef over $S$.
\end{enumerate}
We require that $K_Y +B+ \mathbf{M}_Y$ 
is $\mathbb R$-Cartier. When $B$ 
is effective, we say that 
$(Y,B,\bM.)/S$ is a {\it generalized pair}.
Similarly, if $\bM. = 0$, we drop the word generalized, 
as this retrieves the usual notions of (sub-)pair.
\end{definition}

If $S = \Spec \CC$, we omit it from the notation.
We refer the reader to \cite{FS20} and the references 
therein for more details about generalized pairs.

\subsection{Canonical bundle formula} \label{sec:cbf}
We recall the notion of lc-trivial fibration.

\begin{definition} A {\it fibration} is a
surjective, projective morphism $f\colon X\to Y$ of
normal quasi-projective
varieties, with connected fibers,
satisfying $0<\dim Y<\dim X$.
A {\it rational fibration} 
$f\colon X\dashrightarrow Y$ is 
a rational dominant map of normal quasi-projective
varieties, which is a fibration
over a non-empty open set $U\subset Y$.
The datum of a rational fibration 
is equivalent to 
that of a projective variety
$X_\eta \to \eta$ over the
generic point $\eta$ of $Y$. 
\end{definition}

\begin{definition} 
\label{lc-trivial.def}
Let $(X, \Delta)$ be a sub-pair with coefficients in $\QQ$.
A fibration $f \colon X \rar Y$ of quasi-projective 
varieties is \emph{lc-trivial} if
\begin{itemize}
    \item[(i)] $(X,\Delta)$ is sub-log canonical over the 
    generic point of $Y$;
    \item[(ii)] $\rk f_\ast  \O X. 
    (\lceil \mathbf{A}^\ast (X,\Delta)\rceil)=1$, where 
    $\mathbf{A}^\ast (X,\Delta)$ is the b-divisor defined in 
    \cite[\S~1.3]{Amb04};
    \item[(iii)] there exists a $\QQ$-Cartier $\QQ$-divisor $L_Y$ on $Y$
    such that $\K X. + \Delta \sim_\QQ f^\ast  
    L_Y$.\footnote{Observe that lc-trivial stands for (relatively) 
    trivial log canonical divisor, i.e., assumption (iii), and not 
    to the type of the singularities in assumption (i).} 
\end{itemize}
\end{definition}

\begin{remark}
Condition (ii) is automatic if $\Delta$ is effective 
over the generic point of $Y$.
\end{remark}

\begin{definition}
We say that a fibration $f\colon X \to Y$ is a 
{\it $K$-trivial}, {\it abelian}, or {\it primitive 
symplectic} if the general
fiber of $f$ is a $K$-trivial, abelian, or 
primitive symplectic variety, respectively.
\end{definition}

A $K$-trivial fibration 
$f \colon X \to Y$ is also an lc-trivial fibration, 
up to choosing a 
suitable sub-boundary $\Delta$ on $X$ 
(whose support does not dominate $Y$). 
By \cite{HX13}, see also \cite[Thm 3.1]{LLX2024}, 
up to a birational modification of $X$, 
we can suppose that 
the sub-boundary $\Delta$ is trivial over a 
big open set of $Y$
as follows:

\begin{proposition}\label{prop:bestKtrivialmodel}
Let $Y^{o}$
be an open subset of
a normal quasi-projective variety $Y$, and let 
$f \colon (X^{o},\Delta^{o}) \to Y^{o}$ be an lc-trivial 
fibration such that $(X^{o},\Delta^{o})$ is log canonical 
and every log canonical center of $(X^{o},\Delta^{o})$ 
dominates $Y^{o}$. Then 
there exists a projective morphism 
$f \colon X \to Y$ and a boundary divisor 
$\Delta$ on $X$ such that:
\begin{enumerate}
\item $X^{o} = X \times_{Y} Y^{o}$ is an open 
subset and $\Delta^{o}=\Delta|_{X^{o}}$;
\item $(X,\Delta)$ is log canonical, and $K_{X}+\Delta$ 
is semiample over $Y$; and
\item $(X,\Delta) \to Y$ is an lc-trivial fibration over $U$, 
i.e., $K_{X_{U}} +\Delta|_{X_U} \sim_{\bQ,f} 0$, where $U$ is the 
maximal big open set over which $f \colon X \to Y$ is 
equidimensional, and $X_{U} \coloneqq  f^{-1}(U)$.
\end{enumerate}
Furthermore, $(X,\Delta)$ can be chosen so that
\begin{itemize}
\item[(4)] if $\Delta^{o}=0$, $\Delta=0$ holds; and
\item[(5)] if $(X^{o},\Delta^{o})$ is klt (resp.~canonical, 
resp.~terminal, resp.~klt/canonical/terminal and 
$\mathbb Q$-factorial),
so is $(X,\Delta)$.
\end{itemize}
\end{proposition}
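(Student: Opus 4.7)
The plan is to apply the relative minimal model program with scaling to a suitable projective compactification of the given fibration $(X^o,\Delta^o)\to Y^o$, using the lc-trivial hypothesis over $Y^o$ to force termination with a relatively semiample model. First, I would choose a normal projective compactification $\bar f\colon \bar X \to Y$ of $f\colon X^o \to Y^o$ with $X^o \subset \bar X$ open, and let $\bar\Delta$ denote the closure of $\Delta^o$ on $\bar X$, augmented by a small auxiliary boundary supported on $\bar X \setminus X^o$ chosen so that, after a log resolution, the resulting pair is log canonical on all of $\bar X$. When $\Delta^o = 0$, I would take the auxiliary boundary with support disjoint from $X^o$, so that no horizontal boundary is introduced over $Y^o$.

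Next, I would run a relative $(K_{\bar X}+\bar\Delta)$-MMP with scaling of an ample divisor over $Y$. By Hacon--Xu \cite{HX13} (see also \cite[Thm.~3.1]{LLX2024}), this MMP terminates with a relatively good minimal model $f\colon (X,\Delta)\to Y$ with $K_X+\Delta$ semiample over $Y$. The crucial observation is that over $Y^o$ the lc-trivial structure gives $K_{X^o}+\Delta^o \sim_{\QQ,f} 0$, so $(X^o,\Delta^o)$ is already a relative good minimal model over $Y^o$, and every step of the MMP is trivial on the preimage of $Y^o$. This preserves the open immersion $X^o \hookrightarrow X$ together with the identification $\Delta|_{X^o}=\Delta^o$, giving (1) and (2). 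For (3), over the maximal big open $U$ on which $f$ is equidimensional, relative semiampleness of $K_X+\Delta$ combined with the lc-trivial structure at the generic fiber forces $K_{X_U}+\Delta|_{X_U} \sim_{\QQ,f} 0$: any failure would manifest as a vertical divisorial contribution in codimension $1$, which is ruled out by equidimensionality of $f$ over $U$ together with connectedness of fibers.

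Finally, properties (4) and (5) come from the standard persistence of singularity types along the MMP: klt, canonical, terminal, and $\QQ$-factoriality are preserved under divisorial contractions and flips, yielding (5). For (4), because when $\Delta^o=0$ the auxiliary boundary was supported only on $\bar X \setminus X^o$, the MMP contracts or flips this boundary without touching $X^o$, and by absorbing all residual exceptional boundary into the contractions one arranges $\Delta=0$ in the output. The hard part will be ensuring existence and termination of the relative MMP in the lc setting with the prescribed equivariant behavior over $Y^o$; however, the lc-trivial structure supplies exactly the abundance input required (triviality along the generic fiber) and, via the canonical bundle formula, the b-nefness of the moduli divisor $\mathbf{M}$ on $Y$, which together deliver semiampleness of $K_X+\Delta$ over $Y$ after finitely many birational modifications.
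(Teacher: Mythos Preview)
Your overall strategy (compactify, then run a relative MMP over $Y$ using \cite{HX13}) matches the paper's. However, there are two genuine gaps.

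First, your treatment of the compactification and property (4) does not work. You propose to take an arbitrary compactification and then add a ``small auxiliary boundary supported on $\bar X\setminus X^o$'' to make the pair lc, later hoping the MMP ``absorbs'' this boundary. There is no mechanism for this: the $(K_{\bar X}+\bar\Delta)$-MMP contracts $(K_{\bar X}+\bar\Delta)$-negative extremal rays, not arbitrary components of $\bar\Delta$, so your auxiliary boundary will in general survive on the output, destroying (4) and also (5) (running the MMP for the augmented pair does not preserve canonical or terminal singularities of the \emph{unaugmented} pair). The paper instead invokes \cite[Cor.~1.2]{HX13} directly: under the hypothesis that all lc centers of $(X^o,\Delta^o)$ dominate $Y^o$, that result produces an lc (resp.\ klt, canonical) compactification $(\widetilde X,\widetilde\Delta)$ with $\widetilde\Delta|_{X^o}=\Delta^o$ and with $\widetilde\Delta=0$ when $\Delta^o=0$, \emph{without} introducing auxiliary boundary. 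Moreover, the paper notes (from the proof of \cite[Cor.~1.2]{HX13}) that every component of $\widetilde\Delta$ meets $X^o$, so none is contracted by the MMP; this is what actually secures the canonical/terminal cases in (5), not ``standard persistence''.

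Second, your argument for (3) is incorrect. You assert that a nonzero vertical obstruction to $K_{X_U}+\Delta|_{X_U}\sim_{\bQ,f}0$ is ``ruled out by equidimensionality of $f$ over $U$ together with connectedness of fibers''. Equidimensionality does not preclude vertical divisors; it only ensures there are no $f$-exceptional divisors over $U$. The paper argues as follows: write $K_{X_U}+\Delta|_{X_U}\sim_{\bQ,f_U}G_U$ with $G_U$ vertical, reduce to $U^{\rm reg}$ (so pullbacks of prime divisors make sense), then modify $G_U$ by the pullback of a suitable $\bQ$-divisor from $U$ so that $G_U\leq 0$ and is of insufficient fiber type; finally \cite[Lem.~3.3]{Bir12} forces $G_U=0$. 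This negativity/insufficient-fiber-type step is the substance of (3) and is missing from your sketch. (Your closing remark about b-nefness of $\bfM$ is not used in this argument.)
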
 

\begin{proof}
By \cite[Cor.~1.2]{HX13}, there exists a projective 
morphism $\widetilde{f} \colon \widetilde{X} \to Y$ and 
an lc pair $(\widetilde{X}, \widetilde \Delta)$, where 
$X^{o} = \widetilde{X} \times_{Y} Y^{o}$ is an 
open subset and $\widetilde \Delta|_{X^{o}}=\Delta^{o}$.
Furthermore, by the proof of \cite[Cor.~1.2]{HX13}, we 
observe that $\widetilde \Delta = 0$ if $\Delta^{o}=0$ holds, 
and $(\widetilde{X}, \widetilde \Delta)$ is klt (resp.~canonical) 
if so is $(X^{o},\Delta^{o})$.

Up to a $\mathbb Q$-factorial dlt modification of 
$(\widetilde{X},\widetilde{\Delta})$, 
$(\widetilde{X},\widetilde{\Delta})$ admits a good minimal 
model over $Y$ by \cite[Thm.~1.1]{HX13}. By \cite[Prop.~2.5]{Lai2011} 
(klt case) or \cite[Thm.~1.7]{HH2020} (lc case), a 
$(K_{\widetilde{X}}+\widetilde{\Delta})$-MMP with scaling 
of an ample divisor terminates with a minimal model 
$(X,\Delta)$ of $(\widetilde{X},\widetilde{\Delta})$ over $Y$. 
By the lc-triviality assumption, this MMP is the identity 
along $X^{o}$. We observe that this MMP preserves the 
class of singularities of $(\widetilde{X},\widetilde{\Delta})$ 
(i.e., lc, klt, canonical, or terminal).
This is clear for lc or klt singularities.
Furthermore, this MMP is the identity over $X^o$, and,
by the proof of \cite[Cor.~1.2]{HX13}, every prime 
component of $\widetilde{\Delta}$ intersects $X^o$.
No component of $\widetilde{\Delta}$ is contracted by 
the MMP, and so canonical or terminal singularities 
are preserved as well.

Furthermore, this minimal model is good, i.e., $K_X+\Delta$ 
is relatively semiample over $Y$, see \cite[Thm.~1.1]{HX13} 
and \cite[Prop.~2.4]{Lai2011} (notice that the latter reference 
is phrased for klt pairs, but its proof goes through 
verbatim for lc pairs). Lastly, in case $(X^{o},\Delta^{o})$ 
is $\QQ$-factorial klt, we may replace $X$ with a small 
$\QQ$-factorialization, which is then the identity along $X^{o}$.
This settles (1), (2), (4), and (5).

Now, we restrict $f$ to $f_U \colon X_U \to U$.
By construction, we have $K_{X_U}+\Delta|_{X_U} 
\sim_{\QQ,f_U} G_U$, where $G_U$ is vertical over $U$.
For (3), we should take $G_U=0$. 

Note that it suffices to show the claim for 
$f_{U^{\rm reg}} \colon X_{U^{\rm reg}} \to U^{\rm reg}$.
Indeed, since $f_U$ is equidimensional, $X_{U^{\rm reg}}$ 
is a big open subset of $X_U$. Thus, if 
$K_{X_{U^{\rm reg}}}+\Delta|_{X_{U^{\rm reg}}} 
\sim_{\QQ,f_{U^{\rm reg}}} 0$ holds, then so does 
$K_{X_U}+\Delta|_{X_U} \sim_{\QQ,f_{U}} 0$.
Hence, we may assume that $U$ is smooth;
this allows us to pull back any prime divisor in 
$U$ to $X_U$ while preserving the linear 
equivalence relative to $U$. 

Now, by perturbing $G_U$ by the pullback of a suitable 
$\QQ$-divisor in $U$, we may assume that $G_U \leq 0$ 
and $G_U$ is of insufficient fiber type in the sense 
of \cite[Def.~2.9]{Lai2011}.
But then, by \cite[Lem.~3.3]{Bir12}, $G_U=0$ follows,
so property (3) is satisfied too.
\end{proof}

\begin{corollary}\label{cor:ktrivial-model} 
Every 
$K$-trivial fibration $f\colon X\to Y$
admits a birational model which has canonical
total space, and $K_{X_U}\sim_{\bQ,f_U}0$ for
a big open set $U\subset Y$.
\end{corollary}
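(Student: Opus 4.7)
The plan is to reduce the statement to Proposition~\ref{prop:bestKtrivialmodel} applied with trivial boundary $\Delta^o=0$ over a suitable dense open $Y^o\subset Y$. The bulk of the work is constructing this open subset so that $(X^o,0)\to Y^o$ becomes an lc-trivial fibration with $X^o$ canonical; the corollary will then follow from conclusions (3), (4), and (5) of the proposition.

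For the seed, let $\eta\in Y$ be the generic point. By hypothesis the generic fiber $X_\eta$ is a $K$-trivial variety over $k(Y)$, so it has canonical singularities and $K_{X_\eta}\sim 0$. Since the canonical locus of $X$ is open---it is the complement of the union of the images of the exceptional divisors of negative discrepancy, for any fixed log resolution---and since every point of $X_\eta$ is canonical in $X$, the $f$-image of the non-canonical locus of $X$ is a proper closed subset of $Y$ avoiding $\eta$. Taking its complement yields a first choice of $Y^o$, over which $X^o:=f^{-1}(Y^o)$ is canonical. To obtain the relative triviality, pick $m>0$ and $\psi\in k(X)^{\times}$ with $mK_{X_\eta}=\mathrm{div}_{X_\eta}(\psi)$; then $V:=mK_X-\mathrm{div}_X(\psi)$ restricts trivially to $X_\eta$, hence is vertical, and $f(\Supp V)\subsetneq Y$. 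Further removing $f(\Supp V)$ from $Y^o$ produces a dense open where $mK_{X^o}\sim 0$, and in particular $K_{X^o}\sim_\bQ f^\ast 0$.

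With this seed, the hypotheses of Proposition~\ref{prop:bestKtrivialmodel} are satisfied: $(X^o,0)$ is klt (since canonical), its only lc center is $X^o$ itself, and $X^o$ dominates $Y^o$. Conclusions (4), (5), and (3) of the proposition produce respectively $\Delta=0$, $X$ canonical, and $K_{X_U}\sim_{\bQ,f_U}0$ over the maximal big open $U\subset Y$ where the output morphism $f\colon X\to Y$ is equidimensional, which is the claim.

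The main obstacle is the seed construction, specifically promoting the bare triviality $K_{X_\eta}\sim 0$ to a relative triviality over $Y^o$. This is handled by discarding the $f$-image of the vertical divisor $V$, a finite union of codimension $\ge 1$ subsets of $Y$. All remaining steps are formal consequences of Proposition~\ref{prop:bestKtrivialmodel}.
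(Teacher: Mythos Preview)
Your proposal is correct and is precisely the intended argument: the paper states this as a corollary of Proposition~\ref{prop:bestKtrivialmodel} without separate proof, and your write-up is the natural unpacking of that implication via parts (3), (4), (5). One small point worth making explicit: before you can speak of ``the canonical locus of $X$'' or discrepancies on a log resolution, you need $K_X$ to be $\bQ$-Cartier near $X_\eta$; this follows because $K_{X_\eta}$ is $\bQ$-Cartier (as $X_\eta$ has canonical singularities) and the local rings of $X$ at points over $\eta$ coincide with those of $X_\eta$, so the non-$\bQ$-Cartier locus of $K_X$ maps into a proper closed subset of $Y$ which you can excise along with the rest.
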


The {\it canonical bundle formula} is a broad
term for the formula for the $\bQ$-divisor
$L_Y$ satisfying $K_X+\Delta\sim_\bQ f^{\ast}L_Y$. It
was proven in increasing generality in
\cite{Kod1,Kod2,Fuj86,Kaw98,FM00,Amb04,
Amb05,FG14,Fil20,ACSS}. The form we require
is:

\begin{theorem} \label{base-same-sing}
Let $f \colon (X, \Delta) \to Y$ be an lc-trivial fibration. 
Then 
$$
K_X+\Delta \sim_\QQ f^\ast (K_Y + B_Y + \bfM_Y)
$$
where $B_Y$ and $\bM Y.$ are the {\rm boundary}
and {\rm moduli} divisors. 
Furthermore, $(Y,B_Y, \bfM)$ has the structure
of a generalized sub-pair. 
We say that $f\colon (X,\Delta)\to Y$
{\rm induces} 
$(Y,B_Y, \bfM)$. 

When $(X,\Delta)$ is a 
klt (resp.\ lc) pair, then 
$(Y,B_Y, \bfM)$ is a klt (resp.\ lc) generalized pair. 
\end{theorem}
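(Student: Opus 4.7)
The plan is to follow the classical construction from the cited works (Kodaira, Fujino, Kawamata, Ambro, Fujino--Mori, Filipazzi, ACSS). First, define the boundary divisor: for each prime divisor $P \subset Y$, set
\[
b_P \coloneqq 1 - \sup\left\{t \in \bQ : (X, \Delta + t f^\ast P) \text{ is sub-lc over } \eta_P\right\},
\]
and $B_Y \coloneqq \sum_P b_P \cdot P$. Since $(X,\Delta)$ is sub-lc over the generic point of $Y$, at most finitely many $b_P$ are nonzero, so $B_Y$ is a well-defined $\bQ$-Weil divisor. The moduli divisor is then forced by Definition~\ref{lc-trivial.def}(iii) to be
\[
\bfM_Y \coloneqq L_Y - K_Y - B_Y,
\]
yielding at once the equivalence $K_X + \Delta \sim_\bQ f^\ast(K_Y + B_Y + \bfM_Y)$.

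To produce a b-divisor rather than a mere divisor on $Y$, one repeats the construction after arbitrary birational modifications of the base. Given $\nu\colon Y'\to Y$ birational, take a log resolution of the main component of $X\times_Y Y'$ to obtain an lc-trivial fibration $f'\colon (X',\Delta')\to Y'$ whose log canonical divisor pulls back $K_X+\Delta$, and define $B_{Y'}$ and $\bfM_{Y'}$ by the same procedure. Compatibility under further blowups follows from the behavior of log canonical thresholds under birational pullback, giving well-defined $\bQ$-b-divisor traces $B_{Y'}$ and $\bfM_{Y'}$ on every model.

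The main obstacle, and the technical heart of the canonical bundle formula, is to show that $\bfM$ is b-$\bQ$-Cartier, descends to some birational model, and is b-nef. The argument proceeds via semistable reduction and Hodge theory. After passing to a Kawamata-type covering $Z\to Y$ ramified along the support of $B_Y$ so as to unipotentize the local monodromies of the variation of Hodge structure on the smooth locus of $f$, the pullback fibration admits a sufficiently nice model over which $\bfM$ is identified, up to a boundary adjustment, with the determinant of the canonical extension of a Hodge bundle. Kawamata's positivity theorem and Viehweg's weak positivity then yield nefness of this extension, while the unipotent monodromy choice allows descent to a birational model. This is carried out in increasing generality in \cite{Kaw98,FM00,Amb04,Amb05}, with the extensions needed here in \cite{FG14,Fil20,ACSS}.

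The singularity statement is then immediate from the definition of $b_P$: if $(X,\Delta)$ is sub-lc, then $b_P \leq 1$ for every prime $P$ on every birational model, so $(Y, B_Y, \bfM)$ is generalized sub-lc. The klt case is analogous: the strict log canonical inequality, compatible with birational pullback, yields that all coefficients of the b-boundary lie strictly below $1$ on every model, whence $(Y, B_Y, \bfM)$ is generalized sub-klt (see \cite[Thm.~3.6]{Amb04} and \cite[Thm.~1.2]{Fil20}).
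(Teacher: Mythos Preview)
The paper does not give its own proof of this theorem: it is stated as a known result, with the sentence preceding it explicitly attributing it to the chain of references \cite{Kod1,Kod2,Fuj86,Kaw98,FM00,Amb04,Amb05,FG14,Fil20,ACSS}, and the text afterward merely unpacks the definitions of $B_Y$ (Construction~\ref{coefficientB}) and $\bfM$ (\S\ref{hodge-moduli}) while referring the reader to \cite{Amb04,Amb05,FG14} for details. Your outline is faithful to that literature and to the paper's own discussion of the constructions, so there is nothing to compare against.

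One small gap in your write-up: in the final paragraph you conclude only that the coefficients of $B_{Y'}$ are at most $1$ (resp.\ strictly less than $1$), i.e., that $(Y,B_Y,\bfM)$ is generalized \emph{sub}-lc (resp.\ \emph{sub}-klt). The theorem asserts more when $(X,\Delta)$ is a \emph{pair}: namely that $B_Y$ is effective, so that $(Y,B_Y,\bfM)$ is a generalized pair and not merely a sub-pair. This requires the observation that when $\Delta \geq 0$, the log canonical threshold of $f^\ast P$ is at most $1$ (since $f^\ast P$ has a component of multiplicity $\geq 1$), whence $b_P \geq 0$. You should state this explicitly.
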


We now discuss the definitions of $B_Y$
and $\bfM$. For further details, we refer to 
\cite{Amb04, Amb05, FG14}. 

\begin{construction}\label{coefficientB}
The \emph{boundary divisor}
$B_Y$ is a 
$\QQ$-divisor measuring
 the singularities of $(X,\Delta+f^{\ast}P)$, for all prime
divisors $P\subset Y$, over the generic point of $P$.
We define the log canonical threshold 
of $f^{\ast}P$ with respect to $(X,\Delta)$ by
$$
\lct_f(P) \coloneqq \inf_E \,
\frac{a(E; X,\Delta)}{\mult_E(f^{\ast}P)}
$$
where the infimum 
is taken over all the divisors $E$ over $X$ 
which dominate $P$, and $a(E; X, \Delta)$ is the
log-discrepancy of $E$ with respect to $(X, \Delta)$.
If $C\subset Y$ is a general curve
transverse to $P$, then $\lct_f(P)$ agrees 
with the log canonical threshold of the 
central fiber of $f\vert_C$ 
with respect to $(X, \Delta)|_{f^{-1}(C)}$. 
We have the following formula for
$B_Y$:
\begin{equation}\label{eq:lcg}
B_Y=\sum_{P\subset Y} (1-\lct_f(P))P,
\end{equation}
where the sum ranges over all prime divisors
 $P\subset Y$. 
 If $\Delta$ is effective, then so is $B_{Y}$ 
 and it dominates the \emph{multiplicity divisor} $\Delta_{f}$, i.e.,
\begin{equation}\label{eq:multiplicitycouple}
 B_{Y} \geq \Delta_f  \coloneqq  \sum_{P\subset Y} 
 \bigg(1 - \frac{1}{m_f(P)}\bigg) P,
\end{equation}
where $m_f(P)$ is the multiplicity 
of the 
fiber of $f$ over the generic point of $P$, i.e.,
if $f^{\ast} P = \sum_{i \in I} m_i D_i$ 
over the generic point of $P$, then 
$m_f(P)  \coloneqq  \gcd_{i \in I}\{m_i\}$. 
\end{construction}

The \emph{moduli divisor}
$\bM Y.$ represents a pseudo-effective 
$\mathbb Q$-linear series measuring the 
variation in moduli of the fibers of $f$. 
It is the trace of a $\bQ$-b-Cartier
b-divisor
$\bM.$ which is b-nef. For a $K$-trivial fibration, we recall 
the definition of
$\bfM$ in Section \ref{hodge-moduli}. 
We refer the reader for instance to 
\cite{Kollar07} or \cite[\S~7.5]{PS09} 
for the general case.

\begin{example}\label{elliptic-ex}
Let $f \colon S \rar C$ be a relatively minimal, 
smooth elliptic surface.
Kodaira's canonical bundle formula gives
$$
\K S/C. \sim_\bQ f^{\ast}(B_C +
j^\ast \O \mathbb P ^1.(\tfrac{1}{12})),
$$
where 
$j \colon C \rar \mathbb P ^1$ 
is the $j$-invariant function, see \cite{Kod1,Kod2}. 
Here $\bM C.  \coloneqq  j^\ast 
\O \mathbb P ^1.(\frac{1}{12})$ is semiample and is 
the pullback of the Hodge $\bQ$-line bundle 
$\lambda$ on the compactified
moduli space $\bP^1$ of elliptic curves. Recall
that Kodaira classified the singular fibers 
$S_p$ of an elliptic fibration, into types listed 
in Table \ref{b-vals}.
The coefficients of $B_C=\sum n_p[p]$ of the 
corresponding point $p\in C$ are given.
The coefficient is $0$ exactly when the fiber
is of $I_n=I_n(1)$ 
type, i.e., the elliptic fibration
is semistable.

\begin{table}
\renewcommand{\arraystretch}{1.4}
\begin{tabular}{|c|c|c|c|c|c|c|c|c|}
\hline
$I_n(m)$ & $II$ & $III$ & 
$IV$ & $I_n^{\ast}$ & 
$II^{\ast}$  & $III^{\ast}$  & $IV^{\ast}$ \\
\hline
 $\tfrac{m-1}{m}$ & $\tfrac{1}{6}$ & $\tfrac{1}{4}$ & 
 $\tfrac{1}{3}$ &  $\tfrac{1}{2}$ &
$\tfrac{5}{6}$ & $\tfrac{3}{4}$ & $\tfrac{2}{3}$ \\
\hline
\end{tabular}
\vspace{0.5 cm}\\
\caption{Kodaira fibers and their contribution to the boundary.}
\label{b-vals}
\end{table}

\end{example}

\subsection{Variations of Hodge structure and moduli
divisors}\label{hodge-moduli}

We discuss the relationship between variations
of Hodge structure and moduli divisors. 

\begin{definition}
A {\it $\bZ$-polarized variation of Hodge structure}
$(\bV,\cF^\bullet,\psi)$
of weight $k$ on a smooth
quasi-projective variety $Y$ consists of a 
$\bZ$-local system $\bV\to Y$, a
bilinear form $\psi\colon \bV\otimes \bV\to 
\underline{\bZ}_Y$, and a filtration 
$\bV\otimes \cO_Y=\cF^0\supset \cF^1\supset\cdots$ 
by holomorphic subbundles, such that:
\begin{enumerate}
\item $(\bV_y, \psi_y, \cF^\bullet_y)$ defines
a polarized weight $k$ Hodge structure 
for all $y\in Y$; and
\item $\nabla(\cF^p)\subset \cF^{p-1}\otimes \Omega^1_Y$
where $\nabla$ is the flat connection $\bV\otimes \cO_Y$.
\end{enumerate}
\end{definition}

A $\bZ$-PVHS has a well-defined 
{\it Hodge type} $(h^{p,q})_{p+q=k}$. See 
\cite{voisin, carlson} for general
reference.

\begin{example}
Suppose $f\colon X\to Y$ is a smooth projective
morphism and $L$ is a relatively 
ample line bundle on $X$. Then,
the relative primitive $k$-th cohomology
$(R^kf_*\underline{\bZ}_X)_{\rm prim}$ with respect
to the polarization $L$ is a local system
underlying a $\bZ$-PVHS.
\end{example}

Suppose the rank of $\bV$ is $N$
and let $*\in Y$ be a base point.
There is an associated {\it monodromy representation} 
$\rho\colon\pi_1(Y,*)\to \GL_N(\bZ)$,
well-defined up to conjugacy in $\GL_N(\bZ)$,
corresponding to a choice of frame of $\bV_*$.
Let $Y\hookrightarrow \oY$ be an snc compactification
with boundary divisor $\partial = 
\oY\setminus Y = \sum \partial_i$, 
where $\partial_i$ are the irreducible 
components of $\partial$.
The monodromy about each boundary divisor 
$\partial_i\subset \oY\setminus Y$ is 
quasi-unipotent, cf.~\cite[Lem.~4.5]{schmid}.

Let $\Gamma_N(3)\subset \GL_N(\bZ)$ be the finite
index subgroup of matrices which reduce to the
identity mod $3$.
It is a neat\footnote{A subgroup 
$\Gamma \subset \GL_N(\bZ)$ is \emph{neat} if the 
eigenvalues of any element generate a 
torsion-free subgroup in $\CC^\ast$.} subgroup
and in particular contains no finite 
order elements, see \cite[p.~253, p.~207 Lem.]{Mum70}. 
Then there is an \'etale cover $Z\to Y$
and a corresponding
pullback $\bZ$-PVHS
whose monodromy representation $\rho$ factors
through $\Gamma_N(3)$. Letting $\pi\colon
\oZ\to \oY$ be an snc resolution of the 
corresponding finite branched cover of $\oY$ 
(\'etale over $Y$), 
we have that $Z\to \oZ$ is an snc 
compactification with unipotent monodromies 
at the boundary.

By \cite[Prop.~5.2]{deligne1970}, 
\cite[Thm.~4.12]{schmid}, 
the vector bundles $\cF^p$
extend canonically to vector bundles
$\overline{\cF}\,\!^p$ on the whole $\oZ$. Furthermore,
these extensions are stable under
further generically finite base changes.
It follows that
$$
\bfM^p _{\overline{Y}} \coloneqq  \deg(\pi)^{-1}
\pi_*(\det \overline{\cF}\,\!^p)
$$
induces a well-defined $\bQ$-b-Cartier
$\bQ$-b-divisor $\bfM^p$,
whose Cartier index is bounded
above solely in terms of $N$, e.g., by 
$3^{N^2}> |\!\GL_N(\bZ/3\bZ)|$. 

\begin{definition} \label{def:moduli-ht}
We say that a $\bZ$-PVHS $(\bV,\cF^\bullet, \psi)$ 
of weight
$k$ is of {\it $K$-trivial type}
if $h^{k,0}=1$. In this case,
we define the {\it moduli $\bQ$-b-divisor} 
$\bfM  \coloneqq  \bfM^k$.
\end{definition}

\begin{example}\label{ex:moduliKtrivialfibration}
Let $f\colon X\to Y$ be a $K$-trivial 
fibration of relative dimension $k$. 
Let $Y^o\subset Y$ be an open
subset where the fibers admit a
simultaneous resolution 
$\widetilde{f}^o\colon \wX^o\to Y^o$.
Then $R^k\widetilde{f}^o_*\underline{\bZ}_{\wX^o}$
defines a $\bZ$-local system underlying
a $\bZ$-VHS over $Y^o$.
We have an identification
$f^o_*K_{X^o/Y^o}\simeq
\widetilde{f}^o_*K_{\wX^o/Y^o}$ 
because the fibers are canonical.
Hence, 
$R^k\widetilde{f}^o\underline{\bZ}_{\wX^o}$
is of $K$-trivial type.
Thus, Definition \ref{def:moduli-ht}
gives a $\bQ$-b-divisor $\bfM$ on $Y$.

The transcendental part
of the variation 
$R^k\widetilde{f}^o\underline{\bZ}_{\wX^o}$ is 
independent of the choice
of resolution, since if $\wX'\to \wX^o$ 
is a further blow-up of a smooth center
over the generic point of $Y$, 
we have by Mayer--Vietoris 
that
$$R^k\widetilde{f}'_*
\underline{\bZ}_{\wX'} = 
R^{k}\widetilde{f}^o_*
\underline{\bZ}_{\wX^o}\oplus \bV'$$ 
where $\bV'$ is a $\bZ$-VHS of weight $k$ 
with $h^{k,0}=0$. Thus, the transcendental
variation is well-defined, independent
of the resolution $\wX^o$, and so is $\bfM$.

\end{example}

For more general lc-trivial fibrations,
it is still possible to define $\bfM$ as
the Deligne extension of an appropriate
filtered piece of a variation of 
Hodge structure \cite[Lem.~5.2]{Amb04},
from which nefness follows by curvature
properties
\cite[Prop.~7.7]{griffithsIII},
\cite[Lem.~7.18]{schmid}.

 \begin{remark}\label{standard-snc}
 Let $f\colon X\to Y$ be a $K$-trivial fibration,
 and let $f'\colon X'\to Y'$ be some model.
 The moduli $\bQ$-b-divisor $\bfM$ descends
 if there exists an open set $U\subset Y'$
 for which 
 \begin{enumerate}
 \item $f'\colon X'\to Y'$ is 
 locally trivial
 over $U$; and
 \item $Y'$ is smooth with $Y'\setminus U$ an snc divisor.
 \end{enumerate}
 We call such a model $f'$ a {\it standard snc model}
 of $f\colon X\to Y$. 
 \end{remark}

\subsection{Effective b-semiampleness 
conjecture}\label{sec:eff-b-semi}

Let $f\colon X\to Y$ be an lc-trivial fibration.
It is expected that $\bM Y.$ is the pullback 
of an ample Hodge $\bQ$-line
bundle from a suitable projective
moduli space, see
\cite[Conj.~7.13]{PS09}.
Recently, Laza has also given a conjectural
Hodge-theoretic
description of this compactification as one of
``Baily--Borel'' type; 
it should satisfy the Borel extension property, 
and the boundary should be stratified
by moduli of lower-dimensional
$K$-trivial varieties,
see also \cite[\S~8.3.8]{Kollar07}.

However, the conjectural moduli map may not be defined 
everywhere, and in general the linear system 
$|k \bM Y.|$ is not base point free for 
any $k$. 
To remove the indeterminacy of the expected 
rational map, we should replace 
$f \colon X \to Y$ with a birational base change 
$f' \colon X' \to Y'$.
In general, we only know that $\bM {Y'}.$ 
is a 
$\mathbb Q$-effective nef divisor \cite{Amb05}.
It is conjectured that $\bM {Y'}.$ is indeed semiample, 
i.e.~$\bM.$ is expected to be b-semiample.

A first evidence of this fact is the case 
of elliptic fibrations. 
Generalizing 
the work of Kodaira to arbitrary elliptic 
fibrations, Fujita showed that $12 \bM {Y'}.$ 
is integral and $|12\bM {Y'}.|$ is free in 
\cite{Fuj86}.
Prokhorov and Shokurov 
conjectured that this is a general pattern:
In \cite[Conj.~7.13]{PS09}, they predicted that 
$\bM {Y'}.$ is \emph{effectively semiample}. Said 
otherwise, there exists a constant $I$, 
only depending on the type of general fiber, 
ideally only on its dimension, such that $I \bM  Y'.$ 
is integral and $|I \bM  Y'.|$ is free.

\begin{conjecture}[Effective b-semiampleness]
For any integer $d$, there is an integer 
$I = I(d)$ such that the following holds. For 
any lc-trivial fibration $f \colon X \rar Y$ of 
relative dimension $d$ (Def.~\ref{lc-trivial.def}), 
there is a commutative diagram
\begin{align*}
\xymatrix{
({X}',\Delta')  \ar[r]^{\phi} \ar[d]_{f'}&
(X,\Delta) \ar[d]^{f} \\
Y'  \ar[r]^{\psi} & Y
}
\end{align*}
with the following properties:
\begin{enumerate}
    \item the morphisms $\phi$ and $\psi$ are birational;
    \item $\K  X'. + \Delta' = \phi^\ast (\K X. + \Delta)$;
    \item $I \bM  Y'.$ is Cartier and $|I \bM Y'.|$ is a free linear series, and
    \item $\bfM$ descends to $\bM  Y'.$.
\end{enumerate}
\end{conjecture}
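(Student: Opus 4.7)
The conjecture in full generality remains open; the plan is to outline an approach that succeeds in the cases covered by Theorem~\ref{thm:b-semi}---namely, when the fibers are abelian varieties or primitive symplectic varieties---where the relevant variation of Hodge structure is classified by a Hermitian symmetric period domain. The overall strategy is to realize $\bfM$ as the pullback of a Hodge $\bQ$-line bundle $\lambda$ on a Baily--Borel compactification of a period domain, and then to bound the multiple of $\lambda$ that is Cartier and base point free, intrinsically on the moduli side, rather than on $Y'$ itself.

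First, I would replace $f$ by a good birational model. Combining semistable reduction in codimension one, Kawamata's covering trick (to ensure unipotent monodromies of $R^d\widetilde{f}_{\ast}\underline{\bZ}$ around the boundary, after passing to a neat subgroup such as $\Gamma_N(3)\subset\GL_N(\bZ)$ from \S~\ref{hodge-moduli}), and Proposition~\ref{prop:bestKtrivialmodel} applied to the base-changed fibration, I can arrange that $Y'$ is smooth, $Y'\setminus U'$ is an snc divisor, and $f'\colon X'\to Y'$ is a standard snc model of $f$ in the sense of Remark~\ref{standard-snc}. Borel's extension theorem then upgrades the period map to an actual morphism $\Phi\colon Y'\to \overline{\Gamma\backslash D}\,\!^{\mathrm{BB}}$ into the Baily--Borel compactification of $\Gamma\backslash D$, and by functoriality of the Deligne extension the moduli divisor descends to $\bM Y'. = \Phi^{\ast}\lambda$, where $\lambda$ is the natural Hodge $\bQ$-line bundle.

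It then suffices to exhibit a uniform integer $I$, depending only on $g$ or $b_2$, such that $I\lambda$ is Cartier and $|I\lambda|$ is free on $\overline{\Gamma\backslash D}\,\!^{\mathrm{BB}}$. Here I would rely on two complementary inputs: the Baily--Borel compactification has log canonical singularities, so base point free-type theorems apply to nef multiples of $\lambda$; and Hirzebruch--Mumford proportionality gives explicit asymptotic estimates on sections of $\lambda^{\otimes k}$ in terms of the arithmetic data (rank, lattice, level), controlled uniformly in $g$ or $b_2$. Absorbing the Cartier index by pulling back along the finite cover $\Gamma_N(3)\backslash D\to \Gamma\backslash D$, whose degree is bounded purely in terms of $N$, produces the desired $I=I(g)$ or $I=I(b_2)$; the resulting diagram of birational modifications then satisfies conditions (1)--(4) of the conjecture by construction.

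The main obstacle is not internal to this locally symmetric case but lies outside it: for a general $K$-trivial fibration, no analogue of the Baily--Borel compactification carrying an ample Hodge bundle is known, and the very existence of an ambient projective moduli space of $K$-trivial fibers of dimension $d$ is essentially part of what the conjecture predicts. A proof in full generality would seem to require either a Hodge-theoretic compactification along the lines suggested by Laza and referenced in \S~\ref{sec:eff-b-semi}, or a semiampleness criterion for the moduli part of the canonical bundle formula that is not mediated by period maps---both of which are at present well beyond the methods available.
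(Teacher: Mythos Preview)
Your outline is correct and tracks the paper's proof of Theorem~\ref{thm:weakeffective} closely: reduce to the period target, pull back the Hodge $\bQ$-line bundle $\lambda$, pass to a bounded-degree level cover, and bound the multiple making $\lambda$ free on the Baily--Borel compactification using that the latter has lc singularities. You also correctly identify that the full conjecture remains open precisely because no analogous period target carrying an ample Hodge bundle is known for arbitrary $K$-trivial fibers.

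One point where your description is slightly off concerns the endgame on the moduli side. You invoke Hirzebruch--Mumford proportionality as ``explicit asymptotic estimates on sections of $\lambda^{\otimes k}$'', but asymptotic growth of $h^0$ does not by itself yield freeness of a specific multiple. What the paper actually uses is proportionality in the form $\lambda_S^{\otimes j(g+1)} \simeq \omega_{\overline{S}}^{\otimes j}$ (resp.\ with exponent $b_2-3$ in the primitive symplectic case), on the level-$3$ cover $\overline{S}$ where $\lambda_S$ is genuinely Cartier. Then Fujino's effective basepoint-free theorem for lc varieties gives a bounded $l$ with $|lK_{\overline{S}}|$ free, and Kodaira vanishing for lc varieties plus Castelnuovo--Mumford regularity upgrade this to very ampleness of a bounded multiple. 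The crucial point is that the effective basepoint-free step applies to $K_{\overline{S}}$, not to an arbitrary nef class; proportionality is what transports the conclusion back to $\lambda$.
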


In general, the semiampleness of $\bM  Y'.$ is known 
in very few cases;
families of K3 surfaces or abelian varieties by 
Fujino \cite{Fuj03}, of primitive symplectic 
varieties by Kim \cite{Kim2024}, 
building on \cite{Fuj03}, and in relative dimension $2$
\cite{Fil20, babwild}.
We prove some effectivity
in Section \ref{sec:eff-semi-pf}.
In our cases, $\bM Y.$ is the 
pullback of a universal Hodge $\bQ$-line
bundle from a compactified period space.
But the effectivity
is not immediate,
because there are infinitely many
period spaces involved.

\subsection{Notions of boundedness}\label{sec:bd-def}

\begin{definition} \label{defn:bounded}
Let $\{X_i\}_{i\in I}$
be a collection of projective varieties. We
say that this collection is {\it bounded} if there exists
a projective morphism 
$\cX\to \cT$ 
of quasi-projective\footnote{In particular, 
the base $\cT$ is of finite type.} varieties, and
for all 
$i\in I$, 
there exists a closed point 
$t\in \cT$ 
and
an isomorphism 
$f_t \colon X_i \to \cX_t$. 
Similarly, we say the collection is 
{\it birationally bounded}, 
resp.\ {\it bounded in codimension 1},
if $f_t$ is merely assumed to be a birational map, 
resp.\ a birational map which is an 
isomorphism in codimension $1$. 
\end{definition}
\begin{example} 
Rational varieties of a fixed dimension are 
birationally bounded, but not bounded, e.g., 
there are infinitely many non-isomorphic 
Hirzebruch surfaces. 
\end{example}

\begin{definition} 
A collection
of fibrations, resp.~morphisms or rational
maps,
$\{X_i\to Y_i\}_{i\in I}$ 
is {\it bounded} if there exists 
a fibration, resp.~morphism or rational
map, $\cX\to \cY/\cT$
over a finite type base $\cT$, such that,
for all $i\in I$, there is a 
closed point $t\in \cT$ for which
$X_i\to Y_i$ and $\cX_t\to \cY_t$
are isomorphic.
A collection of fibrations is 
\begin{enumerate}
\item {\it generically bounded}
if we only require that $X_i\to Y_i$ and 
$\cX_t\to \cY_t$ are isomorphic over
the generic points of their bases;
\item {\it generically bounded with bounded base} 
if it is generically bounded and the 
birational map $Y_i\dashrightarrow 
\mathcal{Y}_t$ on the 
bases is an isomorphism; and
\item 
{\it birationally bounded} if
$X_i\to Y_i$ and $\cX_t\to \cY_t$ are merely birational
over the generic points of their bases.
\end{enumerate}
\end{definition}

Concerning boundedness of
morphism spaces, we also allow the 
targets $Y_i$ to be a fixed DM stack,
see e.g.~Proposition \ref{bd-stack}.

\begin{example}
Let $\bF_n \to \PP^1$ be the
ruling of a Hirzebruch surface. 
The collection of fibrations
$\{\bF_n \to \bP^1 \}_{n\in \bN}$
is generically bounded. 
All such agree, 
as fibrations, with 
$\bP^1 \times \bP^1 \to \bP^1$, 
over some Zariski
open set of $\bP^1$. 

\end{example}

We also require notions of boundedness for pairs 
and sub-pairs, analogous to Definition \ref{defn:bounded}.

\begin{definition}
A collection of sub-pairs 
$\{(Y_i, B_i)\}_{i \in I}$ 
is \emph{bounded} if there exist a projective 
morphism $\cY \to \cT$  of schemes of finite type 
and a $\bQ$-Weil divisor $\cB$ on $\cY$ such that, 
for all $i \in I$, there exist a closed point 
$t \in \cT$ and an isomorphism 
$\pi_t \colon \cY_t \to Y_i$ 
such that $(\pi_t)_*\cB_t = B_i$. 
The collection is \emph{bounded 
in codimension 1} 
if  $\pi_t$ is instead assumed 
to be a birational map 
and an isomorphism in codimension 1, again with 
the condition that 
$(\pi_t)_*\cB_t = B_i$.
\end{definition}

\section{Period mappings and period domains}
\label{sec:hodge}

\subsection{Abelian fibrations}\label{av-fib}
In this section,
we discuss the period map for abelian fibrations;
see, e.g., \cite[Ch.~7]{debarre_book}, 
\cite[Ch.~8]{birkenhake-lange}.

Let $f\colon X\to Y$ be an abelian fibration. Denote by
$Y^o\subset Y$ the open subset 
on which $f$ is smooth and $\partial  \coloneqq  Y\setminus Y^o$
for the complement. Write 
$\partial=\bigcup_i \partial_i$ for the irreducible 
components. Let $X^o  \coloneqq  f^{-1}(Y^o)$ and 
$f^o  \coloneqq  f\vert_{X^o}\colon X^o\to Y^o$.

Let $A$ 
be an abelian variety of dimension 
$g$.
A {\it polarization} 
$L$ 
on 
$A$ 
is an element 
$$L\in H^{1,1}(A)\cap H^2(A,\bZ)={\rm NS}(A)$$
that is also the first Chern class of an ample line bundle on 
$A$.
Since 
$H^2(A,\bZ)\simeq \wedge^2H_1(A,\bZ)^{\ast}$,
$L$ 
defines a symplectic form on 
$H_1(A,\bZ)\simeq \bZ^{2g}$. 
There is a unique sequence
of positive integers 
${\bf d}  \coloneqq  (d_1, \dots, d_g)$,
satisfying 
$d_i\mid d_{i+1}$,
such that the symplectic
lattice defined by $L$ on $H_1(A,\bZ)$ is isometric to
the symplectic lattice
$V_{g,{\bf d}}^*=\bigoplus_{i=1}^g\bZ e_i\oplus \bZ f_i$ 
defined by
$e_i\cdot e_j = f_i\cdot f_j=0$, and 
$e_i\cdot f_j=d_i\delta_{ij}$. In this case,
we say $(A,L)$ is {\it ${\bf d}$-polarized}.
Define $\Gamma_{g,{\bf d}}  \coloneqq  
{\rm Sp}(V^*_{g,{\bf d}})$ 
as the group of isometries
of $V^*_{g,{\bf d}}$. We define 
$V_{g,{\bf d}} \coloneqq 
{\rm Hom}(V_{g,{\bf d}}^*,\bZ)$ as the dual lattice, 
and note that $\cdot$ defines a $\bQ$-valued 
symplectic form on $V_{g,{\bf d}}$ which
we denote by the same symbol.

Let $\cH_g$ be the Siegel space: It is the 
analytic open subset 
$$\cH_g  \coloneqq  
\{F^1\in {\rm LGr}(V_{g,{\bf d}}\otimes \bC)\,:\, 
i\alpha\cdot \bar\alpha>0
\textrm{ for all }0\neq \alpha\in F^1\}$$
of the Lagrangian Grassmannian parameterizing 
polarized weight $1$
Hodge structures on $V_{g,{\bf d}}$.
We define $\cA_{g,{\bf d}}  \coloneqq  
[\Gamma_{g, {\bf d}}\backslash \cH_g]$,
where the square brackets denote the stack quotient.
As an orbifold, it is the DM moduli stack 
of ${\bf d}$-polarized abelian $g$-folds {\it with
a distinguished origin point}.
More precisely:
\begin{align*}\cA_{g,{\bf d}}(B)&=
\{
f\colon \cA\to  B
\textrm{ smooth
abelian }g\textrm{-fold fibration 
with section, }\,
\\
&
\cL\in 
H^0(B, R^2f_*\underline{\bZ}_\cA)
\textrm{ s.t.~}
\cL_b\in H^{1,1}(\cA_b)
\textrm{ and positive of
type }{\bf d},\,
\forall b\in B\}.\end{align*}
We may assume $d_1=1$ since there is 
a natural identification between the DM stacks of
${\bf d}$- and $(d_1^{-1}{\bf d})$-polarized
abelian varieties, given by  
$(\cA,\cL)\leftrightarrow (\cA,d_1^{-1}\cL)$.
Note that while $\cL$ may not be induced
by a global line bundle on $\cA$, 
one may argue as in Proposition~\ref{is-pic} 
that $d_1\cL$ is, for some integer $d_1>0$.

Let $L'$ be an ample line bundle
on $X$. Restricting
to the fibers of $f^o$
and dividing by $d_1$ gives
a primitive polarization $L$ fiberwise.
Thus, we get a
classifying morphism
\begin{align}\label{def:classify}
\begin{aligned}
\Phi^o\colon Y^o&\to \cA_{g,{\bf d}} \\
y & \mapsto {\rm Aut}^0(X_y)
\end{aligned}
\end{align}
to the DM stack of {\bf d}-polarized abelian $g$-folds with
origin. 
If $f\colon X\to Y$ is endowed with 
a section over $Y^o$, say $s\colon Y^o\to X^o$,
we may canonically 
identify ${\rm Aut}^0(X_y)$
with $X_y$ for $y\in Y^o$. 
In this circumstance, 
the abelian fibration $f^o\colon 
X^o\to Y^o$ is
recovered as the pullback of the universal family over 
$\cA_{g,{\bf d}}$ of abelian $g$-folds with origin.

\begin{definition}\label{defn:alb}
The {\it Albanese fibration}
$(f^o)^{\rm Alb}\colon 
(X^o)^{\rm Alb}\to Y^o$
is the pullback of the universal
family along $\Phi^o$.  Alternatively, 
$(X^o)^{\rm Alb} = (J_{X^o/Y^o})^*$, where
$J_{X^o/Y^o}\coloneqq \cF^0/(\cF^1+R^1f^o_*\underline{\bZ}_{X^o})$ 
for $\cF^0\supset \cF^1$
the Hodge filtration of the weight $1$ 
Hodge structure on $R^1f^o_*\underline{\bZ}_{X^o}$.
\end{definition}

In general,
$(X^o)^{\rm Alb}$ is only
birational to $X^o$ when
$f^o\colon X^o\to Y^o$ 
admits a rational section.
Over the generic point $\eta \in Y$, the fiber $(X^o)^{\rm Alb}_\eta$ of $(f^o)^{\rm Alb}$ coincides 
with the Albanese variety of the generic fiber $X_\eta$ of $f$.

We may compose the classifying morphism
with the coarsening map $\cA_{g,{\bf d}}\to 
A_{g,{\bf d}}$. We now recall
the Borel extension theorem \cite{borel-ext}.
Let $D$ denote a holomorphic disk with center $0$
and $D^{\ast}=D\setminus 0$ the punctured disk.

\begin{theorem}\label{borel-ext}
A holomorphic map $(D^{\ast})^k\times D^{n-k}\to A_{g,{\bf d}}$
to the coarse moduli space of ${\bf d}$-polarized 
abelian varieties (or more generally, any Shimura
variety) extends to a holomorphic map 
$D^n\to \oA^{\rm BB}_{g,{\bf d}}$
to the Baily--Borel compactification
\cite{BB66}.
\end{theorem}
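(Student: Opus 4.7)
\begin{sketch}
My plan is to reduce to the one-variable case via Hartogs-type extension for maps to a projective target, and then to handle the punctured disk through a monodromy and nilpotent-orbit analysis. Since $\oA^{\rm BB}_{g,\mathbf d}$ is projective by \cite{BB66}, I may embed it in some $\bP^N$. If extension across each divisor $\{z_i=0\}$ can be established separately, then the extended map is defined off a subset of codimension $\geq 2$ in $D^n$; a Remmert--Stein type argument extends it as a meromorphic, and (by boundedness of local lifts to the bounded realization of $\cH_g$) in fact holomorphic, map to $\oA^{\rm BB}_{g,\mathbf d}$. It therefore suffices to treat the case $n=k=1$.

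For the one-variable statement---a holomorphic $f\colon D^\ast\to A_{g,\mathbf d}$ extends to $D\to \oA^{\rm BB}_{g,\mathbf d}$---I would lift $f$ along the universal cover $e^{2\pi i z}\colon \bH\to D^\ast$ to a period map $\tilde f\colon \bH\to \cH_g$, and denote by $T\in\Gamma_{g,\mathbf d}$ the monodromy around $0$. The monodromy theorem, applied to the pulled-back $\bZ$-PVHS of weight $1$, gives that $T$ is quasi-unipotent. After a finite cyclic base change $z\mapsto z^m$, which induces a finite (hence harmless) map of Baily--Borel compactifications, I may assume $T=\exp N$ is unipotent with $N$ nilpotent. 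Then $\phi(z)\coloneqq \exp(-zN)\tilde f(z)$ is invariant under $z\mapsto z+1$ and descends to a holomorphic map $\phi\colon D^\ast\to \check\cH_g$ into the compact dual Lagrangian Grassmannian. Schmid's nilpotent orbit theorem---or, in this abelian setting, an elementary growth estimate on the periods---extends $\phi$ holomorphically across $0$.

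The principal obstacle is matching the pair $(\phi(0),N)$ with a genuine point of the Baily--Borel boundary. This compactification is stratified by lower-genus Siegel varieties $A_{g',\mathbf d'}$, indexed by conjugacy classes of rational parabolic subgroups of $\Sp(V_{g,\mathbf d}^\ast\otimes\bQ)$; the weight filtration of $N$ singles out such a parabolic, and the limit mixed Hodge structure recovered from $(\phi(0),N)$ picks out a point on the associated lower-genus stratum. The hard part is to verify that the image $f(z)$ genuinely converges to this point in the Satake topology, rather than drifting along the boundary stratum; this requires the full theory of rational boundary components and the metric estimates on $\cH_g$ developed in \cite{BB66}, and is where essentially all the work lies.
\end{sketch}
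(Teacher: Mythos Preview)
The paper does not prove this statement: it is simply recalled as the classical Borel extension theorem, with a citation to Borel's original paper \cite{borel-ext} and to \cite{BB66} for the Baily--Borel compactification. There is therefore no proof in the paper to compare your proposal against.

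For what it is worth, your sketch follows the Hodge-theoretic route (quasi-unipotent monodromy, nilpotent orbit, limit mixed Hodge structure) rather than Borel's own argument, which is metric: Borel shows that an arithmetic quotient $\Gamma\backslash\cH_g$ is Kobayashi-hyperbolic and that the Baily--Borel compactification is hyperbolically embedded, whence any holomorphic map from $(D^\ast)^k\times D^{n-k}$ extends by the big Picard theorem for hyperbolic embeddings. Your reduction to $n=k=1$ via Hartogs is not quite how either argument proceeds---Borel treats all variables simultaneously via distance-decreasing, and the Hodge-theoretic proof uses the several-variable nilpotent orbit theorem and $\mathrm{SL}_2$-orbit theorem directly---and as you yourself acknowledge, the convergence in the Satake topology is the substantive part that your sketch does not address.
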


We mostly work with Baily--Borel compactifications,
and so we henceforth drop the superscript.
If the discriminant 
$\partial\subset Y$ 
is snc, we conclude by Theorem \ref{borel-ext}
that the classifying morphism 
extends as a morphism to the coarse space 
$\Phi\colon Y\to \oA_{g,{\bf d}}.$

\begin{definition}\label{def:classifying-morphism}
To distinguish it from the {\it classifying morphism}
$\Phi^o\colon Y^o\to \cA_{g,{\bf d}}$ 
to the DM stack, we call the rational map
$\Phi\colon Y\dashrightarrow
\oA_{g,{\bf d}}$ to the coarse space
the {\it period map}. 
\end{definition}

The stack $\cA_{g,{\bf d}}$ supports a universal
polarized variation  
of Hodge structure $(\bV,\cF^1, \psi)$ of weight $1$,
and $\wedge^g\bV$ is a $K$-trivial variation
of Hodge structure, whose corresponding moduli 
$\bQ$-b-divisor (Def.~\ref{def:moduli-ht})
on $A_{g,{\bf d}}$
is called the {\it Hodge $\bQ$-line bundle}
$\lambda_g=\det(\cF^1)$.
For $g\geq 2$, the Baily--Borel
compactification may be
defined via the ring of modular
forms, as 
$$\oA_{g,{\bf d}}=\textstyle 
{\rm Proj}\,\bigoplus_{n\geq 0}
H^0(A_{g,{\bf d}}, \lambda^{\otimes n}_g)$$
and for $g=1$, a moderate growth
condition at the cusps is required.

\begin{lemma}\label{lem:proportionality} There exists 
a constant $a \in \NN$, only depending on $g$, such 
that $a \bM Y. \sim a \Phi^{\ast}(\lambda_g)$.
\end{lemma}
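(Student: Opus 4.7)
The plan is to identify both sides, up to a multiple $a$ depending only on $g$, with Deligne's canonical extension of $\det \cF^1 \subset \bV = R^1 f^o_* \underline{\bQ}_{X^o} \otimes \cO_{Y^o}$, the Hodge filtration of the weight-$1$ VHS induced by $f$. Up to rescaling an ample line bundle on $X$, the top Hodge piece $\cF^g \subset \wedge^g \bV$ that defines $\bfM$ in \S\ref{hodge-moduli} agrees with $\det \cF^1$, so both the moduli $\bQ$-b-divisor and $\Phi^*(\lambda_g)$ are, in essence, two ways of extending the same holomorphic line bundle across the discriminant.

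First, I would pass to a birational model $Y' \to Y$ on which the discriminant is snc, and choose a finite \'etale cover $Z \to (Y')^o$ along which the pullback VHS has unipotent monodromies at every boundary divisor, extending to a smooth snc compactification $\oZ \to Y'$. The principal level-$3$ subgroup $\Gamma_N(3) \subset \Gamma_{g,\mathbf{d}}$ with $N = 2g$ is neat, so $Z$ may be chosen of degree at most $|\GL_{2g}(\bZ/3)| \leq 3^{4g^2}$, which depends only on $g$. On $\oZ$ the classifying morphism lifts to a morphism into the fine level-$3$ moduli scheme, and by Borel extension (Theorem \ref{borel-ext}) applied to this smooth arithmetic quotient, it extends to a morphism $\widetilde{\Phi}_Z \colon \oZ \to \widetilde{A}$ into a chosen smooth toroidal compactification of $A_{g,\mathbf{d}}^{(3)}$.

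Next, I would invoke the functoriality of Deligne's canonical extension under pullbacks of VHS with unipotent monodromy, together with Mumford's construction of the Hodge bundle on toroidal compactifications of Shimura varieties: the pullback of the canonically extended $\det \cF^1$ on $\widetilde{A}$ coincides with the Deligne extension of $\det \cF^1$ for the pulled-back VHS on $\oZ$. By the definition of $\bfM$ in \S\ref{hodge-moduli}, pushing forward along $\oZ \to Y'$ and normalizing by the degree recovers $\mathbf{M}_{Y'}$. On the other side, a uniform multiple $c(g)\lambda_g$ is Cartier on $\oA_{g,\mathbf{d}}$, and its pullback to $\widetilde{A}$ equals $c(g)$ copies of the canonically extended Hodge bundle, where $c(g)$ absorbs both the Cartier index of $\lambda_g$ on the Baily--Borel compactification and any discrepancy between the toroidal and minimal boundaries. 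Setting $a = a(g)$ to be a suitable product of $c(g)$ and $\deg(\oZ/Y')$ yields the stated relation $a \bM Y. \sim a \Phi^*(\lambda_g)$ after descending down the cover.

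The main obstacle is the comparison in the previous paragraph between the canonical extension of the Hodge bundle on a smooth toroidal compactification and the $\bQ$-line bundle $\lambda_g$ on the singular Baily--Borel compactification $\oA_{g,\mathbf{d}}$. This is classical for fine moduli problems, but one must track the Cartier index of $\lambda_g$ at cusps of $\oA_{g,\mathbf{d}}$, the boundary discrepancies along each toroidal stratum, and the degree of the level-$3$ cover. The key point is that each of these is bounded purely in terms of $g$, independently of the polarization type $\mathbf{d}$; the index bound $3^{4g^2}$ in particular is uniform in $\mathbf{d}$ because $\Gamma_{g,\mathbf{d}} \subset \GL_{2g}(\bZ)$ and level-$3$ reduction is taken inside $\GL_{2g}(\bZ)$.
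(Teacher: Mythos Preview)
Your approach is essentially the paper's: pass to a finite cover with unipotent monodromy, identify both $\bfM$ and $\Phi^*\lambda_g$ with Deligne's canonical extension of $\det\cF^1$ there, and descend with all indices bounded in terms of $g$ alone. The paper's execution differs in two respects. First, it takes a \emph{Galois} cover $\pi\colon Y'\to Y$ and descends by taking $G$-invariants of $\pi_*$, rather than by pushforward-and-normalize; this makes the descent step for $\Phi^*\lambda_g$ (not just for $\bM Y.$) transparent. Second, and more to the point, the paper works directly with the Baily--Borel compactification $\oA_{g,{\bf d}}$ and never introduces a toroidal model.

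Your detour through a toroidal compactification $\widetilde A$ introduces a small gap: Theorem~\ref{borel-ext} yields extension to the \emph{Baily--Borel} target, not to a toroidal one; extending a period map to a toroidal compactification requires the separate condition that the local monodromy cones land in the chosen fan (Ash--Mumford--Rapoport--Tai), which is not what you cite. You can bypass this entirely. On the Galois cover $Y'$ with unipotent monodromy and snc boundary, the paper simply records the Galois-invariant isomorphism
\[
\cO_{Y'}(\pi^*\bM Y.)\ \simeq\ \cO_{Y'}(\bM Y'.)\ =\ \overline{\cF}\,\!^g\ \simeq\ \cO_{Y'}(\pi^*\Phi^*\lambda_g),
\]
the last step using that $\lambda_g$ on $\oA_{g,{\bf d}}$ is constructed so that its pullback along an extended period map with unipotent boundary monodromy is the Deligne extension. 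Your ``main obstacle'' then dissolves, and taking $G$-invariants finishes.
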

\begin{proof}
Up to a birational modification of $Y$, we can suppose that 
$\partial = Y \setminus Y^{o}$ is an snc divisor, and so the 
restricted
period map $\Phi\vert_{Y^o} \colon Y^{o} \to A_{g,{\bf d}}$ 
extends to a regular map $\Phi \colon Y \to \oA_{g,{\bf d}}$
 by Theorem \ref{borel-ext}.
Choose a Galois cover $\pi \colon Y'\to Y$ with Galois 
group $G$, and a normal variety $X'$ birational to the 
main component $X''$ of $X \times_{Y'} Y$ such that:
\begin{itemize}
\item $\partial ' \coloneqq \pi^{-1}(\partial) \subset Y'$ 
is snc, and $f' \colon X'\to Y'$ is smooth over 
$Y_{o}' \coloneqq Y' \setminus \Sigma'$; and
\item $f'$ is semistable in codimension 1, so that 
$R^k f'_\ast  \mathbb{Z}_{X'_{o}}$ has unipotent monodromy.
\end{itemize}
The setup is summarized in the following diagram:
\[
\xymatrix{
X' \ar[dr]_{f'} \ar[r]^{bir} 
& X'' \ar[d]^{f''} \ar[r] 
& X \ar[d]^{f} \\
& Y'  \ar[r]^{\pi} 
& Y.
}
\]
Recall that the moduli part for the fibration $f'$ is the 
Deligne's extension $\overline{\mathcal{F}}\,\!^g$ of the 
lowest piece of the Hodge filtration of the $\ZZ$-variation 
of Hodge structure $R^g f'_{*} \ZZ_{X'_{o}}$, i.e., 
$\cO_{Y'}(\bM Y'.) = \overline{\mathcal{F}}\,\!^g$,
see \S~\ref{hodge-moduli}.
Thus, there exist isomorphisms 
$\rho_{Y'_{o}} \colon \cO_{Y'_{o}}( \bM{Y'_{o}}.) 
\to \cO_{Y'_{o}}((\pi^{o})^*(\Phi^{o})^{*} \lambda_{g})$ 
and  $\rho_{Y^{o}} \colon \cO_{Y^{o}}(\bM {Y^{o}}.) 
\to \cO_{Y^{o}}((\Phi^{o})^* \lambda_{g})$, which are 
compatible, i.e., $\rho_{Y'_{o}} = \pi \circ \rho_{Y^{o}}$.
The isomorphism $\rho_{Y'_{o}}$ extends to the 
Galois invariant isomorphism
\[
\cO_{Y'}(\pi^*\bM Y.) \simeq \cO_{Y'}(\bM Y'.) = 
\overline{\mathcal{F}}\,\!^g 
\simeq \cO_{Y'}(\pi^* \Phi^*\lambda_{g}).
\]
Again by construction of moduli part 
(see \S~\ref{hodge-moduli}), the Cartier indices of 
the $\QQ$-Cartier $\QQ$-divisors $\bM Y.$ and 
$\lambda_g$ are bounded above solely in term of $g$, 
say that $a\bM Y.$ and $a\lambda_g$ are Cartier.
Taking Galois invariants, we obtain
\begin{equation*}
\cO_{Y}(a \bM Y.) \simeq 
(\pi_* \pi^* \cO_{Y}(a \bM Y.))^{G} 
\simeq (\pi_*\pi^* \cO_{Y}(a \Phi^* \lambda_{g}))^G 
\simeq \cO_{Y}(a \Phi^* \lambda_{g}).
\end{equation*} 
\end{proof}

\begin{remark}
The constant $a$ in \Cref{lem:proportionality} is 
necessary already when $g=1$. Consider for instance 
an isotrivial elliptic fibration over a curve. 
Its moduli part may be a torsion Cartier divisor, 
which therefore cannot be pulled back from $\oA_{1,1} 
\simeq \mathbb{P}^1$ via the $j$-map. In fact, when 
$g=1$, we have $a=12$, see \cite[(2.9)]{Fuj86}.
\end{remark}

It follows 
that $\bM. \sim_{\QQ}\Phi^{\ast}(\lambda_g)$
is $b$-semiample, 
cf.~\cite{Fuj03}. 
In particular, 
for $f\colon X\to Y$ an arbitrary 
abelian fibration, $\bfM$ descends to $Y$ once the 
period map $\Phi\colon Y\dashrightarrow \oA_{g,{\bf d}}$
is a morphism. 
Conversely, if $\bfM$ descends to $Y$,
consider a model $\pi\colon Y'\to Y$ on which 
$\Phi'\colon Y'\to \oA_{g,{\bf d}}$
is a morphism. Curves $C$ contained in the fibers of
$\pi$ satisfy $C\cdot \bM Y'.=0$. Thus, $\Phi'$
factors through $\pi$ and descends
to a morphism $\Phi$ on $Y$.

\begin{definition} 
We call $\Phi^{-1}(\oA_{g,{\bf d}}\setminus 
A_{g,{\bf d}})\subset Y$ the {\it Hodge-theoretic
boundary} $\partial_{\rm H}\subset \partial$.
\end{definition}

\begin{remark}
The Hodge-theoretic boundary
is independent of the choice
of polarization on $X$: For instance,
it consists exactly of the union of the components
$\partial_i\subset \partial$ about which the local
monodromy of $\bV  \coloneqq  R^1f^o_*\underline{\bZ}_{X^o}$
on $Y^o$ has infinite order.
\end{remark}

\begin{lemma}\label{section-extend}
Let $f\colon X\to Y$ be an
abelian fibration over a smooth base $Y$. 
Any rational
section $s\colon Y\dashrightarrow X$ is
a morphism over $Y^o$.
\end{lemma}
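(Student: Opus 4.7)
The plan is to show the indeterminacy locus $Z \subset Y$ of $s$ satisfies $Z \cap Y^o = \emptyset$. I would argue in two steps: first that $Z$ has codimension at least $2$ in $Y$, and then that $Z$ cannot meet $Y^o$ in higher codimension. Throughout, the key property I would exploit is that the fibers of $f$ over $Y^o$ are abelian varieties, hence contain no rational curves.

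For the codimension one step, let $y \in Y$ be a codimension one point. Since $Y$ is smooth, $R \coloneqq \cO_{Y,y}$ is a DVR with fraction field $K = k(Y)$. The rational section $s$ gives a $K$-point $\Spec K \to X$ whose composition with $f$ is the canonical map $\Spec K \to Y$. Since $f$ is projective, hence proper, the valuative criterion of properness supplies a unique $R$-point $\Spec R \to X$ extending it. This shows $s$ is defined at every codimension one point, so $\codim_Y Z \geq 2$.

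For the higher-codimension step, let $\Sigma \subset X$ be the closure of the image of $s$ on its domain of definition. Then $f|_\Sigma \colon \Sigma \to Y$ is proper and birational, and $Z$ is precisely the locus where $f|_\Sigma$ fails to be an isomorphism. Suppose for contradiction that $y \in Z \cap Y^o$ and let $\pi \colon \widetilde{\Sigma} \to \Sigma$ be a resolution of singularities. Then $\widetilde{\Sigma} \to Y$ is a proper birational morphism between smooth varieties, which by Hironaka can be dominated by a sequence of blowups at smooth centers starting from $Y$; consequently the fiber $\widetilde{F}$ over $y$ is rationally chain connected, being built inductively from projective bundles over smooth centers. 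The composition $\widetilde{\Sigma} \to \Sigma \hookrightarrow X$ sends $\widetilde{F}$ into the fiber $X_y$, which is a smooth abelian variety, and hence contains no rational curves. Any morphism from a rationally chain connected proper variety to an abelian variety is constant, so the image of $\widetilde{F}$ in $X_y$ is a single point. Therefore $f|_\Sigma$ is quasi-finite at $y$; by Zariski's main theorem---using that $Y$ is normal and $f|_\Sigma$ is birational---it is an isomorphism in a neighborhood of $y$, so $s$ is defined at $y$, contradicting $y \in Z$.

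The main technical point I would expect to double-check is the rational chain connectedness of fibers of a proper birational morphism between smooth varieties. If one prefers to avoid appealing to weak factorization or to the iterated-blowup description, one can weaken this step to uniruledness: a positive-dimensional fiber $\widetilde{F}$ then contains a rational curve $C$ whose image in $X_y$ is constant (absence of rational curves in the abelian variety); pushing this rigidity to the whole fiber requires either rational chain connectedness or an induction using that $B$ in a ruling $\bP^1 \times B \dashrightarrow \widetilde{F}$ itself satisfies the hypothesis, but in characteristic zero the Hironaka-style factorization gives the cleanest route.
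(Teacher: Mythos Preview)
Your proof is correct and follows essentially the same approach as the paper: resolve indeterminacy over $Y^o$ by blow-ups of smooth centers, note that the exceptional fibers are rationally chain connected, and use that abelian varieties contain no rational curves to conclude these fibers collapse to points. Your version is more elaborate---the separate codimension-one step via the valuative criterion and the appeal to Zariski's main theorem are not needed, since the blow-up argument already handles all points of $Y^o$ uniformly---but the core idea is identical.
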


\begin{proof}
The lemma follows from the fact that
we can resolve indeterminacy of $s$
over $Y^o$ via blow-ups of smooth centers.
Since $Y^o$ is smooth,
the fibers of such a blow-up are rationally chain-connected.
The image of a rationally chain-connected
variety in an abelian variety is a point.
Hence, no blow-ups are necessary to resolve
indeterminacy.
\end{proof}

\begin{lemma}
\label{extend2}
Let $Y$ be smooth
and $U \subset Y$ be a 
big
open set.
Let $f_U \colon X_U \to U$ be a smooth
abelian fibration. 
Then the local system
$R^1f_{U*}\underline{\bZ}$ and 
the $\bZ$-PVHS it underlies extend to $Y$. 

If, additionally, $f_U$ has a section, then
there is a 
unique smooth abelian fibration 
$f \colon X \to Y$ 
with section
that extends $f_U$, 
i.e., $f \vert_U=f_U$.
\end{lemma}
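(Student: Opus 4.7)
The plan is to handle the two assertions in sequence: first extending the polarized $\bZ$-VHS across $Y\setminus U$, and then, under the section hypothesis, constructing the extended abelian fibration out of this VHS. For the local system: since $Y$ is smooth and $Y \setminus U$ has complex codimension $\geq 2$, hence real codimension $\geq 4$, transversality arguments show that loops and homotopies in $Y$ can be perturbed off $Y\setminus U$, so the inclusion induces an isomorphism $\pi_1(U) \xrightarrow{\sim} \pi_1(Y)$. Consequently $\bV \coloneqq R^1 f_{U*}\underline{\bZ}$ and its flat polarization $\psi$, both equivalent to representations of $\pi_1(U)$, extend uniquely to $Y$.

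For the Hodge filtration $\cF^1 \subset \bV \otimes \cO_U$, I would work locally on a polydisk $V \subset Y$. After trivializing the extended local system on $V$, the filtration over $V \cap U$ is encoded by a holomorphic map to the Siegel upper half space $\cH_g$. Since $\cH_g$ is a bounded symmetric domain, it is Kobayashi hyperbolic (in fact complete and taut); a Hartogs-type extension theorem for maps into taut complex spaces then extends the map uniquely to $V \to \cH_g$. Gluing over a polydisk cover of $Y$ produces the extended Hodge subbundle $\cF^1 \subset \bV \otimes \cO_Y$; the Hodge-positivity condition persists automatically, since hyperbolicity ensures the extended map stays inside the period domain rather than touching its boundary.

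For the second assertion, assume $f_U$ admits a section $s_U$; using $s_U$ as the zero section endows $X_U \to U$ with the structure of an abelian scheme. From the extended PVHS on $Y$, I would form the relative Jacobian
$$
X \;\coloneqq\; \bigl(\bV \otimes \cO_Y \,/\, \cF^1\bigr)\,\big/\, \bV,
$$
a smooth abelian fibration over $Y$ with a tautological zero section that restricts to $s_U$. Alternatively, one may invoke the classical extension theorem of Raynaud and Grothendieck (SGA 7, Exp.~IX; cf.~Faltings--Chai, I.2.7) asserting that an abelian scheme over an open subset $U$ of a regular scheme $Y$, with $Y\setminus U$ of codimension $\geq 2$, extends uniquely to an abelian scheme on $Y$. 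Uniqueness of $f$ is standard: two such smooth abelian extensions agree on $U$, and by the N\'eron mapping property any $U$-isomorphism between them extends to $Y$.

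The principal technical obstacle lies in the middle step---ensuring that $\cF^1$ extends as an \emph{honest holomorphic subbundle} satisfying the Hodge positivity, rather than merely as a reflexive subsheaf which could fail to be a subbundle at some codimension-$3$ locus. Kobayashi hyperbolicity of the target period domain forbids any degeneration of the Hodge structure over $Y\setminus U$, which resolves both the subbundle property and the preservation of polarization positivity in one stroke.
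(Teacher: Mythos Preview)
Your approach is correct and matches the paper's: extend the $\bZ$-PVHS across the codimension-$\geq 2$ locus (the paper simply states this as a known fact, while you unpack it via $\pi_1(U)\simeq\pi_1(Y)$ and a Hartogs/hyperbolicity argument for the local period map), then rebuild the abelian scheme from the extended Hodge data---exactly the paper's ``Albanese fibration associated to the classifying morphism''. One small correction: with $\bV=R^1f_{U*}\underline{\bZ}$, the quotient $(\bV\otimes\cO_Y/\cF^1)/\bV$ computes the relative $\mathrm{Pic}^0$, i.e.\ the \emph{dual} fibration $X_U^\vee$, not $X_U$; to recover $f_U$ take $(\cF^1)^*/\bV^*$ as in the paper's construction (cf.\ \S\ref{av-fib} and Definition~\ref{defn:alb}), or dualize, or fall back on your Faltings--Chai/SGA~7 reference.
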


\begin{proof} 
Any $\bZ$-PVHS
on a big open set $U$ extends 
uniquely to a $\bZ$-PVHS
on $Y$, since $Y$ is smooth
and $Y\setminus U$ has codimension
$\geq 2$. Then $f \colon X \to Y$ is the Albanese 
fibration associated to the classifying morphism 
of the $\bZ$-PVHS.
\end{proof}

\begin{proposition}\label{extend3}
Let $f\colon X\to Y$ be an abelian
fibration for which $K_X\sim_{\bQ,f} 0$ and 
$\bfM$ descends to the smooth base $Y$.
Let $Y^o\subset Y$ be an
open set over which $f$ is smooth 
and let $B_Y$ denote the boundary 
divisor induced by the canonical bundle formula.
The local system 
$R^1f^o_*\underline{\bZ}$ on $Y^o$ 
and the $\bZ$-PVHS it underlies
 extend along the open set
$U \coloneqq  Y\setminus 
(\partial_{\rm H}\cup \Supp B_Y)$.

If additionally,
$f$ has a rational section, the smooth abelian
fibration $f|_{Y^o}$ with section admits an 
extension to $U$, which coincides with $f$ 
if $X$ has klt singularities.
\end{proposition}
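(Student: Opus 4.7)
The plan is to reduce the extension statement to showing that the local monodromy of $\bV\coloneqq R^1f^o_\ast\underline{\bZ}_{X^o}$ around every codimension $1$ component $P$ of $\partial$ not contained in $\partial_H\cup\Supp B_Y$ is trivial. Granting this, $\bV$ and its Hodge filtration extend across these components, and Lemma \ref{extend2} handles the remaining complement in $U$, which has codimension $\geq 2$. When $f$ admits a rational section, Lemma \ref{section-extend} makes it regular on $Y^o$, so combining with Lemma \ref{extend2} one obtains a unique smooth abelian fibration with section $\tilde f\colon\tilde X\to U$ extending $f\vert_{Y^o}$.

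To verify triviality of the monodromy $T_P$ at such a $P$, note that by Schmid's theorem $T_P$ is quasi-unipotent, and $P\not\subset\partial_H$ makes it of finite order $e$. Assume for contradiction $e\geq 2$ and take a cyclic cover $\pi\colon\tilde Y\to Y$ of degree $e$ ramified over $P$ (locally near a general point), with reduced preimage $\tilde P=\pi^{-1}(P)_{\rm red}$. Let $\tilde f\colon\tilde X\to\tilde Y$ be the normalized base change, still lc-trivial with effective $B_{\tilde Y}$. Applying the pullback formula for generalized pairs (cf.~\cite{Amb05}) and combining with Hurwitz's $K_{\tilde Y}=\pi^\ast K_Y+(e-1)\tilde P$ and the functoriality $\bfM_{\tilde Y}=\pi^\ast\bfM_Y$ of the moduli b-divisor under finite base change yields
\[
(e-1)\tilde P+B_{\tilde Y}\sim_\bQ\pi^\ast B_Y
\]
locally near $\tilde P$. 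The coefficient of $\tilde P$ on the right-hand side is $0$ (since $P\not\subset\Supp B_Y$), while on the left it is at least $e-1\geq 1$ (by effectiveness of $B_{\tilde Y}$). This contradiction forces $e=1$.

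For the coincidence $\tilde f=f\vert_U$ when $X$ has klt singularities, the birational map $X\vert_U\dashrightarrow\tilde X$ is an isomorphism over $Y^o$. At each codimension $1$ component $P$ of $U\setminus Y^o$, triviality of monodromy together with $m_f(P)=1$ (which follows from $B_Y\geq\Delta_f$ in \eqref{eq:multiplicitycouple} and $P\not\subset\Supp B_Y$) implies $f$ is smooth generically along $P$. Hence $f\vert_U$ itself is a smooth abelian fibration with section extending $f\vert_{Y^o}$, so by the uniqueness in Lemma \ref{extend2} we get $\tilde f=f\vert_U$.

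The main obstacle is the verification of the functoriality $\bfM_{\tilde Y}=\pi^\ast\bfM_Y$ under the ramified cover $\pi$: because $\pi$ is chosen so that the pulled-back $\bZ$-PVHS has trivial monodromy around $\tilde P$, Deligne's canonical extension agrees with the smooth extension of the Hodge bundles there, and the identity follows locally---but careful global bookkeeping on b-divisors and their descent models is needed to cleanly deduce the coefficient comparison above.
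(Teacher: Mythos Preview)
Your approach is genuinely different from the paper's, which restricts to a transverse arc and argues directly that the central fiber is an abelian variety (slc from $\lct_f(P)=1$, canonical from finite monodromy via the source/dual-complex dimension, then abelian by du~Bois invariance of $h^k(\cO)$). Your cyclic-cover/coefficient-comparison route is appealing, but it has a real gap.

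The problem is the claimed effectivity of $B_{\tilde Y}$. The base-change formula you invoke produces the generalized \emph{sub}-pair attached to the sub-pair $(\tilde X,\tilde\Delta)$ with $K_{\tilde X}+\tilde\Delta=\tilde p^\ast K_X$, i.e.\ $\tilde\Delta=-R$ for the (effective) ramification divisor $R$ of $\tilde p$. Theorem~\ref{base-same-sing} only gives $B_{\tilde Y}\geq 0$ when the boundary upstairs is effective; with $\tilde\Delta=-R\leq 0$ there is no such conclusion. In fact the computation goes the other way: since $\lct_f(P)=1$ forces every component of $f^\ast P$ to be reduced, each component $\tilde D_j$ of $\tilde f^\ast\tilde P$ has multiplicity $1$ and ramification coefficient $e-1$, so $a(\tilde D_j;\tilde X,-R)/\mult_{\tilde D_j}(\tilde f^\ast\tilde P)=e$. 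Moreover $(\tilde X,\sum\tilde D_j)$ is the log pullback of the lc pair $(X,f^\ast P)$, hence lc, so $\lct_{(\tilde X,-R)}(\tilde f^\ast\tilde P)=e$ and the coefficient of $\tilde P$ in $B_{\tilde Y}$ equals $1-e=-(e-1)$. Plugging this back into your identity $(e-1)\tilde P+B_{\tilde Y}=\pi^\ast B_Y$ gives $0=0$: there is no contradiction. To salvage the idea you would need to replace $(\tilde X,-R)$ by a genuine $K$-trivial model $(\tilde X',0)$ over $\tilde Y$ and compare its boundary to $\pi^\ast B_Y$; but $R$ is in general not $\tilde f$-pulled-back (different components ramify differently), so $K_{\tilde X}\not\sim_{\bQ,\tilde f}0$, and relating the new boundary to the old one essentially requires the same fiber analysis the paper performs.

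A secondary issue: in your last paragraph, ``trivial monodromy together with $m_f(P)=1$ implies $f$ is smooth generically along $P$'' is asserted but not proved. Trivial monodromy gives a smooth extension of the Albanese, and $m_f(P)=1$ gives a reduced component, but neither forces the actual fiber $X_P$ to be smooth. The paper obtains this from the slc/canonical/du~Bois chain above; alternatively, once the first part is settled, you can use the paper's short argument for coincidence (klt source, no rational curves in abelian fibers, discrepancy comparison) rather than trying to prove smoothness of $f$ directly.
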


\begin{proof} 
Let $\partial_i$ be an irreducible
divisor in $U\setminus Y^o$.
Restricting $f$ to a general arc transverse
to $\partial_i$, we obtain a family $g\colon
\cX\to (D,0)$
over a disk, for which $K_\cX\sim_{\QQ,g} 0$, and 
$\cX^{\ast}=g^{-1}(D^{\ast})$ is a smooth abelian 
fibration. The hypothesis 
that $\partial_i\not\subset \Supp B_Y$
implies that the central fiber $\cX_0$ is slc.
Also $K_{\cX_0}\sim_{\QQ} 0$ holds by adjunction.
Since $\partial_i\not\subset 
\partial_{\rm H}$ we may additionally
conclude that the dimension
of the source (cf.\ \cite[\S~4.5]{Kollar2013}) 
of $\cX_0$ is the dimension
of the general fiber and thus $\cX_0$
is not just slc, but a canonical variety; 
cf., e.g., \cite[Thm 4.1.10]{NX2016}. 
Since $\cX_0$ has du Bois singularities 
by \cite[Ch.~6]{Kollar2013}, 
$h^{k}(\mathcal{X}_t, \mathcal{O}_{\mathcal{X}_t})$ 
is constant for any $t \in D$, see 
\cite{Steenbrink1980}. Hence, $\cX_0$
is an abelian variety and $g$ is a smooth abelian fibration. 

It follows that $f$ extends
to a smooth abelian fibration over the 
generic point of $\partial_i$.
Thus, $f$ extends smoothly to a big
open subset of $U$, and the result on 
$U$ follows from Lemma \ref{extend2}. 

Suppose now that $X$ has klt 
singularities. Let $f^+$
be a smooth extension of $f^o\colon X^o\to Y^o$ to an 
abelian fibration $f^+\colon X^+\to U$.
The birational map $f^{-1}(U) \to X^{+}$ 
is regular,  since $f^{-1}(U)$ has klt 
singularities and there 
are no rational curves in the fibers of $f^{+}$; 
cf.~\cite[Cor.~1.44]{Debarre2001}. 
Any  exceptional divisor of the morphism 
$f^{-1}(U) \to X^{+}$ has positive discrepancy 
with respect to $X^+$,
since $X^{+}$ is smooth (in particular, terminal), 
but both $f^{-1}(U)$ and $X^+$ are 
$K$-trivial over $Y^+$.
Hence, $f^+$ and 
$f$ must be isomorphic over $U$.
\end{proof}

\subsection{Fibrations of Picard type}

We define a class of $K$-trivial fibrations
for which we expect boundedness to hold.

\begin{proposition}\label{pure} 
Let $f\colon X\to Y$
be a K-trivial fibration, 
and let 
$f^o\colon X^o\to Y^o$ be a restriction 
over an open set $Y^{o}$ 
over which $f$ is locally trivial.
Then $R^2f^o_*\underline{\bZ}$ underlies
a variation of Hodge structure of pure weight $2$.
\end{proposition}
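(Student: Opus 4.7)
The plan is to use the local triviality of $f^o$ to reduce the statement to the pointwise purity of the Hodge structure on $H^2$ of a single $K$-trivial fiber, and then to invoke the hypothesis of canonical singularities.

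First, I would unpack local triviality. Every $y \in Y^o$ admits an analytic open neighborhood $U$ over which $f^o$ is isomorphic to the projection $U \times F \to U$, where $F$ denotes the general fiber, a projective $K$-trivial variety with canonical singularities. It follows immediately that $R^2 f^o_* \underline{\bZ}$ is a locally constant sheaf on $Y^o$ with stalk $H^2(F, \bZ)$. The relative Hodge filtration on $R^2 f^o_* \Omega^{\bullet}_{X^o/Y^o}$, coming from the stupid filtration and the relative Hodge-to-de Rham spectral sequence, pulls back from the Hodge filtration on $H^2(F, \bC)$ via each trivialization; hence it is a filtration by holomorphic subbundles. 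The Gauss--Manin connection is locally trivial over each chart, so Griffiths transversality is automatic.

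It remains to verify that the pointwise Hodge structure on $H^2(F, \bZ)$ is pure of weight $2$. Here the hypothesis that $F$ is $K$-trivial with canonical---hence rational, by Elkik's theorem---singularities is essential. Fix a resolution $\pi \colon \widetilde F \to F$. I would argue that $\pi^* \colon H^2(F, \bQ) \to H^2(\widetilde F, \bQ)$ is an injective morphism of mixed Hodge structures. By the Leray spectral sequence for $\pi$, the kernel of the edge map is the image of $d_2 \colon H^0(F, R^1\pi_* \bQ_{\widetilde F}) \to H^2(F, \bQ)$, and rationality of the singularities ensures this term contributes nothing; one clean way to see this is that $\bQ_F$ appears as a direct summand of $R\pi_*\bQ_{\widetilde F}$ for a resolution of rational singularities, via the decomposition theorem. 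Since $\widetilde F$ is smooth projective, $H^2(\widetilde F, \bQ)$ is pure of weight $2$; by strictness of MHS morphisms, an injection from $H^2(F, \bQ)$ forces $W_1 H^2(F, \bQ) = 0$, and the constraint $W_2 H^2(F, \bQ) = H^2(F, \bQ)$ is automatic for projective $F$. Hence $H^2(F, \bQ)$ is pure of weight $2$.

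Assembling the pieces, $R^2 f^o_* \underline{\bZ}$ equipped with the Hodge filtration $\mathcal{F}^{\bullet}$ is a pure $\bZ$-variation of Hodge structure of weight $2$ on $Y^o$; a polarization is obtained fiberwise by restricting the cup-product pairing against a relatively ample class on $X^o$ to the appropriate primitive part. The main obstacle is the pointwise purity of $H^2(F, \bQ)$, which rests on the interaction between rational singularities and Deligne's mixed Hodge theory; this is classical, and can alternatively be packaged through pure Hodge modules or intersection cohomology, both of which identify $H^2(F, \bQ)$ with $IH^2(F, \bQ)$ under the rationality hypothesis.
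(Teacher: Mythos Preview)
Your approach is essentially the paper's: both reduce to the pointwise statement that $H^2$ of a fiber with canonical (hence rational) singularities carries a pure Hodge structure of weight $2$, which the paper simply cites as \cite[Lem.~2.1]{BL2021}. Your added detail on why local triviality yields a VHS is correct and the paper leaves it implicit.

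One caution on your inline justification of the purity fact: the decomposition theorem yields $IC_F$, not $\bQ_F$, as a direct summand of $R\pi_*\bQ_{\widetilde F}$, and rational singularities alone do not force $\bQ_F[\dim F]\simeq IC_F$ nor $H^2(F,\bQ)\simeq IH^2(F,\bQ)$ in general. The injectivity of $\pi^*\colon H^2(F,\bQ)\to H^2(\widetilde F,\bQ)$ is nonetheless true under rationality, but the clean route is via mixed Hodge theory (the kernel lies in $W_1H^2(F)$, and one checks this vanishes using $R^i\pi_*\cO_{\widetilde F}=0$ together with the Hodge--Du Bois description of the graded pieces), rather than the summand claim you invoke. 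Since you correctly flag the result as classical and the paper treats it as a black box, this does not affect the validity of your proof.
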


\begin{proof} The second cohomology group of a 
variety with rational singularities carries a 
pure Hodge structure, see e.g., \cite[Lem.~2.1]{BL2021}. 
The proposition follows, as the fibers of 
$f^o$ are canonical.
\end{proof}

Associated to the local system 
$\bU \coloneqq  
(R^2f^o_*\underline{\bZ})_{\rm tf}$
we have a monodromy representation 
$$\rho\colon \pi_1(Y^o)\to\GL_N(\bZ)$$ where
$N=\rk \bU$. The global sections
$H^0(Y^o,\bU)=\bU^\rho$ 
are the monodromy invariants 
and do not depend on the chosen
Zariski open set $Y^o$.

\begin{proposition}
A flat section 
$L\in \bU^\rho$ lies in $H^{1,1}$ of 
every fiber of $f^o\colon X^o\to Y^o$ 
if and only if it lies in $H^{1,1}$
of some fiber.
\end{proposition}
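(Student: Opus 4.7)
The plan is to promote the VHS to a locally constant one using the local triviality hypothesis, and then to deduce a global Hodge-type decomposition of $\bU$ into flat sub-local systems, from which the conclusion is immediate.

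First I would unpack the local triviality: per the footnote's definition, every $y \in Y^o$ admits an analytic neighborhood $V$ together with a biholomorphism $f^{-1}(V) \cong V \times X_y$ over $V$. Pushing forward $\underline{\bZ}$ from the second factor yields $\bU|_V \cong \underline{H^2(X_y,\bZ)}_V$ not merely as an integral local system, but as a VHS: the Hodge filtration $\cF^\bullet$ restricts on $V$ to the constant filtration induced by the Hodge structure on $H^2(X_y)$. In particular, the Gauss--Manin connection preserves each $\cF^p$, rather than merely $\cF^{p-1}$ as Griffiths transversality would dictate.

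Consequently the monodromy representation $\rho$ preserves both $\cF^\bullet$ and its complex conjugate, and hence the Hodge decomposition $H^{p,q} = \cF^p \cap \overline{\cF^q}$ globally. This promotes the pointwise Hodge decomposition to a splitting
$$\bU \otimes_\bZ \bC \;=\; \bU^{2,0} \oplus \bU^{1,1} \oplus \bU^{0,2}$$
as complex local systems on $Y^o$. A flat section $L \in \bU^\rho$ decomposes accordingly as $L = L^{2,0} + L^{1,1} + L^{0,2}$ with each $L^{p,q}$ a flat section of $\bU^{p,q}$; since $Y^o$ is connected (any non-empty open subset of the irreducible variety $Y$ is), a flat section of a local system is determined by its value at a single point, so $L^{p,q}(y_0) = 0$ for some $y_0$ forces $L^{p,q} \equiv 0$.

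Since $L$ is integral, in particular real, it lies in $H^{1,1}(X_y)$ if and only if $L^{0,2}(y) = 0$. Hence if $L$ is of type $(1,1)$ at some $y_0$, the flat section $L^{0,2}$ vanishes at $y_0$ and therefore identically, so $L$ is of type $(1,1)$ on every fiber. The core observation is conceptual rather than technical: it is local triviality of $f^o$, not mere smoothness, that upgrades the VHS to a locally constant one and endows the Hodge decomposition with a flat structure; without this hypothesis the result would fail, as shown by the existence of proper Noether--Lefschetz loci in generic families of K3 surfaces.
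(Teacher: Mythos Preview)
Your argument rests on the claim that local triviality of $f^o$ forces the Hodge filtration to be flat, but this misreads the definition. The footnote in the paper says explicitly that local triviality is ``local on the source space and not on the target of the deformation'': every point of $X^o$ (not of $Y^o$) has an analytic neighborhood on which the family is trivial. This is an equisingularity condition, automatically satisfied by any smooth morphism; it does not produce a biholomorphism $f^{-1}(V)\cong V\times X_y$ over neighborhoods $V$ in the base. The complex structure on the fibers---and hence the Hodge filtration on $\bU$---may genuinely vary. Your own closing remark exposes the contradiction: a smooth family of K3 surfaces is locally trivial in the intended sense, yet has proper Noether--Lefschetz loci, so the conclusion you draw from local triviality cannot hold.

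The paper's proof instead appeals to the theorem of the fixed part (Schmid), which for a polarized VHS over a quasi-projective base asserts that the monodromy-invariant subspace $\bU^\rho$ is a constant sub-VHS: the Hodge filtration it inherits from each fiber is independent of the fiber. This is the substantive input missing from your argument. Once one knows that $\bU^\rho$ carries a single Hodge structure embedding compatibly into every $\bU_y$, a class $L\in\bU^\rho$ lying in $H^{1,1}(\bU_{y_0})$ for one $y_0$ must lie in $H^{1,1}(\bU^\rho)$, and hence in $H^{1,1}(\bU_y)$ for all $y$.
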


\begin{proof}
This follows from the theorem
of the fixed part \cite[Thm.~7.22]{schmid},
which implies $\bU^\rho$
admits a pure weight $2$ Hodge structure,
which is a sub-Hodge structure of every fiber.
\end{proof}

\begin{definition}\label{def:pic-type}
We say that a fibration
$f\colon X\to Y$ is of {\it Picard type} 
if $(R^2f^o_*\underline{\bZ})_{\rm tf}^\rho$
is of Hodge--Tate type (equivalently,
$h^{2,0}=0$ for this Hodge structure).
\end{definition}

\begin{remark}\label{rem:ps-picard}
Definition \ref{def:pic-type} says that
every globally flat class of degree
$2$ on the fibers is of $(1,1)$-type.
Thus, a fibration on a projective variety
with $(R^2f^o_*\underline{\bZ})_{\rm tf}^\rho$
of rank $1$ is automatically of Picard type. Moreover,
a fibration by primitive symplectic varieties 
is \emph{not} of Picard type if and only if it is 
isotrivial and the monodromy acts by 
symplectic automorphisms of the fiber.
\end{remark}

\begin{proposition}\label{is-pic}
Let $X$ be a projective 
variety with rational 
singularities and $h^2(X,\cO_X)=0$. 
Then any fibration $f\colon X\to Y$ 
is of Picard type.
\end{proposition}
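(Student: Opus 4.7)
The plan is to reduce, via resolution of singularities, to a situation where Deligne's global invariant cycle theorem applies. Fix a projective resolution $\pi\colon\widetilde X\to X$. Because $X$ has rational singularities, $R^i\pi_*\cO_{\widetilde X}=0$ for $i>0$, giving $h^2(\widetilde X,\cO_{\widetilde X})=h^2(X,\cO_X)=0$. The Hodge decomposition on the smooth projective variety $\widetilde X$ then forces $H^2(\widetilde X,\bC)=H^{1,1}(\widetilde X)$, so the weight-$2$ Hodge structure on $\widetilde X$ is pure of Hodge--Tate type.

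Next, set $\widetilde f\coloneqq f\circ\pi$ and shrink $Y^o$ if necessary to arrange that $\widetilde f^o\colon\widetilde X^o\to Y^o$ is smooth projective while $f^o$ remains locally trivial. Write $\widetilde\bU\coloneqq R^2\widetilde f^o_*\underline{\bZ}$, a local system on $Y^o$ underlying a pure $\bZ$-PVHS of weight $2$. Deligne's global invariant cycle theorem, applied to $\widetilde f^o$ with smooth projective compactification $\widetilde X^o\hookrightarrow\widetilde X$, identifies the image of the restriction $H^2(\widetilde X,\bQ)\to H^2(\widetilde X_y,\bQ)$ with the monodromy invariants $\widetilde\bU_y^\rho\otimes\bQ$, as a morphism of pure weight-$2$ Hodge structures. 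Since the source is of Hodge--Tate type by the previous paragraph, so is the image; hence $\widetilde\bU^\rho$ is Hodge--Tate.

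I then transfer this conclusion to $\bU\coloneqq R^2f^o_*\underline{\bZ}$. For each $y\in Y^o$, the fiber $X_y$ inherits rational singularities from $X^o$ (as the fiber of a locally trivial morphism from an open subset of $X$), and $\pi_y\colon\widetilde X_y\to X_y$ is a resolution. By \cite[Lem.~2.1]{BL2021} (the reference invoked in the proof of Proposition \ref{pure}), $H^2(X_y,\bQ)$ carries a pure Hodge structure of weight $2$, and the pullback $\pi_y^\ast\colon H^2(X_y,\bQ)\hookrightarrow H^2(\widetilde X_y,\bQ)$ is an injection of Hodge structures. Globalizing via the adjunction $\underline{\bQ}_{X^o}\to R\pi_*\underline{\bQ}_{\widetilde X^o}$, the map $\pi^\ast$ induces a $\pi_1(Y^o)$-equivariant injection of local systems $\bU\otimes\bQ\hookrightarrow\widetilde\bU\otimes\bQ$. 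Taking invariants realizes $\bU^\rho\otimes\bQ$ as a sub-Hodge structure of $\widetilde\bU^\rho\otimes\bQ$, and the Hodge--Tate property is inherited, establishing that $f$ is of Picard type.

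The step requiring most care is the globalization in the last paragraph: one must verify that the fiberwise injections $\pi_y^\ast$ truly assemble into a morphism of local systems on $Y^o$. This should follow naturally from the adjunction described above, with fiberwise injectivity controlled by the rational singularities of each $X_y$.
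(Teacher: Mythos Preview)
Your proof is correct and follows essentially the same strategy as the paper: resolve to $\widetilde X$, use rational singularities to get $H^2(\widetilde X,\bC)=H^{1,1}$, and push this Hodge--Tate property down to $(\bU_\bQ)^\rho$ via the global invariant cycle theorem. The only packaging difference is in the final transfer step: the paper uses the direct-sum splitting $R^2\widetilde f^o_*\bQ \simeq R^2f^o_*\bQ \oplus \bU'$ (with $\bU'$ spanned by exceptional divisor classes) together with the surjection $H^{1,1}(\widetilde X,\bQ)\twoheadrightarrow (\bU_\bQ)^\rho$, whereas you use the inclusion $(\bU_\bQ)^\rho\hookrightarrow(\widetilde\bU_\bQ)^\rho$; both are valid and essentially equivalent. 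Your concern about globalizing the fiberwise $\pi_y^*$ is unfounded: the adjunction map $\underline{\bQ}_{X^o}\to R\pi_*\underline{\bQ}_{\widetilde X^o}$ already lives at the sheaf level, and applying $R^2f^o_*$ gives a morphism of local systems whose stalks are exactly the $\pi_y^*$.
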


\begin{proof}
Let $\wX\to X$ be a resolution of 
singularities which is a simultaneous
resolution of $f^o\colon X^o\to Y^o$.
Since $X$
has rational singularities, we have 
$h^2(\wX,\cO_\wX)=0$.
Thus $H^2(\wX,\bC)=H^{1,1}(\wX)$. 
Since $\wX^o$ is smooth, 
the mixed Hodge structure on $H^2(\wX^o,\bQ)$
has weights $\geq 2$.
The natural restriction map 
$$H^2(\wX,\bQ)\twoheadrightarrow W_2H^2(\wX^o,\bQ)$$ 
surjects onto the weight $2$ part
of the mixed Hodge structure.

Consider the Leray spectral sequence
for the smooth 
morphism $\widetilde{f}^o\colon \wX^o\to Y^o.$ 
By the relative hard Lefschetz theorem 
of Blanchard--Deligne,
this spectral sequence
degenerates at the $E_2$ page,
giving a surjection 
$$H^2(\wX^o,\bQ)\twoheadrightarrow 
H^0(Y^o,R^2\widetilde{f}^o_*\underline{\bQ}) =
H^0(Y^o,R^2f^o_*\underline{\bQ}\oplus \bU'_{\bQ}) 
= (\bU_\bQ)^\rho \oplus (\bU'_\bQ)^\rho,$$
where $\bU'$ is the local system
whose stalk at $y \in Y^o$ is generated by the 
cycle classes of the exceptional divisors of the
resolution $\wX^o_y \to X^o_y$, 
as in \Cref{ex:moduliKtrivialfibration}.
Since $\bU$ has pure weight $2$
by Proposition \ref{pure},
the weight $2$ part $W_2H^2(\wX^o,\bQ)$ surjects
onto $(\bU_\bQ)^\rho$. 
We conclude that there is a surjection
\begin{align}\label{11-lift}
H^{1,1}(\wX,\bQ)\twoheadrightarrow (\bU_\bQ)^\rho
\end{align}
and so the Hodge structure on the latter
is of Hodge--Tate type.
\end{proof}

\begin{remark}\label{pic-aut}
For a projective
variety with rational singularities, 
we have Hodge 
symmetry: $h^2(X,\cO_X)=h^0(X,\Omega_X^{[2]})$.
So any fibration from a CY variety 
of dimension $\geq 3$ is of Picard type.
The same holds for fibrations with primitive
symplectic total space $X$. In that case, 
$h^2(X,\cO_{X})=h^2(\wX,\cO_{\wX})=1$, but the
surjection $H^2(\wX,\bQ)\twoheadrightarrow 
(\bU_\bQ)^\rho$ sends
$H^{2,0}(\wX)$ to zero,
since the fibration is Lagrangian,
so the proof of Proposition \ref{is-pic} still applies.
In fact, in this case, more is true:
$(R^2f^o_*\underline{\bZ})_{\rm tf}^\rho$
has rank $1$, see \cite[Lem.~2.2]{Matsushita2016}.
\end{remark}

\subsection{Moduli of lattice-polarized abelian varieties}
\label{sec:lattice_av}

Fix a reference free $\bZ$-module $V$
of rank $2g$. 

\begin{definition} The {\it period
domain of complex tori} is
$$
\bD  \coloneqq  
\{F^1\in {\rm Gr}(g,V_\bC)\,:\,F^1\cap \oF\,\!^1=0\}
$$
i.e., the space of weight $1$ (unpolarized)
Hodge structures on $V$.
\end{definition}

Given a point $F^1\in \bD$, we have a map
$V_\bC^{\ast}\to (F^1)^{\ast}$. We define a complex
torus as the quotient 
$A=A(F^1)  \coloneqq  (F^1)^{\ast}/V^{\ast}.$ Thus,
we have canonical identifications
$H_1(A,\bZ)= V^{\ast}$, $H^1(A,\bZ)= V$.
We have that $\bD$ is a fine
moduli space, in the analytic category,
for complex tori $A$, with 
an isomorphism of free $\bZ$-modules
$\varphi\colon 
H^1(A,\bZ)\to V$, i.e., a marking.

\begin{definition}
Let $\Lambda\subset \wedge^2V$
be a nonzero saturated sublattice of alternating
forms on $V^{\ast}$.
The {\it period domain
of $\Lambda$-polarized complex tori} is
$$
\bD_\Lambda  \coloneqq  
\{F^1\in \bD\,:\,\Lambda\subset H^{1,1}  
\coloneqq  F^1\wedge \oF\,\!^1\}.
$$
\end{definition}

Note that $\bD_\Lambda$ is a locally
closed set in the real Zariski topology
on ${\rm res}_{\bC/\bR}\,{\rm Gr}(g,V_\bC)$,
so it has finitely many connected components. 
Let
$$\Sp_\Lambda  \coloneqq  
\{\gamma\in\GL(V)\,:\,(\wedge^2\gamma)(L)=L
\textrm{ for all }L\in \Lambda\}.
$$ 
Then $\Sp_\Lambda$ acts naturally
on $\bD_\Lambda$.

\begin{proposition}\label{pol-const} 
Let $\bD_{\Lambda,0}$ be a connected
component of $\bD_\Lambda$. Then if $L\in \Lambda$
defines a polarization 
for some $F^1\in \bD_{\Lambda,0}$,
it defines a polarization for all $F^1\in \bD_{\Lambda,0}$.
\end{proposition}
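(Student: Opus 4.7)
The positivity clause in the excerpt's definition of the Siegel space amounts to the positive-definiteness of a Hermitian form $H_L$ on the $g$-dimensional subspace $F^1 \subset V_\bC$, constructed from the alternating form associated to $L$. Since membership in $\bD_\Lambda$ already encodes the Hodge condition $L \in F^1 \wedge \bar F^1$, the content of the proposition is that the signature of $H_L$ is constant along the connected component $\bD_{\Lambda,0}$. The strategy is a standard continuous-signature argument, contingent on showing that $H_L$ is nondegenerate at every $F^1 \in \bD_\Lambda$.

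Explicitly, one sets $H_L(\alpha,\beta) \coloneqq  i\,\widetilde L(\alpha,\bar\beta)$ for $\alpha,\beta\in F^1$, where $\widetilde L$ denotes the alternating bilinear form on $V_\bC$ determined by $L$ under the conventions of the excerpt. The condition $L \in F^1 \wedge \bar F^1$, which holds throughout $\bD_\Lambda$, gives $\widetilde L|_{F^1 \times F^1} = 0$, so $H_L$ is a well-defined Hermitian form varying continuously in $F^1$. A direct computation identifies $\ker H_L = F^1 \cap \mathrm{rad}(\widetilde L)$, using $F^1 \oplus \bar F^1 = V_\bC$ which holds for every $F^1 \in \bD$. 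Consequently, nondegeneracy of $H_L$ on $F^1$ is equivalent to $\mathrm{rad}(\widetilde L) \cap F^1 = 0$, and is guaranteed uniformly in $F^1$ as soon as $\widetilde L$ itself is nondegenerate on $V_\bC$.

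Now I would use the hypothesis: positive-definiteness of $H_L$ at some $F_0^1 \in \bD_{\Lambda,0}$ forces $H_L$ to be nondegenerate there, hence forces $\widetilde L$ to be nondegenerate on $V_\bC$. Since this nondegeneracy is intrinsic to $L$ and independent of any choice of Hodge filtration, $H_L$ is then nondegenerate at every point of $\bD_\Lambda$. Its signature, being an integer-valued continuous function on this open nondegeneracy locus, is locally constant, hence constant on the connected set $\bD_{\Lambda,0}$. Since the signature at $F_0^1$ is $(g,0)$, this value persists throughout $\bD_{\Lambda,0}$, giving positive-definiteness of $H_L$ there and thus the polarization property claimed.

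I do not anticipate a substantive obstacle: the argument is a linear-algebra continuity statement. The only delicate point is matching conventions, i.e.\ verifying that the positivity $i\alpha\cdot\bar\alpha>0$ of the Siegel space translates to the positive-definiteness of the Hermitian form $H_L$ constructed from $\widetilde L$; this is routine once one keeps track of the duality between $V^{\ast}_{g,\mathbf{d}}$ and $V_{g,\mathbf{d}}$ induced by the choice of $L$.
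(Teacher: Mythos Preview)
The proposal is correct and takes essentially the same approach as the paper: define a Hermitian form from $L$, show it is nondegenerate throughout $\bD_\Lambda$ (using that $L$ itself must be nondegenerate on the ambient space), and conclude by continuity of signature on the connected component. The only cosmetic difference is that the paper places its Hermitian form $h_L$ on $K \coloneqq \ker(V_\bC^* \to (F^1)^*) \subset V^*_\bC$, where $L \in \wedge^2 V$ naturally acts as an alternating form, rather than on $F^1 \subset V_\bC$; as you anticipate in your final paragraph, this is purely a duality bookkeeping matter.
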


\begin{proof}
Since $L$ is real by hypothesis, the condition
$L\in H^{1,1}$ amounts to the 
statement that the subspace
$K\coloneqq \ker(V_\bC^*\to (F^1)^*)\subset V^*_\bC$
is isotropic for $L$.

Note $L\in \Lambda$ can only define 
a polarization if $L$ is non-degenerate
on $V^{\ast}$.
The condition $V_\bC^* = K\oplus \oK$, 
together with the non-degeneracy of $L$
and that $K$ is isotropic,
imply that $L$ induces a non-degenerate
pairing between $K$ and $\oK$.
Equivalently, 
the hermitian form 
$$h_L(\alpha,\beta) 
\coloneqq  
iL(\alpha,\overline{\beta})$$
on $K$ is non-degenerate. Hence $h_L$ is
non-degenerate at all points of a connected component
$\bD_{\Lambda,0}\subset \bD_\Lambda$. It follows
that the signature of $h_L$ is the same
for all $F^1\in \bD_{\Lambda,0}$. The proposition follows,
as $L$ defines a polarization for 
$F^1\in \bD_\Lambda$ if and only if $h_L$ is 
positive-definite.
\end{proof}

\begin{definition} A {\it polarizable}
connected component of 
$\bD_{\Lambda,0}\subset \bD_\Lambda$ is a connected 
component on which there exists some $L\in \Lambda$
which defines a polarization on some
(equivalently, any) $F^1\in \bD_{\Lambda,0}$. 
We define ${\rm Sp}_{\Lambda,0}
\subset {\rm Sp}_\Lambda$ as the 
stabilizer of this connected component.
\end{definition}

\begin{proposition} \label{shimura-var}
The action of $\Sp_{\Lambda,0}$ on a polarizable component
$\bD_{\Lambda,0}$
is properly discontinuous, and the quotient
is a Shimura variety.
\end{proposition}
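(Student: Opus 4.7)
The plan is to realize $\bD_{\Lambda,0}$ as a closed subdomain of the Siegel upper half-space, preserved by an arithmetic subgroup of a reductive $\bQ$-algebraic group; the proposition will then fall out of the general theory of Mumford--Tate subdomains and Shimura varieties of Hodge type.

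First, I would fix a polarizing class $L \in \Lambda$ guaranteed by the polarizability hypothesis. Choosing a symplectic isometry of $(V^*,L)$ with the standard symplectic lattice $V^*_{g,{\bf d}}$ of the appropriate type ${\bf d}$ gives a closed embedding $\bD_{\Lambda,0} \hookrightarrow \cH_g$ onto the analytic subspace of $L$-polarized weight-$1$ Hodge structures for which each member of a finite $\bQ$-basis $L_1=L, L_2, \ldots, L_k$ of $\Lambda_\bQ$ is of type $(1,1)$. Since every $\gamma \in \Sp_{\Lambda,0}$ fixes $L$ pointwise by definition, we have $\Sp_{\Lambda,0} \subset \Gamma_{g,{\bf d}}$, and the embedding is equivariant.

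Proper discontinuity is then immediate: the classical $\Gamma_{g,{\bf d}}$-action on $\cH_g$ is properly discontinuous, and proper discontinuity is inherited by any subgroup acting on any invariant closed subspace. For the Shimura variety structure, I would consider the $\bQ$-algebraic group
\[
G \coloneqq \bigcap_{i=1}^{k} \Stab_{\Sp(V,L)_\bQ}(L_i),
\]
for which $\Sp_{\Lambda,0}$ is a finite-index subgroup of $G(\bZ)$. Interpreting the $L_i$ as Hodge tensors, $\bD_\Lambda$ is precisely the Mumford--Tate subdomain of $\cH_g$ cut out by requiring $L_2,\dots,L_k$ to remain of type $(1,1)$ on the universal weight-$1$ Hodge structure. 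By the standard theory of Shimura varieties of Hodge type (see, e.g., Green--Griffiths--Kerr, or Milne's notes on Shimura varieties), the pair $(G, \bD_{\Lambda,0})$ then defines a Shimura datum, and the arithmetic quotient $[\Sp_{\Lambda,0} \backslash \bD_{\Lambda,0}]$ is a connected component of a Shimura variety.

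The main technical obstacle will be verifying the Deligne axioms for $(G, \bD_{\Lambda,0})$, the key point being reductivity of $G$. The cleanest argument is to observe that the endomorphisms $L^{-1}L_i \in \End(V^*_\bQ)$, together with their $L$-adjoints, generate a $\ast$-subalgebra of $\End(V^*_\bQ)$; because this algebra of Hodge endomorphisms preserves the polarization $L$, Poincar\'e reducibility forces it to be semisimple, and hence its centralizer $G$ in the reductive group $\Sp(V,L)_\bQ$ is itself reductive. Transitivity of $G(\bR)^+$ on $\bD_{\Lambda,0}$ and compactness of point stabilizers then follow from standard Mumford--Tate theory, completing the verification.
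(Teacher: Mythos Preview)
Your proposal is correct and follows the same approach as the paper: embed $\bD_{\Lambda,0}\hookrightarrow\cH_g$ via a polarizing $L\in\Lambda$, identify $\Sp_{\Lambda,0}$ with a finite-index subgroup of the stabilizer of the Mumford--Tate subdomain cut out by $\Lambda\subset H^{1,1}$, and appeal to the standard theory. The paper's proof is two sentences and leaves the verification of the Deligne axioms implicit, whereas you spell out the reductivity argument via the semisimplicity of the $*$-algebra generated by the $L^{-1}L_i$; this extra detail is fine but not required.
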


\begin{proof}
Since $\bD_{\Lambda,0}$ is polarizable,
we have a closed embedding of period domains
$\bD_{\Lambda,0}\hookrightarrow \cH_g$
into the Siegel space associated to
the polarization $L$ of type ${\bf d}$.
Then ${\rm Sp}_{\Lambda,0}$ can be identified
with a finite index subgroup
of the stabilizer of the Mumford--Tate
subdomain of $\cH_g$ corresponding
to the condition $\Lambda\subset H^{1,1}$.
\end{proof}

In fact it follows that there is a DM stack,
defined as in \S~\ref{av-fib},
$$
\cA_{g,\Lambda}  \coloneqq  
\big[
\Sp_\Lambda\backslash  \!\!\!\!\bigsqcup_{\rm polarizable} \!\!\!\!\bD_{\Lambda,0}\big]
$$
of $\Lambda$-polarizable abelian varieties with origin.
It serves as an appropriate target
for period maps of abelian fibrations
 of Picard type, with rational section.
 When $\Lambda = \bZ L$, 
 for $L$ of type ${\bf d}$,
 $$\bigsqcup_{\rm polarizable} 
 \!\!\!\!\bD_{\Lambda,0}= 
 \cH_{g,{\bf d}}\sqcup \overline{\cH}_{g,{\bf d}}$$
 and $\cA_{g,\Lambda} = \cA_{g,{\bf d}}
 \sqcup \overline{\cA}_{g,{\bf d}}$
 with the two components 
 corresponding to 
 when $h_L$ is positive-, 
 resp.~negative-definite (that 
 is, when $L$ or $-L$ is ample).
On the other connected components
of $\bD_\Lambda$ where $h_L$ has indeterminate
signature $(r,s)$, i.e.~$r,s\geq 1$, 
Proposition \ref{shimura-var} is false;
the action of $\Gamma_{g,{\bf d}}$ is {\it not}
properly discontinuous.

Associated to an abelian fibration 
$f\colon X\to Y$, write $\bV \coloneqq 
R^1f_*^o\underline{\bZ}$ for the corresponding
rank $2g$ local system on $Y^o$.

\begin{proposition}\label{pic-per}
Let $f\colon X\to Y$ be an abelian
fibration of Picard type, 
which is smooth over an open set $Y^o\subset Y$.
Let $\Lambda  \coloneqq  H^0(Y^o,\wedge^2\bV)
\subset \wedge^2\bV_*$
for a basepoint $*\in Y^o$.
There is a well-defined classifying morphism
\begin{align*} \Phi^o\colon Y^o&\to \cA_{g,\Lambda} \\
y&\mapsto {\rm Aut}^0(X_y).
\end{align*}
If furthermore, $f$ admits a rational section 
and $Y^o$ is smooth,
then $f^o\colon X^o\to Y^o$ is the pullback
of the universal family along $\Phi^o$.
\end{proposition}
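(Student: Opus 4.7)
The plan is to establish three pieces of data defining $\Phi^o$---fiberwise $(1,1)$-ness of $\Lambda$, the existence of a polarizing element of $\Lambda$, and stability of the polarizable component under monodromy---and then, in the presence of a rational section, to identify the pullback of the universal family with $X^o$ itself.

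First I would unpack the fiberwise structure. For $y \in Y^o$, the weight-one Hodge structure on $\bV_y = H^1(X_y,\bZ)$ reconstructs $\Aut^0(X_y)$ as an abelian variety, and $H^2(X_y,\bZ) \simeq \wedge^2 \bV_y$ since each fiber is abelian. Because $\Lambda = (\wedge^2 \bV)^\rho$ by definition, every element of $\Lambda$ specializes to a flat class on each fiber, and the Picard-type hypothesis (Definition \ref{def:pic-type}) says exactly that these flat classes are all of type $(1,1)$. Hence $\Lambda \subset H^{1,1}(\Aut^0(X_y))$ for every $y \in Y^o$.

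Next I would exhibit a polarizing element of $\Lambda$. Pick an ample line bundle $L'$ on $X$; via the edge map of the Leray spectral sequence, the class $c_1(L'|_{X^o}) \in H^2(X^o,\bZ)$ produces a global section $L_0 \in H^0(Y^o, R^2 f^o_* \underline{\bZ}_{X^o}) = H^0(Y^o, \wedge^2 \bV) = \Lambda$, where the first equality uses that the fibers of $f^o$ are abelian. By construction $L_0|_y = c_1(L'|_{X_y})$ polarizes the abelian variety $X_y = \Aut^0(X_y)$, so every fiber is $\Lambda$-polarizable. Proposition~\ref{pol-const} combined with connectedness of $Y^o$ then forces the image of $\Phi^o$ into a single polarizable component $\bD_{\Lambda,0}$, and the monodromy representation $\rho$, which preserves $\Lambda$ by definition, also preserves this component by connectedness; thus $\rho$ factors through $\Sp_{\Lambda,0}$. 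This yields a well-defined morphism $\Phi^o \colon Y^o \to \cA_{g,\Lambda}$ sending $y \mapsto \Aut^0(X_y)$.

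For the universal-family assertion, I would invoke Lemma~\ref{section-extend} applied to the smooth abelian fibration $f^o$ over the smooth base $Y^o$, promoting the rational section to a regular section $s \colon Y^o \to X^o$. The pullback of the universal family over $\cA_{g,\Lambda}$ along $\Phi^o$ is, by Definition~\ref{defn:alb}, the Albanese fibration $(X^o)^{\rm Alb} \to Y^o$, and fiberwise translation $x \mapsto x - s(f^o(x))$ provides an isomorphism $X^o \xrightarrow{\sim} (X^o)^{\rm Alb}$ of abelian fibrations with origin. No step is seriously obstructed; the one point requiring care is ensuring the image lies in the polarizable (and not merely polarized, nor signature-indefinite) locus of $\bD_\Lambda$, which is precisely what Proposition~\ref{pol-const} and the global polarization $L_0$ provide.
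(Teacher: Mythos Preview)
Your proof is correct and follows essentially the same approach as the paper's own proof, which is quite terse: the paper invokes the Picard-type definition for the $(1,1)$-condition, notes $\rho$ factors through $\Sp_{\Lambda,0}$ because $\Lambda=(\wedge^2\bV)^\rho$, and then uses Lemma~\ref{section-extend} for the second part. Your version is more explicit in constructing the polarizing element $L_0\in\Lambda$ from an ample line bundle on $X$ (which the paper leaves implicit but is needed to land in a polarizable component of $\bD_\Lambda$) and in spelling out the translation isomorphism $X^o\xrightarrow{\sim}(X^o)^{\rm Alb}$.
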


\begin{proof}
The first part of the 
proposition follows from the definition
of Picard type, since $\Lambda$ lies in $H^{1,1}$
of every fiber, and the fact that 
the monodromy representation
$\rho$ factors through $\Sp_{\Lambda,0}$
because $\Lambda=(\wedge^2\bV)^\rho$. 

To see the second part, note
that the rational section
is regular over $Y^o$ by Lemma
\ref{section-extend} and $\cA_{g,\Lambda}$
is the DM stack parameterizing 
$\Lambda$-polarizable abelian varieties,
with origin. We conclude that $f^o\colon X^o\to Y^o$
is the pullback of the universal family
as in \S~\ref{av-fib}.
\end{proof}

\begin{remark}\label{pol-const-2}
By Proposition \ref{shimura-var},
each connected component of $\cA_{g,\Lambda}$
maps finitely 
to some $\cA_{g,{\bf d}}$ (as a DM stack),
with ${\bf d}$ depending only on $\Lambda$
and the connected component.
\end{remark}

\subsection{The Zarhin trick}\label{sec:Ztrick}

Fix ${\bf d}  \coloneqq  (d_1, \dots, d_g)$ a sequence
of $g$ positive integers, each dividing the next,
with $d_1=1$. By Section \ref{av-fib}, the moduli stack
of ${\bf d}$-polarized abelian varieties $(A,L)$
with origin is the quotient 
$\cA_{g,{\bf d}}=[\Gamma_{g,{\bf d}}\backslash \cH_g]$.
Recall that a polarized abelian variety $(A,L)\in 
\cA_{g,\bf d}$ has a dual
abelian variety $(A^{\ast},L^{\ast})\in 
\cA_{g, {\bf d}^{\ast}}$
where $${\bf d}^{\ast} = 
(d_g/d_g,  d_g/d_{g-1},\cdots, d_g/d_2, d_g).$$

We recall the following theorem, due to Zarhin, 
cf.~\cite[Thm.~1.1]{zarhin}:

\begin{theorem}\label{thm:zarhin}
Let $(A,L)$ be a polarized abelian variety. Then 
$Z(A)=(A\oplus A^{\ast})^{\oplus 4}$ admits a principal
polarization $L_Z$ for which there is a natural embedding
$${\rm End}(A,L)\hookrightarrow {\rm End}(Z(A),L_Z).$$
\end{theorem}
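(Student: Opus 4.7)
The plan is to combine two classical inputs: a complementary isogeny to the polarization, together with Lagrange's four-square theorem.

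First I would set $\lambda \coloneqq \lambda_L \colon A \to A^\ast$. Since $\lambda$ is an isogeny whose kernel has some exponent $n \in \bZ_{>0}$, the endomorphism $[n] \colon A \to A$ kills $\ker\lambda$ and therefore factors uniquely through $\lambda$, yielding an isogeny $\mu \colon A^\ast \to A$ with $\mu \circ \lambda = [n]_A$ (and by dualizing $\lambda \circ \mu = [n]_{A^\ast}$). By Lagrange's four-square theorem, I would write $n = a^2 + b^2 + c^2 + d^2$, and let $W \in M_4(\bZ)$ be the matrix representing left multiplication by the quaternion $q = a + bi + cj + dk$ on $\bH \simeq \bR^4$ in the orthonormal basis $\{1, i, j, k\}$. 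Since left multiplication by $\bar q$ is transpose-adjoint to left multiplication by $q$, the norm relation $q\bar q = n$ gives $W W^T = W^T W = n \cdot I_4$.

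Next, on $Z(A) = A^4 \oplus (A^\ast)^4$ — whose dual is canonically $(A^\ast)^4 \oplus A^4$ — I would construct a symmetric isogeny $L_Z \colon Z(A) \to Z(A)^\vee$ as a suitable $2 \times 2$ block map built from the diagonal blocks $\lambda \cdot I_4$ and $\mu \cdot I_4$ together with off-diagonal blocks $W, W^T$ acting fiberwise on $(A^\ast)^4$ and $A^4$. The identities $\lambda \mu = [n]$ and $W W^T = n I_4$ are calibrated so that the block-determinant computation collapses, forcing $\deg L_Z = 1$ and hence principality. Symmetry of $L_Z$ is immediate from $\lambda, \mu$ being self-dual as polarizations and from $W^T$ being the literal transpose of $W$; ampleness reduces to positive-definiteness of the induced Hermitian form on $H_1(Z(A), \bC)$, which follows from positive-definiteness of the Hermitian form of $\lambda$ by inspection of the block structure. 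Finally, the embedding $\End(A) \hookrightarrow \End(Z(A))$ is given by the diagonal map $f \mapsto f^{\oplus 4} \oplus (f^\vee)^{\oplus 4}$; for $f \in \End(A, L)$, i.e.\ satisfying the Rosati condition $\lambda f = f^\vee \lambda$, each block of $L_Z$ intertwines with $f^{\oplus 4} \oplus (f^\vee)^{\oplus 4}$: commutation with $W$ and $W^T$ is automatic because they have integer entries, and commutation with $\lambda \cdot I_4$ (respectively $\mu \cdot I_4$) is precisely the Rosati condition (respectively its dual $f\mu = \mu f^\vee$).

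The hard part will be pinning down the exact block formula for $L_Z$ so that all four requirements — symmetry, principality, ampleness, and endomorphism compatibility — hold simultaneously. The four-square identity $W W^T = n I_4$ is precisely what forces the determinant cancellation needed for principality; this explains both the role of Lagrange's theorem and why the fourth power in $Z(A) = (A \oplus A^\ast)^{\oplus 4}$ is sharp.
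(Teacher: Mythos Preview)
The paper does not prove this theorem; it cites Zarhin.  However, immediately after the statement the paper records the one piece of data on which the construction hinges: a choice of integers $a,b,c,d$ with
\[
a^2+b^2+c^2+d^2 \;\equiv\; -1 \pmod{\textstyle\prod_i d_i}.
\]
Your proposal has exactly the right architecture (complementary isogeny $\mu$ with $\mu\lambda=[n]$, Lagrange's theorem, a quaternion matrix $W$, a block polarization, and the diagonal embedding $f\mapsto f^{\oplus 4}\oplus (f^\vee)^{\oplus 4}$), but the arithmetic calibration is off by precisely the ``$-1$'' that makes Zarhin's trick work.

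Concretely: with your choices $\mu\lambda=[n]$ and $WW^T=nI_4$, the block map you describe,
\[
L_Z \;=\; \begin{pmatrix}\lambda\, I_4 & W\\ W^T & \mu\, I_4\end{pmatrix}\colon A^4\oplus (A^\ast)^4 \longrightarrow (A^\ast)^4\oplus A^4,
\]
has rational determinant (computed on $H_1$, using that the two off-diagonal blocks commute with the lower-right block)
\[
\det\!\bigl(I_{2g}\otimes (nI_4 - WW^T)\bigr)\;=\;\det(0)\;=\;0.
\]
So $L_Z$ is not an isogeny at all; equivalently the associated Hermitian form is degenerate, so your ampleness claim also fails.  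The ``collapse'' you anticipate does occur, but to $0$ rather than to $1$.

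The fix is to take $WW^T=mI_4$ with $m\equiv -1\pmod n$.  Then $[m+1]$ kills $\ker\lambda$, so there is an isogeny $\mu'\colon A^\ast\to A$ with $\mu'\lambda=[m+1]$, and the same block formula with $\mu'$ in place of $\mu$ now has determinant
\[
\det\!\bigl(I_{2g}\otimes((m+1)I_4-mI_4)\bigr)\;=\;1,
\]
yielding a genuine principal polarization (and the Hermitian form is now positive-definite, since the $2\times 2$ blocks $\bigl(\begin{smallmatrix}m+1 & w\\ w & 1\end{smallmatrix}\bigr)$ with $w^2\le m$ are positive).  This congruence is exactly what the paper records, with $\prod_i d_i$ in place of the exponent $n$; either modulus works since $n\mid \prod_i d_i$.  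Your treatment of symmetry and of the $\End(A,L)$-embedding is fine and survives unchanged.
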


Furthermore, the construction in Theorem 
\ref{thm:zarhin} is functorial and can 
be performed in families. Thus it defines a morphism
$$Z\colon \cA_{g,{\bf d}}\to \cA_{8g}$$
of DM stacks, lifting on universal covers to a map of 
period domains $\wZ\colon \cH_g\hookrightarrow \cH_{8g}$ and 
corresponding to an inclusion 
$\Gamma_{g,{\bf d}}\hookrightarrow {\rm Sp}(16g,\bZ)$.
Note though that the Zarhin trick is not ``unique''
in the sense that the map $Z$ does depend on some
additional choices of initial data \cite[\S~4]{zarhin}: 
A solution of \begin{equation}\label{zarhin-choice}
a^2+b^2+c^2+d^2\equiv -1\textrm{ mod }
\prod_{i=1}^g d_i.
\end{equation} 

For an abelian fibration of Picard type, the period
map
$\Phi\colon Y\dashrightarrow A_{g,\Lambda}$
(see Prop.~\ref{pic-per})
lands in a connected component $A_{g,\Lambda,0}$.
We fix, for once and all, a chain
of maps
\begin{align}
\label{inclusions}
\xymatrix{
\cA_{g,\Lambda,0} 
\ar[r] 
&
\cA_{g,{\bf d}}
\ar[r]^{Z}
&
\cA_{8g}
}
\end{align}
for all possible $\Lambda$, and connected components.
This requires choosing some $L\in \Lambda$
of type ${\bf d}$, defining a polarization
over $\cA_{g,\Lambda,0}$ (cf.~Rem.~\ref{pol-const-2})
and choosing a Zarhin trick (\ref{zarhin-choice})
for  each ${\bf d}$.
Denote the composition
of $\Phi$ with these inclusions, on coarse spaces,
by $$\Phi_Z\colon Y\dashrightarrow A_{8g}.$$
Similarly, $\Phi_Z^o\colon Y^o\to \cA_{8g}$
is defined as the composition of the classifying 
morphism with the above maps, on DM stacks. Then
$\Phi_Z$ is the composition of $\Phi_Z^o$
with the coarsening map.

\begin{definition}\label{z-period}
The {\it Z-period map}
is $\Phi_Z$. The {\it Z-classifying morphism}
is $\Phi_Z^o$.
\end{definition}

\subsection{Moduli of primitive symplectic
varieties}\label{sec:moduli-ps}

Let $f\colon X\to Y$ be a primitive symplectic 
fibration.
Since the fibers of $y\in Y^o$ are PS varieties,
the Hodge structure on $H^2(X_y,\bZ)$
is pure of weight $2$ and admits a natural
symmetric $\bZ$-valued 
bilinear form, the 
{\it Beauville--Bogomolov--Fujiki} 
(or {\it BBF}) {\it form} \cite{Namikawa:deformation, 
Kirschner, Matsushita2015, Schwald2020, BL2021}. 
Thus, we have a local system 
$\bU_{\rm np}  \coloneqq  
(R^2f^o_*\underline{\bZ})_{\rm tf}$
(np for ``non-polarized'')
admitting a bilinear form $\psi$. Let 
$$\Lambda  \coloneqq  H^0(Y^o,\bU_{\rm np})=
(\bU_{\rm np})^\rho$$
denote the monodromy-invariant classes.
These form a sub-local system which is a constant
Hodge sub-structure $\underline{\Lambda}$ 
of weight $2$ on the fibers
over $Y^o$. It is nonzero because $f$ is projective.

Define
a saturated sub-$\bZ$-local system 
$\bU\subset \bU_{\rm np}$
by setting $\bU \coloneqq
\underline{\Lambda}^\perp
\subset \bU_{\rm np}$
where the perpendicular is taken 
with respect to the BBF form
$\psi$. When $f$ is of Picard type, 
$\bU$ underlies a $\bZ$-PVHS
of type $(1,m,1)$ for some
$0\leq m\leq b_2(X_y)-3$.

Let $(M,\cdot)$, $(M_{\rm np},\cdot)$ be 
lattices isometric to $(\bU_*,\psi_*)$, $(\bU_{{\rm np},*},\psi_*)$ respectively, 
for some
basepoint $*\in Y^o$. So $\Lambda^\perp=M$,
$M^{\perp}=\Lambda$
are mutually perpendicular, saturated
sublattices of $(M_{\rm np},\cdot)$.
The {\it Type IV domain} $\bD_M$ 
associated to a lattice $M$
of signature $(2,m)$
is one of the two connected components
of $\bP\{x\in M_\bC\,\big{|}\,
x\cdot x=0,\,x\cdot \bar x>0\}$.
Then $\bD_M$ parameterizes the 
space of polarized weight $2$
Hodge structures on $M$ of type $(1,m,1)$.
Let $O^*(M)$ denote the subgroup of 
the isometries
of $M$ which admit an extension to 
$M_{\rm np}$ acting
trivially on $\Lambda\subset M_{\rm np}$ and
preserving the connected component $\bD_M$.
Define $$\cF_\Lambda \coloneqq 
[O^*(M)\backslash \bD_M].$$ It serves as the natural 
classifying space of Hodge
structures for $\Lambda$-polarized
PS varieties. While $\bD_M$
depends only on the abstract
isometry type of $M$, the subgroup
$O^*(M)\subset O(M)$ depends implicitly
on $\Lambda$ and
the embedding $M\subset M_{\rm np}$.
Let $F_\Lambda$ and $\oF_\Lambda$ 
be the coarsening and
its Baily--Borel compactification, 
respectively.
We have a classifying morphism
for the Hodge structures
$$\Phi^o\colon Y^o\to \cF_\Lambda.$$

By the Borel extension theorem (Thm.~\ref{borel-ext}),
$\Phi^o$ extends as a coarse period map 
$\Phi\colon Y\to \oF_\Lambda$ when 
$f\colon X\to Y$
is a standard snc model. There is a Hodge 
$\bQ$-line bundle $\lambda_\Lambda$ on the coarse
space associated to the tautological 
$\bZ$-PVHS of weight $2$ and $K$-trivial type
over $\cF_\Lambda$. 
There is a constant $a \in \NN$, depending only
on $b_2(X)$, 
so that \begin{equation}\label{ps-lambda}a \bM Y. 
\sim a  \Phi^{\ast}(\lambda^{\otimes d}_\Lambda),\end{equation}
where $2d$ is the fiber dimension of $f$. 
This follows from the same argument of 
\Cref{lem:proportionality}, where we further 
apply \cite[Lem. 3.1]{Kim2024} on $Y'$ (notation as 
in the proof of \Cref{lem:proportionality}), which 
accounts for the occurrence of the $d$-th 
power---$H^{2d,0}(X_y)\simeq H^{2,0}(X_y)^{\otimes d}$ 
for general $y\in Y$.

The {\it inertia stratification}
of $F_\Lambda$ is the locally closed 
stratification by loci in $F_\Lambda$
which have a fixed inertia group
in the DM stack $\cF_\Lambda$.

\begin{definition}
Let $\Phi^{-1}(F_\Lambda)$ be the maximal 
open set in $Y$ where the period map $\Phi$ is a morphism.
The {\it inertia jumping locus} 
$\partial_{\rm I}\subset Y$ is the 
closure in $Y$ of the locus
of points $y \in \Phi^{-1}(F_\Lambda)$ 
such that $\Phi(y)$ lands in a deeper
stratum of the inertia stratification
than the general point of $Y$. 
\end{definition}

\Cref{extend4} 
is a weak analogue 
of Proposition \ref{extend3} for 
primitive symplectic fibrations.

\begin{proposition}\label{extend4}
Let $f\colon X\to Y$ be a primitive symplectic
fibration, such that $K_X \sim_{\QQ,f} 0$ 
and $\bfM$ descends onto the smooth base $Y$.
Let $Y^o\subset Y$ be an
open set over which $f$ is locally trivial
and $B_Y$ be the boundary divisor.
The local system 
$\bU_{\rm np}=(R^2f^o_*\underline{\bZ})_{\rm tf}$ 
on $Y^o$ and the $\bZ$-VHS it underlies
extend to the open set 
$U \coloneqq  Y\setminus 
( \partial_{\rm H}\cup \partial_{\rm I}\cup \Supp B_Y)$.
\end{proposition}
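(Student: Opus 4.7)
The plan is to reduce the extension problem to a codimension-one analysis on analytic arcs. Since $Y$ is smooth and a $\bZ$-VHS on a big open subset of a smooth variety extends uniquely across codimension-$\geq 2$ loci by Hartogs, it suffices to show extension across the generic point of each irreducible component $\partial_i\subset U\setminus Y^o$. Restricting $f$ to a general analytic disk $D\subset Y$ transverse to $\partial_i$ at a general point, we obtain a relatively $K$-trivial family $g\colon \cX\to D$ which is locally trivial and primitive symplectic over $D^*\coloneqq D\setminus\{0\}$; the task becomes extending the $\bZ$-local system $\bU_{\rm np}|_{D^*}$ and its VHS across $0\in D$.

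I would first control the singularities of the central fiber $\cX_0$. The hypothesis $\partial_i\not\subset\Supp B_Y$ together with the canonical bundle formula (\Cref{base-same-sing} and \Cref{coefficientB}) implies $(\cX,0)$ is lc over $0\in D$; hence $\cX_0$ is slc with $K_{\cX_0}\sim_\bQ 0$ by adjunction. The hypothesis $\partial_i\not\subset\partial_{\rm H}$ then forces the monodromy of $\bU_{\rm np}$ around $0$ to be quasi-unipotent of finite order, so that the LMHS is pure of weight $2$. Running the source-dimension comparison from the proof of \Cref{extend3}, this upgrades $\cX_0$ from slc to canonical, hence du Bois; consequently $h^2(\cX_t,\cO_{\cX_t})$ is constant in $t\in D$ by Steenbrink.

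I would then upgrade finite-order monodromy to triviality using the inertia condition. By \Cref{borel-ext} and $\partial_i\not\subset\partial_{\rm H}$, the period map extends as $\Phi\colon D\to \oF_\Lambda$ with $\Phi(0)\in F_\Lambda$. The hypothesis $\partial_i\not\subset\partial_{\rm I}$ forces the inertia of $\cF_\Lambda$ at $\Phi(0)$ to agree with the generic inertia, so the monodromy around $0$, which a priori lies in the stabilizer of $\Phi(0)$, already sits in this generic inertia. Since $f$ is projective, $\Lambda\neq 0$, and the generic inertia acts as the identity on $\Lambda$ while acting on $M=\Lambda^\perp$ by at most $\pm 1$. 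The $-1$ case would correspond to $-1$-monodromy on the Hodge line $\cF^2=H^{2,0}$; combining Schmid's nilpotent orbit theorem with the constancy of $h^2(\cX_t,\cO_{\cX_t})$, one extends $\cF^2$ to a line bundle on $D$ on which a holomorphic symplectic limit on $\cX_0$ provides a single-valued nonvanishing section, ruling out $-1$-monodromy. Hence the monodromy around $\partial_i$ is trivial, so $\bU_{\rm np}$ extends across $\partial_i$ as a $\bZ$-local system, and its Hodge filtration extends by Deligne's canonical extension.

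The main obstacle is this last step: excluding the $-1$ generic-inertia element of $\cF_\Lambda$. This difficulty has no analogue in the abelian-fibered case of \Cref{extend3}, where the fibration itself extends as a smooth abelian family, giving trivial monodromy automatically; here we cannot in general extend $g$ as a locally trivial PS family, so the $\pm 1$ ambiguity must be handled via the extension behavior of $\cF^2$ as above, or equivalently by a careful analysis of the stacky classifying morphism $\Phi^o\colon D^*\to\cF_\Lambda$ in conjunction with the existence of a nonvanishing holomorphic symplectic form on the canonical/du Bois central fiber $\cX_0$.
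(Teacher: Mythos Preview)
Your reduction to codimension one and the argument that $\cX_0$ is canonical with constant $h^{2}(\cX_t,\cO_{\cX_t})$ parallel part of the paper's setup. But your route to trivial monodromy diverges from the paper's and contains a genuine gap.

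The unjustified step is the assertion that the generic inertia of $\cF_\Lambda$ along $\Phi(Y)$ is contained in $\{\pm 1\}$. This holds at a very general point of $\bD_M$, but you give no reason it holds at a general point of the \emph{image} $\Phi(Y)$: if that image sits in a proper Mumford--Tate sub-domain, the generic stabilizer can be a larger finite group, and the monodromy $T$ could be any element of it. Without that bound your case analysis is insufficient, since you would also need to exclude Hodge isometries acting trivially on $H^{2,0}$ but nontrivially on $M$. Your mechanism for excluding $-1$ is itself incomplete: producing a $2$-form on $\cX_0$ does not, without a comparison of $g_*\Omega^{[2]}_{\cX/D}$ with the Deligne extension of $\cF^2$, determine the monodromy eigenvalue on the Hodge line.

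The paper's argument is constructive rather than obstructive. After showing (via the KLSV argument) that $Z\coloneqq\cX_0$ is itself primitive symplectic, it passes to a $\bQ$-factorial terminalization $Z'\to Z$ and invokes Namikawa's result that $p\colon\Def(Z')\to\Def(Z)$ is a finite branched Galois cover whose universal family over $\Def(Z')$ is locally trivial, with Galois group acting faithfully on $H^2(Z',\bZ)$ by Hodge isometries. Taking $D$ to be a full-dimensional analytic neighborhood of $0\in\partial_i$ (not a transverse arc), the classifying map $\phi\colon(D,0)\to\Def(Z)$ base-changes to $p_D\colon D'\to D$. If $p_D$ is an isomorphism, the pulled-back locally trivial family over $D$ has trivial $R^2$, into which $\bU_{\rm np}|_{D^*}$ embeds as a sub-local system, hence extends. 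If $p_D$ is not an isomorphism, its ramification produces a Galois element fixing $\phi'(0)$ but not all of $\phi'(D')$, which translates directly into an inertia jump, i.e.\ $\partial_i\subset\partial_{\rm I}$. No bound on the size of the generic inertia, and no separate $\pm 1$ analysis, is needed.
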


\begin{proof}
Let $\partial_i$ be an irreducible divisor in 
$U \setminus Y^o$, and $D$ be a small analytic  
neighborhood of a general point $0 \in \partial_i$. 
By restricting $f$ to $D$, we
obtain a family $g\colon
\cX\to (D,0)$, for which $K_\cX\sim_{\QQ,g} 0$, and 
$\cX^{\ast}=g^{-1}(D^{\ast})$ is a locally trivial 
fibration of primitive symplectic varieties over $D^{\ast} \coloneqq D \setminus \partial_i$.
As in the proof of Proposition \ref{extend3}, 
if $\partial_i\not\subset 
\partial_{\rm H} \cup \Supp B_Y$, then $\cX_t$ is a $K$-torsion variety for any $t \in D$. Following
the argument of
\cite[Sec.~4, Proof of Thm.~0.7]{KLSV},
$\cX_t$ is in fact a primitive symplectic variety. Set
$Z  \coloneqq  \cX_0$. 

Choose a $\QQ$-factorial terminalization 
$\pi \colon Z' \to Z$. Since 
$\pi_* \mathcal{O}_{Z'} = \mathcal{O}_{Z}$ and 
$R^1\pi_* \mathcal{O}_{Z'} = 0$, there exists a 
commutative square
\begin{align*}
\xymatrix{
\mathcal{Z}' \ar[r] \ar[d]_-{h'} & \mathcal{Z}\ar[d]^-{h}\\
\Def(Z') \ar[r]^-{p} & \Def(Z)
}
\end{align*}
for the universal deformations 
of $Z'$ and $Z$, see \cite[Prop.~11.4]{KM92},
\cite[Lem.~4.6]{BL2022}. Note that
\begin{enumerate}
\item $\Def(Z')$ and $\Def(Z)$ are smooth of the 
same dimension by \cite[Thm.~1]{Namikawa2006};
\item
\label{locally_trivial}
$\mathcal{Z}'$ is a locally trivial 
fibration by \cite[Main Thm.]{Namikawa2006};
\item
\label{surjective} 
$p$ is a surjective branched Galois covering, and 
its Galois group $G_{\partial_i}$ acts faithfully 
on $H^2(Z', \ZZ)$ via monodromy operators preserving 
the Hodge structure;
see \cite[Lem.~1.2, \S~6.1.1]{Markman2010} or 
\cite[Thm.~3.5]{LMP24}.
\end{enumerate}
Consider the classifying map $\phi \colon (D, 0) \to \Def(Z)$, and let $D'$ be the normalization of an 
irreducible component of $D \times_{\Def(Z)} \Def(Z')$. They sit in the commutative square
\[
\xymatrix{
(D',0) \ar[r]^{p_D} \ar[d]_-{\phi'}& (D,0)\ar[d]^-{\phi}\\
\Def(Z') \ar[r]^-{p} & \Def(Z).
}
\]
Let $f'\colon (\phi')^{\ast}\cZ'\to (D',0)$ be the
pullback deformation along $\phi'$.

If $p_D$ 
is not an isomorphism, then $\phi '(0)$
is fixed 
by some element in $G_{\partial_i}$ that does not fix 
the whole image $\phi'(D')$, so 
$\partial_i \subset \partial_{\rm I}$
by \eqref{surjective}.
Otherwise, 
$p_D$ is an isomorphism,
so we can identify $D'$ with $D$.
By \eqref{locally_trivial}, 
the sheaf $R^2f'_*\underline{\bZ}$ is a trivial 
local system on $D'$, and $\bU_{\rm np}|_{D^{\ast}}$ 
is a sub-$\bZ$-local system of 
$(R^2f'_*\underline{\bZ})|_{D^{\ast}}$. 
Hence, $\bU_{\rm np}|_{D^{\ast}}$ extends over $D$.

It follows that $\bU_{\rm np}$ extends
to the generic point of $\partial_i$.
Thus, $\bU_{\rm np}$ extends to a big
 open subset of $U$, hence to all of $U$. 
\end{proof}

\begin{definition}\label{ps-moduli}
Let $\cM_\Lambda^{\rm q}$ be the moduli stack
of $\Lambda$-quasipolarized 
PS varieties. It parametrizes
PS varieties $X$ which are locally-trivially
deformation equivalent to a fixed one,
and which admit a primitive embedding 
$j\colon
\Lambda\hookrightarrow {\rm NS}(X)$ such that
$j(\sigma)$ consists of big and nef classes, where 
$\sigma\subset \Lambda_\bR$ is a choice of 
{\it small cone}, cf.~Def.~\ref{def:small-cone}.
Define $\cM_\Lambda\subset \cM_\Lambda^{\rm q}$
as the open substack for which $j(\sigma)$
consists of ample classes. It is 
the moduli stack of $\Lambda$-polarized
PS varieties.
\end{definition}

\begin{definition}\label{def:small-cone}
Let $\cC^+\subset \Lambda_\bR$ 
denote the positive cone of a primitive,
hyperbolic sublattice $\Lambda\subset M_{\rm np}$, and $M_{\rm np}^{\rm term}\supset M_{\rm np}$ be the BBF lattice of a $\bQ$-factorial terminalization.
A {\it potential wall} is a hyperplane in $\cC^+$
of the form $\beta^\perp\cap \cC^+$ where
$\Lambda+\bZ \beta\subset M_{\rm np}^{\rm term}$ 
is a hyperbolic sublattice of $M_{\rm np}^{\rm term}$, 
for which $0>\beta^2\geq -B$
for the constant $B$ of \cite[Prop.~7.7]{LMP24}. 
A {\it small cone} $\sigma\subset \cC^+$ is
a connected component of the complement of the 
locally polyhedral arrangement defined by the 
collection of all potential walls 
\cite[Def.~4.2, Def.~4.9, Prop.~4.13]{AE2025}. 
\end{definition}

These definitions require $b_2(X^{\rm term})\geq 5$ to 
apply \cite[Prop.~7.7]{LMP24}, but are 
valid/vacuous also when $\rk \Lambda=1$.
Note that in the 
special case $b_2(X^{\rm term})=4$, $\rk \Lambda=2$,
the moduli is rigid, so fibrations
in such PS varieties are necessarily isotrivial.
The $b_2(X^{\rm term})\leq 4$ 
cases will be treated separately in the proof
of Theorem \ref{bd-families}(PS).

\begin{remark} Walls arise from extremal 
contractions, see \cite[Prop.~7.2]{LMP24}.
For K3 surfaces, we have $\beta^2=-2$,
and all walls correspond to 
divisorial contractions of 
quasi-polarized 
K3 surfaces, to polarized
K3 surfaces with ADE singularities. 
But for higher-dimensional PS varieties, 
there may be both divisorial 
and small contractions. 
\end{remark}

There are only
finitely many orbits of small cones
under the action
of isometries of $\Lambda$ extending trivially
to $M_{\rm np}$.
Given any
embedding $\Lambda\hookrightarrow {\rm NS}(X)$,
an element $\lambda\in \sigma$ is big and nef,
resp.~ample, on $X$
if we and only if any other $\lambda'\in \sigma$
is. So there are only finitely many 
possible moduli spaces of 
$\Lambda$-(quasi)polarized
PS varieties.
Combining \cite[Cor.~10]{MST20} and 
\cite[Thm.~4.7]{AE2025}, one may deduce
$\cM_\Lambda^{\rm q}$ is a finite type DM stack.
Observe that we have a surjective
classifying morphism
$\Phi^o_\Lambda\colon
\cM_\Lambda^{\rm q}\to \cF_\Lambda$ assigning to
$[X]\in \cM_\Lambda^{\rm q}$ the Hodge structure 
$[H^2(X)_{\rm prim}]\in \cF_\Lambda$ 
in the corresponding stack of weight $2$ $\bZ$-PVHS;
it is quasi-finite by local Torelli.

\begin{definition} Choose a small cone
$\sigma\subset \Lambda_\bR$ 
and suppose that 
$\beta\in M_{\rm np}\setminus \Lambda$
gives a potential wall intersecting 
(equivalently, containing) $\sigma$. 
The corresponding {\it Heegner
divisor of potential wall type} is the image
in $\cF_\Lambda=[O^*(M)\backslash \bD_M]$ 
of the hyperplane 
section
$\{[x]\in \bD_M\,\big{|}\,x\cdot \beta=0\}$. 
\end{definition}

The bound $\beta^2\geq -B$ of 
\cite[Prop.~7.7]{LMP24}, together with
the condition that $\beta^\perp\cap \cC^+$
is non-empty, imply that 
there are only finitely many Heegner divisors
of potential wall type.

Note that 
$\cM_\Lambda$ is separated---supposing that $g\colon \cX\to (D,0)$ is a family of $\Lambda$-polarized PS varieties over a disk,
any other family of $\Lambda$-polarized
PS varieties $g'\colon \cX'\to (D,0)$ that is isomorphic to $g$ over the punctured disk $D^*$, is isomorphic
to $g$ itself. 
This holds
because $\cX$ is the relatively ample model, 
for any $L\in \sigma$.
On the other hand, $\cM_\Lambda^{\rm q}$ is in
general not separated, since there can 
be multiple locally trivial 
fillings of a punctured family 
$\cX^*\to D^*$ for which
$L\in \sigma$ extends to a big and nef class 
on the central fiber. Such fillings will be
related by birational modifications along
$L$-trivial curve classes, a phenomenon
which is already present in moduli of 
quasipolarized K3 surfaces.

\begin{definition}\label{wall-jump}
The {\it wall jumping locus} 
$\partial_{\rm W}\subset Y$ is the closure
in $Y$ of the locus of points $y\in Y$ where
the number of branches
of Heegner divisors of potential
wall type passing through $\Phi^o(y)$ increases.
\end{definition}

\begin{proposition} \label{extend5}
Let $f\colon X\to Y$ be a primitive symplectic
fibration over a smooth base $Y$, 
whose general fiber has $\bQ$-factorial
terminalization with $b_2\geq 5$, 
such that $K_X \sim_{\QQ,f} 0$.
Let $Y^o\subset Y$ be an
open set over which $f$ is locally trivial.
Let 
$\Psi^o\colon Y^o\to \cM_\Lambda$
be the
classifying morphism to the moduli stack
of $\Lambda$-polarized
PS varieties, associated to a small cone 
$\sigma$ containing a relative polarization $L$. 
Then $\Psi^o$ extends
along $U \coloneqq  
Y\setminus 
(\partial_{\rm H}\cup 
\partial_{\rm I}\cup \partial_{\rm W}
\cup \Supp B_Y)$.
\end{proposition}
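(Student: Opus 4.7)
The plan is to bootstrap from the Hodge-theoretic extension of \Cref{extend4} by using the universal deformation of a $\QQ$-factorial terminalization, together with the Heegner wall analysis of \Cref{def:small-cone} and \Cref{wall-jump}, to produce an extension of $\Psi^o$ at each generic point of $U \setminus Y^o$, and then to glue.

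Since $U$ is contained in the open set $Y \setminus (\partial_{\rm H} \cup \partial_{\rm I} \cup \Supp B_Y)$ of \Cref{extend4}, that proposition provides an extension of the $\bZ$-PVHS $\bU_{\rm np}$, together with its monodromy-invariant sublattice $\underline{\Lambda}$, to all of $U$. Composing with the period map, the classifying morphism $\Phi^o \colon Y^o \to \cF_\Lambda$ extends to a morphism $\Phi \colon U \to \cF_\Lambda$ of stacks. Around the generic point of each irreducible component $\partial_i \subset U \setminus Y^o$, I would then construct an analytic filling by $\Lambda$-polarized PS varieties. Take a small disk $(D,0) \to Y$ transverse to $\partial_i$ at a very general point, pass to a finite base change $D' \to D$ arising from the universal deformation of a $\QQ$-factorial terminalization, as in items (\ref{locally_trivial}) and (\ref{surjective}) of the proof of \Cref{extend4}, and let $\cX \to D'$ be the resulting locally trivial deformation of the primitive symplectic variety $\cX_0$. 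The class $L \in \Lambda$ extends to a line bundle $\cL$ on $\cX$ which restricts to the polarization on each nearby fiber via the flat extension of $\bU_{\rm np}$.

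The crucial step is to show that $L$ remains \emph{ample} (not merely big and nef) on $\cX_0$. The hypothesis $\partial_i \not\subset \partial_{\rm W}$ says that no new Heegner divisor of potential wall type passes through $\Phi(0)$, so every potential wall of the positive cone of the BBF lattice of $\cX_0$ meeting $\sigma$ already occurs on the nearby fibers. Combined with the bound $\beta^2 \geq -B$ of \cite[Prop.~7.7]{LMP24}, which controls the finite list of wall-defining classes, this pins down the small cone decomposition and rules out any new extremal contraction of $\cX_0$ whose contracted class pairs trivially with $L$. Hence $L$ stays ample on $\cX_0$, so $\cX \to D'$ classifies a morphism $D' \to \cM_\Lambda$ extending $\Psi^o|_{(D')^*}$; descending along the finite Galois cover $D' \to D$ produces a morphism from a neighborhood of the generic point of $\partial_i$ in $U$ to $\cM_\Lambda$.

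Finally, these local analytic extensions agree with $\Psi^o$ on the overlap with $Y^o$ and, by the separatedness of $\cM_\Lambda$, they glue to a morphism defined over a big open subset $U^\circ \subseteq U$. The morphism is algebraic because $\cM_\Lambda^{\rm q}$ is a finite type DM stack (a consequence of \cite[Cor.~10]{MST20} and \cite[Thm.~4.7]{AE2025}) and $\Phi$ is algebraic. To extend across the remaining codimension $\geq 2$ loci in $U$, apply a Hartogs-type argument for morphisms from the smooth variety $U$ to the separated, finite type DM stack $\cM_\Lambda$. The main obstacle in this plan is the ampleness verification on $\cX_0$: merely producing $\cL$ as an extension of the flat $(1,1)$-class is straightforward, but ruling out an extremal contraction of $\cX_0$ pairing trivially with $L$ is precisely what motivates the definition of the wall jumping locus $\partial_{\rm W}$ and relies essentially on the structural bound $\beta^2 \geq -B$.
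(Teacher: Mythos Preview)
Your outline follows the paper's strategy, but there is a confusion about which family you are working with that obscures the key step. The family you pull back from $\Def(Z')$ has fibers that are $\bQ$-factorial \emph{terminalizations} of the original PS fibers; on those fibers the pullback $\pi^*L$ is only big and nef, not ample (unless the original fibers were already terminal). So ``show that $L$ remains ample on $\cX_0$'' is not the right target. The paper instead works directly with this terminalization family $f'\colon \cX'\to D$ (no base change is needed: since $\partial_i\not\subset\partial_{\rm I}$, the map $p_D$ in the proof of \Cref{extend4} is an isomorphism, so $D'=D$), and uses the wall hypothesis to show that the curve classes pairing to zero with $\pi^*L$ are \emph{constant} across all fibers of $f'$. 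Hence the relative $L$-ample model, which is the original $f|_D$, is locally trivial over $D$. The upshot is that $Y^o$ can be enlarged to contain the big open $U'\subset U$ where $f$ is equidimensional; you never build a new filling, you show the existing fibration is already locally trivial there.

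For the extension from $U'$ to all of $U$, a bare Hartogs appeal to the separated finite-type DM stack $\cM_\Lambda$ is not enough, because $\cM_\Lambda$ is not proper. The paper extends first on \emph{coarse} spaces: $\Phi^o\colon U\to\cF_\Lambda$ is already defined by \Cref{extend4}, and the map $M_\Lambda\to F_\Lambda$ is \emph{finite} on the complement of the Heegner walls, so the composite $U'\to M_\Lambda$ extends to $U$ by normality. Then one lifts to the stack using smoothness of $U$ and \cite[Lem.~2.4.1]{AO2002} (cf.~\cite[Lem.~4.5]{mj2025}). Your Hartogs step can be repaired along these lines, but it needs this finiteness input explicitly.
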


\begin{proof}
Let $U'\subset U$ be the big open subset
with equidimensional fibers for $f$, and let
$D\ni 0$ be a small analytic
neighborhood of $0\in U'$ with $D^*\coloneqq D\cap Y^o$.
Following the proof of
Proposition \ref{extend4},
there is a locally trivial family
$f'\colon \cX'\to D$ of $\bQ$-factorial
terminal PS varieties, where 
$f'\colon \cX'\to D$ is a simultaneous
$\bQ$-factorial terminalization 
of the fibers of $f\vert_{D}\colon \cX\to D$,
$\cX\coloneqq f^{-1}(D)$. 
Let $\pi\colon \cX'\to \cX$ be this
birational morphism, and consider
$\pi^*(L\vert_{\cX})$,
which is relatively big and nef over $D$. The
condition that $U'\subset U$ maps into 
the complement of $\partial_{\rm W}$ grants
that the curve classes intersecting 
$\pi^*(L\vert_{\cX})$ to be zero are the same on all 
fibers of $f'\colon \cX'\to D$. Hence 
$f\vert_D\colon \cX\to D$ must be locally trivial,
as $L$ is relatively ample.
Thus, we may choose $Y^o\subset Y$ to contain $U'$. 

Let $M_\Lambda$ be the coarse space of $\cM_\Lambda$.
The normality of $U$ implies that 
$U'\to M_\Lambda$ (the composition
of $\Psi^o\vert_{U'}$ with the coarsening map), 
admits an extension $U\to M_\Lambda$.
This follows because the Hodge-theoretic
classifying morphism extends to
$\Phi^o\colon U\to \cF_\Lambda$ 
by Proposition \ref{extend4}, and the restriction
of the morphism $M_\Lambda\to F_\Lambda$ to
the complement of $\partial_{\rm W}$ is finite.
Thus, we have a morphism $U\to M_\Lambda$ which
lifts over a big open set to the stack $\Psi^o\vert_{U'}\colon
U'\to \cM_\Lambda$.
We deduce, by 
the smoothness of $U$, and 
an appropriate extension of \cite[Lem.~2.4.1]{AO2002}
to higher dimension, 
cf.~\cite[Rem.~2.4.2]{AO2002} or \cite[Lem.~4.5]{mj2025}, 
that $\Psi^o\vert_{U'}$ extends uniquely to a classifying
morphism $\Psi\colon U\to \cM_\Lambda$.
\end{proof}

\subsection{Kuga--Satake construction} 
\label{sec:kuga}

We refer to \cite[Ch.~4 and Ch.~6 \S 4.4]{huybrechts2016}
for proofs of claims below.
Let $(M,\cdot)$ be a lattice of 
signature $(2,m)$.
Recall that the {\it Clifford algebra} of $M$ is
the following quotient of the tensor algebra:
$${\rm Cl}(M)  \coloneqq  
(\textstyle \bigoplus_{p\geq 0} M^{\otimes p})/
(v\otimes v - (v\cdot v)).$$
There is a filtration of ${\rm Cl}(M)$
induced by tensor degree, and we have
a canonical isomorphism 
${\rm gr}_p {\rm Cl}(M)\simeq \wedge^p M$ with the 
wedge product. Thus, ${\rm Cl}(M)$ is a 
free $\bZ$-module of rank $2^{m+2}$. 

Given $[x]\in \bD_M$, we may 
suppose
$x=e_1+ie_2$ with
$e_1,e_2\in M_\bR$ an oriented orthonormal
basis of $\langle x,\bar x\rangle\cap M_\bR$.
Then $J_{[x]}  \coloneqq  e_1 e_2\in {\rm Cl}(M_\bR)$
is independent of the 
basis and satisfies $J^2_{[x]}=-1$. 
Left multiplication by $J_{[x]}$ induces a 
complex structure on ${\rm Cl}(M_\bR)$. 

\begin{definition}  \label{ks-primary}
The {\it primary Kuga--Satake torus $KS'(x)$} is
$$KS'(x) \coloneqq {\rm Cl}(M_\bR)/{\rm Cl}(M),$$ 
endowed with the complex structure defined by $J_{[x]}$.
\end{definition}

Thus $J_{[x]}$ defines
a weight $-1$ Hodge structure on
${\rm Cl}(M)=H_1(KS'(x),\bZ)$ and 
a weight $1$ Hodge structure on
${\rm Cl}(M)^{\ast}=H^1(KS'(x),\bZ)$.
We denote the corresponding
period map $\mathfrak{Ks}' \colon \bD_M \to \bD$ 
where the target is the period domain 
of weight $1$ unpolarized Hodge 
structures on ${\rm Cl}(M)^{\ast}$, 
see \S~\ref{sec:lattice_av}.

To show that $KS'(x)$ is in fact an abelian 
variety, we define a polarization. 
Fix orthogonal vectors $f_1, f_2\in M$.
Define an element of $\wedge^2{\rm Cl}(M)^{\ast}$ 
(i.e., an alternating form on ${\rm Cl}(M)$) by 
\begin{equation}\label{ks-pol}
(v,w)\mapsto {\rm tr}(f_1f_2v^{\ast}w),
\end{equation}
where ${\rm tr}(v)$ 
stands for the trace of the endomorphism of 
$\rm Cl(M)$ given by left multiplication by $v 
\in \rm Cl(M)$,
and $^{\ast}$ is the anti-automorphism defined by
$(m_1\cdots m_k)^{\ast}= m_k\cdots m_1$, $m_i\in M$.
When $f_1\cdot f_1$ and $f_2\cdot f_2$ are 
both positive,
(\ref{ks-pol}) defines a polarization on the Kuga--Satake
torus, for all periods in the connected
component $\bD_M$ containing $[f_1+i f_2]$. 

In families, the Kuga--Satake construction
is quite subtle to define. Let $$KS_M'\to \bD_M$$ 
denote the pullback of the universal family of
unpolarized complex tori along the map 
$\mathfrak{Ks}'$. There is a natural
lift of the action of $O^*(M)$ on $\bD_M$
to $KS_M'$ by declaring that the
$O^*(M)$ action on $M^{\otimes p}$ is induced 
by the action on $M\simeq {\rm Cl}^1(M)$. 
{\it But this is may not be an algebraic 
family of Kuga--Satake varieties over 
$\cF_\Lambda$}. The issue
is that the polarization (\ref{ks-pol})
does not descend to the quotient
$[O^*(M) \backslash KS_M']$.

In order to fix this issue, we introduce the notion of 
secondary Kuga--Satake variety. To this end, we first 
recall some generalities about the Pin group. 
If ${\rm Cl}(M_\bQ)^*$ is the group of units of the 
Clifford algebra ${\rm Cl}(M_\bQ)$, then
\[
{\rm GPin}(M_\bQ) \coloneqq 
\{\gamma \in {\rm Cl}(M_\bQ)^* \,|\, 
\gamma M_{\bQ} \gamma^{-1} \subset M_{\bQ}\}.
\]
is the Clifford group.
It admits the orthogonal representation 
\begin{align*} 
\phi \colon \rm{GPin}(M_\bQ)&\to O(M_\bQ) \\
\gamma &\mapsto (v\mapsto (-1)^{\deg\gamma}\gamma v 
\gamma^{-1},\,\textrm{ for }
v\in {\rm Cl}^1(M_\bQ)\simeq M_\bQ),
\end{align*}
For instance, for any $m\in M_\bQ$ with $m \cdot m \neq 0$, 
i.e., $m\in M_\bQ \cap {\rm Cl}(M_\bQ)^*$, the 
transformation $\phi(m)$ coincides with the 
orthogonal reflection about the vector $m$. 
Since $(M, \cdot)$ is a non-degenerate quadratic space, 
$O(M_\bQ)$ is generated by reflections due to the 
Cartan--Dieudonné theorem 
(see e.g.~\cite[\S 5.3.9]{Procesi2007}).
So $\phi$ is surjective and 
sits in the exact sequence
\begin{equation}
1 \to \bQ^* \to {\rm GPin}(M_\bQ) 
\xrightarrow{\phi} O(M_\bQ)  \to 1. 
\end{equation}
The Pin group is the subgroup of the Clifford 
group of spinor norm 1, i.e.,
\begin{align*}
{\rm Pin}(M_\bQ) & \coloneqq 
\{\gamma \in {\rm GPin}(M_\bQ)\,| \, \gamma^*\gamma=1\} \\
&= \{\gamma = m_{1} \cdots m_{k} \in {\rm{Cl}}(M_{\bQ})\,| 
\, m_{i} \in M_{\bQ}, \,\gamma^*\gamma=1\},
\end{align*}
sitting in the exact sequence
\begin{equation}
1 \to {\rm Pin}(M_\bQ) \to 
{\rm GPin}(M_\bQ) \xrightarrow{N} \bQ^*  
\end{equation}
where $N(\gamma)= \gamma^*\gamma$ is the spinor norm. 
Since the spinor norm of elements in 
$\bQ^* = {\rm GPin}(M_\bQ)^0$ 
lies in $(\bQ^*)^2$, we obtain an exact sequence
\begin{equation}\label{eq:exactsequencePin}
1 \to \{\pm 1\} \to {\rm Pin}(M_\bQ) \xrightarrow{\phi} O(M_{\bQ}) \xrightarrow{N} \bQ^*/(\bQ^*)^2.
\end{equation}

Let
${\rm Pin}(M) \coloneqq \phi^{-1}(O(M))$ 
be the pre-image of the integral
isometries $O(M)\subset O(M_\bQ)$.
Elements of ${\rm Pin}(M)$ act 
on ${\rm Cl}(M_\bQ)$ by left multiplication,
and ${\rm Pin}(M)_\bZ \coloneqq 
{\rm Pin}(M)\cap {\rm Cl}(M)$ acts integrally, i.e., 
invertibly on ${\rm Cl}(M)$.

\begin{lemma}\label{lem:finiteindex} The following 
subgroups have finite index:
\begin{enumerate}
\item $\phi({\rm Pin}(M)) \subseteq O(M)$;
\item ${\rm Pin}(M)_\bZ \subseteq {\rm Pin}(M)$;
\item ${\rm Cl}(M) \subseteq 
\langle {\rm Pin}(M){\rm Cl}(M)\rangle$.
\end{enumerate}
\end{lemma}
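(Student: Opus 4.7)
The plan is to handle the three parts in order, using the exact sequence \eqref{eq:exactsequencePin} together with standard facts about arithmetic subgroups of $\bQ$-algebraic groups.

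For (1), the key observation is that $\phi({\rm Pin}(M))=O(M)\cap\ker(N)$, so $[O(M):\phi({\rm Pin}(M))]=|N(O(M))|$, and it suffices to show that $N(O(M))\subset\bQ^*/(\bQ^*)^2$ is finite. Since $M$ has signature $(2,m)$, $O(M)$ is an arithmetic subgroup of the semisimple real Lie group $O(M_\bR)$, hence finitely generated by Borel--Harish-Chandra (or trivially finite when $m=0$). Its image $N(O(M))$ is then a finitely generated subgroup of the $\bF_2$-vector space $\bQ^*/(\bQ^*)^2$, so it is finite, which proves (1).

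For (2), I would exhibit both ${\rm Pin}(M)_\bZ$ and ${\rm Pin}(M)$ as arithmetic subgroups of the $\bQ$-algebraic group ${\rm Pin}(M_\bQ)$ and then invoke the commensurability of two arithmetic subgroups of the same $\bQ$-algebraic group. I would realize ${\rm Pin}(M)_\bZ$ as the stabilizer of the lattice ${\rm Cl}(M)$ under the faithful left-multiplication representation ${\rm Pin}(M_\bQ)\hookrightarrow\GL({\rm Cl}(M_\bQ))$: using $\gamma^{-1}=\gamma^*$ for $\gamma\in{\rm Pin}(M_\bQ)$ and the fact that $^*$ preserves ${\rm Cl}(M)$, the condition $\gamma\cdot{\rm Cl}(M)={\rm Cl}(M)$ is equivalent to $\gamma\in{\rm Cl}(M)$, so the stabilizer equals ${\rm Pin}(M_\bQ)\cap{\rm Cl}(M)={\rm Pin}(M)_\bZ$. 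On the other hand, ${\rm Pin}(M)=\phi^{-1}(O(M))$ is the pullback under the surjective $\bQ$-morphism $\phi\colon{\rm Pin}(M_\bQ)\twoheadrightarrow\ker(N)$ of the arithmetic subgroup $O(M)\cap\ker(N)\subset\ker(N)$, hence is arithmetic in ${\rm Pin}(M_\bQ)$ by the standard pullback property of arithmetic subgroups under surjective $\bQ$-morphisms. Since these two arithmetic subgroups of ${\rm Pin}(M_\bQ)$ are commensurable and ${\rm Pin}(M)_\bZ\subset{\rm Pin}(M)$, the inclusion has finite index.

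For (3), using (2) I would pick finitely many coset representatives $\gamma_1,\ldots,\gamma_k\in{\rm Pin}(M)$ for ${\rm Pin}(M)/{\rm Pin}(M)_\bZ$. Since ${\rm Pin}(M)_\bZ\subset{\rm Cl}(M)$ and ${\rm Cl}(M)$ is a subring of ${\rm Cl}(M_\bQ)$,
$$\langle{\rm Pin}(M)\cdot{\rm Cl}(M)\rangle=\sum_{i=1}^k\gamma_i\cdot{\rm Cl}(M).$$
Letting $D\in\bZ_{>0}$ be a common denominator so that each $\gamma_i\in D^{-1}{\rm Cl}(M)$, the right-hand side is sandwiched between ${\rm Cl}(M)$ and $D^{-1}{\rm Cl}(M)$, giving the required finite index. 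The main obstacle I anticipate is the setup for (2): one must be careful about the non-faithfulness of $\phi$ and recognize that the cleanest route is to place an arithmetic-subgroup structure on ${\rm Pin}(M_\bQ)$ via the faithful left-multiplication representation to handle ${\rm Pin}(M)_\bZ$, and then appeal to the pullback property of arithmetic subgroups for ${\rm Pin}(M)$; once both subgroups are recognized as arithmetic, commensurability is immediate from standard theory, and (1) and (3) are then fairly direct.
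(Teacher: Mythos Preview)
Your proof is correct and follows essentially the same strategy as the paper: part~(1) via finite generation of $O(M)$ and $2$-torsion of $\bQ^*/(\bQ^*)^2$, part~(2) via arithmeticity and Borel's theory, and part~(3) via coset representatives and a common denominator. The only minor difference is in~(2): the paper pushes ${\rm Pin}(M)_\bZ$ forward to $O(M_\bQ)$ via \cite[Thm.~8.9]{Borel1969} (images of arithmetic subgroups under surjective $\bQ$-morphisms are arithmetic) and compares $\phi({\rm Pin}(M)_\bZ)$ with the arithmetic group $O(M)$ there, whereas you pull $O(M)$ back and compare two arithmetic subgroups inside ${\rm Pin}(M_\bQ)$; the two routes are equivalent once one uses that $\ker\phi=\{\pm 1\}$ is finite.
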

\begin{proof} The exact sequence 
\eqref{eq:exactsequencePin} induces the short exact sequence 
\[1 \to \phi({\rm Pin}(M)) \to O(M) \to N(O(M)) 
\coloneqq {\rm im}(O(M) \xrightarrow{N} \bQ^*/(\bQ^*)^2) \to 1.\]
Since $O(M)$ is finitely generated and 
$\bQ^*/(\bQ^*)^2$ is a 2-torsion group, 
then $N(O(M))$ is a finite group, which implies (1). 
By construction, ${\rm Pin}(M)_\bZ$ is an arithmetic 
subgroup of ${\rm Pin}(M_{\bQ})$, so 
$\phi({\rm Pin}(M)_\bZ)$ is an arithmetic 
subgroup of $O(M_{\bQ})$ by \cite[Thm.~8.9]{Borel1969}. 
Since, by construction, $\phi({\rm Pin}(M)_\bZ)$ is 
contained in the arithmetic subgroup $O(M)$, 
we conclude that $\phi({\rm Pin}(M)_\bZ)$ has finite 
index in $O(M)$, so (2) holds. As a result, 
${\rm Pin}(M){\rm Cl}(M)$ generates an overlattice of 
finite index of the lattice 
${\rm Pin}(M)_{\ZZ}{\rm Cl}(M) = {\rm Cl}(M)$, 
which gives (3).
\end{proof}

\begin{definition}\label{secondary-ks}
The {\it secondary Kuga--Satake variety} $KS''(x)$
is $$ KS''(x) \coloneqq {\rm Cl}(M_\bR)/
\langle {\rm Pin}(M) {\rm Cl}(M)\rangle ,$$
endowed with the complex structure defined by $J_{[x]}$.
\end{definition}

By \Cref{lem:finiteindex}(3), 
there is a natural isogeny $KS'(x) \to KS''(x)$ between 
primary and secondary Kuga--Satake 
varieties. 
The advantage
of the secondary Kuga--Satake variety is that
now, the entire group ${\rm Pin}(M)$
acts integrally on $H_1(KS''(x),\bZ)$. As above,
we define $\mathfrak{Ks}''\colon \bD_M\to \bD$
to  be the period map to unpolarized 
complex tori assigning
$[x]\mapsto KS''(x)$.
Note that the polarization (\ref{ks-pol}) on $KS'(x)$
naturally defines a polarization also on $KS''(x)$.

\begin{lemma}\label{ks-pol-prop} The action
of ${\rm Pin}(M)$ on $\bD_M$ via $\phi$ lifts 
to an action $v\mapsto \gamma v$
on the universal secondary Kuga--Satake torus
$KS_M''\to \bD_M$ respecting
the polarization (\ref{ks-pol}).
\end{lemma}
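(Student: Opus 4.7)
The plan is to define the action of $\gamma\in{\rm Pin}(M)$ on $KS_M''\to\bD_M$ fiberwise by left multiplication: $\gamma$ acts on the base via $\phi(\gamma)\in O(M)$, and on the fiber ${\rm Cl}(M_\bR)/\langle{\rm Pin}(M){\rm Cl}(M)\rangle$ over $[x]$ by the map $v\mapsto\gamma v$ induced by left multiplication in the Clifford algebra. I would then verify: (i) the map is well-defined on the quotient; (ii) it is holomorphic, sending the fiber at $[x]$ with complex structure $J_{[x]}$ to the fiber at $[\phi(\gamma)(x)]$ with complex structure $J_{[\phi(\gamma)(x)]}$; and (iii) it pulls the polarization (\ref{ks-pol}) back to itself.

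Item (i) is immediate, since $\gamma\in{\rm Pin}(M)$ is a unit and so $\gamma\cdot\langle{\rm Pin}(M){\rm Cl}(M)\rangle=\langle{\rm Pin}(M){\rm Cl}(M)\rangle$. For item (iii), note that the defining property of ${\rm Pin}(M)\subseteq{\rm GPin}(M_\bQ)$ is $\gamma^{\ast}\gamma=1$, so for $v,w\in{\rm Cl}(M_\bR)$,
\[
{\rm tr}(f_1f_2(\gamma v)^{\ast}(\gamma w))={\rm tr}(f_1f_2v^{\ast}\gamma^{\ast}\gamma w)={\rm tr}(f_1f_2v^{\ast}w),
\]
and the polarization is preserved.

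Item (ii) is the substantive step. Writing $x=e_1+ie_2$ with $(e_1,e_2)$ an oriented orthonormal basis of $\langle x,\bar x\rangle\cap M_\bR$, recall $J_{[x]}=e_1e_2$ acts on ${\rm Cl}(M_\bR)$ by left multiplication. Since $\phi(\gamma)$ acts on $M\simeq{\rm Cl}^1(M)$ by $v\mapsto(-1)^{\deg\gamma}\gamma v\gamma^{-1}$, the two parity signs cancel in the product $\phi(\gamma)(e_1)\phi(\gamma)(e_2)$, giving
\[
J_{[\phi(\gamma)(x)]}=\phi(\gamma)(e_1)\phi(\gamma)(e_2)=\gamma e_1e_2\gamma^{-1}=\gamma J_{[x]}\gamma^{-1}.
\]
Hence $\gamma\cdot(J_{[x]}v)=(\gamma J_{[x]}\gamma^{-1})(\gamma v)=J_{[\phi(\gamma)(x)]}(\gamma v)$, so left multiplication by $\gamma$ is $\bC$-linear between the two fibers and defines a holomorphic isomorphism of complex tori, which in the real structure is integral because $\gamma\in{\rm Cl}(M)$ preserves the lattice $\langle{\rm Pin}(M){\rm Cl}(M)\rangle$.

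The main obstacle is the parity bookkeeping in (ii): when $\gamma$ is odd (say $\gamma=m\in M\cap{\rm Cl}(M)^{\ast}$, giving the reflection $\phi(m)$), the formula $\phi(\gamma)(e_i)=-\gamma e_i\gamma^{-1}$ contributes two minus signs, and one must check these combine consistently so that $J_{[\phi(\gamma)(x)]}$ is actually $\gamma J_{[x]}\gamma^{-1}$ rather than its negative; the orientation of $(e_1,e_2)$ must also be controlled so that $\phi(\gamma)(x)$ again lies in the chosen connected component $\bD_M$ (which explains why one works with the subgroup of Pin mapping into $O^{\ast}(M)$). This is precisely the failure mode that prevented the primary Kuga--Satake construction from descending to the quotient: here it is repaired because the condition $\gamma^{\ast}\gamma=1$ makes left multiplication, rather than conjugation on tensor powers, the natural lift, and the quotient $\langle{\rm Pin}(M){\rm Cl}(M)\rangle$ was designed precisely to absorb this action.
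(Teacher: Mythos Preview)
Your proof is correct and follows essentially the same approach as the paper: both verify that the polarization is preserved via $\gamma^{\ast}\gamma=1$ and that the complex structure is respected via $J_{[\phi(\gamma)(x)]}=\gamma J_{[x]}\gamma^{-1}$, with the same parity cancellation in $\phi(\gamma)(e_1)\phi(\gamma)(e_2)$. One minor slip: near the end of (ii) you write ``$\gamma\in{\rm Cl}(M)$'', but in general $\gamma\in{\rm Pin}(M)\subset{\rm Cl}(M_\bQ)$ need not lie in ${\rm Cl}(M)$---integrality holds for the correct reason you already gave in (i), namely that $\gamma\cdot\langle{\rm Pin}(M){\rm Cl}(M)\rangle=\langle{\rm Pin}(M){\rm Cl}(M)\rangle$ by construction of the secondary lattice.
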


\begin{proof}
See also \cite[Sec.~5.4]{rizov}, 
and \cite[Thm.~4.2.6]{pozzi}.
There are two 
things to check. The first is that the polarization
(\ref{ks-pol}) is preserved by the stated action.
The second is that the action
$v\mapsto \gamma v$ on 
$\langle {\rm Pin}(M) {\rm Cl}(M)\rangle $
induces a Hodge isometry from $KS(x)$ to 
$KS(\phi(\gamma)\cdot x)$, for all $x\in \bD_M$. 
To check the first, observe:
$$(v,w)\mapsto {\rm tr}(f_1f_2(\gamma v)^*(\gamma w)) 
= {\rm tr}(f_1f_2v^*\gamma^*\gamma w) = 
{\rm tr}(f_1f_2v^*w),$$
since $\gamma^*\gamma=1$.
To check the second, we show that
$v\mapsto \gamma v$
sends $H^{1,0}(KS(x))\to H^{1,0}(KS(\gamma\cdot x))$.
Equivalently, 
$$J_{[\gamma\cdot x]} = 
(\phi(\gamma) \cdot e_1)(\phi(\gamma) \cdot e_2) = 
\gamma (e_1e_2)\gamma^{-1} = 
\gamma J_{[x]}\gamma^{-1}.$$ 
The middle 
equality holds, since
$\phi(\gamma)\cdot m = (-1)^{\deg \gamma} 
\gamma m \gamma^{-1}$.
\end{proof}

Let $M_{\rm np}$ be an ambient lattice. As $M$ runs over 
all primitive sublattices of $M_{\rm np}$, 
the index $[O(M) : \phi({\rm Pin}(M))]$ 
is unbounded in general. To restore a universal bound, we 
must pass to $O^*(M)$.
For every
primitive sublattice $M\subset M_{\rm np}$, recall that 
$O^*(M) \subset O(M)$ denotes the subgroup of isometries 
of $M$ which admit an extension to $M_{\rm np}$ acting 
trivially on $M^{\perp}$, cf.~\S~\ref{sec:moduli-ps}. 
In particular, there is a natural inclusion 
$O^*(M)\subset O(M_{\rm np})$
given by extending by the identity map
on $M^\perp$, which fits into a commutative
diagram with the natural
inclusion ${\rm Pin}(M)\subset {\rm Pin}(M_{\rm np})$
sending $m_1\cdots m_k\mapsto m_1\cdots m_k$.

Given the orthogonal representations 
$\phi \colon {\rm Pin}(M) \to O(M)$ and $\phi_{\rm np} 
\colon {\rm Pin}(M_{\rm np}) \to O(M_{\rm np})$, 
define the group \[{\rm Pin}^*(M) \coloneqq 
\phi^{-1}(\phi_{\rm np}({\rm Pin}(M_{\rm np})) 
\cap O^*(M)) \subset {\rm Pin}(M).\]

\begin{lemma}\label{bounded-index}
The image of ${\rm Pin}^*(M)$ in $O^*(M)$ has 
finite index, uniformly bounded in a manner 
depending only on  $M_{\rm np}$. 

Furthermore, we may choose a finite
index subgroup ${\rm P}^*(M)\subset {\rm Pin}^*(M)$
of index bounded solely in terms of $\rk M$,
so that ${\rm P}^*(M)$ maps isomorphically
onto its image in $O^*(M)$.
\end{lemma}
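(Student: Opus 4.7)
The plan is to tackle the two claims separately, the first by spinor-norm comparison and the second by a reduction-modulo-$3$ argument on an invariant lattice.

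For the first claim, I would observe that, via the natural inclusion $O^*(M) \hookrightarrow O(M_{\rm np})$ extending isometries by the identity on $M^\perp$, the spinor norm of an element $\tau \in O^*(M)$ computed inside $O(M_{\rm np})$ agrees with that computed inside $O(M)$, because reflections along vectors of $M^\perp$ contribute trivially. Hence $\phi({\rm Pin}(M)) \cap O^*(M) = \phi_{\rm np}({\rm Pin}(M_{\rm np})) \cap O^*(M)$, and this common subgroup is exactly the image $\phi({\rm Pin}^*(M))$. The required finite-index bound then follows from
\[ [O^*(M) : \phi({\rm Pin}^*(M))] \leq [O(M_{\rm np}) : \phi_{\rm np}({\rm Pin}(M_{\rm np}))], \]
by viewing $O^*(M)$ as a subgroup of $O(M_{\rm np})$ and applying \Cref{lem:finiteindex}(1) to $M_{\rm np}$.

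For the second claim, the key point is that ${\rm Pin}^*(M)$ is arithmetic---being the preimage under $\phi$ of an arithmetic subgroup of $O(M_\bQ)$---in an appropriate $\bQ$-algebraic subgroup of ${\rm Pin}(M_\bQ)$. The natural left-multiplication action of ${\rm Pin}(M_\bQ)$ on ${\rm Cl}(M_\bQ)$ is $\bQ$-rational, and any arithmetic subgroup preserves a $\bZ$-lattice in any $\bQ$-rational representation. Via a standard averaging argument (starting from ${\rm Cl}(M)$, intersecting ${\rm Pin}^*(M)$ with a common finite-index arithmetic subgroup, and summing over coset representatives), I would produce a ${\rm Pin}^*(M)$-invariant lattice $L \subset {\rm Cl}(M_\bQ)$. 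Since $\dim_\bQ {\rm Cl}(M_\bQ) = 2^{\rk M}$, the lattice $L$ has rank exactly $2^{\rk M}$, independent of $M_{\rm np}$.

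I would then define ${\rm P}^*(M) \coloneqq \ker\!\bigl({\rm Pin}^*(M) \to GL(L/3L)\bigr)$. Its index in ${\rm Pin}^*(M)$ is at most $|GL_{2^{\rk M}}(\bF_3)|$, a quantity depending only on $\rk M$. Since $-1 \in {\rm Pin}^*(M)$ acts on $L$ as the scalar $-1$, which is non-trivial modulo $3$, we have $-1 \notin {\rm P}^*(M)$; as $\ker \phi = \{\pm 1\}$, the restriction $\phi\vert_{{\rm P}^*(M)}$ is injective, and ${\rm P}^*(M)$ maps isomorphically onto its image in $O^*(M)$. The main obstacle is ensuring that the invariant lattice has rank controlled by $\rk M$ alone, which forces the use of the tautological representation of ${\rm Pin}(M_\bQ)$ on ${\rm Cl}(M_\bQ)$ rather than a larger ambient representation; once this is in place, the mod-$3$ step is formal and cleanly separates $-1$ from the identity.
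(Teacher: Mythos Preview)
Your argument is correct and follows the paper's strategy: the spinor-norm comparison you give is precisely what justifies the equality $\phi({\rm Pin}^*(M)) = \phi_{\rm np}({\rm Pin}(M_{\rm np})) \cap O^*(M)$ that the paper asserts without proof, and for the second claim the paper likewise takes a level-$3$ subgroup. The averaging step is unnecessary, since the lattice $\langle {\rm Pin}(M)\,{\rm Cl}(M)\rangle$ of \Cref{lem:finiteindex}(3) (underlying the secondary Kuga--Satake torus) is already ${\rm Pin}(M)$-invariant, hence ${\rm Pin}^*(M)$-invariant, and can serve as your $L$ directly.
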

\begin{proof}
Since $\phi({\rm Pin}^*(M)) = 
\phi_{\rm np}({\rm Pin}(M_{\rm np})) \cap O^*(M)$, the required 
uniform bound is given by the index 
$[O(M_{\rm np}) : \phi_{\rm np}({\rm Pin}(M_{\rm np}))]$. 

The second statement follows by taking, say,
the level $3$ subgroup 
${\rm P}^*(M)\subset {\rm Pin}^*(M)$,
whose index is bounded above by $3^{(\rk M)^2}$
and which does not contain the kernel 
$\{\pm 1\}$ of the map $\phi\colon 
{\rm Pin}(M)\to O(M)$, so maps isomorphically
onto its image via $\phi$.
\end{proof} 

Combining Lemmas \ref{ks-pol-prop}
and \ref{bounded-index}, we get a lift
of the ${\rm P}^*(M)$-action on $\bD_M$
to an action on the universal secondary 
Kuga--Satake torus $KS_M''\to \bD_M$ 
which preserves a polarization.
Thus, there is a nontrivial
``generic Néron--Severi lattice'' 
$\Lambda''\subset \wedge^2\langle
{\rm Pin}(M) {\rm Cl}(M)\rangle^{\ast}$,
containing a polarization,
which is are ${\rm P}^*(M)$-invariants 
(for the left action)
of the N\'eron--Severi lattice
of $KS''(x)$, for $[x]\in \bD_M$ very general.
We conclude that there is an embedding 
$\bD_M\hookrightarrow \bD_{\Lambda'',0}$
into a connected component of the
space of $\Lambda''$-polarizable complex tori,
which is equivariant with respect
to the action of ${\rm P}^*(M)$ on the source
and an appropriate action
of ${\rm Sp}_{\Lambda'',0}$
on the target. So there is a morphism
of DM stacks \begin{align}\label{secondary-ks-map}
\mathfrak{Ks}''\colon 
\cF_\Lambda''
\to \cA_{g'',\Lambda'',0}\end{align}
where $g''=2^{m+1}$, 
$\Lambda''=\Lambda''(M,\cdot)$ depends only
on the lattice $(M,\cdot)$ and the lift 
${\rm P}^*(M)$, and $\cF_\Lambda'' \coloneqq 
[{\rm P}^*(M)\backslash \bD_M]$ is an
$O^*(M)/{\rm P}^*(M)$-cover of the DM stack $\cF_\Lambda$.

Pushing forward the corresponding weight 1 
$\bZ$-PVHS along the \'etale cover $\cF_\Lambda''\to \cF_\Lambda$
of DM stacks, we get a morphism of DM stacks
\begin{align}\label{final-ks-1}
\mathfrak{Ks}\colon \cF_\Lambda\to \cA_{g,\Lambda_g,0}
\end{align}
where $g = 2^{m+1} [O^*(M):{\rm P}^*(M)]$,
and $\Lambda_g$ is the invariant 
N\'eron--Severi lattice,
which is non-empty since we may push forward
any element of $\Lambda''$.
On points, 
we have 
\begin{align}\label{final-ks-2}\mathfrak{Ks}\colon 
[x] 
\mapsto KS(x) \coloneqq  \!\!\!\!\! 
\bigoplus_{\gamma \,\in\, O^*(M)/{\rm P}^*(M)}
\!\!\!\!\! KS''(\gamma\cdot x).\end{align}

\begin{remark}\label{ks-differences}
To define the Kuga--Satake family over 
$\cF_\Lambda$, we had
to choose a lift of the action of ${\rm P}^*(M)$ 
on $\bD_M$ to
the universal secondary Kuga--Satake torus
$KS_M''\to \bD_M.$ There were two
natural choices,
the ``orthogonal'' and the ``pin''
lifted actions:
\begin{align*}\textrm{orthogonal:}&\,\, v\mapsto 
(-1)^{\deg \gamma} \gamma v\gamma^{-1} \\
\textrm{pin:}&\,\, v\mapsto 
\gamma v,
\end{align*}
for $\gamma\in {\rm P}^*(M)$,
$v\in {\rm Cl}(M)$.

For the pin action,
the invariant N\'eron--Severi lattice 
$\Lambda_g$
is quite 
large---for instance, it naturally
contains the entire subspace of 
$\alpha \in {\rm Cl}(M)$ for which 
$\alpha^*=-\alpha$, 
identified with the space of alternating
two-forms $(v,w)\mapsto {\rm tr}(\alpha v w)$.
On the other hand, it is unclear to us whether,
in general, the orthogonal lifted action preserves
a polarization at all (in which case, the resulting
family of complex tori over $[{\rm P}^*(M)\backslash \bD_M]
$ would not even be algebraic). 

Thus, we always will take the ``pin'' form
of the universal Kuga--Satake variety, at the expense
of passing to a finite cover of $\cF_\Lambda'' = 
[{\rm P}^*(M)\backslash \bD_M]$ whose degree is bounded
solely in terms of the ambient lattice 
$M_{\rm np}\supset M$, and then pushing forward,
back down to $\cF_\Lambda=[O^*(M)\backslash \bD_M]$.
The final pushforward allows us
to define the morphism $\mathfrak{Ks}$ directly from
$\mathcal{F}_{\Lambda}$ to a moduli space of abelian 
varieties, without passing to the finite cover 
$\mathcal{F}''_{\Lambda}$ 
(cf.~\cite[comment after Ch.~6, 
Prop.~4.10]{huybrechts2016}), 
at the price of increasing the dimension of the 
abelian varieties.
\end{remark}

\begin{definition} \label{ksz-period}
Consider a classifying morphism $\Phi^o\colon 
Y^o\to \cF_\Lambda$ for a $\bZ$-PVHS of type $(1,m,1)$
over $Y^o$ with monodromy
valued in $O^*(M)$. 
The {\it Z-classifying
morphism} $\Phi^o_{Z}$ is the composition 
of the classifying
morphism 
$\Phi^o$
with the chain of morphisms
of DM stacks
\begin{align*}
\xymatrix{
\cF_\Lambda
\ar[r]^{\mathfrak{Ks}} 
&
\cA_{g,\Lambda_g,0}
\ar[r]
&
\cA_{g,{\bf d}}
\ar[r]^{Z} 
&
\cA_{8g}.
}
\end{align*}
The first morphism 
is defined in 
(\ref{final-ks-1}) and (\ref{final-ks-2}); 
the second morphism comes from Remark \ref{pol-const-2}; 
the last morphism is the Zarhin trick 
defined in \S~\ref{sec:Ztrick}. 
If $Y\supset Y^o$ is a 
compactification, the coarsening of $\Phi^o_Z$,
viewed as a rational map $\Phi_{Z}\colon 
Y\dashrightarrow
\oA_{8g}$
is the {\it Z-period map}.
\end{definition}

\begin{remark}\label{bb-finite}
The chain of morphisms in Definition~\ref{ksz-period} 
(see also, Def.~\ref{z-period}) descends to the coarse
spaces and the Baily--Borel compactifications.
The resulting morphisms are finite,  
because the maps $\bD_M^+\to \bD_{\Lambda_g,0}^+\to 
 \cH^+_g\to  \cH^+_{8g}$ 
of rationally extended period domains 
(i.e., prequotients of the Baily--Borel 
compactifications, see \cite[I.~\S~3]{BB66})
are inclusions. 
\end{remark}

\subsection{Effective b-semiampleness}\label{sec:eff-semi-pf} 
We prove effective b-semiampleness, 
see \S~\ref{sec:eff-b-semi}, for lc-trivial 
fibrations in abelian or primitive symplectic
varieties with bounded $b_2$.

\begin{theorem}[Weak effective b-semiampleness, 
Theorem \ref{thm:b-semi}] 
\label{thm:weakeffective}
Let $f \colon (X,\Delta) \rar Y$ be an 
lc-trivial fibration
whose general fibers are either abelian 
varieties of dimension $g$, or primitive symplectic 
varieties of second Betti number $b_2$.
Let $(Y, B_Y, \bM.)$ be the generalized pair 
induced on $Y$. Then, there exists a constant $I$, 
depending respectively on $g$ or on $b_2$ only, 
such that $I\bM Y'.$ is free for a 
suitable birational modification $Y' \to Y$.
\end{theorem}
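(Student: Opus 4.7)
The plan is to reduce effective $b$-semiampleness of $\bfM$ to effective global generation of a fixed multiple of the Hodge $\QQ$-line bundle on a single, universal Baily--Borel compactification, obtained via the Zarhin trick (in the abelian case) and via Kuga--Satake followed by Zarhin (in the primitive symplectic case).

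First I would choose a birational model $\pi \colon Y' \to Y$ such that $Y'$ is smooth, $Y' \setminus Y'^{o}$ is snc, and $\bfM$ descends to a Cartier divisor on $Y'$. By the Borel extension theorem (\Cref{borel-ext}), the period map $\Phi \colon Y' \to \oA_{g,\mathbf{d}}$ (abelian case) or $\Phi \colon Y' \to \oF_{\Lambda}$ (primitive symplectic case) is then a morphism. In the abelian case \Cref{lem:proportionality} gives a constant $a = a(g)$ with $a\bM Y'. \sim a\Phi^{\ast}\lambda_g$; in the primitive symplectic case, the analogous comparison \eqref{ps-lambda} gives $a\bM Y'. \sim a\,\Phi^{\ast}(\lambda_\Lambda^{\otimes d})$ with $a = a(b_2)$.

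Next I would post-compose $\Phi$ with the Z-period map of \Cref{z-period} or \Cref{ksz-period} to obtain a morphism $\Phi_Z \colon Y' \to \oA_N$ landing in a \emph{fixed} target whose dimension $N$ depends only on $g$ or only on $b_2$. The key point is that, while the intermediate Shimura targets $\cA_{g,\mathbf{d}}$ or $\cA_{g',\Lambda_{g'},0}$ depend on the polarization type $\mathbf{d}$ or on the embedding $\Lambda \subset M_{\rm np}$, composing with Zarhin always lands in $\cA_{8g}$ (abelian case), while in the primitive symplectic case the Kuga--Satake dimension is controlled by $\rk M \le b_2 - 2$ together with the uniformly bounded index $[O^{\ast}(M):{\rm P}^{\ast}(M)]$ of \Cref{bounded-index}; the resulting finitely many possible targets are handled by taking the maximum. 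Tracking how the Hodge bundle transforms under $Z$ and $\mathfrak{Ks}$, which always contributes positive integer exponents bounded in terms of $g$ or $b_2$ coming from the tensor/Clifford decomposition of $H^{1}$ of the image variety, yields positive integers $b, c$, depending only on $g$ or $b_2$, with $b\,\bM Y'. \sim c\,\Phi_Z^{\ast}\lambda_N$.

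On the fixed projective variety $\oA_N$, the Hodge $\QQ$-line bundle $\lambda_N$ is ample by the $\Proj$ definition of Baily--Borel via modular forms (\S~\ref{av-fib}). Since $\oA_N$ has log canonical singularities, effective base-point freeness for lc pairs in the spirit of \cite{Fuj03}, combined with Hirzebruch--Mumford proportionality to control the volume $\lambda_N^{\dim \oA_N}$, produces a constant $I_0 = I_0(N)$ such that $I_0\,\lambda_N$ is Cartier and globally generated. Pulling back sections along $\Phi_Z$ and chaining with the comparison from the previous step shows that $I\bM Y'.$ is Cartier and free for some $I = I(g)$ or $I = I(b_2)$. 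The main obstacle is the bookkeeping of exact exponents along the chain of comparisons between $\bM Y'.$, $\Phi^{\ast}\lambda$, and $\Phi_Z^{\ast}\lambda_N$; in the primitive symplectic case one must in addition verify that the auxiliary cover $\cF_{\Lambda}'' \to \cF_{\Lambda}$ underlying the Kuga--Satake construction (\Cref{ks-differences}) has degree bounded uniformly in $b_2$, so that the pushforward of the universal polarized family along this cover yields an algebraic abelian family of polarization type depending only on $b_2$.
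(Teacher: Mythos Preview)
Your reduction via the Zarhin trick is sound in the abelian case---indeed the paper itself remarks (just after \Cref{lambda-formula}) that this gives an alternative proof---but in the primitive symplectic case there is a genuine gap. You claim that the index $[O^{\ast}(M):{\rm P}^{\ast}(M)]$ of \Cref{bounded-index} is bounded uniformly in $b_2$, but that lemma only bounds it in terms of the full unpolarized BBF lattice $M_{\rm np}$: the bound is $3^{(\rk M)^2}\cdot [O(M_{\rm np}):\phi_{\rm np}(\mathrm{Pin}(M_{\rm np}))]$, and the second factor, governed by the spinor norm image, depends on the discriminant of $M_{\rm np}$. Since the theorem fixes only $b_2$ and there are infinitely many isometry classes of signature-$(3,b_2-3)$ lattices, your Kuga--Satake target $\cA_{8g}$ has $g = 2^{b_2-r-1}[O^{\ast}(M):{\rm P}^{\ast}(M)]$ unbounded, and ``taking the maximum over finitely many targets'' is not available. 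The paper flags exactly this in the remark following \Cref{lambda-formula}.

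The paper's own proof avoids the Zarhin and Kuga--Satake detour entirely. It works directly on the level-$3$ cover $\overline{S}$ of $\oA_{g,\mathbf{d}}$ or $\oF_{\Lambda}$, whose degree is at most $3^{g^2}$ or $3^{(b_2-1)^2}$. The key uniform inputs are: (i) $\overline{S}$ has lc singularities and ample Cartier canonical bundle, with $\omega_{\overline{S}}^{\otimes j}\simeq \lambda_S^{\otimes j(g+1)}$ or $\lambda_S^{\otimes j(b_2-3)}$ (Hirzebruch--Mumford proportionality), and (ii) $\dim \overline{S}$ equals $g(g+1)/2$ or $b_2-3$, so the effective basepoint-free theorem for lc varieties and a Castelnuovo--Mumford regularity argument produce a multiple of $K_{\overline{S}}$, hence of $\lambda_S$, that is very ample with bound depending only on $g$ or $b_2$. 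Although there are infinitely many targets $\oF_{\Lambda}$, their dimension and the Hodge--canonical ratio are uniformly bounded, which is what makes the argument go through without ever landing in a single universal $\oA_N$.
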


\begin{proof}
Associated to $f\colon X\to Y$, 
we have a rational
period map $$\Phi\colon Y\dashrightarrow 
\oA_{g,{\bf d}}
\textrm{ or }\Phi\colon 
Y\dashrightarrow \oF_\Lambda$$
where ${\bf d}$ is the (primitivized)
type of a relative polarization $L_X$ 
in the abelian case,
and $M=\Lambda^\perp\subset H^2(X_y,\bZ)_{\rm prim}$ 
is the primitive BBF lattice 
with respect to a relative
lattice polarization\footnote{For 
the purposes of this proof,
we may simply take $r=1$, 
restricting a relative polarization $L_X$. 
But for compatibility with Def.~\ref{ksz-period}
and \S~\ref{sec:bd}, we include the flexibility 
of allowing $r>1$.} of rank $r$
in the primitive symplectic case,
for some $y\in Y$ general. In either case, by 
\Cref{lem:proportionality} or \eqref{ps-lambda} 
respectively, the effective semiampleness of $\bM Y.$ 
will follow from the effective very-ampleness of the Hodge 
bundle $\lambda$ on $\oA_{g,{\bf d}}$ or $\oF_\Lambda$: 
There exists an integer $k$, depending only on $g$ or 
$b_2$, respectively, such that $\lambda^{\otimes k}$ 
is very ample. 

Imposing a level $3$ structure amounts to taking a finite 
Galois cover $q \colon \overline{S} \to \oA_{g,{\bf d}}$ or 
$q \colon \overline{S} \to \oF_\Lambda$ of degree 
$e\leq 3^{g^2}$ 
or $3^{(b_2-1)^2}$; cf.~\S~\ref{hodge-moduli}. In particular, 
if  $\lambda_{S}$ is the Hodge bundle on $\overline{S}$, we 
have $e q^*\lambda \sim e \lambda_{S}$. By 
\cite[Props.~3.4 and 4.2]{Mum77} and 
\cite[Thm.~1.2]{Fuj03}, provided that 
$\dim \overline{S}\geq 2$,\footnote{The 
case of $\dim \overline{S}=1$ is 
treated in \cite{Fuj86} (for elliptic curves), or 
reducing to $\dim \overline{S}\geq 2$ in the 
primitive symplectic case by adding $(1,1)$-classes, 
e.g., by blowing-up a multisection of $X$ as in 
\cite{Kim2024}.} the following fundamental 
facts hold:
\begin{itemize}
\item $\overline{S}$ has lc singularities, and 
\item $\omega_{\overline{S}}$ is Cartier ample 
and proportional to the Hodge bundle on 
$\overline{S}$, i.e., 
\[
\lambda^{\otimes j(g+1)}_{S}\simeq 
\omega_{\overline{S}}^{\otimes j}
\text{ or }\lambda^{\otimes j(b_2-3)}_S
\simeq \omega_{\overline{S}}^{\otimes j} 
\text{ for any }j\geq 0.
\]
\end{itemize} 
As a result, it suffices to show that a fixed multiple
of $\omega_{\overline{S}}$ is very ample. Note that 
the dimension of $\overline{S}$ is either $g(g+1)/2$ 
or $b_2-3$, and is hence bounded. By the effective 
basepoint-free theorem for lc varieties (see 
\cite[Thm. 1.1]{Fuj09}), 
there exists a bounded $l \geq 1$ such that 
$|l K_{\overline{S}}|$ is free.
The Kodaira vanishing theorem for lc varieties gives
\begin{align*}
0=
H^{i}(X, \mathcal{O}_{\overline{S}}
(K_{\overline{S}} + 
(l(\dim \overline{S}-i)+1)K_{\overline{S}})
)
=H^{i}(X, \mathcal{O}_{\overline{S}}
((l\dim \overline{S}+2)
K_{\overline{S}} -i(l K_{\overline{S}}))
)
\end{align*}
for all $i>0$; see \cite[Thm.~1.1]{Fuj15}. This means 
that $(l \dim \overline{S}+2) K_{\overline{S}}$ is 
$0$-regular with respect to $l K_{\overline{S}}$, 
which is  ample and globally generated;
see \cite[Def.~1.8.4]{Laz}.
By \cite[Ex.~1.8.22]{Laz}, we conclude that 
$(l \dim \overline{S}+l+2) K_{\overline{S}}$ is very 
ample, so we may take $k=l \dim \overline{S}+l+2$. 
\end{proof}

\begin{proposition}\label{lambda-formula}
Let $\Phi_Z'\colon Y'\to \oA_{8g}$ be a resolution
of indeterminacy of the Z-period map 
$\Phi_Z\colon Y\dashrightarrow \oA_{8g}$ of an
abelian or primitive symplectic fibration.
There exists a constant $a \in \ZZ$, 
depending on $g$ only,
so that we have a linear equivalence
\begin{align}\label{lambda-eqn}
a (\Phi_Z')^{\ast}\lambda_{8g} \sim \twopartdef{8a\bM Y'.}
{f\textrm{ is abelian,}}
{4gd^{-1}a\bM Y'.}{f\textrm{ is primitive symplectic}.}
\end{align}
Here $2d$ is the dimension of the general 
fiber of a primitive symplectic fibration $f$,
and $g =g(M_{\rm np})\coloneqq 2^{b_2-r-1} 
[O^*(M):{\rm P}^*(M)]$ is bounded 
solely in terms of $M_{\rm np}$.
\end{proposition}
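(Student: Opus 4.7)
The plan is to decompose the Z-period map into its constituent morphisms---the Zarhin trick $Z$, the inclusion $\cA_{g,\Lambda_g,0} \hookrightarrow \cA_{g,{\bf d}}$, and (in the primitive symplectic case) the Kuga--Satake map $\mathfrak{Ks}$---and pull back $\lambda_{8g}$ along each factor. For the Zarhin trick, I would verify $Z^*\lambda_{8g} \sim 8\lambda_g$ using $Z(A) = (A \oplus A^*)^{\oplus 4}$ together with the canonical identification $\det H^{1,0}(A^*) \simeq \det H^{1,0}(A)$, which follows from the short exact sequence $0 \to \cF^1 \to V \otimes \cO \to V \otimes \cO/\cF^1 \to 0$ on the universal family and the trivial determinant of the symplectic local system $V = R^1\pi_*\underline{\bZ}$. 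The inclusion $\cA_{g, \Lambda_g, 0} \hookrightarrow \cA_{g, {\bf d}}$ pulls back the Hodge bundle tautologically. Combining these with \Cref{lem:proportionality}, the abelian case follows immediately as $a(\Phi_Z')^*\lambda_{8g} \sim 8a\bM Y'.$.

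For the primitive symplectic case, the crucial step is to compute $\mathfrak{Ks}^*\lambda_g$ in terms of $\lambda_\Lambda$ on $\cF_\Lambda$. I would first pass to the finite cover $\pi\colon \cF_\Lambda'' \to \cF_\Lambda$ of degree $[O^*(M):{\rm P}^*(M)]$ on which the secondary Kuga--Satake variety $KS''$ is defined, and work over the period domain $\bD_M$. At each $[x] \in \bD_M$ the Hodge filtration $\cF^1_{KS''}$ coincides with both the image and the kernel of the nilpotent operator $L_x$ of left Clifford multiplication by $x \in \cF^2$ (because $L_x^2=0$ and $\{L_x,L_{\bar x}\}=2\,x\cdot\bar x>0$). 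This gives the bundle-level short exact sequence
\[
0 \to \cF^2 \otimes \cF^1_{KS''} \to \cF^2 \otimes C(V) \to \cF^1_{KS''} \to 0
\]
induced by $s \otimes w \mapsto s \cdot w$. Taking determinants, and using that $\det C(V) = \cO$ since $C(V)$ is a constant bundle on the simply connected $\bD_M$, one obtains $(\det \cF^1_{KS''})^{\otimes 2} \simeq (\cF^2)^{\otimes 2^{m+1}}$. Absorbing the resulting 2-torsion ambiguity into a sufficiently large $a$, this yields $\det \cF^1_{KS''} \sim 2^m\lambda_\Lambda$ on $\cF_\Lambda''$, and pushing back down to $\cF_\Lambda$ via $\pi$ multiplies by the degree of the cover, so $\mathfrak{Ks}^*\lambda_g \sim 2^m[O^*(M):{\rm P}^*(M)]\,\lambda_\Lambda = (g/2)\,\lambda_\Lambda$.

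Assembling these factors, $(\Phi_Z')^*\lambda_{8g} \sim 8 \cdot (g/2)\,\Phi^*\lambda_\Lambda = 4g\,\Phi^*\lambda_\Lambda$. From \eqref{ps-lambda}, $a\bM Y'. \sim a\Phi^*(\lambda_\Lambda^{\otimes d})$, whence $a\Phi^*\lambda_\Lambda \sim d^{-1}a\bM Y'.$ after suitably enlarging $a$ to clear $d$; the desired equivalence $a(\Phi_Z')^*\lambda_{8g} \sim 4gd^{-1}a\bM Y'.$ follows. The main technical obstacle is the Kuga--Satake determinant computation: the appearance of $2^m$ rather than the naive $2^{m+1}$ reflects the self-dual relation $\cF^1_{KS} = \operatorname{im} L_x = \ker L_x$, and must be tracked carefully through the quotient bundle $C(V)/\cF^1_{KS}$ in the determinant calculation. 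Any remaining 2-torsion and Cartier-index ambiguities in the Picard groups of the relevant DM stacks are uniformly bounded in terms of $g$ alone, and are absorbed into the universal constant $a = a(g)$.
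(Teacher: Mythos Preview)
Your overall strategy---factoring the Z-period map and pulling back the Hodge bundle along each piece---is exactly the paper's approach, and your treatment of the Zarhin step and the final assembly via \Cref{lem:proportionality} and \eqref{ps-lambda} matches the paper essentially verbatim. The one genuine difference is the Kuga--Satake step: the paper argues by weights (the scalar $c$ acts on $H^{1,0}(KS'')$ so that $H^{2,0}$ is scaled by $c^2$ while $\det H^{1,0}(KS'')$ is scaled by $c^{g''}$, whence the exponent $g''/2=2^m$), whereas you produce the short exact sequence
\[
0 \;\to\; \cF^2 \otimes (\ker L_x) \;\to\; \cF^2 \otimes {\rm Cl}(M_\bC) \;\to\; (\operatorname{im} L_x) \;\to\; 0
\]
coming from left Clifford multiplication. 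This is a nice alternative and the determinant relation $(\det\ker L_x)^{2}\simeq(\cF^2)^{2^{m+1}}$ you extract from it is correct.

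There is, however, a sign issue you should not sweep into the ``2-torsion ambiguity.'' Left multiplication $L_x$ acts on ${\rm Cl}(M_\bC)=H_1(KS'',\bC)$, not on $H^1$; with the paper's convention $J=e_1e_2$ one has $\ker L_x=\operatorname{im}L_x=V^+$, the $(+i)$-eigenspace of $J$. A direct check (e.g.\ via $H^{1,0}={\rm Ann}(V^-)\simeq (V^+)^*$) shows $\lambda_{g''}=\det F^1H^1=(\det V^+)^{-1}$, so your ``$\det\cF^1_{KS''}\sim 2^m\lambda_\Lambda$'' actually reads $\lambda_{g''}^{-1}\sim\lambda_\Lambda^{2^m}$, i.e.\ the exponent comes out with the wrong sign. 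This is not a 2-torsion ambiguity but a genuine $\pm$ in the exponent. The fix is easy---either correct the identification (work with $F^0H_1=V^-$, or dualize), or simply observe that $\mathfrak{Ks}$ is finite on Baily--Borel compactifications (Remark~\ref{bb-finite}), so the pullback of the ample $\lambda_{g''}$ must be a \emph{positive} multiple of the ample $\lambda_\Lambda$, forcing $+2^m$. Once this is addressed, your argument is complete and equivalent to the paper's.
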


\begin{proof} Write $\lambda_g$
or $\lambda_M$ to denote, respectively, the 
Hodge line bundles on
$\oA_{g,{\bf d}}$ and $\oF_\Lambda$.
The Zarhin map $Z\colon \oA_{g,{\bf d}}\to \oA_{8g}$
satisfies 
$Z^{\ast}(\lambda_{8g}^{\otimes a_g})
\simeq
(\lambda_g)^{\otimes 8a_g}$ for some
constant $a_g\in \bZ$ depending only on $g$.
For instance, this follows as in 
\Cref{lem:proportionality} by passing
to neat subgroups of $\Gamma_{g,{\bf d}}$ and
${\rm Sp}(16g,\bZ)$, whose index $a_g$ 
depends only on $g$, and then observing
$$H^{8g,0}((A\oplus A^{\ast})^{\oplus 4})
\simeq H^{g,0}(A)^{\otimes 4}\otimes 
H^{g,0}(A^{\ast})^{\otimes 4}.$$
So in the case of abelian fibrations, 
$a(\Phi_Z')^*\lambda_{8g}\sim 8a\bM Y'.$ 
for some $a\in \bZ$ depending only
on $g$, by \Cref{lem:proportionality}.

By again passing to neat subgroups of bounded
index, we see that the secondary (or primary)
Kuga--Satake map $\mathfrak{Ks}''
\colon 
\cF''_\Lambda
\to \cA_{g'',{\bf d}''}$
(see Def.~\ref{secondary-ks})
satisfies $$(\mathfrak{Ks}'')^{\ast}(\lambda_{g''}^{\otimes a_{g''}})
\simeq (\lambda_\Lambda'')^{\otimes a_{g''}g''/2},$$
for some $a_{g''}\in \bZ$,
where $\lambda_\Lambda''$ is the Hodge line
bundle on $\oF_\Lambda''$---for $x\in \cF_\Lambda''$, 
the scalar action on $H^{1,0}(KS''(x))$ by 
$c\in \bC^{\ast}$
scales $H^{1,0}(KS''(x))^{\otimes 2}\supset H^{2,0}(x)$ 
by $c^2$ and scales $H^{g,0}(KS''(x))$
by $c^{g''}$. In turn, we claim that
$\mathfrak{Ks}^*(\lambda_g^{\otimes a_g}) 
\sim (\lambda_\Lambda)^{\otimes a_gg/2}.$
This follows from the fact that 
$KS(x)\simeq 
\bigoplus_{\gamma\in 
O^*(M)/{\rm P}^*(M)} KS''(\gamma\cdot x)$
and so, for some $a_g\in \bZ$ depending
only on $g$,
$$\mathfrak{Ks}^*(\lambda_g^{\otimes a_g}) \sim
\det \pi_*((\lambda_\Lambda'')^{\otimes a_g})\sim
(\lambda_M)^{\otimes a_g[O^*(M)\,:\,{\rm P}^*(M)]}$$
where $\pi\colon \oF_\Lambda''\to 
\oF_\Lambda$ is the quotient
map by the $O^*(M)/{\rm P}^*(M)$-action.
Combining these results, we get 
$a(\Phi_Z')^*\lambda_{8g}
\sim 4gd^{-1}a\bM Y'.$ for some integer $a$
depending only on $g$, by (\ref{ps-lambda}).
\end{proof}

\begin{remark}
\Cref{lambda-formula} suggests an alternative proof to 
\Cref{thm:b-semi}. The linear equivalence 
\eqref{lambda-eqn} reduces the 
effective $b$-semiampleness of $\bM Y.$ to the semiampleness 
of the Hodge line bundle on the single variety 
$\overline{A}_{8g}$. In the abelian case, this suffices 
to reprove \Cref{thm:b-semi}. In the primitive 
symplectic case, the constant 
$g \coloneqq 2^{b_2-r-1}[O^*(M):{\rm P}^*(M)]$ depends 
a priori on the unpolarized
BBF lattice, while the 
proof of \Cref{thm:b-semi} relies 
simply on a bound on $b_2$. 
\end{remark}

\begin{corollary}
Fix $\dim X$. Abelian fibrations $f\colon X\to Y$
satisfy the effective b-semiampleness conjecture.
If the generalized abundance Conjecture \ref{gen-abund}
for symplectic varieties holds, then
primitive symplectic fibrations $f\colon X\to Y$
satisfy the effective b-semiampleness conjecture.
\end{corollary}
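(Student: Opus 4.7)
The plan is to reduce both assertions to Theorem \ref{thm:weakeffective} by producing a uniform bound---depending only on $\dim X$---on the parameter ($g$ in the abelian case, $b_2$ in the primitive symplectic case) that controls the constant $I$ produced there.

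For the abelian case, the argument is immediate: the relative dimension $g$ of an abelian fibration $f\colon X\to Y$ satisfies $g = \dim X - \dim Y < \dim X$, so fixing $\dim X$ bounds $g$. Applying Theorem \ref{thm:weakeffective} with this bound yields a constant $I = I(\dim X)$ such that $I\mathbf{M}_{Y'}$ is free on some birational modification $Y'\to Y$, and by Theorem \ref{base-same-sing} the generalized pair structure on $Y$ is induced from a klt or lc one depending on the singularities of $(X,\Delta)$.

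For the primitive symplectic case, I would split according to $b_2(F)$ of a general fiber $F$ of $f$. Let $2d\leq \dim X$ be the relative dimension. If $b_2(F)\leq 4$, we directly invoke Theorem \ref{thm:weakeffective} with this absolute bound. If instead $b_2(F)\geq 5$, I would argue as follows. By \cite[Thm.~1.1(3)]{BL2022}, $F$ admits a locally trivial deformation to a primitive symplectic variety $F'$ carrying a non-trivial nef line bundle $L$ with $c_1(L)^{d+1}=0$; assuming generalized abundance (Conjecture \ref{gen-abund}), $L$ is semiample, so the linear system $|mL|$ produces a Lagrangian fibration on $F'$ by Proposition \ref{prop:base}, verifying the SYZ Conjecture \ref{SYZ conjecture} in this range. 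Theorem \ref{thm:ps} then asserts that Lagrangian-fibered primitive symplectic varieties of fixed dimension $2d$ lie in finitely many locally trivial deformation classes. Since $b_2$ is constant in locally trivial families, this yields a bound $b_2(F)\leq N(2d)\leq N(\dim X)$.

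Combining the two cases gives a uniform bound $b_2(F)\leq \max\{4, N(\dim X)\}$ depending only on $\dim X$, and Theorem \ref{thm:weakeffective} then produces the desired integer $I = I(\dim X)$ making $I\mathbf{M}_{Y'}$ free on a suitable modification. The main conceptual point---and the reason generalized abundance is needed in the symplectic case---is precisely this bound on $b_2$: without a Lagrangian fibration on a locally trivial deformation, there is at present no known bound on $b_2$ of a primitive symplectic variety in terms of its dimension alone, as witnessed by examples such as the Liu--Liu--Xu primitive symplectic variety of dimension $42$ with $b_2\geq 24$ mentioned in the introduction. In the abelian case no such hypothesis is needed because the analogous parameter $g$ is automatically bounded by dimension.
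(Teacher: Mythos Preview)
Your proposal is correct and follows essentially the same approach as the paper: bound $g$ trivially in the abelian case, and in the primitive symplectic case split on $b_2(F)\leq 4$ versus $b_2(F)\geq 5$, using generalized abundance plus Theorem~\ref{thm:ps} to bound $b_2$ in the latter case. The only cosmetic difference is that the paper re-derives the existence of the isotropic nef class via Meyer's theorem and surjectivity of the period map, whereas you cite \cite[Thm.~1.1(3)]{BL2022} directly---which the paper itself invokes just before Conjecture~\ref{gen-abund}---so the two routes are equivalent.
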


\begin{proof}
Fixing the dimension $\dim X$ also bounds 
the dimension of a fiber of $f$, and so the 
abelian case is immediate from 
Theorem \ref{thm:weakeffective}.

For the primitive symplectic case,
we split into two subcases. If $b_2\leq 4$
for the general fiber, we
apply Theorem \ref{thm:weakeffective}.
So suppose $b_2\geq 5$. Then the BBF form
on the general fiber has signature $(3,n)$
for $n\geq 2$. It follows from Meyer's Theorem
\cite{meyer} that the BBF form represents the integer
$0$. By the surjectivity of the period mapping 
\cite[Thm.~1.1]{BL2022},
there is some primitive symplectic variety $Z$,
deformation equivalent to a general fiber of $f$,
admitting a nef line bundle $L$ with $L\cdot L=0$.
By Conjecture \ref{gen-abund},
$|L|$ defines a Lagrangian fibration.
Then Theorem \ref{thm:ps}
implies that $b_2(Z)$ is bounded above. Since $b_2$
is a topological invariant, we conclude 
$b_2$ is bounded for the general fiber of $f$.
Now, we apply Theorem \ref{thm:weakeffective}.
\end{proof}

\section{{\it K}-trivial fibrations
over a family of bases}\label{sec:bd}

\subsection{Boundedness of the relative polarization}
Given a finite type family $\cY\to \cT$,
and a fixed target variety 
$A$, 
we can encode a space 
$\cS$ 
parameterizing rational maps
$\Psi_s\colon \cY_t\dashrightarrow A$, 
$t\in \cT$, 
for $s\in \cS$,
by the closure of the 
universal graph. 
That is, we have a
forgetful finite type
morphism $\cS\to \cT$, not necessarily
surjective,
and a universal graph closure
$$\overline{\Gamma \Psi}\subset 
(\cY\times_\cT \cS)\times A.$$
To lighten the notation, we notate such a universal
rational map $\Psi\colon \cY\dashrightarrow A/\cS$, 
with the map $\cS\to \cT$ implicit.
In this section, unless stated otherwise,
$$(\cY,\cB,\cM)\to \cT$$ will denote
a family $\cY\to \cT$ of normal 
projective varieties,
where $\cB$, $\cM$ are $\bQ$-divisors on $\cY$,
which are $\bQ$-divisors on all fibers $\cY_t$, $t\in \cT$.

We shall frequently pass to locally closed stratifications
of $\cS$, $\cT$. 
For simplicity, we will notate the new
bases again by $\cS$, $\cT$.

\begin{definition}\label{def:indicates} 
Let us fix a closed point 
$t\in \cT$. 
We say that a $K$-trivial
fibration $f\colon X\to \cY_t$ 
{\it indicates} $(\cY_t ,\cB_t,\cM_t)$ (cf.~``induces'' in 
Thm.~\ref{base-same-sing}) if
\begin{enumerate}
\item $K_{X_U}\sim_{\bQ, f_U}0$ for a big open
set $U\subset \cY_t$; 
\item $\supp B_{\cY_t}=\supp \cB_t$ and $\bM \cY_t.\equiv \cM_t$. 
\end{enumerate}
\end{definition}

\begin{remark}
Our definition of indicating $(\cY_t,\cB_t,\cM_t)$ employs
numerical equivalence $\equiv$ to avoid issues concerning
the non-constructibility of loci where 
$\bM \cY_t.\sim_\bQ \cM_t$. For instance, fix
a fibration $f\colon X\to Y$ over a base $Y$ with
${\rm Pic}^0(Y)\neq 0$, inducing $(Y,B,\bfM)$.
Set $\cT={\rm Pic}^0(Y)$,
$\cY \coloneqq Y\times \cT$, $\cB = B\times \cT$, and 
choose a family of $\bQ$-divisors with
$\cM_t \sim \bM Y.\otimes \cL_t$ where 
$\cL_t$ is the line bundle parameterized by 
$t\in \cT={\rm Pic}^0(Y)$. Then replacing $\equiv$
with $\sim_\bQ$ in Definition \ref{def:indicates}
would mean that $f$ only indicates $(\cY_t,\cB_t,\cM_t)$
at the torsion points of ${\rm Pic}^0(Y)$.
\end{remark}

\begin{lemma}\label{bir-vol}
Fix $Y$ a normal projective variety,
and a $\bQ$-divisor $D$ on $Y$. 
The set of rational maps
$\Phi\colon Y\dashrightarrow \bP^n$ for which
$D\equiv\Phi^\ast \cO_{\bP^n}(1)$ is bounded. 
\end{lemma}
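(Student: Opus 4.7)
A rational map $\Phi\colon Y\dashrightarrow \bP^n$ is determined by the tuple $(L,s_0,\dots,s_n)$, with $L\coloneqq\Phi^{\ast}\cO_{\bP^n}(1)$ and with $s_0,\dots,s_n \in H^0(Y,L)$ the pullbacks of the homogeneous coordinates (linearly independent and having no common zero in codimension one); conversely any such tuple gives back $\Phi=[s_0:\cdots:s_n]$. So it suffices to bound the space of such tuples. After replacing $Y$ by a resolution of singularities $\pi\colon\widetilde Y\to Y$ (the assignment $\Phi\mapsto \Phi\circ\pi$ is injective on rational maps and preserves the numerical class of the pullback of $\cO(1)$, now with respect to $\pi^{\ast} D$), we may assume $Y$ is smooth.

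\textbf{Bounding $L$.} If no integral Cartier divisor is numerically equivalent to $D$, the family is empty. Otherwise, fixing one such $D_0$, the line bundles $L$ with $L\equiv D$ form a torsor under $\Pic^\tau(Y)$. Since $Y$ is smooth projective, $\Pic^0(Y)$ is an abelian variety and $\Pic^\tau(Y)/\Pic^0(Y)$ is the finite torsion subgroup of the finitely generated $\NS(Y)$. Hence this torsor is parametrized by a scheme $P$ of finite type over $\bC$.

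\textbf{Bounding sections.} On the noetherian scheme $P$, the function $L\mapsto h^0(Y,L)$ is upper semicontinuous and hence bounded above by some integer $N$ (the descending chain of closed subschemes $\{h^0\geq k\}$ must stabilize). Stratifying $P$ into finitely many locally closed pieces on which $h^0$ is constant, $p_{\ast}\cL$ becomes locally free on each piece, where $p\colon P\times Y\to P$ and $\cL$ denotes the universal line bundle. An open subscheme $\cS$ of the total space of $p_{\ast}\cL^{\oplus(n+1)}$ parametrizes tuples $(L,s_0,\dots,s_n)$ of linearly independent sections; it is of finite type, and the universal rational map $Y\times\cS\dashrightarrow\bP^n$, $(y,s)\mapsto[s_0(y):\cdots:s_n(y)]$, yields the required universal graph closure.

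\textbf{Main obstacle.} The only nontrivial input is the uniform bound $h^0(Y,L)\leq N$ as $L$ ranges over $P$, which reduces to finite-typeness of $\Pic^\tau(Y)$ plus upper semicontinuity; both are standard for smooth projective $Y$. Everything else is formal.
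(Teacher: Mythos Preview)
Your reduction to smooth $Y$ contains an error. The claim that $(\Phi\circ\pi)^{\ast}\cO_{\bP^n}(1)\equiv\pi^{\ast}D$ on $\widetilde Y$ is false: in general $(\Phi\circ\pi)^{\ast}\cO_{\bP^n}(1)$ and $\pi^{\ast}\bigl(\Phi^{\ast}\cO_{\bP^n}(1)\bigr)$ differ by $\pi$-exceptional divisors whose coefficients depend on $\Phi$. For a concrete counterexample, take $Y$ the quadric cone, $\pi\colon\bF_2\to Y$ the blow-up of the vertex with exceptional curve $E$ and fiber class $F$, and $\Phi\colon Y\dashrightarrow\bP^1$ the projection from the vertex. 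Then $D\coloneqq\Phi^{\ast}\cO(1)$ is a ruling line ($\bQ$-Cartier, with $2D\sim H$), so $\pi^{\ast}D=\tfrac{1}{2}E+F$; but $\Phi\circ\pi$ is the $\bP^1$-bundle structure of $\bF_2$, so $(\Phi\circ\pi)^{\ast}\cO(1)=F$. These are not numerically equivalent. More generally, as $\Phi$ varies over maps with $\Phi^{\ast}\cO(1)\equiv D$, the numerical class of $(\Phi\circ\pi)^{\ast}\cO(1)$ on $\widetilde Y$ ranges over an a priori unbounded set of classes (differing by exceptional contributions), so your Step~2 no longer pins down a single $\Pic^\tau$-torsor.

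For smooth $Y$ your argument is correct and pleasantly elementary: you parametrize the line bundle $L=\Phi^{\ast}\cO(1)$ by the finite-type scheme $\Pic^\tau(Y)$ and then bound $h^0$ by semicontinuity. The paper takes a different route that sidesteps the reduction entirely: it bounds the degree of the graph closure $\overline{\Gamma\Phi}\subset Y\times\bP^n$ with respect to a fixed ample class $L\boxtimes\cO(1)$, via the inequality $\vol_{\overline{\Gamma\Phi}}(\pi^{\ast}L+D')\leq\vol_Y(L+D)$ (using that volume is numerical for Weil $\bR$-divisors and that pushforward along the birational projection can only increase it), and then invokes boundedness in the Chow scheme. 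This works uniformly for normal $Y$ without needing to control how the linear system behaves on a resolution.
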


\begin{proof} Fix a very ample line bundle $L$ on $Y$.
Consider the very ample line bundle 
$L\boxtimes \cO_{\bP^n}(1)$ on $Y\times \bP^n$. 
Let $Y' \coloneqq \overline{\Gamma \Phi} 
\subset Y \times \mathbb P^n$ 
be 
the closure of the graph and let $\pi\colon Y'\to Y$ 
be the restriction of the projection onto 
the first factor to $Y'$. 
Then 
$(L\boxtimes \cO_{\bP^n}(1))\vert_{Y'}= 
\pi^\ast L+D'$
for some effective Cartier divisor $D'$ on $Y'$ 
satisfying $\pi_*D'\equiv D$. Hence, as the 
volume is a numerical invariant for 
Weil
$\bR$-divisors
\cite[Thm.~3.5.iv]{FKL}
and 
$\vol(D') \leq \vol(\pi_\ast D')$, we have
\begin{align*}
\vol_{Y'} (L\boxtimes \cO_{\bP^n}(1)|_{Y'})
= \vol_{Y'} (\pi^\ast L+D')
\leq 
\vol_{Y} (L+\pi_*D')
=
\vol_{Y} (L+D).
\end{align*}
Since the volume of $L+D$ is bounded by assumption, the
result now follows from generalities on Chow schemes.
\end{proof}

\begin{proposition}\label{bd-map} 
Fix $g>0$ and $(\cY,\cB,\cM)\to \cT$
as above.
There is a finite type space of rational maps
$\cY\dashrightarrow \oA_{8g}/\cS$ containing
all Z-period maps $\Phi_Z\colon \cY_t\dashrightarrow 
\oA_{8g}$
for some abelian or primitive symplectic
fibration $f\colon X\to \cY_t$ 
indicating $(\cY_t,\cB_t,\cM_t)$.
\end{proposition}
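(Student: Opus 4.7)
The plan is to embed $\oA_{8g}$ into a fixed projective space via a suitable multiple of the Hodge line bundle, use Proposition~\ref{lambda-formula} to tie the pullback of the hyperplane class to the fixed family of moduli divisors $\cM$, and then invoke a relative version of Lemma~\ref{bir-vol} together with the finite-typeness of relative Chow/Hilbert schemes.

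First, since $\oA_{8g}$ is the Proj of the ring of modular forms with respect to $\lambda_{8g}$, the Hodge line bundle is ample, and by the Mumford--Fujita arguments as in the proof of Theorem~\ref{thm:weakeffective}, there is an integer $k=k(g)$ such that $k\lambda_{8g}$ is very ample. Fix once and for all an embedding $\iota\colon \oA_{8g}\hookrightarrow \bP^N$ with $N=N(g)$, $\iota^{\ast}\cO_{\bP^N}(1)=k\lambda_{8g}$. For any abelian or primitive symplectic fibration $f\colon X\to \cY_t$ indicating $(\cY_t,\cB_t,\cM_t)$, resolve the indeterminacy $\pi\colon \cY_t' \to \cY_t$ of $\Phi_Z$ to obtain a morphism $\Phi_Z'\colon \cY_t' \to \oA_{8g}$. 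By Proposition~\ref{lambda-formula}, for some fixed $a=a(g)\in \bZ$,
$$
a\,(\Phi_Z')^{\ast}\lambda_{8g} \sim c_g\, \bM \cY_t'.
$$
with $c_g\in \bQ$ depending only on $g$. Multiplying by $k$ and pushing forward along $\pi$, and using that $\pi_{\ast}\bM \cY_t'. \equiv \bM \cY_t. \equiv \cM_t$ by the indication hypothesis, we obtain
$$
a \cdot \pi_{\ast}(\iota\circ\Phi_Z')^{\ast}\cO_{\bP^N}(1) \;\equiv\; k c_g\, \cM_t.
$$

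Second, fix a relatively very ample line bundle $\cL$ on $\cY/\cT$, and set $\Psi_Z \coloneqq \iota\circ\Phi_Z\colon \cY_t\dashrightarrow \bP^N$. Exactly as in the proof of Lemma~\ref{bir-vol}, writing the graph closure $\overline{\Gamma \Psi_Z}\subset \cY_t\times\bP^N$ and projecting to $\cY_t$, one bounds
$$
\vol_{\overline{\Gamma\Psi_Z}}\bigl((\cL_t\boxtimes \cO_{\bP^N}(1))|_{\overline{\Gamma\Psi_Z}}\bigr) \;\leq\; \vol_{\cY_t}\!\bigl(\cL_t + (kc_g/a)\,\cM_t\bigr).
$$
The right-hand side is a constructible function of $t\in\cT$, hence bounded uniformly after stratifying $\cT$ into finitely many locally closed subsets. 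On each stratum, the graph closures have bounded degree with respect to $\cL\boxtimes \cO_{\bP^N}(1)$, so by the theory of relative Chow (or Hilbert) schemes they are parameterized by a finite-type scheme; taking the disjoint union over strata yields the desired $\cS\to \cT$, with universal graph $\overline{\Gamma \Psi}\subset (\cY\times_\cT \cS)\times \bP^N$. Restricting to the closed subscheme cut out by $\iota(\oA_{8g})\subset\bP^N$ in the second factor produces the sought-after finite-type family of Z-period maps $\cY\dashrightarrow \oA_{8g}/\cS$.

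The main obstacle is ensuring uniformity in the family: one must check that the relevant volume is indeed constructible in $t$, and that the argument of Lemma~\ref{bir-vol} goes through relatively (i.e., that the family of bounded-degree graph closures is of finite type over $\cT$). Both points reduce to standard finite-type properties of the relative Chow scheme, but they depend on the initial stratification of $\cT$ that makes $\vol_{\cY_t}(\cL_t + (kc_g/a)\cM_t)$ uniformly bounded; the rest is purely formal once the effective very ampleness on $\oA_{8g}$ and the Hodge-theoretic proportionality of Proposition~\ref{lambda-formula} are in hand.
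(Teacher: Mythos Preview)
Your proof is correct and follows essentially the same approach as the paper: both arguments use Proposition~\ref{lambda-formula} to identify $\Phi_Z^{\ast}\lambda_{8g}$ numerically with a fixed multiple of $\cM_t$, then bound the volume of the graph closure and invoke a relative form of Lemma~\ref{bir-vol}. Your version is slightly more explicit in embedding $\oA_{8g}$ into a fixed $\bP^N$ and in discussing the stratification of $\cT$, whereas the paper works directly with the ample $\lambda_{8g}^{\otimes I_g}$ on $\oA_{8g}$, but these are cosmetic differences.
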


Here,
in the primitive symplectic case,
we take 
$g=2^{b_2-r-1}[O^*(M):{\rm P}^*(M)]$,
as in the proof of Proposition \ref{lambda-formula}.

\begin{proof}
Let $\cL$ be a line bundle on $\cY$ that is relatively very ample
over $\cT$.
The graph closure of such a rational Z-period
map $\Phi_Z$ has bounded
volume with respect to the ample
line bundle $\cL_t\boxtimes \lambda_{8g}^{\otimes I_g}$
on $\cY_t\times \oA_{8g}$
because $\Phi_Z^{\ast}(\lambda_{8g})\equiv8\cM_t$
or $4gd^{-1}\cM_t$ in the respective cases, see 
(\ref{lambda-eqn}). This holds even in the case
when $\Phi_Z$ is rational, because on a model
$\pi\colon Y'\to Y$ on which $\bfM$ descends (i.e., a 
resolution of indeterminacy of $\Phi_Z$), we
have $\bM Y.=\pi_*\bM Y'.$ and that $\bM Y'.$ is 
pulled back from $\oA_{8g}$. 
The result now follows from a relative form of 
Lemma \ref{bir-vol}.
\end{proof}

\begin{proposition}\label{bd-stack}
Fix $g>0$ and $(\cY,\cB,\cM)\to \cT$.
There is a finite type space of morphisms
$\cY^o\to \cA_{8g}/\cS$
containing
all Z-classifying morphisms $\Phi_Z^o\colon 
\cY_t^o\to \cA_{8g}$
for some abelian or primitive symplectic
fibration $f\colon X\to \cY_t$ 
indicating $(\cY_t,\cB_t,\cM_t)$.
\end{proposition}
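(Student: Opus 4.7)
The plan is to upgrade Proposition \ref{bd-map}, which gives boundedness of rational maps to the coarse space $\oA_{8g}$, to boundedness of morphisms to the DM stack $\cA_{8g}$. The main tool is a level-$\ell$ structure: for $\ell = 3$, the principal congruence subgroup $\Gamma(\ell) \subset \mathrm{Sp}(16g,\bZ)$ is neat, so $\cA_{8g}[\ell] \coloneqq [\Gamma(\ell)\backslash \cH_{8g}]$ is a quasi-projective scheme (the fine moduli of principally polarized abelian $8g$-folds with full level-$\ell$ structure), and $\cA_{8g}[\ell] \to \cA_{8g}$ is a finite étale Galois cover with finite Galois group $G \coloneqq \mathrm{Sp}(16g,\bZ/\ell)$.

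First I would start from the bounded family of rational maps $\Phi_Z: \cY \dashrightarrow \oA_{8g}/\cS$ obtained from Proposition \ref{bd-map}. After taking the graph closure and stratifying $\cS$, we can assume the data consists of a proper birational modification $\cY' \to \cY_{|\cS}$ and a morphism $\cY' \to \oA_{8g}$ of finite type families. Let $\cY^o \subset \cY'$ denote the preimage of $A_{8g}$; after a further (constructible) stratification of $\cS$, we may assume $\cY^o \to \cS$ is a topologically locally trivial family of complex varieties over each stratum (e.g., by Verdier's generic fibration theorem), so that the topological fundamental groups $\pi_1(\cY^o_s)$ are uniformly finitely generated as $s$ ranges over a fixed stratum.

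Next, I would invoke the standard equivalence: a morphism $\cY^o_s \to \cA_{8g}$ lifting a given morphism $\cY^o_s \to A_{8g}$ is the same as the data of (i) a finite étale $G$-torsor $Z_s \to \cY^o_s$ and (ii) a $G$-equivariant morphism $Z_s \to \cA_{8g}[\ell]$ whose composition with the finite map $\cA_{8g}[\ell] \to A_{8g}$ recovers the composition $Z_s \to \cY^o_s \to A_{8g}$. There are uniformly finitely many $G$-torsors on $\cY^o_s$ because the set of conjugacy classes of homomorphisms $\pi_1(\cY^o_s) \to G$ from a uniformly finitely generated group to a fixed finite group is uniformly bounded. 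Once a torsor is fixed, the composition $Z_s \to A_{8g}$ is known (via the bounded rational map), so there are only finitely many lifts to the quasi-projective scheme $\cA_{8g}[\ell]$, since $\cA_{8g}[\ell] \to A_{8g}$ is finite.

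Combining the finitely many $G$-torsors and the finitely many lifts (both varying in bounded subfamilies) yields the desired finite type space of morphisms $\cY^o \to \cA_{8g}/\cS'$ over a refined parameter space $\cS'$. The main obstacle is the uniformity of the étale fundamental group analysis in the second step: while it is classical that for a smooth family the topology of the fibers is constant, for our finite type family with potentially singular open fibers $\cY^o_s$, we need a generic topological triviality statement (via Verdier or Thom--Mather stratifications) in order to obtain a uniform bound on the number of generators of $\pi_1(\cY^o_s)$; once that is in place the remaining combinatorics of $G$-equivariant lifts is immediate from the finiteness of $\cA_{8g}[\ell] \to A_{8g}$.
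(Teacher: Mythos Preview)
There is a genuine gap in your choice of the open set $\cY^o$. You take $\cY^o_s$ to be the preimage of $A_{8g}$ under the resolved coarse period map, but the Z-classifying morphism $\Phi^o_Z$ for a given fibration $f$ is a priori only defined on the (possibly smaller, $f$-dependent) locus $Y^o\subset\cY_t$ where $f$ is smooth. Your open set can contain divisors along which the local monodromy of $R^1f^o_*\underline{\bZ}$ is finite but nontrivial---for instance, over a Kodaira type~$II$ fiber the coarse period lands in $A_{8g}$ (at $j=0$), yet the order-$6$ monodromy obstructs any extension of the map $Y^o\to\cA_{8g}$ across that divisor. Consequently $\Phi^o_Z$ does not factor through $\pi_1(\cY^o_s)$ and is not the restriction of any of the lifts you produce on $\cY^o_s$; your bounded family simply misses it.

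The paper's crucial step, which you are missing, is to shrink the open set further---removing $\supp\cB_t$, the singular locus of $\cY_t$, and (in the primitive symplectic case) the inertia-jumping locus $\partial_{\rm I}$---and then to invoke Propositions~\ref{extend3} and~\ref{extend4}. These show that the underlying variation of Hodge structure, hence the classifying morphism to the stack, extends canonically to this smaller open set for \emph{every} $f$ indicating the given data, so that $\cY^o_s$ depends only on $\Phi_{Z,s}$ and $\supp\cB_t$, not on $f$. Only after this extension step does bounding the stack lifts become meaningful. Once the correct open set is in place, your lift-counting via level-$3$ torsors and $G$-equivariant maps to the scheme $\cA_{8g}[3]$ is a perfectly valid alternative to the paper's use of crossed homomorphisms into the generic inertia group, and either route gives the required finiteness.
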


\begin{proof}
By Proposition \ref{bd-map}, the space of rational
Z-period maps $\Phi_{Z,s}\colon \cY_t\dashrightarrow 
\oA_{8g}$ for a fibration indicating 
$(\cY_t,\cB_t,\cM_t)$ lies in a bounded family. 
We first bound
the open set $\cY_s^o\subset \cY_t$ on which
the Z-classifying morphism $\Phi_{Z,s}^o$
(corresponding to the Albanese fibration, see
(\ref{def:classify}))
necessarily exists. Let 
$\partial_{{\rm H},8g} \coloneqq \oA_{8g}\setminus A_{8g}$.
In the abelian case, $\Phi_{Z,s}$
is induced by a classifying morphism $\Phi_{Z,s}^o$
on the open set 
\begin{align}\begin{aligned}\label{open1}
\cY_s^o 
& \coloneqq  \cY_t\setminus 
({\rm indet}(\Phi_{Z,s})\cup 
\Phi_{Z,s}^{-1}(\partial_{{\rm H}, 8g})
\cup \Supp \cB_t\cup (\cY_t)_{\rm sing}) \\
&\,\,=\cY_t\setminus ({\rm indet}(\Phi_{Z,s})\cup
\partial_{{\rm H},s}
\cup \Supp \cB_t\cup (\cY_t)_{\rm sing})
\end{aligned}\end{align}
by Proposition \ref{extend3},
applied to the restriction
of $f$ to $U\cap \cY_t\setminus 
({\rm indet}(\Phi_{Z,s})\cup (\cY_t)_{\rm sing})$,
where $U\subset \cY_t$ is a big open set with
$K_{X_U}\sim_{\bQ,f_U}0$. Here, we have extended
$\Phi_{Z,s}^o$ from $U\cap \cY_s^o$ to $\cY_s^o$
using that $U$ is big and $\cY_s^o$ is smooth.
Note that, for any $f$,
the moduli divisor 
$\bfM$ descends on the complement of the indeterminacy
locus of $\Phi_{Z,s}$. 

In the primitive symplectic case, first
pass to a locally closed stratification
of the space of maps, 
according to the generic inertia group 
$G\subset \Sp(16g,\bZ)$
of the image $\Phi_{Z,s}(\cY_t)$,
so that $G_s=G$ is constant
on a connected component. 
Let $\partial_{{\rm I},8g,G}\subset \oA_{8g}$
be the sublocus of the closure of 
the inertia stratum
for $G$ where the inertia jumps 
(to be larger than $G$).
Then $\Phi_{Z,s}$
is induced by a classifying morphism $\Phi_{Z,s}^o$
on the open set
\begin{align}\begin{aligned}\label{open2}
\cY_s^o &\coloneqq 
\cY_t\setminus 
({\rm indet}(\Phi_{Z,s})\cup 
\Phi_{Z,s}^{-1}(\partial_{{\rm H},8g})\cup 
\Phi_{Z,s}^{-1}(\partial_{{\rm I},8g,G})\cup 
\Supp \cB_t\cup (\cY_t)_{\rm sing}) \\
&\,\,\subset 
\cY_t\setminus ({\rm indet}(\Phi_{Z,s})\cup
\partial_{{\rm H},s}\cup
\partial_{{\rm I},s}\cup \Supp \cB_t\cup (\cY_t)_{\rm sing})
\end{aligned}\end{align}
by Proposition \ref{extend4}. The 
Hodge-theoretic boundary pulls back to
the Hodge-theoretic boundary under $\mathfrak{Ks}$
by \cite[Thm.~1.2]{schreieder2020kuga}.
The containment
holds because if the inertia at $y\in \cY_t$
for the period morphism $\Phi_s\colon \cY_t\to \oF_\Lambda$
jumps, then so does necessarily the inertia
at $\Phi_{Z,s}(y)$ because $\cF_\Lambda\to \cA_{8g}$
is induced by an embedding
of prequotients 
$\bD_M\hookrightarrow \cH_{8g}$.

Note that $f$ is not, a priori,
smooth over all of $\cY^o_s$.
Rather, it is critical for us
that the open sets $\cY_s^o\subset \cY_t$ 
from (\ref{open1}) or (\ref{open2}) on which 
the $Z$-period map lifts to a $Z$-classifying map
depend only on $\Phi_{Z,s}\colon 
\cY_t\to \oA_{8g}$ (and $\supp \cB_t$) but 
not on $f$ itself.
In summary,
we have shown that
there is a finite type family
of morphisms $$\cY^o\to A_{8g}/\cS$$
from quasi-projective subvarieties 
$\cY^o_s\subset \cY_t$ for which the period
map $\Phi_{Z,s}\colon \cY_t\dashrightarrow \oA_{8g}$ 
associated to $f$ is induced by {\it some} lift
to the DM stack $\Phi_{Z,s}^o\colon \cY_s^o\to \cA_{8g}$.

It remains to bound the set of lifts.
Stratify $\cS$ by topological type of 
the fibers $\cY^o_s$.
For those components of $\cS$ on which a lift
exists, the set of lifts of $\Phi_{Z,s}\vert_{\cY_s^o}\colon 
\cY^o_s\to A_{8g}$ to $\cA_{8g}$
is identified with the space of 
crossed homomorphisms 
$\pi_1(\cY^o_s)\to G$, with respect
to twisting by the pullback to $\cY^o_s$
of the $G$-gerbe over the inertia stratum.
In particular since $\pi_1(\cY^o_s)$
is finitely generated and $G$ is finite, 
these are finite in number.
So the number of lifts $\cY_s^o\to \cA_{8g}$ to the
DM stack is finite, and constant on connected
components of $\cS$. Alternatively, one can invoke \cite[Thm.~1.1]{Olsson2007}.
\end{proof}

We must now ``undo'' the Kuga--Satake and Zarhin
tricks. Then, using that the fibrations are
of Picard type, we will recover a polarization type 
from the local system alone;
see Example \ref{pic-ex} for why this restriction
is necessary.

\begin{proposition}\label{bd-unzarhin} 
Fix $g>0$
and $(\cY,\cB,\cM)\to \cT$.
\begin{enumerate}
\item[(AV)] There is a finite set 
of saturated sub-modules $\Lambda^{(i)}\subset\wedge^2(\bZ^{2g})$
and a finite type space of maps 
$\cY^o\to \bigsqcup_i \cA_{g,\Lambda^{(i)}}/\cS$
containing
all classifying morphisms $\Phi^o_s\colon 
\cY_s^o\to \cA_{g,\Lambda^{(i)}}$
for some abelian
fibration $f\colon X\to \cY_t$ of relative dimension $g$, of Picard type,
indicating $(\cY_t,\cB_t,\cM_t)$.\smallskip
\item[(PS)] 
Fix an analytic deformation
class of primitive symplectic varieties.
There is a finite collection 
$\Lambda^{(i)}\subset (M_{\rm np},\cdot)$
of primitive, signature $(1,r-1)$
sublattices and a finite type space of maps 
$\cY^o\to \bigsqcup_i \cF_{\Lambda^{(i)}}/\cS$
containing
all classifying morphisms $\Phi_s^o\colon 
\cY_s^o\to \cF_{\Lambda^{(i)}}$
for some primitive symplectic
fibration $f\colon X\to \cY_t$ of Picard type,
indicating $(\cY_t,\cB_t,\cM_t)$.
\end{enumerate}
\end{proposition}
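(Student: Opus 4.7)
The plan is to invert the Z-classifying morphism construction with finite ambiguity, and then use the Picard-type hypothesis to recover the polarization lattice $\Lambda$. Starting from the finite type family of Z-classifying morphisms $\Phi^o_{Z,s}\colon \cY^o_s\to \cA_{8g}$ produced by Proposition \ref{bd-stack}, I would first stratify $\cS$ so that on each connected component the pulled-back rank $16g$ $\bZ$-local system $\bW_s$ on $\cY^o_s$ forms a single family of local systems over $\cS$; in particular, the monodromy representation $\pi_1(\cY^o_s)\to \GL_{16g}(\bZ)$ is locally constant in $s$.

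The crux is to undo the Zarhin trick at the level of local systems. Since $Z(A)=(A\oplus A^{\ast})^{\oplus 4}$, there is a rational isotypic decomposition
\[\bW_s\otimes \bQ\simeq \bV_s^{\oplus 4}\oplus (\bV_s^{\ast})^{\oplus 4},\]
where $\bV_s=R^1f^o_{\ast}\underline{\bZ}$ is the rank $2g$ local system underlying $f$. The identification of $\bV_s$ inside $\bW_s$ is not canonical, but the different integral choices are permuted by a finite group stemming from the ambiguity of the Zarhin factorization (controlled by \eqref{zarhin-choice} together with the isotypic automorphisms of $(A\oplus A^{\ast})^{\oplus 4}$). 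Hence $\bV_s$ is determined by $\bW_s$ up to finitely many possibilities; this is the content of Lemma \ref{local-sys-undo}, which is the main auxiliary statement.

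In case (AV), the Picard-type hypothesis (Def.~\ref{def:pic-type}) implies that $\Lambda_s\coloneqq H^0(\cY^o_s, \wedge^2\bV_s)\subset \wedge^2(\bV_{s,\ast})$ is a saturated sublattice of Hodge--Tate type, and in particular contains a relative polarization. Because $\bV_s$ varies in finitely many options, so does $\Lambda_s$ up to $\GL_{2g}(\bZ)$-equivalence; after fixing a framing $\bV_{s,\ast}\simeq \bZ^{2g}$, one obtains a finite list $\{\Lambda^{(i)}\}\subset \wedge^2(\bZ^{2g})$. Proposition \ref{pic-per} then supplies the classifying morphism $\Phi^o_s\colon \cY^o_s\to \cA_{g,\Lambda^{(i)}}$, and since the composition $\cA_{g,\Lambda^{(i)}}\to \cA_{g,\mathbf{d}}\to \cA_{8g}$ from \eqref{inclusions} is quasi-finite on coarse spaces (Rem.~\ref{bb-finite}, \ref{pol-const-2}), the lifts of $\Phi^o_{Z,s}$ through it form a finite type family over $\cS$.

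In case (PS), one additionally inverts the Kuga--Satake construction. Given $\bV_s$ recovered as above, the weight-$2$ $\bZ$-PVHS on $\bU$ is in turn determined up to finite ambiguity because $\mathfrak{Ks}$ is induced by a closed embedding $\bD_M\hookrightarrow \bD$ of Mumford--Tate subdomains and descends to a finite morphism between Baily--Borel prequotients (Rem.~\ref{bb-finite}). The Picard-type hypothesis (Rem.~\ref{rem:ps-picard}, \ref{pic-aut}) ensures that the invariant lattice $\Lambda_s=H^0(\cY^o_s,\bU_{\rm np})$ is a primitive, hyperbolic sublattice of signature $(1,r-1)$ of the fixed BBF lattice $M_{\rm np}$ containing the relative polarization, so $\Lambda_s$ ranges over finitely many options $\{\Lambda^{(i)}\}$; each yields a classifying morphism $\Phi^o_s\colon \cY^o_s\to \cF_{\Lambda^{(i)}}$, assembling into a finite type family over $\cS$. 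The main obstacle will be proving Lemma \ref{local-sys-undo} cleanly, i.e., bounding the $\Gamma_{g,\mathbf{d}}$-conjugacy classes of integral embeddings $\bV\hookrightarrow \bW$ compatible with the Zarhin isotypic decomposition; this reduces to a finite group-theoretic computation relying on the explicit form of Theorem \ref{thm:zarhin}.
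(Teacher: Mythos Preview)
Your outline for case (AV) is essentially the paper's approach: undo the Zarhin trick on local systems via Lemma~\ref{local-sys-undo}, read off $\Lambda^{(i)}=H^0(\cY^o_s,\wedge^2\bV_s)$ on each connected component of $\cS$, then invoke Picard type and Proposition~\ref{pic-per}. One remark: the paper does not lift through the quasi-finite map $\cA_{g,\Lambda^{(i)}}\to\cA_{8g}$ as you suggest, but instead reapplies Propositions~\ref{bd-map} and~\ref{bd-stack} verbatim with target $\cA_{g,\Lambda^{(i)}}$; either route works. Also, the proof of Lemma~\ref{local-sys-undo} is not a ``finite group-theoretic computation'' from the explicit Zarhin formula, but an eigenvalue argument combined with Procesi's theorem on traces determining semisimple representations.

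Case (PS) has a genuine gap. Your sentence ``the weight-$2$ $\bZ$-PVHS on $\bU$ is in turn determined up to finite ambiguity because $\mathfrak{Ks}$ is induced by a closed embedding $\bD_M\hookrightarrow\bD$ \dots\ and descends to a finite morphism between Baily--Borel prequotients'' does not hold as stated: that embedding and that finiteness are for a \emph{fixed} $\Lambda$ (equivalently, fixed $M=\Lambda^\perp$), whereas the whole point is that $\Lambda$ is not yet known and ranges a priori over infinitely many primitive sublattices of $M_{\rm np}$. The paper needs a separate lemma (Lemma~\ref{orth-to-pinnew}) to undo Kuga--Satake on the level of local systems, again via eigenvalue and trace bounds. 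Even after recovering $\bU$ as an abstract local system, you have not recovered the bilinear form $\psi$ on it nor the primitive embedding $M\hookrightarrow M_{\rm np}$: the paper explicitly notes that rescaling $\psi\mapsto m\psi$ leaves the semisimplified Kuga--Satake local system unchanged. Bounding the scaling constant $N_{\rm tr}$ requires a nontrivial argument comparing the image of monodromy on the discriminant group $M_{\rm tr}^*/M_{\rm tr}$ with its image on $M_{\rm alg}^*/M_{\rm alg}$, and the isotrivial case (finite monodromy) must be handled separately via finiteness of conjugacy classes of finite subgroups of $O(M_{\rm np})$. Your final claim that ``$\Lambda_s$ ranges over finitely many options'' is exactly the conclusion that needs this work; it does not follow from Picard type alone.
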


Here $(M_{\rm np},\cdot)$ is the
BBF lattice of the specified analytic deformation class
of primitive symplectic variety. Then
$g=2^{b_2-r-1}[O^*(M):{\rm P}^*(M)]$ 
where $b_2=\rk M_{\rm np}$ and $r=\rk \Lambda$
is the rank of the monodromy-invariant sublattice
of $M_{\rm np}$, see \S~\ref{sec:moduli-ps}.

\begin{proof} We first treat the abelian-fibered case.
By Proposition \ref{bd-stack}, there is a finite
type space $\Phi^o_Z\colon
\cY^o\to \cA_{8g}/\cS$ containing all possible
Z-classifying morphisms.
There is a universal
rank $16g$ $\bZ$-local system 
over the stack $\cA_{8g}$
and pulling it back along $\Phi^o_Z$ gives a
rank $16g$ $\bZ$-local system $\bV_Z\to \cY^o.$ 
Given
a local system $\bV$, define $Z(\bV) \coloneqq 
\bV^{\oplus 4}\oplus (\bV^{\ast})^{\oplus 4}$.
We have that 
$(\cY^o_s, Z(R^1f^o_*\underline{\bZ}_{X^o}))$ and 
$(\cY^o_s,\bV_{Z,s})$ agree over the locus 
where $f$ is smooth.

\begin{lemma}\label{local-sys-undo} 
Let $Y^o$ be a quasi-projective
variety and let $\bV_Z\to Y^o$ be a semisimple
$\bZ$-local
system of rank $16g$. 
There are only finitely many semisimple
$\bZ$-local systems $\bV\to Y^o$ of rank $2g$
for which $Z(\bV)=\bV_Z$.
\end{lemma}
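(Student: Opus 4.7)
The plan is to reduce the statement to the classical Jordan--Zassenhaus theorem: for any $\bZ$-order $\Lambda$ in a finite-dimensional semisimple $\bQ$-algebra and any fixed $n$, there are only finitely many isomorphism classes of $\Lambda$-lattices of $\bZ$-rank at most $n$ (see, e.g., Reiner, \emph{Maximal Orders}, Thm.~26.4).

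To set this up, I fix a basepoint $\ast \in Y^o$ so that $\bV_Z$ corresponds to a representation $\rho_Z \colon \pi_1(Y^o, \ast) \to \GL(\bV_{Z,\ast})$; note $\pi_1(Y^o,\ast)$ is finitely generated because $Y^o$ is quasi-projective. Let $\Lambda \subset \End_\bZ(\bV_{Z,\ast})$ be the image of $\bZ[\pi_1(Y^o,\ast)]$ under $\rho_Z$. As a $\bZ$-subring of a free finite-rank $\bZ$-module, $\Lambda$ is finitely generated over $\bZ$, hence a $\bZ$-order in the $\bQ$-algebra $A \coloneqq \Lambda \otimes_\bZ \bQ \subset \End_\bQ(\bV_{Z,\ast} \otimes \bQ)$. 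The semisimplicity hypothesis on $\bV_Z$ (interpreted as semisimplicity of $\bV_Z \otimes \bQ$ as a $\pi_1$-representation) says that $\bV_{Z,\ast} \otimes \bQ$ is a semisimple $A$-module; since $A$ acts faithfully, Wedderburn theory forces $A$ itself to be semisimple.

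Now for any candidate $\bV$ with $Z(\bV) \cong \bV_Z$, the given isomorphism $\bV^{\oplus 4} \oplus (\bV^*)^{\oplus 4} \cong \bV_Z$ of $\bZ[\pi_1]$-modules shows that the $\pi_1$-action on the left-hand side factors through $\Lambda$; hence so does the action on the direct summand $\bV$. Thus every such $\bV$ is a $\Lambda$-lattice of $\bZ$-rank $2g$, and Jordan--Zassenhaus bounds the number of isomorphism classes. Note that semisimplicity of $\bV$ is automatic since $\bV \otimes \bQ$ is a direct summand of the semisimple $\bV_Z \otimes \bQ$.

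The main point that requires care is verifying that the abstract framework of $\bZ$-orders applies correctly to local systems on a non-compact base, but this is immediate from the identification of $\bZ$-local systems on $Y^o$ with $\bZ[\pi_1(Y^o)]$-modules that are free of finite rank over $\bZ$. The only real input beyond this bookkeeping is the semisimplicity of $A$, which comes for free from the Wedderburn analysis of faithful semisimple representations; the rest is a clean invocation of classical integral representation theory. If the hypothesis were weakened to allow non-semisimple $\bV_Z$, one would instead have to work with the (possibly non-semisimple) order $\Lambda$ and invoke the more general form of Jordan--Zassenhaus valid for arbitrary $\bZ$-orders, which remains true but requires slightly more care.
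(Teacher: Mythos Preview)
Your proof is correct and takes a genuinely different route from the paper. The paper argues concretely with eigenvalues: if $\rho_\bV(\gamma)$ has eigenvalues $\{\lambda_i\}$, then $\rho_{\bV_Z}(\gamma)$ has eigenvalues $\{\lambda_i^{\pm 1}\}$ each with multiplicity four, so the eigenvalues of $\rho_\bV(\gamma)$ are determined up to at most $2^{2g}$ sign choices. It then invokes Procesi's theorem (traces on a finite generating set of $\pi_1$ determine a semisimple representation up to $\GL_{2g}(\bQ)$-conjugacy) to bound the rational conjugacy class, and finishes with the fact that a $\bQ$-conjugacy class contains only finitely many $\GL_{2g}(\bZ)$-conjugacy classes.

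Your argument short-circuits all of this by observing that every candidate $\bV$ is a $\Lambda$-lattice for the \emph{fixed} order $\Lambda = \operatorname{im}(\bZ[\pi_1]\to \End_\bZ(\bV_{Z,\ast}))$ and appealing to Jordan--Zassenhaus directly. This is cleaner, does not need Procesi's theorem, does not actually use finite generation of $\pi_1$ (since $\Lambda$ is visibly a finitely generated $\bZ$-module), and as you note at the end, the semisimplicity hypothesis on $\bV$ is not even needed. What the paper's approach buys is an explicit bound and, more importantly, a template: the eigenvalue-bounding technique is reused immediately afterward in the paper to ``undo'' the Kuga--Satake construction (where one must compare ${\rm Cl}^{\rm pin}(\bU,\psi)$ with $\bU$ itself), a setting in which your direct-summand trick is not available.
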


\begin{proof}
Suppose that there exists a local system 
$\bV$ on $Y^o$ which satisfies
$Z(\bV)=\bV_Z$. Let 
$\rho_{\bV_Z}\colon \pi_1(Y^o)\to \GL_{16g}(\bZ)$
and $\rho_{\bV}\colon \pi_1(Y^o)\to \GL_{2g}(\bZ)$ 
be the monodromy representations of the respective local systems.
By a theorem of Procesi \cite{procesi},
the $\GL_{2g}(\bQ)$-conjugacy class 
of a semisimple representation
of a finitely generated group $\Pi$ is uniquely determined
by the traces ${\rm tr}\,\rho_{\bV}(\gamma_k)$ of a finite
generating set $\gamma_k\in \Pi$, which depends
only on the group $\Pi$. Let $\{\lambda_i\}_{i=1,\,\dots,\,2g}$
denote the eigenvalues of $\rho_{\bV}(\gamma)$. 
Then, the eigenvalues of $\rho_{\bV_Z}(\gamma)$ are
necessarily
\begin{align}\label{eigens}
\{\lambda_i, \lambda_i,\lambda_i,\lambda_i
,\lambda_i^{-1},\lambda_i^{-1},
\lambda_i^{-1},\lambda_i^{-1}\}_{i=1,\,\dots,\,2g}.
\end{align}
In fact, if for any $\gamma\in \Pi$, 
the eigenvalues of $\rho_{\bV_Z}(\gamma)$
are not of the form (\ref{eigens}), then no $\bV$
exists for which $Z(\bV)=\bV_Z$. If the eigenvalues
are of the form (\ref{eigens}), then there 
are $\leq 2^{2g}$ possibilities for the set of eigenvalues 
of $\rho_{\bV}(\gamma)$:
$$\{\lambda_1^{\pm 1}, \,\dots,\,\lambda_{2g}^{\pm 1}\}.$$
Thus, $\rho_{\bV_Z}$ determines, up to finite
ambiguity,\footnote{This ambiguity is legitimately
present. For instance, we have $Z(A)=Z(A^*)$ or
$Z(A\oplus B)= Z(A\oplus B^*)$ for two abelian
varieties, or fibrations, $A$ and $B$.
} the eigenvalues, and in turn the traces
of $\rho_{\bV}(\gamma_k)$ for Procesi's
generators $\gamma_k\in \Pi$.
We deduce that $\bV_Z$ determines, up to finite
ambiguity,
the $\GL_{2g}(\bQ)$-conjugacy class
of the monodromy representation
of any $\bV$ satisfying $Z(\bV)=\bV_Z$ (if one exists).
The lemma follows from the fact that the
$\bQ$-conjugacy class contains at 
most finitely many, possibly zero,
$\GL_{2g}(\bZ)$-conjugacy classes, 
see e.g., \cite[Lem.~0.3]{deligne87}.
\end{proof}

The monodromy of a local system underlying
a $\bZ$-PVHS is semisimple \cite[Prop.~1.13]{deligne87}. 
The finite ambiguity 
in Lemma \ref{local-sys-undo} is constant on connected 
components of $\cS$, since we have stratified 
by topological type of $(\cY_s^o,\bV_{Z,s})$. 
Replacing $\cS$ with a finite cover,
it follows that there exists a finite type
family $(\cY^o,\bV)\to \cS$ consisting
of the quasi-projective
subvarieties $\cY^o_s\subset \cY_t$ endowed
with a rank $2g$ $\bZ$-local 
system $\bV\to \cY^o$, such that: For 
any abelian fibration $f\colon X\to \cY_t$
indicating $(\cY_t,\cB_t,\cM_t)$,
we have, for some $s\in \cS$, agreement between
$R^1f^o_*\underline{\bZ}$ and 
$\bV_s$ on $\cY_s^o$ (over the locus
where $f$ is smooth).
Define, for a connected component
$\cS^{(i)}\subset \cS$,
\begin{equation}\label{possible_lattices}
\Lambda^{(i)} \coloneqq H^0(\cY^o_s, \wedge^2\bV_s)
\subset \wedge^2\bV_{s,y}\simeq \wedge^2(\bZ^{2g})
\end{equation}
for some (or any) $s\in \cS^{(i)}$, $y\in \cY_s^o$.
We throw out any components $\cS^{(i)}$
for which $\Lambda^{(i)}=0$ since we are 
concerned with fibrations with
projective total space.

We now use that $f\colon X\to \cY_t$
is assumed to be of Picard type. By definition,
this implies that $\Lambda^{(i)}\subset H^{1,1}(X_y)$
for $y\in \cY_s^o$ and thus, the original
classifying morphism used to construct
the Z-classifying morphism for $f$ factors
through
$$
\Phi^o_s\colon \cY_s^o\to \cA_{g,\Lambda^{(i)}},
$$
see Proposition \ref{pic-per} and (\ref{inclusions}).

The possible 
saturated submodules $\Lambda^{(i)}$ depend on 
the connected components $\mathcal{S}^{(i)}$, 
which are finite in number. So, 
we may now reapply the arguments of Propositions
\ref{bd-map}, \ref{bd-stack} essentially
verbatim, replacing $\oA_{8g}$, $\cA_{8g}$ with 
$\oA_{g,\Lambda^{(i)}}$, $\cA_{g,\Lambda^{(i)}}$
to conclude that the space of possible
classifying morphisms $\Phi^o\colon 
\cY_s^o\to \cA_{g,\Lambda^{(i)}}$ lie
in a bounded family. Case (AV) follows. \smallskip

Now, we treat the primitive symplectic case.
The proof strategy is similar, with many
additional wrinkles. Proposition \ref{bd-stack}
and Lemma \ref{local-sys-undo} ensure that
the local system $\bV$ associated to the family
of Kuga--Satake varieties (\ref{final-ks-1}),
(\ref{final-ks-2}) over $Y^o$ is bounded.

Let $(\bU_{\rm np}, \psi)$ be a $\bZ$-local system admitting 
a non-degenerate quadratic form $\psi$ on a smooth quasi-projective 
variety $Y^o$, and  $(\bU,\psi) \subseteq (\bU_{\rm np}, \psi)$
a $\bZ$-polarized sublocal system of signature $(2,m)$ 
such that $\bU^{\perp}$ is a trivial local system, with 
$(M_{\rm np},\cdot)\coloneqq
(\bU_{{\rm np},*},\psi_*)$
isometric to the unpolarized BBF lattice
of the PS varieties in question for a base 
point $*\in Y^o$.
Set $(M,\cdot)=(\bU_*,\psi_*) \subseteq M_{\rm np}$. Let 
${\rm P}^*(M) \subset O^*(M)$ the subgroup defined in 
\Cref{bounded-index}, which has index uniformly bounded 
above depending only on $M_{\rm np}$, namely by 
$$b \coloneqq 3^{(\rk M)^2}
[O(M_{\rm np}):\phi_{\rm np}({\rm Pin}(M_{\rm np}))].$$

The monodromy representation $\rho \colon \pi_1(Y^{o}) 
\to O^*(M)$, associated to the local system $\bU$ on $Y^o$, 
induces the representation
\[\rho^{\rm orth} \colon \pi_1(Y^{o}) \to \GL({\rm Cl}(M)) 
\simeq \GL_{2g''}(\bZ)\] by the universal property 
of the Clifford algebra. 
The representation $\rho^{\rm orth}$ determines
a local system ${\rm Cl}^{\rm orth}(\bU, \psi)$, 
which is a $\bZ$-local system of 
Clifford algebras of rank $2g'' =2^{m+2}$.

Let $Y''\to Y^{o}$ be the finite \'{e}tale cover corresponding to 
the subgroup  $$\pi_1(Y'') \coloneqq \rho^{-1}({\rm P}^{\ast}(M)) 
\subset \pi_1(Y^{o}).$$ For any 
$\gamma \in \pi_1(Y'')$, the transformation 
$\rho^{\rm orth}(\gamma)$ is given by
$v \mapsto (-1)^{\deg \,\rho(\gamma)} 
\rho(\gamma) v \rho(\gamma)^{-1}$, $v\in {\rm Cl}^1(M)$,
with $\rho(\gamma) \in {\rm P}^*(M)$. 
The group ${\rm P}^*(M)$ acts also by 
left multiplication on the overlattice
$\langle {\rm Pin}(M){\rm Cl}(M)\rangle \supseteq {\rm Cl}(M)$. 
This allows us to define a new representation 
$\rho'' \colon \pi_1(Y'') \to \GL(\langle {\rm Pin}(M)
{\rm Cl}(M)\rangle) \simeq \GL_{2g''}(\bZ)$, which induces 
\[
\rho^{\rm pin} \coloneqq 
{\rm Ind}_{\pi_1(Y'')}^{\pi_1(Y^o)}(\rho'')
\colon \pi_1(Y^{o}) \to \GL_{2g}(\bZ),
\]
where $2g=2^{m+2}[O^*(M):{\rm P}^*(M)]$. 
The representation $\rho^{\rm pin}$ corresponds to a $\bZ$-local 
system of rank $2g$ on $Y^{o}$, denoted 
${\rm Cl}^{\rm pin}(\bU,\psi)$. 

Geometrically, the dual representations of $\rho^{\rm orth}$ and 
$\rho''$ are the monodromy on $H^1$ of the family of primary and 
secondary Kuga--Satake varieties, i.e., $KS'_{Y^o} \to Y^o$ and 
$KS''_{Y''} \to Y''$ respectively, see \S~\ref{sec:kuga}, and 
$\rho^{\rm pin}$ is the dual monodromy of the universal weight 1
$\mathbb{Z}$-PVHS on $\mathcal{A}_{g, \Lambda_g, 0}$ pulled back 
along the morphism $\mathfrak{Ks} \colon Y^o \to 
\mathcal{A}_{g, \Lambda_g, 0}$, cf.~\eqref{final-ks-1}.

\begin{lemma}\label{orth-to-pinnew} Fix the lattice 
$(M_{\rm np}, \cdot)$ and the constant $b$ as above. 
Fix a semisimple local system $\bV$ on a quasi-projective 
variety $Y^{o}$. There are only finitely
many semisimple local systems $\bU$ such that:
\begin{enumerate}
\item\label{cond1} there exists a $\ZZ$-local system 
$\bU_{\rm np}$ containing $\bU$, admitting a non-degenerate 
quadratic form $\psi$ such that
\begin{enumerate}
\item $(\bU_{\rm np, \ast}, \psi_*)\simeq (M_{\rm np}, \cdot)$;
\item $\psi$ restricts to a polarization on $\bU$ of signature $(2, m)$;
\item $\bU^{\perp}$ is a trivial local system.
\end{enumerate}
\item ${\rm Cl}^{\rm pin}(\bU,\psi)^* \simeq \bV$.
\end{enumerate}
\end{lemma}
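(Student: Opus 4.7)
The plan is to imitate the proof of Lemma~\ref{local-sys-undo} by tracking characters and invoking Procesi's theorem. It suffices to show that the $\GL_{m+2}(\bQ)$-conjugacy class of the composite orthogonal monodromy $\phi \circ \rho \colon \pi_1(Y^o) \to O^*(M) \subset \GL_{m+2}(\bQ)$ of $\bU$ is determined by the monodromy of $\bV = {\rm Cl}^{\rm pin}(\bU,\psi)^*$ up to finite ambiguity. Given this, finitely many $\GL(M_\bZ)$-orbits per $\GL_{m+2}(\bQ)$-orbit (\cite[Lem.~0.3]{deligne87}) yield finitely many $\bU$, and finitely many primitive extensions $\bU \oplus \bU^\perp \hookrightarrow \bU_{\rm np}$ with $\bU^\perp$ trivial of bounded rank, across the finitely many isometry classes of primitive embedding $M \hookrightarrow M_{\rm np}$, yield finitely many $\bU_{\rm np}$.

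We first pass to a finite-index subgroup. Since $\pi_1(Y^o)$ is finitely generated, it has only finitely many subgroups of index at most $b$; these give finitely many candidates for $\pi_1(Y'') = \rho^{-1}({\rm P}^*(M))$. Fix one, and let $N \subset \pi_1(Y^o)$ denote its normal core, of index at most $b!$. By Mackey's formula,
$$\rho^{\rm pin}\big|_N \,\cong\, \bigoplus_{g \in \pi_1(Y'') \backslash \pi_1(Y^o)} (\rho''|_N)^g,$$
a direct sum of conjugate $N$-representations, so by the Procesi argument of Lemma~\ref{local-sys-undo}, the $\GL(\bQ)$-conjugacy class of $\rho''|_N$ is determined by $\bV$ up to finite ambiguity.

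Next, we recover the character of $\phi \circ \rho|_N$ from that of $\rho''|_N$. For $h \in N$, $\rho''(h) = L_{\rho(h)}$ is left multiplication by $\rho(h) \in {\rm P}^*(M) \subset {\rm Pin}(M_\bC)$ on $\langle{\rm Pin}(M){\rm Cl}(M)\rangle_\bC \simeq {\rm Cl}(M_\bC)$. Since ${\rm Cl}(M_\bC)$ is semisimple, the regular module decomposes into copies of the (half-)spin representation(s) $S$ of ${\rm Pin}(M_\bC)$ with uniform multiplicity $\dim S = 2^{\lfloor (m+2)/2 \rfloor}$; hence the character of $\rho''|_N$ is a fixed multiple of the spin character of $\rho|_N$. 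The Weyl character formula makes the latter explicit: if $\phi(\rho(h)) \in O(M_\bC)$ has eigenvalue pairs $\{e^{\pm 2\pi i \theta_j}\}$, then $\chi_S(\rho(h)^k) = \prod_j 2\cos(k\pi\theta_j)$, and inverting this system up to finite sign ambiguity in the $\theta_j$ recovers the character of $\phi \circ \rho|_N$ on $M_\bC$.

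Finally, by a second application of Procesi, the $\GL_{m+2}(\bQ)$-conjugacy class of $\phi \circ \rho|_N$ is determined up to finite ambiguity. Since $[\pi_1(Y^o):N]\leq b!$ is bounded and $O^*(M)$ is finitely generated, $\phi \circ \rho|_N$ admits only finitely many extensions to a representation $\pi_1(Y^o) \to O^*(M)$. The main technical step is the spin-to-orthogonal character inversion in the third paragraph, which requires the standard combinatorics of the Clifford algebra but is finite-to-one because it reduces to a system of polynomial identities of bounded degree.
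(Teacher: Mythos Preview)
Your outline follows the same architecture as the paper's proof (pass to a bounded-index subgroup, undo the induction, recover the orthogonal monodromy from the pin monodromy, then extend), but the two load-bearing steps are not established.

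\textbf{The spin-to-orthogonal inversion.} You assert that knowing $\chi_S(\rho(h)^k)=\prod_j 2\cos(k\pi\theta_j)$ for all $k$ determines the $\theta_j$ ``up to finite sign ambiguity,'' because the problem ``reduces to a system of polynomial identities of bounded degree.'' Bounded degree does not by itself give finitely many solutions, and your Weyl-character parametrization with real $\theta_j$ is not valid in signature $(2,m)$, where eigenvalues of $\phi(\rho(h))$ need not lie on the unit circle. The paper avoids any such inversion: it bounds eigenvalue \emph{norms}. Since $(p(\gamma)v)^*=v^*p(\gamma)^{-1}$, left multiplication by $p(\gamma)$ and right multiplication by $p(\gamma)^{-1}$ have the same eigenvalues; hence the eigenvalues of the conjugation action $\rho^{\rm orth}(\gamma)$ on ${\rm Cl}(M_\bC)$ have norm at most $\lambda_{\max}^2$, where $\lambda_{\max}$ is the maximal norm of an eigenvalue of $\rho''(\gamma)$. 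Then, since ${\rm Cl}(M)$ is filtered with ${\rm gr}\simeq\wedge^\bullet M$, the eigenvalues of $\rho^{\rm orth}(\gamma)$ are the subset-products $\mu_I=\prod_{i\in I}\lambda_i$ of the eigenvalues $\lambda_i$ of $\phi(\rho(\gamma))$ on $M$; taking $|I|=1$ gives $\sup_i|\lambda_i|\le\sup_I|\mu_I|$. Thus $\operatorname{tr}\phi(\rho(\gamma))\in\bZ$ is bounded for each of Procesi's finitely many test words, and one concludes via Procesi as in Lemma~\ref{local-sys-undo}. Your direct inversion can in fact be salvaged by the same observation (the eigenvalues on $M$ lie in the finite set $\{s_i/s_j\}$ of ratios of spin eigenvalues), but this is precisely the paper's route through the intermediate $\rho^{\rm orth}$.

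\textbf{The extension step.} The claim that $\phi\circ\rho|_N$ has only finitely many extensions to $\pi_1(Y^o)\to O^*(M)$ because ``$O^*(M)$ is finitely generated'' is a non sequitur: finite generation of the target says nothing about finiteness of extensions. The correct argument (paper, end of Step~3) is again via eigenvalue bounds: for $\gamma\in\pi_1(Y^o)$ one has $\gamma^{b!}\in N$, so the eigenvalues of $(\phi\circ\rho)(\gamma)$ are $(b!)$-th roots of the already-bounded eigenvalues of $(\phi\circ\rho)(\gamma^{b!})$; hence integer traces are bounded, and Procesi applies.
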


Note that the requirement \eqref{cond1} grants that the 
monodromy representation of $\bU$ takes values in $O^*(M)$, 
so that the local system ${\rm Cl}^{\rm pin}(\bU,\psi)$ 
can be defined.

\begin{proof}
Let $\rho_{\bV}$ be the monodromy representation of $\bV^*$.

 \textbf{Step 1.} There are only finitely many \'{e}tale covers 
 $Y'' \to Y^{o}$ of index bounded $\leq b$. Indeed, for any subgroup 
 $\Pi \subset \pi_1(Y^o)$ of index $\leq b$, the normal core 
 ${\rm Core}(\Pi) \coloneqq 
 \cap_{\gamma \in \pi_1(Y^o)} \gamma \Pi \gamma^{-1}$
 has index $\leq b!$, but since $\pi_1(Y^o)$ is finitely 
 generated, there exists only finitely many possible quotients 
 $\pi_1(Y^o) \to \pi_1(Y^o)/\Pi$, so $\Pi$ ranges among the 
 pre-images of subgroups of index $\leq b$ in these 
 finitely many finite quotients.
 
\textbf{Step 2.} For a given choice of $Y'' \to Y^{o}$ 
of degree $b' \leq b$, there are only finitely many 
representations $\rho''_{\bV} \colon \pi_1(Y'') \to 
\GL_{2g''}(\bZ)$ inducing $\rho_{\bV}$.
Indeed, by restricting to $\pi_1(Y'')$, we obtain  
\[\rho_{\bV}|_{\pi_1(Y'')}= {\rm Ind}_{\pi_1(Y'')}^{\pi_1(Y^o)}
(\rho''_{\bV})|_{\pi_1(Y'')} = (\rho''_{\bV})^{\oplus b'},\]
which determines $\rho''_{\bV}$ up to a finite ambiguity, 
since $\rho_{\bV}$ and $\rho''_{\bV}$ are semisimple.

\textbf{Step 3.} Fix $\rho''_{\bV}$ as in Step 2, satisfying 
the following additional property: There exists a primitive 
embedding $(M, \cdot) \hookrightarrow\ (M_{\rm np}, \cdot)$, 
and ${\rm P}^*(M)\subset O^*(M)$ of index $\leq b$ as in 
\Cref{bounded-index} such that  $\rho''_{\bV}$ is isomorphic 
to a representation $\pi_1(Y'') \to \GL(\langle {\rm Pin}(M)
{\rm Cl}(M)\rangle)$, assigning to each $\gamma \in \pi_1(Y'')$ 
the left multiplication by an element $p(\gamma)\in {\rm P}^*(M)$. 
Then there are only finitely many semisimplifications 
$(\rho^{\rm orth}_{\bV})^{\rm ss}$ of representations 
$\rho^{\rm orth}_{\bV} \colon \pi_1(Y^{o}) \to \GL({\rm Cl}(M)) 
\simeq \GL_{2g''}(\bZ)$ satisfying the following property: 
\begin{equation}\label{eq:semisimpleorth}
\rho^{\rm orth}_{\bV}(\gamma)\cdot v = 
\pm p(\gamma)vp(\gamma)^{-1} \quad \text{ for any }
\gamma \in \pi_1(Y'')\text{ and }v \in {\rm Cl}(M_{\bQ}).
\end{equation}
To this end, fix $\gamma \in \pi_1(Y'')$. Let 
$\lambda_{\rm max}$ be the largest norm of the eigenvalues 
of the left multiplication by ${p(\gamma)}$. Since 
$(p(\gamma) v)^* = v^* p(\gamma)^* = v^* p(\gamma)^{-1}$, 
the eigenvalues of the left multiplication by $p(\gamma)$ 
and right multiplication by $p(\gamma)^{-1}$ coincide.
 By \eqref{eq:semisimpleorth}, the
eigenvalues of $\rho^{\rm orth}_{\bV}(\gamma)$
have norm bounded above by $\lambda_{\rm max}^2$
(this bound does not depend
on the $\bZ$-structure of the representation).
As in Lemma 
\ref{local-sys-undo},
the integral conjugacy class of the 
semi-simplification\footnote{While it is true that 
in our situation, $\rho_\bV^{\rm orth}$ underlies a 
$\bZ$-VHS of weight $1$, it may fail to be polarizable
(cf.~Rem.~\ref{ks-differences}), and thus 
Deligne-Andr\'e semisimplicity may fail.}
$\rho^{\rm orth}_{\bV}\vert_{\pi_1(Y'') }^{\rm ss}$
is determined up to finite
ambiguity
by ${\rm tr}\,\rho^{\rm orth}_{\bV}(\gamma_i)\in \bZ$ 
for some finite
set of $\gamma_i\in\pi_1(Y'') $.
Since the norm of the eigenvalues of 
$\rho^{\rm orth}_{\bV}(\gamma_i)$
is bounded, so is its trace, and thus, 
there are only finitely
many possibilities 
for 
$\rho^{\rm orth}_{\bV}\vert_{\pi_1(Y'')}^{\rm ss}$
since the trace is integral.

The possible extensions of 
$\rho^{\rm orth}_{\bV}\vert_{\pi_1(Y'')}^{\rm ss}$
to a semisimple
representation $(\rho^{\rm orth}_{\bV})^{\rm ss}$ of $\pi_1(Y^o)$
are bounded. Since $\pi_1(Y'')\subseteq \pi_1(Y^o)$ 
has index $b'\leq b$, $\gamma^{b'}\in \pi_1(Y'')$
for all $\gamma\in \pi_1(Y^o)$. So
the eigenvalues of 
$(\rho^{\rm orth}_{\bV})^{\rm ss}(\gamma)$, with
$\gamma\in \pi_1(Y^o)$, are determined by the eigenvalues
of $(\rho^{\rm orth}_{\bV})^{\rm ss}(\gamma^{b'})$, up to 
$b'$-th roots of unity.

\textbf{Step 4.} Given $(\rho^{\rm orth}_{\bV})^{\rm ss}$, 
there are only finitely many semisimple local systems 
$\bU$ admitting a non-degenerate polarization $\psi$ 
such that $\rho^{\rm orth} = \rho^{\rm orth}_{\bV}$. 
Let $\rho \colon \pi_1(Y^{o}) \to O(\bU_*) \subset \GL_{m+2}(\ZZ)$ 
be the monodromy of $\bU$. 
Suppose that the eigenvalues
of $\rho(\gamma)$ are 
$\{\lambda_1,\dots,\lambda_{m+2}\}$,
which are all nonzero.
Then, since ${\rm Cl}^{\rm orth}(\bU)$
admits a filtration whose associated
graded 
is $\wedge^{\bullet}\bU$,
we conclude that the eigenvalues
of $\rho^{\rm orth}_{\bV}$ are 
\[\mu_{I} \coloneqq \prod_{i\in I}\lambda_i\]
where $I$ runs over all subsets of $\{1,\dots,m+2\}$.
It follows that 
$$\sup_I |\mu_I| \geq 
\sup_i |\lambda_i|.$$
We conclude
that 
${\rm tr}\,\rho(\gamma)=\sum \lambda_i$ 
are bounded, in a manner
depending only on $\gamma$.
As in Lemma \ref{local-sys-undo},
the integral conjugacy class of $\rho_\bU$
is determined up to finite
ambiguity
by ${\rm tr}\,\rho_\bU(\gamma_i)\in \bZ$ 
for some finite
set of $\gamma_i\in \pi_1(Y^o)$.
\end{proof}

Combining Lemmas \ref{local-sys-undo}
and \ref{orth-to-pinnew} (respectively, to unwind
the Zarhin and Kuga--Satake constructions on the level
of local systems), we conclude:
There is a finite type
family $(\cY^o,\bU^o)\to \cS$ of quasi-projective
subvarieties $\cY^o_s\subset \cY_t$ endowed
with a $\bZ$-local 
system 
$\bU^o\to \cY^o$ of rank $m+2\geq 2$,
such that, for 
any primitive symplectic 
fibration $f\colon X\to \cY_t$
indicating $(\cY_t,\cB_t,\cM_t)$,
we have agreement between
$$\bU  \coloneqq  
H^0(\cY_s^o, R^2f^o_*\underline{\bZ})^\perp\subset
(R^2f^o_*\underline{\bZ})_{\rm tf} \eqcolon \bU_{\rm np}$$
and $\bU_s^o$ on $\cY_s^o$ (over the locus
where $f$ is locally trivial).

We must now determine, up to finitely
many choices, the primitive embedding $\bU_s^o=\bU\hookrightarrow 
\bU_{\rm np}$ and the induced
polarization
$\psi\colon \bU\otimes \bU\to \underline{\bZ}_{\cY_s^o}$ 
from the data of $\rho_\bU$.
This is subtle, as the replacement
$\psi\mapsto m\psi$ for some $m>0$
leaves $\bU$ and the resulting 
semi-simplified
orthogonal
Kuga--Satake
local system 
unaltered, i.e., 
${\rm Cl}^{\rm orth}(\bU,\psi)^{\rm ss}=
{\rm Cl}^{\rm orth}(\bU,m\psi)^{\rm ss}$.
The analytic deformation
class of the general fiber of $f$
has been specified. This
specifies an unpolarized BBF lattice
$(M_{\rm np},\cdot)=(\bU_{{\rm np},*},\psi_{{\rm np},*})$
and a finite index monodromy group
${\rm Mon}\subset O(M_{\rm np})$,
see \cite[Thm.~8.2(1)]{BL2022}.\footnote{Except
possibly in the case $b_2=4$.}

Let us first note that we have an integral decomposition
of a finite index sub-$\bZ$-local system
$$\bU_{\rm tr}\oplus \bU_{\rm fin} \unlhd \bU$$
into the transcendental subvariation 
and a complement $\bU_{\rm fin}$
of Hodge--Tate type,
on which $\rho_\bU$ has finite monodromy,
because $\bU$ is a polarized variation.
Suppose the fibration $f$ is not isotrivial 
(we deal with this case later). Then,
by the semi-simplicity of $\rho_\bU$
and Deligne's theorem of the fixed
part, we can detect $\bU_{\rm tr}$ as the 
saturated sub-$\bZ$-local system
of $\bU=\bU_s$ which, over $\bQ$, 
is the unique 
summand of infinite monodromy.
So define 
$\rho_{\rm tr} \coloneqq \rho_\bU\vert_{\bU_{\rm tr}}$.

The Zariski closure of
${\rm im}(\rho_{\rm tr})$ in $\GL_{m+2}(\bR)$
is $O(2,m')$ or $U(1,m')$, up to finite
index \cite[2.2.1, 2.3.1]{zarhin83}. 
Here $\rk \bU_{\rm tr} = 2+m'$ or $2+2m'$
respectively.
Thus, the monodromy of 
$\rho_{\rm tr}$
is absolutely irreducible, and
it follows by Schur's lemma
that $\bU_{\rm tr}$
admits a $\rho_{\rm tr}$-invariant
bilinear form $\psi_0$ which is unique
up to scaling. We choose $\psi_0$
as the smallest scaling for which $\psi_0$
is integer-valued on $\bU_{\rm tr}$.
We necessarily have $\psi_{\rm tr} \coloneqq 
\psi\vert_{\bU_{\rm tr}}=N_{\rm tr}\psi_0$
for some $N_{\rm tr}>0$ an integer.

\begin{lemma} $N_{\rm tr}$ is bounded above.
\end{lemma}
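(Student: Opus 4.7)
The plan is to reduce the claim to the statement that the pair $(\bU_{{\rm tr},*}, \psi_{\rm tr})$ takes only finitely many isometry classes. This in turn should combine the finite ambiguity of $\bU$ from \Cref{orth-to-pinnew} with the finiteness of primitive embeddings into the fixed lattice $(M_{\rm np}, \cdot)$ provided by Nikulin's theory of discriminant forms.

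First, I would observe that by the very definition, $N_{\rm tr}$ equals the greatest common divisor of the entries of the Gram matrix of $\psi_{\rm tr}$ in any $\bZ$-basis of $\bU_{{\rm tr},*}$: the form $\psi_0$ was chosen as the smallest integer scaling of the unique (up to scalar, by absolute irreducibility of $\rho_{\rm tr}$ and Schur's lemma) $\rho_{\rm tr}$-invariant bilinear form, so the relation $\psi_{\rm tr} = N_{\rm tr}\psi_0$ fixes the proportionality constant intrinsically. In particular, $N_{\rm tr}$ depends only on the isometry class of the pair $(\bU_{{\rm tr},*}, \psi_{\rm tr})$, and one can read it off from, e.g., comparing $|\det \psi_{\rm tr}|=N_{\rm tr}^{\rk \bU_{\rm tr}}|\det \psi_0|$.

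Next, since $\bU_{\rm tr}$ is a saturated summand of $\bU_{\rm np}$, its stalk $\bU_{{\rm tr},*}$ embeds primitively into $\bU_{{\rm np},*} \simeq M_{\rm np}$, with $\psi_{\rm tr}$ equal to the restriction of the BBF form $(\cdot)$ on $M_{\rm np}$. The orthogonal complement $\bU_{{\rm tr},*}^{\perp}$ in $M_{\rm np}$ contains the monodromy-invariant lattice $\Lambda$ of rank $r$ together with a finite-index summand of the Hodge--Tate part $\bU_{{\rm fin},*}$, both of rank bounded solely in terms of $M_{\rm np}$. Combining this constraint with Nikulin's theorem on primitive embeddings of lattices of bounded corank, the isometry class of $(\bU_{{\rm tr},*}, \psi_{\rm tr})$ takes only finitely many values as $(\bU,\psi)$ ranges over the finite family produced in \Cref{orth-to-pinnew}.

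These two steps together show that $N_{\rm tr}$ takes only finitely many values and is therefore bounded above. The main technical obstacle I anticipate is in the second step: bridging the local-system finiteness of \Cref{orth-to-pinnew}, which controls $\bU$ and $\bU_{\rm tr}$ only as abstract $\bZ$-local systems with their monodromy, with the lattice-theoretic finiteness from Nikulin, which requires control of the actual primitive embedding into $M_{\rm np}$. Bridging these will require carefully matching the monodromy-equivariant structure on both sides and confirming that the rank (not merely the isometry class) of $\bU_{{\rm tr},*}^\perp$ is uniformly bounded in terms of $M_{\rm np}$ alone.
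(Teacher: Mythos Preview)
Your proposal has a genuine gap, and the obstacle you flag in the final paragraph is in fact fatal to the approach as stated. Lemma~\ref{orth-to-pinnew} bounds only the abstract $\bZ$-local system $\bU$ (hence $\bU_{\rm tr}$ and the normalized form $\psi_0$); it does \emph{not} produce finitely many pairs $(\bU,\psi)$. The form $\psi_{\rm tr}$ comes from the embedding $\bU_{{\rm tr},*}\hookrightarrow M_{\rm np}$, which at this stage is completely uncontrolled. Your appeal to Nikulin's theory is backwards: Nikulin's results (e.g.~\cite[Prop.~1.5.1]{nikulin}) bound the primitive embeddings of a \emph{given} lattice $(L,q)$ into $(M_{\rm np},\cdot)$, but here you do not know $(L,q)$ --- only $L$ as a $\bZ$-module. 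A fixed lattice $M_{\rm np}$ contains infinitely many non-isometric primitive sublattices of any given rank (for instance, primitive vectors of arbitrary square), so bounding the corank buys you nothing. Indeed, in the paper's argument, Nikulin is invoked only \emph{after} the present lemma, precisely because one first needs $\psi_{\rm tr}$ pinned down.

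The paper's proof takes a completely different route that uses the one piece of information you do have: the infinite monodromy $\rho_{\rm tr}$. The key observation is that $\tfrac{1}{N_{\rm tr}}M_{\rm tr}/M_{\rm tr}$ injects into the discriminant group $M_{\rm tr}^*/M_{\rm tr}$, and the image of $\rho_{\rm tr}$ modulo $N_{\rm tr}$ lands in the automorphisms of this discriminant group. Since $\rho_{\rm tr}$ has infinite image, its reduction mod $N$ has image whose size grows without bound as $N\to\infty$. On the other hand, because $\rho_{\rm tr}$ extends to an action on $M_{\rm np}$ (trivial on $\Lambda$ and with finite, bounded image on $M_{\rm alg}$), a discriminant-gluing argument bounds the size of the image of $\rho_{\rm tr}$ on $M_{\rm tr}^*/M_{\rm tr}$ solely in terms of $[M_{\rm np}^*:M_{\rm np}]$ and $b_2$. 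These two facts together force $N_{\rm tr}$ below a fixed $N_0$. The monodromy is doing the real work here; lattice embedding theory alone cannot.
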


\begin{proof} Set
$M=\bU_*$, $M_{\rm tr}=
\bU_{{\rm tr},*}$, $M_{\rm fin}=\bU_{{\rm fin},*}$.
The transcendental monodromy representation
$\rho_{\rm tr}\colon \pi_1(\cY_s^o,*)\to 
\GL(M_{\rm tr})$
reduces mod $N$ to a homomorphism
$$\rho_{\rm tr}\,(\textrm{mod }N)\colon 
 \pi_1(\cY_s^o,*)\to 
{\rm Aut}(\tfrac{1}{N}M_{\rm tr}/
M_{\rm tr})$$
whose image grows to infinity in size
as $N\to \infty$,
because ${\rm im}(\rho_{\rm tr})$
is infinite---e.g.~we may take
any fixed finite set in ${\rm im}(\rho_{\rm tr})$,
and choose $N$ to be larger than twice
the absolute value of the largest entry
of any matrix in the set. 
We choose some integer $N_0$
for which 
$|{\rm im}\,\rho_{\rm tr}\,(\textrm{mod }N)|>
[M_{\rm np}^{\ast}:M_{\rm np}]^
{[M_{\rm np}^{\ast}\,:\,M_{\rm np}]{b_2}}\cdot 3^{(b_2)^2}
$ for all $N\geq N_0$.

We have a decomposition
$M_{\rm tr}\oplus M_{\rm alg} \coloneqq 
M_{\rm tr}\oplus (M_{\rm tr})^{\perp}
\unlhd M_{\rm np}$ 
and an inequality
$$
[M_{\rm np}^{\ast} :M_{\rm np}]^{-1}\leq 
\frac{[M^{\ast}_{\rm tr}:M_{\rm tr}]}
{[M_{\rm alg}^{\ast}:M_{\rm alg}]}
\leq [M_{\rm np}^{\ast}:M_{\rm np}].$$
For instance, when $M_{\rm np}$ is unimodular,
the discriminant groups of saturated
complementary lattices have equal size
and are canonically isomorphic.
In the general case, $M^{\ast}_{\rm tr}/M_{\rm tr}$
and $M_{\rm alg}^{\ast}/M_{\rm alg}$ canonically
contain the subgroup 
$M_{\rm np}/(M_{\rm tr} \oplus M_{\rm alg})$
of index at most $[M_{\rm np}^{\ast}:M_{\rm np}]$
in either.
It follows that, 
for an action $\rho_{\rm tr}$ of a group
on $M_{\rm tr}$ to extend to an action 
$\rho_{\rm np}$ on $M_{\rm np}$,
the ratio in size between
the images
of $\rho_{\rm np}$ in 
${\rm Aut}(M^{\ast}_{\rm tr}/M_{\rm tr})$
and in ${\rm Aut}(M^{\ast}_{\rm alg}/M_{\rm alg})$
is at most 
$[M_{\rm np}^{\ast}:M_{\rm np}]^{[M_{\rm np}^{\ast}\,:\,M_{\rm np}]{b_2}}$.\footnote{Let
$A\subset B$ be a subgroup of an abelian group, of index $p$,
and let ${\rm Aut}_A(B)$ be the subgroup of automorphisms of
$B$ preserving $A$. Then, any element $\phi$ of 
the kernel of the natural restriction map ${\rm res}\colon
{\rm Aut}_A(B)\to {\rm Aut}(A)$
satisfies $\phi(pb) = pb$, for all $b\in B$. Hence 
$\phi(b)-b\in B[p]$ for all $b\in B$. Note $|B[p]|\leq p^{\rk B}$
and $\phi$ is uniquely determined from ${\rm res}(\phi)$
and its value on coset representatives of $A$ in $B$.
Thus, $|\ker({\rm res})|\leq {p^{p\rk B}}$. We apply this
to $A = M_{\rm np}/M_{\rm tr}\oplus M_{\rm alg}$ 
and $B=M_{\rm tr}^*/M_{\rm tr}$ or $M_{\rm alg}^*/M_{\rm alg}$ 
observing
that $\rho_{\rm np}$ maps to ${\rm Aut}_A(B)$ and
has some fixed image in ${\rm Aut}(A)$, given by the image
of $\rho_{\rm np}$ in ${\rm Aut}(M_{\rm np}/
M_{\rm tr}\oplus M_{\rm alg}).$}

The image of $\rho_{\rm alg}$
has size bounded above by $3^{(b_2)^2}$---the
image is finite in $\GL(M_{\rm alg})$,
because $\rho_{\rm alg}$ fixes some 
polarization class, and any finite subgroup
of $\GL(M_{\rm alg})$ injects into 
$\GL(M_{\rm alg}/3M_{\rm alg})$ by neatness
of the level $3$ subgroup. So
the image of $\rho_{\rm tr}$ acting
on the discriminant group is bounded 
above in size by 
$[M_{\rm np}^{\ast}:M_{\rm np}]^
{[M_{\rm np}^{\ast}\,:\,M_{\rm np}]{b_2}}
\cdot 3^{(b_2)^2}$.
Finally, since 
$\psi_{\rm tr}=N_{\rm tr}\psi_0$, 
$$\tfrac{1}{N_{\rm tr}}M_{\rm tr}/
M_{\rm tr}\hookrightarrow M_{\rm tr}^{\ast}/M_{\rm tr}$$
and so $|{\rm im}\,\rho_{\rm tr}\,(\textrm{mod }
N_{\rm tr})|
\leq [M_{\rm np}^{\ast}:M_{\rm np}]^
{[M_{\rm np}^{\ast}\,:\,M_{\rm np}]{b_2}}
\cdot 3^{(b_2)^2}$.
We conclude that $N_{\rm tr}< N_0$.
\end{proof}

Thus, we have bounded 
$\psi_{\rm tr} = \psi\vert_{\bU_{\rm tr}}$ 
when the variation
is not isotrivial.
If $b_2\geq 5$, 
there are only finitely many primitive
embeddings 
$\iota_{\rm tr}\colon
(M_{\rm tr}, \psi_{{\rm tr},*})
\hookrightarrow
(M_{\rm np},\cdot)$, up
to the action of 
${\rm Mon}\subset O(M_{\rm np})$ 
by post-composition, 
see, e.g., \cite[Prop.~1.5.1]{nikulin}.
In the $b_2=4$ case, we will only
bound these embeddings up to the action of 
$O(M_{\rm np})$.

Thus, the embedding of the perpendicular lattice 
$(M_{\rm alg},\cdot_{\rm alg})\hookrightarrow (M_{\rm np},\cdot)$
is bounded.
Let $G_{\rm alg}$ be 
the finite image of $\rho_\bU$
in $O(M_{\rm alg})$.
Let ${\rm Mon}_{\rm alg}$ be
the image in $O(M_{\rm alg})$ of the
subgroup of ${\rm Mon}$ acting
 trivially on $M_{\rm tr}$.
 It has finite index in $O(M_{\rm alg})$.
 So there are only finitely
 many ${\rm Mon}_{\rm alg}$-conjugacy classes 
 of finite subgroups of $O(M_{\rm alg})$,
 see \cite[\S~5(a)]{borel_finite}.
 Thus, up to post-composition with
 an element of ${\rm Mon}_{\rm alg}$,
 there are only finitely many possibilities
 for $G_{\rm alg}$.
 Note that $M$ is, over $\bQ$, the
 sum of $M_{\rm tr}$ and the non-trivial
 isotypic components of the action of 
 $G_{\rm alg}$.
 Hence, there are only finitely many
 extensions
 $\iota\colon M\hookrightarrow 
 (M_{\rm np},\cdot)$ of $\iota_{\rm tr}$
 giving finitely many possible embedded lattices
 $M^{(i)}\subset M_{\rm np}$ for $M$, 
 up to post-composition with ${\rm Mon}$. 

  The above argument fails when the fibration
  is isotrivial, because the monodromy
  $\rho_\bU$ has finite image. In this
  case, the period map is constant,
  and the monodromy naturally
  factors as
  $$\rho_\bU\colon \pi_1(\cY_s^o)\to {\rm Aut}(X_*)\to \GL(M)$$ 
  for $X_*=f^{-1}(*)$. Letting 
  $G={\rm im}(\rho_\bU)\subset {\rm Mon}$
  be the resulting finite group, we again
  have by \cite[\S~5(a)]{borel_finite}
  that there are only finitely many possibilities
  for $G$ up to ${\rm Mon}$- or $O(M_{
  \rm np})$-conjugacy---note that this also
  holds in the $b_2=3$ case, which is necessarily
  isotrivial.
  Since $f$ is of Picard type,
  $G$ must be acting not purely
  symplectically, i.e., $g^\ast \sigma\neq \sigma$
  for some $g\in G$.
  Thus, $H^{2,0}(X_*)$
  lies in the complexification of the direct
  sum of all non-trivial sub-representations
  of $G$ acting on $M_{\rm np}$, whose integral
  lattice equals $M^{(i)}$.  
  
  In summary we have argued that, whether
  the monodromy is finite or infinite,
  only finitely many 
  ${\rm Mon}$-orbits (or $O(M_{\rm np})$-orbits,
  when $b_2=4$) of inclusions
  $\bU_{s,*}=M^{(i)}
  \hookrightarrow (M_{\rm np},\cdot)$, 
  $s\in\cS^{(i)}$ can arise from
  a primitive symplectic fibration $f$
  inducing the local system $\bU_s$
  on $\cY_s^o$.
  Since we assumed
  that $f$ has Picard type,
  the classifying morphism lands in 
  $\Phi^o\colon \cY_s^o\to \cF_{\Lambda^{(i)}}$,
  where $\Lambda^{(i)} \coloneqq M^{(i)\perp}$.
  We conclude as in the abelian case.
\end{proof}

\begin{theorem}[cf.~Theorem \ref{thm:bound-ab}]\label{bd-families} 
Fix $g>0$ and $(\cY,\cB,\cM)\to \cT$.
\begin{enumerate}
\item[(AV)] There is a finite type family
$\cX^{\rm Alb}\to \cY/\cS$ of abelian fibrations
with rational section, such that 
for any
fibration $f\colon X\to \cY_t$ of abelian
$g$-folds, of Picard type,
indicating $(\cY_t,\cB_t,\cM_t)$,
there is some closed point $s\in \cS$, $s\mapsto t$,
for which 
$f^{\rm Alb}\colon X^{\rm Alb}\to \cY_t$
and $\cX^{\rm Alb}_s\to \cY_s$ are isomorphic
over the generic points of their bases.
\smallskip
\item[(PS)] 
There is a finite type family
$\cX\to \cY/\cS$ of primitive symplectic 
fibrations, such that 
for any primitive symplectic
fibration $f\colon X\to \cY_t$ of Picard type
indicating $(\cY_t,\cB_t,\cM_t)$,
with fibers in a fixed analytic deformation class,
there is some closed point $s\in \cS$, $s\mapsto t$,
such that the fibrations $f\colon X\to \cY_t$
and $\cX_s\to \cY_s$ are crepant birational
over the generic points of their bases.
\end{enumerate}
\end{theorem}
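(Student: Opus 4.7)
The strategy is to pull back universal families along the bounded spaces of classifying morphisms constructed in Proposition \ref{bd-unzarhin}, then extend these pullbacks via Propositions \ref{extend3} and \ref{extend5} to obtain a finite type family over the full base $\cY_s$. Throughout, we freely refine $\cS$ by locally closed stratification and finite covers, which preserves finite type-ness.

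For (AV), Proposition \ref{bd-unzarhin}(AV) produces a finite type space of classifying morphisms $\Phi^o\colon \cY^o \to \bigsqcup_i \cA_{g,\Lambda^{(i)}}/\cS$, and each DM stack $\cA_{g,\Lambda^{(i)}}$ carries a tautological universal abelian $g$-fold with origin. Pulling back the universal family along $\Phi^o$ yields a finite type family of smooth abelian fibrations with section over the open subsets $\cY^o_s \subset \cY_s$. By Definition \ref{defn:alb} and Proposition \ref{pic-per}, for every closed $s \in \cS$ arising from some $f \colon X \to \cY_t$ with $t$ the image of $s$, the pulled-back family over $\cY^o_s$ is isomorphic to the restriction of the Albanese fibration $f^{\rm Alb}$. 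Now apply Proposition \ref{extend3} to extend this family to a smooth abelian fibration with section over the open set $U_s \coloneqq \cY_s \setminus (\partial_{{\rm H},s} \cup \Supp \cB_t)$, which is big in $\cY_s$ since $\cY_s$ is normal and the constructible boundary has codimension $\geq 1$ but only the codimension-$1$ part contributes to $\Supp \cB_t$. Compactifying this family by passing to the relatively canonical model (or any fixed choice of projective closure over $\cY_s$) and stratifying $\cS$ by the topological type of the extension yields the desired family $\cX^{\rm Alb} \to \cY/\cS$.

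For (PS), we argue analogously but must lift from the period space to the moduli of $\Lambda$-polarized PS varieties. Proposition \ref{bd-unzarhin}(PS) produces a bounded family of Hodge-theoretic classifying morphisms $\Phi^o\colon \cY^o \to \bigsqcup_i \cF_{\Lambda^{(i)}}/\cS$. For each $s$ arising from a genuine fibration $f \colon X \to \cY_t$ with a chosen relative polarization $L$ contained in some small cone $\sigma \subset \Lambda^{(i)}_\bR$, Proposition \ref{extend5} provides a moduli-theoretic classifying morphism $\Psi^o_s \colon \cY^o_s \to \cM_{\Lambda^{(i)}}$ which extends to the bigger open set $U_s \coloneqq \cY_s \setminus (\partial_{{\rm H},s} \cup \partial_{{\rm I},s} \cup \partial_{{\rm W},s} \cup \Supp \cB_t)$. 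Since there are only finitely many orbits of small cones $\sigma$ under isometries of $\Lambda^{(i)}$ extending trivially to $M_{\rm np}$, and since the forgetful map $\cM_{\Lambda^{(i)}} \to \cF_{\Lambda^{(i)}}$ is quasi-finite (local Torelli) while $\cM_{\Lambda^{(i)}}$ is finite type by \cite[Cor.~10]{MST20} and \cite[Thm.~4.7]{AE2025}, the resulting collection of $\Psi^o_s$ lies in a bounded family. Pulling back the universal PS fibration over $\cM_{\Lambda^{(i)}}$ and compactifying over $\cY_s$ yields the desired family $\cX \to \cY/\cS$. The two fibrations $f\colon X \to \cY_t$ and the pulled-back $\cX_s \to \cY_s$ have the same $\Lambda^{(i)}$-polarized period and therefore agree crepant birationally over the generic point of $\cY_s$, since $\cM_{\Lambda^{(i)}}$ classifies $\Lambda^{(i)}$-polarized PS varieties up to isomorphism, which coincides with crepant birational equivalence for the generic fiber.

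The main obstacle is the extension argument in (PS) and the handling of the small cone structure. Unlike $\cA_{g,\Lambda^{(i)}}$, the stack $\cM_{\Lambda^{(i)}}$ is not a Shimura variety but a finite-to-one cover of the Hodge-theoretic period space, and the relevant finite type and separatedness statements require invoking the recent results of Martinelli--Schreieder--Tasin and Alexeev--Engel to control walls and small cones. A secondary subtlety lies in the base case $b_2 \leq 4$ for the PS theorem: here the moduli is rigid and the fibration is isotrivial, so the uniqueness of primitive embeddings $M \hookrightarrow M_{\rm np}$ up to the Mon-action (used in the proof of Prop.~\ref{bd-unzarhin}) must be replaced by finiteness up to $O(M_{\rm np})$-action, which still yields boundedness after further stratification of $\cS$.
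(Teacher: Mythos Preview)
Your approach is essentially the same as the paper's, and your (AV) argument is fine—though the extension step via Proposition~\ref{extend3} and compactification is superfluous, since the theorem only asks for agreement over generic points; the paper simply notes that an abelian fibration with section is the pullback of the universal family along its classifying morphism on the smooth locus, and stops there.

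For (PS), your strategy matches the paper's, with one presentational difference: the paper inserts an intermediate ``refined Hodge-theoretic classifying morphism'' to $[{\rm Fix}_{\rm Mon}(\Lambda^{(i)})\backslash \bD_{M^{(i)}}]$, a finite cover of $\cF_{\Lambda^{(i)}}$, before lifting to $\cM_{\Lambda^{(i)}}$. You skip this and argue directly from quasi-finiteness of $\cM_{\Lambda^{(i)}}\to \cF_{\Lambda^{(i)}}$. The paper's own remark following the proof explicitly acknowledges that your direct route works; the refined morphism is introduced only for conceptual clarity (the map from $\cM_\Lambda$ to the refined target is $1$-to-$1$ on coarse spaces, while $\cM_\Lambda\to \cF_\Lambda$ is merely quasi-finite). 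Your phrasing ``coincides with crepant birational equivalence for the generic fiber'' is imprecise—what you actually get is that the two fibrations are \emph{isomorphic} over the generic point when their moduli-theoretic classifying morphisms agree, which is stronger than what the theorem claims.

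The genuine gap is your treatment of the low-$b_2$ cases. You gesture at $b_2\leq 4$ in the final sentence but do not supply an argument. The paper handles $b_2=3,4$ with $\rk\Lambda=1$ by noting that $\cM_\Lambda$ is still a separated DM stack (polarized $K$-trivial varieties of bounded volume) and defining $\partial_{\rm W}$ as the locus where $\cM_\Lambda\to \cF_\Lambda$ fails to be finite; the case $b_2=4$, $\rk\Lambda=2$ is necessarily isotrivial and is dispatched by bounding representations $\pi_1(\cY_s^{oo})\to {\rm Aut}_\Lambda(X_*)$ into the finite automorphism group of a bounded fiber. Your proposal would need these arguments to be complete.
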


\begin{proof}
The statement (AV)
follows from Proposition \ref{bd-unzarhin},
and that an abelian fibration with section
is pulled back from the universal family
by its Hodge-theoretic classifying morphism
on the smooth locus.

Statement (PS) is a bit trickier. 
First, assume
that the $\bQ$-factorial terminalization
of fibers of $f$ satisfies $b_2\geq 5$. 
We define a {\it refined Hodge-theoretic
classifying morphism} as a lift of 
$\Phi^o_s\colon \cY_s^o\to 
\cF_{\Lambda^{(i)}}=[O^*(M^{(i)})
\backslash \bD_{M^{(i)}}]$ 
to the finite cover
$$
\widetilde{\Phi}^o_s\colon \cY_s^o\to 
[{\rm Fix}_{\rm Mon}(\Lambda^{(i)})
\backslash\bD_{M^{(i)}}],
$$ where ${\rm Fix}_{\rm Mon}(\Lambda^{(i)})$
denotes the elements of the monodromy
group ${\rm Mon}\subset O(M_{\rm np}$) 
fixing $\Lambda^{(i)}$.
Such lifts $\widetilde{\Phi}^o_s$ of $\Phi^o_s$ 
are finite, bounded in number, 
and vary constructibly in families, for
each $\Lambda^{(i)}\subset (M_{\rm np},\cdot)$
in Proposition \ref{bd-unzarhin}, since
we are lifting a bounded family
of maps $\Phi_s^o$ to a fixed finite (\'etale)
cover of the target. Note that the monodromy
of the fibration $f$ lands in ${\rm Mon}$,
fixing $\Lambda^{(i)}$, so the classifying
morphisms in question necessarily admit
such a lift. 

The morphism $\widetilde{\Phi}^o_s$ determines
the birational class of the general fiber
of the fibration $f$, 
since projective PS varieties 
with the same periods are birational 
\cite[Thm.~1.1(2)]{BL2022}.
But $\widetilde{\Phi}_s^o$
may not determine the birational 
class of the fibration; see Example
\ref{no-B-ex-2}.

Let $\partial_{\rm W}\subset \cY_s^o$
be the wall jumping locus 
(Def.~\ref{wall-jump}), which varies
constructibly in families of classifying
morphisms $\widetilde{\Phi}_s^o$.
By Proposition \ref{extend5}, any 
refined Hodge-theoretic
classifying morphism $\widetilde{\Phi}^o_s$
coming from a fibration of PS varieties
admits a lift over the open subset
$\cY_s^{oo} \coloneqq \cY_s^o\setminus
\partial_{\rm W}$ to a moduli-theoretic
classifying morphism
$$\Psi^{oo}_s\colon \cY^{oo}_s\to \cM_\Lambda$$
where $\cM_\Lambda$ is the DM stack
of $\Lambda$-polarized PS varieties
(Def.~\ref{ps-moduli})
of the specified deformation class, 
for an appropriate
choice of small cone $\sigma\subset \Lambda_\bR$ 
(Def.~\ref{def:small-cone}).

Since $\cM_\Lambda\to 
[{\rm Fix}_{\rm Mon}(\Lambda^{(i)})\backslash \bD_{M^{(i)}}]$ (or $\cM_\Lambda\to \cF_\Lambda$)
is quasi-finite, we deduce
that the lifts $\Psi_s^{oo}$ of $\widetilde{\Phi}_s^{o}\vert_{\cY_s^{oo}}$ 
(or of $\Phi_s^o\vert_{\cY_s^{oo}}$)
are finite in number and vary constructibly,
for each of the finitely many
possible choices of $\sigma$.
The fibration of $\Lambda$-polarized PS
varieties over $\cY_s^{oo}$ is determined
by $\Psi_s^{oo}$ and hence so is the 
birational class of $f\colon X\to \cY_t$.
We deduce the theorem for fibrations
whose fibers have $\bQ$-factorial
terminalizations with $b_2\geq 5$.

When $b_2=3, 4$ and $\rk \Lambda =1$,
$\cM_\Lambda$ (as in 
Def.~\ref{ps-moduli}) 
is still a separated
DM stack, since it is a substack
of the polarized $K$-trivial 
varieties of bounded volume (a primitive 
generator of $\Lambda$ gives the required
polarization) and fixed dimension.
Hence, the locus in $\cF_\Lambda$ 
over which the classifying morphism 
$\cM_\Lambda\to \cF_\Lambda$
is finite, is the complement
of some fixed 
Zariski-closed subset
$\partial_{\rm W}\subset \cF_\Lambda$
which does not contain the entire image
of $\Phi_s^o$. Now, Proposition \ref{extend5} 
and the same
proof as above apply, even though
we cannot explicitly describe
the locus $\partial_{\rm W}$.

Finally, when $b_2=4$ and $\rk \Lambda=2$,
the fibration $f$ is necessarily isotrivial,
and the monodromy of the fibration fixes all
N\'eron--Severi classes. Since $\Lambda$
has been bounded, we may choose a 
vector $L\in \Lambda$ of bounded positive norm.
Let $X_*$ denote the fiber over some 
general point. Then, the birational class
of $f$ is uniquely determined
by a representation $\pi_1(\cY_s^{oo},*)\to {\rm Aut}_\Lambda(X_*)$ where ${\rm Aut}_\Lambda(X_*)$
is the group of automorphisms of $X_*$ 
acting trivially on
$\Lambda$.

Since $\Lambda$
has been bounded, we may choose a primitive
vector $L\in \Lambda$ of bounded positive norm.
Then, the birational class of $X_*$ can be bounded, 
via the Torelli theorem for $\bZ L$-polarized
PS varieties, see also the discussion before Lemma
\ref{lemma_smaller_norm}. A birational
map $\phi\colon X_*\dashrightarrow X_*'$ gives an 
identification $\phi_*\colon {\rm Aut}_\Lambda(X_*)\to 
{\rm Aut}_\Lambda(X_*')$ because $\phi\circ 
\alpha \circ \phi^{-1}$,
$\alpha\in {\rm Aut}_\Lambda(X_*)$
is a birational isomorphism of $X_*'$ acting trivially
on ${\rm NS}(X')=\Lambda$, hence preserves an ample
class and is a morphism. Thus, $|{\rm Aut}_\Lambda(X_*)|$ is bounded. So,
the pullback of $f$ to some bounded degree
\'etale cover 
$\widetilde{\cY}_s^{oo}\to \cY_s^{oo}$ 
is a trivial fibration, and 
$\widetilde{\cY}_s^{oo}\times X_*\dashrightarrow
\widetilde{\cY}_s^{oo}\times X_*'$ are equivariantly birational.
The result now follows from finiteness of 
${\rm Aut}_\Lambda(X_*)$.
\end{proof}

\begin{remark}
In the proof of Theorem \ref{bd-families},
one may argue directly from the quasi-finiteness
of $\cM_\Lambda\to \cF_\Lambda$. But the 
current formulation is conceptually clearer, 
because the map $\cM_\Lambda\to 
[{\rm Fix}_{\rm Mon}(\Lambda^{(i)})
\backslash \bD_{M^{(i)}}]$ 
is $1$-to-$1$ onto its image (on coarse spaces). 

In general, though,
it is not an isomorphism 
over any open substack.
Rather, there is an open substack
of $[{\rm Fix}_{\rm Mon}(\Lambda^{(i)})
\backslash \bD_{M^{(i)}}]$ over which
$\cM_\Lambda$ is a $G$-gerbe, 
where $G$ is the group of automorphisms
of the general $X$, $[X]\in \cM_\Lambda$,
which act trivially on $H^2(X,\bZ)_{\rm tf}$.
Unlike for K3 surfaces,
this group may be nontrivial
\cite[Cor.~3.3]{BSNWS}.
Note that $G$ is finite
since it preserves a polarization. 
Over the open substack where $\cM_\Lambda$
is a $G$-gerbe,
a lift of the Hodge-theoretic
classifying map 
$\Phi^o\colon Y^o\to \cF_\Lambda$ to a 
moduli-theoretic classifying map
$\Psi^o\colon Y^o\to \cM_\Lambda$
is determined by a crossed homomorphism
valued in $G$, as in
Proposition \ref{bd-stack}.
\end{remark}

    \begin{remark}[Shafarevich conjecture for symplectic varieties] 
    \Cref{bd-families}(PS) should be compared with \cite[Thm.~5.1]{Fu2025}. Although the general 
    strategy of the proof is similar, e.g., both use 
    versions of the Kuga--Satake construction in families,
    they differ at the technical level, and none of 
    the results implies immediately the other. 
    The set-up of {\it loc.~cit.}~is that 
$f^o \colon X^o \to Y^o$ is a (not necessarily 
projective) locally trivial 
fibration over a fixed 
smooth variety $Y^o$ with a fixed
isomorphism type of a $\bQ$-factorial terminal fiber $X_*$
over a base point $*\in Y^o$. They prove finiteness of such fibrations up to birational equivalence.

Note that, in \Cref{bd-families}, the locus where the fibration $f \colon X \to Y$ is locally trivial is not bounded by assumption; the bound on the generalized pair indicated by such fibrations is sufficient. Observe also that \cite[Thm.~5.1]{Fu2025} does not require the Picard type assumption (Def.~\ref{def:pic-type}) in \Cref{bd-families}(PS), but it is somehow implicit in the choice of the isomorphism type of $X_*$. Indeed, by Remark
\ref{rem:ps-picard}, 
a fibration by PS varieties is of Picard
type unless it is isotrivial, and the
monodromy representation 
$\rho\colon \pi_1(Y^o)\to {\rm Aut}(X_*, \sigma_*)$
factors through the symplectic automorphism group
of $X_*$. In the Picard type case, we have
birational boundedness by \Cref{bd-families}(PS). 
In the non-Picard type case, the fibrations
with a fixed isomorphism type of $X_*$ correspond
to representations $\rho$ above, which are finite
in number. Finally, the condition that 
$X_*$ have a fixed isomorphism type
rigidifies the classifying map, see
e.g.~\cite[Prop.~3.6]{peters}, 
\cite[\S~10]{kerr-pearlstein}, 
\cite[Thm.~1.7]{javlitt}, \cite[\S 4]{Fu2025},
so that boundedness 
reduces to finiteness.
\end{remark}

\subsection{Boundedness of the base}

The following lemma is well known to experts.
We include it for the reader's convenience.

\begin{lemma}\label{lemma:gentopair}
Let $(Y,B_Y,\bM.)$ be a generalized klt (resp.~lc) pair.
Assume that $c\bM.$ is b-free for some positive integer $c$.
Then, we may choose $0 \leq \Gamma_Y \sim_\QQ \bM Y.$ such 
that $(Y,B_Y+\Gamma_Y)$ is a klt (resp.~lc) pair and the 
coefficients of $\Gamma_Y$ are in $c^{-1} \mathbb{Z}$. 
\end{lemma}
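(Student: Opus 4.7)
The plan is to convert the b-semiample moduli part into an honest boundary by taking a general member of a basepoint-free linear system on a suitable birational model and pushing forward. The argument is standard and relies on Bertini's theorem to control singularities.

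First, I would choose a log resolution $\pi \colon Y' \to Y$ of $(Y, B_Y)$ on which $\bfM$ descends, so that $c\bfM_{Y'}$ is Cartier and the complete linear system $|c\bfM_{Y'}|$ is basepoint-free; such a model exists by the b-freeness hypothesis, possibly after further blowing up. Writing the generalized log pullback
\[
K_{Y'} + B_{Y'} + \bfM_{Y'} = \pi^*(K_Y + B_Y + \bfM_Y),
\]
the generalized klt (resp.~lc) assumption translates into the coefficients of $B_{Y'}$ being all strictly less than $1$ (resp.~at most $1$). One may arrange that $\Supp B_{Y'} + \Exc(\pi)$ is snc.

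Next, I would choose a general member $D' \in |c\bfM_{Y'}|$. By Bertini's theorem applied to the basepoint-free linear system $|c\bfM_{Y'}|$, the divisor $D'$ is reduced and smooth, shares no component with $\Supp B_{Y'} \cup \Exc(\pi)$, and the sum $\Supp B_{Y'} + \Supp D' + \Exc(\pi)$ remains snc. Set $\Gamma_Y \coloneqq \tfrac{1}{c}\pi_* D'$. Since $\pi_*\bfM_{Y'}=\bfM_Y$ and $D' \sim c\bfM_{Y'}$, we have $\Gamma_Y \sim_\QQ \bfM_Y$; it is effective, and because $D'$ is an integral Weil divisor, the coefficients of $\Gamma_Y$ lie in $c^{-1}\mathbb{Z}_{\geq 0}$.

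Finally, I would verify that $(Y, B_Y + \Gamma_Y)$ is klt (resp.~lc) by passing to the log resolution. Since $D'$ contains no $\pi$-exceptional component, $\pi_*(B_{Y'} + \tfrac{1}{c}D') = B_Y + \Gamma_Y$, and the $\QQ$-linear equivalence
\[
K_{Y'} + B_{Y'} + \tfrac{1}{c}D' \sim_\QQ \pi^*(K_Y + B_Y + \Gamma_Y)
\]
identifies $(Y', B_{Y'} + \tfrac{1}{c}D')$ with the log pullback of $(Y, B_Y + \Gamma_Y)$. The coefficients of $B_{Y'}$ are $<1$ (resp.~$\leq 1$) by hypothesis, those of $\tfrac{1}{c}D'$ are equal to $\tfrac{1}{c}\leq 1$ by reducedness of $D'$, and since $D'$ and $B_{Y'}$ share no component, the total divisor has all coefficients $<1$ (resp.~$\leq 1$). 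The one delicate point is the klt case when $c=1$: a Bertini-general member contributes coefficient $1$ on new components, which violates klt. To handle this, I would note that if $c\bfM$ is b-free, then so is $(kc)\bfM$ for every $k \geq 1$, and replacing $c$ by a multiple ensures $\tfrac{1}{c}<1$ while keeping the coefficients in the required denominator group. This step is the only real obstacle, and it is resolved by this standard replacement.
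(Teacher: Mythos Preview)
Your proof is essentially the same as the paper's: pass to a log resolution where $\bfM$ descends, pick a general $D' \in |c\bfM_{Y'}|$ by Bertini so that adding $c^{-1}D'$ preserves the sub-klt (resp.\ sub-lc) condition, and push forward. Your handling of the $c=1$ klt edge case is slightly off---replacing $c$ by $kc$ lands the coefficients in $(kc)^{-1}\bZ$, which is \emph{not} contained in $c^{-1}\bZ$---but the paper's proof does not address this case either, and in all applications one has $c \geq 2$.
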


\begin{proof}
Let $\pi \colon Y' \rar Y$ be a log resolution 
of $Y$ where $\bM.$ descends.
We write
\begin{align*}
\K Y'.+B_{Y'}+\bM Y'.=\pi^\ast (K_Y+B_Y+\bM Y.).
\end{align*}
Since $Y'$ is smooth and $(Y,B_Y,\bM.)$ is 
generalized klt (resp. lc), $(Y',B_{Y'})$ is 
sub-klt (resp. sub-lc).
By assumption, $|c\bM Y'.|$ is a free linear series.
By Bertini's theorem, choose $D' \in |c\bM Y'.|$ 
such that the class of singularities of $(Y',B_{Y'})$ 
is the same as those of $(Y',B_{Y'}+D')$. Then, we 
can set $\Gamma_Y  \coloneqq  \pi_\ast (c^{-1}D')$.
\end{proof}

\begin{lemma}\label{cor:bases}
Let  $f \colon X \rar Y$ be a fibration from a projective 
$K$-trivial variety $X$ to a rationally connected base $Y$. 
Let $(Y,B_Y,\bM.)$ be the generalized pair structure induced 
on $Y$ via $f$. Assume that $c\bM.$ is b-free for 
some positive integer $c$.

Then we may choose $0 \leq \Gamma_Y \sim_\QQ \bM Y.$ such 
that the pairs $(Y,B_Y+\Gamma_Y)$ arising in this way 
are log bounded in codimension 1.
\end{lemma}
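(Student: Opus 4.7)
The plan is to reduce the generalized log Calabi--Yau structure on $(Y, B_Y, \bM.)$ to a genuine klt log Calabi--Yau pair, and then invoke existing boundedness results for rationally connected such pairs with DCC coefficients. First, since $X$ has canonical (hence klt) singularities and $K_X \sim 0$, Theorem~\ref{base-same-sing} yields that $(Y, B_Y, \bM.)$ is a generalized klt pair with $K_Y + B_Y + \bM Y. \sim_\QQ 0$. Using the hypothesis that $c\bM.$ is b-free, I would apply Lemma~\ref{lemma:gentopair} to produce an effective divisor $\Gamma_Y \sim_\QQ \bM Y.$ with coefficients in $c^{-1}\ZZ$ such that $(Y, B_Y + \Gamma_Y)$ is klt. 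Combined with the above canonical bundle equivalence, $(Y, B_Y + \Gamma_Y)$ is then a klt log Calabi--Yau pair.

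Next, I would verify that the coefficients of $B_Y + \Gamma_Y$ lie in a DCC subset of $[0, 1]$ depending only on $d = \dim X$ and $c$. The coefficients of $\Gamma_Y$ belong to the finite set $c^{-1}\ZZ \cap [0, 1]$. The coefficients of $B_Y$ are of the form $1 - \lct_f(P)$ by the formula~\eqref{eq:lcg}; since $\dim X$ is bounded and $X$ has klt singularities, the ACC for log canonical thresholds of Hacon--McKernan--Xu forces these thresholds into a DCC subset of $[0, 1]$, so the coefficients of $B_Y$ also lie in a DCC set determined by $d$.

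Finally, the hard step is to invoke the boundedness-in-codimension-$1$ result for rationally connected klt log Calabi--Yau pairs established in \cite{BDCS}: klt pairs $(Y, D)$ of bounded dimension with $Y$ rationally connected, $K_Y + D \sim_\QQ 0$, and coefficients of $D$ in a fixed DCC subset of $[0, 1]$ form a collection bounded in codimension $1$. Applying this with $D = B_Y + \Gamma_Y$ would conclude the proof. The main obstacle will be matching the precise hypotheses of the statement available in the literature to our setting --- in particular, confirming that no additional conditions (such as a lower bound on the minimal log discrepancy of $(Y, B_Y + \Gamma_Y)$, or separate constraints on the singularities of the underlying variety $Y$) are required beyond the klt property we have produced, and ensuring that the DCC bound on coefficients is uniform across all the fibrations arising in the collection of interest.
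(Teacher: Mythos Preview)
Your proposal is correct and follows essentially the same route as the paper: reduce to a klt log Calabi--Yau pair via Lemma~\ref{lemma:gentopair}, then invoke \cite[Thm.~1.5]{BDCS}. The only notable difference is how the coefficients of $B_Y$ are controlled. You argue DCC via the ACC for log canonical thresholds of Hacon--McKernan--Xu, which works. The paper instead uses the sharper observation that since $K_X \sim 0$ (not merely $\sim_\QQ 0$), one obtains $K_Y + B_Y + \bM Y. \sim 0$ by \cite[Prop.~5.3]{FM20}; combined with $c\Gamma_Y \sim c\bM Y.$ this yields $c(K_Y + B_Y + \Gamma_Y) \sim 0$, forcing the coefficients of $B_Y + \Gamma_Y$ into the finite set $c^{-1}\ZZ \cap [0,1)$ directly, with no appeal to ACC for thresholds. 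Your concern about matching hypotheses is unfounded: \cite[Thm.~1.5]{BDCS} requires precisely bounded dimension, rational connectedness, $K_Y + D \sim_\QQ 0$, klt singularities, and DCC coefficients, all of which you have secured.
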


\begin{proof}
Since $K_X \sim 0$ holds, we have 
$f^\ast (K_Y + B_Y + \bM Y.) \sim 0$ by \cite[\S~7.5]{PS09} 
and $K_Y + B_Y + \bM Y. \sim 0$ by \cite[Prop.~5.3]{FM20}.
Furthermore, $(Y,B_Y,\bM.)$ is generalized klt, see 
\cite[Prop.~4.16]{Fil20}.
Then, by Lemma \ref{lemma:gentopair}, we may choose an 
integral divisor $0 \leq c\Gamma_Y \sim  c\bM Y.$ such that:
\begin{enumerate}
\item $(Y,B_Y+\Gamma_Y)$ is klt;
\item $c(K_Y + B_Y + \Gamma_Y) \sim 0$; and
\item $Y$ is rationally connected. 
\end{enumerate}
We conclude by \cite[Thm.~1.5]{BDCS}.
\end{proof}

\begin{corollary}\label{cor:basesLagrbound}
Let $f \colon X \to Y$ be a Lagrangian fibration from a projective 
primitive symplectic variety of dimension $2d$.
Let $(Y,B_Y,\bM.)$ be the generalized pair structure induced on 
$Y$ via $f$. Then, we may choose $0 \leq \Gamma_Y \sim_\QQ \bM Y.$ 
such that the pairs $(Y,B_Y+\Gamma_Y)$ are bounded.
\end{corollary}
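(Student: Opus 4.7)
The plan is to reduce to Lemma~\ref{cor:bases} and then upgrade the resulting codimension-$1$ boundedness to full boundedness by exploiting the Picard-number-$1$ structure of the Fano base.

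By Proposition~\ref{prop:base}, the base $Y$ is a Fano variety with $\QQ$-factorial klt singularities and Picard number $1$, and the general fiber of $f$ is an abelian variety of dimension $d$; in particular, $Y$ is rationally connected and $f$ is an abelian fibration of relative dimension $d$. Theorem~\ref{thm:weakeffective} then yields a constant $c = c(d)$, depending only on $d$, such that $c\bfM$ is b-free. Applying Lemma~\ref{lemma:gentopair} followed by Lemma~\ref{cor:bases}, we choose $0 \le \Gamma_Y \sim_\QQ \bM Y.$ with coefficients in $c^{-1}\ZZ$ so that $(Y, B_Y + \Gamma_Y)$ is a klt log Calabi--Yau pair and the pairs so obtained form a family $(\cY, \cB)/\cT$ that is log bounded in codimension $1$: for each such pair there exist $t \in \cT$ and a small birational map $Y \dashrightarrow \cY_t$ sending $B_Y + \Gamma_Y$ to $\cB_t$.

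To upgrade to full boundedness, we enlarge $(\cY, \cB)/\cT$ to a finite-type family $(\cY', \cB')/\cT'$ containing all $\QQ$-factorial small birational modifications of the pairs $(\cY_t, \cB_t)$. After a suitable stratification of $\cT$ and a finite base change, one runs a relative MMP on $(\cY, \cB) \to \cT$, enumerating the finitely many $\QQ$-factorial small modifications of each fiber via chains of $(K + B + \Gamma)$-trivial flops. Since $Y$ is $\QQ$-factorial with Picard number $1$, the small birational map $Y \dashrightarrow \cY_t$ factors through some $\QQ$-factorialization of $\cY_t$ and identifies $Y$ with some $\cY'_{t'}$; moreover, the Picard-number-$1$ hypothesis singles $Y$ out uniquely from within the finite collection of $\QQ$-factorial small modifications, as the model in which the movable cone reduces to a single chamber. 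Consequently $(Y, B_Y + \Gamma_Y) \simeq (\cY'_{t'}, \cB'_{t'})$ for some $t' \in \cT'$, yielding full boundedness.

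\textbf{Main obstacle.} The principal difficulty lies in the MMP-in-families step: controlling the $\QQ$-factorial small modifications of $(\cY_t, \cB_t)$ uniformly in $t \in \cT$. This requires uniform finiteness of small $\QQ$-factorial models and termination of flops for klt log Calabi--Yau pairs in a bounded family, combined with standard stratification arguments. The Picard-number-$1$ assumption on $Y$ is indispensable here, as it pins down $Y$ unambiguously among the finitely many $\QQ$-factorial small modifications of $\cY_t$.
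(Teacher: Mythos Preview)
Your approach is correct and follows the same overall strategy as the paper: first invoke Lemma~\ref{cor:bases} (using Proposition~\ref{prop:base} for rational connectedness and Theorem~\ref{thm:weakeffective} for effective b-semiampleness), then use $\QQ$-factoriality and $\rho(Y)=1$ to pass from codimension-$1$ boundedness to boundedness.

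However, you over-engineer the second step, and your ``main obstacle'' is not actually an obstacle. The paper's proof is a single sentence: since $\rho(Y)=1$ and $Y$ is $\QQ$-factorial, \emph{all flops are trivial}, so boundedness in codimension~$1$ is already boundedness. Concretely: a small birational map preserves the class group, so any $\QQ$-factorial variety isomorphic to $Y$ in codimension~$1$ also has Picard rank~$1$; being $\QQ$-factorial Fano of Picard rank~$1$, both $Y$ and this model are the Proj of the same anti-canonical ring, hence isomorphic. Thus $Y$ is the \emph{unique} $\QQ$-factorial small model in its birational class, and one needs only a single $\QQ$-factorialization of each $\cY_t$ (a standard bounded operation) rather than an enumeration of all small $\QQ$-factorial modifications via relative MMP. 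No uniform finiteness of minimal models or termination-of-flops argument in families is needed.
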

\begin{proof}
We apply Lemma \ref{cor:bases}.
Since $\rho(Y)=1$ and $Y$ is 
$\QQ$-factorial by Proposition
\ref{prop:base}, all flops are trivial. So
boundedness of the pairs
in codimension $1$ implies
boundedness.
\end{proof}

\begin{remark}
In the context of Corollary~\ref{cor:bases}, if 
the general fiber of $f$ is smooth, the b-Cartier 
index of $\bM.$ is controlled independently of our 
work by \cite[Thm.~3.1]{FM00}, see also 
\S~\ref{hodge-moduli}.
Since $K_Y+B_Y+\bM Y.\sim 0$ and $c\bM Y.$ is 
b-Cartier for some positive integer $c$,
we may deduce the weaker statement that $Y$ is 
bounded modulo flops by \cite[Thm.~1.3]{HJ22}.
\end{remark}

\begin{example}\label{rc-base-ex}
The conclusion of \Cref{cor:bases} 
holds in the following key examples:
\begin{enumerate}
\item $X$ is an irreducible Calabi--Yau variety of fixed
dimension, and the general fiber of $f$ is primitive 
symplectic or abelian. The irreducibility assumption 
grants that $Y$ is automatically rationally 
connected\footnote{If $Y$ were 
not rationally connected, the image of 
its MRC fibration would be a non-uniruled variety
of positive dimension by \cite[Cor.~1.4]{GHS2003}. 
However, by \cite[Thm.~14]{KL2009}, a quasi-\'{e}tale 
cover of $X$ would split as the product of two 
Calabi--Yau varieties of positive dimension, which is 
a contradiction by the irreduciblity of $X$.
Hence, $Y$ must be rationally connected.} 
and so the lemma applies, because $c$ is bounded
by Theorem \ref{thm:weakeffective}. 
\item $X$ is a Calabi--Yau 3-fold. If $f$ is a fibration by
K3 or abelian surfaces, the result is automatic as $Y=\PP^1$.
Elliptic Calabi--Yau 3-folds are instead fibered either 
onto rational or singular Enriques surfaces, see 
\cite[Prop.~2.2]{grassi91} 
and
\cite[Prop.~2.3]{Gross1994}.
In the former case, the above proof applies.
In the latter case, $B_Y = 0 \sim_{\QQ} \bM Y.$ and we can apply \Cref{cor:bases} thanks to \cite[Thm.~6.3]{FHS2024}.
\item $X$ is a primitive symplectic variety. 
See Proposition \ref{prop:base}.
\end{enumerate}
In the last two cases, the pairs $(Y,B_Y+\Gamma_Y)$ are 
actually bounded, 
since all flops are trivial either for 
dimensional reasons or because $\rho(Y)=1$ and 
$Y$ is $\QQ$-factorial by 
\Cref{prop:base}.
\end{example}

\begin{theorem}[cf.~Theorem \ref{thm:icy-ab}(PS)]\label{bd-families-2} 
Fix $d>0$. 

\begin{enumerate}
\item[(AV)]
Abelian fibrations
$f\colon X\to Y$ of Picard type admitting a rational section over
a rationally connected base,
such that $X$ is $K$-trivial
of dimension $d$,
are birationally bounded.\smallskip

\item[(PS)]
Primitive 
symplectic fibrations $f\colon X\to Y$
of Picard type, over a
rationally connected base, with fibers of a 
fixed analytic deformation class,
such that $X$ is $K$-trivial
of dimension $d$, are birationally bounded.
\end{enumerate}

In both cases, the birational map on bases and
on total spaces can
be assumed isomorphisms in codimension $1$.
\end{theorem}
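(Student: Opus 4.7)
The plan is to reduce boundedness of $f\colon X\to Y$ to the relative boundedness statement of Theorem~\ref{bd-families}, over a bounded family of base generalized pairs produced by combining effective $b$-semiampleness with the boundedness of the base.

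First, I would apply Theorem~\ref{thm:weakeffective}, which supplies a positive integer $c$ depending only on $d$ in case~(AV), or on the fixed analytic deformation class (hence on $b_2$) in case~(PS), such that on a suitable birational model $Y'\to Y$ the b-divisor $\bfM$ descends and $c\bM {Y'}.$ is Cartier and basepoint-free. Using Corollary~\ref{cor:ktrivial-model} I may simultaneously replace $X$ by a birational $K$-trivial model compatible with $Y'$, and so assume $c\bM Y.$ is Cartier and basepoint-free. Second, since $Y$ is rationally connected, Lemma~\ref{cor:bases} (via Lemma~\ref{lemma:gentopair}) produces an effective $\Gamma_Y\sim_\bQ \bM Y.$ with coefficients in $c^{-1}\bZ$ such that the pairs $(Y, B_Y+\Gamma_Y)$ lie in a bounded family $(\cY,\cD)/\cT$ in codimension one. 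Since the coefficients of $B_Y$ in \eqref{eq:lcg} lie in a fixed DCC set, while those of $\Gamma_Y$ lie in $c^{-1}\bZ$, stratifying $\cT$ by the combinatorial decomposition $\cD=\cB+\cG$ into components with prescribed rational coefficients yields a finite type family $(\cY,\cB,\cM\coloneqq\cG)/\cT$ such that every $f$ as in the theorem indicates $(\cY_t,\cB_t,\cM_t)$ in the sense of Definition~\ref{def:indicates}, up to isomorphism in codimension one on the base. Third, I would apply Theorem~\ref{bd-families}: in case~(PS) this gives birational boundedness of $f$ up to crepant equivalence, while in case~(AV) the theorem bounds the Albanese fibration $f^{\rm Alb}$, which is identified with $f$ over the generic point of $Y$ via the assumed rational section (cf.~Definition~\ref{defn:alb} and Theorem~\ref{thm:alb}).

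The main obstacle is to upgrade this generic birational equivalence on the total space to the claimed isomorphism in codimension one. Once the bases are identified in codimension one, matching the data $(B_Y,\bfM)$ should force codimension-one agreement of the fibrations: outside of $\Supp B_Y\cup\partial_{\rm H}$ in case~(AV), and outside of $\Supp B_Y\cup\partial_{\rm H}\cup\partial_{\rm I}$ in case~(PS), the fibration is smooth, respectively locally trivial, by Proposition~\ref{extend3} and Proposition~\ref{extend4}. Such fibrations are rigid and are determined, in a neighborhood of the generic point of each codimension-one stratum of $Y$, by the classifying morphism together with the multiple-fiber data recorded in $B_Y$ via Construction~\ref{coefficientB}. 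Verifying this matching at each codimension-one stratum separately then patches together to the desired codimension-one isomorphism of total spaces.
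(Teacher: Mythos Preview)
Your main reduction is correct and matches the paper: combine Theorem~\ref{thm:weakeffective} with Lemma~\ref{cor:bases} to bound the base generalized pair in codimension one, then feed this into Theorem~\ref{bd-families}, using the rational section in case~(AV) to identify $f$ with $f^{\rm Alb}$ over the generic point. The paper streamlines the passage from the codimension-one identification $\cY_t\dashrightarrow Y$ to the hypothesis ``$f$ indicates $(\cY_t,\cB_t,\cM_t)$'' by invoking \cite[Prop.~2.9]{Fil24}, which directly produces a $K$-trivial model $X'\to\cY_t$; your route through Corollary~\ref{cor:ktrivial-model} is in the same spirit. Your coefficient-separation scheme for splitting $\cD=\cB+\cG$ is unnecessary, since the family furnished by Lemma~\ref{cor:bases} already tracks $B_Y$ and $\Gamma_Y$ separately.

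Your final paragraph, however, has a genuine gap. The claim that the fibration near each codimension-one stratum is ``determined by the classifying morphism together with the multiple-fiber data recorded in $B_Y$'' is false. In case~(AV) with rational section there are no multiple fibers, and in any case the coefficient $1-\lct_f(P)$ does not pin down the local model of $f$ over $P$: all $I_n^\ast$ fibers in Table~\ref{b-vals} give coefficient $\tfrac12$, and in case~(PS) the Hodge-theoretic classifying map need not see the fiber at all (Example~\ref{no-B-ex-2}). Even granting stratum-wise agreement of local models, this would not assemble into a single global birational map that is an isomorphism in codimension one. In fact the paper's own proof stops at ``generically bounded'' and does not spell out the total-space codimension-one upgrade either; the intended argument is the standard extrinsic one for $K$-trivial varieties: stratify the birationally bounding family $\cX\to\cS$, run a relative MMP so that the fibers $\cX_s$ are themselves $K$-trivial, and then enlarge the family by the finitely many crepant divisorial extractions over each fiber. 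Since two birational $K$-trivial canonical varieties differ only by crepant extractions, contractions, and flops, every $X$ is then isomorphic in codimension one to a fiber of the enlarged family. Your stratum-by-stratum matching is neither necessary nor sufficient for this step.
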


Note that the hypotheses of the above theorem 
are satisfied by any irreducible
Calabi--Yau variety $X$ endowed with an 
abelian or primitive symplectic fibration 
$f \colon X \to Y$.

\begin{proof}
By Lemma \ref{cor:bases} and 
Theorem \ref{thm:weakeffective}, 
there is a finite
type family $(\cY,\cB,\cM)\to \cT$
such that:
For any abelian or primitive symplectic
fibration $f\colon X\to Y$ with 
$\dim X=d$ and $X$ a $K$-trivial variety, 
over a rationally connected base $Y$,
there is a closed point $t\in \cT$ and a 
birational map $\pi\colon \cY_t\dashrightarrow Y$, 
which is an isomorphism in codimension $1$, 
for which $\pi_*(\cB_t)=B_Y$ and 
$\pi_*(\cM_t)\sim_\bQ \bM Y.$. 

Now $f \circ \pi^{-1}\colon X
\dashrightarrow \cY_t$
is a rational fibration which,
over a big open set of $\cY_t$ is a morphism
and $K$-trivial. Furthermore, it induces
$\cB_t$ and $\cM_t$ as the boundary and moduli
divisors. By \cite[Prop.~2.9]{Fil24}, we can
birationally modify $f$ to a fibration
$f' \colon 
X' \to \cY_t$ of a $K$-trivial variety,
which indicates $(\cY_t,\cB_t,\cM_t)$,
see Definition~\ref{def:indicates}.
The result now follows from Theorem \ref{bd-families},
since $f\circ \pi^{-1}$,
and in turn $f$, are 
generically bounded.
\end{proof}

\begin{corollary}
Primitive symplectic varieties of dimension $2d$,
admitting a Lagrangian fibration with rational section,
are birationally bounded.

There are only finitely many analytic
deformation classes of primitive symplectic varieties
containing one admitting a Lagrangian fibration
with rational section.
\end{corollary}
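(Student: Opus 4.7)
The plan is to verify that Lagrangian-fibered primitive symplectic varieties with rational section satisfy the hypotheses of Theorem \ref{bd-families-2}(AV), then invoke that theorem directly. Let $f \colon X \to Y$ be such a Lagrangian fibration, with $X$ primitive symplectic of dimension $2d$. By Proposition \ref{prop:base}(2), the general fiber of $f$ is an abelian variety of dimension $d$, so $f$ is an abelian fibration with $\dim X = 2d$. By Proposition \ref{prop:base}(3), the base $Y$ is a $\QQ$-factorial Fano variety with klt singularities and Picard number one, hence rationally connected. Finally, $X$ is $K$-trivial and $f$ admits a rational section by hypothesis.

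The remaining hypothesis to verify is that $f$ is of Picard type. This is precisely the content of Remark \ref{pic-aut}: although $h^2(X,\cO_X) = 1$, the Lagrangian condition ensures that the holomorphic symplectic form vanishes on fibers, so the surjection $H^{1,1}(\widetilde X,\bQ) \twoheadrightarrow (\bU_\bQ)^\rho$ from the proof of Proposition \ref{is-pic} kills $H^{2,0}(\widetilde X)$; equivalently, by \cite[Lem.~2.2]{Matsushita2016}, $(R^2 f^o_* \underline{\bZ})_{\rm tf}^\rho$ has rank one and is spanned by the pullback of an ample class from $Y$, hence is of Hodge--Tate type. Applying Theorem \ref{bd-families-2}(AV) yields that the collection of such fibrations $f \colon X \to Y$ is birationally bounded, which in particular birationally bounds the collection of total spaces $X$.

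For the second statement, I would argue that birational boundedness of the $X$ translates into finiteness of analytic deformation classes by invoking the fact that birational primitive symplectic varieties are locally trivially deformation equivalent, a result of Bakker--Lehn \cite{BL2022}. Concretely, the finite type family $\cX^{\rm Alb} \to \cY/\cS$ produced by Theorem \ref{bd-families-2}(AV) stratifies into finitely many strata over which the fiber analytic deformation class is constant; combined with the birational identification with $X$, this bounds the number of analytic deformation classes that occur.

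The main obstacle is essentially cosmetic: confirming that the Picard-type hypothesis genuinely holds for Lagrangian fibrations, since this is the one hypothesis of Theorem \ref{bd-families-2}(AV) that is not automatic from the definitions. Once this is pinned down via the Lagrangian condition and \cite[Lem.~2.2]{Matsushita2016}, the corollary reduces to a straightforward application of the main boundedness result together with the birational invariance of the analytic deformation class for primitive symplectic varieties.
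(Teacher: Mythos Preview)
Your argument for the first statement is correct and matches the paper's approach: you verify the hypotheses of Theorem~\ref{bd-families-2}(AV) directly, just as the paper does (more tersely) via Example~\ref{rc-base-ex}(3) and Remark~\ref{pic-aut}.

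For the second statement, your idea is right but there is a small gap. The Bakker--Lehn result you invoke, \cite[Thm.~6.16]{BL2022}, states that birational \emph{$\bQ$-factorial terminal} primitive symplectic varieties are locally trivially deformation equivalent; it does not apply to arbitrary primitive symplectic $X$. So after birationally bounding the fibrations and (say) running an MMP on the bounding family to produce $\bQ$-factorial terminal PS fibers, you obtain finitely many deformation classes of terminalizations $X^{\rm term}$, not of the $X$ themselves. The paper closes this gap by invoking property~\eqref{surjective} from the proof of Proposition~\ref{extend4}: the map $\Def(X^{\rm term}) \to \Def(X)$ is a surjective branched Galois covering, so fixing the locally trivial deformation class of $X^{\rm term}$ pins down that of $X$. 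You should add this step to pass from finitely many classes of $\bQ$-factorial terminalizations to finitely many classes of the original varieties.
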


\begin{proof} The first part follows from
Example \ref{rc-base-ex}(3) and Theorem \ref{bd-families-2}.
Now, the second part follows from property 
\eqref{surjective} in the proof 
of Proposition \ref{extend4}
and the fact that birational
$\bQ$-factorial terminal primitive symplectic
varieties are deformation
equivalent \cite[Thm.~6.16]{BL2022} (via
locally trivial deformations).
\end{proof}

\begin{remark}
If $f \colon X \to Y$ is a Lagrangian fibration from an 
irreducible symplectic manifold with reduced fibers in 
codimension 1 and such that the integral homology of 
$X$ is indivisible, then Bogomolov--Kamenova--Verbitsky 
\cite{BKV} show that $f$ deforms to a Tate--Shafarevich 
twist with a meromorphic section; see also 
\cite[Thm.~C]{AR2021}. Koll\'{a}r \cite{kollar_new2} has 
recently showed that a Tate--Shafarevich twist of an 
arbitrary $K$-trivial locally projective klt abelian 
fibration whose general fiber is indivisible as a 
homology class admits a section. However, it is unclear 
under which assumptions the original fibration and its 
twists are deformation equivalent \cite[(13) and (20)]{kollar_new2}. At least in the smooth case, the 
indivisibility of the general fibers is necessary 
for the previous results; see
\cite[\S 1.5]{BKV} and \cite[Comment 9]{kollar_new2}. 

An alternative source of abelian fibrations with rational 
section is the Albanese fibration $f \colon X^{\rm Alb} \to Y$ 
of any given abelian fibration $f\colon X \to Y$ 
(Def.~\ref{defn:alb}), unique up to birational modifications. 
In \cite[Cor.~6.4]{sacca}, Sacc\`{a} showed that if $f$ is 
a Lagrangian fibration with reduced fibers in codimension 
1, then $X^{\rm Alb}$  can be chosen $\bQ$-factorial, 
terminal and symplectic. We generalize the result in 
\Cref{thm:alb} to arbitrary $K$-trivial varieties. 
In particular, we drop the assumption on reduced 
fibers. In general, an abelian fibration and its 
Albanese fibration are not deformation equivalent, 
even in the Lagrangian case, e.g., \cite[Ex.~6.7]{sacca}. 
Notwithstanding, we show that there are only finitely 
many deformation types of 
primitive symplectic varieties admitting a 
Lagrangian fibration whose Albanese fibration
lies in a fixed deformation class.
\end{remark}

\subsection{Examples and counterexamples}

We now give some examples which 
demonstrate that Propositions 
\ref{bd-map}, \ref{bd-stack}, 
\ref{bd-unzarhin} and Theorem \ref{bd-families}
are near-optimal statements, at least regarding 
boundedness over the generic point.

\begin{example}\label{no-M-ex}
Suppose we only bound $(\cY,\cB)\to \cT$ but not $\cM$.
We take $\cT={\rm pt}$ and $\cY=\bP^1$ and 
$\cB = 0$, the trivial divisor.
Now consider elliptic fibrations
$X\coloneqq\{y^2=x^3+ax+b\}$
where $a\in H^0(\bP^1,\cO(4d))$ and 
$b\in H^0(\bP^1, \cO(6d))$ are general. We have 
$\bfM_Y\sim \cO_{\bP^1}(d)$.
Letting $d\to \infty$, 
these elliptic surfaces induce $(Y,B_Y)=(\bP^1,0)$
but are not birationally
bounded, as $h^{2,0}(X)$
goes to infinity.
\end{example}

\begin{example}\label{no-B-ex}
Suppose we only bound $(\cY,\cM)\to \cT$ but not $\cB$.
Let $\partial$ be a set of $6d$ distint points on $\bP^1$.
Let $\wY\to \bP^1$ be the order $6$ cyclic 
cover of $\bP^1$ branched over $\partial$. Let $E$
be the elliptic curve with an order $6$ automorphism.
Consider the relatively minimal
model of the diagonal quotient 
$$X \coloneqq \wY\times E/C_6\dashrightarrow X'\to Y
\coloneqq E/C_6\simeq \bP^1.$$ 
Then $X'\to\bP^1$ is an isotrivial elliptic fibration
with section, that has a cuspidal fiber over each
point of $\partial$; it is given by 
an equation $X'=\{y^2=x^3+b\}$. We have
$B_Y=\tfrac{1}{6}\partial\sim \cO_{\bP^1}(d)$.
These fibrations induce $(Y,M_Y)=(\bP^1,0)$. But they 
are not birationally bounded, since $h^{2,0}(X')$
goes to infinity. Here Proposition \ref{bd-map}
goes through, which only requires control of $\cM$.
But Proposition \ref{bd-stack} fails as we 
do not control the open set $Y^o\subset Y$ 
on which the classifying morphism $\Phi^o\colon Y^o\to \cA_{1,1}$ exists.
\end{example}

\begin{example}\label{no-B-ex-2}
Let $F \coloneqq {\rm Kum}_n(A)$ be the Kummer
variety of an abelian surface $A$, i.e., the
kernel of the summation map 
${\rm Hilb}^{n+1}(A)\to A$. Inversion in 
the group law on $A$ induces an involution $\sigma$ on $F$,
acting trivially on $H^2(F,\bZ)$. Let $\tau$
be the hyperelliptic involution on a hyperelliptic
curve $C$ of genus $g$, and define $X \coloneqq F\times C/C_2$, where $C_2$ is generated by
the diagonal involution
$(\sigma,\tau)$.

Then $f\colon X\to \bP^1\simeq C/C_2$ 
is an isotrivial
family of PS varieties of ${\rm Kum}_n$-type,
with singular fibers over the $2g+2$ branch
points of $\tau$. Ranging over all $g\geq 0$,
these fibrations are unbounded. For example,
$K_X\simeq f^*(\cO_{\bP^1}(g-1))$. But since $\sigma$
acts trivially on $H^2(F,\bZ)$, the weight $2$
$\bZ$-VHS induced by $f$, for all $g\geq 0$,
is the same trivial $\bZ$-VHS.

Hence, the classifying morphism 
$\Phi_s^o\colon Y^o\to \cF_\Lambda$ to
the DM stack of Hodge structures is insufficient 
to bound $f$, and we really do require
the data of the lift $\Psi_s^o\colon Y^o\to 
\cM_\Lambda$ to the DM stack of $\Lambda$-polarized
PS varieties---this lift only exists
over the complement of the branch points.
Note that this example is not of Picard type, 
but it is also possible to furnish a
such a counter-example, by
performing the same construction
to the pullback along $C\to \bP^1$
of a ${\rm Kum}_n$-fibration, associated to
an abelian surface fibration
of Picard type over $\bP^1$.
\end{example}

\begin{example}\label{pic-ex}
The assumption on Picard type is necessary:
Take, for instance, the collection of 
abelian surface fibrations
$f\colon A\to {\rm pt}$ over a point.
The bases $(\cY,\cB,\cM)$
lie in a bounded family. Propositions \ref{bd-map},
\ref{bd-stack} go through: The induced Z-period 
maps, classifying morphisms
$$\Phi_Z,\, \Phi^o_Z\colon {\rm pt} \to A_{8}, 
\,\cA_{8}$$ lie in a bounded
family. But Proposition \ref{bd-unzarhin} fails;
we cannot recover the polarizing lattice
$\Lambda\subset \wedge^2(\bZ^4)$ from the local system,
because 
$H^0({\rm pt} , R^2f_*\underline{\bZ})=H^2(A,\bZ)$ 
is not of Hodge--Tate type. We are unable
to bound the moduli spaces $A_{2,{\bf d}}$, 
$\cA_{2,{\bf d}}$ through which $\Phi_Z$, $\Phi^o_Z$ 
factor.
\end{example}

\section{Tate--Shafarevich twists of abelian fibrations
with multiple fibers}
\label{sec:ts}

Given a family of bases $(\cY,\cB,\cM)\to \cT$,
Theorem \ref{bd-families} ensures that
there is a finite type family $\cX^{\rm Alb}\to \cY/\cS$,
containing a birational model of 
the Albanese fibration of any abelian fibration
$f\colon X\to \cY_t$ of Picard type,
indicating some $(\cY_t,\cB_t,\cM_t)$. Alternatively,
the datum of $f^{\rm Alb}\colon
X^{\rm Alb}\to \cY_t$ is determined by
some classifying morphism $\Phi_s^o\colon \cY_s^o
\to \cA_{g,\Lambda}$ for an 
appropriate $s\in \cS$ over $t\in \cT$,
and an appropriate polarization lattice $\Lambda$.

The goal of this section
is to understand the relation between
$f\colon X\to \cY_t$ and its Albanese
fibration $f^{\rm Alb}\colon X^{\rm Alb}\to \cY_t$. 
Thus, we study the geometric analogue
of the Tate--Shafarevich group.
After finite type base changes and 
passing to appropriate locally closed 
stratifications, 
our constructions can be performed 
relatively (Rem.~\ref{constructible-remark}),
so we will restrict here to the class of 
abelian fibrations $f\colon X\to Y$
inducing a fixed generalized pair $(Y,B,\bfM)$
and birational class of Albanese variety, 
encoded by $(Y,B,\bfM,\Phi^o)$.\footnote{Though
$\bfM$ is superfluous, as it is determined by $\Phi^o$.} Unlike in 
Section \ref{sec:bd}, we will now care about the 
coefficients of $B$, not just its support.

Throughout this section, we will
assume that $X$ is proper.

\subsection{Kulikov models}\label{ts1} 

\begin{definition}\label{def:Kulikov}
An abelian fibration is {\it Kulikov}
if the base and total space are smooth, 
the discriminant is a smooth divisor, and
the fibration is relatively $K$-trivial,
semistable, and analytically-locally 
trivial over the discriminant. That is, 
the analytic local equation of the total space 
is $x_1\cdots x_m=t$, where $t$ is a local
coordinate on the base cutting out the discriminant
divisor.
\end{definition}

\begin{proposition}\label{nicest-model}
Let $f\colon X\to Y$ be an abelian fibration
inducing $(Y,B,\bfM,\Phi^o)$. There is a 
finite Galois $G$-cover $Z\to Y$,
and a $G$-invariant, big open subset 
$Z^+\subset Z$,
satisfying the following condition: 
Let $g\colon W\to Z$ be the normalized
base change of $f$. \'Etale-locally over $Z^+$, 
the restriction $g^+\colon W^+\to Z^+$ of $g$
is translation-birational (Def.~\ref{def:tr-bir})
to a Kulikov
model $h^+\colon V^+\to Z^+$ of the Albanese fibration,
which can furthermore be chosen so that
the natural birational $G$-action is regular.

Finally, the cover $Z\to Y$
can be chosen in a manner 
depending only on $(Y,B,\bfM,\Phi^o)$.
\end{proposition}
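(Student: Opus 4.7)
The plan is to construct $Z \to Y$ by combining two independent sources of ramification: one extracted from the period data $\Phi^o$ (to unipotentize monodromy and provide a Kulikov model of $f^{\rm Alb}$), and one extracted from the boundary divisor $B$ (to kill the multiple fibers of $f$ in codimension $1$). Since each ingredient is read off directly from $(Y, B, \bfM, \Phi^o)$, the resulting Galois cover is canonical in this data and independent of the choice of twist $f$ among fibrations inducing the same Albanese.

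For the first source, I would use the monodromy representation $\pi_1(Y^o) \to {\rm Sp}(V_{g,{\bf d}}^*)$ of the local system $R^1(f^{\rm Alb})^o_*\underline{\bZ}$ to pass to a finite \'etale cover of $Y^o$ whose monodromy factors through a fixed neat finite-index subgroup, e.g.~the level-$3$ subgroup discussed in \S~\ref{hodge-moduli}. Taking the normal closure over $Y$ and resolving in codimension $1$ yields a Galois cover with unipotent local monodromies about every component of the boundary. For the second source, the comparison $B \geq \Delta_f$ of \eqref{eq:multiplicitycouple} bounds the denominators of the multiplicities $m_f(P)$ uniformly in $B$; a further Kawamata-type cyclic base change ramified to sufficient order along each component of $\Supp B$, followed by the Galois closure of the composite, produces $Z \to Y$ with Galois group $G$ depending only on $(Y, B, \bfM, \Phi^o)$.

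Next I would construct the Kulikov model of the pullback of $f^{\rm Alb}$ along $Z \to Y$. Choosing the $G$-invariant big open $Z^+ \subset Z$ to be the complement of the singular locus of $Z$, of the non-snc locus of the pulled-back discriminant, and of the codimension-$\geq 2$ loci where the monodromy cones degenerate, the Mumford construction \cite{mumford-construction, alexeev-nakamura} applied to the $\bZ$-PVHS over $Z^+$ produces a relatively $K$-trivial, analytic-locally toric model $h^+\colon V^+ \to Z^+$ whose local equations are exactly those of Definition~\ref{def:Kulikov}. Because the input datum --- the $\bZ$-PVHS together with its cones of unipotent monodromy --- is canonically $G$-equivariant, choosing a simultaneously $G$-invariant polytope decomposition of these cones produces $V^+$ with a regular $G$-action extending the birational action inherited from the Albanese family.

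Finally I would verify the \'etale-local translation-birational identification of $g^+\colon W^+ \to Z^+$ with $h^+$. The ramification prescribed in the first paragraph kills the multiplicity divisor of $g^+$ along every prime of $Z^+$ (by the formula of Construction~\ref{coefficientB} relating $B$ to $\lct_f$ and $\Delta_f$), and the unipotent monodromy makes the generic fibers of $g^+$ and $h^+$ coincide as abelian varieties up to translation; \'etale-locally on $Z^+$ this upgrades to the torsor-by-translation structure recorded in Definition~\ref{def:tr-bir}. The main technical obstacle I anticipate is precisely the joint $G$-equivariance of the Kulikov model and the openness $Z^+ \subset Z$: while Mumford's construction is canonical given the toric input, producing a $G$-invariant polytope decomposition that simultaneously avoids $G$-fixed strata forcing non-regular action may require shrinking $Z^+$ further than pure Hodge-theoretic considerations would suggest, and checking that this shrinking can still be arranged to yield a big open is the main step requiring care.
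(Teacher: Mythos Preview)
Your proposal is essentially correct and follows the same architecture as the paper's proof: the two independent sources of ramification (level structure to make monodromy unipotent, and cyclic cover along $\Supp B$ of order governed by the coefficients of $B$ to kill multiplicities), followed by Galois closure, the Mumford construction for the Kulikov model, and a $G$-invariant refinement of the tilings, are exactly the steps the paper carries out.

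Two small points where the paper's execution differs from what you anticipate. First, your \'etale-local identification of $g^+$ with $h^+$ is the content of the paper's Lemma~\ref{finally-rid-of-B}, which is phrased over a curve and then applied at the generic point of each discriminant divisor; the mechanism is that once all fiber components of the normalized base change are reduced (which your $c!$-type cover ensures via the bound $1-\mathrm{coeff}_P(B)=\lct_f(P)$), an \'etale section exists, and the level-$3$ structure rigidifies the resulting translation-identification. To apply this cleanly the paper first passes to a birational model of $g$ that is relatively $K$-torsion over a big open (Proposition~\ref{prop:bestKtrivialmodel}); you will want this step as well. Second, your anticipated obstacle is slightly misdiagnosed: the paper does not shrink $Z^+$ to achieve $G$-equivariance of the Kulikov model. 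Instead, after observing that the $G$-translates $a^*\cT_i$ of the tilings need not have integral vertices, it takes a \emph{further} ramified base change of $Z$ to scale until they do, and then takes a common $G$-invariant refinement. So the fix lives on the cover side, not by cutting down $Z^+$.
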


\begin{remark}\label{remark-indep}
We will later prove (Prop.~\ref{local-no-twist})
that the big open set 
$Z^+\subset Z$
can also be chosen 
in a manner depending only on $(Y,B,\bfM,\Phi^o)$.
\end{remark}

\begin{remark}\label{remak-G-equiv}
An important and subtle point is that,
even though both $g^+\colon W^+\to Z^+$
and $h^+\colon V^+\to Z^+$ are \'etale-locally
birational, and both admit regular $G$-actions,
they are {\it not}, in general, $G$-equivariantly
\'etale-locally birational. This is
due to the presence of multiple fibers of $f$ 
in codimension $1$.
\end{remark}

\begin{proof}
As in (\ref{open1}), we 
have a classifying morphism $\Phi^o\colon Y^o\to 
\cA_{g,\Lambda}$ where $$Y^o \coloneqq Y\setminus 
({\rm indet}(\Phi)\cup \partial_{\rm H}\cup \supp B\cup  
Y_{\rm sing}).$$
We may pass to a finite Galois cover
$Y_1^o\to Y^o$ for which we have a morphism
$\Phi_1^o\colon Y_1^o\to 
\cA_{g,\Lambda}[3]$ to the moduli stack
of $\Lambda$-polarized abelian varieties with
full level $3$ structure. Let $Y_1^o\hookrightarrow Y_1$
be the normalization of $Y$ in $\bC(Y^o_1)$.
Since the level $3$ subgroup is neat,
$\cA_{g,\Lambda}[3]$ exists as a scheme,
there is a universal family of abelian
varieties over it, and $\Phi_1^o$
is uniquely determined by the corresponding
period map $\Phi_1$.

Let $\partial_i\subset \partial_{\rm H}$
be an irreducible component of codimension $1$.
The monodromy (on $H_1$) of a meridian
about $\partial_i$ is a unipotent
matrix $T_i\in {\rm Sp}_\Lambda[3]$, well-defined
up to conjugacy. Let
$N_i \coloneqq \log (T_i)\in \mathfrak{sp}_{\Lambda}[3]$ 
be its logarithm. Note that $N_i=T_i-I$ is integral
since $N_i^2=0$. There
is a toroidal extension $$
\cA_{g,\Lambda}[3]\hookrightarrow
\overline{\cA}_{g,\Lambda}
[3]^{\bigcup_i \bR_{\geq 0}N_i}$$
whose fan is the union of the rays spanned
by $N_i$ in the appropriate 
cone of positive semi-definite 
$g\times g$ matrices. 
See \cite{namikawabook, mumford-amrt} 
for reference on toroidal 
compactification of Siegel spaces.

The boundary of $\overline{\cA}_{g,\Lambda}
[3]^{\bigcup_i \bR_{\geq 0}N_i}$
consists of a disjoint collection
of smooth (non-proper)
divisors
$E_i$ of dimension $D-1$,
where $D \coloneqq \dim \cA_{g,\Lambda}[3]$.
Here $E_i$ admits a sequence of fibrations
\begin{align}\label{fibrations}
E_i\to F_i\to \cB_{g_i}\subset 
\overline{\cA}_{g,\Lambda}[3]\end{align}
over a 
Baily--Borel boundary stratum $\cB_{g_i}$
of abelian $g_i$-folds with appropriate
polarization and level structure, 
determined by the monodromy operator $N_i$. 
The fibration
$F_i\to \cB_{g_i}$ 
is a smooth, proper
fibration of abelian varieties, 
and $E_i\to F_i$ is an algebraic torus
bundle. Cases to keep in mind
are the $g_i=g-1$ case, where $E_i\simeq F_i$
and the $g_i=0$ case, where $E_i\simeq 
(\bC^\ast )^{D-1}$ and $F_i\simeq \rm pt$.

In the following, we consider $\cA_{g,\Lambda}$ as a parameter
space of weight $-1$ Hodge structures. Following Deligne
\cite[Sec.~10]{deligne1974}, a point in
$E_i$ defines the graded-polarized 
limit mixed Hodge structure, with (integral) weight
and Hodge filtrations
\begin{align*}
\mathscr{W}_\bullet & \coloneqq  \mathscr{W}_{-2}\subset \mathscr{W}_{-1}\subset \mathscr{W}_0 &&  
 \mathscr{W}_{-1}\coloneqq \ker N_i\\
 \mathscr{W}_{-2}&\coloneqq ({\rm im}\,N_i)^{\rm sat} && 
 N_i\colon (\mathscr{W}_0/\mathscr{W}_{-1})_\bQ \xrightarrow{\sim} (\mathscr{W}_{-2})_\bQ \\
\mathscr{F}^\bullet & \coloneqq  \mathscr{F}^{-1}\supset \mathscr{F}^{0} && 
\mathscr{F}^{-1}=(\mathscr{W}_0)_\bC.
\end{align*}
Any $1$-parameter
degeneration with monodromy $T_i$ has a limit
mixed Hodge structure of this type. It is 
$\Lambda$-polarized and has mixed Hodge
type
$\{(-1,-1),(-1,0),(0,-1), (0,0)\}$, with Hodge numbers 
$h^{-1,0}=h^{0,-1}=g_i$ and $h^{0,0} =h^{-1,-1}= g-g_i$.
The image under $E_i\to F_i$ records the truncated
mixed Hodge structure on $\mathscr{W}_{-1}$
of type $\{(-1,-1), (-1,0), (0,-1)\}$,
whose associated Albanese variety
\begin{align}\label{w-quotient}
(\mathscr{W}_{-1})_\bC/(\mathscr{F}^0\cap (\mathscr{W}_{-1})_\bC + \mathscr{W}_{-1})\simeq \cF^{-1}/(\cF^0+\mathscr{W}_{-1})\end{align}
is the unique semiabelian variety
occurring as the limit of the degeneration.
Thus, we can think of the sequence of fibrations
(\ref{fibrations}) Hodge-theoretically, as the maps
$$(\mathscr{W}_\bullet,\, \mathscr{F}^\bullet)\mapsto 
(\mathscr{W}_{\bullet \leq -1},\, \mathscr{F}^\bullet)\mapsto 
({\rm gr}_{-1} \mathscr{W}_\bullet, \,\mathscr{F}^\bullet).$$

Let
$Y_1' \coloneqq Y_1\setminus (\partial_{\rm H})_{\rm sing} 
\cup (Y_1)_{\rm sing}$
be the locus where $Y_1$ is smooth, and $\partial_{\rm H}$
is a smooth divisor.\footnote{In $Y_1^{\rm reg}$,
the Hodge-theoretic bundary
$\partial_{\rm H}$ is divisorial---it is 
detected by infinitude of the local monodromy.}
The period map $\Phi_1$ lifts to a morphism
\begin{align}
\label{period-lift}
\Psi_1\colon Y_1'\to \overline{\cA}_{g,\Lambda}
[3]^{\bigcup_i \bR_{\geq 0}N_i}.\end{align}
This is a consequence 
of the nilpotent orbit theorem \cite[Thm.~4.9]{schmid},
together with the fact
that $\bZ$-PVHS extend over 
codimension $2$ loci in a smooth
variety. 
Over the toroidal extension we can produce
a family $\overline{\cV}\to 
\overline{\cA}_{g,\Lambda}
[3]^{\bigcup_i \bR_{\geq 0}N_i}$
with smooth total space $\overline{\cV}$, which
is Kulikov over each boundary component $E_i$ via
a relative Mumford construction, see 
\cite[\S3]{alexeev-nakamura}, 
\cite{mumford-construction}, 
\cite{alexeev2002} for further
details, and \cite[Sec.~7]{AE2023}
for an analogous 
construction for
K3 surfaces.
We give an example in the $g_i=0$ case for 
simplicity of exposition;
otherwise one must perform the Mumford construction
relatively over the boundary stratum of 
$\overline{\cA}_{g,\Lambda}[3]$. 

\begin{construction}[$g_i=0$]\label{mumford-construction}
Conjugate $N_i\in \mathfrak{sp}_\Lambda[3]$ 
until it is of the $g\times g$ block
form $$\twobytwo{0}{M_i}{0}{0}$$ for 
$M_i\in {\rm Mat}_{g\times g}(\bZ)$ nondegenerate.
Assume $N_i$ is primitive (or otherwise, divide
it by its imprimitivity).
Let $L_i\subset \bZ^g$
be the lattice spanned by rows of $M_i$ and 
take an $L_i$-periodic tiling $\cT_i$ of $\bR^g$ into 
lattice simplices of volume $1/g!$. 
Viewing $\cT_i$ as a tiling of an affine hyperplane
$\bR^g\times \{1\}\subset \bR^g\times \bR$ at height
$1$, the cone ${\rm Cone}(\cT_i)$ over it
is a rational polyhedral fan in $\bR^g\times \bR$.

Since
the lattice simplices in $\cT_i$
are of the minimal possible
volume $1/g!$, each polyhedral cone 
$\sigma \in {\rm Cone}(\cT_i)$
is a standard affine cone, i.e.~${\rm Cone}(\cT_i)$
is a regular fan. 
The lattice
$L_i$ acts on $\bR^g\times \{1\}$ by translations,
which may be extended uniquely to linear transformations
of $\bR^g\times \bR$ preserving the fan, and also
preserving the projection to the second
factor $\bR$. Thus, there is an $L_i$-equivariant
morphism of fans $${\rm Cone}(\cT_i)\to \bR_{\geq 0}=\{0\}\cup \bR_{>0}.$$

Let $X(\fF)$ denote the toric
variety associated to a fan $\fF$.
Passing from fans to toric varieties,
$$
L_i\backslash X({\rm Cone}(\cT_i))\to \bC=X(\bR_{\geq 0})
$$
defines a degeneration of abelian varieties over an 
analytic neighborhood of $0\in \bC$,
whose central fiber is the $L_i$-quotient of the toric
boundary of $X({\rm Cone}(\cT_i))$. The action of $L_i$
is free, and $X({\rm Cone}(\cT_i))$ is smooth, because
the fan is regular. Thus, the quotient is also smooth.

The toric boundary of $X({\rm Cone}(\cT_i))$ is
an infinite periodic quilt of smooth toric varieties 
and $L_i$ freely translates the quilt.
The result is a $1$-parameter Kulikov degeneration;
in particular, the central fiber
is an snc variety and the total space is smooth.

There is a torus-worth $E_i\simeq (\bC^\ast )^{D-1}$ 
of twists of the $L_i$ action by translations
preserving the polarization $\Lambda$ on the 
general fiber, and performing this twisted construction
relatively gives the Kulikov extension $\overline{\cV}$
over the boundary divisor $E_i$.

By ensuring $\cT_i$ is the bending locus
of an appropriate convex PL function,
we can assume that 
$\overline{\cV}\to \overline{\cA}_{g,\Lambda}
[3]^{\bigcup_i \bR_{\geq 0}N_i}$ is 
relatively projective.
\end{construction}

Let $h_1'\colon V_1'\to Y_1'$ be the pullback of 
$\overline{\cV}$ along $\Psi_1$. 
Over components $\partial_i\subset \partial_{\rm H} 
\cap Y_1'$ for which $N_i$ is primitive, 
$V'_1$ is Kulikov since it is the pullback
along a transverse slice of the universal Kulikov 
model $\overline{\cV}$. When $N_i$ is imprimitive
of imprimitivity $k_i$,
the local analytic equation of $V_1'$ over 
$\partial_i$ is $x_1\cdots x_m=t^{k_i}$ where $t$
is a local coordinate cutting out $\partial_i$.
Thus, taking a complete
$k_i$-subdivision of $\cT_i$ we may
toroidally resolve the pullback to a Kulikov model.
Let $h_1\colon V_1\to Y_1$ be a 
compactification over $Y_1$.

In summary, we have found a particularly nice
model $h_1\colon V_1\to Y_1$
of the Albanese fibration of $f$, which is Kulikov
over
a big open set $Y_1'\subset Y_1$ that depends
only on $(Y,B,\bfM,\Phi^o)$. But, this may not be,
even analytically-locally over $Y_1$, bimeromorphic
to the base change of $f$. We argue that this failure
is entirely due to multiple fibers:

\begin{lemma}\label{finally-rid-of-B} 
Let $\cX, \cV\to (D,0)$ be
degenerations of abelian varieties with level $3$
structure, over an 
algebraic curve,
and suppose that
\begin{enumerate}
\item $\cX_t\simeq \cV_t$ for all $t\in D^\ast $;
\item $K_\cX\sim_{\bQ} 0$, $K_\cV\sim 0$; and
\item $\cV$ is semistable, i.e., $\cV$ is 
Kulikov.
\end{enumerate}
Suppose furthermore that the log canonical threshold
of $\cX_0\subset \cX$ satisfies
${\rm lct}(\cX_0)\geq \tfrac{1}{c}$ for some integer 
$c>0$. Then, after an order
$c!$ base change $(D',0)\to (D,0)$, the 
pullbacks $\cX'$ and $\cV'$ are 
\'etale-locally 
birational.
\end{lemma}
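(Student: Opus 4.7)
The key idea is to exploit the lct bound to eliminate the multiplicity of the central fiber $\cX_0$ via base change, after which $\cX'$ and $\cV'$ become two relatively $K$-trivial semistable models of the same family of level-$3$-polarized abelian varieties over $D'^*$, and hence \'etale-locally birational by MMP comparison.

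First, I will pass to an analytic-local snc log resolution of $\cX$ and write the central fiber as $\cX_0 = \sum_i a_i D_i$. The assumption $\mathrm{lct}(\cX_0) \geq 1/c$ forces $a_i \leq c$ for every $i$, so $a_i \mid c!$. Under the cyclic cover $t = s^{c!}$ of $(D',0) \to (D,0)$, the local equation $\prod_i x_i^{a_i} = t$ becomes $\prod_i x_i^{a_i} = s^{c!}$, whose normalization has reduced central fiber precisely because each $a_i$ divides $c!$. A further log resolution, which takes place entirely over the central fiber and thus does not affect the generic fiber, produces a model $\cX'$ with reduced snc central fiber, still satisfying $K_{\cX'} \sim_\bQ 0$. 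Simultaneously, the pullback of $\cV$ has local equations $x_1 \cdots x_m = s^{c!}$, which is not snc; a standard toric subdivision analogous to the Mumford construction in the proof of Proposition~\ref{nicest-model} refines it to a Kulikov model $\cV'$ with $K_{\cV'} \sim 0$.

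Second, the hypothesis $\cX_t \simeq \cV_t$ for $t \in D^*$ as polarized abelian varieties with level $3$ structure gives a canonical isomorphism $\cX'|_{D'^*} \simeq \cV'|_{D'^*}$ of fibrations, which extends by properness to a bimeromorphism $\cX' \dashrightarrow \cV'$ over $D'$. To upgrade this to \'etale-local birationality, I will run a relative MMP on a common resolution: since both models are relatively $K$-trivial and semistable in codimension $1$, the MMP terminates with a relative minimal model crepant birational to both. By the uniqueness of the Mumford model on the toroidal compactification $\overline{\cA}_{g,\Lambda}[3]$ recalled in Proposition~\ref{nicest-model}---the fan and the monodromy operator $N$ are determined by the generic fiber together with its level-$3$ structure---the flops relating $\cX'$ and $\cV'$ occur in codimension $\geq 2$ of the total space and become trivial after passing to a sufficiently small analytic (equivalently \'etale) neighborhood of $0 \in D'$.

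The main obstacle is the final step: extracting \'etale-local birationality from the MMP comparison of two semistable $K$-trivial models. It rests on the uniqueness of Kulikov models for families of polarized abelian varieties, which matches the combinatorial data (fan subdivision and monodromy cone) on both sides via the fixed polarization type and level structure. An alternative route is to invoke N\'eron model uniqueness over the smooth locus and observe that the semistable reduction extends the N\'eron structure canonically across the codimension-$1$ boundary; either route reduces to the same underlying uniqueness principle coming from the toroidal description.
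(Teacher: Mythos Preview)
Your first step---bounding the multiplicities $a_i\leq c$ via the lct and killing them by the order-$c!$ base change---is correct and matches the paper.

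The gap is in your second step. You assert that hypothesis~(1) together with the level~$3$ structure gives a canonical isomorphism $\cX'|_{D'^*}\simeq\cV'|_{D'^*}$ of fibrations. This does not follow: the level~$3$ structure lives on the Albanese and rigidifies the automorphisms of $\cV_t$ as an abelian variety \emph{with origin}, but it does not pick out an origin on the torsor $\cX_t$. The set of variety isomorphisms $\cX_t\to\cV_t$ is itself a torsor under the translation group $\cV_t$, so there is no canonical choice and the fiberwise isomorphisms need not glue. This is precisely the torsor obstruction the lemma is meant to overcome (compare the Warning that follows the lemma in the paper, where even with level~$3$ structure the families fail to be Zariski-locally birational).

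What actually makes the argument go through---and what the paper does---is to observe that once every component of the central fiber of the normalized base change $(\cX')^\nu$ is reduced, a smooth point on such a component extends to an \'etale-local section of $(\cX')^\nu\to D'$. Likewise $\cV'\to D'$ has an \'etale section coming from the origin of the Kulikov model. With sections on both sides, level~$3$ rigidity now gives a \emph{unique} isomorphism $\cX'_t\to\cV'_t$ sending section to section, and these glue over the punctured disk to yield the \'etale-local birational map directly. You in fact produced the reduced central fiber in your first step, but you did not extract the section from it. Your subsequent appeal to MMP and uniqueness of Kulikov models is unnecessary (a birational map over $D'$ already gives \'etale-local birationality) and, as written, circular: uniqueness of relatively $K$-trivial minimal models only compares models with the same generic fiber as a \emph{scheme over} $D'$, which is exactly the identification that was missing.
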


\begin{proof} We have 
$B_D\geq (1-\tfrac{1}{m_i})[0]$ where 
$m_i$ is a multiplicity of some component
of $\cX_0$. On the other hand, 
$(1-\frac{1}{c})[0]\geq B_D$ by hypothesis. 
Therefore $c\geq m_i$.

Let $(\cX')^\nu\to (D',0)$ be the normalization of
the order $c!$ base change. After this base change,
every irreducible component of
the central fiber is reduced.
So we have an \'etale section of 
$(\cX')^\nu\to (D',0)$. 
Choose an \'etale section
of $\cV\to (D,0)$. This induces an \'etale section
of $\cV'\to (D',0)$. The level $3$ structure 
rigidifies the fibers of $\cV'$, and so there is 
a unique 
\'etale-local birational
map
of the punctured families
$\cX'\dashrightarrow \cV'$ identifying the chosen 
sections over the punctured disk $(D')^\ast \subset D'$
and preserving the level structures.
\end{proof}

\begin{warning}
Lemma \ref{finally-rid-of-B} is false
in the Zariski topology. 
For instance, consider an elliptic K3 surface 
$\cX\to \bP^1$ with $24$ Kodaira type $I_1$ 
singular fibers, but admitting no section. Let 
$\cV\to \bP^1$ be its Albanese fibration. Then,
$\cX$ and $\cV$ are 
\'etale-locally isomorphic over an \'etale chart around any
$0\in \bP^1$,
but are {\it not} birational over any Zariski open set
containing $0$---if they were, $\cX$ would admit
a section. An example with level $3$ structure
is
furnished by base changing to an
appropriate branched
cover of $\bP^1$.
\end{warning}

Take a cyclic 
cover to a further finite base 
change $Z_1\to Y_1$, whose
branch order is $c!$
over the inverse image 
of $B$ in $Y_1$ and possibly
branched elsewhere to ensure the existence of the cyclic
cover, but in a bounded manner 
depending only on $(Y,B,\bfM,\Phi^o)$.
Here $c>0$
is an integer satisfying $1-\frac{1}{c}\geq 
\max\{\textrm{coefficients of }B\}$.
Let $Z\to Y$ be the Galois closure
of $Z_1\to Y$.
Let ${\rm Br}\subset Y_1$ be the
branch divisor of $Z\to Y_1$.
Let $\overline{h}\colon \overline{V}\to Z$ be
the normalized base change of $h_1\colon V_1\to Y_1$.

If $Z\to Y_1$ is ramified to 
order $k_i$ over 
a component $\partial_i\subset 
\partial_{\rm H}$
then the degeneration may no longer be 
generically Kulikov along the corresponding 
component. Set $Z'\subset Z$ as the 
inverse image of $Y'_1\setminus 
({\rm Br}\cup \partial_{\rm H})_{\rm sing}
\subset Y_1$.
So $Z'$ is smooth. There
is a simultaneous resolution $V'\to \oV\vert_{Z'}$
to a new family of Kulikov models, by replacing
the relevant tiling $\cT_i$ with a 
$k_i$-subdivision. 
Then $V'$ is again smooth, and the resolution
$h'\colon V'\to Z'$ is Kulikov, with smooth
discriminant.

Set $G$ to be the Galois group of $Z\to Y$. 
So far, our constructions have 
not depended on $f\colon X\to Y$ itself, 
but only on $(Y,B,\bfM,\Phi^o)$.  
Let $g\colon W\to Z$ be the normalized base
change of $f\colon X\to Y$. By Proposition 
\ref{prop:bestKtrivialmodel}, there is a 
birational modification of
$g\colon W\to Z$ which is relatively
$K$-torsion over a $G$-invariant
big open set $U$.
We define $Z'' \coloneqq Z'\cap U$.
Then, localizing about the generic point
of each divisor in the discriminant locus,
it follows from Lemma \ref{finally-rid-of-B}
that there is a big open subset $Z^+\subset Z''$
for which $g^+\colon W^+\to Z^+$ is \'etale-locally 
birational to the restriction
$h^+\colon V^+\to Z^+$ of $h'$.

We have thus constructed a family satisfying (2)
of Proposition \ref{nicest-model},
minus the $G$-equivariance of 
$h^+\colon V^+\to Z^+$.
Pulling $h^+$ back along the action of 
$a\in G$ on the base produces a new 
Kulikov model along $\partial_i$
corresponding to a pullback tiling 
$a^\ast \cT_i$ 
(Construction \ref{mumford-construction}). Taking
a finite ramified base change of $Z$ as necessary, 
we can scale the set 
$\bigcup_{a\in G} a^\ast  \cT_i\subset \bR^g$ 
until all vertices are integral, and then further
refine in a $G$-invariant manner 
to replace $\cT_i$ with a $G$-invariant tiling. Hence, 
we can ensure that $h^+\colon V^+\to Z^+$ is naturally
$G$-equivariant.
\end{proof}

We define $f^+\colon X^+\to Y^+$ as the restriction
of $f$ to the smooth, big open set $Z^+/G=Y^+\subset Y$. 
In summary, we have
a diagram of the form
\begin{align} \label{diagram}\begin{aligned}
\xymatrix{V^+\ar[rd]_{h^+} &
W^+ \ar@{<-->}[l]_{\textrm{\'et-loc}} \ar[d]^{g^+} \ar[r]^{/G}
& X^+\ar[d]^{f^+} \\
& Z^+ \ar[r]^{/G} & Y^+
}
\end{aligned}\end{align}
where $V^+\to Z^+$ is a Kulikov model
of the Albanese fibration, and $W^+$
is \'etale-locally birational to $V^+$.
Let $\partial^+\subset Z^+$ denote the discriminant
locus of $h^+\colon V^+\to Z^+$, a smooth divisor.

\begin{definition}\label{def:tr-bir}
Let $X\to U$ and 
$X'\to U$ be abelian fibrations
whose Albanese varieties
$X^{\rm Alb}=(X')^{\rm Alb}$
are identified
over the generic point ${\rm Spec}\,\bC(U)$. A
{\it translation-birational} map is a birational map 
$\varphi\colon X\dashrightarrow X'$ over $U$, 
such that 
$\varphi^{\rm Alb}={\rm id}_{X^{\rm Alb}}$.
If $X=X'$, this is the same as requiring
that $\varphi$ acts by translation
on a general fiber. If $X^{\rm Alb}=X'$,
it amounts to the statement that,
for $u\in U$ over which
the fibers are smooth, the morphism
$\varphi_u\colon X_u\to {\rm Aut}^0(X_u)$
induces the identity map $$(\varphi^{\rm Alb})_u\colon 
{\rm Aut}^0(X_u)\to {\rm Aut}^0({\rm Aut}^0(X_u))={\rm Aut}^0(X_u).$$
\end{definition}

\subsection{Variations on the Tate--Shafarevich group}
\label{ts2}

\begin{definition}\label{def:p}
The 
{\it translation sheaf} of 
$g^+\colon W^+\to Z^+$ is the sheaf $\cP$ of abelian groups in the \'{e}tale topology of $Z^+$ defined as follows:
$\cP(U)$ is the group
of translation-birational 
automorphisms of $W^+_U\to U$,
for an \'etale chart $U\to Z^+$.
See also \cite[Thm.~1.3(3)]{yoonjoo}.
\end{definition}

Note that $\cP$ is a sheaf of abelian groups.
The analytification of $\cP$ is similar;
its sections are the 
translation-bimeromorphisms 
over an analytic open set 
$U\subset Z^+$. 
The translation
sheaves of $g^+\colon W^+\to Z^+$
and $h^+\colon V^+\to Z^+$ are naturally identified, 
see (\ref{diagram}) and Definition \ref{def:tr-bir}. 

\begin{remark}\label{p-is-kul} 
We can choose Kulikov models
over $\partial^+\subset Z^+$ so that any 
\'etale-local
translation-birational automorphism
of $h^+\colon V^+\to Z^+$
in a neighborhood of a point 
$p\in \partial_i\subset \partial^+$ extends to a biregular map,
by ensuring the tiling $\cT_i$ of 
$\bR^{g-g_i}$
is $\bZ^{g-g_i}$-periodic 
rather than just $L_i$-periodic. But it is not clear that 
this is possible while additionally preserving
the $G$-equivariance. This is not a major sacrifice,
as the (non-proper) smooth locus 
$$(h^+)^{\rm sm}\colon (V^+)^{\rm sm}\to Z^+$$ 
of the morphism $h^+$ is 
independent of the choice of 
Kulikov model $V^+$---any two Kulikov models
are connected by toric modifications, which are
isomorphisms in the smooth locus and only change
the combinatorics of the triangulation of the 
dual complex.
Hence $\cP$ acts 
biregularly on $(V^+)^{\rm sm}$ and in fact,
$\cP\simeq (V^+)^{\rm sm}\to Z^+$ canonically,
since $V^+$ has a distinguished origin. 

Thus, the sheaf $\cP$ 
can simply be thought of as the sheaf of sections
of $h^+\colon V^+\to Z^+$
whose image lies in $(V^+)^{\rm sm}$. 
Note that a rational section of 
the Kulikov model is necessarily 
regular, see, e.g., \cite[6.4.2]{yoonjoo}.
So $\cP$ admits a natural algebraic structure, as
a group scheme of finite
type over $Z^+$.
\end{remark}

We have that $\cP\to Z^+$ 
is a group scheme over $Z^+$ in either 
the analytic or \'etale topology.
Let $\cP^0$ denote the fiberwise connected
component of the identity.
Then, over a point $z\in \partial^+\subset Z^+$ 
in the discriminant locus (this may not 
be the inverse of the discriminant locus of 
$Y^+$, e.g., base changes of multiple fibers 
with smooth reduction disappear from the discriminant 
locus in $Z^+$)
the semiabelian fiber $\cP^0_z$
fits into an exact
sequence \begin{align}\label{extension-exact}
0\to \bG\to \cP^{0}_z\to A\to 0\end{align}
where $A$ is an abelian $g_i$-fold and 
$\bG\simeq (\bG_m)^{g-g_i}$. In fact,
the moduli of $\cP_z^0$ 
is exactly recorded by ${\rm im}\,\Psi_1(z)\in F_i$ 
in the notation of \eqref{fibrations}
and ensuing discussion, and $\cP^0_z$ is the Albanese
of the corresponding mixed Hodge structure of type 
$\{(-1,-1), (-1,0), (0,-1)\}$, see 
\cite[Constr.~10.1.3]{deligne1974}. 
Here, $g-g_i=\rk(\ker(N_i))$,
where we (re)define $N_i\in \mathfrak{sp}_\Lambda[3]$ 
as the logarithm of  local monodromy about the given
component $\partial_i\subset \partial^+$.
Furthermore, the component group $\mu_z=\mu^{(i)}$ is 
finite abelian, depending only on $\partial_i$. It 
fits into an exact sequence
\begin{align*}
0\to \cP^{0}_z\to \cP_z\to \mu^{(i)} \to 0.\end{align*}
From the Mumford construction, we have an isomorphism 
$\mu^{(i)}\simeq  \bZ^{g-g_i}/L_i$ 
and $L_i={\rm im}\,N_i$. Thus, we have 
an exact sequence of group schemes 
\begin{align} \label{component-exact}
0\to \cP^{0}\to \cP\to \mu \to 0.
\end{align}
Here $\mu\to Z^+$ is a $G$-equivariant
constructible
group scheme of finite groups, supported
on the discriminant, and isomorphic to 
$\mu\simeq \bigoplus_i j^{(i)}_*\underline{\mu}^{(i)}$
where $j^{(i)}\colon \partial_i\hookrightarrow Z^+$
is the inclusion.
See Figure \ref{fig1} for a illustration.

\begin{figure}
\includegraphics[width=6in]{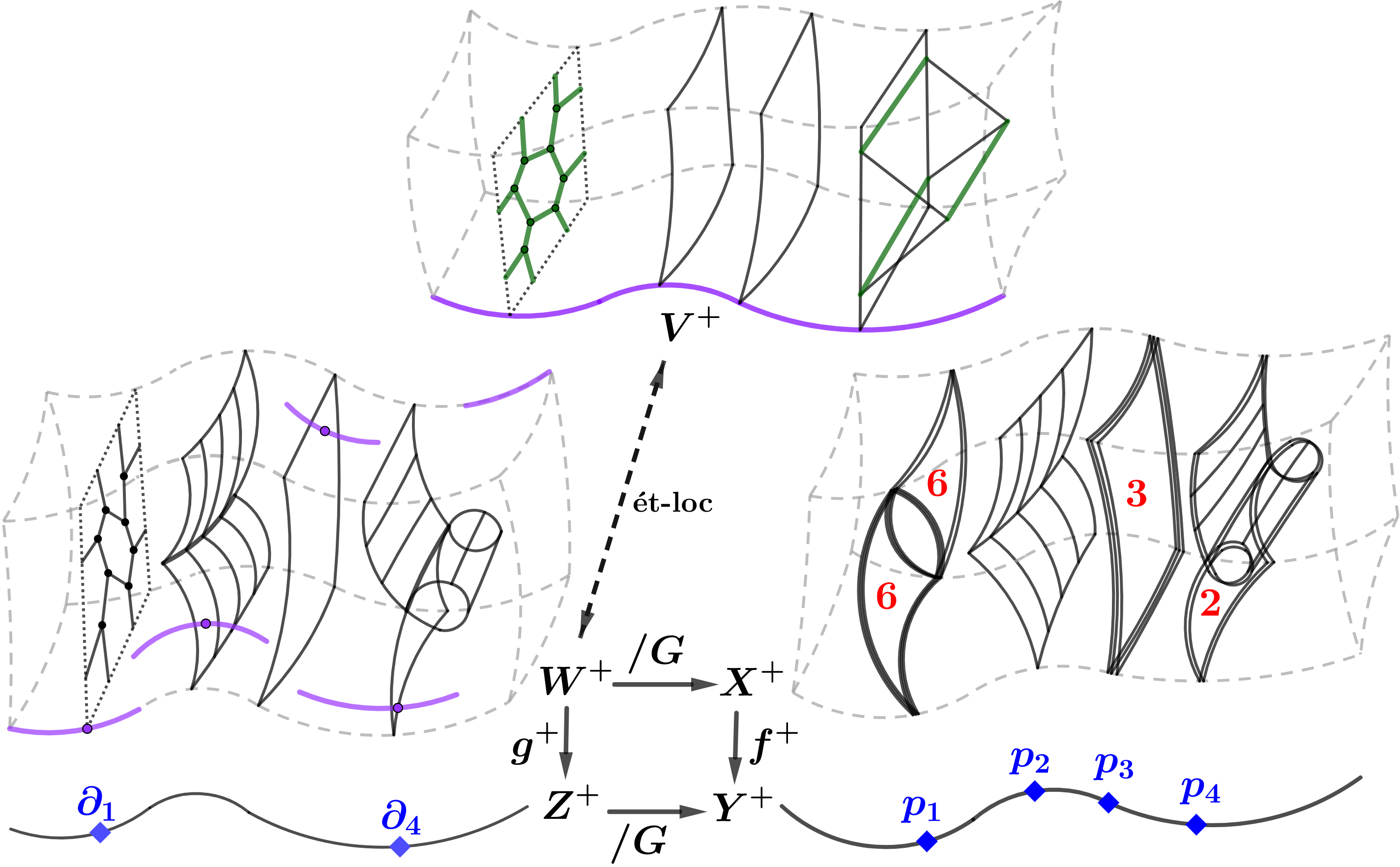}
\caption{{\bf Right:}
Abelian surface fibration
$f^+\colon X^+\to Y^+$ over a curve, 
with $4$ singular fibers
(multiplicities in red), over 
points $p_1$, $p_2$, $p_3$, $p_4$.
$B = \tfrac{5}{6}[p_1]+ \tfrac{1}{6}[p_2]
+\tfrac{2}{3}[p_3]+\tfrac{1}{2}[p_4]$. The $G$-cover
$Z^+\to Y^+$ is a cyclic $6$-to-$1$ cover, 
branched over $\{p_1,p_2,p_3,p_4\}$. {\bf Left:}
Normalized
base change $g^+\colon W^+\to Z^+$. 
Analytic local sections of $g^+\colon W^+\to Z^+$
in purple. {\bf Above:} Kulikov model 
$h^+\colon V^+\to Z^+$ of 
$(W^+)^{\rm Alb}$. Global section 
in purple. 
Group scheme $\cP$:
the complement
of the green singular locus of
$h^+$. Discriminant locus: the smooth
divisor $\partial^+ = \partial_1\cup 
\partial_4$.
Compact parts of corresponding
semiabelian groups: $g_1 = 0$ and $g_4=1$.
Component groups: 
$\mu^{(1)}\simeq \mu_2\oplus \mu_2$ and 
$\mu^{(4)}=\mu_3$. 
}\vspace{10pt}
\includegraphics[width=4in]{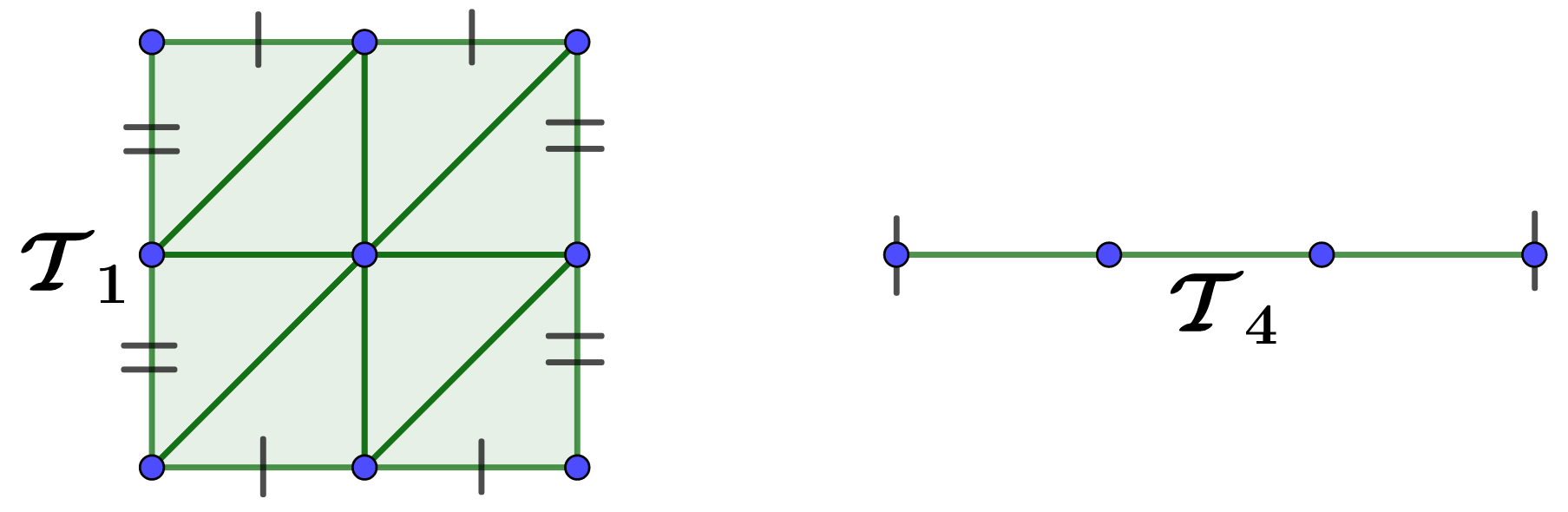}
\caption{{\bf Left} and {\bf Right:}
Mumford constructions
over $\partial_1$ and $\partial_4$. 
Monodromy matrices: $M_1 = {\rm diag}(2,2)$ 
and $M_4=(3)$. Standard simplicial tilings:
$\cT_1$ and $\cT_4$ of
$\bR^2/\bZ(2,0)\oplus \bZ(0,2)$
and $\bR/\bZ(3)$, respectively.}
\label{fig1}
\end{figure}

As $h^+\colon V^+\to Z^+$ is $G$-equivariant,
the sheaf $\cP$ and its neutral component $\cP^{0}$
admit a $G$-action
(i.e., $\cP$ is a $G$-equivariant sheaf
over the $G$-space $Z^+$). 
Note that the translation sheaves
of $h^+\colon V^+\to Z^+$ and 
$g^+\colon W^+\to Z^+$
are isomorphic as $G$-sheaves,  since
the $G$-actions only differ
by translation, under an \'etale-local
identification of $V^+$ and $W^+$.

We can think of $\cP$ and $\cP^0$ as sheaves on 
the orbifold $[Z^+/G]$. 
We define sheaves $P$, $P^0$ on
$Y^+$ as the pushforward of $\cP$, $\cP^0$ 
along the coarsening map
$${\rm co}\colon [Z^+/G]\to Z^+/G=Y^+$$ 
from the orbifold to its coarse
space. Thus, if $U\to Y^+$ is a small \'etale
or analytic open chart, $P(U) \coloneqq \cP(\wU)^G$
where $\wU$ is the inverse image of $U$
under the map $Z^+\to Y^+$.

\begin{definition} 
The {\it Tate--Shafarevich group}(s) 
are various first cohomology groups
of $\cP$ and its variants. Possible 
aspects to vary are:
\begin{enumerate}
\item\label{notation1} cohomology of 
the subsheaf $\cP^0$, $P^0$ or of
the whole group scheme $\cP$, $P$;
\item\label{notation2} $G$-equivariant or regular
cohomology of  $\cP$, $\cP^0$ or 
regular cohomology of $P$, $P^0$;
\item\label{notation3} localization at a point 
$z\in Z$ or $y\in Y$; and
\item\label{notation4} cohomology in the 
analytic or \'etale
topology.
\end{enumerate}
There are $20$ possibilities, with the most
important group being $\Sha_G \coloneqq H^1_G(Z^+,\cP)$.
\end{definition}

\begin{notation} Our notational conventions
are as follows: A superscript $^0$ or no superscript
indicate the possibilities for (\ref{notation1});
a subscript ``$G$'' or no subscript to
indicate (\ref{notation2}); a subscript
``$z$'' or ``$y$'' to indicate (\ref{notation3});
a subscript ``${\rm an}$'' or no subscript to
indicate (\ref{notation4}). Finally, 
if we take regular (but not $G$-equivariant)
cohomology upstairs on $Z^+$, we put a tilde 
over the $\Sha$ symbol.
For example: $\Sha^0_{G,\,{\rm an}} \coloneqq 
H^1_{G,\,{\rm an}}(Z^+, \cP^0)$ and 
$\widetilde{\Sha}_z \coloneqq \lim_{U\ni z} 
H^1(U\cap Z^+, \cP)$.

We only put the ``${\rm an}$'' 
subscript on the  cohomology or Sha symbols, i.e.,  
$H$ or $\Sha$, and notationally suppress 
the analytifications $Z^+_{\rm an}$, $Y^+_{\rm an}$, 
$\cP_{\rm an}^0$, $\cP_{\rm an}$, $P^0_{\rm an}$,
$P_{\rm an}$ of the spaces or objects
themselves. No symbol means \'etale cohomology.
\end{notation}

The exact sequence with the deepest geometric
significance for us will come from
the Leray spectral sequence for the coarsening
morphism ${\rm co}\colon [Z^+/G]\to Y^+$ and the sheaf
$\cP$.
It gives
\begin{align}\label{first-answer}
0\to H^1(Y^+, P)\to 
H^1_G(Z^+,\cP)\xrightarrow{m} H^0(Y^+,R^1{\rm co}_*\cP)
\xrightarrow{\rm ob} 
H^2(Y^+,P).\end{align}
Defining $\Sha \coloneqq H^1(Y^+,P)$,
$\Ash^{G,\,{\rm pre}} \coloneqq H^0(Y^+,R^1{\rm co}_*\cP)$, and 
$\ker({\rm ob}) \coloneqq \Ash^G\subset \Ash^{G,\,{\rm pre}}$,
we get the exact sequence \begin{align}\label{the-answer}
0\to \Sha\to \Sha_G\to \Ash^G\to 0\end{align}
whose meaning we will
interpret below (Prop.~\ref{alg-mult}).

\begin{proposition}\label{define-twist} 
Suppose that $f\colon X\to Y$
is an abelian fibration, inducing
$(Y,B,\bfM,\Phi^o)$. Let $Z^+$ be the big open
set of the $G$-cover $Z\to Y$
from Proposition \ref{nicest-model}.
Then, $g^+\colon W^+\to Z^+$ and 
$h^+\colon V^+\to Z^+$ differ via ``twisting''
by an element $t(f)\in 
\Sha_G \coloneqq H^1_G(Z^+, \cP)$.
Furthermore, if $f_1\colon X_1\to Y$ and 
$f_2\colon X_2\to Y$ inducing $(Y,B,\bfM,\Phi^o)$ 
are \'etale-locally translation-birational over $Y^+$,
the difference 
$t(f_1)-t(f_2)
\in \Sha_G$ 
lies in the subgroup
$$t(f_1)-t(f_2)\in \Sha=H^1(Y^+,P).$$
Finally, $f_1$ and $f_2$ are 
translation-birational over
$Y^+$ if and only if 
$t(f_1)-t(f_2)=0\in \Sha$.
\end{proposition}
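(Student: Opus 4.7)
The plan is to define $t(f)$ as the class of a $G$-equivariant \v Cech $1$-cocycle with values in $\cP$, measuring the discrepancy between the \'etale-local trivializations furnished by Proposition~\ref{nicest-model}. First I would choose a $G$-invariant \'etale cover $\{U_i \to Z^+\}$ on which the \'etale-local translation-birational equivalence $W^+ \dashleftarrow V^+$ is realized by genuine isomorphisms $\phi_i \colon V^+|_{U_i} \xrightarrow{\sim} W^+|_{U_i}$. On double overlaps, $c_{ij} \coloneqq \phi_j^{-1} \circ \phi_i$ is a translation-birational automorphism of $V^+|_{U_{ij}}$, hence an element of $\cP(U_{ij})$, and $\{c_{ij}\}$ is a \v Cech $1$-cocycle. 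To lift this to $\Sha_G$, I would also record, for each $a \in G$, the section $a_i \coloneqq \phi_i^{-1} \circ (a^*\phi_i) \in \cP(U_i)$ which measures the failure of $\phi_i$ to intertwine the two $G$-actions; the standard interpretation of $H^1_G(Z^+,\cP)$ via the double complex computing equivariant sheaf cohomology shows that the pair $\{c_{ij},\,a_i\}$ is a cocycle, and its class $t(f)$ is independent of the choices of $\{U_i\}$ and $\phi_i$ up to coboundaries.

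For the second assertion, assume $f_1$ and $f_2$ are \'etale-locally translation-birational over $Y^+$, via isomorphisms $\psi_\alpha \colon X_1|_{U_\alpha}\xrightarrow{\sim} X_2|_{U_\alpha}$ on an \'etale cover $\{U_\alpha \to Y^+\}$. Pulling everything back to the $G$-invariant cover $\{\widetilde U_\alpha \to Z^+\}$ of $Z^+$, the $\psi_\alpha$ furnish automatically $G$-equivariant \'etale-local identifications of $g^+_1$ with $g^+_2$, hence of the corresponding trivializations against the common $h^+$. Therefore the difference cocycle representing $t(f_1)-t(f_2)$ can be taken with $a_i = 0$ and with $c_{ij}$ a $G$-invariant section descending through the coarsening ${\rm co}_*\cP = P$. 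Equivalently, the image of $t(f_1)-t(f_2)$ under the connecting map of~\eqref{first-answer} to $\Ash^{G,\,{\rm pre}}$ vanishes, so by~\eqref{the-answer} the difference lies in $\Sha = H^1(Y^+,P)$.

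For the last equivalence, if $t(f_1)-t(f_2)=0 \in \Sha$, the descended cocycle is a $P$-coboundary after further refining $\{U_\alpha\}$; choosing a trivializing $0$-cochain and composing with the $\psi_\alpha$ yields isomorphisms on the cover that agree on overlaps and so glue, by \'etale descent, to a translation-birational map $X_1 \dashrightarrow X_2$ over $Y^+$. The converse is immediate, since such a global map over $Y^+$ directly trivializes the comparison cocycle. The main obstacle will be the equivariant book-keeping: one must check that the double-complex \v Cech model of $H^1_G(Z^+,\cP)$ is compatible with the Leray-theoretic presentation~\eqref{first-answer} used to define $\Sha \hookrightarrow \Sha_G$, and one must honestly treat the multiple-fiber discrepancy (Rem.~\ref{remak-G-equiv}), which is exactly what prevents the cochain $\{a_i\}$ from being a coboundary and thus what makes the quotient $\Ash^G$ of~\eqref{the-answer} the correct receptacle for the codimension-one multiplicity data.
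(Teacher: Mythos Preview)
Your proposal is correct and follows essentially the same approach as the paper: build a \v Cech $1$-cocycle from the overlap comparisons of the local trivializations, and for the second and third assertions use that \'etale-local identifications over $Y^+$ pull back to automatically $G$-equivariant identifications over $Z^+$, so the difference cocycle descends to $P$. The only substantive difference is presentational: you make the equivariant structure explicit via the double-complex cochains $(c_{ij}, a_i)$, whereas the paper is terser, refining to a $G$-permuted cover and simply asserting that the natural $G$-actions on $W^+$ and $V^+$ endow $\{t_{ij}\}$ with a $G$-equivariant structure. One minor inaccuracy: your $\phi_i$ should be translation-\emph{birational} maps (as in Proposition~\ref{nicest-model}), not isomorphisms, though this is harmless since $\cP$ is defined via translation-birational automorphisms and acts biregularly on $(V^+)^{\rm sm}$ (Rem.~\ref{p-is-kul}).
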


\begin{proof}
By Proposition 
\ref{nicest-model}, the base change
$g^+\colon W^+\to Z^+$ admits
an \'etale-local 
translation-birational identification with
its Albanese $h^+\colon V^+\to Z^+$;
see Definition~\ref{def:tr-bir}. 
Choosing an \'etale cover 
$\{U_i\}_{i\in I}$ of $Z^+$ 
for which such identifications
$\varphi_i\colon (W^+)_{U_i}
\dashrightarrow (V^+)_{U_i}$ exist, we have 
translation-birational maps
\[
\xymatrix{
(V^+)_{U_i\cap U_j}\ar@{-->}[r]^{\varphi_i^{-1}} &
(W^+)_{U_i\cap U_j}\ar@{-->}[r]^{\varphi_j} &
(V^+)_{U_i\cap U_j}}
\] whose composition 
$t_{ij}=\varphi_j\circ \varphi_i^{-1}$ gives a 
translation-birational automorphism of 
$(V^+)_{U_i\cap U_j}$ and
hence a section of $\cP(U_i\cap U_j)$. 
Since $t_{ki}\circ t_{jk}\circ t_{ij}={\rm id}$,
the elements $t(f) \coloneqq \{t_{ij}\}$ define
an element of $\widetilde{\Sha}=H^1(Z^+,\cP)$.
The set of translation-birational
identifications $\varphi_i$ is naturally
a torsor over $\cP(U_i)$ under post-composition, and 
changing these identifications corresponds
to adding a $1$-coboundary to $t_{ij}$.
Refining the cover $\fU=\{U_i\}_{i\in I}$ 
as necessary so that the $G$-action
permutes the elements of $\fU$,
we have isomorphisms $(W^+)_{U_i}\to 
(W^+)_{a\cdot U_i}$ for all $a\in G$,
since our fibration is pulled back
from $f$. We also have isomorphisms
$(V^+)_{U_i}\to (V^+)_{a\cdot U_i}$
by construction. This endows
$\{t_{ij}\}$ with a $G$-equivariant 
structure, giving a class
$t(f)\in \Sha_G= H^1_G(Z^+,\cP)$.

Furthermore, note that if $f_1\colon X_1\to Y$ and 
$f_2\colon X_2\to Y$ are \'etale-locally
translation-birational over $Y^+$, then
we have lifts to $G$-equivariant
\'etale-local translation-birational 
maps between $W_1$ and $W_2$ over $Z^+$.
So we may choose  
$\{\varphi_i\}_{i\in I}$
as above, but additionally required
to be $G$-equivariant
birational maps over a $G$-equivariant open cover
of $Z^+$ of the form $\{\wU_i\}_{i\in I}$,
where $\{U_i\}_{i\in I}$ form an open
cover of $Y^+$ and $\wU_i$ is the 
inverse image $U_i$ under $Z^+\to Y^+$.
In turn, the corresponding twisting
element $t(f_1)-t(f_2)\in \Sha_G$ 
can be defined by 
a cocycle $\{t_{ij}\}$ with 
$t_{ij}\in \cP(\wU_i\cap \wU_j)^G = P(U_i\cap U_j)$,
and so defines an element $t(f_1)-t(f_2)\in\Sha=H^1(Y^+,P)$.

Finally, if $X_1$ and $X_2$ are 
translation-birational over $Y^+$ (or $Y$), 
this lifts to a $G$-equivariant translation-birational 
map of $W_1$ and $W_2$ over $Z$, and hence 
$t(f_1)-t(f_2)=0\in \Sha$. Conversely, if 
$t(f_1)-t(f_2)=0 \in \Sha$ then the cocycle
$\{t_{ij}\}$ corresponding to this difference
is a $1$-coboundary of the sheaf $P$ on $Y^+$. 
It follows that the $G$-equivariant 
identifications
$\varphi_{i,2}\circ \varphi_{i,1}^{-1}$ 
can be chosen to globalize to a $G$-equivariant
translation-birational map between $W_1$
and $W_2$ over $Z^+$. Passing to the quotient
by $G$, we conclude that $X_1$ and $X_2$ are 
translation-birational over $Y^+$.
\end{proof}

\begin{definition} We let $t(f)\in 
\Sha_G$ denote the element 
corresponding to the 
translation-birational equivalence 
class of $f$. 
We denote the image
of $t(f)$ in the local Tate--Shafarevich 
group at $z\in Z$ by $t_z(f)\in 
\widetilde{\Sha}_z \coloneqq \lim_{U\ni z} 
H^1(U\cap Z^+, \cP)$. 
\end{definition}

Note that we have
a forgetful map $\Sha=H^1_G(Z^+,\cP)\to
H^1(Z^+,\cP)=\widetilde{\Sha}$.

\begin{lemma}\label{torsion-lemma-1} 
The groups
$\widetilde{\Sha}$, $\Sha$, $\Sha_G$ 
are torsion. 

Furthermore, the pairs
$\widetilde{\Sha}^0$ and $\widetilde{\Sha}$, 
$\Sha^0$ and $\Sha$, 
$\Sha^0_G$ and $\Sha_G$ are related
by finite degree isogenies.\footnote{For us, it suffices to 
define an {\it isogeny} of abelian groups as a homomorphism
with finite kernel and cokernel.}
Here, 
$\widetilde{\Sha}^0 \coloneqq H^1(Z^+,\cP^0)$,
$\Sha^0 \coloneqq H^1(Y^+,P^0)$, and
$\Sha_G^0 \coloneqq H^1_G(Z^+,\cP^0)$.
\end{lemma}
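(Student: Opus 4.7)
The plan is to reduce all six claims (torsion of $\widetilde{\Sha}$, $\Sha$, $\Sha_G$, and the three isogenies $\widetilde{\Sha}^0 \sim \widetilde{\Sha}$, $\Sha^0 \sim \Sha$, $\Sha_G^0 \sim \Sha_G$) to the single assertion that $\widetilde{\Sha}^0 = H^1(Z^+,\cP^0)$ is torsion, which is the semiabelian-scheme version of the classical theorem of Raynaud \cite{ray70} already cited in the text. All remaining statements will then follow by propagating through the short exact sequence $(\ref{component-exact})$ and exploiting that the quotient sheaf $\mu$ has finite cohomology.

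For torsion of $\widetilde{\Sha}^0$, recall (Rem.~\ref{p-is-kul}) that $\cP^0\to Z^+$ is the smooth locus of the Kulikov model $h^+\colon V^+\to Z^+$, hence a (non-proper) semiabelian group scheme over $Z^+$. For any $n\geq 1$, multiplication-by-$n$ is \'etale-locally surjective on $\cP^0$ with finite kernel $\cP^0[n]$. Given a $\cP^0$-torsor $T\to Z^+$, the projectivity of $f$ ensures the existence of a multisection $T'\to Z^+$ of some finite degree $n$; pushing $T$ forward along $\cdot n\colon \cP^0\to\cP^0$ then produces a section, so $n\cdot [T]=0$ in $\widetilde{\Sha}^0$.

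For the isogeny statements, the exact sequence $(\ref{component-exact})$ is $G$-equivariant, and $\mu=\bigoplus_i j^{(i)}_*\underline{\mu^{(i)}}$ is supported on the discriminant $\partial^+$ with finite stalks $\mu^{(i)}$. By Artin's comparison theorem, each of $H^k(Z^+,\mu)$, $H^k_G(Z^+,\mu)$, and $H^k(Y^+,\,P/P^0)$ is finite, being the cohomology of a constructible sheaf of finite abelian groups on a variety of finite type over $\bC$. Taking the long exact sequence in the relevant cohomology theory (\'etale on $Z^+$, $G$-equivariant \'etale on $Z^+$, and \'etale on $Y^+$, where on $Y^+$ one uses the sequence $0\to P^0\to P\to P/P^0\to 0$ with $P/P^0$ a subsheaf of ${\rm co}_*\mu$), the kernels and cokernels of the comparison maps $\widetilde{\Sha}^0\to\widetilde{\Sha}$, $\Sha_G^0\to\Sha_G$, and $\Sha^0\to\Sha$ all lie inside these finite groups, establishing each as a finite-degree isogeny. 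In particular, $\widetilde{\Sha}$ is torsion. Torsion of $\Sha_G^0$ then follows from the Hochschild--Serre spectral sequence
\[
H^p(G,H^q(Z^+,\cP^0))\Rightarrow H^{p+q}_G(Z^+,\cP^0),
\]
whose two contributing $E_2$-terms in total degree one, namely $H^1(G,\cP^0(Z^+))$ and $H^0(G,\widetilde{\Sha}^0)$, are torsion since $G$ is finite and $\widetilde{\Sha}^0$ is torsion. Torsion of $\Sha_G$ and $\Sha^0$ follows from the isogenies just established, and torsion of $\Sha$ follows from the inclusion $\Sha\hookrightarrow\Sha_G$ in $(\ref{first-answer})$.

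The main obstacle will be the Raynaud torsion argument for $\widetilde{\Sha}^0$: over the discriminant $\partial^+$ the fibers of $\cP^0$ are genuinely semiabelian (\ref{extension-exact}), so the multisection/pushforward trick must accommodate the toric-quotient directions in addition to the abelian part. This is where we crucially use that $\cP^0$ is realized as the smooth locus of the projective Kulikov model $h^+$ constructed in Proposition~\ref{nicest-model}, which inherits enough relative projectivity from $f$ to produce multisections. Once torsion of $\widetilde{\Sha}^0$ is in hand, the remaining bookkeeping through $(\ref{component-exact})$ and Hochschild--Serre is purely formal.
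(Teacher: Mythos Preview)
Your proposal is correct and follows essentially the same route as the paper: reduce the torsion claims to the non-equivariant case on $Z^+$ via the Cartan--Leray/Hochschild--Serre spectral sequence, deduce $\Sha$ torsion from the inclusion $\Sha\hookrightarrow\Sha_G$ in (\ref{first-answer}), and obtain the three isogenies from the long exact sequence of the component sequence (\ref{component-exact}) together with finiteness of the cohomology of $\mu$.

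The only difference is cosmetic. The paper reduces to torsion of $\widetilde{\Sha}=H^1(Z^+,\cP)$ and cites \cite[Prop.~6.32]{yoonjoo}, whereas you reduce to torsion of $\widetilde{\Sha}^0=H^1(Z^+,\cP^0)$ and sketch the Raynaud multisection argument directly. These are equivalent starting points via the isogeny, but note that reducing to $\cP$ is marginally cleaner: $\cP$ is the N\'eron model, so a section over the generic point extends automatically to $Z^+$, making the restriction $H^1(Z^+,\cP)\to H^1(\eta,\cP_\eta)$ injective. For $\cP^0$ this restriction can have a nonzero kernel (a section may jump to a non-identity component over $\partial^+$), though the kernel is finite, being controlled by $H^0(Z^+,\mu)$, so your argument still goes through. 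Your invocation of ``the projectivity of $f$'' for producing the multisection is slightly misdirected; what you actually use, as you note at the end, is that the torsor compactifies to something projective over $Z^+$ because $\cP^0$ sits in the relatively projective Kulikov model $V^+$.
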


\begin{proof} Given any $G$-sheaf $\cF$
on a $G$-space $S$, we have the Cartan--Leray spectral
sequence $$H^i(G,H^j(S,\cF))\implies H^{i+j}_G(S,\cF)$$
and the corresponding five-term exact sequence
$$0\to H^1(G, H^0(S,\cF))
\to H^1_G(S,\cF)
\to H^1(S,\cF)^G
\to H^2(G, H^0(S,\cF))
\to H^2_G(S,\cF).$$
Applied to $S=Z^{+}$ and $\cF =\cP$,
we get an exact
sequence
\begin{align*}
0\to H^1(G,H^0(Z^+,\cP))\to \Sha_G\to 
\widetilde{\Sha}^G.
\end{align*}
All higher cohomology
of a finite group is torsion, so it suffices
to prove that $\widetilde{\Sha}$ 
is torsion, which will
imply its $G$-invariants are too. 
The proof is the same as \cite[Prop.~6.32]{yoonjoo},
see references therein. By (\ref{the-answer}), in 
turn, $\Sha$ is torsion.

The second statements follow from 
taking the long exact sequence
(in either regular or $G$-equivariant cohomology)
of the component exact sequence (\ref{component-exact})
and observing that the cohomology of 
a constructible sheaf of 
finite abelian groups is finite abelian.
\end{proof}

Let $\mathfrak{p}$ denote the Lie algebra
of $\cP$.
It is a $G$-equivariant 
sheaf of Lie algebras over $Z^+$, 
and we have an exact sequence of sheaves of abelian groups
in the analytic category:
\begin{align}\label{ses}0\to \Gamma
\to \mathfrak{p}
\xrightarrow{\rm exp} 
\cP^0\to 1\end{align}
where $\Gamma$ 
is a constructible sheaf of finitely
generated, torsion-free $\bZ$-modules, 
whose rank away
from $\partial^+$ is $2g$ and whose
rank drops to $g+g_i$ over an irreducible 
component $\partial_i\subset \partial^+$. 

We continue with an analysis of the local Tate--Shafarevich
group for points $z\in Z'\setminus Z^+$. Recall from
the proof of Proposition \ref{nicest-model} that
$Z'\subset Z$ is a big open subset 
of the regular locus, 
depending only on $(Y,B,\bfM,\Phi^o)$,
for which (a model of) the Albanese fibration 
$h'\colon V'\to Z'$ is Kulikov and $Z^+\subset Z'$
is a possibly smaller big open set.

\begin{proposition} \label{local-no-twist}
For all $z\in Z'\setminus Z^+$, we have 
$t_z(f)=0$.
\end{proposition}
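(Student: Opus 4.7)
My plan is to first establish that $Z' \setminus Z^+$ has codimension $\geq 2$ in $Z'$, and then use Hartogs-type extension combined with the torsion property of $\widetilde{\Sha}_z$ (Lemma~\ref{torsion-lemma-1}, applied locally at $z$) to trivialize the class $t_z(f)$.

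For the first step, I would trace through the construction of $Z^+$ in Proposition~\ref{nicest-model}. Recall that $Z^+ \subset Z'' = Z' \cap U$, where $U$ is the $G$-invariant big open set supplied by Proposition~\ref{prop:bestKtrivialmodel}, so $Z' \setminus Z''$ is already of codimension $\geq 2$ in $Z'$. The passage from $Z''$ to $Z^+$ is obtained by invoking Lemma~\ref{finally-rid-of-B} at the generic point of every codimension 1 component of $Z''$; since the étale-local birational identification between $g$ and $h'$ produced by that lemma extends to a Zariski neighborhood of the generic point to which it applies, one can arrange that $Z'' \setminus Z^+$ is itself of codimension $\geq 2$. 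Combining, $Z' \setminus Z^+$ has codimension $\geq 2$ in the regular variety $Z'$.

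Second, I would observe that the group scheme $\cP$ extends naturally from $Z^+$ to $Z'$. By Remark~\ref{p-is-kul}, $\cP$ coincides with the relative smooth locus $(V^+)^{\rm sm} \to Z^+$ of the Kulikov model $h^+$, and since $h'\colon V' \to Z'$ is Kulikov over all of $Z'$, the same construction produces a smooth group scheme $\cP_{Z'} \to Z'$ restricting to $\cP$ over $Z^+$. Because the fibers of $\cP_{Z'}$ are commutative and $\cP_{Z'}$ is smooth over the regular base $Z'$, sections of $\cP_{Z'}$ on a big open subset of a small neighborhood $U$ of $z$ extend uniquely to sections over $U$ by Hartogs; the same applies to the Lie algebra sheaf $\mathfrak{p}$ via its coherence.

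The main obstacle is the last step: showing that the twist class $t_z(f)$ is not merely extendable, but actually trivial. The plan is to reduce to the case of an $n$-torsion class, using that $t_z(f) \in \widetilde{\Sha}_z$ has finite order (by applying Lemma~\ref{torsion-lemma-1} in a Zariski neighborhood of $z$, together with a Raynaud-type argument as in \cite{ray70}). Once reduced to the torsion case, the class is classified by the étale cohomology of the finite étale $n$-torsion subgroup scheme $\cP_{Z'}[n]$, and working in the strict Henselization of $Z'$ at $z$, combined with the codimension $\geq 2$ control of $Z' \setminus Z^+$ in the regular scheme $Z'$, gives a purity-type identification of local étale cohomology on $Z^+$ with that on $Z'$ near $z$. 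Since the strict local ring of $Z'$ at $z$ carries no non-trivial étale torsors under a finite étale group scheme, $t_z(f)$ must vanish. The technical heart of the argument is justifying the purity identification in the presence of the non-affine, semiabelian fibers of $\cP_{Z'}$ over the discriminant passing through $z$, which is where I expect to rely most heavily on the explicit Mumford-construction description of $V'$.
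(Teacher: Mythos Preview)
Your plan is the right shape in its first two steps, and indeed matches the paper's setup: one reduces to showing that near any $z\in Z'\setminus Z^+$, the class $t_z(f)$ vanishes on a punctured neighborhood $U^+$ whose complement in a regular neighborhood $U$ has codimension $\geq 2$, and one has the extended group scheme $\cP_{Z'}$ coming from the Kulikov model $h'\colon V'\to Z'$.

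The gap is in your final step. The purity statement you invoke---that an \'etale torsor under a finite \'etale group scheme on $U^+$ extends to the strict henselization at $z$, where it is then trivial---is correct as stated, but $\cP_{Z'}[n]$ is \emph{not} a finite \'etale group scheme when $z\in\partial^+$. Over $Z^o$ the fibers of $\cP^0[n]$ have order $n^{2g}$, but over a point of $\partial_i$ they have order $n^{g+g_i}$ (the $n$-torsion of a semiabelian variety with toric rank $g-g_i$). So $\cP^0[n]\to Z'$ is only quasi-finite \'etale, and Zariski--Nagata purity does not apply. Concretely, as an \'etale sheaf one has $\cP^0[n]\simeq \Gamma/n\Gamma$ via the exponential sequence, and $\Gamma$ is genuinely constructible, not locally constant, along $\partial^+$. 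There is no black-box purity theorem that kills $H^1_{\text{\'et}}(U^+,\Gamma/n\Gamma)$ in this situation; one must actually compute it, and that computation is the content of the paper's proof.

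The paper's route is different: it works analytically throughout. Using the exponential sequence $0\to\Gamma\to\mathfrak p\to\cP^0\to 0$, it shows directly that $H^1(U^+,\Gamma)=H^2(U^+,\Gamma)=0$ by an explicit analysis of the constructible sheaf $\Gamma$ on the punctured polydisk (the delicate point being that when $z\in\partial^+$ one must use the short exact sequence $0\to\underline{\bZ}^{g+g_i}\to\Gamma\vert_{U^+}\to i_!\underline{\bZ}^{g-g_i}\to 0$ and identify a coboundary map with the monodromy logarithm $N_i$). This gives $H^1_{\rm an}(U^+,\cP^0)\simeq H^1_{\rm an}(U^+,\mathfrak p)$, a $\bC$-vector space. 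A further \v{C}ech argument handles the component group $\mu$ to conclude the same for $\cP$. Since $t_z(f)$ is torsion and lands in a torsion-free group, its analytification vanishes; Lemma~\ref{mero-ext} then yields a meromorphic section over all of $U$, and Proposition~\ref{mero-to-etale} upgrades this to an \'etale-local rational section, giving $t_z(f)=0$. If you want to make your \'etale approach work, you will need to replicate precisely this cohomological computation of $\Gamma$ (and the handling of $\mu$) in the \'etale topology---which is feasible but is the proof, not a shortcut around it.
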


\begin{proof}
We have that $t_z(f)=0$ if and only if $g^+\colon W^+\to Z^+$
is \'etale-locally (in a neighborhood of $z\in Z$) 
birational to $h^+\colon V^+\to Z^+$, and 
this holds if and only if 
$g^+\colon W^+\to Z^+$ admits a rational section
over an \'etale neighborhood of $z\in Z$.

Consider some \'etale or analytic chart
$U\to Z$ about 
$z\in Z'\setminus Z^+$ and let $U^+=U \cap Z^+$. 
We will do an induction on the codimension of 
$Z'\setminus Z^+$. Once it is shown
that $t_z(f)=0$ for $z\in (Z'\setminus Z^+)^{\rm reg}$
we may replace $Z^+$ with $Z^+\cup (Z'\setminus Z^+)^{\rm reg}$
because $g\colon W\to Z$ is then \'etale-locally
birational to its Albanese
over this enlargement of $Z^+$, and therefore defines an element
of the Tate--Shafarevich group $\Sha_G$ over this enlargement,
as in Proposition \ref{define-twist}.

Thus consider $U=D^k\ni z$ some polydisk neighborhood of 
$z\in Z'$ for which $Z'\setminus Z^+$ 
is given by $\{0\}^\ell\times D^{k-\ell}$
for some $\ell\geq 2$. Then $U^+ = 
(D^\ell\setminus \{0\}) \times D^{k-\ell}$. We have
that $H^1_{\rm an}(U^+,\cP)$ computes the obstruction
to the existence of a meromorphic section of $W_{U^+}\to U^+$.
Taking the long exact sequence of (the restriction to $U^+$ of)
the short exact sequence (\ref{ses}), we have
$$
H^1(U^+,\Gamma)
\to H^1_{\rm an}(U^+,\mathfrak{p})
\to H^1_{\rm an}(U^+,\cP^0) \to
H^2(U^+,\Gamma).
$$

We now consider two cases, $z\notin \partial^+$
and $z\in \partial^+$. First suppose
$z\notin \partial^+$. Then $\Gamma\vert_{U^+}$ 
is a local system of rank $2g$ and $\cP^0=\cP$. 
But $\pi_1(U^+)=1$, since $U^+$ is 
homotopy-equivalent to a 
sphere $\bS^{2\ell-1}$, $\ell\geq 2$. So 
$\Gamma\vert_{U^+}\simeq \underline{\bZ}^{2g}$ is a trivial
local system, and 
$H^1(U^+,\Gamma)\simeq H^2(U^+,\Gamma)=0$.
Hence 
$H^1_{\rm an}(U^+,\mathcal P)\simeq 
H^1_{\rm an}(U^+,\mathfrak{p})$
is a $\bC$-vector space. 

Note that $t(f)\in \widetilde{\Sha}$
is torsion, by Lemma \ref{torsion-lemma-1}.
So $t_z(f)\in \widetilde{\Sha}_z$ 
is torsion and in turn the image 
$t_{z,\,{\rm an}}(f)\in \widetilde{\Sha}_{z,\,{\rm an}}
\coloneqq \lim_{U\ni z} H^1_{\rm an}(U^+,\cP)$ 
under the natural
analytification map $\widetilde{\Sha}_z\to 
\widetilde{\Sha}_{z,\,{\rm an}}$ is torsion.
But $\widetilde{\Sha}_{z,\,{\rm an}}
\simeq H^1_{\rm an}(U^+,\cP)$, 
a $\bC$-vector space, is torsion-free. We conclude 
that $t_{z,\,{\rm an}}(f)=0$.
Thus, $W_{U^+}\to U^+$ admits a meromorphic section
$s^+\colon U^+\dashrightarrow W_{U^+}$
for a sufficiently small analytic open set $U\ni z$. 
We require:

\begin{lemma}\label{mero-ext} 
Let $V\to U$ be a projective morphism
of analytic spaces and 
let $U^+\subset U$ be a big open set. 
A meromorphic section $s^+$ on the open
set $U^+$ is also a meromorphic section on $U$. \end{lemma}

\begin{proof}
By the projectivity hypothesis, it
suffices to prove the proposition for the projection 
$\bP^n\times U\to U$. Let $z_i$ be projective coordinates
on $\bP^n$. Then $w_{ij} \coloneqq (s^+)^\ast (z_i/z_j)$ 
is a meromorphic function on $U^+$. By the Levi extension
theorem (the meromorphic analogue of Hartogs' lemma), 
$w_{ij}$ is meromorphic on $U$. The lemma follows.
\end{proof}

So $s^+$ defines a meromorphic section 
$s\colon U\dashrightarrow W_U$. 
By Proposition \ref{mero-to-etale},
it follows that there is an \'etale 
chart $U'\to Z$ about $z\in Z$
for which $W_{U'}\to U'$ admits a rational section,
and thus $t_z(f)=0$ as desired.

Now we consider the second case $z\in \partial_i\subset 
\partial^+$. First,
we claim that the image of the localization
and analytification
map $\widetilde{\Sha}^0\to 
\widetilde{\Sha}^0_{z,\,{\rm an}} \coloneqq H^1_{\rm an}(Z^+,\cP^0)$
is zero. The former group
is torsion, and so it suffices
to prove that $\widetilde{\Sha}^0_{z,\,{\rm an}}$ is torsion-free.
In this case, $\Gamma\vert_{U^+}$ is a constructible
sheaf which has rank $2g$ over $U^+\setminus \partial^+$
and whose rank drops to $g+g_i$ over $U^+\cap \partial^+$.
We may assume $\partial^+$ is locally given by
$\{0\}\times D^{k-1}$ in the polydisk chart $D^k\ni z$.
Let $i\colon U^+\setminus \partial^+\hookrightarrow U^+$
denote the inclusion of the complement of the discriminant
and let $j\colon U^+\cap \partial^+\hookrightarrow U^+$
be the inclusion of the discriminant.

We observe that on $U^+$ there is
an exact sequence of constructible
sheaves \begin{align}\label{gamma-exact}
0\to \underline{\bZ}^{g+g_i}\to 
\Gamma\vert_{U^+}\to i_!\underline{\bZ}^{g-g_i}\to 0
\end{align}
arising 
from the fact that $\Gamma\vert_{U^+}\simeq 
i_*(\Gamma\vert_{U^+\setminus \partial^+})$
is the pushforward 
of a $\bZ^{2g}$-local system whose monodromy
about $\partial^+$ is $I+N_i$ (see also Prop.~\ref{isog}).
Since $N_i$ is nilpotent of order two, 
the subsheaf 
$\underline{\bZ}^{g+g_i} = \underline{H^0(U^+,\Gamma)}$
is the constant sheaf on the kernel of $N_i$ and
the quotient $i_!\underline{\bZ}^{g-g_i}$
is the extension by zero of the cokernel of $N_i$. 
As such, we may identify 
$\bZ^{g+g_i}\simeq 
\mathscr{W}_{-1}^{\rm lim}$ with
the weight $-1$ part of the limit MHS over
$\partial^+$, and similarly $\bZ^{g-g_i}\simeq 
\mathscr{W}_{0}^{\rm lim}/
\mathscr{W}_{-1}^{\rm lim}$.

Taking the long exact sequence, and noting that 
$H^1(U^+, \underline{\bZ}^{g+g_i})=
H^2(U^+, \underline{\bZ}^{g+g_i})=0$ (again, 
$U^+$ is homotopy equivalent to a real
$(2\ell-1)$-sphere because $Z'$
is smooth at $z$), 
we conclude that there are isomorphisms
$H^1(U^+, \Gamma)\simeq 
H^1(U^+, i_!\underline{\bZ}^{g_i})$ and
$$
H^2(U^+, \Gamma)\simeq \ker(
H^2(U^+, i_!\underline{\bZ}^{g_i})\to 
H^3(U^+, \underline{\bZ}^{g+g_i})).$$
Now, we consider the exact sequence of sheaves
$$0\to i_!\underline{\bZ}^{g-g_i}\to 
\underline{\bZ}^{g-g_i} \to j_*\underline{\bZ}^{g-g_i}\to 0.$$
Taking the long exact sequence in cohomology,
and noting $H^1(U^+, \underline{\bZ}^{g-g_i})=
H^2(U^+, \underline{\bZ}^{g-g_i})=0$ and
$H^0(U^+,\underline{\bZ}^{g-g_i})\to 
H^0(U^+, j_*\underline{\bZ}^{g-g_i})$ is an isomorphism,
we conclude that we have vanishing of 
$H^1(U^+, i_!\underline{\bZ}^{g-g_i}) = 0$ and that
$$
H^2(U^+, i_!\underline{\bZ}^{g-g_i}) \simeq 
H^1(\partial^+\cap U^+,
\underline{\bZ})^{\oplus (g-g_i)}=\twopartdef{\bZ^{g-g_i}}
{\codim_z Z'\setminus Z^+=2,}{0}
{\codim_z Z'\setminus Z^+\geq 3.}$$ 

When $\codim_z Z'\setminus Z^+=2$, 
a Čech complex computation verifies that
the third connecting homomorphism of the long exact sequence
associated to (\ref{gamma-exact}) is identified with the nilpotent operator $N_i$ is as follows: 
\[\xymatrix
{H^2(U^+, i_!\underline{\bZ}^{g-g_i}) \ar[r]\ar[d]^{\simeq} & 
H^3(U^+, \underline{\bZ}^{g+g_i})\ar[d]^{\simeq} \\
\mathscr{W}_0^{\rm lim}
/\mathscr{W}_{-1}^{\rm lim}\simeq \bZ^{g-g_i} \,\,
\ar[r]^{\,\,\,\,\,\,\,\,\,N_i} & 
\,\,\mathscr{W}_{-1}^{\rm lim}\simeq \bZ^{g+g_i}.
}\] 
And when $\codim_z Z'\setminus Z^+\geq 3$, it is 
isomorphic to the map $0\to 0$.
In fact, over $\bQ$, the map $N_i$ defines an isomorphism 
$(\mathscr{W}_0^{\rm lim}/\mathscr{W}_{-1}^{\rm lim})_\bQ
\to (\mathscr{W}_{-2}^{\rm lim})_\bQ
\subset (\mathscr{W}_{-1}^{\rm lim})_\bQ$ 
and so in either case, $\ker N_i = H^2(U^+,\Gamma)=0$.

Having proven $H^1(U^+,\Gamma)= H^2(U^+,\Gamma)= 0$, 
we conclude that the long exact sequence of (\ref{ses})
induces an isomorphism
$H^1_{\rm an}(U^+,\mathfrak{p})\simeq
H^1_{\rm an}(U^+,\cP^0)$.
So the map
$\widetilde{\Sha}^0\to 
\widetilde{\Sha}^0_{z,\,{\rm an}}$ is zero,
again since $H^1_{\rm an}(U^+,\mathfrak{p})$ is
a $\bC$-vector space, which is torsion-free.

Let us now consider the long exact sequence
associated to the component sequence 
(\ref{component-exact}).
We have again a dichotomy depending on
$\codim_z Z'\setminus Z^+=2$ or $\geq 3$:
$$
H^1_{\rm an}(U^+,\cP^0)\to H^1_{\rm an}(U^+,\cP)\to 
H^1(U^+\cap \partial^+, \underline{\mu}^{(i)})
\simeq (\mu^{(i)}\textrm{ or }0)\to H^2_{\rm an}(U^+,\cP^0),
$$
respectively. We also have a connecting homomorphism
$H^2_{\rm an}(U^+,\cP^0)\to H^3(U^+,\Gamma)$ and 
we consider the composition $H^1(U^+\cap \partial^+, 
\underline{\mu}^{(i)})\to H^3(U^+,\Gamma)$.
We have
$$
\coker(
H^2(U^+, i_!\underline{\bZ}^{g-g_i})\to 
H^3(U^+, \underline{\bZ}^{g+g_i}))=
\twopartdef{\coker(\bZ^{g-g_i}\xrightarrow{N_i}
\bZ^{g+g_i})}{\codim_z Z'\setminus Z^+=2,}
{0}{\codim_z Z'\setminus Z^+\geq 3,}
$$ as again, this coboundary
map is identified with the logarithic monodromy $N_i$.

Let $K$ denote this cokernel
(so $K\simeq \mu^{(i)}
\oplus \bZ^{2g_i}$
or $0$). We have an exact sequence
$$0\to K\to H^3(U^+,\Gamma)\to 
H^3(i_!\bZ^{g-g_i})\simeq (\bZ^{g-g_i}\textrm{ or }0)\to 0.$$ 
We conclude that $H^3(U^+,\Gamma)_{\rm tors}\simeq 
\mu^{(i)}\textrm{ or }0$. 
Thus, we have a diagram
\[\xymatrix{
H^1(U^+\cap \partial^+, 
\underline{\mu}^{(i)})\ar[r]\ar[d]^{\simeq} & 
H^3(U^+,\Gamma)_{\rm tors} \ar[d]^{\simeq} \\
\mu^{(i)}\textrm{ or }0 \ar[r] & \mu^{(i)}\textrm{ or }0
}\]
with the two cases
depending on $\codim_z Z'\setminus Z^+$. 
Again a Čech complex computation 
verifies that the bottom
horizontal map is an isomorphism when
$\codim_z Z'\setminus Z^+=2$. 
Indeed, a component of $\cP^0$ over $\partial_i$ 
corresponds to a vertex of the tiling 
$\cT_i$ (cf. \Cref{mumford-construction}) defining 
the Kulikov model, which corresponds to a 
cocharacter of the toric variety uniformizing 
the Kulikov model in the Mumford construction. 
In particular, such a cocharacter defines a section 
of the uniformization by construction, 
and it descends to a section of $\cP$. 
The coboundary map $\mu^{(i)}
\simeq
H^1(U^+\cap \partial^+,\underline{\mu}^{(i)})
\to H^2(U^+,\cP^0)$ assigns to a Čech $1$-cocycle with values
in $\mu^{(i)}$ the corresponding cocharacter. 
Then, the coboundary
$H^2(U^+,\cP^0)\to H^3(U^+,\Gamma)$ results from 
taking logarithms of these cocharacters in $\mathfrak{p}$. 
We get a $3$-cocycle valued in the cocharacter lattice 
of the Mumford construction, which is easily seen to 
represent the relevant element of $\mu^{(i)}\subset K$.

Therefore, the coboundary
map $H^1(U^+\cap \partial^+, \underline{\mu}^{(i)})
\to H^2_{\rm an}(U^+,\cP^0)$ in the component sequence
is necessarily injective. 
So $H^1_{\rm an}(U^+,\cP)\simeq H^1_{\rm an}(U^+,\cP^0)\simeq 
H^1_{\rm an}(U^+,\mathfrak{p})$.

Thus $t_{z,\,{\rm an}}(f)$ equals zero, and 
we may apply Lemma \ref{mero-ext} to conclude
that $V^+$ and $W^+$ are analytically-locally 
bimeromorphic in a neighborhood of $z$. 
In turn, $t_z(f)=0$ by algebraically approximating
a meromorphic section 
(Prop.~\ref{mero-to-etale}).
\end{proof}

As claimed in Remark \ref{remark-indep},
we have proven that the big open set $Z^+\subset Z$ over
which $W\to Z$ is \'etale-locally translation-birational to its Albanese
can be chosen to depend {\it only} on $(Y,B,\bfM,\Phi^o)$. 
More precisely, $Z^+=Z'$ agrees with the 
locus where the Albanese fibration $h\colon V\to Z$
admits a Kulikov model $h'\colon V'\to Z'$ 
and so we will henceforth
not distinguish between $Z^+$ and $Z'$.

\begin{corollary}\label{torsion-lemma-2} 
The groups $\Sha^0_G$ and $\Sha_G$ are related by
an isogeny of degree bounded above, by an integer 
depending only on $(Y,B,\bfM,\Phi^o)$.
The same holds for the groups $\Sha^0$ and 
 $\Sha$.
\end{corollary}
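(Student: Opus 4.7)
The plan is to run the long exact sequence of the component exact sequence \eqref{component-exact} and observe that the connecting terms are finite groups whose orders depend only on $(Y,B,\bfM,\Phi^o)$.

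First I would consider $\Sha_G^0$ versus $\Sha_G$. From
\[
0 \to \cP^0 \to \cP \to \mu \to 0
\]
the long exact sequence in $G$-equivariant \'etale cohomology on $Z^+$ gives
\[
H^0_G(Z^+,\mu) \to \Sha^0_G \to \Sha_G \to H^1_G(Z^+,\mu) \to H^2_G(Z^+,\cP^0).
\]
Thus the kernel and cokernel of $\Sha^0_G \to \Sha_G$ are subquotients of $H^0_G(Z^+,\mu)$ and $H^1_G(Z^+,\mu)$, respectively. Since $\mu \simeq \bigoplus_i j^{(i)}_*\,\underline{\mu}^{(i)}$ is a constructible sheaf of finite abelian groups supported on the finitely many components $\partial_i$ of the discriminant, both of these cohomology groups are finite. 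Their orders are controlled by the finite invariants $|\mu^{(i)}|$ (equal to $|\det(N_i|_{\mathrm{im}\,N_i})|$, where $N_i$ is the logarithmic monodromy), the topology of $\partial_i \cap Z^+$, and the action of $G$; all of these data are determined by $(Y,B,\bfM,\Phi^o)$ alone, as is $Z^+$ by Proposition~\ref{local-no-twist}. This yields the desired uniform bound.

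For $\Sha^0$ versus $\Sha$, I would push the sequence \eqref{component-exact} forward along the coarsening map ${\rm co}\colon [Z^+/G] \to Y^+$. Since $\mu$ is supported on the discriminant where the $G$-action is as specified, the sequence $0 \to P^0 \to P \to {\rm co}_*\mu$ is left exact, and the obstruction to surjectivity on the right lies in $R^1{\rm co}_*\,\cP^0$. Composing the resulting long exact sequence on $Y^+$ with the Leray/Cartan--Leray comparison of $H^*(Y^+,{\rm co}_*\mu)$ to $G$-invariants/$G$-cohomology of $H^*(Z^+,\mu)$, the kernel and cokernel of $\Sha^0 \to \Sha$ again fit into finite groups whose orders are bounded by functions of $|\mu^{(i)}|$, the components $\partial_i$, and $G$.

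The argument is essentially bookkeeping once Proposition~\ref{local-no-twist} is in hand; the only mild obstacle is to confirm that the cokernel on the $Y^+$ side, which a priori lies in a group built from $H^1(Y^+,{\rm co}_*\mu)$ together with a piece of $H^2(Y^+,P^0)$ detected by the obstruction map, actually remains finite and bounded in the claimed way. This follows because the image of the coboundary is torsion (by Lemma~\ref{torsion-lemma-1} applied to $\Sha^0$) and lives inside a subquotient of the finite group $H^1(Y^+,{\rm co}_*\mu)$, so no further control beyond what $(Y,B,\bfM,\Phi^o)$ provides is required.
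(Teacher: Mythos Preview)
Your argument for $\Sha_G^0$ versus $\Sha_G$ is correct and matches the paper's proof exactly: take the $G$-equivariant long exact sequence of the component sequence \eqref{component-exact}, observe that the flanking terms $H^0_G(Z^+,\mu)$ and $H^1_G(Z^+,\mu)$ are finite with size controlled by $(Y,B,\bfM,\Phi^o)$, and invoke Proposition~\ref{local-no-twist} to ensure $Z^+$ itself depends only on this data.

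For $\Sha^0$ versus $\Sha$, your approach is correct in spirit but unnecessarily roundabout, and your third paragraph betrays some confusion. You do not need to track the obstruction in $R^1{\rm co}_*\cP^0$ or worry about any piece of $H^2(Y^+,P^0)$. The clean argument (which is what the paper intends by ``the same holds in the long exact sequence for regular cohomology of $P$'') is this: set $\nu \coloneqq P/P^0$. Since taking $G$-invariants is left exact, for every \'etale $U\to Y^+$ we have $P(U)/P^0(U)\hookrightarrow ({\rm co}_*\mu)(U)$, hence $\nu$ is a subsheaf of ${\rm co}_*\mu$. As ${\rm co}_*\mu$ is a constructible sheaf of finite abelian groups, so is $\nu$. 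Now the honest short exact sequence $0\to P^0\to P\to \nu\to 0$ on $Y^+$ gives
\[
H^0(Y^+,\nu)\to \Sha^0\to \Sha\to H^1(Y^+,\nu),
\]
and the outer terms are finite of size bounded by $(Y,B,\bfM,\Phi^o)$. There is no contribution from $H^2(Y^+,P^0)$ to the cokernel; your concern there is misplaced.
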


\begin{proof} It follows from Proposition \ref{local-no-twist}
that the big open
set $Z^+\subset Z$
can be bounded solely in terms of $(Y,B,\bfM,\Phi^o)$,
independently of $f$.
Therefore, in the $G$-equivariant long
exact sequence associated to the 
component exact sequence
$$
\textstyle 
H^0_G(Z^+, 
\mu) \to 
H^1_G(Z^+,\cP^0)\to H^1_G(Z^+,\cP)\to 
H^1_G(Z^+,\mu),$$ the first
and last groups are finite of size
bounded only in terms of $(Y,B,\bfM,\Phi^o)$. 
The same holds in the 
long exact sequence for regular cohomology
of $P$.
\end{proof}

\subsection{The multiplicity group}

In this section, we measure the failure of $V^+$ and $W^+$
to be {\it $G$-equivariantly} \'etale-locally 
translation-birational over $Z^+$. Equivalently,
we measure the failure of $f^+\colon X^+\to Y^+$ to be 
translation-birational to its Albanese, 
thus quantifying the multiple fibers of $f$ 
in codimension $1$, via a finite group which 
we call the {\it multiplicity group}.

Given $y\in Y^+$, let $U\ni y$ be an 
\'etale neighborhood,
and $\wU\subset Z^+$ 
be the inverse image of $U$ in $Z^+$, which is a 
$G$-invariant open set. We consider the group
$$
\Sha_{G,y} \coloneqq \lim_{U\ni y} \,H^1_G(\wU, \cP).
$$ 

\begin{proposition}\label{mero-to-etale}
Let $W\xrightarrow{g} Z\xrightarrow{\pi} Y$
be a sequence of projective morphisms
and let $y\in Y$ be a point. Suppose that
that there exists an analytic
open set $U\ni y$ of $Y$ such that
$g$ admits a meromorphic
section over $\wU \coloneqq \pi^{-1}(U)$.
Then, there exists an \'etale open set
$U'\ni y$ for which $g$ admits a 
rational section over $\wU' \coloneqq \pi^{-1}(U')$.
\end{proposition}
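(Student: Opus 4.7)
The plan is to algebraize the meromorphic section using the relative Hilbert scheme together with Artin's approximation theorem, and then identify the resulting algebraic family as the graph of a rational section via a semi-continuity argument.

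I would first form $\Gamma_s \subset (\pi g)^{-1}(U) = W_U$, the closure of the graph of $s$: an irreducible closed analytic subvariety for which $g|_{\Gamma_s}\colon \Gamma_s \to \wU$ is bimeromorphic. Each fiber $W_{y'} \coloneqq (\pi g)^{-1}(y')$ is projective over $\bC$, so GAGA identifies $\Gamma_{s,y'} \coloneqq \Gamma_s \cap W_{y'}$ with a closed algebraic subscheme of $W_{y'}$. After possibly shrinking $U$ to ensure that the Hilbert polynomial of $\Gamma_{s,y'}$ is constant in $y'$, the assignment $y' \mapsto [\Gamma_{s,y'}]$ defines an analytic morphism $\phi\colon U \to H^{\mathrm{an}}$ over $Y$, where $H$ denotes the component of the relative Hilbert scheme $\Hilb(W/Y)$ containing $[\Gamma_{s,y}]$, which is of finite type over $Y$.

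Next, I would pass to the formal completion $\hat\phi\colon \mathrm{Spf}\,\widehat\cO_{Y,y} \to H$ of $\phi$ at $y$ and apply Artin's approximation theorem to the morphism $H \to Y$ of finite type $\bC$-schemes. This yields an étale neighborhood $(U', y') \to (Y, y)$ and an algebraic morphism $\phi'\colon U' \to H$ over $Y$ with $\phi'(y') = \phi(y) = [\Gamma_{s,y}]$ as closed points of $H$. The morphism $\phi'$ classifies a closed subscheme $\Gamma' \subset W_{U'}$, flat over $U'$, with $\Gamma'_{y'} = \Gamma_{s,y}$.

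Finally, I would verify that $g|_{\Gamma'}\colon \Gamma' \to Z_{U'}$ is birational, which exhibits $\Gamma'$ as the graph of the desired rational section of $g$ over $\wU' = Z_{U'}$. Since $g|_{\Gamma_{s,y}}\colon \Gamma_{s,y} \to Z_y$ is birational, its generic fiber has length $1$; upper semi-continuity of fiber length for the proper generically finite morphism $g|_{\Gamma'}$ then implies, after possibly shrinking $U'$ to a Zariski open neighborhood of $y'$, that the generic fiber of $g|_{\Gamma'}$ itself has length $1$. The key technical input is Artin's theorem ensuring $\phi'(y') = \phi(y)$ exactly, applied to a fixed-Hilbert-polynomial component of $\Hilb(W/Y)$ so that the target is of finite type over $Y$; once this is set up, the remainder of the argument is a routine flat-family semi-continuity argument.
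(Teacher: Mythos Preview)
Your strategy—pass to the relative Hilbert scheme and apply Artin approximation—matches the paper's. The gap is in the first step: you cannot in general shrink $U$ while keeping $y\in U$ so that the fibres $\Gamma_{s,y'}$ have constant Hilbert polynomial. The graph closure $\Gamma_s$ may fail to be flat over $U$ precisely at $y$. For a minimal example, take $Z=Y=\bA^2$, $W=\bP^1\times Y$, and $s(y_1,y_2)=[y_1:y_2]$: then $\Gamma_s$ is the blow-up of $\bA^2$ at the origin, and $\Gamma_{s,0}=\bP^1$ has strictly larger Hilbert polynomial than the generic fibre (a point). So you do not obtain a morphism $\phi\colon U\to H^{\mathrm{an}}$, and the anchor value $\phi(y)=[\Gamma_{s,y}]$ on which both your Artin-approximation step and your semi-continuity step rely is unavailable. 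Relatedly, the assertion that $\Gamma_{s,y}\to Z_y$ is birational is unjustified: it fails whenever $Z_y$ is contained in the indeterminacy locus of $s$, which can certainly happen once $\dim Y\ge 2$.

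The paper repairs this by invoking Hironaka's flattening theorem to produce only a \emph{meromorphic} section $U\dashrightarrow H=\Hilb(Z\times_Y W/Y)$, written in a projective embedding as $u\mapsto[g_0(u):\cdots:g_n(u)]$ with the $g_i$ holomorphic, and applies Artin approximation directly to the $g_i$. Since the value at $y$ is no longer controlled, the final step also changes: rather than semi-continuity from the central fibre, one uses that the locus $H^0\subset H$ of subschemes projecting birationally to $Z$ is open, and that any approximation agreeing with $[g_0:\cdots:g_n]$ to sufficiently high order at $y$ still has image meeting $H^0$.
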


\begin{proof} 
By Hironaka's flattening theorem (cf.~\cite[Thm.~1]{Hironaka1975}), 
the graph of the meromorphic section 
$\sigma \colon \widetilde{U} \dashrightarrow W$ defines a meromorphic section 
$U \dashrightarrow H$ of the relative Hilbert scheme 
$H \coloneqq \mathrm{Hilb}(Z \times_Y W/Y)$. A local meromorphic 
section of the projective morphism $H \to Y$ is a map 
$U \to \bP^{n}$ given by $u \mapsto [g_0(u) \colon \ldots \colon g_{n}(u)]$, where $g_{i}$ are holomorphic functions on $U$. 
By Artin's approximation \cite{artin1969algebraic}, there 
exists an \'{e}tale open set $U'$ and a map 
$f_k \colon U' \to H \subset \bP^{n}$ given by $u \mapsto [f_{0,k}(u) \colon \ldots \colon f_{n, k}(u)]$ such that $f_{i, k} \equiv g_{i} \mod \mathfrak{m}_{y}^{k}$, where $f_{n, k}$ are now regular functions. 
Finally, let $H^{0}$ be the open set in $H$ parametrizing subschemes $\Gamma_{u} \subset Z_{u} \times W_{u}$ such that the projection $\Gamma_{u} \to Z_{u}$ restricts to an isomorphism on dense open subsets of both $\Gamma_{u}$ and $Z_{u}$.
For $k$ large enough, $f_{k}$ can be chosen in such a way that its image intersects the open set $H^{\circ}$, i.e., $f_k$ defines the required rational section of $g$ over $\widetilde{U}'$. 
\end{proof}

It follows as in Proposition 
\ref{local-no-twist} that for any
point $y\in Y^+$, there exists an
analytic open neighborhood 
$U\ni y$
for which $H^1_{\rm an}(\wU, 
\cP)=0$---this
holds since we may choose $U$ small
enough so that $\wU$ is a disjoint union
of contractible, 
Stein analytic open neighborhoods
of the points $z_i\in Z^+$ 
forming the pre-image of $y$.
Thus, $\{z_1,\dots,z_k\}$ admits an 
analytic neighborhood $\wU$ 
over which any (analytic)
Tate--Shafarevich
twist of $(V^+)_\wU\to \wU$ admits
a meromorphic section. We
may ensure $\wU$ is the inverse
image of some $U\ni y$ by 
intersecting
over its $G$-orbit.

Fix an 
\'etale open chart $U'\ni y$
and any element $t\in H^1(\wU', \cP)$,
where $\wU'\to U'$ is the preimage.
It corresponds to a twist $W_{\wU'}\to
\wU'$ of $(V^+)_{\wU'}\to \wU'$ (see 
e.g.~Prop.~\ref{sha-inj} for a construction).
By the previous paragraph and
Proposition 
\ref{mero-to-etale}, applied
to the sequence
of projective morphisms
$W_{\wU'}\to \wU'\to U'$,
there exists some possibly smaller
\'etale chart $U''\ni y$ for which
$t\vert_{\wU''}=0$,
where $\wU''\to U''$ is the preimage.
We conclude that $$\lim_{U\ni y} H^1(\wU,\cP)=0.$$
It follows that 
$\lim_{U\ni y} H^1(\wU,\cP)^G=0$
and again by Cartan--Leray, we conclude that 
$\Sha_{G,y}\simeq \lim_{U\ni y} H^1(G, 
\cP(\wU)).$
So the sheaf $\cH^1(G,\cP)$
on $Y^+$ which is the sheafification
of the presheaf
$$
U\mapsto H^1(G,\cP(\wU))
$$
over an \'etale
open $U\subset Y^+$,
has stalk $\Sha_{G,y}$ at $y\in Y^+$.
In terms
of (\ref{first-answer}), we have
$$\cH^1(G,\cP)=R^1{\rm co}_*\cP;$$
e.g., use the factorization $[Z^{+}/G] \to Y^+ \times BG \to Y^{+}$ and apply \cite[Lem.~B.2.2]{Shin2019}.
The support of the sheaf 
$\cH^1(G,\cP)$ is contained 
in the branch locus
of $Z^+\to Y^+$, since the higher
cohomology of a $G$-module of the form
$A[G]$ for an abelian
group $A$ vanishes.

\begin{definition}\label{def:pre-mult}
We call $\cH^1(G,\cP)$ the {\it multiplicity
sheaf}, and the {\it pre-multiplicity group}
is defined to be $\Ash^{G,\,{\rm pre}} \coloneqq 
H^0(Y^+, \cH^1(G, \cP))$.
\end{definition}

\begin{proposition}\label{mult-class} 
An abelian fibration $f\colon X\to Y$ 
inducing $(Y,B,\bfM,\Phi^o)$ defines an element 
$m(f)\in H^0(Y^+, \cH^1(G, \cP))$.
If $f_1\colon X_1\to Y$ and $f_2\colon X_2\to Y$
are two such fibrations, then $m(f_1)=m(f_2)$ 
if and only if
$f_1$ and $f_2$ are \'etale-locally 
translation-birational over $Y^+$.
\end{proposition}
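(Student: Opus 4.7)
The plan is to define $m(f)\coloneqq m(t(f))$, where $t(f)\in\Sha_G$ is the Tate--Shafarevich class constructed in Proposition \ref{define-twist} and $m\colon\Sha_G\to \Ash^{G,\,{\rm pre}}$ is the edge map of the Leray spectral sequence for $\mathrm{co}\colon[Z^+/G]\to Y^+$ appearing in (\ref{first-answer}). Since $R^1\mathrm{co}_*\cP\simeq \cH^1(G,\cP)$, this yields a canonical section $m(f)\in H^0(Y^+,\cH^1(G,\cP))=\Ash^{G,\,{\rm pre}}$, and well-definedness is inherited from that of $t(f)$.

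For the equivalence, set $\delta\coloneqq t(f_1)-t(f_2)\in\Sha_G$. One has $m(f_1)=m(f_2)$ precisely when $m(\delta)=0$ as a global section of $\cH^1(G,\cP)$, and by sheafification this holds if and only if the stalk $\delta_y$ vanishes in $(\cH^1(G,\cP))_y$ for every $y\in Y^+$. The stalk identification $(\cH^1(G,\cP))_y\simeq \Sha_{G,y}\simeq\lim_{U\ni y}H^1_G(\wU,\cP)$, established in the excerpt immediately before Definition \ref{def:pre-mult} via the analytic vanishing underlying Proposition \ref{local-no-twist} together with the meromorphic-to-\'etale transfer of Proposition \ref{mero-to-etale}, then recasts this vanishing as the existence, for each $y$, of an \'etale neighborhood $U\ni y$ whose preimage $\wU\subset Z^+$ satisfies $\delta\vert_{\wU}=0$ in $H^1_G(\wU,\cP)$.

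To conclude, I would observe that the construction of $t(f)$ is compatible with base change, so the restricted fibrations $f_i\vert_U\colon(X_i)_U\to U$ have Tate--Shafarevich classes $t(f_i\vert_U)=t(f_i)\vert_{\wU}$; in particular $\delta\vert_{\wU}=t(f_1\vert_U)-t(f_2\vert_U)$. Applying the final assertion of Proposition \ref{define-twist} to the pair $f_1\vert_U,f_2\vert_U$, the vanishing of this difference in $H^1_G(\wU,\cP)$ is equivalent to $(X_1)_U$ and $(X_2)_U$ being translation-birational over $U$. Quantifying over $y\in Y^+$ yields exactly the definition of $f_1,f_2$ being \'etale-locally translation-birational over $Y^+$, completing the proof. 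The most delicate input is the stalk identification $(\cH^1(G,\cP))_y\simeq \Sha_{G,y}$; this is where the interplay between the analytic and \'etale topologies is essential, but since it is already in hand from the discussion surrounding Proposition \ref{local-no-twist}, the remaining argument is a direct diagram chase through (\ref{first-answer}).
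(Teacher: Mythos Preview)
Your approach is correct and is precisely the ``formal'' route that the paper itself acknowledges immediately before its proof: ``This follows formally from (\ref{first-answer}) and Proposition \ref{define-twist}, but we find it instructive to explicitly construct and geometrically interpret the map $m$.'' So you are not wrong, but you are taking a genuinely different path from what the paper actually writes out.

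The paper's proof is an explicit cocycle-level construction. It fixes an \'etale cover $\{U_i\}$ of $Y^+$, chooses translation-birational identifications $\varphi_i\colon (W^+)_{\wU_i}\dashrightarrow (V^+)_{\wU_i}$, and then compares the natural $G$-action $\rho_i^{\rm aff}$ on the normalized base change with the linear $G$-action $\rho_i$ on the Kulikov model of the Albanese. The difference $m_i\coloneqq (\varphi_i^{-1})^*\rho_i^{\rm aff}\circ \rho_i^{-1}$ is a crossed homomorphism $G\to\cP(\wU_i)$, hence a class in $H^1(G,\cP(\wU_i))$; these glue to the global section $m(f)$. The equivalence is then read off directly: $m(f)(U_i)=0$ exactly when the two $G$-actions are related by a coboundary, i.e., when $(W^+)_{\wU_i}$ and $(V^+)_{\wU_i}$ are $G$-equivariantly translation-birational. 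Your approach instead pushes everything through the Leray edge map and the stalk identification $(\cH^1(G,\cP))_y\simeq\Sha_{G,y}$, reducing to Proposition \ref{define-twist}. What your route buys is brevity and a clean reliance on the spectral-sequence machinery already set up; what the paper's route buys is a concrete description of $m(f)$ as the affine-linear twist of the $G$-action, which is used again later (e.g., in the proof of Proposition \ref{alg-mult} and in the examples such as \ref{multiple-ex1}).

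One small imprecision: when you invoke ``the final assertion of Proposition \ref{define-twist}'' over $U$, that assertion literally concerns vanishing in $\Sha=H^1(U,P)$, not in $H^1_G(\wU,\cP)$, and it presupposes the hypothesis of the ``Furthermore'' clause (\'etale-local translation-birationality). This hypothesis is satisfied over $U$ because both $f_i\vert_U$ share the same Albanese and are each \'etale-locally translation-birational to it by Proposition \ref{nicest-model}; once this is noted, the injection $H^1(U,P)\hookrightarrow H^1_G(\wU,\cP)$ from (\ref{first-answer}) closes the gap. It would strengthen your write-up to make this bridging step explicit.
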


This follows formally from (\ref{first-answer})
and Proposition \ref{define-twist},
but we find it instructive to explicitly construct
and geometrically interpret the map $m$.

\begin{proof}
Let $\{U_i\}_{i\in I}$ be a sufficiently fine \'etale 
open cover of $Y^+$. Over $\wU_i$
we have an \'etale-local translation-birational map
$\varphi_i \colon (W^+)_{\wU_i}
\dashrightarrow (V^+)_{\wU_i}$. 
On the normalized base change,
we have a natural $G$-action by automorphisms
sending fibers to fibers
$$
\rho_i^{\rm aff}\colon
G\to {\rm Aut}\,(W^+)_{\wU_i}$$ but acting
nontrivially on the base $Z^+$ and the 
open set $\wU_i\subset Z^+$.
Also, by construction, we have 
a $G$-action on the Kulikov model of the Albanese, 
$$\rho_i\colon 
G\to {\rm Aut}\,(V^+)_{\wU_i}.$$
Via $\varphi_i$ we can compare
them. The two actions 
$(\varphi_i^{-1})^\ast  \rho_i^{\rm aff}$ and $\rho_i$
act on smooth abelian fibers of $(V^+)_{\wU_i}$ 
with the same linear
part (the former action is only affine-linear whereas
the latter action is linear, as it preserves the origin
of the Albanese fibration). Thus, their
difference
is a translation-birational 
automorphism over $\wU_i$. That is,
we have a set-map
$$
m_i \coloneqq (\varphi_i^{-1})^\ast \rho_i^{\rm aff}\circ 
\rho_i^{-1}\colon G\to \cP(\wU_i)
$$
since
$\rho_i$ and $\rho_i^{\rm aff}$ define the same action 
on the base $Z^+$. Using that 
$\rho_i$ and
$\rho_i^{\rm aff}$
are representations of $G$, 
the $(m_i(g))_{g\in G}$ define 
a $G$-cocycle
in $Z^1(G,\cP(\wU_i))$. 
A different choice of
\'etale-local translation-birational 
map $\varphi_i$ corresponds to changing
the $m_i(g)$ by a $1$-coboundary. Hence 
$$m(f)(U_i) \coloneqq \{m_i(g)\}_{g\in G}$$ defines canonically 
an element of $H^1(G,\cP(\wU_i))$. Furthermore, 
$m(f)(U_i)=0$ if and only if $(W^+)_{\wU_i}$ and 
$(V^+)_{\wU_i}$ are $G$-equivariantly 
translation-birational.

Finally, we have that $m(f)(U_i)$ and $m(f)(U_j)$
agree on their intersection $U_i\cap U_j$ and hence
define a global section 
$m(f)\in H^0(Y^+,\cH^1(G, \cP))$ 
which enjoys the property
that $m(f)=0$ if and only if $g^+\colon W^+\to Z^+$
and $h^+\colon V^+\to Z^+$ are \'etale-locally, 
$G$-equivariantly, translation-birational 
at all points $y\in Y^+$.
Quotienting both sides
by $G$,  we conclude that $f^+\colon X^+\to Y^+$ and 
$V^+/G \to Z^+/G=Y^+$ are \'etale-locally
translation-birational if and only if $m(f)=0$. 
See also Proposition \ref{alg-mult}.
\end{proof}

\begin{definition} The {\it
multiplicity class} of $f$ is 
$m(f)\in \Ash^{G,\,{\rm pre}}$. 
\end{definition}

\begin{proposition}\label{mult:finite} 
The pre-multiplicity
group $\Ash^{G,\,{\rm pre}}$ is finite, 
of size bounded only in terms of 
$(Y,B,\bfM,\Phi^o)$. \end{proposition}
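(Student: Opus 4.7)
The plan is to bound $H^0(Y^+, \cH^1(G, \cP))$ by first controlling the stalks of the multiplicity sheaf $\cH^1(G, \cP) = R^1\mathrm{co}_{*}\cP$ and then using that these stalks are concentrated on a small closed subscheme. Since $H^i(G, -)$ vanishes on induced $G$-modules for $i \geq 1$, the sheaf $\cH^1(G, \cP)$ is supported on the image in $Y^+$ of the ramification locus of $Z^+ \to Y^+$; by Proposition~\ref{nicest-model}, this ramification locus has finitely many irreducible components, bounded in terms of $(Y, B, \bfM, \Phi^o)$.

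For each point $y$ in the support, I would fix a preimage $z \in Z^+$ with inertia subgroup $G_z \subseteq G$. For a sufficiently small étale neighborhood $U$ of $y$, the preimage $\wU$ in $Z^+$ splits $G$-equivariantly as an induced set from a strict-Hensel neighborhood of $z$, so Shapiro's lemma identifies the stalk $\Sha_{G,y} = \lim_{U \ni y} H^1(G, \cP(\wU))$ with $H^1(G_z, \cP_z^{\mathrm{sh}})$, where $\cP_z^{\mathrm{sh}}$ denotes the strict-Hensel stalk of $\cP$ at $z$. The long exact sequence attached to the component sequence $0 \to \cP^0 \to \cP \to \mu \to 0$ then reduces the bound to controlling $H^1(G_z, \cP_z^{0, \mathrm{sh}})$, since the outer terms involve cohomology of the finite group $G_z$ with values in the finite component group $\mu_z$ and are automatically finite.

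The key bound is that $\cP^0$ is a smooth group scheme with semi-abelian geometric fibers, so by Hensel's lemma the stalk $\cP_z^{0, \mathrm{sh}}$ is a divisible abelian group whose $n$-torsion is finite of order at most $n^{2g}$, where $n = |G_z|$. Combined with the standard fact that $n$ annihilates $H^i(G_z, -)$ for $i \geq 1$, the exact sequence $0 \to \cP_z^{0, \mathrm{sh}}[n] \to \cP_z^{0, \mathrm{sh}} \xrightarrow{n} \cP_z^{0, \mathrm{sh}} \to 0$ embeds $H^1(G_z, \cP_z^{0, \mathrm{sh}})$ into $H^2(G_z, \cP_z^{0, \mathrm{sh}}[n])$, which is finite of bounded size. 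Finally, since $\cH^1(G, \cP)$ is constructible and supported on finitely many divisors, $H^0(Y^+, \cH^1(G, \cP))$ injects into a finite product of stalks along a suitable stratification of the support. The main obstacle is then purely bookkeeping---checking that the Galois cover $Z \to Y$, the inertia subgroups $G_z$, the component groups $\mu_z$, and the relative dimension $g$ all depend only on $(Y, B, \bfM, \Phi^o)$---which is guaranteed by the construction of $Z \to Y$ in Proposition~\ref{nicest-model}.
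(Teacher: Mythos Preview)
Your approach is correct and takes a genuinely different route from the paper's. Both arguments first reduce, via the component sequence $0\to\cP^0\to\cP\to\mu\to 0$, to controlling $\cH^1(G,\cP^0)$; the divergence is in how $\cP^0$ is linearized. You exploit that $\cP^0$ is divisible in the \'etale topology (multiplication by $n$ on a semiabelian scheme is \'etale surjective) and that $n=|G_z|$ annihilates higher group cohomology, giving the sheaf-level embedding $\cH^1(G,\cP^0)\hookrightarrow\cH^2(G,\cP^0[n])$ into a constructible sheaf of finite groups. The paper instead passes to the analytic topology and uses the exponential sequence $0\to\Gamma\to\mathfrak p\to\cP^0\to 1$, together with Maschke's theorem (which kills $\cH^i_{\rm an}(G,\mathfrak p)$ for $i\geq 1$), to identify $\cH^1_{\rm an}(G,\cP^0)\simeq\cH^2(G,\Gamma)$; it must then invoke a separate \'etale--analytic comparison (their Lemma~\ref{an-mult-is-mult}) to return to the algebraic setting. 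Your route is more self-contained and avoids that comparison; on the other hand, the paper's identification with $\cH^2(G,\Gamma)$ is not incidental---it is reused in \S\ref{ts3} when analyzing the continuous part of $\Sha_G$ via the lattice $\Gamma$.

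One presentational gap: in your final paragraph you assert constructibility of $\cH^1(G,\cP)$ without proof, and then use it to pass from stalk bounds to a bound on $H^0$. But a uniform bound on stalks does not by itself bound global sections. The fix is already implicit in your argument: run it at the sheaf level rather than stalkwise. Since $\cP^0[n]$ and $\mu$ are constructible sheaves of finite groups, so are $\cH^i(G,\cP^0[n])$ and $\cH^i(G,\mu)$; the injection $\cH^1(G,\cP^0)\hookrightarrow\cH^2(G,\cP^0[n])$ then gives $H^0(Y^+,\cH^1(G,\cP^0))\hookrightarrow H^0(Y^+,\cH^2(G,\cP^0[n]))$, and the latter is finite of bounded size. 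Combined with the $\mu$-terms this bounds $\Ash^{G,\,{\rm pre}}$ directly, without needing to establish constructibility of $\cH^1(G,\cP)$ itself.
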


\begin{proof}
We first prove that
$\cH^1_{\rm an}(G,\cP)$
is a constructible sheaf of finite
abelian groups on $Y^+$. 
Taking the 
long exact sequence in 
$G$-cohomology of the usual
exact sequences (\ref{component-exact})
and (\ref{ses}), we get the following
exact sequences of sheaves on $Y^+$:
\begin{gather*}
\cH^0(G, 
\textstyle \mu)\to 
\cH^1_{\rm an}(G, \cP^0)\to 
\cH^1_{\rm an}(G, \cP) \to
\cH^1(G, 
\textstyle \mu), \\
\cH^1_{\rm an}(G, \mathfrak{p})\to 
\cH^1_{\rm an}(G, \cP^0) \to
\cH^2(G, \Gamma)\to
\cH^2_{\rm an}(G, \mathfrak{p}).
\end{gather*}
The first and last terms 
of the first exact sequence
are constructible sheaves of 
finite abelian groups on $Y^+$, and
the first and last terms
of the second exact sequence vanish
by Maschke's theorem, since 
$\mathfrak{p}$ is a sheaf of 
characteristic zero vector spaces.
Finally $\cH^2(G,\Gamma)$
is a constructible sheaf of 
finitely generated groups since $\Gamma$
is finitely generated, but also
a sheaf of torsion groups since 
$\cH^2(G,\Gamma_\bC)=0$.

By Lemma \ref{an-mult-is-mult}, the 
\'etale and analytic 
multiplicity sheaves
are equal.
Finally, the space of global sections 
of a constructible sheaf of 
finite abelian groups is itself 
finite abelian, so we conclude
the proposition.
\end{proof}

\begin{lemma}\label{an-mult-is-mult} 
$\cH^1_{\rm an}(G,\cP)=\cH^1(G,\cP)$. \end{lemma}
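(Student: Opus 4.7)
The plan is to show that the natural analytification map $\cH^1(G,\cP)\to \cH^1_{\rm an}(G,\cP)$ of sheaves on $Y^+$ is an isomorphism on stalks. I will do this by reducing to the two structural exact sequences \eqref{component-exact} and \eqref{ses}, exploiting that two of the three constituent sheaves ($\mu$ and $\Gamma$) are constructible with finitely generated stalks, while the third ($\mathfrak{p}$) is a sheaf of characteristic-zero vector spaces on which $G$-cohomology vanishes for Maschke-theoretic reasons, exactly as in the proof of Proposition \ref{mult:finite}.

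First, I would reduce to a local computation. Fix $y\in Y^+$ and a point $z\in Z^+$ over $y$ with stabilizer $G_z\subseteq G$. For a sufficiently small \'etale (resp.\ analytic) neighborhood $U\ni y$, the preimage $\wU\subset Z^+$ decomposes $G$-equivariantly as $G\times_{G_z} V_z$, where $V_z$ is a $G_z$-invariant \'etale (resp.\ analytic) neighborhood of $z$. Shapiro's lemma then identifies $H^i(G,\cP(\wU))$ with $H^i(G_z,\cP(V_z))$, and passing to the stalk at $y$ in the respective topology gives
\begin{align*}
(\cH^1(G,\cP))_y\simeq \varinjlim_{V_z\ni z} H^1(G_z,\cP(V_z)),\qquad
(\cH^1_{\rm an}(G,\cP))_y\simeq \varinjlim_{V_z\ni z} H^1(G_z,\cP_{\rm an}(V_z)),
\end{align*}
where the colimits are taken in the \'etale and analytic topologies respectively.

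Second, I would run the long exact sequences in $G_z$-cohomology attached to \eqref{component-exact} and \eqref{ses} in both topologies, and compare. Since $\mathfrak{p}$ is a sheaf of $\cO_{Z^+}$-modules, its sections over any $V_z$ form a $\bC$-vector space, and the finite group $G_z$ acts on it; by Maschke's theorem, $H^i(G_z,\mathfrak{p}(V_z))=0$ for $i>0$, in either topology. The long exact sequence for \eqref{ses} therefore yields an isomorphism $H^1(G_z,\cP^0(V_z))\simeq H^2(G_z,\Gamma(V_z))$, and plugging this into the long exact sequence for \eqref{component-exact} produces a four-term exact sequence expressing $H^1(G_z,\cP(V_z))$ in terms of $G_z$-cohomology groups of $\mu$ and $\Gamma$ alone.

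Third, I would conclude by a five-lemma. The sheaves $\Gamma$ and $\mu$ are constructible with finitely generated (respectively finite) stalks, constructible with respect to a stratification by $\partial^+$ and its strata; their \'etale and analytic stalks coincide. Since $G_z$ is finite and $G_z$-cohomology commutes with filtered colimits, the limits $\varinjlim_{V_z} H^i(G_z,\Gamma(V_z))$ and $\varinjlim_{V_z} H^i(G_z,\mu(V_z))$ are the same whether taken over \'etale or analytic neighborhoods. Applying the five-lemma to the comparison diagram of the two four-term exact sequences then gives the desired stalkwise isomorphism.

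The main obstacle is the verification that sufficiently small neighborhoods really decompose $G$-equivariantly as $G\times_{G_z}V_z$ in the \'etale topology; this holds because $Z^+\to Y^+$ is a finite (in particular quasi-finite) morphism, so $y$ admits an \'etale neighborhood over which the preimage splits into the disjoint union of Henselian neighborhoods of the points in its fiber, which are permuted by $G$ with stabilizers $G_z$. Once this local splitting is in hand, the rest is a clean exact-sequence and five-lemma argument essentially parallel to the proof of Proposition \ref{mult:finite}.
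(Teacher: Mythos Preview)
Your argument has a genuine gap at step two. You propose to run the long exact sequences attached to both \eqref{component-exact} and \eqref{ses} ``in both topologies,'' but the exponential sequence \eqref{ses},
\[0\to \Gamma\to \mathfrak{p}\xrightarrow{\exp}\cP^0\to 0,\]
is introduced in the paper explicitly as an exact sequence ``in the analytic category'': the exponential map $\mathfrak{p}\to\cP^0$ is transcendental and has no \'etale avatar. Your Maschke argument does give $H^i(G_z,(\cP^0_{\rm an})_z)\simeq H^{i+1}(G_z,\Gamma_z)$ on the analytic side---this is exactly the computation in the proof of Proposition~\ref{mult:finite}---but the \'etale stalk of $\cP^0$ at $z$ is not the quotient of a $\bC$-vector space by $\Gamma_z$, and no analogous identification is available. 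So the ``four-term exact sequence expressing $H^1(G_z,\cP(V_z))$ in terms of $G_z$-cohomology of $\mu$ and $\Gamma$ alone'' exists only analytically, and your five-lemma has nothing on the \'etale side to compare against.

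The paper takes a route that avoids \eqref{ses} on the \'etale side entirely. Since higher cohomology of a finite group is annihilated by its order, every local section $m$ of $\cH^1(G,\cP)$ (in either topology) is torsion; the paper then uses that the identity component of the coboundary group scheme $\cB^1(G,\cP)$ is divisible to correct an arbitrary cocycle lift $\widetilde{m}\in\cZ^1(G,\cP)(U)$ by a coboundary so that the resulting representative is itself torsion of a fixed order. Such a torsion cocycle is valued in a finite \'etale subgroup scheme $\cP[n]\subset\cP$, whose \'etale and analytic sections coincide, and the comparison follows.
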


\begin{proof}
Since $\cH^1_{\rm an}(G,\cP)$ is a constructible
sheaf of finite groups, 
it again follows by an algebraic
approximation argument that
$\cH^1_{\rm an}(G,\cP)=\cH^1(G,\cP)$,
i.e.~any analytic-local affine-linear twist
$\rho^{\rm aff}(g)=\rho(g)+m(g)$ of 
$\rho\colon G\to {\rm Aut} 
(V^+)_{\wU}$ 
can be realized \'etale-locally.
Alternatively, we claim it is possible to represent
any element of $\cH^1(G,\cP)(U)$
or $\cH^1_{\rm an}(G,\cP)(U)$ by a 
$1$-cocycle $m=(m(g))_{g\in G}$ with 
$m(g)$ torsion. For instance, we have
an exact sequence of group schemes 
$$0\to  \cB^1(G,\cP)\to \cZ^1(G,\cP)\to \cH^1(G,\cP)\to 0$$
where $\cB^1$ are the $1$-coboundaries
and $\cZ^1$ are the $1$-cocycles, of $G$ valued
in $\cP$ (and a similar sequence analytically). 
Letting $m\in \cH^1(G,\cP)(U)$, there
is some integer $e$ so that $em=0$. Up to shrinking
$U$, we may choose
a lift of $m$ to $\widetilde{m}\in \cZ^1(G,\cP)(U)$.
We have $e\widetilde{m} \in \cB^1(G,\cP)(U)$. Letting $e'$
be an exponent for the component group scheme of
$\cB^1(G,\cP)$, we have that 
$e'em\in (\cB^1)^0(G,\cP)(U)$ is a section of the identity 
component of $\cB^1(G,\cP)$,
which is a divisible group scheme. Hence, 
$e'e\widetilde{m} 
= e'eb$ for some $b\in \cB^1(G,\cP)(U)$.
Then $\widetilde{m}-b$ is a lift of $m$
which is $(e'e)$-torsion.
It follows that any analytic or algebraic
affine-linear twist can also
be constructed algebraically and 
\'etale-locally, as an $(e'e)$-torsion twist.
\end{proof}

\begin{example}\label{multiple-ex1}
Let $f\colon X\to Y$ be an elliptic fibration
over a curve $Y$,
with a multiple fiber over $p\in Y$ of multiplicity $n$,
whose reduction is a smooth elliptic curve. Then, after a cyclic
base change $Z\to Y$ of order $n$ totally ramified over $p$,
the normalized pullback $g\colon Z\to Y$ 
has a smooth fiber over the point $q\mapsto p$. Thus, 
$g\colon W\to Z$ is \'etale-locally birational
to a Kulikov model---it is already a Kulikov model,
and admits an \'etale-local 
isomorphism near $q$ to its Albanese,
 by choosing an \'etale section.

But $g\colon W\to Z$ and 
 $g^{\rm Alb}\colon W\,\!^{\rm Alb}\to Z$
 are not $G=C_n$-equivariantly isomorphic (or birational),
 because $C_n$ acts on the fiber $g^{-1}(q)$
 by an $n$-torsion translation, whereas 
 $C_n$ acts on $E_q \coloneqq (g^{\rm Alb})^{-1}(q)$ trivially.
 We can identify the sheaf $\cH^1_{\rm an}(C_n,\cP)$ with a skyscraper
 sheaf at $p$ of the $n$-torsion subgroup $E_q[n]$, and 
 comparing the two $C_n$ actions, we see that the 
 multiplicity class lies in $m(f)\in \Ash^{G,\,{\rm pre}}=
 H^0(Y, \cH^1(C_n, \cP))=E_q[n]$.
 Thus, the multiplicity class $m(f)\in E_q[n]$ 
 in this example encodes exactly which $n$-torsion 
 element we should quotient $E_q$ by, to
 produce the multiple fiber $f^{-1}(p)$.
 \end{example}

Summarizing and combining
Propositions \ref{define-twist},
\ref{local-no-twist}, and \ref{mult-class}, 
we have shown:

\begin{corollary}\label{twist-cor}
Let $f\colon X\to Y$ be an abelian
fibration inducing $(Y,B,\bfM,\Phi^o)$. There is a big
open set $Z^+\subset Z$ depending only on $(Y,B,\bfM,\Phi^o)$,
such that $f$ defines a $G$-equivariant
twist class 
$t(f)\in \Sha_G=H^1_G(Z^+,\cP)$ mapping to a 
multiplicity class 
$m(f)\in \Ash^{G,\,{\rm pre}}$.
Furthermore, if $f_1\colon X_1\to Y$ and 
$f_2\colon X_2\to Y$ are two such fibrations
for which $m(f_1)=m(f_2)$, then there is a natural
lift of $t(f_1)-t(f_2)\in \Sha=H^1(Y^+,P)$ 
to a $G$-invariant twist class. 
This class equals
zero if and only if $f_1$ and $f_2$ are 
translation-birational over $Y^+$.

Similarly, if $m(f_1)_{\rm an}= m(f_2)_{\rm an}$,
then $t_{\rm an}(f_1)-t_{\rm an}(f_2)
=0\in \Sha_{\rm an}$ if and only if
$f_1$ and $f_2$ are translation-bimeromorphic 
over $Y^+$.
\end{corollary}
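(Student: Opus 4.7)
The plan is to assemble the corollary directly from the three earlier propositions, with the exact sequence \eqref{the-answer} providing the overall scaffolding.

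First, I would invoke Proposition \ref{local-no-twist}, together with the observation in the paragraph immediately following its proof (identifying $Z^+$ with the Kulikov locus $Z'$), to conclude that the big open set $Z^+\subset Z$ may be chosen to depend only on $(Y,B,\bfM,\Phi^o)$, independently of the particular fibration $f$. This also means that the translation sheaf $\cP$, the group $G$, and consequently $\Sha_G$, $\Sha$ and $\Ash^{G,\,{\rm pre}}$ are all canonically associated to the base data. Next, for a fixed fibration $f\colon X\to Y$ inducing $(Y,B,\bfM,\Phi^o)$, Proposition \ref{define-twist} produces the class $t(f)\in \Sha_G$. The image of $t(f)$ in $\Ash^{G,\,{\rm pre}}=H^0(Y^+,R^1{\rm co}_*\cP)$ under the middle arrow of \eqref{first-answer} coincides with the multiplicity class $m(f)$ of Proposition \ref{mult-class}: both are defined by comparing the natural $G$-action on the normalized base change $W^+$ with the $G$-action on the Kulikov model $V^+$ through an \'etale-local translation-birational identification $\varphi_i$, and the $G$-cocycle $\{m_i(g)\}$ of Proposition \ref{mult-class} is exactly the image of the \v{C}ech $1$-cocycle $\{t_{ij}\}$ of Proposition \ref{define-twist} under the edge map of the Cartan--Leray spectral sequence for ${\rm co}\colon [Z^+/G]\to Y^+$.

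For the second assertion, suppose $f_1,f_2$ satisfy $m(f_1)=m(f_2)$. Then Proposition \ref{mult-class} produces an \'etale-local $G$-equivariant translation-birational equivalence between $X_1$ and $X_2$ over $Y^+$. Plugging this into the proof of Proposition \ref{define-twist} shows that the difference $t(f_1)-t(f_2)$ admits a representative \v{C}ech cocycle with values in $\cP^G$, i.e.\ in the sheaf $P$ on $Y^+$, and so lifts canonically to an element of $\Sha=H^1(Y^+,P)$. Exactness of \eqref{the-answer} guarantees this lift is well defined. The vanishing criterion is precisely the final clause of Proposition \ref{define-twist}: the difference is zero in $\Sha$ if and only if $f_1$ and $f_2$ are translation-birational over $Y^+$.

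The analytic variant proceeds in parallel, substituting $\cP_{\rm an}$ for $\cP$ throughout and invoking the analytic forms of Propositions \ref{define-twist} and \ref{mult-class}; the key input is Lemma \ref{an-mult-is-mult}, which identifies the \'etale and analytic multiplicity sheaves, together with Proposition \ref{mero-to-etale} and Lemma \ref{mero-ext} for transferring meromorphic information to the algebraic setting. The main (mild) obstacle is purely bookkeeping, namely verifying that the spectral-sequence edge map $\Sha_G\to \Ash^{G,\,{\rm pre}}$ does indeed agree with the geometric construction of $m(f)$ in Proposition \ref{mult-class}; this is a routine \v{C}ech computation on an \'etale cover of $Z^+$ refined from a cover of $Y^+$. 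No new ideas are required beyond what has already been established in \S~\ref{ts1}--\S~\ref{ts2}.
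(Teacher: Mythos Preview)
Your proposal is correct and matches the paper's approach: the paper presents this corollary simply as ``Summarizing and combining Propositions \ref{define-twist}, \ref{local-no-twist}, and \ref{mult-class}'' with no further argument, and your write-up is precisely that assembly, together with the exact sequence \eqref{the-answer} as scaffolding. Your additional remarks on the analytic variant (invoking Lemma \ref{an-mult-is-mult}) and on identifying the edge map with the geometric construction of $m(f)$ are reasonable elaborations of details the paper leaves implicit.
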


Thus, fibrations $f$ with the same multiplicity
class $m(f)$ naturally form a torsor over $\Sha$.
The question of 
whether a given class 
$m\in \Ash^{G,\,{\rm pre}}$ 
is realizable by an abelian fibration 
of an algebraic space 
is rather subtle:

\begin{example}\label{multiple-ex2}
Let $E\times \bP^1_{\bf y}\to \bP^1_{\bf y}$ 
be a trivial elliptic fibration over a projective line $\bP^1_{\bf y}$ with coordinates
${\bf y}=[y_0:y_1]$
and let $f\colon X\to \bP^1_{\bf y}$ be the result of
order $2$ log transforms \cite[Ch.~V.13]{barth2015compact} over $0$ and $\infty$,
so that $f$ has multiple fibers over $0, \infty$
with $f^{-1}(0)^{\rm red}$ and 
$f^{-1}(\infty)^{\rm red}$ elliptic curves
isogenous to $E$ by an order $2$ isogeny.
Note that $X$ is a compact complex manifold, but
may not be projective.
We have $B= \tfrac{1}{2}[0]+\tfrac{1}{2}[\infty]$
and $\bfM=0$.
After taking the order $2$ base change $\bP^1_{\bf z} \to \bP^1_{\bf y}$,
${\bf z} \mapsto {\bf z}^2 = {\bf y}$
with Galois group $C_2$, 
the normalized pullback becomes
a smooth analytic
genus $1$ fibration $g\colon W\to \bP^1_{\bf z}$.

As in Example \ref{multiple-ex1},
the sheaf $\cH^1(C_2, \cP)=E[2]_0\oplus E[2]_\infty$
is a sum of skyscraper sheaves,
and the pre-multiplicity group is $E[2]\oplus E[2]$.
Thus, there are $4\cdot 4=16$ possible multiplicity
structures, all of them realizable analytically. 
But we claim that only the 
diagonal subgroup 
$\Delta_{E[2]}\subset E[2]\oplus E[2]$
of the pre-multiplicity group
can be represented by an algebraic space.

To see why, first note that 
$V=W^{\rm Alb}\simeq E\times \bP^1_{\bf z}$ is a trivial
elliptic fibration and the exact sequence (\ref{ses})
is
\begin{equation} \label{ses-example}
0\to \underline{\bZ}^2\to \mathfrak{e}\to \cE\to 1
\end{equation}
where $\mathfrak{p}=\mathfrak{e}\simeq \cO_{\bP^1_{\bf z},\,{\rm an}}$
is the Lie
algebra of $\cP=\cE$, and
$\cE(U) \coloneqq {\rm Hol}(U,E)$
is a $C_2$-equivariant
sheaf on $\bP^1_{\bf z}$.
The long exact sequence in
analytic sheaf cohomology gives an isomorphism
$$\widetilde{\Sha}_{\rm an}=
H^1_{\rm an}(\bP^1_{\bf z}, \cE)\xrightarrow{\sim} 
H^2(\bP^1_{\bf z}, \underline{\bZ}^2)\simeq \bZ^2.$$

By Proposition \ref{sha-inj}, 
the analytification map $\widetilde{\Sha}=H^1(\bP^1_{\bf z},\cE)\to 
H^1_{\rm an}(\bP^1_{\bf z}, \cE)$ is injective, 
but on the other hand, $H^1(\bP^1_{\bf z},\cE)$
is torsion by Lemma \ref{torsion-lemma-1}. Thus 
$H^1(\bP^1_{\bf z},\cE)=0$. So if 
$f\colon X\to \bP^1_{\bf y}$ has an algebraic structure,
we must have that $g\colon W\to \bP^1_{\bf z}$ is a trivial
fibration $W\simeq E\times \bP^1_{\bf z}$. So in this
case, $X$ is a global $C_2$-quotient
of $E\times \bP^1_{\bf z}$, and therefore 
the multiplicity classes
in $E[2]_0$ and $E[2]_\infty$ must be equal.
\end{example}

\begin{definition}\label{mult-alg} 
Fix a base $(Y,B,\bfM,\Phi^o)$.
The {\it multiplicity group}
$\Ash^G\subset \Ash^{G,\,{\rm pre}}$ 
is the subset of $m\in \Ash^{G,\,{\rm pre}}$ 
for which there exists some $t(f)\in \Sha_G$ 
mapping to $m$, i.e., for which
there exists an abelian fibration $f\colon X\to Y$
of a projective variety realizing $m$.
\end{definition}

\begin{remark}\label{alg-space-remark} 
There is no difference replacing ``projective variety''
with ``proper algebraic space'' in Definition 
\ref{mult-alg}, by Artin's theorem \cite{artin1970algebraization}.
\end{remark}

\begin{proposition}\label{alg-mult} 
There is a natural map
$\Ash^{G,\,{\rm pre}}\xrightarrow{\rm ob} H^2(Y^+,P)$ 
and the kernel is $\Ash^G$. In particular,
$\Ash^G\subset \Ash^{G,\,{\rm pre}}$ is a subgroup.
\end{proposition}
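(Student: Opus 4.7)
The plan is to derive $\mathrm{ob}$ as a Leray differential and then identify $\ker(\mathrm{ob})$ with $\Ash^G$ by realizing every class in $\Sha_G$ as the twist class of an actual abelian fibration. The Leray spectral sequence $E_2^{p,q} = H^p(Y^+, R^q\mathrm{co}_*\cP) \Rightarrow H^{p+q}_G(Z^+, \cP)$, combined with the identification $R^1\mathrm{co}_*\cP = \cH^1(G,\cP)$ established above, already produces the five-term sequence \eqref{first-answer}. I would define $\mathrm{ob} \coloneqq d_2^{0,1}$ to be the associated Leray differential, so that exactness at $\Ash^{G,\,\mathrm{pre}}$ reads $\ker(\mathrm{ob}) = \mathrm{image}(m \colon \Sha_G \to \Ash^{G,\,\mathrm{pre}})$. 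Since the map $m$ above can be checked, via a \v{C}ech-level unwinding, to coincide with the assignment $f \mapsto m(f)$ of Proposition~\ref{mult-class} along $f \mapsto t(f)$, the inclusion $\Ash^G \subset \ker(\mathrm{ob})$ is immediate.

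For the reverse inclusion $\ker(\mathrm{ob}) \subset \Ash^G$, I need to show that each $t \in \Sha_G$ is realized by an abelian fibration $f_t \colon X_t \to Y$ on a proper algebraic space (sufficient by Rem.~\ref{alg-space-remark}). The construction is a twisting of the Kulikov model $V \to Z$ from Proposition~\ref{nicest-model}. By Lemma~\ref{torsion-lemma-1} and Corollary~\ref{torsion-lemma-2}, after multiplying $t$ by a bounded integer, I may lift it to an $n$-torsion class $t^0 \in \Sha_G^0$ with $n$ divisible enough that multiplication by $n$ is surjective on $\cP^0$ (and also kills the component group). The sequence $0 \to \cP^0[n] \to \cP^0 \xrightarrow{n} \cP^0 \to 0$ then lifts $t^0$ to $\tilde t \in H^1_G(Z^+, \cP^0[n])$, which I interpret as a $G$-equivariant $\cP^0[n]$-torsor $T^+ \to Z^+$; since $\cP^0[n]$ is a finite flat $G$-equivariant group scheme on $Z^+$, the torsor $T^+$ is a finite flat cover. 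I then define the twist as the contracted product
\[
W^+_t \coloneqq (T^+ \times_{Z^+} V^+)/\cP^0[n],
\]
where $\cP^0[n]$ acts diagonally---by the torsor structure on $T^+$ and by translation $\cP^0[n] \hookrightarrow \cP \simeq (V^+)^{\mathrm{sm}}$ (extended birationally) on $V^+$. The result is a proper algebraic space over $Z^+$, \'etale-locally birational to $V^+$, carrying a compatible $G$-action.

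To finish, I would extend the construction across the codimension-$\geq 2$ complement $Z \setminus Z^+$ and pass to the $G$-quotient. The finite flat group scheme $\cP^0[n]$ extends across $Z \setminus Z^+$ via the compactification of the Kulikov model $V \to Z$ (since it is determined by the smooth semiabelian part and $Z^+$ is big in $Z$), and the torsor $T^+$ extends to a finite $G$-cover $T \to Z$ by normalization. Forming $W_t \coloneqq (T \times_Z V)/\cP^0[n]$ yields a proper algebraic space over $Z$, and $X_t \coloneqq W_t/G$ (via Keel--Mori) is a proper algebraic space over $Y$, equipped with the required abelian fibration $f_t$ with $t(f_t) = t$ by construction. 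The main obstacle I anticipate is precisely this extension step: verifying that $T$ and the resulting $W_t$ remain proper and algebraic across $Z \setminus Z^+$ requires combining the Kulikov compactification, the finite-flatness of $\cP^0[n]$, and a Hartogs-type extension for the torsor, so care will be needed to ensure that all objects extend consistently and that the twist class and induced generalized pair of $f_t$ are correctly accounted for.
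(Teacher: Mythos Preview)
Your identification of $\mathrm{ob}$ as the Leray differential $d_2^{0,1}$ and the inclusion $\Ash^G \subset \ker(\mathrm{ob})$ are correct and match the paper's framing (which notes both follow formally from \eqref{first-answer}).

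The gap is in the reverse inclusion. After multiplying $t$ by a bounded integer $k$ to land in the image of $\Sha_G^0 \to \Sha_G$, your contracted-product construction produces a fibration whose twist class is the image of $\tilde t$ in $\Sha_G$, namely $kt$, not $t$. Thus you realize $m(kt)=k\,m(t)\in\Ash^G$, not $m(t)$. Since you do not yet know that $\Ash^G$ is a subgroup---that is part of the conclusion---and since the exponent of the cokernel of $\Sha_G^0\to\Sha_G$ (governed by the component sheaf $\mu$) need not be coprime to the exponent of $\Ash^{G,\,\mathrm{pre}}$, you cannot infer $m(t)\in\Ash^G$ from $k\,m(t)\in\Ash^G$. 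The assertion ``$t(f_t)=t$ by construction'' is therefore unjustified.

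The paper's argument bypasses any passage to $\Sha_G^0$ or to $n$-torsion. Given $m\in\ker(\mathrm{ob})$, one represents $m$ by local affine-linear $G$-actions $\rho_i$ on $(V^+)_{\wU_i}$, chooses $G$-equivariant identifications $\varphi_{ij}\in P(U_{ij})$ on overlaps, and observes that the $2$-cocycle $\varphi_{ki}\circ\varphi_{jk}\circ\varphi_{ij}$ \emph{is} a \v{C}ech representative of $\mathrm{ob}(m)$. Its vanishing lets one correct the $\varphi_{ij}$ to a genuine $1$-cocycle and glue the smooth loci $\cP_{\wU_i}$ (where translations act biregularly, unlike on all of $V^+$) to an algebraic $G$-space $\cP(m)\to Z^+$. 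A $G$-equivariant Nagata compactification over $Z$ followed by the quotient by $G$ then yields the desired $f\colon X\to Y$ with $m(f)=m$; no delicate extension of a torsor or finite group scheme across $Z\setminus Z^+$ is required.
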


Again, this follows formally
from (\ref{first-answer}). But for
clarity and geometric understanding, 
we construct the obstruction
map explicitly.

\begin{proof}
Fix an element $m\in \Ash^{G,\,{\rm pre}}=
H^0(Y^+,\cH^1(G,\cP))$ and suppose that $\{U_i\}_{i\in I}$
is an \'etale
cover of $Y^+$ for which $m$ is represented
by a collection of compatible
elements $m_i\in H^1(G, \cP(\wU_i))$. 
By definition, each $m_i$ gives a twist
of the linear $G$-action 
$\rho_{V^+}\colon G\to {\rm Aut}(V^+)_{\wU_i}$ 
to an affine linear $G$-action
$$
\rho_i\colon G\to {\rm Bir}(V^+)_{\wU_i}
$$
acting on the base in the same manner, with the same
linear part as $\rho_{V^+}$.
Since $m_i\vert_{U_{ij}}=m_j\vert_{U_{ij}}$,
there is
a $G$-equivariant translation-birational identification 
of the actions of $\{\rho_i\}_{i\in I}$ on the double
overlaps $\wU_{ij}=\wU_i\cap \wU_j$ (Prop.~\ref{mult-class}).
Choose
one such isomorphism $\varphi_{ij}\in \cP(\wU_{ij})^G=P(U_{ij})$.
The obstruction to globalizing the local $G$-actions
is then represented by the $2$-cocycle 
${\rm ob}_{ijk} \coloneqq \varphi_{ki} \circ 
\varphi_{jk}\circ \varphi_{ij}$ which defines a class 
${\rm ob}(m)\in H^2(Y^+,P)$.
We need ${\rm ob}(m)=0$ for the actions
$\rho_i$ to patch together, so we at least
have a containment $\Ash^{G,\,{\rm pre}}
\subset {\rm ker}({\rm ob})$.

Conversely,
we need to realize $m$ as the 
multiplicity class of a fibration 
of an algebraic space 
(cf.~Rem.~\ref{alg-space-remark})
whenever ${\rm ob}(m)=0$. In this case,
the affine linear $G$-actions 
$\rho_i$ are regular on $\cP_{\wU_i}$ and,
by vanishing of ${\rm ob}(m)$,
can be glued, in the
\'etale topology, to produce an algebraic space
$\cP(m)\to Z^+$ admitting a $G$-action. Taking
a $G$-equivariant compactification, we produce
an abelian fibration $g\colon W\to Z$.
The quotient by $G$ is an abelian fibration 
$f\colon X\to Y$ whose multiplicity class is $m$. 
Note that the construction of $\cP(m)$ is naturally 
ambiguous up to 
$\varphi_{ij}\mapsto \varphi_{ij}+t_{ij}$ where 
$\{t_{ij}\}$ define a $1$-cocycle in 
$P(U_{ij})=\cP(\wU_{ij})^G$, i.e., an element of 
$\Sha=H^1(Y^+,P)$.
The proposition follows.
\end{proof}

We may define, analogously,
the {\it analytic multiplicity group} 
$\Ash^{G,\,{\rm an}}$ as the kernel of
the map $$\Ash^{G,\,{\rm pre}}=
H^0(Y^+,\cH^1(G,\cP))\to 
H^2_{{\rm an}}(Z^+,P).$$ 

Consider the analytification
map on the exact sequence (\ref{first-answer}):
\begin{align}\label{an-exact}
\begin{aligned}\xymatrix{0\ar[r] & \Sha\ar[r] \ar[d] & 
\Sha_G\ar[r]\ar[d] & \Ash^{G,\,{\rm pre}}\ar[r]\ar[d] & H^2(Y^+, P)\ar[d] \\ 
0\ar[r] & \Sha_{\rm an}\ar[r] & \Sha_{G,\,{\rm an}}\ar[r] & 
\Ash^{G,\,{\rm pre}}\ar[r] & H^2_{\rm an}(Y^+,P).}\end{aligned}\end{align}
By Proposition \ref{sha-inj} below and Lemma
\ref{an-mult-is-mult},
the first, second, and third downward
maps are injective, but we will see that,
in general, the fourth is not injective.

\begin{example}\label{multiple-ex3} 
Example \ref{multiple-ex2} indicates
that even when local $G$-actions in the proof of 
Proposition \ref{alg-mult} fail to patch in the 
\'etale topology, they may successfully
patch in the analytic topology,
i.e., $\Ash^G\neq \Ash^{G,\,{\rm an}}$ in general.
Indeed, in Example
\ref{multiple-ex2}, the analytification
of exact sequences as in (\ref{an-exact}) gives
the following diagram:
\[\xymatrix{0\ar[r] & 
H^1(\bP^1_{\bf y}, E)\ar[r] \ar[d] & 
H^1_{C_2}(\bP^1_{\bf z}, \cE)\ar[r]\ar[d] & 
H^0(\bP^1_{\bf y}, \cH^1(C_2,\cE))
\ar[r]^{\,\,\,\,\,\,\,\,\,\,\,\,\rm ob}\ar[d]^{\sim} & 
H^2(\bP^1_{\bf y},E) \ar[d] \\ 
0\ar[r] & 
H^1_{\rm an}(\bP^1_{\bf y}, E)\ar[r]& 
H^1_{C_2,\,{\rm an}}(\bP^1_{\bf z}, \cE)\ar[r]
& H^0(\bP^1_{\bf y}, \cH^1(C_2,\cE))
\ar[r]^{\,\,\,\,\,\,\,\,\,\,\,\,\rm ob} & 
H^2_{\rm an}(\bP^1_{\bf y},E)}
\]
where by abuse of notation,
$E$ denotes the sheaf of 
functions on $\bP^1_{\bf y}$ valued
in $E$.

The bottom row is easily computed
using the exponential exact sequence \eqref{ses-example}.
Analyzing the long exact sequence
associated to the multiplication by $n$
map on $E$, we may also determine the top
sequence. The result is
\[\xymatrix{0\ar[r] & 
0 \ar[r] \ar[d] & E[2] \ar[r]\ar[d] & E[2]\oplus 
E[2] \ar[r]^{\,\,\,\,\,\,\,\,\rm ob}\ar[d] & E[\infty] \ar[d] \\ 
0\ar[r] & 
\bZ^2 \ar[r]& \bZ^2\oplus E[2] \ar[r]
& E[2]\oplus E[2]\ar[r]^{\,\,\,\,\,\,\,\,\,\,\,\,\rm ob} & 0}\] 
with the \'etale (upper) obstruction map
sending $(\alpha,\beta)\mapsto \alpha-\beta$.
So only the diagonal
subgroup of the pre-multiplicity
group is realizable algebraically,
as expected.
\end{example}

\subsection{Injectivity and surjectivity 
properties of analytification}\label{sec:big-good}
We now show why controlling 
the Tate--Shafarevich twists analytically 
over the 
big open set $Y^+\subset Y$ will
suffice for our purposes. More precisely,
we can check whether two algebraic twists
are the same by comparing them analytically
over $Y^+$.

\begin{proposition}\label{big-open-2}
Let $X$ and $X'$ be two normal, proper algebraic 
spaces (resp.~analytic spaces of Fujiki class 
$\cC$\footnote{A proper 
analytic space is of {\it Fujiki class} 
$\cC$ if it is bimeromorphic to a 
K\"ahler manifold.})
admitting surjective morphisms $f\colon X\to Y$, 
$f'\colon X'\to Y$ to a normal, 
proper target $Y$.
Let $Y^+\subset Y$ be
a big open set and define $X^+ \coloneqq f^{-1}(Y^+)$ and
$(X')^+ \coloneqq (f')^{-1}(Y^+)$. 
A bimeromorphism $\varphi\colon 
(X')^+\dashrightarrow X^+$ over the open
set $Y^+$ defines a birational (resp.~bimeromorphic) 
map $X'\dashrightarrow X$.
\end{proposition}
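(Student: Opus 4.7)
The plan is to extend $\varphi$ by closing up its graph inside the fiber product $X' \times_Y X$. The crucial structural point is that, since $f$ and $f'$ are proper and $Y$ is itself proper, the fiber product $X' \times_Y X$ exists as a proper algebraic space (resp.\ proper analytic space), and both projections $\pi_1 \colon X' \times_Y X \to X'$ and $\pi_2 \colon X' \times_Y X \to X$ are proper. This properness is precisely what forces closures of graphs to behave as bimeromorphic (resp.\ birational) correspondences even when $X \setminus X^+$ has codimension $1$ in $X$, so that a Hartogs-type extension argument on $X$ directly is unavailable.

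Concretely, the graph $\Gamma_\varphi$ of $\varphi$ is a closed analytic (resp.\ Zariski-closed) subset of the open subspace $(X')^+ \times_{Y^+} X^+ \subset X' \times_Y X$, and I would let $Z$ denote its closure inside $X' \times_Y X$; this closure is analytic by Remmert's proper mapping theorem in the analytic setting. I would then verify that the restrictions $p_i \coloneqq \pi_i|_Z$ are proper and birational (resp.\ bimeromorphic). Properness is immediate from properness of $X' \times_Y X$ over each factor. For birationality of $p_1$: on the dense open $U \subset (X')^+$ over which $\varphi$ is a regular (resp.\ holomorphic) isomorphism onto its image, $p_1^{-1}(U) \to U$ is the graph isomorphism, so $p_1$ is an isomorphism over a dense open subset of $X'$. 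Density of $(X')^+$ in $X'$ uses that $f'$ is surjective, $Y^+$ is big in $Y$, and $X'$ is irreducible (normal plus connected; one reduces the disconnected case componentwise, noting that $\varphi$ respects the decomposition into irreducible components since it is an isomorphism in codimension zero on each side). Since $Z$ is simultaneously the closure of $\Gamma_{\varphi^{-1}}$, the symmetric argument applied to the inverse bimeromorphism shows that $p_2 \colon Z \to X$ is birational (resp.\ bimeromorphic) as well.

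Thus $Z$ is the graph of a birational (resp.\ bimeromorphic) map $X' \dashrightarrow X$ extending $\varphi$, and uniqueness follows because two such maps agreeing on the dense open $(X')^+$ must coincide. The main technical obstacle I anticipate is verifying, in the analytic case, that $Z$ really qualifies as a bimeromorphism in the sense compatible with the Fujiki class $\cC$ hypothesis; this reduces to Remmert's proper mapping theorem together with the fact that class $\cC$ is preserved under bimeromorphic equivalence of normal proper analytic spaces, so the argument is conceptually routine once the proper fiber product is correctly set up.
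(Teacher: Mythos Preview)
Your closure argument works in the purely algebraic setting (with birational input): the Zariski closure of $\Gamma_\varphi$ in the proper space $X'\times_Y X$ is automatically a closed subspace, and properness of the projections does the rest. The gap is in the analytic case. You assert the closure of $\Gamma_\varphi$ is analytic ``by Remmert's proper mapping theorem,'' but that theorem (images under proper maps are analytic) does not apply here; if you meant Remmert--Stein, it requires the complement $(X'\times_Y X)\setminus((X')^+\times_{Y^+}X^+)$ to have dimension strictly less than $\dim\Gamma_\varphi=\dim X$, and this fails: the complement is the preimage of $Y\setminus Y^+$, and even if all fibers of $f,f'$ have the generic dimension, this preimage has dimension $2\dim X-\dim Y-2$, which is $\geq\dim X$ as soon as the relative dimension of $f$ is at least $2$. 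So the closure is not known to be analytic, and indeed your argument never actually uses the Fujiki class $\cC$ hypothesis, which is a warning sign.

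The paper's proof is genuinely different: it exploits bigness \emph{on $Y$}, not on the fiber product. One parametrizes the fiberwise graphs by a meromorphic map $\psi\colon Y^+\dashrightarrow {\rm Chow}(X'\times X)$ (Barlet space in the analytic case) sending $y\mapsto\overline{\Gamma(\varphi|_{X'_y})}$, then extends $\psi$ across the codimension-$\geq 2$ set $Y\setminus Y^+$ by Levi's theorem, using projectivity of ${\rm Chow}$ or, in the analytic case, the Hartogs extension property of the Barlet space of a class-$\cC$ space (Ivashkovich). The union of the parametrized cycles is then the desired global graph. In the algebraic case, GAGA on the proper base $Y$ also converts the extended meromorphic $\psi$ to a rational map, handling the analytic-to-algebraic passage that your argument elides by tacitly assuming $\Gamma_\varphi$ is already Zariski-closed.
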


\begin{proof}
First, assume that $X$ and $X'$ are proper algebraic spaces.
Observe that $X$ admits a
bimeromorphic modification 
to a projective variety,
by Artin's theorem that 
proper algebraic spaces
are Moishezon \cite{artin1970algebraization}.
So assume
without loss of generality that $X$
and $X'$ are projective, and still
admit morphisms to $Y$.
Consider the map 
\begin{align*} 
\psi\colon Y^+&\dashrightarrow {\rm Chow}(X'\times X) \\
y&\mapsto \overline{\{(x,\varphi(x))\in X'\times X\,:\,f(x)=y\}}
\end{align*}
which assigns to a point $y\in Y^+$ the 
graph closure of the restriction of $\varphi$
to the fiber of $f'\colon X'\to Y$ over $y$. 
Since $\varphi$ is meromorphic over $Y^+$,
the map $\psi$ is also meromorphic.
By the Levi extension theorem 
\cite[p.~133, Thm.~4]{narasimhan},
and the projectivity
of the Chow variety of $X'\times X$,
it follows that $\psi$ extends to a meromorphic
map on $Y$, which by GAGA is a rational map.
It follows that the closure of the image of $\psi$ 
in ${\rm Chow}(X'\times X)$ is algebraic. So the union
of the algebraic cycles in $X\times X'$ parameterized 
by this image, which is the graph closure of $\varphi$,
is also algebraic. We conclude that
$\varphi$ is a rational map.

More generally,  if $X$, $X'$ are of Fujiki class $\cC$, 
then by \cite[Main Thm.]{barlet1989}
so is the Barlet space of $X'\times X$ 
(the analytic analog of the 
Chow variety).
Then, by 
\cite[Main Thm.]{ivash},
the Barlet
space still admits the Hartogs'/Levi
extension property. Let $\{U_i\}_{i\in I}$
be a Stein open cover of $Y$.
The proposition follows, by applying
{\it loc.cit.}~to $U_i\cap Y^+$, which
proves that $\psi$ extends meromorphically
to the envelope of holomorphy $U_i$.
\end{proof}

\begin{proposition}\label{sha-inj}
The analytification 
maps $\Sha_G^0\to \Sha_{G,\,{\rm an}}^0$ and
$\Sha_G\to \Sha_{G,\,{\rm an}}$
are injective.
\end{proposition}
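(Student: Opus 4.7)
The plan is to reduce both claims to the injectivity of $\Sha \to \Sha_{\rm an}$ and then prove the latter via the algebraization result Proposition \ref{big-open-2}.

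First, I would handle the reduction. By Lemma \ref{an-mult-is-mult}, the multiplicity maps in the algebraic and analytic versions of (\ref{the-answer}) share the target $\Ash^{G,\,\rm pre}$ and agree on it, as already recorded in the commutative diagram (\ref{an-exact}). A five-lemma chase then reduces injectivity of $\Sha_G \to \Sha_{G,\,\rm an}$ to that of $\Sha \to \Sha_{\rm an}$. For the $\cP^0$ case, I would apply the $G$-equivariant long exact sequences associated to the component extension (\ref{component-exact}) in the algebraic and analytic settings; the adjacent terms are equivariant cohomology groups of the constructible sheaf $\mu$ of finite abelian groups supported on $\partial^+$, for which algebraic and analytic cohomology coincide by Artin's comparison theorem. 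A second five-lemma chase then reduces injectivity of $\Sha_G^0 \to \Sha_{G,\,\rm an}^0$ to the already-obtained injectivity of $\Sha_G \to \Sha_{G,\,\rm an}$.

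For the core injectivity $\Sha \to \Sha_{\rm an}$, I would take $t \in \Sha$ mapping to zero analytically. Since $\Sha \subset \Sha_G$ consists of classes of vanishing multiplicity, the construction of Proposition \ref{alg-mult} combined with the torsor structure described in Corollary \ref{twist-cor} produces an abelian fibration $f_t \colon X_t \to Y$ with $X_t$ a proper algebraic space and $t(f_t) = t$. The vanishing of $t$ in $\Sha_{\rm an}$, together with the analytic half of Corollary \ref{twist-cor}, will furnish an analytic translation-bimeromorphism $\varphi \colon X_t^+ \dashrightarrow (V/G)^+$ over $Y^+$. Since both $X_t$ and $V/G$ are proper algebraic spaces, hence Moishezon, Proposition \ref{big-open-2} extends $\varphi$ to a birational map $X_t \dashrightarrow V/G$ over $Y$. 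This extension remains translation-birational, since that property is detected on the generic fiber where it already holds for $\varphi$; the algebraic half of Proposition \ref{define-twist} then forces $t = 0$.

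The main obstacle will be the passage from an analytic bimeromorphism defined only over the big open $Y^+$ to an algebraic birational equivalence over all of $Y$. Proposition \ref{big-open-2}, which combines the Moishezonness of proper algebraic spaces with Hartogs/Levi-type extension for meromorphic maps into Chow or Barlet spaces, is precisely the tool that bridges this gap; verifying its hypotheses in the present setting, in particular confirming that the twisted space $X_t$ is genuinely a proper algebraic space and not merely an analytic object, is the most substantive geometric input.
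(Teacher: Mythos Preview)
Your core geometric argument — realize the class as an honest fibration, then use Proposition~\ref{big-open-2} to promote the analytic bimeromorphism over $Y^+$ to an algebraic birational equivalence — is exactly what the paper does. The paper, however, applies it directly to an arbitrary $t\in\Sha_G$ rather than first reducing to $\Sha$; your detour through the exact sequence~\eqref{an-exact} is valid but unnecessary, since the same twist construction works $G$-equivariantly from the start.

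There is a genuine gap in your second reduction. In the four-lemma argument for $\Sha_G^0\to\Sha_{G,\,{\rm an}}^0$, the relevant piece of the long exact sequence is
\[
H^0_G(Z^+,\cP)\to H^0_G(Z^+,\mu)\to \Sha_G^0\to \Sha_G,
\]
and injectivity at $\Sha_G^0$ requires not only that the maps at $H^0_G(Z^+,\mu)$ and $\Sha_G$ be injective, but also that the map $H^0_G(Z^+,\cP)\to H^0_{G,\,{\rm an}}(Z^+,\cP)$ be \emph{surjective}. Artin comparison handles $\mu$, but says nothing about $H^0$ of the non-constructible group scheme $\cP$. The paper fills this by observing that any $G$-invariant holomorphic section of $\cP\to Z^+$ extends meromorphically into the projective compactification $\cP\hookrightarrow V^+\hookrightarrow V$ over $Z$ via Lemma~\ref{mero-ext}, and is then algebraic by GAGA; hence $H^0_G(Z^+,\cP)=H^0_{G,\,{\rm an}}(Z^+,\cP)$. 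Without this step your diagram chase does not close.
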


\begin{proof} Let $t\in \Sha_G$
and suppose that $t_{\rm an}=0\in \Sha_{G,\,{\rm an}}$.
Associated to $t$, we can produce an
\'etale twist $g^+\colon W^+\to Z^+$ of the 
fibration $h^+\colon V^+\to Z^+$ as follows: Cover 
$Y^+$ by \'etale open sets $\{U_i\}_{i\in I}$ whose inverse
images $\wU_i\subset Z^+$ carry the cohomology 
class $t\in H^1_G(Z^+,\cP)$. Associated to $t$, 
we have an affine-linear twist $m(f)(U_i)$ of 
the $G$-action over $\wU_i$ and 
$G$-equivariant translation-birational maps
$t_{ij} \in \cP(\wU_{ij})$
satisfying the cocycle condition.

We cannot in general glue cylinders
$(V^+)_{\wU_i}$ and $(V^+)_{\wU_j}$ as we usually would,
because $t_{ij}$ is only birational. But $t_{ij}$
is biregular on the smooth loci of fibers
$\cP\simeq (V^+)^{\rm sm}$ and so we may construct
a twist $\cP(t)$ of the group scheme,
gluing $\cP_{\wU_i}$ and $\cP_{\wU_j}$
by the restriction of $t_{ij}$.
As a $G$-equivariant \'etale twist,
$\cP(t)$ 
naturally has the structure of an algebraic 
$G$-space, and so it admits a $G$-equivariant 
compactification $g\colon W\to Z$, which is
guaranteed to exist \cite{nagata}.

Set $f\colon X\to Y$ to be the quotient by $G$; 
then by construction, $t(f)=t$. 
Since $t_{\rm an}=0$, we
have that $f^+\colon X^+\to Y^+$ is 
translation-bimeromorphic to its Albanese fibration, 
by Corollary \ref{twist-cor}.
Since $Y^+\subset Y$ is a big open,
we conclude by Proposition \ref{big-open-2}
that $f$ is translation-birational to its Albanese
fibration. Hence $t=0$, as desired.

It remains to consider $\Sha^0_G$. 
By the component exact
sequence (\ref{component-exact}), 
we have
an exact sequence
$$0\to H^0_G(Z^+,\mu)/{\rm im}\,
H^0_G(Z^+,\cP)\to \Sha^0_G\to \Sha_G$$
and thus the injectivity of $\Sha^0_G\to 
\Sha^0_{G,\,{\rm an}}$ follows from
the statement that $$H^0_G(Z^+,\mu)/{\rm im}\,H^0_G(Z^+,\cP)
=H^0_{G,\,{\rm an}}(Z^+,\mu)/{\rm im}\,H^0_{G,\,{\rm an}}(Z^+,\cP).$$
Since $\mu$ is a constructible group scheme of finite
groups, we have $H^0_G(Z^+,\mu)=
H^0_{G,\,{\rm an}}(Z^+,\mu)$ by 
the \'etale-singular comparison theorem
\cite[Exposé XI, Thm.~4.4(iii)]{artin-comparison}.
We also have 
$H^0_G(Z^+,\cP)=H^0_{G,\,{\rm an}}(Z^+,\cP)$ because 
a $G$-invariant holomorphic section of $\cP\to Z^+$
is automatically algebraic, by 
Lemma \ref{mero-ext} and GAGA, 
applied to the compactification
$\cP\hookrightarrow V^+\hookrightarrow V$ 
over $Z$.
\end{proof}

\begin{remark}
As we saw in Example \ref{multiple-ex3}, the map
$H^2_G(Z^+,\cP)\to H^2_{G,\,{\rm an}}(Z^+,\cP)$ need
not be injective, even over a proper base $Z^+$.
So Proposition \ref{sha-inj} is a 
phenomenon particular to $H^1$ and its geometric
interpretation.
\end{remark}

\begin{proposition} \label{mult-surj}
The map $\Sha_G\to \Sha_{G,\,{\rm an}}$ 
surjects onto the torsion subgroup. \end{proposition}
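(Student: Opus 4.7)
The plan is to combine the Kummer short exact sequence
\[
0 \to \cP^0[N] \to \cP^0 \xrightarrow{[N]} \cP^0 \to 0
\]
for multiplication by $N$ on the identity component $\cP^0$---valid in the \'{e}tale topology because $\cP^0$ is a smooth commutative group scheme in characteristic zero with semiabelian geometric fibers---with Artin's \'{e}tale--singular comparison theorem \cite{artin-comparison} applied to the constructible torsion sheaf $\cP^0[N]$.

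First I would reduce to the analogous statement for $\cP^0$, namely that $\Sha^0_G \to \Sha^0_{G,\,{\rm an}}$ surjects onto the torsion subgroup. The reduction proceeds by comparing the $G$-equivariant long exact sequences, \'etale and analytic, attached to the component sequence $0 \to \cP^0 \to \cP \to \mu \to 0$: since $\mu$ is a constructible sheaf of finite abelian groups supported on $\partial^+$, Artin's theorem gives $H^i_G(Z^+,\mu) \simeq H^i_{G,\,{\rm an}}(Z^+,\mu)$ for all $i$. A torsion class $t_{\rm an} \in \Sha_{G,\,{\rm an}}$ then has an image $\bar t \in H^1_G(Z^+,\mu)$ that is already algebraic, and its lifting obstruction in $H^2_G(Z^+,\cP^0)$ can be controlled by a diagram chase invoking the Kummer comparison for $\cP^0$ at the $H^2$ level.

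For $\cP^0$, the kernel sheaf $\cP^0[N]$ is quasi-finite \'etale over $Z^+$---the fiberwise order drops from $N^{2g}$ on the abelian locus to $N^{g+g_i}$ over a boundary component $\partial_i \subset \partial^+$, because the torus $N$-torsion $\mu_N^{g-g_i}$ contributes fewer points than the corresponding abelian $N$-torsion---but as a sheaf in the \'etale topology it is constructible with finite stalks, stratified by $\partial^+$. Artin's comparison thus yields $H^i_G(Z^+,\cP^0[N]) \simeq H^i_{G,\,{\rm an}}(Z^+,\cP^0[N])$ for all $i$. Given an $N$-torsion class $t^0_{\rm an} \in \Sha^0_{G,\,{\rm an}}$, the analytic Kummer connecting map lifts it to $\tilde t^0_{\rm an} \in H^1_{G,\,{\rm an}}(Z^+,\cP^0[N])$, which via comparison corresponds to an algebraic class $\tilde t^0 \in H^1_G(Z^+,\cP^0[N])$; pushing forward along $\cP^0[N] \hookrightarrow \cP^0$ produces the desired algebraic lift $t^0 \in \Sha^0_G$.

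The main technical obstacle is the careful handling of the constructibility of $\cP^0[N]$ across $\partial^+$: the scheme is quasi-finite but not finite over $Z^+$ (because the torus rank of $\cP^0$ jumps), so one must verify that Artin's comparison applies with the correct constructible stratification---and in particular that the Čech calculations on the nerve of the stratification commute with analytification. A secondary subtlety arises in the reduction from $\cP$ to $\cP^0$, where the diagram chase requires injectivity on the $N$-torsion of the analytification $H^2_G(Z^+,\cP^0) \to H^2_{G,\,{\rm an}}(Z^+,\cP^0)$; this is supplied by the same Kummer comparison one degree higher, reducing it to the surjectivity modulo $N$ of $H^1_G(Z^+,\cP^0) \to H^1_{G,\,{\rm an}}(Z^+,\cP^0)$, which follows from the $\cP^0$-statement.
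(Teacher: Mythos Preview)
Your proposal is essentially the paper's proof: both first establish the $\cP^0$-case via the Kummer sequence $0\to\cP^0[n]\to\cP^0\xrightarrow{\cdot n}\cP^0\to 0$ together with \'etale--singular comparison for the constructible torsion sheaf $\cP^0[n]$, and then reduce $\cP$ to $\cP^0$ through the component sequence by factoring the coboundary $H^1_G(Z^+,\mu)\to H^2_G(Z^+,\cP^0)$ through $H^2_G(Z^+,\cP^0[n])\simeq H^2_{G,\,{\rm an}}(Z^+,\cP^0[n])$. The paper presents these two halves in the opposite order, but the content is the same; your identification of the needed input as ``surjectivity modulo $N$ of $\Sha^0_G\to\Sha^0_{G,\,{\rm an}}$'' is exactly the paper's ``$\cdot n$ has the same cokernel on $\Sha^0_G$ and $\Sha^0_{G,\,{\rm an}}$'', and in justifying that step the paper also invokes Proposition~\ref{sha-inj} (injectivity) to obtain $\Sha^0_G\simeq(\Sha^0_{G,\,{\rm an}})_{\rm tors}$, which your sketch omits but which is already available.
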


\begin{proof} First we check the analogous
statement for the map $\Sha_G^0\to \Sha_{G,\,{\rm an}}^0$.
Since $\cP^0$ is a divisible group scheme,
we have an exact sequence $$0\to \cP^0[n]\to 
\cP^0\xrightarrow{\cdot n} \cP^0\to 0.$$
Taking the analytification and long exact
sequence in cohomology,
we obtain a surjection 
\begin{align*}
H^1_{G,\,{\rm an}}(Z^+,\cP^0[n])
\twoheadrightarrow
\Sha_{G,\,{\rm an}}^0[n].
\end{align*}
Since $\cP^0[n]$
is a finite group scheme, we have 
$H^1_{G,\,{\rm an}}(Z^+,\cP^0[n])=H^1_G(Z^+,\cP^0[n])$
by \'etale-singular comparison.
Thus, any element $t_{\rm an}\in 
(\Sha^{0}_{G,\,{\rm an}})_{\rm tors}$
lifts to an element of $H^1_G(Z^+,\cP^{0}[n])$ for some
$n$, and in turn, the image
$t\in \Sha^{0}_G=H^1_G(Z^+,\cP^{0})$ is 
an inverse image of $t_{\rm an}$.

To prove the original statement, it suffices
to prove that $\Sha_G$ and $\Sha_{G,\,{\rm an}}$
have the same image in 
$H^1_G(Z^+,\mu)\simeq H^1_{G,\,{\rm an}}(Z^+,\mu)$;
the isomorphism holds again by \'etale-singular comparison. Equivalently,
the kernels of the maps
\begin{align*}
H^1_G(Z^+,\mu)&\to H^2_G(Z^+,\cP^0) \\
H^1_{G,\,{\rm an}}(Z^+,\mu)&\to H^2_{G,\,{\rm an}}(Z^+,\cP^0)
\end{align*}
are equal. Since $\mu$ is a finite group scheme,
these coboundary maps factor through the coboundary
maps to
$H^2_G(Z^+,\cP^0[n])\simeq H^2_{G,\,{\rm an}}(Z^+,\cP^0[n])$,
once the exponent of $\mu$ divides $n$.
Thus, it suffices to prove that the maps
\begin{align*}
H^2_G(Z^+,\cP^0[n])&\to H^2_G(Z^+,\cP^0) \\
H^2_{G,\,{\rm an}}(Z^+,\cP^0[n])&\to H^2_{G,\,{\rm an}}(Z^+,\cP^0)
\end{align*}
have the same kernel, or equivalently, that
$\cdot n$ has the same cokernel on $\Sha^0_G$
and $\Sha^0_{G,\,{\rm an}}$. This follows
from the first paragraph and Proposition \ref{sha-inj},
which proves that $\Sha^0_G\simeq 
(\Sha^0_{G,\,{\rm an}})_{\rm tors}$.
\end{proof}

\begin{theorem}\label{ray-mult} 
$\Sha_G\simeq 
(\Sha_{G,\,{\rm an}})_{\rm tors}$
 and  $\Sha^0_G\simeq 
(\Sha^0_{G,\,{\rm an}})_{\rm tors}$.\end{theorem}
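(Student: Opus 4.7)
The plan is to observe that Theorem \ref{ray-mult} is essentially an immediate consequence of the two previous results, Proposition \ref{sha-inj} and Proposition \ref{mult-surj}, once one verifies that the domains $\Sha_G$ and $\Sha^0_G$ are themselves torsion. No new geometric input should be required.

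First, I would recall from Lemma \ref{torsion-lemma-1} that $\Sha_G$ is a torsion group and that $\Sha^0_G$ and $\Sha_G$ are related by a finite-degree isogeny, so that $\Sha^0_G$ is torsion as well. Combined with the injectivity part---Proposition \ref{sha-inj} applied to both $\Sha_G \hookrightarrow \Sha_{G,\,{\rm an}}$ and $\Sha^0_G \hookrightarrow \Sha^0_{G,\,{\rm an}}$---this shows that the two analytification maps realize the source as a subgroup of the torsion subgroup of the target. So all that remains is surjectivity of each map onto the torsion subgroup.

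For $\Sha_G \to \Sha_{G,\,{\rm an}}$, surjectivity onto torsion is exactly the content of Proposition \ref{mult-surj}. For the analogue with $\Sha^0_G$, I would point out that this is already established in the first paragraph of the proof of Proposition \ref{mult-surj}: using the divisibility of $\cP^0$, the Kummer sequence
\[
0 \to \cP^0[n] \to \cP^0 \xrightarrow{\cdot n} \cP^0 \to 0
\]
and \'etale--singular comparison for the finite group scheme $\cP^0[n]$, one obtains that every $n$-torsion class in $\Sha^0_{G,\,{\rm an}}$ lifts from $H^1_G(Z^+,\cP^0[n])=H^1_{G,\,{\rm an}}(Z^+,\cP^0[n])$ and hence lies in the image of $\Sha^0_G$.

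There is no serious obstacle: the theorem is essentially a bookkeeping consequence packaging Lemma \ref{torsion-lemma-1}, Proposition \ref{sha-inj}, and Proposition \ref{mult-surj} into a single statement. If anything, the only subtlety to flag is that surjectivity onto torsion for $\Sha^0_G$ is not stated as a separate result but must be extracted from within the proof of Proposition \ref{mult-surj}, so the proof should explicitly cite that intermediate step rather than the final statement of that proposition.
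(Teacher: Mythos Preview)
Your proposal is correct and follows exactly the same approach as the paper, whose proof is the single line ``This follows from Propositions \ref{sha-inj} and \ref{mult-surj}.'' Your additional care in noting that the $\Sha^0_G$ case must be extracted from the first paragraph of the proof of Proposition \ref{mult-surj}, and in invoking Lemma \ref{torsion-lemma-1} to ensure the sources are torsion, is accurate and only makes the argument more explicit.
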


\begin{proof} This follows from 
Propositions \ref{sha-inj} and \ref{mult-surj}.\end{proof}

This theorem can be viewed as
a version of \cite[Prop.~XIII.2.3]{ray70}
that allows multiple fibers.
Note though that $\Ash^G\to \Ash^{G,\,{\rm an}}$
may not be surjective, despite both groups
being finite abelian. This is because non-torsion
elements of $\Sha_{G,\,{\rm an}}$ can map to torsion
elements in $\Ash^{G,\,{\rm an}}$. 
See Example \ref{multiple-ex3}.

\begin{remark}\label{real}
Let $\oG \coloneqq {\rm Gal}\,(\overline{\bC(Y)}/\bC(Y)).$ 
Consider the groups $\Sha_G$ for varying
groups $G$, and branched $G$-covers $Z^+\to Y^+$.
These form a filtration of the Weil--Ch\^atelet group
$H^1(\oG, \,X^{\rm Alb}_{\overline{\eta}})$,
$\eta= \Spec \bC(Y)$,
parameterizing
abelian torsors over the geometric generic 
point---each such torsor can be compactified to produce
an abelian fibration, whose multiple and non-semistable
fibers in codimension $1$ can be cleared by
an appropriate choice of $Z^+\to Y^+$ and is thus
represented by an element of $\Sha_G$ 
for some finite quotient 
$\oG\twoheadrightarrow G$.
It follows that 
$$H^1(\overline{G}, \,X^{\rm Alb}_{\overline{\eta}}) = 
\lim_{\oG\twoheadrightarrow G} \,\Sha_G.$$

More geometrically, for any effective Weil divisor 
$n_1B_1+\cdots +n_kB_k$ we define
a root stack $$Y^+(n_1B_1+\cdots+ n_kB_k),$$
where $Y^+\subset Y$ is a big open set 
of the smooth locus, and $Y^+\cap 
(\partial_{\rm H}\cup_i B_i)$ 
is a smooth divisor. 
Then, we may impose a root structure of 
order $n_i$ along $B_i\cap Y^+$. These root stacks
form an inverse system (via the support of the Weil
divisor, the choice of open set $Y^+$,
and the divisibility of the $n_i$). We
take the infinite root stack $\sqrt[\infty]{Y}\,\!^+$
as the inverse limit. Philosophically, 
$$``H^1(\overline{G},X^{\rm Alb}_{\overline{\eta}})=
H^1(\!\sqrt[\infty]{Y}\,\!^+,\cP)."$$

The advantage of the righthand side over the 
lefthand side is that (i)
the filtration of abelian torsors by their multiple
fiber structure is clarified 
(it comes from Leray of the pushforward 
map to the coarse space, see (\ref{the-answer})),
and (ii) $\sqrt[\infty]{Y}\,\!^+$ 
is an inverse limit of big opens, so
$$``H^1(\!\sqrt[\infty]{Y}\,\!^+,\cP)\simeq 
(H^1_{\rm an}(\!\sqrt[\infty]{Y}\,\!^+,\cP))_{\rm tors}."$$
\end{remark}

\begin{warning}
By Proposition \ref{alg-mult}
and Corollary \ref{twist-cor}, we have 
proven that birational classes of abelian 
fibrations $f\colon X\to Y$
inducing $(Y,B,\bfM,\Phi^o)$ are uniquely
represented by elements of the group $\Sha_G$.
But observe that not all elements of $\Sha_G$
produce an abelian fibration whose boundary equals
$B$. In fact, this is ill-defined---the divisor
$B$ depends on the choice of birational model
of $f\colon X\to Y$ while an element of
$\Sha_G$ only specifies
a birational class.

But, because we chose
$G$ and the cover $Z^+\to Y^+$ 
in terms of the support and 
coefficients of $B$,
we know that the birational class
of such an abelian fibration whose boundary
divisor is $B$ can be (uniquely) represented 
by some $t(f)\in \Sha_G$.
\end{warning}

\subsection{Global analysis of the Tate--Shafarevich group}
\label{ts3}

There is an induced
long exact sequence for the short exact sequence
(\ref{ses}) in analytic sheaf 
cohomology, and in $G$-equivariant analytic sheaf
cohomology. Since we have already related $\Sha_G$
and $\Sha^0_G$ by a bounded degree isogeny 
(Cor.~\ref{torsion-lemma-2}), 
we will focus on the diagram
\begin{align}\label{biggg}\begin{aligned}
\xymatrix{
& \Sha^0_G \ar[d]
& \\
H^1_{G,\,{\rm an}}(Z^+,\mathfrak{p}) 
\ar[r] & \Sha^0_{G,\,{\rm an}}
\ar[r] &  H^2_{G}(Z^+,\Gamma)
}
\end{aligned}\end{align}
where the vertical arrow is the analytification morphism.
The group
$H^2_{G}(Z^{+},\Gamma)$ 
is finitely generated
abelian. So the image of 
$\Sha^0_G$ in it,  
being both torsion (cf.~Lem.~\ref{torsion-lemma-1}) 
and finitely generated, is a finite abelian group 
$N^2\subset 
H^2_{G}(Z^{+},\Gamma).$
Let $\overline{\Sha}_{G,\,{\rm an}}^0$ denote the inverse
image of $N^2$ in $\Sha_{G,\,{\rm an}}^0$. 
Let $N^1$ denote
the image of $ 
H^1_{G}(Z^{+},\Gamma)$ in 
$H^1_{G,\,{\rm an}}(Z^+,\mathfrak{p})$.
Then, we have a diagram of the form
\begin{align}\begin{aligned}\label{bigger}
\xymatrix{
 & & \Sha_{G}^0\ar[d] 
 & &
 \\
0 \ar[r] & H^1_{G,\,{\rm an}}(Z^+,\mathfrak{p})/N^1
\ar[r] & \overline{\Sha}_{G,\,{\rm an}}^0
\ar[r] &  N^2 \ar[r] & 0
}\end{aligned}
\end{align}
with the bottom row short exact.

\begin{lemma}\label{lem:h1} We have that
$H^1_{G,\,{\rm an}}(Z^{+},\mathfrak{p})\simeq 
H^1_{\rm an}(Z^{+},\mathfrak{p})^G$ and this isomorphism
induces an isogeny of bounded degree between $N^1$ and 
${\rm im}(H^1(Z^{+},\Gamma)^G\to H^1_{\rm an}(Z^+,\mathfrak{p})^G)$.
\end{lemma}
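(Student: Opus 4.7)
My plan is to establish both assertions by exploiting the fact that $\mathfrak p$ is a sheaf of $\bC$-vector spaces, so that $G$-invariants and $G$-cohomology collapse, and then to compare this collapse with the corresponding Cartan--Leray sequence for $\Gamma$, whose failure to collapse is controlled by a finite group.

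First, I would treat the isomorphism $H^1_{G,\,{\rm an}}(Z^+,\mathfrak p)\simeq H^1_{\rm an}(Z^+,\mathfrak p)^G$. The Cartan--Leray/Hochschild--Serre spectral sequence associated with the $G$-action on $Z^+$ takes the form
\begin{equation*}
E_2^{p,q}=H^p\!\bigl(G,\,H^q_{\rm an}(Z^+,\mathfrak p)\bigr)\Longrightarrow H^{p+q}_{G,\,{\rm an}}(Z^+,\mathfrak p).
\end{equation*}
Since $\mathfrak p$ is the Lie algebra of $\cP$, its analytic cohomology groups are $\bC$-vector spaces with a linear $G$-action, and by Maschke's theorem $H^p(G,V)=0$ for all $p>0$ whenever $V$ is a $\bQ$-vector space acted on by a finite group. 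The spectral sequence therefore degenerates at $E_2$ and gives the claimed identification in every degree. (The same reasoning applies to the \'etale cohomology of $\mathfrak p$, but only the analytic statement will be used.)

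Next, for the statement about $N^1$, I would write down the analogous Cartan--Leray sequence for the sheaf $\Gamma$,
\begin{equation*}
E_2^{p,q}=H^p\!\bigl(G,\,H^q(Z^+,\Gamma)\bigr)\Longrightarrow H^{p+q}_G(Z^+,\Gamma),
\end{equation*}
and extract the five-term exact sequence
\begin{equation*}
0\to H^1\!\bigl(G,H^0(Z^+,\Gamma)\bigr)\to H^1_G(Z^+,\Gamma)\xrightarrow{\alpha} H^1(Z^+,\Gamma)^G\xrightarrow{\beta} H^2\!\bigl(G,H^0(Z^+,\Gamma)\bigr).
\end{equation*}
Because $\Gamma$ is a constructible sheaf of finitely generated abelian groups on the quasi-compact space $Z^+$, the group $H^0(Z^+,\Gamma)$ is finitely generated, and hence the two outer terms are finite; their orders depend only on $G$ and $H^0(Z^+,\Gamma)$, both of which are determined by $(Y,B,\bfM,\Phi^o)$. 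Thus $\alpha$ is an isogeny of degree bounded uniformly in terms of $(Y,B,\bfM,\Phi^o)$.

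Finally, I would assemble the commutative square
\begin{equation*}
\xymatrix{
H^1_G(Z^+,\Gamma)\ar[r]\ar[d]_{\alpha} & H^1_{G,\,{\rm an}}(Z^+,\mathfrak p)\ar[d]^{\simeq}\\
H^1(Z^+,\Gamma)^G\ar[r] & H^1_{\rm an}(Z^+,\mathfrak p)^G,
}
\end{equation*}
where the right vertical arrow is the isomorphism proved above. The subgroup $N^1$ is by definition the image of the top row, which via the right-hand isomorphism coincides with the image of $\alpha(H^1_G(Z^+,\Gamma))$ under the bottom row, while the group we wish to compare it to is the image of the entire bottom row. Since $\alpha$ has finite kernel and finite cokernel of uniformly bounded order, its image sits inside $H^1(Z^+,\Gamma)^G$ as a subgroup of uniformly bounded index; pushing forward to $H^1_{\rm an}(Z^+,\mathfrak p)^G$ only shrinks kernels and cokernels, so $N^1$ is contained in ${\rm im}\!\bigl(H^1(Z^+,\Gamma)^G\to H^1_{\rm an}(Z^+,\mathfrak p)^G\bigr)$ with cokernel of bounded order, completing the isogeny statement. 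No step in this argument is genuinely hard; the only delicate point is ensuring that $H^0(Z^+,\Gamma)$ is finitely generated, which follows from quasi-compactness of $Z^+$ together with constructibility and finite generation of the stalks of $\Gamma$ recalled in \S\ref{ts2}.
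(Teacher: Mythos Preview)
Your proposal is correct and follows essentially the same approach as the paper: both arguments use the Cartan--Leray spectral sequence, invoke Maschke's theorem to kill the group cohomology of the $\bC$-vector space $H^0_{\rm an}(Z^+,\mathfrak p)$ for the first isomorphism, and then observe that $H^i(G,H^0(Z^+,\Gamma))$ is finite of bounded size for $i=1,2$ to deduce the isogeny statement. Your write-up is more detailed (you spell out the five-term sequence and the commutative square explicitly, and you note the full $E_2$-degeneration rather than just the vanishing in degrees $1,2$), but the underlying mechanism is identical.
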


\begin{proof} We again apply the Cartan--Leray
spectral sequence, but this time $H^i(G, H^0_{\rm an}
(Z^{+},\mathfrak{p}))$ vanishes for $i=1,2$
by Maschke's theorem,
since $H^0_{\rm an}(Z^+,\mathfrak{p})$
is a $\bC$-vector space. For $N^1$ it suffices
to observe that $H^i(G, H^0
(Z^{+},\Gamma))$ for $i=1,2$
are finite abelian of bounded size.
\end{proof}

\begin{lemma} \label{identify-N2}
$N^2=H^2_G(Z^+,\Gamma)_{\rm tors}$.
\end{lemma}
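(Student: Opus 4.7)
The plan is to read off the statement from the analytic long exact sequence associated to the exponential sequence \eqref{ses}, exploiting two facts already in hand: that $\mathfrak{p}$-cohomology is a $\bC$-vector space (hence torsion-free and divisible), and that Theorem \ref{ray-mult} identifies $\Sha^0_G$ with the torsion subgroup of $\Sha^0_{G,\,{\rm an}}$.

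The containment $N^2\subseteq H^2_G(Z^+,\Gamma)_{\rm tors}$ is immediate, since $N^2$ is a quotient of $\Sha^0_G$, which is torsion by Lemma \ref{torsion-lemma-1}. For the reverse containment, I would work with the $G$-equivariant analytic long exact sequence
\begin{equation*}
H^1_{G,\,{\rm an}}(Z^+,\mathfrak{p})\;\to\;\Sha^0_{G,\,{\rm an}}\;\xrightarrow{\;\delta\;}\;H^2_{G,\,{\rm an}}(Z^+,\Gamma)\;\to\;H^2_{G,\,{\rm an}}(Z^+,\mathfrak{p}),
\end{equation*}
together with the identification $H^2_{G,\,{\rm an}}(Z^+,\Gamma)=H^2_G(Z^+,\Gamma)$ coming from the \'etale--singular comparison theorem for the constructible sheaf $\Gamma$ (as used already in the proof of Proposition \ref{mult-surj}). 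Since $H^2_{G,\,{\rm an}}(Z^+,\mathfrak{p})$ is a $\bC$-vector space, it is torsion-free, so every class $\alpha\in H^2_G(Z^+,\Gamma)_{\rm tors}$ lies in $\ker$ and lifts to some $\tilde\alpha\in\Sha^0_{G,\,{\rm an}}$.

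It then remains to upgrade $\tilde\alpha$ to a \emph{torsion} lift, after which Theorem \ref{ray-mult} places it in $\Sha^0_G$ and hence puts $\alpha$ in $N^2$. For this, pick $n$ with $n\alpha=0$, so that $n\tilde\alpha\in\ker\delta={\rm im}(H^1_{G,\,{\rm an}}(Z^+,\mathfrak{p})\to\Sha^0_{G,\,{\rm an}})$. By Lemma \ref{lem:h1}, this image is a quotient of the $\bC$-vector space $H^1_{\rm an}(Z^+,\mathfrak{p})^G$, and is therefore divisible; thus I can write $n\tilde\alpha=n\beta$ for some $\beta$ in this image, and replace $\tilde\alpha$ by $\tilde\alpha-\beta$, which is $n$-torsion and still maps to $\alpha$. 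This gives $\alpha\in N^2$, completing the equality.

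The steps are all formal once the set-up is in place; the only point that requires a little care is the compatibility of the two cohomology theories in degree $2$ for the sheaf $\Gamma$, but this is the same comparison invoked in Proposition \ref{mult-surj}, so no new obstacle is expected.
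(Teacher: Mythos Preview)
Your argument is correct and follows essentially the same route as the paper: lift a torsion class in $H^2_G(Z^+,\Gamma)$ to $\Sha^0_{G,\,{\rm an}}$ using torsion-freeness of $H^2_{G,\,{\rm an}}(Z^+,\mathfrak p)$, then correct to a torsion lift via divisibility of the image of $H^1_{G,\,{\rm an}}(Z^+,\mathfrak p)$, and finally invoke $\Sha^0_G\simeq(\Sha^0_{G,\,{\rm an}})_{\rm tors}$. The paper's proof compresses this into two sentences (citing Proposition~\ref{mult-surj} and torsion-freeness of the next term), leaving the divisibility step implicit; you have spelled it out.

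One small remark: your aside about needing \'etale--singular comparison for $\Gamma$ in degree~$2$ is unnecessary. The sheaf $\Gamma$ is only defined analytically (as the kernel of the analytic exponential map), and its stalks are free $\bZ$-modules, not finite groups; the paper's $H^2_G(Z^+,\Gamma)$ simply \emph{means} the singular/analytic cohomology. No comparison theorem is invoked or needed here, and Proposition~\ref{mult-surj} only uses comparison for the finite sheaves $\cP^0[n]$ and $\mu$, not for $\Gamma$.
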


\begin{proof}
Note that $\Sha_G^0\twoheadrightarrow
(\Sha_{G,\,{\rm an}}^0)_{\rm tors}$ 
surjects (Prop.~\ref{mult-surj}) and
$(\Sha^0_{G,\,{\rm an}})_{\rm tors}\twoheadrightarrow
H^2_G(Z^+,\Gamma)_{\rm tors}$ 
surjects, because the next
term in the long exact sequence (\ref{biggg}),
$H^2_{G,\,{\rm an}}(Z^+,\mathfrak{p})$,
is torsion-free.
\end{proof}

Suppose that $t_1$ and $t_2\in \Sha^0_G$ 
map to the same element of $N^2$. Then their difference 
$t_1-t_2$ maps into 
$H^1_{\rm an}(Z^+,\mathfrak{p})^G/N^1$.
But since 
$H^1_{\rm an}(Z^+,\mathfrak{p})^G$
is a $\bC$-vector space, their difference
lands in $\bQ N^1 /N^1\subset \bC N^1/
N^1$ because
$\Sha^0_G$ is torsion. Here $\bC N^1$
means the $\bC$-span of $N^1$ as a subset of 
$H^1_{\rm an}(Z^+,\mathfrak{p})^G$
and not the abelian
group $N^1\otimes \bC$.
Hence, we have a diagram of the form
\begin{align}\begin{aligned}\label{smaller}
\xymatrix{
& & \Sha^0_G \ar[d] & & \\
0\ar[r] & \bC N^1/N^1
\ar[r] & 
\overline{\underline{\Sha}}_{G,\,{\rm an}}^0 
\ar[r] & N^2\ar[r] & 0
}
\end{aligned}
\end{align}
with the bottom row short exact,
for a subgroup 
$\overline{\underline{\Sha}}_{G,\,{\rm an}}^0
\subset \overline{\Sha}_{G,\,{\rm an}}^0$. 

\begin{remark}\label{alternate-thing} 
We could have taken the first 
term to be $\bQ N^1/N^1$ also.
Then the vertical map would 
instead be an isomorphism,
by Theorem \ref{ray-mult},
but such twists are all connected by
analytics twists in $\bC N^1/N^1$.
\end{remark}

\begin{remark}
The short exact sequence (\ref{smaller})
is a significant
improvement over (\ref{bigger}), since
$H^1_{\rm an}(Z^+,\mathfrak{p})^G$ is likely
an infinite-dimensional vector space,
whereas $\bC N^1$, being the $\bC$-span 
of a finitely
generated lattice, is necessarily 
finite-dimensional.
We will identify $\bC N^1$ 
in \S~\ref{sec:continuous-ts}.
\end{remark}

We now find an isogeny of the exact sequence
(\ref{ses}) to a simpler one. As the kernel of 
the exponential map, there is an isomorphism 
$\Gamma_z\simeq H_1(V_z,\bZ)$ for a 
smooth fiber $V_z$. 
In \cite[Prop.~4.4]{AR2021}, 
the polarization is used
to identify with $H^1(V_z,\bZ)$. We do the same
here.

\begin{proposition}\label{isog} 
A relative polarization 
of type ${\bf d}$ on a Kulikov fibration 
$h^+\colon V^+\to Z^+$ with section $s^+$
defines an isogeny, of 
degree bounded solely in terms
of ${\bf d}$, between \eqref{ses}
and the pushforward of the exponential exact sequence
$$
0\to R^1h^+_*\underline{\bZ}_{V^+} 
\to
R^1h^+_*\cO_{V^+} 
\to \cJ \to 1.
$$
Here $\cJ$ is an
extension of the relative Jacobian 
of the smooth fibration
to $Z^+$.
\end{proposition}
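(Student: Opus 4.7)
The strategy is to construct the claimed morphism first over the smooth locus $Z^{\mathrm{sm}} \coloneqq Z^+\setminus \partial^+$ of $h^+$, where everything is classical, and then extend it across the semiabelian locus $\partial^+$ using the good Hodge-theoretic behavior of Kulikov models.

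Over $Z^{\mathrm{sm}}$, the section $s^+$ identifies $V^+|_{Z^{\mathrm{sm}}}$ with $\cP^0|_{Z^{\mathrm{sm}}}$, and the exponential sequence $0 \to \Gamma \to \mathfrak{p} \to \cP^0 \to 1$ becomes, fiberwise, the standard uniformization $0\to H_1(V_z,\bZ)\to \mathrm{Lie}(V_z)\to V_z\to 0$. On the other hand, the cokernel sequence for $R^1h^+_*\underline{\bZ}\hookrightarrow R^1h^+_*\cO$ restricts to $0\to H^1(V_z,\bZ)\to H^1(V_z,\cO_{V_z})\to \mathrm{Pic}^0(V_z)\to 0$. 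The relative polarization $L$ of type $\mathbf{d}$ defines the classical polarization isogeny $\phi_L \colon \cP^0|_{Z^{\mathrm{sm}}} \to \cJ|_{Z^{\mathrm{sm}}}$, $v\mapsto t_v^*L\otimes L^{-1}$, which is fiberwise an isogeny of degree $\chi(L_z)^2 = \prod_i d_i^2$. Its derivative at the identity, together with its induced map on $H_1$, produces a morphism from the first exact sequence to the second whose outer two vertical arrows are isogenies of the same bounded degree.

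I would then extend this morphism of short exact sequences across $\partial^+$. The key inputs are: (i) $\mathfrak{p}$ is locally free of rank $g$ on $Z^+$ by construction of $\cP^0$ (Rem.~\ref{p-is-kul}); (ii) $R^1h^+_*\cO_{V^+}$ is locally free of rank $g$ on $Z^+$, because Kulikov degenerations of abelian varieties have du Bois fibers and the $E_1$-degeneration of the log Hodge-to-de Rham spectral sequence of Steenbrink yields constancy of Hodge numbers, so Grauert's theorem applies; (iii) $\Gamma$ and $R^1h^+_*\underline{\bZ}_{V^+}$ are both constructible sheaves whose stalks at $z\in \partial_i$ are identified via the limit mixed Hodge structure attached to $h^+$ at $z$ (see the construction of $\cP^0_z$ from the mixed Hodge data below \eqref{extension-exact} and the description of $\Gamma|_{U^+}$ in \eqref{gamma-exact}). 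The polarization $L$, being defined on all of $V^+$, induces a polarization of this limit mixed Hodge structure, hence a morphism from the Deligne 1-motive attached to $V^+$ at $z$ to its dual, which is an isogeny of degree $\prod d_i^2$ compatibly with the smooth locus. Hartogs extension across $\partial^+$ (a smooth divisor) then identifies the morphism constructed over $Z^{\mathrm{sm}}$ with a globally defined morphism of sheaves on $Z^+$; its degree on fibers is constant and equal to $\prod_i d_i^2$.

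The main obstacle, and where most of the work sits, will be step (iii) at the boundary: one must match the filtration $\mathscr{W}_\bullet^{\mathrm{lim}}$ on $\Gamma_z$ (recording the toric/abelian split of $\cP^0_z$) with the weight filtration on $(R^1h^+_*\underline{\bZ})_z$ induced by the Clemens--Schmid / Steenbrink theory for the semistable Kulikov degeneration, and to check that the polarization-induced map is compatible with both. Once this is verified locally at each component $\partial_i$ via Mumford's construction (Constr.~\ref{mumford-construction}), which realizes $\cP^0_z$ concretely as the Albanese of the limit 1-motive, globality and boundedness of the degree are automatic.
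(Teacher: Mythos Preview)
Your overall strategy is sound and would work, but it differs from the paper's approach, and one step is misstated.

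The paper does not argue by ``construct over $Z^{\mathrm{sm}}$, then extend.'' Instead, it builds the identification globally on $Z^+$ from the start. The key observation is that the section $s^+$ gives a canonical isomorphism $\mathfrak{p} = h^+_* T_{V^+/Z^+} \simeq N_{s^+}$, while the log cotangent side gives $h^+_*\Omega^1_{V^+/Z^+}(\log \delta^+) \simeq N_{s^+}^*$, and the latter is by definition the Deligne canonical extension $\overline{\mathcal{F}}\,\!^1$. Thus $\mathfrak{p}\simeq (\overline{\mathcal{F}}\,\!^1)^*$ globally. Since $L$ extends to a nondegenerate symplectic form on $\overline{\mathcal{F}}\,\!^0$ with $\overline{\mathcal{F}}\,\!^1$ Lagrangian, one gets $(\overline{\mathcal{F}}\,\!^1)^*\simeq_L \overline{\mathcal{F}}\,\!^0/\overline{\mathcal{F}}\,\!^1 \simeq R^1h^+_*\cO_{V^+}$ as bundles on all of $Z^+$; this is the middle vertical isomorphism, and no extension argument is needed. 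The integral lattices $\Gamma=\mathscr{W}_{-1}$ and $R^1h^+_*\underline{\bZ}_{V^+}=\mathcal{W}_1$ (the latter via Lemma~\ref{gysin-inj}) are then compared directly under $L$, giving the bounded-index inclusion $L(\mathscr{W}_{-1})\hookrightarrow \mathcal{W}_1$.

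Your route via the fiberwise polarization isogeny $\phi_L$ and limit $1$-motives at the boundary recovers the same maps, but with more bookkeeping. The one genuine error is the invocation of ``Hartogs extension across $\partial^+$'': Hartogs applies in codimension $\geq 2$, and $\partial^+$ is a divisor, so no such extension is automatic. What actually makes the extension work in your argument is precisely what you wrote in the sentence before---that $L$ is globally defined on $V^+$ and hence polarizes each limit $1$-motive compatibly; this is a Deligne-canonical-extension statement, not Hartogs. If you rephrase that step (or, better, adopt the paper's global identification via $N_{s^+}$ and the Lagrangian condition), the argument goes through.
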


\begin{proof}
See \cite[Prop.~4.1]{AR2021},
\cite[Thm.~A]{dutta} for
the same result, in the special case
of Lagrangian fibrations. 
Let $N_{s^+}$ denote the pullback along the section $s \colon Z^{+} \to V^+$ of the normal
bundle of $s(Z^{+})\subset V^{+}$. 
We have
a natural isomorphism
$$
\mathfrak{p}=h^+_*T_{V^+/Z^+}\to 
N_{s^+}
$$
because the Lie algebra of infinitesimal
vertical automorphisms is identified with the relative
tangent space at the origin of the corresponding
Lie group. There exists an isomorphism $\cP\simeq (V^+)^{\rm sm}$ which sends the zero section of $\cP$ to $s^+(Z^+)$; see Remark \ref{p-is-kul}.

Let $\delta^+$
denote the inverse image of $\partial^+$, so that 
$\delta^+\subset V^+$ is an snc divisor, by the
hypothesis that $h^+\colon V^+\to Z^+$ is Kulikov.
Let $\Omega^1_{V^+/Z^+}(\log \delta^+)$ denote 
the sheaf of relative logarithmic 1-forms.
Then, we also have an isomorphism 
$$
h_*^+\Omega^1_{V^+/Z^+}(\log \delta^+)\to 
N_{s^+}^\ast.
$$
This isomorphism
is clear over the smooth fibration 
$h^o\colon V^o\to Z^o$, and can be 
extended to the Mumford construction.
The former is, by definition, the Deligne canonical
extension $\overline{\mathcal{F}}\,\!^1$ of the Hodge filtered
piece $\mathcal{F}^1=h^o_*\Omega^1_{V^o/Z^o}$. 
Thus, we see that 
$$\mathfrak{p}\simeq N_{s^+}\simeq (N_{s^+}^*)^*\simeq (\overline{\mathcal{F}}\!\,^1)^\ast .$$
With respect to the Deligne 
canonical extension of
the relative polarization $L$ of type ${\bf d}$,
which is monodromy-invariant and so extends
naturally to a non-degenerate
symplectic form on $\overline{\mathcal{F}}\,\!^0$,
the bundle $\overline{\mathcal{F}}\,\!^1\subset 
\overline{\mathcal{F}}\,\!^0$ is a Lagrangian subbundle.
So we have an identification
\begin{align}\label{ident}
\mathfrak{p}\simeq 
(\overline{\mathcal{F}}\!\,^1)^\ast \simeq_L 
\overline{\mathcal{F}}\,\!^0/
 \overline{\mathcal{F}}\!\,^1\simeq 
R^1h^+_*\cO_{V^+}
\end{align} 
with the Deligne canonical extension
of $\mathcal{F}^0/\mathcal{F}^1$. 

We have
$\cP^0 = \overline{\mathscr{F}}\,\!^{-1}/
(\overline{\mathscr{F}}\,\!^0 + \mathscr{W}_{-1})\simeq 
(\overline{\mathcal{F}}\!\,^1)^*/\mathscr{W}_{-1}$ 
where $\overline{\mathscr{F}}\,\!^\bullet$, 
$\mathscr{W}_\bullet$ are the Hodge and 
weight filtrations, see (\ref{w-quotient})---here
we interpret $\mathscr{W}_{-1}$ as a closed,
constructible subsheaf
of the Deligne canonical extension, given
by the topological closure of the flat sub-bundle of
integral sub-lattices of the smooth fibers.
Fiberwise, $\mathscr{W}_{-1}$ 
agrees with the weight $-1$
part of the limit MHS at any given point. 
Its fiber is, in particular, a rank $2g$ lattice 
at all points in $Z^o$, and drops to rank $g+g_i$
over $\partial_i$.
Applying the middle 
identification of
(\ref{ident}),  
\begin{align*}
\cP^0\simeq_L \overline{\mathcal{F}}\,\!^0/(\overline{\mathcal{F}}\!\,^1+L(\mathscr{W}_{-1}))\xrightarrow{\,\phi\,} 
\overline{\mathcal{F}}\,\!^0/(\overline{\mathcal{F}}\!\,^1+\mathcal{W}_1)\simeq \cJ\end{align*} where $\overline{\mathcal{F}}\,\!^\bullet$, $\mathcal{W}_\bullet$
denote the Hodge and weight 
filtration on the limit 
mixed Hodge structures on first cohomology, 
and $\phi$ is the quotient
map induced by the 
inclusion of the finite index sub-constructible
sheaf  
$L(\mathscr{W}_{-1})\hookrightarrow \mathcal{W}_1
\simeq R^1h^+_*\underline{\bZ}_{V^+}$ 
(see Lemma \ref{gysin-inj}).
Note that the fiberwise degree of $\phi$
is bounded by the type of $L$. 
The proposition follows.
\end{proof}

\begin{lemma}\label{gysin-inj} Let 
$h^+\colon V^+\to Z^+$ be 
a semistable fibration.
We have 
$R^1h^+_*\underline{\bZ}_{V^+} \simeq
\mathcal{W}_1$ where $\mathcal{W}_1$
is the weight $1$ part of the 
Deligne canonical
extension of first cohomology.
\end{lemma}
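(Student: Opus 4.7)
The plan is to compare both sides stalkwise on $Z^+$. Over the smooth locus $Z^o = Z^+ \setminus \partial^+$, both $R^1h^+_*\underline{\bZ}_{V^+}$ and $\mathcal{W}_1$ restrict to the same local system $R^1h^o_*\underline{\bZ}_{V^o}$ underlying the VHS of weight $1$, so the nontrivial content is stalk agreement at each point $z \in \partial^+$. Since both sheaves are constructible with respect to the smooth stratification $\partial^+ = \bigcup_i \partial_i$ (and the discriminant is smooth on $Z^+$ by choice of the big open), a stalkwise comparison that is compatible with the nearby-cycle specialization maps will globalize.

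First I would compute the left-hand stalk. By proper base change, $(R^1h^+_*\underline{\bZ}_{V^+})_z = H^1(V_z,\bZ)$. Since $h^+$ is Kulikov (Def.~\ref{def:Kulikov}), in an analytic neighborhood of $z \in \partial_i$ the fibration is a pullback of the standard Mumford model (Constr.~\ref{mumford-construction}), so $V_z$ is a normal-crossings union of toric compactifications of abelian varieties glued along toric strata in a pattern encoded by the tiling $\cT_i$. A Mayer--Vietoris (or Čech) computation on this normal crossings variety then identifies $H^1(V_z,\bZ)$ canonically with the integral kernel of the logarithmic monodromy $N_i$ acting on $H^1_{\rm lim}$, i.e., with the $W_1$-part of the integral limit MHS; this is the semistable version of the local invariant cycle theorem, and the Mumford combinatorics upgrade it to an integral statement.

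Next I would identify the right-hand stalk. By construction, $\mathcal{W}_\bullet$ on the Deligne canonical extension is the closure of the flat integral subbundle $\mathcal{W}_\bullet|_{Z^o}$ inside the canonical extension; fiberwise at $z \in \partial_i$ it coincides with the weight filtration of the integral limit mixed Hodge structure. Since $N_i^2 = 0$ in the Kulikov abelian setting, one has $\ker(N_i) = \mathcal{W}_1$, matching the left-hand computation. The identifications on the two sides are canonical and compatible with the specialization maps from a nearby smooth fiber (both realize ``pass to monodromy invariants''), so the local isomorphisms glue to the asserted global isomorphism of constructible sheaves.

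The main obstacle is the integrality of the invariant cycle identification: rationally, $H^1(V_z,\bQ) \xrightarrow{\sim} \ker(N_i)_\bQ$ is immediate from Clemens--Schmid, but a priori one could lose torsion information on either side. The Kulikov hypothesis is exactly what removes this: the explicit toric description of the Mumford model gives the integral Mayer--Vietoris computation directly, and the integral $\mathcal{W}_1$ is manifestly saturated in $R^1h^+_*\cO_{V^+}$. Alternatively one can cite Steenbrink's integral refinement; either way, the semistable Kulikov structure is essential and is what the hypothesis of the lemma provides.
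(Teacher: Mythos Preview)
Your proposal is correct and follows the same overall strategy as the paper: compare stalks at discriminant points, invoke the local invariant cycle theorem (Clemens--Schmid) for the rational statement, then upgrade to $\bZ$. The difference lies in how integrality is handled. You propose a Mayer--Vietoris computation on the explicit Mumford central fiber (or a citation to Steenbrink's integral refinement); the paper instead observes that it suffices to show the cokernel of the specialization $H^1(T,\bZ)\to H^1(T_u,\bZ)$ is torsion-free, and does this via the universal coefficient identification $H^2(T,T_u,\bZ)_{\rm tors}\simeq H_1(T,T_u,\bZ)_{\rm tors}$ together with the surjectivity of $\pi_1(T_u)\to\pi_1(T)$, which holds simply because the central fiber is reduced. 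The paper's route is shorter and uses only the reducedness coming from semistability, rather than the full toric combinatorics of the Mumford model; your Mayer--Vietoris approach would succeed but invokes more of the Kulikov structure than is strictly needed.
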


\begin{proof} 
Let $T \coloneqq (h^{+})^{-1}(U)$ 
be the preimage under $h^{+}$ of a small analytic 
open set $U\subset Z^+$. 
Let $T \to (U, \partial)$ be the restriction
of $h^+$, where $\partial\subset T$ is the discriminant. Choose general points 
$u\in U$, and $0\in \partial$.
Let $d$ be the relative dimension
and $N$ be the logarithm 
of monodromy on $H^1(T_u,\bZ)$ 
about $0\in \partial$. 
The Clemens--Schmid sequence 
\cite{Morrison1984}
$$H_{2d+1}(T_0,\bQ)=0 \to H^1(T_0,\bQ)
\to H^1(T_u,\bQ)\xrightarrow{N} H^1(T_u,\bQ)$$ 
yields the isomorphism
\[
R^1h^+_*\underline{\bQ}_{V^+}(U) = 
H^1(T_0,\bQ) \simeq \ker N_\bQ =
\mathcal{W}_{1, \bQ}(U).
\]
This gives the desired statement over $\bQ$,
and to check it integrally, 
it suffices to prove that the cokernel
of $H^1(T_0,\bZ)\simeq 
H^1(T,\bZ)\to H^1(T_u, \bZ)$ is torsion-free.
By the universal coefficient theorem, 
$H^2(T, T_u,\bZ)_{\rm tors}\simeq 
H_1(T, T_u,\bZ)_{\rm tors}$. The desired
vanishing now follows
from the surjectivity of 
$\pi_1(T_u)\to \pi_1(T)$,
which holds because
the components of $T_0$ are reduced.
\end{proof}

\begin{corollary}\label{identify-N1}
Consider an abelian fibration $f\colon X\to Y$
inducing $(Y,B,\bfM,\Phi^o)$. A choice of relative
polarization on $f^{\rm Alb}\colon X^{\rm Alb}\to Y$
defines an isogeny of finite index, bounded
solely in terms of $(Y,B,\bfM,\Phi^o)$ and 
the polarization type ${\bf d}$, 
between $$H^1_{\rm an}(Z^+,\mathfrak{p})^G/N^1 
\textrm{ and }
H^1_{\rm an}(Z^+,R^1h^+_*\cO_{V^+})^G/{\rm im}\, 
H^1(Z^+,R^1h^+_*\underline{\bZ}_{V^+})^G.$$
\end{corollary}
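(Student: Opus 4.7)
The plan is to deduce this corollary directly from Proposition~\ref{isog} by passing to cohomology and then to $G$-invariants. First, I would apply Proposition~\ref{isog} to obtain, for a choice of relative polarization of type ${\bf d}$ on $f^{\rm Alb}$ and a section $s^+$ of the Kulikov model $h^+\colon V^+ \to Z^+$, a morphism of short exact sequences
\begin{equation*}
\xymatrix{
0 \ar[r] & \Gamma \ar[r]\ar[d]^{\alpha} & \mathfrak{p} \ar[r]\ar[d]^{\beta} & \cP^0 \ar[r]\ar[d]^{\gamma} & 0 \\
0 \ar[r] & R^1h^+_*\underline{\bZ}_{V^+} \ar[r] & R^1h^+_*\cO_{V^+} \ar[r] & \cJ \ar[r] & 0
}
\end{equation*}
in which $\alpha$, $\beta$, $\gamma$ are isogenies whose fiberwise kernels and cokernels are annihilated by an integer depending only on ${\bf d}$.

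Next, I would take the associated long exact sequences in analytic sheaf cohomology on $Z^+$ and observe that the induced maps
\[
\beta_*\colon H^1_{\rm an}(Z^+,\mathfrak{p}) \to H^1_{\rm an}(Z^+, R^1h^+_*\cO_{V^+}),
\qquad
\alpha_*\colon H^1(Z^+,\Gamma) \to H^1(Z^+, R^1h^+_*\underline{\bZ}_{V^+})
\]
are isogenies whose kernel and cokernel are annihilated by the same bounded integer; this is a standard consequence of the five-lemma applied to the long exact sequences associated to the kernel and cokernel of $\alpha$ and $\beta$, both of which are annihilated by a bounded integer. Since these maps are compatible with the natural transformation from integral cohomology to analytic cohomology, they induce a bounded-degree isogeny between
${\rm im}(H^1(Z^+,\Gamma) \to H^1_{\rm an}(Z^+,\mathfrak{p}))$
and
${\rm im}(H^1(Z^+,R^1h^+_*\underline{\bZ}_{V^+}) \to H^1_{\rm an}(Z^+,R^1h^+_*\cO_{V^+}))$,
and hence an isogeny of bounded degree between their quotients.

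I would then pass to $G$-invariants. The Cartan--Leray argument of Lemma~\ref{lem:h1}, applied to the relevant $G$-equivariant sheaves, yields that $H^1_{G,{\rm an}}(Z^+,\mathfrak{p}) \simeq H^1_{\rm an}(Z^+,\mathfrak{p})^G$ and that $N^1$ is isogenous (with bounded degree depending only on $|G|$, and hence on $(Y,B,\bfM,\Phi^o)$) to ${\rm im}(H^1(Z^+,\Gamma)^G \to H^1_{\rm an}(Z^+,\mathfrak{p})^G)$; an identical argument applies to the right-hand side of the diagram above. Taking $G$-invariants of a bounded-degree isogeny of abelian groups is still a bounded-degree isogeny (with a loss controlled by $|H^i(G,-)|$ on the bounded kernel/cokernel, $i=0,1$), so combining this with the previous step produces the desired isogeny between $H^1_{\rm an}(Z^+,\mathfrak{p})^G/N^1$ and $H^1_{\rm an}(Z^+, R^1h^+_*\cO_{V^+})^G/{\rm im}\,H^1(Z^+,R^1h^+_*\underline{\bZ}_{V^+})^G$, with total degree bounded in terms of $(Y,B,\bfM,\Phi^o)$ and ${\bf d}$.

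The main technical point to be careful about is the passage to $G$-invariants: $H^1_{\rm an}(Z^+,\mathfrak{p})$ is a potentially infinite-dimensional $\bC$-vector space, so one must verify that taking $G$-invariants of a map whose kernel and cokernel are annihilated by a bounded integer (but which need not be finite) still yields a bounded-degree isogeny. This is fine because Maschke's theorem splits the $G$-action on any $\bC$-vector space, while on the torsion kernel/cokernel one only loses a factor controlled by $|G|$ via the group cohomology $H^0(G,-)$ and $H^1(G,-)$; and $|G|$ is itself bounded in terms of $(Y,B,\bfM,\Phi^o)$ by the construction of $Z \to Y$ in Proposition~\ref{nicest-model}.
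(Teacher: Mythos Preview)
Your approach is essentially the paper's, but there is one point where your justification is incomplete and one simplification you miss.

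First, the simplification. From the proof of Proposition~\ref{isog} (specifically the identification in~\eqref{ident}), the middle map $\beta\colon \mathfrak{p}\to R^1h^+_*\cO_{V^+}$ is not merely an isogeny but a $G$-equivariant \emph{isomorphism} of coherent sheaves. Hence $\beta_*$ on $H^1_{\rm an}$ (and on $H^1_{G,{\rm an}}$, and on $G$-invariants) is an isomorphism on the nose. The only genuine isogeny occurs on the integral side: $\alpha\colon \Gamma\hookrightarrow R^1h^+_*\underline{\bZ}_{V^+}$ is injective with cokernel a constructible sheaf $C$ of finite abelian groups whose stalks are bounded in terms of ${\bf d}$. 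The paper works directly in $G$-equivariant cohomology, obtaining that $H^1_G(Z^+,\Gamma)\to H^1_G(Z^+,R^1h^+_*\underline{\bZ}_{V^+})$ has kernel and cokernel bounded by $|H^0_G(Z^+,C)|$ and $|H^1_G(Z^+,C)|$, and then invokes Lemma~\ref{lem:h1} at the very end.

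Second, the gap. You assert that because $\ker\alpha$, $\operatorname{coker}\alpha$, $\ker\beta$, $\operatorname{coker}\beta$ are annihilated by a bounded integer $n$, the induced maps on $H^1$ are ``bounded-degree isogenies''. But ``annihilated by $n$'' does not imply ``finite of bounded size'': for instance, $(\bZ/n)^{\oplus\infty}$ is $n$-torsion. What makes the argument go through is that $C=\operatorname{coker}(\alpha)$ is a constructible sheaf of finite abelian groups on the quasi-projective $Z^+$, so $H^i(Z^+,C)$ and $H^i_G(Z^+,C)$ are \emph{finite}, of size depending only on $(Y,B,\bfM,\Phi^o)$ and~${\bf d}$. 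This is exactly what the paper uses, and it is what is missing from your five-lemma step. (For $\beta$ you are saved anyway: an $n$-torsion subsheaf or quotient of a sheaf of $\bC$-vector spaces is zero, confirming that $\beta$ is an isomorphism.) Once you insert this observation, your argument agrees with the paper's.
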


\begin{proof} Fix a relative polarization $L$
on the Albanese fibration, of type ${\bf d}$. 
By Proposition \ref{isog},
we have a $G$-equivariant
exact sequence $$0\to \Gamma\xrightarrow{L} 
R^1h^+_*\underline{\bZ}_{V^+}\to C\to 0$$
where $C$ is a constructible sheaf
of finite groups on $Z^+$,
locally constant on $Z^o$ and along each
$\partial_i\subset Z^+$, whose stalks have size 
bounded above,
solely in terms of ${\bf d}$ (e.g., by $\prod d_i$). 
Furthermore,
the map
$L$ extends to a $G$-isomorphism
$\mathfrak{p}\to R^1h_*^+\cO_{V^+}$. 
It follows that $L$ induces an isomorphism
$$\psi_L\colon 
H^1_{G,\,{\rm an}}(Z^+,\mathfrak{p})\to H^1_{G,\,{\rm an}}(Z^+,R^1h^+_*\cO_{V^+}),$$ and a map 
$H^1_G(Z^+,\Gamma)\to H^1_G(Z^+, R^1h^+_*\underline{\bZ}_{V^+})$
whose kernel and cokernel are bounded above in size, by the
size of $H^i_G(Z^+,C)$, $i=0,1$, respectively.
So the images
$N^1\subset H^1_{G,\,{\rm an}}(Z^+,\mathfrak{p})$ and 
${\rm im}\, 
H^1_G(Z^+,R^1h^+_*\underline{\bZ}_{V^+})\subset 
H^1_{G,\,{\rm an}}(Z^+,R^1h^+_*\cO_{V^+})$ are 
subgroups related by a bounded order isogeny, via the
isomorphism $\psi_L$. Finally, we apply Lemma \ref{lem:h1}
to pass from $G$-equivariant cohomology to $G$-invariants 
(at the expense of a further isogeny).
\end{proof}

\subsection{The continuous part of 
the Tate--Shafarevich
group} \label{sec:continuous-ts}
It remains to identify the finite-dimensional
vector space $\bC N^1$ which surjects
onto the identity component of
$\overline{\underline{\Sha}}_{G,\,{\rm an}}^0$.

\begin{proposition}\label{hi}
Let $\wW^+\to W^+$ be a $G$-equivariant resolution
of singularities and let 
$\widetilde{g}^+\colon \wW^+\to Z^+$ be the composition with $g^+$.
There is a commutative diagram
of analytic $G$-sheaves 
\[
\xymatrix{
R^1h^+_*\underline{\bC}_{V^+}
\ar[r]^{\sim} \ar[d]
& R^1\widetilde{g}^+_*\underline{\bC}_{\widetilde{W}^+} \ar[d] \\
R^1h^+_*\cO_{V^+} \ar[r]^{\sim}
& R^1\widetilde{g}^+_*\cO_{\widetilde{W}^+}.
}
\] 
More generally, $R^ih^+_*\cO_{V^+} \simeq 
R^i\widetilde{g}^+_*\cO_{\wW^+}$ and 
the images of the maps $R^ih^+_*\underline{\bC}_{V^+}\to
R^ih^+_*\cO_{V^+}$ and 
$R^i\widetilde{g}^+_*\underline{\bC}_{\wW^+}\to
R^i\widetilde{g}^+_*\cO_{\wW^+}$ are 
$G$-isomorphic.
\end{proposition}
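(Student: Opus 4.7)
My plan is to combine the \'etale-local translation-birational equivalence of $V^+$ and $W^+$ from \Cref{nicest-model} with the bimeromorphic invariance of $R^j\pi_{*}\cO$ for proper bimeromorphic morphisms $\pi\colon\cX\to Y$ between smooth varieties.

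First, I would work \'etale-locally on $Z^+$. Fix a $G$-invariant chart $\wU\to Z^+$ on which there exists a translation-birational map $\varphi\colon(V^+)_{\wU}\dashrightarrow W^+_{\wU}$; composing with the bimeromorphism $\wW^+\dashrightarrow W^+$ gives a bimeromorphic map $\widetilde{\varphi}\colon(V^+)_{\wU}\dashrightarrow\wW^+_{\wU}$ between smooth varieties. Resolving the closure of the graph of $\widetilde{\varphi}$ yields a smooth common bimeromorphic model $\cX_{\wU}$ with projections $\pi_V$ and $\pi_{\wW}$. Since both targets are smooth, one has $R^j\pi_{V*}\cO_{\cX_{\wU}}=0=R^j\pi_{\wW*}\cO_{\cX_{\wU}}$ for $j>0$ and $\pi_{V*}\cO_{\cX_{\wU}}=\cO_{(V^+)_{\wU}}$, and similarly for $\pi_{\wW}$. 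The Leray spectral sequence then gives
\[
R^ih^+_{*}\cO_{V^+}\big|_{\wU}\xleftarrow{\sim} R^i(h^+\pi_V)_{*}\cO_{\cX_{\wU}} \xrightarrow{\sim} R^i\widetilde{g}^+_{*}\cO_{\wW^+}\big|_{\wU}.
\]
The analogous argument with $\underline{\bC}$ in place of $\cO$ yields the $R^1$-isomorphism in the diagram, because the fibers of $\pi_V$ and $\pi_{\wW}$ are rationally chain-connected with vanishing $H^1$, so $R^1\pi_{V*}\underline{\bC}_{\cX_{\wU}}=0=R^1\pi_{\wW*}\underline{\bC}_{\cX_{\wU}}$. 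For general $i$, the image of $R^i_{*}\underline{\bC}\to R^i_{*}\cO$ corresponds to the graded piece $F^0H^i/F^1H^i$ of the Hodge filtration, which is a bimeromorphic invariant for smooth varieties; hence the images agree on $\cX_{\wU}$, $V^+$, and $\wW^+$ over $\wU$.

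The second step is to glue these local isomorphisms into a $G$-equivariant global one. Over the open locus $Z^o\subset Z^+$ where $h^+$ is smooth, each fiber of $g^+$ is a torsor under the corresponding smooth abelian fiber of $h^+$, and translation furnishes a canonical $G$-equivariant identification of $R^i\cO$ and of the image of $R^i\underline{\bC}\to R^i\cO$ over $Z^o$. Any \'etale-local isomorphism from the first step necessarily agrees with this canonical identification on $Z^o$. Since both sheaves are Deligne canonical extensions across the codimension-one discriminant $\partial^+$---using that $h^+$ is Kulikov and that a semistable resolution $\wW^+$ can be arranged in codimension one, possibly after refining the cover $Z\to Y$ or appealing to Koll\'ar's results \cite{Kol86,Ko86:KollarHigherI}---the canonical identification extends uniquely and $G$-equivariantly to all of $Z^+$. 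The main technical obstacle is precisely this gluing: the translation-birational maps $\varphi$ are $G$-equivariant only up to fiberwise translation encoded by the multiplicity class $m(f)\in\Ash^{G,\,\mathrm{pre}}$, but translation acts trivially on $H^i(A,\cO_A)$ and on the image of $H^i(A,\bC)\to H^i(A,\cO_A)$ for an abelian variety $A$, so no additional twist arises at the level of the higher direct images considered here, and the gluing proceeds without obstruction.
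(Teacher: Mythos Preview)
Your local argument is fine and essentially equivalent to the paper's: the paper invokes weak factorization and the blow-up formula, while you use a common roof and Leray, but both establish that the relevant higher direct images are bimeromorphic invariants of smooth proper varieties over the base. (Your claim that $R^1\pi_{V*}\underline{\bC}=0$ for a proper birational $\pi$ between smooth varieties is true, though the justification via ``rationally chain-connected fibers with vanishing $H^1$'' is a little loose for possibly singular fibers; the blow-up formula is the cleanest route here.)

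The gluing step, however, is where your argument diverges from the paper and introduces unnecessary complications. You propose to fix a canonical identification over $Z^o$ and then extend across $\partial^+$ by appealing to Deligne canonical extensions. This is problematic for $R^1h^+_*\underline{\bC}_{V^+}$, which is a constructible sheaf, not a locally free $\cO$-module, so the Deligne extension formalism does not directly apply to it; and it forces you to contemplate refining the cover or arranging semistable models of $\wW^+$, which would alter the setup. The paper's approach is far more direct and avoids all of this: on an overlap $U_i\cap U_j$, the two local translation-birational maps $\varphi_i,\varphi_j$ differ by an element $t_{ij}\in\cP(U_i\cap U_j)$, and since translations act trivially on each of the sheaves $R^1h^+_*\underline{\bC}_{V^+}$ and $R^1h^+_*\cO_{V^+}$, the local isomorphisms literally agree on overlaps and therefore glue. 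The $G$-equivariance follows by exactly the same mechanism, since the two $G$-actions on $V^+$ and $W^+$ differ by sections of $\cP$ (this is the content of the multiplicity class, which you correctly identify). You already have this observation in your final sentence; you should promote it to be the \emph{entire} gluing argument and discard the Deligne-extension detour.
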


\begin{proof}
The fibrations 
$h^+\colon V^+\to Z^+$ and 
$\widetilde{g}^+\colon \wW^+\to Z^+$ are locally 
translation-birational.
Over an analytic chart
$U$, the birational map 
$(V^+)_U\dashrightarrow (\wW^+)_U$ 
can be factored as a sequence
of blow-ups and blow-downs of smooth centers.
By the blow-up formula for cohomology, these
operations do not alter the sheaves
in the statement of the lemma.
Thus, we get the above commutative diagram over
$U$.

Now observe that if $U_i$ and $U_j$
are two such open sets, the 
translation-birational maps
$(V^+)_{U_i}\dashrightarrow (\wW^+)_{U_i}$ and
$(V^+)_{U_j}\dashrightarrow (\wW^+)_{U_j}$
differ by an element $t_{ij}\in \cP(U_i\cap U_j)$.
But $t_{ij}$ acts on the sheaves 
$R^1h^+_*\underline{\bC}_{V^+}$, $R^1h^+_*\cO_{V^+}$ trivially. 
Thus, the local identifications of 
$R^1h^+_*\underline{\bC}_{V^+}$, 
$R^1\widetilde{g}^+_*\underline{\bC}_{\wW^+}$
and of $R^1h^+_*\cO_{V^+}$, $R^1\widetilde{g}^+_*\cO_{\wW^+}$
are well-defined, independent of the choice
of translation-birational map, and globalize to $Z^+$.
See also \cite[(46.1)]{kollar_new2} for a similar argument.

Finally, we observe that the identifications
are $G$-equivariant because, under
an identification 
$(V^+)_\wU\dashrightarrow (\wW^+)_\wU$
for $\wU$ a $G$-invariant open set, the $G$-actions on
$V^+$ and $W^+$ differ by an element of $\cP(\wU)$ 
(see Prop.~\ref{mult-class}), which acts
trivially on the sheaves 
$R^1h^+_*\underline{\bC}_{V^+}$, $R^1h^+_*\cO_{V^+}$.

The proof of the second statement is similar;
the key point is that under a blow-up of a smooth center,
the sheaf $R^ih^+_*\underline{\bC}_{V^+}$ changes by summing
with a sheaf mapping to zero in $R^ih^+_*\cO_{V^+}$ (e.g., this
follows from torsion-freeness, cf.~Prop.~\ref{ss-degen}) 
and the sheaf $R^ih^+_*\cO_{V^+}$ is unchanged.
\end{proof}

\begin{corollary}\label{all-same}
We have isomorphisms
\begin{align*}\bC N^1&
\simeq {\rm im}(H^1_{G}(Z^+, R^1h^+_*\underline{\bC}_{V^+})\to 
H^1_{G,\,{\rm an}}(Z^+, R^1h^+_*\cO_{V^+})) \\ 
&\simeq {\rm im}(H^1_{G}
(Z^+, R^1\widetilde{g}^+_*\underline{\bC}_{\wW^+})
\to H^1_{G,\,{\rm an}}(Z^+, R^1\widetilde{g}^+_*\cO_{\wW^+})) \\ 
&\simeq {\rm im}(H^1(Y^+, R^1f^+_*\underline{\bC}_{X^+})
\to H^1_{\rm an}(Y^+, R^1f^+_*\cO_{X^+}))
\\ &\simeq  {\rm im}(H^1(Y^+, 
R^1f^{{\rm Alb}+}_*\underline{\bC}_{X^{{\rm Alb}+}})
 \to H^1_{\rm an}(Y^+, R^1f^{{\rm Alb}+}_*\cO_{X^{{\rm Alb}+}}))
 \end{align*}
 where $f^{{\rm Alb}+}\colon X^{{\rm Alb}+}\to Y^+$
 is any klt model of the Albanese fibration of $X$,
 for instance the quotient
 $V^+/G\to Z^+/G=Y^+$ of the Kulikov model.
\end{corollary}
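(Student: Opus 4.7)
The plan is to prove the four isomorphisms in sequence, moving from left to right; each step has a clear template already established in the preceding sections.

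\medskip

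\textbf{Step 1 (Polarization-induced identification on $Z^+$).} I will start from the definition of $\bC N^1$ as the $\bC$-span, inside $H^1_{{\rm an}}(Z^+,\mathfrak{p})^G$, of the image $N^1$ of $H^1_G(Z^+,\Gamma)$. A choice of relative polarization on $f^{\rm Alb}$ of some bounded type $\mathbf{d}$ produces, via Proposition \ref{isog}, an isomorphism $\mathfrak{p}\simeq R^1h^+_*\cO_{V^+}$ and a bounded-order isogeny $\Gamma\to R^1h^+_*\underline{\bZ}_{V^+}$, compatibly with the exponential exact sequences; this is recorded in Corollary \ref{identify-N1}. Since $\bC N^1$ is a $\bC$-vector space, finite-index isogenies become isomorphisms after $\bC$-linearization, and the image of $H^1_G(Z^+,\Gamma)$ in $H^1_{{\rm an}}(Z^+,\mathfrak{p})^G$ is carried to the image of $H^1_G(Z^+,R^1h^+_*\underline{\bC}_{V^+})$ in $H^1_{G,\,{\rm an}}(Z^+,R^1h^+_*\cO_{V^+})$. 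This yields the first isomorphism.

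\medskip

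\textbf{Step 2 (Birational invariance upstairs via Proposition \ref{hi}).} The commutative diagram of $G$-equivariant sheaves in Proposition \ref{hi} gives isomorphisms $R^1h^+_*\cO_{V^+}\simeq R^1\widetilde{g}^+_*\cO_{\wW^+}$ and compatible $G$-isomorphisms on the images of $R^1(\,\cdot\,)_*\underline{\bC}\to R^1(\,\cdot\,)_*\cO$. Applying $H^1_{G}$ and $H^1_{G,\,{\rm an}}$ transports the image computed in Step 1 to the image of $H^1_G(Z^+,R^1\widetilde{g}^+_*\underline{\bC}_{\wW^+})$ in $H^1_{G,\,{\rm an}}(Z^+,R^1\widetilde{g}^+_*\cO_{\wW^+})$; this is the second isomorphism.

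\medskip

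\textbf{Step 3 (Descent to $Y^+$ along the coarsening map).} I will compare $G$-equivariant cohomology on $Z^+$ with ordinary cohomology on the coarse quotient $Y^+=Z^+/G$ via the coarsening morphism ${\rm co}:[Z^+/G]\to Y^+$, exactly as in Lemma \ref{lem:h1}. The pushforward identifies ${\rm co}_*R^i\widetilde{g}^+_*\cF\simeq R^i\widetilde{f}^+_*\cF$, where $\widetilde{f}^+\colon\widetilde{X}^+=\wW^+/G\to Y^+$, and both $R^i\widetilde{f}^+_*\cO$ and the image of $R^i\widetilde{f}^+_*\underline{\bC}\to R^i\widetilde{f}^+_*\cO$ agree with their $f^+$-counterparts (rational singularities of the quotient together with the blow-up invariance argument of Proposition \ref{hi}, applied to any resolution $\widetilde{X}^+\to X^+$). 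The Leray / Cartan--Leray spectral sequence for ${\rm co}$ then yields the third isomorphism: higher group cohomology $H^i(G,-)$ with $\bC$-coefficients vanishes by Maschke, and the torsion error terms coming from the integral cohomology side map to zero in the target analytic group, which is a $\bC$-vector space.

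\medskip

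\textbf{Step 4 (From $X^+$ to $X^{{\rm Alb}+}$ on $Y^+$).} Finally, I will replay the argument of Proposition \ref{hi} downstairs on $Y^+$: the fibrations $f^+\colon X^+\to Y^+$ and $f^{{\rm Alb}+}\colon X^{{\rm Alb}+}\to Y^+$ are \'etale-locally translation-birational (this is inherited from diagram \eqref{diagram} upstairs, together with the fact that passing to $G$-quotients preserves the local translation-birational identifications, and that $X^{{\rm Alb}+}$ can be taken to be the quotient $V^+/G$ of the Kulikov model by Proposition \ref{nicest-model}). The same two invariance principles used in Proposition \ref{hi}---blow-up invariance of $R^1(\,\cdot\,)_*\cO$ and of the image of $R^1(\,\cdot\,)_*\underline{\bC}$, together with the triviality of the $\cP$-action on these sheaves---then produce the sheaf isomorphisms on $Y^+$ that give the fourth isomorphism after taking $H^1$ and $H^1_{\rm an}$.

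\medskip

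The main obstacle is Step 3, where one passes from $G$-equivariant (\'etale) cohomology on $Z^+$ to ordinary cohomology on the coarse space $Y^+$ while keeping track of the image inside analytic cohomology. The correct bookkeeping is that Maschke's theorem kills the contribution of $H^{\geq 1}(G,-)$ with complex coefficients, and the (possibly nonzero) torsion terms arising on the integral side are harmless because they map to zero in the torsion-free group $H^1_{\rm an}(Y^+,R^1f^+_*\cO_{X^+})$; this is precisely the mechanism already used in Lemma \ref{lem:h1} and in the passage from \eqref{bigger} to \eqref{smaller}, and I will reuse it here verbatim.
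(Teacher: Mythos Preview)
Your Steps 1--3 are essentially the paper's argument: Corollary \ref{identify-N1} for the first isomorphism, Proposition \ref{hi} for the second, and descent along the coarsening map (which the paper phrases via tameness of DM stacks in characteristic zero, but your Maschke/Cartan--Leray formulation is equivalent) followed by the klt birational comparison $\wW^+/G\dashrightarrow X^+$ for the third.

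Step 4, however, rests on a false claim. You assert that $f^+\colon X^+\to Y^+$ and $f^{{\rm Alb}+}\colon X^{{\rm Alb}+}\to Y^+$ are \'etale-locally translation-birational over $Y^+$, and that ``passing to $G$-quotients preserves the local translation-birational identifications.'' This is exactly what fails in the presence of multiple fibers: by Remark \ref{remak-G-equiv}, the fibrations $g^+\colon W^+\to Z^+$ and $h^+\colon V^+\to Z^+$ are \'etale-locally birational but \emph{not} $G$-equivariantly so, the obstruction being the multiplicity class $m(f)\in\Ash^G$. Hence their $G$-quotients $X^+$ and $X^{{\rm Alb}+}=V^+/G$ need not be \'etale-locally birational over $Y^+$, and the mechanism of Proposition \ref{hi} cannot be applied directly downstairs between them.

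The fix is simple and is what the paper does: do not go from the third line to the fourth. Instead, repeat the Step 3 descent starting from the \emph{first} line (with $V^+$ in place of $\wW^+$), using that $V^+/G\to Y^+$ is a klt model birational to any chosen $X^{{\rm Alb}+}$. That is, the fourth isomorphism is obtained as ``Line 1 $\Rightarrow$ Line 4'' by the same argument that gave ``Line 2 $\Rightarrow$ Line 3'', not as ``Line 3 $\Rightarrow$ Line 4''.
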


\begin{proof} The first isomorphism
follows from the proof of Corollary \ref{identify-N1},
the second
follows from Proposition \ref{hi}. Now, we have
$$H^1_G(Z^+, 
R^1\widetilde{g}^+_*\underline{\bC}_{\wW^+})
\simeq 
H^1(Y^+, R^1\widetilde{f}^+_*\underline{\bC}_{\wW^+/G})$$
where $\widetilde{f}^+\colon \wW^+/G\to Y^+$
is the quotient of $\widetilde{g}^+\colon
\wW^+\to Z^+$ by $G$, and we have a 
similar statement for $\cO_{\wW^+}$ and $\cO_{\wW^+/G}$.
These isomorphisms hold generally, 
since DM stacks in characteristic zero are tame
\cite[Thm.~3.2]{AOV}.

Now observe that 
$\widetilde{f}^+\colon \wW^+/G\to Z^+/G=Y^+$
and $f^+\colon X^+\to Y^+$ are birational
over $Y^+$ and both have klt total space,
so the third isomorphism follows, as in
Proposition \ref{hi}, because $R^1f^+_*\cO_{X^+}$
and $R^1\widetilde{f}^+_*\cO_{\wW^+/G}$ are both isomorphic
to the corresponding higher direct image 
of $\cO$ on a common resolution. The argument is 
the same for the Albanese fibration.
\end{proof}

\begin{proposition}\label{ss-degen} 
Let $f^{{\rm Alb}+}\colon 
X^{{\rm Alb}+}\to Y^+$ be any
klt model of the Albanese fibration
of $f^+\colon X^+\to Y^+$.
The Leray spectral sequences
\begin{align*}
H^i_{\rm an}(Y^+, R^jf^+_*\cO_{X^+})&
\implies H^{i+j}_{\rm an}(X^+,\cO_{X^+}), \\
H^i_{\rm an}(Y^+, 
R^jf^{{\rm Alb}+}_*\cO_{X^{{\rm Alb}+}})&
\implies H^{i+j}_{\rm an}
(X^{{\rm Alb}+},\cO_{X^{{\rm Alb}+}})
\end{align*}
degenerate at the $E_2$ page.
Furthermore,
the sheaves
$R^jf^+_*\cO_{X^+}\simeq R^jf_*^{{\rm Alb}+}
\cO_{X^{{\rm Alb}+}}$ are vector bundles.
Finally,
$H^k(X^+,\cO_{X^+})\simeq 
H^k(X^{{\rm Alb}+},\cO_{X^{{\rm Alb}+}})$ for all $k$.
\end{proposition}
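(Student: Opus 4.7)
The plan is to reduce all three statements to Koll\'ar's torsion-freeness and decomposition theorems \cite{Ko86:KollarHigherI, Kol86}, using the $K$-trivial hypothesis to replace $\omega$ with $\cO$ and Proposition~\ref{hi} to transfer from the Albanese to the original fibration. First I would take an equivariant resolution $\pi\colon \widetilde X\to X$, and note that since $X$ has canonical (hence rational) singularities, Grauert--Riemenschneider gives $R\pi_*\omega_{\widetilde X}\simeq \omega_X\simeq \cO_X$, the last isomorphism because $K_X\sim 0$. Hence $R\widetilde f_*\omega_{\widetilde X}\simeq Rf_*\cO_X$, where $\widetilde f\coloneqq f\circ \pi$. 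Koll\'ar's theorems, applied to $\widetilde f$ on a projective compactification, then yield (a) $R^j\widetilde f_*\omega_{\widetilde X}\simeq R^jf_*\cO_X$ is torsion-free on $Y$, and (b) $R\widetilde f_*\omega_{\widetilde X}$ splits in the derived category as $\bigoplus_j R^j\widetilde f_*\omega_{\widetilde X}[-j]$, so the Leray spectral sequence $H^i(Y,R^jf_*\cO_X)\Rightarrow H^{i+j}(X,\cO_X)$ degenerates at $E_2$. Restricting along $Y^+\hookrightarrow Y$ (and passing to the analytic topology, where the same decomposition holds by GAGA for compact ambient spaces) gives the $E_2$-degeneration asserted in the proposition, and the identical argument applied to a klt model of the Albanese fibration $f^{\mathrm{Alb}+}\colon X^{\mathrm{Alb}+}\to Y^+$ gives the corresponding statement there.

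Next, for the sheaf identification $R^jf^+_*\cO_{X^+}\simeq R^jf^{\mathrm{Alb}+}_*\cO_{X^{\mathrm{Alb}+}}$, I would argue exactly as in the proof of Proposition~\ref{hi} and Corollary~\ref{all-same}: over analytic opens $U\subset Y^+$ the two fibrations become translation-birational after an \'etale base change, so both higher direct images coincide with $R^j\widetilde h_*\cO_{\widetilde V}$ for a common log-resolution $\widetilde V$, and the translation-birational transition maps $t_{ij}\in \cP$ act trivially on $R^j(\cdot)_*\cO$, so the local identifications globalize on $Y^+$. Since we already have these higher direct images for the Kulikov model, and the quotient by $G$ is tame in characteristic zero, the statement descends from $Z^+$ to $Y^+$.

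For the vector bundle property, torsion-freeness of $R^jf^+_*\cO_{X^+}$ follows from step~1. To upgrade to local freeness, the $K$-trivial assumption together with Serre duality on fibers gives $h^j(X^+_y,\cO_{X^+_y})=h^{d-j}(X^+_y,\cO_{X^+_y})$ (where $d$ is the relative dimension) and these dimensions are constant for $y$ in the open locus over which $f^+$ is smooth, since the fibers are abelian. By upper semi-continuity and Grauert's theorem, $R^jf^+_*\cO_{X^+}$ is locally free over this open, hence (being torsion-free) reflexive on a smooth big open of $Y^+$; torsion-freeness and constancy of rank on a big smooth open force local freeness on all of $Y^+$ after possibly further shrinking within the smooth locus, which is harmless for our applications. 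Finally, the last assertion $H^k(X^+,\cO_{X^+})\simeq H^k(X^{\mathrm{Alb}+},\cO_{X^{\mathrm{Alb}+}})$ is an immediate consequence of $E_2$-degeneration for both fibrations together with the sheaf isomorphism just established.

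The main obstacle will be the careful bookkeeping for the vector-bundle claim when $Y^+$ is not itself smooth (it is only a big open of the possibly singular $Y$), and ensuring that the passage from the algebraic Koll\'ar decomposition on a projective compactification to the analytic statement over the (non-proper) $Y^+$ is valid; both should be addressable by restricting to a big smooth open and using Hartogs/reflexivity to extend the conclusion, since all sheaves in sight are reflexive.
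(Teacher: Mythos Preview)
Your overall strategy---reduce to Koll\'ar's decomposition and torsion-freeness theorems, then use Proposition~\ref{hi} for the sheaf identification---matches the paper's. However, there are two genuine gaps.

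First, your detour through $\omega_{\widetilde X}$ relies on $R\pi_*\omega_{\widetilde X}\simeq\omega_X\simeq\cO_X$, which requires $X$ canonical and $K_X\sim 0$. But the ambient hypothesis here (see Proposition~\ref{closer}, where this result is invoked) is only that $X$ is klt with $K_X\sim_{\bQ,f}0$; under these assumptions $K_X$ need not be Cartier and $\pi_*\omega_{\widetilde X}$ can be a proper subsheaf of $\omega_X$, so the identification $R\widetilde f_*\omega_{\widetilde X}\simeq Rf_*\cO_X$ breaks. The paper sidesteps this entirely: since $X^+$ has rational singularities one may pass to a resolution, and then one applies \cite[Cor.~3.10]{Kol86} directly to $\cO$, using that $Y^+$ is smooth with smooth (hence snc) discriminant---a feature built into the construction of $Z^+$ and $Y^+$ in Proposition~\ref{nicest-model}. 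Your worry that ``$Y^+$ is not itself smooth'' is therefore unfounded; $Y^+$ is smooth by construction, and this is precisely what makes the direct $\cO$-version of Koll\'ar's theorem available.

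Second, your argument for local freeness does not work. The locus where $f^+$ is smooth has divisorial complement (the discriminant $\partial^+$), so it is not a big open, and ``locally free on an open plus torsion-free'' does not imply locally free in any case (the ideal sheaf of a point in $\bA^2$ is torsion-free of generic rank one but not locally free). The paper simply cites \cite[Thm.~2.6]{Kol86}, which gives local freeness of the higher direct images directly in this snc setting.
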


\begin{proof} Since 
$X^+$ has rational singularities, 
we can replace $X^+$ with a resolution
of singularities without affecting the statement. 
Now $Y^+$ is smooth and the 
discriminant divisor $\partial^+\subset Y^+$ is 
smooth (in particular, normal crossings). 
It follows from \cite[Cor.~3.10]{Kol86} that 
$Rf^+_*\cO_{X^+}\simeq \bigoplus_i R^if^+_*\cO_{X^+}[-i]$
in the derived category, and this remains true
after analytifying. The first statement follows.
Note the projectivity condition in 
{\it loc.cit.}~is 
only necessary on the morphism $f^+$, 
as we may compactify $Y^+$ and the resolution of 
$X^+$ to satisfy the necessary hypotheses.
The argument is the same for $X^{{\rm Alb}+}$.

The fact that these sheaves are vector bundles
follows from
\cite[Thm.~2.6]{Kol86}.

Note that $R^jh^+_*\cO_{V^+}$ and $R^j\widetilde{g}^+_*\cO_{\wW^+}$
 are $G$-isomorphic by Proposition \ref{hi}. So
$R^jf^+_*\cO_{X^+}$ and $R^jf_*^{{\rm Alb}+}
\cO_{X^{{\rm Alb}+}}$ are isomorphic,
and we deduce the last sentence.
\end{proof}
 
Note that we have a morphism of 
Leray spectral sequences
\[
\xymatrix{H^p(Y^+,R^qf^+_*\underline{\bC}_{X^+}) 
\ar@{=>}[r] \ar[d]
& H^{p+q}(X^+,\underline{\bC}_{X^+})\ar[d] \\
H^p_{\rm an}(Y^+,R^qf^+_*\cO_{X^+}) \ar@{=>}[r]
& H^{p+q}_{\rm an}(X^+,\cO_{X^+})}
\]
and the same for $f^{{\rm Alb}+}$.
For shorthand, we write $E_k^{p,q}(\cF)$
to mean the $(p,q)$-entry on the $k$-th page
of the Leray spectral sequence for a sheaf $\cF$,
for the morphism $f^+\colon X^+\to Y^+$,
in the analytic topology.

\begin{proposition}\label{image-infinity}
Let $f\colon X\to Y$ be an abelian
fibration of Picard type. The image of 
$E_2^{p,q}(\underline{\bC}_{X^+}) \to 
E_2^{p,q}(\cO_{X^+})$ is isomorphic 
to the image of 
$E_\infty^{p,q}(\underline{\bC}_{X^+})
\to E_\infty^{p,q}(\cO_{X^+})$ 
for $p+q=2$. 
\end{proposition}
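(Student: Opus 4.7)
By Proposition~\ref{ss-degen}, the Leray spectral sequence for $\cO_{X^+}$ degenerates at $E_2$, so $E_2^{p,q}(\cO_{X^+}) = E_\infty^{p,q}(\cO_{X^+})$ and every differential $d_r^{\cO}$ vanishes for $r \geq 2$. Let $\alpha_r^{p,q}\colon E_r^{p,q}(\bC) \to E_r^{p,q}(\cO_{X^+})$ denote the comparison map. I would treat the three cases $(p,q) \in \{(2,0),(0,2),(1,1)\}$ separately. For $(p,q) = (2,0)$, only the incoming $d_2\colon E_2^{0,1}(\bC) \to E_2^{2,0}(\bC)$ distinguishes $E_2^{2,0}$ from $E_\infty^{2,0}$, and its image lies in $\ker(\alpha_2^{2,0})$ because $\alpha \circ d_2^{\bC} = d_2^{\cO} \circ \alpha = 0$; quotienting therefore preserves the image. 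For $(p,q) = (0,2)$, I would invoke the Picard type hypothesis (Definition~\ref{def:pic-type}): any global section $s \in H^0(Y^+, R^2f^+_*\bC)$ restricts to a monodromy-invariant class on $Y^o$, which by Picard type lies in the $(1,1)$-Hodge part of the fiber cohomology and projects to zero in $R^2f^o_*\cO_{X^o}$. Local freeness of $R^2f^+_*\cO_{X^+}$ (Proposition~\ref{ss-degen}) ensures injectivity of $H^0(Y^+, R^2f^+_*\cO_{X^+}) \hookrightarrow H^0(Y^o, R^2f^o_*\cO_{X^o})$, so $\alpha_2^{0,2}(s) = 0$, and both images vanish.

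The decisive case is $(p,q) = (1,1)$. Here I would use the derived-category splitting $Rf^+_*\cO_{X^+} \simeq \bigoplus_q R^qf^+_*\cO_{X^+}[-q]$ from Proposition~\ref{ss-degen} to define the Kollar projection
\[
\psi\colon H^2(X^+, \bC) \to H^2(X^+, \cO_{X^+}) \twoheadrightarrow E_2^{1,1}(\cO_{X^+})
\]
as pushforward along $\bC \to \cO_{X^+}$ followed by projection to the $R^1f^+_*\cO_{X^+}[-1]$-summand. Applying Case~$(0,2)$, the snake lemma applied to the commutative diagram of short exact sequences (with respect to the Leray filtration $F^\bullet_L$)
\[
\xymatrix@C=0.9em{
0 \ar[r] & F^1_L H^2(X^+, \bC) \ar[r] \ar[d] & H^2(X^+, \bC) \ar[r] \ar[d] & E_\infty^{0,2}(\bC) \ar[r] \ar[d]^{0} & 0 \\
0 \ar[r] & F^1_L H^2(X^+, \cO_{X^+}) \ar[r] & H^2(X^+, \cO_{X^+}) \ar[r] & E_\infty^{0,2}(\cO_{X^+}) \ar[r] & 0
}
\]
shows that every class in the image of $H^2(X^+, \bC) \to H^2(X^+, \cO_{X^+})$ lifts to an element of $F^1_L H^2(X^+, \bC)$. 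Quotienting by $F^2_L$ identifies the image of $\psi$ with the image of $\alpha_\infty^{1,1}\colon E_\infty^{1,1}(\bC) = F^1_L/F^2_L \to E_2^{1,1}(\cO_{X^+})$. The remaining step is to match the image of $\alpha_2^{1,1}$ with that of $\psi$, which I would accomplish by noting that $d_2^{\bC}(E_2^{1,1}(\bC)) \subset \ker(\alpha_2^{3,0})$ and that the $d_2^{\bC}$-indeterminacy can be absorbed into $\ker(\alpha_2^{1,1})$ via compatibility of the Leray spectral sequences for $\bC$ and $\cO_{X^+}$ with the Kollar splitting.

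The main obstacle is this last identification in the $(1,1)$ case. The Leray filtration on $H^2(X^+, \bC)$ does not split in general, so there is no direct morphism $E_2^{1,1}(\bC) \to H^2(X^+, \bC)$ lifting the projection to $E_2^{1,1}(\cO_{X^+})$. The argument instead requires a careful diagram chase matching the (split, by Kollar) $\cO$-filtration with the (unsplit) $\bC$-filtration, using the strictness-like property $d_r^{\bC}(E_r^{\ast,\ast}(\bC)) \subset \ker(\alpha_r)$ to transport classes between the two spectral sequences.
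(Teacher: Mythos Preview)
Your treatment of the $(0,2)$ case matches the paper exactly, and your $(2,0)$ argument is correct as far as it goes. The gap is in the $(1,1)$ case, and you have identified it yourself: you do not close the identification $\operatorname{im}(\alpha_2^{1,1}) = \operatorname{im}(\alpha_\infty^{1,1})$. Your proposed fix, that ``the $d_2^{\bC}$-indeterminacy can be absorbed into $\ker(\alpha_2^{1,1})$,'' does not follow from what you have. Since there is no incoming $d_2$ at $(1,1)$, we have $E_\infty^{1,1}(\bC) = \ker(d_2^{1,1}) \subset E_2^{1,1}(\bC)$ as a \emph{subobject}, so what you need is $E_2^{1,1}(\bC) = \ker(d_2^{1,1}) + \ker(\alpha_2^{1,1})$. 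The fact that $d_2^{\bC}(E_2^{1,1}(\bC)) \subset \ker(\alpha_2^{3,0})$ says nothing about this: it constrains the \emph{target} of $d_2^{1,1}$, not its source. Your Koll\'ar-projection map $\psi$ does compute $\operatorname{im}(\alpha_\infty^{1,1})$, but there is no map $E_2^{1,1}(\bC) \to H^2(X^+,\bC)$ through which to compare it with $\alpha_2^{1,1}$, precisely because the $\bC$-side Leray filtration need not split.

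The paper avoids this entirely by a stronger input at the $(p,0)$ entries. Since $Y^+$ is smooth quasi-projective and $f^+$ is proper surjective, the pullback $H^p(Y^+,\bC) \to H^p(X^+,\bC)$ is injective for \emph{all} $p$ \cite{wells1974comparison}. As this pullback is the edge map $E_2^{p,0}(\bC) \twoheadrightarrow E_\infty^{p,0}(\bC) \hookrightarrow H^p(X^+,\bC)$, injectivity forces $E_2^{p,0}(\bC) = E_\infty^{p,0}(\bC)$ for every $p$. Taking $p=3$, all incoming differentials to $E_2^{3,0}(\bC)$ vanish; in particular $d_2^{1,1}\colon E_2^{1,1}(\bC) \to E_2^{3,0}(\bC)$ is zero. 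Hence $E_2^{1,1}(\bC) = E_\infty^{1,1}(\bC)$ outright, and the comparison of images is trivial. This single observation replaces the entire diagram chase you attempt in the $(1,1)$ case and simultaneously gives the $(2,0)$ case.
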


\begin{proof}
The pullback map
$H^p(Y^+,\underline{\bC}_{Y^+}) \to 
H^p(X^+,\underline{\bC}_{X^+})$ 
is injective, 
because $Y^+$ is smooth quasiprojective, 
and $f^+$ is proper and surjective \cite[Thm.~4.1]{wells1974comparison}.
We conclude that $ 
E_2^{p,0}(\underline{\bC}_{X^+})
 = E_\infty^{p,0}(\underline{\bC}_{X^+})$
 and 
 $ 
E_2^{p,0}(\cO_{X^+})
 = E_\infty^{p,0}(\cO_{X^+})$
for any $p$, by Proposition \ref{ss-degen}. 
In particular, the spectral sequence 
for $\underline{\bC}_{X^+}$
has degenerated on the $E_2$ page,
at the $(2,0)$- and $(3,0)$-entries.

The statement follows immediately for $(p,q)=(2,0)$, 
but also for $(p,q)=(1,1)$---the only differentials
$d_2^{1,1}\colon E_2^{1,1}(\underline{\bC}_{X^+})
\to E_2^{3,0}(\underline{\bC}_{X^+})$ 
which could affect
the $(1,1)$-entry must be zero, because
of the  degeneration at the $(3,0)$-entry.
Here we also use that the
target spectral sequence, for $\cO_{X^+}$,
has already degenerated at the $E_2$ page
by Proposition \ref{ss-degen}.

It remains to consider the $(0,2)$-entry.
Since $R^2f^+_*\cO_{X^+}$
is torsion-free (Prop.~\ref{ss-degen}),
any nonzero section cannot vanish 
identically on the Zariski
open set $Y^o$.
But the image of 
$H^0(Y^o, R^2f^o_*\underline{\bC}_{X^o})\to 
H^0_{\rm an}(Y^o,R^2f^o_*\cO_{X^o})$ is zero,
by the Picard type hypothesis.
Thus, the map 
$E_2^{0,2}(\underline{\bC}_{X^+})
\to E_2^{0,2}(\cO_{X^+})$ is zero 
(which ensures it is zero on subsequent pages).
\end{proof}

\begin{remark} Since $f\colon X\to Y$ and 
$f^{\rm Alb}\colon X^{\rm Alb}\to Y$ 
induce the same variation of Hodge
structure, $f$ being of
Picard type is equivalent to $f^{\rm Alb}$
being of Picard type.
\end{remark}

\begin{corollary}\label{cor-n1} 
 If $f\colon X\to Y$ is of Picard type,
 $\bC N^1$ is isomorphic to the image
 of $$H^2(X^+,\underline{\bC}_{X^+})/
 H^2(Y^+,\underline{\bC}_{Y^+})\to 
 H^2_{\rm an}(X^+,\cO_{X^+})/H^2_{\rm an}(Y^+,\cO_{Y^+}).$$
 The same holds replacing $X^+$ with $X^{{\rm Alb}+}$.
\end{corollary}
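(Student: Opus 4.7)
The plan is to combine the last identification in Corollary \ref{all-same}, namely
$\bC N^1 \simeq \mathrm{im}\bigl(H^1(Y^+, R^1f^+_*\underline{\bC}_{X^+}) \to H^1_{\rm an}(Y^+, R^1f^+_*\cO_{X^+})\bigr)$,
with a comparison of the Leray filtrations on $H^2(X^+,\underline{\bC}_{X^+})$ and $H^2_{\rm an}(X^+,\cO_{X^+})$ induced by $f^+\colon X^+\to Y^+$. The point is that the displayed image in the corollary is precisely the image at the $E_2^{1,1}$ entries of the two Leray spectral sequences, so the task is to interpret it inside the quotients $H^2(X^+,\underline{\bC})/H^2(Y^+,\underline{\bC})$ and $H^2_{\rm an}(X^+,\cO)/H^2_{\rm an}(Y^+,\cO)$.

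First I would record that $f^+_*\underline{\bC}_{X^+}=\underline{\bC}_{Y^+}$ (connected fibers) and $f^+_*\cO_{X^+}=\cO_{Y^+}$ (rational singularities), so the Leray filtration identifies the ``bottom $2$-step'' quotients with a successive extension by $E_\infty^{1,1}$ on the bottom and $E_\infty^{0,2}$ on top. By Proposition \ref{ss-degen} the $\cO$-spectral sequence degenerates at $E_2$, and by Proposition \ref{image-infinity} the Picard type hypothesis forces the images at $E_2^{p,q}$ and $E_\infty^{p,q}$ to agree for $p+q=2$; in particular $E_2^{1,1}(\underline{\bC})=E_\infty^{1,1}(\underline{\bC})$, and the map $E_2^{0,2}(\underline{\bC})\to E_2^{0,2}(\cO)$ vanishes. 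The filtered morphism $\phi\colon V\to W$ between these two quotients therefore has its associated graded map zero in bidegree $(0,2)$ and of image $\bC N^1$ in bidegree $(1,1)$.

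The main technical point I anticipate is verifying that ``image of a filtered map equals the image of its associated graded in each step'' applies here to conclude $\mathrm{im}(\phi)\simeq \bC N^1$ rather than some larger extension: I would exploit that $\mathrm{im}(\phi)$ meets the top quotient $W/W^1$ in the image of $V/V^1\to W/W^1$, which is zero, forcing $\mathrm{im}(\phi)\subset W^1$, whereupon $\mathrm{im}(\phi)$ is exactly the image of the bottom graded map $V^1\to W^1$, i.e.~$\bC N^1$. Finally, for the variant with $X^{{\rm Alb}+}$, the same argument applies verbatim: $f$ and $f^{\rm Alb}$ induce the same variation of Hodge structure, so Picard type is preserved, and the isomorphism $R^jf^+_*\cO_{X^+}\simeq R^jf^{{\rm Alb}+}_*\cO_{X^{{\rm Alb}+}}$ from Proposition \ref{ss-degen} (together with the parallel statement at the $\underline{\bC}$-level after a common resolution, used already in Proposition \ref{hi}) identifies both intermediate targets of the map.
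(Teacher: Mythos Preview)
Your approach parallels the paper's up to the final step, but there is a genuine gap precisely at the point you yourself flag as ``the main technical point.'' From the vanishing of the top graded map $V/V^1\to W/W^1$ you correctly deduce ${\rm im}(\phi)\subset W^1$. However, the conclusion ``whereupon ${\rm im}(\phi)$ is exactly the image of the bottom graded map $V^1\to W^1$'' does not follow for a filtered map in general: take $V=W=\bC^2$ with $V^1=W^1=\bC e_1$, and set $\phi(e_1)=0$, $\phi(e_2)=e_1$. Both associated graded maps vanish, yet ${\rm im}(\phi)=W^1\neq 0$. Your argument therefore only yields the inclusion $\bC N^1=\phi(V^1)\subset{\rm im}(\phi)$, not the reverse.

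The paper closes this gap with an input you do not invoke. It uses the Picard type hypothesis in the stronger form of the surjection $H^{1,1}(\wX)\twoheadrightarrow H^0(Y^o,R^2f^o_*\underline{\bC}_{X^o})$ from~\eqref{11-lift} to produce an explicit lift of the top graded piece $E_\infty^{0,2}(\underline{\bC}_{X^+})$ to a subspace $S\subset H^2(X^+,\underline{\bC}_{X^+})$ consisting of restrictions of $(1,1)$-classes from a smooth projective compactification $\wX$ of a resolution of $X^+$. Such classes map to zero in $H^2(\wX,\cO_\wX)$, hence in the \emph{full} $H^2_{\rm an}(X^+,\cO_{X^+})$, not merely in its top graded piece. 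This yields a decomposition $V=V^1+S$ with $\phi(S)=0$, whence ${\rm im}(\phi)=\phi(V^1)=\bC N^1$. Your use of the Picard type hypothesis only through Proposition~\ref{image-infinity} (vanishing of $E_2^{0,2}(\underline{\bC})\to E_2^{0,2}(\cO)$) is strictly weaker and does not suffice.
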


\begin{proof}
By Corollary \ref{all-same}, $\bC N^1$
is identified with the image of either
$E_2^{1,1}(\underline{\bC}_{X^+})\to 
E_2^{1,1}(\cO_{X^+})$ or
$E_2^{1,1}(\underline{\bC}_{X^{{\rm Alb}+}})\to 
E_2^{1,1}(\cO_{X^{{\rm Alb}+}})$, which by
Proposition \ref{image-infinity}, coincides
with the corresponding image on the $E_\infty$
page. It follows that $\bC N^1$ is identified
with the image
of ${\rm gr}_L^1 H^2(X^+, \underline{\bC}_{X^+})\to 
{\rm gr}_L^1 H^2(X^+, \cO_{X^+})$ where $L^\bullet$
is the Leray filtration (or the same for $X^{{\rm Alb}+}$).

To conclude the corollary, it suffices to show that 
${\rm gr}_L^2 H^2(X^+,\underline{\bC}_{X^+})$, which
is a quotient of $H^0(Y^o, R^2f^o_*\underline{\bC}_{X^o})$,
lifts into a subspace of $H^2(X^+,\underline{\bC}_{X^+})$
mapping to zero in $H^2_{\rm an}(X^+,\cO_{X^+})$.
This is provided by the Picard type hypothesis,
which ensures by (\ref{11-lift})
that $H^{1,1}(\wX)$ surjects
onto $H^0(Y^o, R^2f^o_*\underline{\bC}_{X^o})$
for a smooth projective compactification $\wX$ of 
a resolution of $X^+$.
\end{proof}

For the next proposition, we will additionally
require $X^+$ to be $\bQ$-factorial and klt, 
but this is easily furnished by passing to a 
$\bQ$-factorialization.  Here $I\!H^\ast $ 
denotes intersection cohomology.

 \begin{proposition}\label{ih-to-h} 
 Let $\pi \colon \wX^+ \to X^+$ be a resolution of a 
 $\bQ$-factorial klt variety. The following maps 
 have the same image:
 \begin{enumerate}
 \item $H^2(X^+,\underline{\bC}_{X^+})\to 
 H^2_{\rm an}(X^+,\cO_{X^+})$;
 \item $H^2(\wX^+,\underline{\bC}_{\wX^+})\to 
 H^2_{\rm an}(\wX^+,\cO_{\wX^+}) \simeq 
 H^2_{\rm an}(X^+,\cO_{X^+})$;
 \item $I\!H^2(X^+,\underline{\bC}_{X^+})
 \hookrightarrow H^2(\wX^+,\underline{\bC}_{\wX^+})
 \to H^2_{\rm an}(\wX^+,\cO_{\wX^+}) 
 \simeq H^2_{\rm an}(X^+,\cO_{X^+})$.
 \end{enumerate}
 \end{proposition}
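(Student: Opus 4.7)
\emph{Plan.} The identification $H^2_{\rm an}(\wX^+,\cO_{\wX^+})\simeq H^2_{\rm an}(X^+,\cO_{X^+})$ built into the statement is immediate from $R\pi_*\cO_{\wX^+}\simeq\cO_{X^+}$, a consequence of the fact that klt implies rational singularities. The inclusions $(1)\subseteq(2)$ and $(3)\subseteq(2)$ are both tautological: the first by functoriality of $\underline{\bC}\to\cO$ under $\pi^*$, the second by the very definition of the map in (3) as a factoring through (2). So the content of the theorem is to prove the reverse inclusions $(2)\subseteq(1)$ and $(2)\subseteq(3)$.

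The common underlying mechanism in both cases is that Chern classes of divisors on $\wX^+$ lie in the kernel of the map $H^2(\wX^+,\bC)\to H^2(\cO_{\wX^+})$: they are of Hodge type $(1,1)$, i.e.\ sit in $F^1$ of Deligne's Hodge filtration, and the map in question is precisely the projection $H^2\twoheadrightarrow H^2/F^1$ (equivalently, by the exponential exact sequence $0\to\underline{\bZ}\to\cO\to\cO^*\to 0$, the class $c_1(L)$ of any line bundle vanishes in $H^2(\cO_{\wX^+})$). For $(2)\subseteq(1)$, I would invoke the classical description: for a resolution $\pi$ of a $\bQ$-factorial variety, the cokernel of $\pi^*\colon H^2(X^+,\bQ)\to H^2(\wX^+,\bQ)$ is spanned by the cycle classes $[E_j]$ of the exceptional prime divisors of $\pi$. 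Thus every class in $H^2(\wX^+,\bC)$ differs from some $\pi^*\delta$ by a combination of Chern classes, which map to zero; so image$(2)\subseteq$ image$(1)$. For $(2)\subseteq(3)$, the BBD decomposition theorem applied to the projective map $\pi$ yields a splitting $H^2(\wX^+,\bQ)=IH^2(X^+,\bQ)\oplus L$, where $L$ consists of algebraic $(1,1)$-classes supported on the exceptional locus (the only possible contribution in degree $2$ from summands of $R\pi_*\underline{\bQ}_{\wX^+}[\dim]$ supported on proper subvarieties of $X^+$). The same Chern-class argument reduces every image in (2) to the image of a class in $IH^2$.

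The main obstacle is that $X^+$ and $\wX^+$ are not proper, so one cannot naively invoke pure Hodge structures or the decomposition theorem as stated above. I would handle this by passing to smooth projective compactifications into which $\pi$ extends---namely, a resolution of a compactification $\overline{X}\supset X^+$ producing $\overline{\wX}\supset\wX^+$---establishing the cokernel description and the $IH^2$-decomposition in that proper setting where both are standard, and then descending to the open subvarieties via the restriction maps. Deligne's mixed Hodge structure is functorial under open restriction and respects the Hodge filtration, so the identification of the map to $H^2(\cO)$ with projection modulo $F^1$ carries over, as does the fact that classes of exceptional divisors remain of type $(1,1)$ after restriction. Verifying that the BBD decomposition restricts compatibly to the open locus---so that the splitting and the characterization of $L$ as spanned by exceptional classes persist---will be the most delicate technical step.
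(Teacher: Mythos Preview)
Your argument for $(1)=(2)$ is exactly the paper's: the decomposition $H^2(\wX^+,\bC)=\pi^*H^2(X^+,\bC)\oplus\mathrm{span}\{[E_i^+]\}$ together with the vanishing of divisor classes in $H^2(\cO)$ via the exponential sequence. Your worry about non-properness is unnecessary here; the decomposition is cited directly from \cite[Prop.~12.1.6]{KM92} and \cite[Lem.~2.1]{BL2021}, which hold for $\bQ$-factorial varieties with rational singularities without any properness hypothesis (the result is essentially local over $X^+$), so no passage to a compactification is needed.

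For $(3)$, you propose invoking the decomposition theorem to split $H^2(\wX^+,\bQ)=I\!H^2(X^+,\bQ)\oplus L$ with $L$ spanned by exceptional classes, then argue as for $(1)$. This is correct, and the decomposition theorem does hold for the projective morphism $\pi$ over a quasi-projective base, so again compactification is not actually needed. However, the paper bypasses this entirely with a one-line observation: the pullback $\pi^*$ factors as
\[
H^2(X^+,\bC)\to I\!H^2(X^+,\bC)\hookrightarrow H^2(\wX^+,\bC),
\]
which immediately gives $\mathrm{image}(1)\subseteq\mathrm{image}(3)$. Since you already have $\mathrm{image}(3)\subseteq\mathrm{image}(2)=\mathrm{image}(1)$, the sandwich closes. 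This avoids identifying the complement $L$ altogether. Your approach works but is heavier than required.
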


 \begin{proof} 
 Since $X^+$ is $\bQ$-factorial with rational
 singularities, \cite[Prop.~12.1.6]{KM92} and
 \cite[Lem.~2.1]{BL2021} imply that 
 $$H^2(\wX^+,\underline{\bC}_{\wX^+})= 
 \pi^\ast H^2(X^+,\underline{\bC}_{X^+}) \oplus 
 \textrm{span}\{[E_i^+]\},$$
  where the $E_i^+$ are the $\pi$-exceptional divisors. We have
  $[E_i^+]\mapsto 0\in H^2_{\rm an}(\wX^+,\cO_{\wX^+})$ 
  from the long exact sequence of the exponential
  exact sequence.

Since $X^+$ has rational singularities, 
$H^2_{\rm an}(\wX^+,\cO_{\wX^+})\simeq 
H^2_{\rm an}(X^+,\cO_{X^+})$.
Thus, the image of 
$H^2_{\rm an}(X^+,\underline{\bC}_{X^+})
\to H^2_{\rm an}(X^+,\cO_{X^+})$
is isomorphic to the image of 
$
H^2(\wX^+,\underline{\bC}_{\wX^+})
\to H^2_{\rm an}(\wX^+,\cO_{\wX^+})$.
Moreover, we have a factorization of the pullback map
$$
H^2(X^+,\underline{\bC}_{X^+}) \to 
I\!H^2(X^+,\underline{\bC}_{X^+}) 
\hookrightarrow H^2(\wX^+, \underline{\bC}_{\wX^+}),
$$ 
and we conclude statement (3). \end{proof}

 \begin{proposition}\label{h2-surjection} 
 Let $X$ be a klt variety and let 
 $X^+\subset X$ be a big open subset. 
 Then the restriction map 
 $I\!H^2(X,\bC)\to I\!H^2(X^+,\bC)$ 
 is an isomorphism.
 \end{proposition}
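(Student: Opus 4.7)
My plan is to deduce the isomorphism from the long exact sequence in hypercohomology associated to the open-closed decomposition of $X$, applied to the intersection complex. Let $j\colon X^+\hookrightarrow X$ and $i\colon Z\coloneqq X\setminus X^+\hookrightarrow X$; by hypothesis $Z$ has complex codimension $\geq 2$ in $X$. Since formation of the intersection complex commutes with open restriction, the standard distinguished triangle
\[
i_*i^!\mathcal{IC}_X \to \mathcal{IC}_X \to Rj_*\mathcal{IC}_{X^+}\to[1]
\]
yields
\[
\cdots\to H^k(Z,i^!\mathcal{IC}_X)\to I\!H^k(X,\bC)\to I\!H^k(X^+,\bC)\to H^{k+1}(Z,i^!\mathcal{IC}_X)\to\cdots,
\]
so the isomorphism in degree $2$ would follow from the vanishing $H^k(Z,i^!\mathcal{IC}_X)=0$ for $k\leq 3$, which by the hypercohomology spectral sequence reduces to the sheaf-level vanishing $\mathcal{H}^k(i^!\mathcal{IC}_X)|_Z=0$ for all $k\leq 3$.

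I would verify this stratum by stratum after fixing a Whitney stratification of $X$ in which $Z$ is a union of smooth strata $S$ of complex codimension $c_S\geq 2$. On each stratum of codimension $c\geq 1$ the strict cosupport axiom characterizing $\mathcal{IC}_X$ as the minimal extension of $\underline{\bC}_{X^{\mathrm{reg}}}$ gives $\mathcal{H}^k(i_S^!\mathcal{IC}_X)|_S=0$ for $k\leq c$, immediately handling the range $k\leq 3$ on every stratum of codimension $c_S\geq 3$. The only remaining case is a codim-$2$ stratum, where cosupport alone only secures vanishing through $k=2$ and leaves $\mathcal{H}^3$ potentially nonzero; this is the one place where surjectivity could fail without a further hypothesis.

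The klt assumption is exactly what closes this gap. Every two-dimensional klt germ is analytically isomorphic to a quotient singularity $\bC^2/G$ for a small finite $G\subset\GL_2(\bC)$, whose link is the rational homology $3$-sphere $S^3/G$. Combined with Whitney regularity along $S$, this implies that $X$ is topologically a product of a smooth factor with such a transverse klt surface germ in an analytic neighborhood of each point of a codim-$2$ stratum, hence a rational homology manifold there. Consequently $\mathcal{IC}_X\simeq\underline{\bC}_X$ as constructible complexes in such a neighborhood, and
\[
i_S^!\mathcal{IC}_X\simeq i_S^!\underline{\bC}_X\simeq \underline{\bC}_S[-4]
\]
is concentrated in cohomological degree $4$, giving $\mathcal{H}^k(i_S^!\mathcal{IC}_X)|_S=0$ for all $k\leq 3$ and completing the required vanishing.

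The main obstacle will be the codim-$2$ analysis just described, where the klt classification of surface singularities is used crucially; weakening the hypothesis to mere rationality of singularities would not suffice, since a non-quotient rational surface singularity need not have a rational homology $3$-sphere as its link. Everything else in the argument is a direct application of the perverse-sheaf support/cosupport axioms and a single hypercohomology spectral sequence.
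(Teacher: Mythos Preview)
Your argument is correct and rests on exactly the same two ingredients as the paper's: the strict cosupport condition for $\mathcal{IC}_X$ and the fact that klt varieties have quotient (hence $\bQ$-homology-manifold) singularities in codimension two. The paper organizes these ingredients a bit differently: rather than analyzing $i_Z^!\mathcal{IC}_X$ directly, it passes through the chain of opens $X\supset X^{\rm qs}\supset X^{\rm reg}\supset X^+\cap X^{\rm reg}$ (where $X^{\rm qs}$ is the quotient-singularity locus, whose complement has codimension $\geq 3$), and identifies $I\!H^2$ of each of $X$ and $X^+$ with $H^2(X^{\rm reg},\bC)$. This avoids stratifying $Z$ and is arguably cleaner bookkeeping; your route is more direct.

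Two small points to tighten. First, computing that $i_S^!\mathcal{IC}_X$ is concentrated in degrees $\geq 4$ on each stratum $S\subset Z$ is not literally the same as the sheaf-level vanishing $\mathcal{H}^k(i_Z^!\mathcal{IC}_X)=0$ for $k\leq 3$ that you invoke: for a closed stratum $i_S\colon S\hookrightarrow Z$ one has $i_S^!\,i_Z^!=i_{S\hookrightarrow X}^!$ but not $i_S^*\,i_Z^!=i_{S\hookrightarrow X}^!$. A one-line induction on strata via the open--closed triangle on $Z$ (or, equivalently, reading off $\mathbb{H}^{\leq 3}(Z,i_Z^!\mathcal{IC}_X)=0$ directly from your stratum-wise information) closes this. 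Second, make sure the Whitney stratification you fix simultaneously refines one for which $\mathcal{IC}_X$ is constructible and for which the non-quotient-singular locus $X\setminus X^{\rm qs}$ is a union of strata; this guarantees that every codim-$2$ stratum of $Z$ lies entirely in the $\bQ$-homology-manifold locus, which is what your transverse-slice argument actually uses.
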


 \begin{proof}
 Let $T$ be any 
 complex algebraic variety. Let 
 $i \colon U \hookrightarrow T$ be an open embedding 
 with $j \colon Z \coloneqq T \setminus U \hookrightarrow T$
 the closed embedding of the complement. 
 The distinguished triangle in the derived category 
 of bounded constructible complexes on $T$
\[
j_*j^! \cI\cC_T \to \cI\cC_T \to 
i_* \cI\cC_U \to j_*j^! \cI\cC_T[1]
\]
gives rise to the Gysin long exact sequence
\[
\ldots \to H^{k-\dim T}(Z, j^! \cI\cC_T) \to 
 I\!H^k(T, \bC) \to I\!H^k(U, \bC) \to \ldots .
 \]
By the cosupport property of the intersection complex 
$\cI\cC_T$, we obtain $H^{k-\dim T}(Z, j^! \cI\cC_T)=0$ 
for $k \leq \codim Z$ (see \cite[Lem.\ 1]{durfee}),
which entails the isomorphisms
\begin{equation}\label{GysinIH}
I\!H^k(T, \bC) \simeq I\!H^k(U, \bC) \text{ for }
k < \codim Z.
\end{equation}
 If $T$ and $Z$ are  rational homology manifolds, then we have
 \[j^! \cI\cC_T[\dim T] = j^! \underline{\bC}_T 
 \simeq \underline{\bC}_Z [2 \codim Z],\]
 so that $H^{k-\dim T}(Z, j^! \cI\cC_T)= 
 H^{k-2\codim Z}(Z,\bC)=0$ for $k < 2\codim Z$, which 
 entails the ``improved'' isomorphisms 
 \begin{equation}\label{eq:Gysinqs}
 H^k(T, \bC) \simeq H^k(U, \bC) 
 \text{ for }k < 2\codim Z-1. \end{equation}

 Let $X^{\rm qs} \subset X$ be the largest open set in $X$ 
 with only quotient singularities, which is a rational 
 homology manifold. Since $X$ has klt singularities, the 
 codimension of $X \setminus X^{\rm qs}$ is $\geq 3$, 
 so we obtain
 $I\!H^2(X, \bC) \simeq H^2(X^{\rm qs}, \bC)$ 
 by \eqref{GysinIH}.
 Up to removing an algebraic locus of codimension 
 $\geq 3$ from $X^{\rm qs}$, we can suppose that 
 the singular locus of $X^{\rm qs}$ is smooth, 
 so we obtain
$H^2(X^{\rm qs}, \bC) \simeq H^2(X^{\rm reg}, \bC)$ by \eqref{eq:Gysinqs}.

Applying the same argument to $X^{+}$, 
we conclude that
\[
I\!H^2(X^+, \bC) \simeq H^2(X^{+,\, {\rm reg}}, \bC)
\simeq H^2(X^{\rm reg}, \bC) \simeq I\!H^2(X, \bC) .
\]
\end{proof}

 \begin{definition} 
 Let $f\colon X\to Y$ be a projective morphism. 
 We say that a divisor $\Delta$ is {\it $f$-exceptional}
 if $\codim_Y f(\Supp(\Delta)) \geq 2$.
 \end{definition}

 \begin{proposition}\label{closer} 
 Let $f\colon X\to Y$ be an abelian
 fibration of Picard type, such that $X$ 
 is proper with klt singularities
 and $K_X\sim_{\bQ,f}0$. Then, $\bC N^1$ 
 is isomorphic to the image
 of
 $$
 H^2(X,\cO_X)/H^2(Y,\cO_Y)\to 
 H^2_{\rm an}(X^+,\cO_{X^+})/H^2_{\rm an}(Y^+,\cO_{Y^+}).
 $$
 \end{proposition}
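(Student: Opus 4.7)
The plan is to upgrade the identification of $\bC N^1$ given in Corollary \ref{cor-n1}, by replacing singular cohomology of $X^+$ with coherent cohomology of the full compactification $X$. The bridge is the intersection cohomology $IH^2$, which agrees with ordinary cohomology of a resolution in low codimension and projects surjectively onto $H^{0,2}$ by classical Hodge theory on proper $\widetilde X$.

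First, I would arrange that $X^+ \subset X$ is a big open subset of a $\bQ$-factorial variety. A small $\bQ$-factorialization of $X$ preserves klt singularities and the relation $K_X \sim_{\bQ,f} 0$, and does not affect any of the cohomology groups involved since it is an isomorphism in codimension one. Then one applies Proposition \ref{contract-f-exc} to contract any $f$-exceptional divisor meeting $X \setminus X^+$, which preserves klt singularities and $K_X \sim_{\bQ,f} 0$, while ensuring $\codim_X(X \setminus X^+) \geq 2$. A parallel small $\bQ$-factorialization is applied to $Y$, noting that $(Y,B_Y,\bfM)$ is klt by Theorem \ref{base-same-sing}.

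Second, I would chain Propositions \ref{ih-to-h} and \ref{h2-surjection}. Since $X^+$ is a big open of the $\bQ$-factorial klt variety $X$, Proposition \ref{h2-surjection} gives $IH^2(X,\bC) \simeq IH^2(X^+,\bC)$, and Proposition \ref{ih-to-h} applied to $X^+$ identifies the image of $H^2(X^+,\bC) \to H^2_{\mathrm{an}}(X^+,\cO_{X^+})$ with the image of $IH^2(X^+,\bC) \hookrightarrow H^2(\widetilde{X^+},\bC) \to H^2_{\mathrm{an}}(\widetilde{X^+},\cO_{\widetilde{X^+}}) = H^2_{\mathrm{an}}(X^+,\cO_{X^+})$. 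Choosing a resolution $\widetilde X \to X$ compatibly with $\widetilde{X^+}$, the naturality of the sheaf map $\underline{\bC} \to \cO$ with restriction to $\widetilde{X^+}$ shows that this image equals the image of the composition $IH^2(X,\bC) \hookrightarrow H^2(\widetilde X,\bC) \to H^2(\widetilde X,\cO_{\widetilde X}) = H^2(X,\cO_X) \to H^2_{\mathrm{an}}(X^+,\cO_{X^+})$. Applying Proposition \ref{ih-to-h} once more to $X$ itself, together with the Hodge-theoretic surjectivity $H^2(\widetilde X,\bC) \twoheadrightarrow H^{0,2}(\widetilde X) = H^2(X,\cO_X)$, one deduces that $IH^2(X,\bC) \twoheadrightarrow H^2(X,\cO_X)$. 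Hence the image of $H^2(X^+,\bC)$ in $H^2_{\mathrm{an}}(X^+,\cO_{X^+})$ agrees with the image of $H^2(X,\cO_X)$.

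Running exactly the same chain with $Y$ in place of $X$ shows the image of $H^2(Y^+,\bC)$ in $H^2_{\mathrm{an}}(Y^+,\cO_{Y^+})$ equals the image of $H^2(Y,\cO_Y)$. Because the pullback maps $H^2(Y,\cO_Y) \to H^2(X,\cO_X)$ and $H^2_{\mathrm{an}}(Y^+,\cO_{Y^+}) \to H^2_{\mathrm{an}}(X^+,\cO_{X^+})$ are compatible with the corresponding maps in $\bC$-cohomology, passing to quotients and invoking Corollary \ref{cor-n1} yields the proposition. The main obstacle is the first step: producing a $\bQ$-factorial klt model in which $X^+$ is big via an MMP that contracts only $f$-exceptional divisors, which is precisely the content of Proposition \ref{contract-f-exc}; the subsequent steps are essentially diagram-chasing once that birational reduction is in place.
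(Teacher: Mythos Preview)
Your proposal is correct and follows essentially the same route as the paper: reduce via Lemma~\ref{contract-f-exc} to a $\bQ$-factorial klt model with no $f$-exceptional divisors so that $X^+\subset X$ is big, then chain Corollary~\ref{cor-n1} with Propositions~\ref{ih-to-h} and~\ref{h2-surjection}, and finish using surjectivity of $H^2(X,\bC)\to H^2(X,\cO_X)$ (the paper cites \cite[Thm.~7]{schwald2016} rather than passing through a resolution, but this is the same content). The one redundancy is your parallel argument for $Y$: since the image of a map $A/B\to C/D$ depends only on the image of $A$ in $C$, once you know the image of $H^2(X^+,\bC)$ in $H^2_{\rm an}(X^+,\cO_{X^+})$ coincides with that of $H^2(X,\cO_X)$, passing to the quotient by $H^2_{\rm an}(Y^+,\cO_{Y^+})$ already gives the statement---no separate analysis of $Y$ (and hence no $\bQ$-factorialization of $Y$) is needed.
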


 \begin{proof} 
By Lemma \ref{contract-f-exc}, there is
 a projective model of $X$ 
 with $\mathbb Q$-factorial klt singularities
 for which $f\colon X\to Y$ admits 
 no $f$-exceptional divisors. Furthermore,
 these modifications induce isomorphisms
 on $H^2(X,\cO_X)$, $H^2_{\rm an}(X^+,\cO_{X^+})$,
 and the restriction map between them.
 Hence, we may assume without loss
 of generality that $X$ is klt,
 $\bQ$-factorial, and there are no 
 $f$-exceptional divisors.
 In particular, the preimage via $f$ 
 of a big open subset of $Y$ is a big open subset of $X$.
 
 It follows from Corollary \ref{cor-n1} and Proposition
 \ref{ih-to-h} that $\bC N^1$
 is the image of the map
 \[
 I\!H^2(X^+,\underline{\bC}_{X^+})
 \to H^2_{\rm an}(X^+,\cO_{X^+})/H^2_{\rm an}(Y^+,\cO_{Y^+}).
 \]
 Since $Y^+ \subset Y$ is a big open set,
 then $X^+\subset X$ is a big open set in a 
 $\bQ$-factorial klt variety.
 By Proposition \ref{h2-surjection}, we conclude
 that $\bC N^1$ is the image of the map 
 $$
 I\!H^2(X,\underline{\bC}_{X})\to 
 H^2_{\rm an}(X^+,\cO_{X^+})/H^2_{\rm an}(Y^+,\cO_{Y^+}).
 $$
 Again by Proposition \ref{ih-to-h}, the image 
 of $I\!H^2(X,\underline{\bC}_X) \to H^2_{\rm an}(X^+,\cO_{X^+})$ 
 coincides with the image of 
 $H^2(X,\underline{\bC}_X)\to H^2_{\rm an}(X^+,\cO_{X^+})$ 
 and factors through $H^2(X,\cO_X)$.
 The proposition now follows, 
 since $H^2(X,\underline{\bC}_X)\to H^2(X,\cO_X)$ 
 is surjective as $X$ is proper with klt singularities, see 
 \cite[Thm.~7]{schwald2016}.
 \end{proof}

 \begin{proposition} \label{h2-injective}
 Under the same hypotheses as
 Proposition \ref{closer},
 $H^2(X,\cO_X)/H^2(Y,\cO_Y)\to 
 H^2_{\rm an}(X^+,\cO_{X^+})/H^2_{\rm an}(Y^+,\cO_{Y^+})$ 
 is injective. \end{proposition}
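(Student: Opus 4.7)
The plan is to reduce the problem to injectivity on the Leray graded pieces associated with $f\colon X\to Y$ and $f^+\colon X^+\to Y^+$, acting on the structure sheaves.

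First I would establish $E_2$-degeneration of the Leray spectral sequence
\[E_2^{p,q}=H^p(Y,R^qf_*\cO_X)\ \Longrightarrow\ H^{p+q}(X,\cO_X).\]
Taking a resolution $\pi\colon\wX\to X$, which preserves higher direct images via $R\pi_*\cO_{\wX}=\cO_X$ (klt implies rational singularities), reduces to the case of a smooth total space $\wX$. Using $K_X\sim_{\bQ,f}0$ to write $\omega_X\cong f^*L$ after a finite \'etale base change, and applying Koll\'ar's decomposition theorem for $Rf_*\omega$ together with the projection formula, yields the required splitting. Proposition \ref{ss-degen} is the analogue for $f^+$. Both $H^2(X,\cO_X)$ and $H^2_{\rm an}(X^+,\cO_{X^+})$ then carry three-step Leray filtrations whose bottom pieces, by the connected-fiber hypothesis, are the images of $H^2(Y,\cO_Y)$ and $H^2_{\rm an}(Y^+,\cO_{Y^+})$ respectively.

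Since the restriction map is filtered, after quotienting by the bottom pieces, injectivity of the statement's map reduces to injectivity of the two induced maps on graded pieces:
\begin{enumerate}
\item[(i)] $H^1(Y,R^1f_*\cO_X)\to H^1_{\rm an}(Y^+,R^1f^+_*\cO_{X^+})$,
\item[(ii)] $H^0(Y,R^2f_*\cO_X)\to H^0_{\rm an}(Y^+,R^2f^+_*\cO_{X^+})$.
\end{enumerate}
Step (ii) follows from Koll\'ar's torsion-freeness theorem: $R^2f_*\omega_{\wX}$ is torsion-free on $Y$, hence so is $R^2f_*\cO_X$ after twisting by $L$, so global sections restrict injectively to the dense open $Y^+$.

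For step (i) I would argue via local cohomology. $Y$ is klt (Theorem \ref{base-same-sing}), hence Cohen--Macaulay, and $Y\setminus Y^+$ has codimension at least $2$. If $R^1f_*\cO_X$ can be arranged to be reflexive---equivalently $S_2$---on $Y$, then $H^i_{Y\setminus Y^+}(Y,R^1f_*\cO_X)=0$ for $i\leq 1$, and the local cohomology long exact sequence yields the injectivity of $H^1(Y,R^1f_*\cO_X)\to H^1(Y^+,R^1f^+_*\cO_{X^+})$. GAGA on proper $X$ and $Y$ identifies algebraic with analytic cohomology, and passage to the analytic cohomology $H^1_{\rm an}(Y^+,R^1f^+_*\cO_{X^+})$ is harmless on the target side since the sheaf is locally free by Proposition \ref{ss-degen}. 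The main obstacle I expect is securing the $S_2$ (reflexivity) property of $R^1f_*\cO_X$: Koll\'ar-type theorems give torsion-freeness directly, but reflexivity requires choosing the birational model so that the base is regular at the codimension-$2$ points cut out by $Y\setminus Y^+$ and invoking a suitable Koll\'ar--Kov\'acs reflexivity result, leveraging the relative triviality of $\omega_{X/Y}$. Birational invariance of $H^2(-,\cO)$ for varieties with rational singularities makes such a reduction admissible; an alternative is to replace $R^1f_*\cO_X$ by its reflexive hull and track the correction, which is supported on $Y\setminus Y^+$ and so does not affect the quotient we care about.
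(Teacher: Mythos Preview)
Your approach is genuinely different from the paper's, and while the Leray-filtration skeleton is reasonable, the sheaf-theoretic inputs are not secured.

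The recurring problem is that Koll\'ar's torsion-freeness and decomposition theorems concern $R^i\wf_*\omega_{\wX}$ for a \emph{smooth} $\wX$, and your passage to $R^if_*\cO_X$ via the projection formula needs $\omega_{\wX}\cong\wf^*L$. Resolving $X$ destroys this: $K_{\wX}=\pi^*K_X+E$ with $E$ exceptional, so $\omega_{\wX}$ is not pulled back from $Y$. Your ``finite \'etale base change'' can arrange $\omega_{X'}\cong f'^*L'$ only for a klt cover $X'$, where Koll\'ar's theorem does not apply; resolving $X'$ reintroduces the same defect. This undermines both your degeneration argument and step (ii). (Degeneration is in fact dispensable: one checks directly that for $(p,q)\in\{(0,2),(1,1)\}$ no differential lands in $E_r^{p,q}$, so $E_\infty^{p,q}\hookrightarrow E_2^{p,q}$ regardless, and injectivity on $E_2$ would suffice. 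But you still need torsion-freeness of $R^2f_*\cO_X$ over the possibly singular $Y$, which is not a consequence of the cited theorem.) For step (i), neither workaround is complete: modifying $Y$ changes the map under study and may create new bad codimension-$2$ loci, while the reflexive-hull trick leaves the torsion subsheaf and the cokernel of $\cF\to\cF^{**}$ contributing to $H^1(Y,\cF)$ in ways you have not controlled.

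The paper sidesteps all of this with a direct Hodge-theoretic argument. After replacing $X$ by a resolution (harmless for $H^2(-,\cO)$), it represents a class $\overline{\sigma}\in H^2(X,\cO_X)$ by an antiholomorphic $2$-form, restricts to the preimage $C_X$ of a general very ample curve $C_Y\subset Y^+$, and uses $H^2(C_Y,\cO_{C_Y})=0$ to force $[\overline{\sigma}|_{C_X}]=0$; since $C_X$ is smooth projective, the harmonic form $\overline{\sigma}|_{C_X}$ itself vanishes. Varying $C_Y$ through general tangent directions of $Y$ shows $\overline{\sigma}$ annihilates every bivector in $T^{0,1}_f\wedge T^{0,1}_X$, hence is pulled back from $Y$.
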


 \begin{proof} 
Passing to a resolution, we may assume that $X$ is 
smooth, projective. 
In doing so, we may lose the minimality of $X$ 
over $Y$, but the relevant cohomology groups 
remain unchanged, and therefore the conclusion of Propostion \ref{closer} remains valid.
Let $C_Y$ be a general very ample 
curve in $Y$, which is contained in $Y^+$,
and let $C_X$ be its inverse image in $X$. Then
$C_X$ is smooth and we may extend the restriction
and pullback maps to a larger diagram:

\[\xymatrix{
H^2(X,\cO_X) \ar[r] & H^2_{\rm an}(X^+,\cO_{X^+}) 
\ar[r] & H^2(C_X,\cO_{C_X})  \\
H^2(Y,\cO_Y) \ar[u] \ar[r] & 
H^2_{\rm an}(Y^+,\cO_{Y^+}) \ar[u] \ar[r] & 
H^2(C_Y,\cO_{C_Y})=0. \ar[u]
}\]

By Hodge theory, we may represent an element
$\overline{\sigma}\in H^2(X,\cO_X)$ as an
antiholomorphic $2$-form. Suppose that the 
restriction $\overline\sigma\vert_{X^+}$
represents a (Dolbeault) cohomology class
pulled back from $Y^+$. It suffices
to show that $\overline{\sigma}$ is pulled
back from $Y$.
We have
$[\overline{\sigma}\vert_{C_X}]=0$ 
in cohomology, because $H^2(C_Y,\cO_{C_Y})=0$.

Since $C_X$ is smooth projective,
the antiholomorphic $2$-form 
$\overline{\sigma}\vert_{C_X}$
represents $0$ in cohomology if and 
only if it equals $0$ as a form.
Since $C_Y$ can be made to pass through
a general tangent vector of $Y$,
we conclude that the pairing
of $\overline{\sigma}$ with any bivector of the form
$v_1\wedge v_2\in T^{0,1}_f\wedge T^{0,1}_X\subset \wedge^2T^{0,1}_X$ vanishes---here $T_f$
is the vertical tangent bundle.
Hence 
$\overline{\sigma}$ is pulled back 
from a reflexive antiholomorphic $2$-form on $Y$,
which represents a class in $H^2(Y,\cO_Y)$ by
\cite[Thm.~7]{schwald2016}.
\end{proof}

 \begin{corollary}\label{cts}
 Let $f\colon X\to Y$ be an abelian
 fibration of Picard type, such that
 $X$ is projective with klt singularities, and 
 $K_X\sim_{\bQ,f}0$. 
 A choice of polarization 
 $L$ (cf.~Prop.~\ref{identify-N1}) 
 on $X^{\rm Alb}$
 defines an isomorphism 
 $$\bC N^1
 \simeq_L H^2(X,\cO_X)/H^2(Y,\cO_Y).$$ \end{corollary}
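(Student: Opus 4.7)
The plan is to observe that the corollary is a direct consequence of the two propositions immediately preceding it, namely Proposition~\ref{closer} and Proposition~\ref{h2-injective}, together with the bookkeeping of the polarization dependence introduced earlier. First, I would invoke Proposition~\ref{closer}, which identifies $\bC N^1$ with the \emph{image} of the natural restriction map
\[
\rho \colon H^2(X,\cO_X)/H^2(Y,\cO_Y)
\longrightarrow
H^2_{\rm an}(X^+,\cO_{X^+})/H^2_{\rm an}(Y^+,\cO_{Y^+}).
\]
Next, I would apply Proposition~\ref{h2-injective}, which asserts that $\rho$ is injective under exactly the hypotheses (projective $X$ with klt singularities, $K_X \sim_{\bQ,f} 0$, Picard-type abelian fibration) that are assumed here. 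Combining the two, $\rho$ is an isomorphism onto its image, and its image is $\bC N^1$, so we obtain the desired isomorphism
\[
\bC N^1 \simeq H^2(X,\cO_X)/H^2(Y,\cO_Y).
\]

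To track the role of the polarization $L$, I would note that the only place where a choice enters in the chain of identifications behind Proposition~\ref{closer} is Proposition~\ref{isog} and Corollary~\ref{identify-N1}, where the polarization of the Kulikov Albanese model is used to build the isogeny between the exponential sequence $0 \to \Gamma \to \mathfrak{p} \to \cP^0 \to 1$ and the exponential sequence attached to $\cJ = R^1 h^+_\ast \cO_{V^+}/R^1 h^+_\ast \underline{\bZ}_{V^+}$. All subsequent identifications in Corollary~\ref{all-same}, Corollary~\ref{cor-n1}, Proposition~\ref{ih-to-h}, Proposition~\ref{h2-surjection}, and Proposition~\ref{closer} are canonical and independent of any further choices. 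Hence the final isomorphism is canonical only up to the $L$-dependence already recorded in $\simeq_L$.

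Since everything has been arranged in the preceding propositions, there is essentially no hard step remaining: the proof is a one-line combination. The only subtlety worth flagging explicitly in writing it up is that Proposition~\ref{closer} actually requires its hypotheses to invoke the MMP step (Lemma~\ref{contract-f-exc}) that removes $f$-exceptional divisors and ensures $X^+ \subset X$ is big, and that Proposition~\ref{h2-injective} implicitly uses that after a further resolution $X$ may be assumed smooth projective without altering the relevant cohomology—so I would briefly remark that these reductions are harmless for stating the final isomorphism on the original $X$.
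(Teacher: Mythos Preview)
Your proposal is correct and matches the paper's approach exactly: the paper states Corollary~\ref{cts} without proof, as it follows immediately from combining Proposition~\ref{closer} (identifying $\bC N^1$ with the image of the restriction map) and Proposition~\ref{h2-injective} (injectivity of that map). Your additional remark tracking the $L$-dependence through Corollary~\ref{identify-N1} and Proposition~\ref{isog} is accurate and makes explicit what the paper leaves implicit in the notation $\simeq_L$.
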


\begin{definition}\label{brauer} For a
variety $X$, let ${\rm BR}_X \coloneqq H^2(X,\cO_X)/
{\rm im}\,H^2(X,\bZ)$. \end{definition}

Though we do not explore the connection to Brauer
groups here, it is the motivation for the notation
in Definition \ref{brauer}.

\begin{proposition}\label{brau-prop} 
Under the assumptions of Corollary
\ref{cts}, a
relative polarization $L$ of type ${\bf d}$ on 
$f^{\rm Alb}\colon X^{\rm Alb}\to Y$
defines an isogeny, of degree bounded
solely in terms of $(Y,B,\bfM, \Phi^o)$ and ${\bf d}$,
between $\bC N^1/N^1$ and ${\rm BR}_X/{\rm BR}_Y$.
Furthermore, $\rk N^1 = b_2(X)-\rho(X) - b_2(Y) + \rho(Y)$.
\end{proposition}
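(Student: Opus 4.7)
The plan is to unwind the chain of identifications in Sections \ref{ts3}--\ref{sec:continuous-ts} that produced $\bC N^1 \simeq_L H^2(X,\cO_X)/H^2(Y,\cO_Y)$, but now keeping track of the integral lattices throughout, in order to identify $N^1$ (up to bounded isogeny) with the image of $H^2(X,\bZ)/H^2(Y,\bZ)$ inside $H^2(X,\cO_X)/H^2(Y,\cO_Y)$. Once this is achieved, the isogeny statement is formal and the rank identity follows from the Lefschetz $(1,1)$-theorem.

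First, I would revisit Proposition \ref{isog} integrally: the polarization $L$ induces a map of short exact sequences from \eqref{ses} to the exponential sequence for $h^+\colon V^+\to Z^+$, and this map is fiberwise an isogeny of degree bounded solely in terms of $\bfd$. Passing to $G$-equivariant cohomology, and applying the Cartan--Leray reduction of Lemma \ref{lem:h1}, $N^1$ becomes isogenous, with degree bounded in terms of $(Y,B,\bfM,\Phi^o)$ and $\bfd$, to the image of $H^1(Z^+,R^1h^+_*\underline{\bZ}_{V^+})^G\to H^1_{\mathrm{an}}(Z^+,R^1h^+_*\cO_{V^+})^G$. Using the integral version of Proposition \ref{hi}, and the argument of Corollary \ref{all-same}, this identifies (again up to bounded isogeny) with the image of $H^1(Y^+,R^1f^+_*\underline{\bZ}_{X^+})\to H^1_{\mathrm{an}}(Y^+,R^1f^+_*\cO_{X^+})$.

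Next, I would run through Propositions \ref{image-infinity}, \ref{closer} and \ref{h2-injective} with integral coefficients in the source. The Leray spectral sequence argument of Proposition \ref{image-infinity} still gives that the $(1,1)$-image on the $E_2$ page equals the $E_\infty$ image (the Picard-type hypothesis still kills $E_\infty^{0,2}(\cO_{X^+})$, and the relevant differentials with target $E_2^{3,0}$ vanish because that column has already degenerated). Combined with the Hartogs/intersection cohomology arguments of Propositions \ref{ih-to-h}, \ref{h2-surjection}, \ref{closer} and \ref{h2-injective}, the integral image is then the image of $H^2(X,\bZ)/H^2(Y,\bZ)$ inside $H^2(X,\cO_X)/H^2(Y,\cO_Y)$. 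Quotienting, $\bC N^1/N^1$ is isogenous of bounded degree to $H^2(X,\cO_X)/(H^2(Y,\cO_Y)+\mathrm{im}\,H^2(X,\bZ))$, which is precisely the cokernel of the pullback $\mathrm{BR}_Y\to\mathrm{BR}_X$, i.e., $\mathrm{BR}_X/\mathrm{BR}_Y$.

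For the rank computation, I would first reduce to the case $X$ smooth projective by passing to a $\bQ$-factorial resolution: by Proposition \ref{ih-to-h} and the decomposition $H^2(\widetilde{X},\bC)=\pi^*H^2(X,\bC)\oplus\mathrm{span}\{[E_i]\}$, both $b_2$ and $\rho$ increase by the number of exceptional divisors, so $b_2-\rho$ is invariant, as are $H^2(X,\cO_X)$ and hence $\rk N^1$. For $X$ and $Y$ smooth, $\rk N^1$ equals the rank of the image of $H^2(X,\bQ)/H^2(Y,\bQ)\to H^2(X,\cO_X)/H^2(Y,\cO_Y)$. By the Lefschetz $(1,1)$-theorem, $\ker(H^2(W,\bQ)\to H^2(W,\cO_W))=\mathrm{NS}(W)_\bQ$ has rank $\rho(W)$ for $W\in\{X,Y\}$, and since pullback along $f$ is injective both on $\bQ$-cohomology (as $f$ is surjective) and on $H^2(-,\cO)$ (being Hodge-theoretically a direct summand), the induced kernel on quotients has rank $\rho(X)-\rho(Y)$. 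Therefore $\rk N^1=(b_2(X)-b_2(Y))-(\rho(X)-\rho(Y))$.

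The main obstacle will be the integral bookkeeping in the first two steps. Unlike the $\bC$-coefficient statements, there is no canonical splitting of the Leray filtration integrally, and Propositions \ref{isog} and \ref{hi} only yield isogenies rather than isomorphisms when tracked with $\bZ$-coefficients. I will need to control all the resulting finite cokernels --- from Cartan--Leray, from the polarization-induced isogeny $\Gamma\hookrightarrow R^1h^+_*\underline{\bZ}_{V^+}$, from comparing $V^+$ with a $G$-equivariant resolution $\widetilde{W}^+$, and from the integral Leray filtration on $H^2(X^+,\bZ)$ --- and show they are bounded uniformly in terms of $(Y,B,\bfM,\Phi^o)$ and $\bfd$ alone.
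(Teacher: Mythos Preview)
Your proposal is correct and follows essentially the same approach as the paper: track the chain of identifications from Corollaries \ref{identify-N1}, \ref{cor-n1}, and \ref{cts} integrally to show that $L$ sends $N^1$ (up to bounded isogeny) to the image of $H^2(X,\bZ)$ in $H^2(X,\cO_X)/H^2(Y,\cO_Y)$, then compute the rank via Lefschetz $(1,1)$. The paper phrases the rank computation in terms of the transcendental lattice $T_X$ (with $\rk T_X = b_2(X)-\rho(X)$) and checks that $T_X$ meets $H^2(Y,\cO_Y)$ in $T_Y$ up to multiples, while you compute kernels directly; these are equivalent, and your explicit reduction to smooth $X$ via the observation that $b_2-\rho$ is resolution-invariant is a clean way to justify Lefschetz $(1,1)$ in the singular setting.
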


\begin{proof}
It follows from Corollaries \ref{identify-N1}, 
\ref{cor-n1}, and \ref{cts} that $L$ defines 
an isogeny of bounded degree between the subgroup
$N^1\subset \bC N^1$ and the image
of the map
$$H^2(X,\bZ)\to H^2(X,\cO_X)/H^2(Y,\cO_Y).$$
This map factors through $H^2(X,\bZ)/H^2(Y,\bZ)$,
so we deduce the first part.

The rank of ${\rm im}\,H^2(X,\bZ)\subset H^2(X,\cO_X)$
is the rank of the transcendental lattice 
$T_X$ of $X$---the 
smallest $\bZ$-Hodge sub-structure of $H^2(X,\bZ)$ containing 
$H^2(X,\cO_X)$. It is complementary to 
${\rm NS}(X) = H^{1,1}(X,\bZ)$
and so we deduce that $\rk {\rm im}\,H^2(X,\bZ) = 
b_2(X)-\rho(X)$.

We claim that, up to a multiple,
any element of $T_X$ projecting
into $H^2(Y,\cO_Y)$ necessarily lies
in the image of $T_Y$. This holds from
simple linear algebra because 
$H^2(Y,\cO_Y)\to H^2(X,\cO_X)$ is 
injective---$X$ and $Y$ are klt, so 
resolve them and the discriminant locus,
then apply \cite[Cor.~3.10]{Kol86}.
We deduce that
$\rk N^1 = b_2(X)-\rho(X) -b_2(Y)+ \rho(Y)$.
\end{proof}

We summarize the results of this
section in the following theorem,
which extends work
of Raynaud \cite{ray70}, 
Gross \cite{Gross97} and Dolgachev--Gross
\cite[Thm.~2.24]{Gross1994} to the general
setting of abelian fibrations, 
of arbitrary base
and fiber dimension, with multiple fibers.

\begin{theorem}\label{identify-N1-2} 
Let $f\colon X\to Y$ be an abelian
fibration of Picard type, of a projective, 
klt variety $X$, such that $K_X\sim_{\bQ,f}0$.
Suppose that $f$ induces $(Y,B,\bfM,\Phi^o)$.
Then the translation-birational class of $f$ defines
an element $t(f)\in \Sha_G$ in a group fitting into
an exact sequence $0\to \Sha\to \Sha_G\to 
\Ash^G\to 0$. The image 
$m(f)\in \Ash^G$ encodes the structure
of the multiple fibers of $f$, and this 
multiplicity group $\Ash^G$
is finite. We have $\Sha_G\simeq 
(\Sha_{G,\,{\rm an}})_{\rm tors}$. 
The group $\Sha_G$
is isogenous to the central term of an exact sequence
\begin{align}
\label{smallerest}
0\to \bQ N^1/N^1\to \Sha_G^0\to N^2\to 0\end{align}
where $N^2$ is finite.
The group $\Sha_G^0$ is the torsion subgroup
of a subgroup of the analytic Tate--Shafarevich twists $\overline{\underline{\Sha}}_{G,\,{\rm an}}^0$
fitting into the exact sequence
\begin{align*} 0\to \bC N^1/N^1
\to 
\overline{\underline{\Sha}}_{G,\,{\rm an}}^0 \to 
N^2\to 0.\end{align*}
A polarization
on $X^{\rm Alb}$ defines an isogeny from
$\bC N^1/N^1$ to ${\rm BR}_X/{\rm BR}_Y$.
Finally, all of the finite groups, indices
of subgroups, and degrees of isogenies 
in the above statement have size bounded above,
solely in terms of $(Y,B,\bfM, \Phi^o)$.
\end{theorem}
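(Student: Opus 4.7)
The plan is to assemble the final theorem from results already proven throughout the section, verifying at each step that the relevant bounds depend only on $(Y,B,\bfM,\Phi^o)$. First, I would recall from Proposition \ref{nicest-model} that associated to $(Y,B,\bfM,\Phi^o)$ there is a canonical Galois cover $Z \to Y$ with group $G$ and a big open set $Z^+\subset Z$ (independent of $f$, by Proposition \ref{local-no-twist} and Remark \ref{remark-indep}) admitting the diagram (\ref{diagram}). The twist class $t(f)\in\Sha_G$ and the exact sequence $0\to \Sha\to \Sha_G\to \Ash^G\to 0$ together with the interpretation of $m(f)$ as measuring multiple fibers come from Proposition \ref{define-twist}, Corollary \ref{twist-cor}, Proposition \ref{mult-class}, and Proposition \ref{alg-mult}. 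Finiteness of $\Ash^G\subset \Ash^{G,\,\mathrm{pre}}$ with a bound depending only on $(Y,B,\bfM,\Phi^o)$ is Proposition \ref{mult:finite}.

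Next, I would invoke Theorem \ref{ray-mult} for the identification $\Sha_G\simeq (\Sha_{G,\,{\rm an}})_{\mathrm{tors}}$, and Corollary \ref{torsion-lemma-2} to pass between $\Sha_G$ and $\Sha_G^0$ via a bounded isogeny. The sequence (\ref{smallerest}) is then constructed as in diagram (\ref{smaller}) from \S\ref{ts3}: starting with the long exact sequence for (\ref{ses}) applied $G$-equivariantly, one peels off $N^1$ (the image of $H^1_G(Z^+,\Gamma)$ in $H^1_{G,\,{\rm an}}(Z^+,\mathfrak p)$) and identifies the image in $H^2_G(Z^+,\Gamma)$ with a finite subgroup $N^2$. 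The fact that $N^2$ equals the torsion subgroup of $H^2_G(Z^+,\Gamma)$ is Lemma \ref{identify-N2}; the fact that the torsion $\Sha_G^0$-classes lie in $\bQ N^1/N^1$ (rather than in all of $\bC N^1/N^1$) follows from Lemma \ref{torsion-lemma-1} together with the observation in Remark \ref{alternate-thing}.

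The analytic statement identifying $\overline{\underline{\Sha}}_{G,\,{\rm an}}^0$ with an extension of $N^2$ by $\bC N^1/N^1$ is diagram (\ref{smaller}) directly, using Lemma \ref{lem:h1} to pass from $G$-equivariant cohomology to $G$-invariants with bounded-degree error. The isogeny to $\mathrm{BR}_X/\mathrm{BR}_Y$ is then exactly the content of Proposition \ref{brau-prop}, which in turn relies on the chain of identifications in Corollary \ref{identify-N1}, Corollary \ref{all-same}, and Corollary \ref{cts}, with Propositions \ref{image-infinity}, \ref{ih-to-h}, \ref{h2-surjection}, \ref{closer}, and \ref{h2-injective} all entering the argument.

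The one step I expect to require the most care is tracking the \emph{uniformity} of all the bounds---the group orders, subgroup indices, and isogeny degrees---in terms of $(Y,B,\bfM,\Phi^o)$ alone. The Galois group $G$ and the cover $Z\to Y$ depend only on $(Y,B,\bfM,\Phi^o)$ by Proposition \ref{nicest-model}, and the big open $Z^+$ does likewise by Proposition \ref{local-no-twist}. From this, $|N^2|$ is bounded (as a subgroup of the finitely generated group $H^2_G(Z^+,\Gamma)$, whose torsion is bounded), $|\Ash^G|$ is bounded by Proposition \ref{mult:finite}, and the isogeny degrees between $(\Sha^0_G,\Sha_G)$ and between $N^1$ and its avatar under the polarization come from Corollary \ref{torsion-lemma-2} and Proposition \ref{isog}, the latter depending on the polarization type $\mathbf{d}$. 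I would therefore conclude by collecting these finitely many bounds and noting that $\mathbf{d}$ itself is determined by $\Phi^o$.
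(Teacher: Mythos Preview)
Your proposal is correct and follows exactly the same approach as the paper's own proof, which is simply a terse assembly of the earlier results (Proposition~\ref{define-twist}, Corollary~\ref{twist-cor}, Proposition~\ref{alg-mult}, Theorem~\ref{ray-mult}, and Proposition~\ref{brau-prop}). You are in fact more thorough than the paper in tracing the chain of dependencies and in explicitly addressing the uniformity of the bounds; your final remark that the polarization type ${\bf d}$ is determined by $\Phi^o$ (via the fixed choice in (\ref{inclusions})) is the correct way to reconcile the dependence on ${\bf d}$ in Proposition~\ref{brau-prop} with the theorem's claim that everything is bounded in terms of $(Y,B,\bfM,\Phi^o)$ alone.
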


\begin{proof} 
The birational classification of $f$ in terms
of the multiplicity class 
$m(f)\in \Ash^{G,\,{\rm pre}}$ 
and twist class $t(f)\in \Sha_G$ follows from 
Proposition \ref{define-twist} and Corollary \ref{twist-cor}.
The description of the elements $m(f)\in \Ash^G\subset 
\Ash^{G,\,{\rm pre}}$ which actually correspond to
algebraic twists is Proposition \ref{alg-mult}.
The isomorphism onto the torsion subgroup
is Theorem \ref{ray-mult}. Finally, the isogeny
of $\bC N^1/N^1$ to ${\rm BR}_X/{\rm BR}_Y$ is 
Proposition \ref{brau-prop}.
\end{proof}

\begin{corollary}\label{cor-ts-twist} 
Under the same hypotheses
as Theorem \ref{identify-N1-2},
we have
$$
\Sha_G\simeq 
(\bQ/\bZ)^{b_2(X)-b_2(Y)-\rho(X)+\rho(Y)}
\oplus N
$$
for a finite abelian group $N$
fitting into the exact sequence (\ref{leftover}) below.
\end{corollary}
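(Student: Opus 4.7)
The strategy is to deduce the decomposition of $\Sha_G$ from that of $\Sha_G^0$, exploiting the isogeny between the two groups provided by Corollary \ref{torsion-lemma-2}. I would begin with the exact sequence
\[
0 \to \bQ N^1/N^1 \to \Sha_G^0 \to N^2 \to 0
\]
of Theorem \ref{identify-N1-2}. Since $\bQ N^1/N^1 \cong (\bQ/\bZ)^{\rk N^1}$ is divisible, hence injective as a $\bZ$-module, this sequence splits; as $N^2$ is finite (in particular reduced), the summand $\bQ N^1/N^1$ is exactly the maximal divisible subgroup $D(\Sha_G^0)$, yielding the canonical splitting $\Sha_G^0 \cong (\bQ/\bZ)^{\rk N^1}\oplus N^2$. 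The identification $\rk N^1 = b_2(X)-\rho(X)-b_2(Y)+\rho(Y)$ is then Proposition \ref{brau-prop}.

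Next I would transfer this structure across an isogeny $\psi\colon \Sha_G^0 \to \Sha_G$ with finite kernel $K$ and finite cokernel $C$, of degree bounded only in terms of $(Y,B,\bfM,\Phi^o)$. The restriction $\psi|_{D(\Sha_G^0)}$ lands in $D(\Sha_G)$ and has kernel a divisible subgroup of the finite group $K$, hence zero, so $D(\Sha_G^0) \hookrightarrow D(\Sha_G)$. The quotient $D(\Sha_G)/\psi(D(\Sha_G^0))$ is divisible (as a quotient of a divisible group) and simultaneously a subgroup of $\Sha_G/\psi(D(\Sha_G^0))$, which is an extension of $\coker\psi$ by a quotient of $N^2$; a divisible subgroup of a finite group is zero, forcing $\psi$ to restrict to an isomorphism $D(\Sha_G^0)\xrightarrow{\sim} D(\Sha_G) \cong (\bQ/\bZ)^{\rk N^1}$. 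Since every divisible subgroup of an abelian group is a direct summand, I conclude
\[
\Sha_G \;=\; D(\Sha_G) \oplus N \;\cong\; (\bQ/\bZ)^{\,b_2(X)-\rho(X)-b_2(Y)+\rho(Y)} \oplus N,
\]
with $N \coloneqq (\Sha_G)_{\mathrm{red}}$ reduced torsion.

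It remains to see that $N$ is finite and to realize it inside the exact sequence (\ref{leftover}). The composition $\Sha_G^0 \to \Sha_G \to N$ kills $D(\Sha_G^0)$ and therefore factors through $N^2$; its cokernel is a quotient of $\coker\psi$. Applying the snake lemma to $\psi$ together with its restriction to maximal divisible subgroups (an isomorphism, by the previous step) presents $N$ as an extension of a subgroup of $\coker\psi$ by a quotient of $N^2$, which is both the finiteness statement and the sought exact sequence; the order of $N$ is bounded by $|N^2|\cdot|\ker\psi|\cdot|\coker\psi|$. The only mildly delicate point will be threading the uniform bounds already recorded in Theorem \ref{identify-N1-2} and Corollary \ref{torsion-lemma-2} through the snake-lemma diagram, so as to ensure every constant appearing in (\ref{leftover}) depends only on $(Y,B,\bfM,\Phi^o)$; this is bookkeeping rather than a substantive obstacle.
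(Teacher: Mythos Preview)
Your approach mirrors the paper's: split off the maximal divisible subgroup of $\Sha_G^0$, transfer across the natural map $\psi\colon \Sha_G^0 \to \Sha_G$ coming from the component sequence (\ref{component-exact}), and split again. However, there is a genuine error in your injectivity step.

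You assert that $\ker(\psi|_{D(\Sha_G^0)}) = K \cap D(\Sha_G^0)$ is ``a divisible subgroup of the finite group $K$, hence zero.'' But an intersection of a divisible group with a finite subgroup need not be divisible: for instance $\tfrac{1}{2}\bZ/\bZ \subset \bQ/\bZ$. Quotients of divisible groups are divisible; subgroups are not, in general. In fact this kernel equals $N^4 \cap \bQ N^1/N^1$, which can be nonzero and which the paper tracks explicitly---it is precisely the first nonzero term of the sequence (\ref{leftover}) you are asked to establish.

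The main splitting survives your error: your surjectivity argument for $\psi|_{D}$ is correct, and since the quotient of $(\bQ/\bZ)^{\rk N^1}$ by any finite subgroup is again isomorphic to $(\bQ/\bZ)^{\rk N^1}$, one still obtains $D(\Sha_G) \cong (\bQ/\bZ)^{\rk N^1}$. But your snake-lemma computation of $N$, run under the false hypothesis that $\psi|_D$ is an isomorphism, would produce the four-term sequence $0 \to N^4 \to N^2 \to N \to N^3 \to 0$ with $N^4$ injecting into $N^2$. In reality the map $N^4 \hookrightarrow \Sha_G^0 \twoheadrightarrow N^2$ has kernel $N^4 \cap \bQ N^1/N^1$, and accounting for this yields exactly the five-term sequence (\ref{leftover}). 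The fix is immediate: replace ``an isomorphism'' by ``a surjection with finite kernel $N^4 \cap \bQ N^1/N^1$'' and rerun the snake lemma; all of the bookkeeping you flag at the end then goes through.
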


\begin{proof}
Given an abelian group $H$, denote by 
$H^{\rm div}\subset H$ the maximal
divisible subgroup, and let 
$H^{\rm ndiv} \coloneqq H/H^{\rm div}$
be the {\it non-divisible part} of $H$. 
Let $H_1\to H_2\to H_3\to 0$
be an exact sequence of abelian groups.
Then, if $H_3$ is finite,
$H_1^{\rm div}\twoheadrightarrow H_2^{\rm div}$ 
is a surjection. Furthermore,
if $0\to H_1\to H_2\to H_3\to 0$ is exact, with
$H_3$ finite, then we have an exact sequence
of groups $$0\to H_1^{\rm ndiv}\to 
H_2^{\rm ndiv}\to H_3\to 0.$$ 

Define $N^3 \coloneqq {\rm im}(H^1_G(Z^+,\cP)
\to H^1_G(Z^+,\mu))$ and $N^4 
\coloneqq {\rm im}(H^0_G(Z^+,\mu)
\to H^1_G(Z^+,\cP^0))$, coming from the component long exact
sequence (\ref{component-exact}). Then, we have
exact sequences \begin{gather*}
0\to \Sha^0_G/N^4\to \Sha_G\to N^3\to 0,  \\
0\to \bQ N^1/N^1\to \Sha_G^0\to N^2\to 0,
\end{gather*}
by Remark \ref{alternate-thing};
recall that 
$N^2 \coloneqq {\rm im}(\Sha_G^0\to H^2_G(Z^+,\Gamma))=H^2_G(Z^+,\Gamma)_{\rm tors}$
by Lemma \ref{identify-N2}.
Note that the latter exact sequence shows
$(\Sha_G^0)^{\rm ndiv}\simeq N^2$.

It follows that
$\bQ N^1/N^1\twoheadrightarrow (\Sha_G)^{\rm div}$
surjects onto the divisible subgroup with a finite
kernel, equal to $N^4\cap \bQ N^1/N^1$. 
The image is isomorphic to $(\bQ/\bZ)^{\rk N^1}$ and 
the rank is given in Proposition \ref{brau-prop}.
Furthermore, the cokernel $N=(\Sha_G)^{\rm ndiv}$ 
has the following structure:
\begin{align}\label{leftover} 
0\to N^4\cap \bQ N^1/N^1\to N^4\to N^2\to N\to N^3\to 0.
\end{align}
By divisibility, the exact sequence
$0\to (\Sha_G)^{\rm div}\to \Sha_G\to N\to 0$
splits (non-canonically). 
\end{proof}

\begin{question} It is unclear to us
whether the hypothesis $K_X\sim_{\bQ,f}0$ 
in Theorem \ref{identify-N1-2} is necessary.
This hypothesis was used to produce a compactification
$X^+\subset X$ with small (i.e., $\codim \geq 2$)
boundary. Thus, we pose the following 
topological question:

Let $f\colon X\to Y$ be a surjective map
of klt varieties of Picard type, and suppose that
$X^+$ is the inverse image of a big open subset
$Y^+\subset Y$.
Then, is $I\!H^2(X,\bC)\to 
I\!H^2(X^+,\bC)$ surjective? If not surjective,
are the images in $H^2(X^+,\cO_{X^+})$ the same?
If so, the small compactification
in Proposition \ref{closer} may be sidestepped,
allowing us to remove the 
hypothesis $K_X\sim_{\bQ,f}0$.
\end{question}

\section{Birational geometry of the relative Albanese variety}
\label{sec:alb}

\subsection{\texorpdfstring{$K$}{K}-triviality 
of the relative Albanese variety}\label{bir-alb}

The main result of this section is that 
the relative Albanese fibration of an 
abelian-fibered $K$-trivial variety is an 
abelian-fibered $K$-trivial variety. We begin with:

\begin{proposition}
\label{prop.map.alb}
Let $f\colon X\to Y$ be an abelian fibration
and let $f^{\rm Alb}\colon
X^{\rm Alb} \to Y$ be a model for 
the associated Albanese fibration.
There exists a commutative diagram of morphisms 
\begin{align*}
\xymatrix{
& X' \ar[dr]^{q} \ar[dl]_{p} & 
\\
X \ar[dr]_{f} \ar@{-->}[rr]^{\psi} & 
& \Xalb \ar[dl]^{f^{\rm Alb}} 
\\
& Y & 
}
\end{align*}
such that the following properties hold:
\begin{enumerate}
    \item 
$X'$ is smooth, 
    \item 
$p$ 
is birational, and
    \item 
$\psi$, $q$ are generically finite.
\end{enumerate}
\end{proposition}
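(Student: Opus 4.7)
The plan is to construct $\psi$ as a rational map using the torsion property of the Tate--Shafarevich group, and then to resolve its indeterminacy to produce $X'$. First, I would restrict to the generic point $\eta = \Spec \bC(Y)$ of $Y$, where the fiber $X_\eta$ is a torsor over the abelian variety $A \coloneqq X^{\rm Alb}_\eta$. By Lemma \ref{torsion-lemma-1}, combined with Remark \ref{real} identifying the Weil--Ch\^atelet group $H^1(\overline{G}, X^{\rm Alb}_{\overline{\eta}})$ with $\varinjlim_G \Sha_G$, the class $[X_\eta] \in H^1(\bC(Y), A)$ is torsion; fix an integer $n > 0$ annihilating it.

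The next step constructs a finite $\bC(Y)$-morphism $\psi_\eta \colon X_\eta \to A$ by quotienting out the $n$-torsion. Concretely, the $A[n]$-action on $X_\eta$ inherited from the torsor structure produces a quotient $X_\eta \to X_\eta / A[n]$, which is a torsor for $A/A[n]$. Via the isomorphism $[n] \colon A/A[n] \xrightarrow{\sim} A$ induced by multiplication by $n$, the class of this quotient torsor corresponds to $n \cdot [X_\eta] = 0$, so $X_\eta/A[n]$ is trivial as an $A$-torsor. Fixing a trivialization identifies it with $A = X^{\rm Alb}_\eta$, yielding a finite morphism $\psi_\eta \colon X_\eta \to X^{\rm Alb}_\eta$ of degree $|A[n]| = n^{2\dim A}$. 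The trivialization is only canonical up to translation by an $A(\bC(Y))$-element, but this ambiguity is irrelevant for the statement, which only asserts existence.

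Then I would view $\psi_\eta$ as a rational map $\psi \colon X \dashrightarrow X^{\rm Alb}$ over $Y$ and apply Hironaka's resolution of indeterminacy (combined with resolution of singularities if necessary) to obtain a smooth variety $X'$ and a birational morphism $p \colon X' \to X$ such that $q \coloneqq \psi \circ p$ extends to a regular morphism $q \colon X' \to X^{\rm Alb}$. The commutativity $f^{\rm Alb} \circ q = f \circ p$ holds on the generic point of $Y$ and hence everywhere by separatedness, and both $\psi$ and $q$ are generically finite because $\psi_\eta$ is finite.

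The main potential obstacle is justifying that the torsion hypothesis genuinely applies to the torsor class over $\eta$: we need that an abelian torsor over $\bC(Y)$ arising as the generic fiber of a proper abelian fibration $f\colon X \to Y$ has a torsion Weil--Ch\^atelet class. This is provided by Lemma \ref{torsion-lemma-1}, whose proof follows Raynaud's argument (cf.~\cite[Prop.~6.32]{yoonjoo}); once torsion-ness is granted, the quotient-by-$A[n]$ construction is essentially formal and the resolution of indeterminacy is standard.
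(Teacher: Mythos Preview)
Your proposal is correct and follows essentially the same route as the paper: restrict to the generic point, use that the torsor class $[X_\eta]\in H^1(\bC(Y),X^{\rm Alb}_\eta)$ is torsion, produce the finite morphism $\psi_\eta$ by quotienting out $A[n]$, then resolve indeterminacy. Your explicit $A[n]$-quotient construction is exactly what the paper blackboxes via the citation to \cite[Prop.~3.3.2(i)]{Sko}, and your degree computation $n^{2\dim A}$ matches.

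The one place worth tightening is your justification of torsion-ness. You invoke Lemma~\ref{torsion-lemma-1} and Remark~\ref{real}, but the setup of \S\ref{sec:ts} (the $G$-cover $Z^+\to Y^+$, the sheaf $\cP$, etc.) is built under the standing hypothesis that $f$ induces a generalized pair $(Y,B,\bfM,\Phi^o)$, which implicitly uses the canonical bundle formula and hence some control on singularities. The paper deliberately avoids this: it observes directly that $X_\eta$ acquires a rational point over some finite extension $L\supset \bC(Y)$ (e.g.\ the residue field of any closed point), so the class dies under restriction to $L$ and is therefore killed by $[L:\bC(Y)]$ via restriction--corestriction. This elementary argument, which you allude to in your final paragraph via Raynaud, is what actually makes the proposition hold with \emph{no} hypotheses on the singularities or Kodaira dimension of $X$ and $X^{\rm Alb}$, as the paper remarks immediately after the statement.
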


We remark that in the statement of 
Proposition~\ref{prop.map.alb}, we do not need to 
impose any requirements on the Kodaira dimensions of 
$X$ and $\Xalb$, nor on their singularities.

\begin{proof}
Let $X_\eta$ and $\Xalb_\eta$ be generic fibers of $f$ and 
$f^{\mathrm{Alb}}$ over the generic point 
$\eta \coloneqq \Spec \bC(Y)$. By construction,
$X_\eta$
is a torsor over the abelian variety
$\Xalb_\eta$. Recall that its class $[X_{\eta}] \in 
H^1(\mathbb C(Y), \Xalb_{\eta})$ is torsion, 
cf.~\cite[Prop.~XIII.2.3]{ray70}
or the discussion at the bottom of 
\cite[p. 57]{Sko}. Indeed, there exists a finite field extension 
$L \supset \bC(Y)$
over which 
$(X_\eta)_{L} \coloneqq 
X_\eta \times_{\eta} {\rm Spec }\, L$
acquires a 
$L$-rational point and thus becomes 
isomorphic to 
$(\Xalb_\eta)_L$.
Hence, there exists a sufficiently large integer 
$\bar n$ such that 
$X_{\eta} \in H^1(\mathbb C(Y), \Xalb_{\eta})[\bar{n}]$.

By 
\cite[Prop.~3.3.2(i)]{Sko},
there exists a finite morphism 
$\psi_\eta \colon X_{\eta} \to \Xalb_{\eta}$
of degree 
$\overline{n}^{2\dim X_{\eta}}$.
The morphism 
$\psi$ over $\eta$ defines a rational map over 
$Y$
\begin{align*}
    \xymatrix{
X \ar[rd] \ar@{-->}[rr]^\psi&
&
\Xalb \ar[ld]
\\
& Y &    
    }
\end{align*}
which is generically finite.
Passing to a resolution of indeterminacy of 
$\psi$ 
yields a commutative diagram
satisfying properties 
(1), (2), (3).
\end{proof}

\begin{remark}
Proposition \ref{prop.map.alb} shows
that, when $X$ has canonical singularities, 
then we always have an inequality of 
Kodaira dimensions
$\kappa(X)\geq \kappa(X^{\rm Alb})$.
\end{remark}

\begin{theorem}[cf.~Theorem \ref{thm:alb}]\label{small-comp}
Let $f\colon X\to Y$ be an 
abelian fibration of a projective,
$K$-trivial variety.
Then we may choose
a birational model of $f^{\rm Alb}\colon
X^{\rm Alb} \to Y$ such that 
$X^{\rm Alb}$ is $K$-trivial, $\bQ$-factorial, 
with canonical singularities, and there are no 
$f^{\rm Alb}$-exceptional divisors.
Furthermore, $f$ and $f^{\rm Alb}$ induce the same 
generalized pair on the base $Y$.

Finally, if $X$ is primitive symplectic,
then $X^{\rm Alb}$ is primitive symplectic,
and if $X$ is Calabi--Yau then $X^{\rm Alb}$ 
is Calabi--Yau.
\end{theorem}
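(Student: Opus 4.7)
The plan is to leverage the generically finite morphism $\psi\colon X'\to X^{\rm Alb}$ from Proposition~\ref{prop.map.alb} (where $p\colon X'\to X$ is a birational resolution) to transfer the $K$-trivial structure from $X$ onto a suitable birational model of the Albanese fibration. The starting point is the diagram~(\ref{diagram}): over $Y^+$, one takes a rough model of $X^{\rm Alb}$ by twisting $V^+/G$ so that its multiplicity class matches $m(f)\in\Ash^G$, the multiplicity class from Proposition~\ref{alg-mult}. This twist ensures the new model shares the codimension-1 multiplicity structure of $f$, which is essential for matching boundary divisors. We extend this partial model across $Y\setminus Y^+$ (codimension $\ge 2$) to a projective variety and apply Proposition~\ref{prop:bestKtrivialmodel} to run a relative MMP, yielding $f^{\rm Alb}\colon X^{\rm Alb}\to Y$ with $\bQ$-factorial, canonical singularities, relatively $K$-trivial over $Y$. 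A further $\bQ$-factorialization and contraction of exceptional divisors, as in Lemma~\ref{contract-f-exc}, then eliminates any remaining $f^{\rm Alb}$-exceptional divisors.

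For absolute $K$-triviality $K_{X^{\rm Alb}}\sim_\bQ 0$, I apply the CBF to $f^{\rm Alb}$: $K_{X^{\rm Alb}}\sim_\bQ (f^{\rm Alb})^*(K_Y+B_Y^{\rm Alb}+\bfM_Y^{\rm Alb})$. The moduli $\bQ$-b-divisor agrees with that of $f$, since both fibrations induce the same $\bZ$-PVHS $R^1f^o_*\underline{\bZ}_{X^o}$---by construction, the relative Albanese over the generic point of $Y$ carries the same weight-one VHS as $f$. The boundary divisor $B_Y^{\rm Alb}$ is governed by the multiplicity structure in codimension 1: for any $K$-trivial fibration, equality holds in~\eqref{eq:multiplicitycouple}, so $B_Y^{\rm Alb}=\Delta_{f^{\rm Alb}}$ and $B_Y=\Delta_f$. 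Since our twisting choice gives $m(f^{\rm Alb})=m(f)$, the multiplicity divisors coincide prime by prime, so $B_Y^{\rm Alb}=B_Y$ on the nose. Thus $f$ and $f^{\rm Alb}$ induce the same generalized pair on $Y$, and the CBF for $f$ combined with $K_X\sim 0$ then gives $K_{X^{\rm Alb}}\sim_\bQ 0$.

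For preservation of the refined structures, I use the trace map along $\psi$. Because $\psi$ restricts generically to an isogeny of abelian varieties on fibers, it is étale on a dense open subset, so both the pullback $\psi^*\colon H^0(X^{\rm Alb},\Omega^{[k]}_{X^{\rm Alb}})\hookrightarrow H^0(X',\Omega^{[k]}_{X'})$ and the trace $\psi_*\colon H^0(X',\Omega^{[k]}_{X'})\to H^0(X^{\rm Alb},\Omega^{[k]}_{X^{\rm Alb}})$ are well-defined on reflexive forms and satisfy $\psi_*\psi^*=\deg(\psi)\cdot\mathrm{id}$. When $X$ is primitive symplectic, the trace of $p^*\sigma$ along $\psi$ yields a non-zero reflexive 2-form on $X^{\rm Alb}$, non-degenerate on the regular locus by the generic étaleness of $\psi$; combined with the vanishing $H^0(X^{\rm Alb},\Omega^{[1]}_{X^{\rm Alb}})\hookrightarrow H^0(X,\Omega^{[1]}_X)=0$, this gives the primitive symplectic structure on $X^{\rm Alb}$. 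When $X$ is Calabi--Yau, the injectivity of $\psi^*$ combined with $H^0(X',\Omega^{[k]}_{X'})\cong H^0(X,\Omega^{[k]}_X)=0$ for $0<k<\dim X$ forces the same vanishing on $X^{\rm Alb}$.

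The main obstacle will be the coefficient-wise equality $B_Y=B_Y^{\rm Alb}$. The delicate point is that one must construct $X^{\rm Alb}$ not as the naive extension of the relative Picard-0 scheme over the generic point of $Y$ (whose multiplicity class would be trivially zero, and which would therefore fail to be $K$-trivial), but as a specific twist carrying the class $m(f)$. That such an algebraic twist exists requires $m(f)$ to lie in the multiplicity group $\Ash^G\subset\Ash^{G,{\rm pre}}$---which it does by construction---and that the multiplicity class is preserved by the subsequent MMP, which only touches higher-codimension loci and thus cannot alter the codimension-1 multiplicity data. The Tate--Shafarevich framework of Section~\ref{sec:ts}, in particular Corollary~\ref{twist-cor}, provides the bookkeeping needed to close the argument.
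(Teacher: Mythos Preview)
Your proposal contains a critical error and a conceptual confusion that together break the argument.

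\textbf{The boundary divisor is not the multiplicity divisor.} You assert that ``for any $K$-trivial fibration, equality holds in~\eqref{eq:multiplicitycouple}, so $B_Y^{\rm Alb}=\Delta_{f^{\rm Alb}}$ and $B_Y=\Delta_f$.'' This is false. The inequality $B_Y\geq \Delta_f$ is strict in general: already for a relatively minimal elliptic surface with a Kodaira type $II$ (cuspidal) fiber, the fiber is reduced so $\Delta_f$ has coefficient $0$, yet the boundary coefficient is $\tfrac{1}{6}$ (Table~\ref{b-vals}). The boundary divisor encodes the log canonical threshold of the fiber, not merely its gcd multiplicity. In the Lagrangian case, Table~\ref{magic} makes the phenomenon explicit: $X$ may have an $E_\triangle(6)$ fiber (multiplicity $6$) while $X^{\rm Alb}$ has a $II^*(1)$ fiber (multiplicity $1$), and the boundary coefficients agree at $\tfrac{5}{6}$ for a reason unrelated to matching multiplicities. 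So your mechanism for proving $B_Y=B_Y^{\rm Alb}$ does not work.

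\textbf{Your construction is not the Albanese.} By Definition~\ref{defn:alb}, $X^{\rm Alb}$ is the pullback of the universal family along $\Phi^o$; it carries the origin section, and in the language of \S\ref{sec:ts} its multiplicity class is $0$, not $m(f)$. Twisting $V^+/G$ to have multiplicity class $m(f)$ produces a \emph{different} abelian torsor---one étale-locally translation-birational to $X$ itself over $Y^+$, not to its Albanese. So the object you build is not the one the theorem concerns.

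The paper's route avoids both issues. Lemma~\ref{edge-same} (via the filtration on $H^0(X,\Omega^k_X)$ and the averaging identity for $q^*q_*$) transports a nowhere-zero top form to a smooth model of $X^{\rm Alb}$, giving $K_{\wX^{\rm Alb}}\sim \Delta^+$ effective and vertical. After a relative MMP, one writes $K_{\hX^{\rm Alb}}\sim(\hat f^{\rm Alb})^*\Gamma$ with $\Gamma\geq 0$, and then compares the two canonical bundle formulas for the sub-pairs $(X',-E)$ and $(X',-R)$ via Riemann--Hurwitz along $q$. The difference $G=B^{\rm Alb}_{\hY}-B_{\hY}$ is exceptional for $\hY\to Y$; the negativity lemma forces $G\leq 0$, while $\kappa(G)=\kappa(\hX^{\rm Alb})=0$ forces $G=0$. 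This is where the real content lies, and your proposal has no substitute for it. Note also that your CBF argument would at best yield $K_{X^{\rm Alb}}\sim_\bQ 0$, whereas the volume-form transport gives the integral equivalence $K_{X^{\rm Alb}}\sim 0$ required by Definition~\ref{def:CY}. Finally, for the symplectic claim, generic étaleness of $\psi$ only gives non-degeneracy of the traced $2$-form on a dense open; the paper closes this by observing $(\sigma^{\rm Alb})^{[g]}$ is a nonzero section of the trivial bundle $K_{X^{\rm Alb}}$, hence nowhere vanishing.
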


\begin{proof} We begin with 
the following lemma:

\begin{lemma}\label{edge-same} If
$X$ and $X^{\rm Alb}$ have rational
singularities, 
$h^k(X,\cO_X)=h^k(X^{\rm Alb},\cO_{X^{\rm Alb}})$.
\end{lemma}

Note the lemma makes no assumption
on the Kodaira dimensions $\kappa(X)$ and
$\kappa(X^{\rm Alb})$, which may well differ
(e.g.~if $X\to Y$ is an elliptic fibration
of an Enriques surface, see Remark \ref{enriques-ex}).

\begin{proof} 
The statement is unaffected by replacing
$X$ and $X^{\rm Alb}$ with resolutions that
are isomorphisms over the generic point of $Y$.
So assume, for the lemma, that $X$ and $X^{\rm Alb}$
are smooth and that we have a 
morphism $q\colon X\to X^{\rm Alb}$
as in Proposition \ref{prop.map.alb}.
By Hodge symmetry, it suffices to prove $h^{k,0}(X)=h^{k,0}(X^{\rm Alb})$.
We have maps \begin{align*} 
q^*&\colon H^0(X^{\rm Alb},\Omega^k_{X^{\rm Alb}})
\to H^0(X, \Omega^k_X) \\ 
q_* &\colon H^0(X,\Omega^k_X)\to 
H^0(X^{\rm Alb},\Omega^k_{X^{\rm Alb}}) \end{align*}
given by the pullback and the trace map,
respectively.
By push-pull, we have 
$q_*q^*=d\cdot {\rm id}$ where $d$
is the degree of the map $X\to X^{\rm Alb}$. 
It suffices to prove
that $q^*q_*\colon 
H^0(X,\Omega^k_X)\to H^0(X,\Omega^k_X)$ 
is an isomorphism.

Let $Y^o\subset Y^{\rm reg}$ be a Zariski
open subset over which $f$ and $f^{\rm Alb}$
are smooth, and let $X^o$, $(X^o)^{\rm Alb}$
be the inverse images of 
$Y^o$ in $X$, $X^{\rm Alb}$.
Fiberwise on $X^o\to Y^o$, the map $q$ is
multiplication by $n$, on a smooth
abelian $g$-fold. So in fact, 
$X^o\to (X^o)^{\rm Alb}$ is \'etale of degree $d=n^{2g}$.
Define a filtration by vector bundles
$\cG_0\subset \cdots \subset \cG_k=\Omega^k_{X^o}$ 
of the sheaf $\Omega_{X^o}^k$ as follows: 
$$
\cG_i  \coloneqq (f^o)^*\Omega_{Y^o}^{k-i}
\wedge \Omega_{X^o}^i\subset \Omega_{X^o}^k.
$$
There is an induced filtration 
$G_0\subset \cdots \subset G_k=H^0(X,\Omega^k_X)$
by declaring that $\sigma\in G_i$ if 
$\sigma\vert_{X^o}\in H^0(X^o,\cG_i)$.
This coincides with the pullback of
the Leray filtration on $H^0(X^o, \Omega^k_{X^o})$ 
with respect to $f^o\colon X^o\to Y^o$.

We may choose an analytic open cover
$\{U_\alpha\}$ of $Y^o$ such that
we have a holomorphic coordinate system
on $X_{U_\alpha}$ of the following form: Coordinates
$w=(w_1,\dots,w_r)$ on the base $U_\alpha$ and 
coordinates $z=(z_1,\dots, z_g)$ on the fibers 
which uniformize
the fibers of $X_{U_\alpha} \coloneqq 
f^{-1}(U_\alpha)\to U_\alpha$ 
in the flat metric; in other
words, we take standard coordinates on 
$\bC^g\times D^r$ and a biholomorphism 
$\bC^g\times D^r/\Lambda_{w}\to X_{U_\alpha}$
for a holomorphically varying full rank 
sublattice $\Lambda_{w}\subset \bC^g$ 
depending on $w\in D^r$.

Suppose $\sigma\in G_i$. Then in
these local coordinates, we may write
$$
\sigma\vert_{X_{U_j}} = \sum_{|I|\,\leq\, i} 
F_{IJ}\, dz_I\wedge dw_J
$$
for appropriate holomorphic coefficient
functions $F_{IJ}(z,w)$.
Observe that for all $I$ with $|I|=i$,
the coefficient
function $F_{IJ}(z,w)=f_{IJ}(w)$
depends only on the base. This is because
$\sigma\vert_{X_{U_j}}$ is
invariant under translations by 
the lattice $\Lambda_{w}$ and thus,
for $|I|=i$, the holomorphic function
$F_{IJ}(z,w)$ is periodic, hence constant,
in the $z$-variables,
see (\ref{transform}).

Locally over the base, the action of $q^*q_*$
on a form $\sigma$ 
may be computed as follows
$$
q^*q_*\sigma\vert_{X_{U_{\alpha}}}=\sum_{t\,\in\, H^0(U_{\alpha}, \,P[n])} t^*\sigma\vert_{X_{U_{\alpha}}}.
$$
because $q^*q_*$ is the pullback under pushing forward
under multiplication by $n$. 
That is,
$q^*q_*\sigma$ is the sum of the  
pullbacks of $\sigma$ along all $n$-torsion
translations.
In
the local coordinates $(z,w)$, we may write 
such a translation $t$ as
\begin{align}\begin{aligned}
\label{transform}
z_1&\mapsto z_1 +
t_1(w_1,\dots,w_r) \\ 
&\cdots \\
z_g&\mapsto z_g +
t_g(w_1,\dots,w_r) \\
w_j&\mapsto w_j\textrm{ for all }j.
\end{aligned}\end{align}
As $f_{IJ}(w)$ depends only on the base
for $|I|=i$,
it follows that 
$\sigma - d^{-1}q^*q_* \sigma \in G_{i-1}$
because the terms with $|I|=i$ cancel.
Therefore $d^{-1}q^*q_*$ induces
the identity map on ${\rm gr}_G H^0(X,\Omega^k_X)$
and hence $q^*q_*$ is an isomorphism.
The lemma follows.
\end{proof}

It follows from the proof of Lemma \ref{edge-same}
that, if $X$ is $K$-trivial, then there is a smooth
model $\wX^{\rm Alb}$ of the Albanese for which we
have a non-zero holomorphic volume form 
$\widetilde{\Omega}^{\rm Alb}$.
Thus, we have $K_{\wX^{\rm Alb}}\sim 
\Delta^+={\rm div}(\widetilde{\Omega}^{\rm Alb})$ for some
effective Cartier divisor 
$\Delta^+$ whose support does not dominate
the base (since the image is disjoint from $Y^o$). 
Let $\hX\,\!^{\rm Alb} \to Y$ be 
a relatively good minimal model of $\wX^{\rm Alb}$ over $Y$, which exists by \cite[Thm.~1.1]{HX13},
and let $\hY \to Y$ denote the corresponding canonical model.
By construction, these satisfy 
$K_{\hX\,\!^{\rm Alb}}
\sim_{\bQ,\widehat{f}\,\!^{\rm Alb}}0$, where $\widehat{f}\,\!^{\rm Alb}$ denotes the morphism 
$\widehat{f}\,\!^{\rm Alb} \colon 
\hX\,\!^{\rm Alb} \to \hY$.
So we have
$$
K_{\hX\,\!^{\rm Alb}} \sim 
{\rm div}(\widehat{\Omega}\,\!^{\rm Alb})=
(\widehat{f}\,\!^{\rm Alb})^\ast \Gamma
$$
for an effective $\bQ$-divisor $\Gamma$
on $\hY$, and a 
nonzero section $\widehat{\Omega}\,\!^{\rm Alb}\in 
H^0(\hX\,\!^{\rm Alb}, K_{\hX\,\!^{\rm Alb}})$
which pulls back to the top form
$\widetilde{\Omega}\,\!^{\rm Alb}$.
We will show that $\widehat Y = Y$ and
$\Gamma=0$.

By Proposition~\ref{prop.map.alb}, 
we have a commutative diagram of the form
\begin{align*}
\xymatrix{
& X' \ar[dr]^{q} \ar[dl]_{p} & 
\\
X \ar[dr]_{f} \ar@{-->}[rr]^{\psi} & & \widehat X\,\!^{\rm Alb} 
\ar[dl] \ar[d]^{\widehat{f}\,^{\rm Alb}} 
\\
& Y & \hY \ar[l]
}
\end{align*}
where 
$X'$ 
is smooth and projective, 
$p$ 
is birational, and 
$q$
is generically finite. 
By the hypothesis that 
$X$ 
is 
$K$-trivial 
(in particular, canonical),
$K_{X'}\sim E$ 
for a unique effective 
$p$-exceptional 
divisor
$E$. 
Since 
$q$
is generically finite and $\hX\,\!^{\rm Alb}$
is terminal, it follows from the 
Riemann--Hurwitz formula
that 
\begin{align}
    \label{eqn:jac.ideal}
\textstyle E = 
q^* {\rm div}\,
\widehat{\Omega}\,\!^{\rm Alb} +
R =
(\widehat{f}\,\!^{\rm Alb} \circ q)^\ast
\Gamma+R
\end{align}
where 
$R$ is an effective divisor.
Let us observe that since $K_X \sim 0$, 
then $p_\ast E=0$.
Now, let $U \subset Y$ be the largest open subset 
over which $\widehat Y \to Y$ is an isomorphism.
Denote the restriction of divisors to the inverse
image of $U$ by a $U$-subscript. From (\ref{eqn:jac.ideal}), 
we deduce 
\begin{equation}\label{ineq_canonical_divisors}
E_U=R_U\textrm{ and }\Gamma_U=0, \end{equation}
since over $U$, we have the commutation
$(\widehat{f}\,\!^{\rm Alb} \circ q)^\ast
\Gamma_U = (f\circ p)^*\Gamma_U$
and $p_*E=0$.

Next, we consider the canonical bundle formula for 
the sub-pairs $(X',-E)$ 
and $(X', -R)$ over $\widehat Y$.
By construction, the moduli parts agree;
we will denote them by $\bM \hY.$.
We write $B_{\hY}$  and $B^{\rm Alb}_{\hY}$ 
for the respective boundary divisors.
By assumption,
\begin{equation}\label{cbf_equiv_to_0}
\K \widehat Y. + B_\hY + \bM \widehat Y. \sim_{\mathbb Q} 0.
\end{equation}
By \eqref{ineq_canonical_divisors}, we may write 
$B^{\rm Alb}_{\hY}  = B_\hY + G$, where 
$ \supp G \subset  \widehat Y \setminus U$; 
thus,
$G$ 
is exceptional for $\widehat Y \to Y$
(but not, a priori, effective).

By the existence of $q$ and \eqref{eqn:jac.ideal}, 
we deduce 
$0 = \kappa(X') \geq
\kappa(\widehat X ^{\rm Alb})$.
On the other hand, by the existence of 
$\widehat \Omega^{\rm Alb}$, we have 
$\kappa(\widehat X ^{\rm Alb}) \geq 0$.
Thus $\kappa(\widehat X ^{\rm Alb}) =
\kappa(\K \widehat Y. + 
B^{\rm Alb}_\hY + \bM \widehat Y.) =\kappa(G)= 0$. By the minimality of $\widehat{X}^{\rm Alb}$ 
over $Y$ and the negativity lemma, we obtain $G \leq 0$
since the pushforward of 
$\K \widehat Y. + 
B^{\rm Alb}_\hY + \bM \widehat Y.$ 
on
$Y$ is trivial.
In turn, since $\kappa(G)=0$, we obtain $G=0$.
In particular, we may take $\widehat Y = Y$,
$\Gamma=0=G$,
and the canonical bundle formula 
for $X$ and $\widehat X ^{\rm Alb}$ 
induce the same generalized pair on $Y$.

\begin{lemma}\label{contract-f-exc} 
Let $\widehat{f}\colon \hX \to Y$ be a fibration
for which $\hX$ has 
canonical (resp.~klt) singularities, and $K_\hX\sim_{\bQ,\widehat{f}}0$. 
There is a birational contraction 
$\pi\colon \hX\dashrightarrow X$
over $Y$ for which $X$ still has canonical 
(resp.~klt) singularities, 
$f \colon X\to Y$
admits no $f$-exceptional divisors,
$K_{X} \sim_{\bQ,f}0$, and $X$ is $\bQ$-factorial.
\end{lemma}

\begin{proof} 
Let $\widehat{\Delta}$ be the reduced sum of the 
$\widehat{f}$-exceptional divisors.
Pass to a small $\bQ$-factorialization of $\hX$
if necessary; it does not affect either the 
hypothesis or the conclusion of the lemma.

As $\widehat{\Delta}$ is $\widehat{f}$-exceptional, 
the 
$(K_\hX+\epsilon \widehat{\Delta})$-MMP over 
$Y$, for sufficiently small $\epsilon$,
terminates with a model 
$\pi \colon \hX \dashrightarrow X$ which 
contracts the support of $\widehat{\Delta}$, see
\cite[Lem.~2.1]{Lai2011}.
To show that the singularities of $(X, 0)$
are the same as those of 
$(\hX, 0)$, 
it suffices to notice that 
$K_{\hX} \sim_\bQ \pi^\ast K_X$,
since each step of the MMP yielding $\pi$ is 
$K_{\hX}$-trivial.
\end{proof}

Applying Lemma \ref{contract-f-exc} to 
$\widehat{f}\,\!^{\rm Alb}\colon \hX\,\!^{\rm Alb}\to Y$, 
we conclude
there is a model of the Albanese fibration
$f^{\rm Alb}\colon X^{\rm Alb}\to Y$ 
with canonical total space, 
$K_{X^{\rm Alb}}\sim 0$, 
and no $f^{\rm Alb}$-exceptional divisors.
Passing to a small $\bQ$-factorialization,
we conclude the first statement of the theorem.

Lemma \ref{edge-same}
shows that $h^k(X,\cO)=h^k(X^{\rm Alb},\cO)$.
It remains to prove that when $X$ is symplectic,
so is $X^{\rm Alb}$. By Lemma \ref{edge-same},
$h^0(X,\Omega^{[2]}_X) = h^0(X^{\rm Alb},\Omega^{[2]}_{X^{\rm Alb}})$.  Let
$\sigma^{\rm Alb}\in H^0(X^{\rm Alb},
\Omega^{[2]}_{X^{\rm Alb}})$ be a generator.
From the proof of Lemma \ref{edge-same},
$\sigma^{\rm Alb}\vert_{(X^o)^{\rm Alb}}$ is symplectic, 
because it pulls back to $\sigma\vert_{X^o}$
along an \'etale cover. 
Hence,  
$(\sigma^{\rm Alb})^{[g]}\in H^0(X^{\rm Alb}, 
\Omega^{[2g]}_{X^{\rm Alb}})= 
H^0(X^{\rm Alb}, K_{X^{\rm Alb}})$ is not identically zero, and it follows from
$K_{X^{\rm Alb}}\sim 0$ that $\sigma^{\rm Alb}$
is symplectic.
\end{proof}

\begin{remark}\label{enriques-ex}
Theorem \ref{small-comp}
does not hold under the weakened hypothesis
$K_X\sim_\bQ 0$. For instance,
if $f\colon X\to \bP^1$ is an elliptic fibration
of an Enriques surface, then $2K_X\sim 0$
but $f^{\rm Alb}\colon X^{\rm Alb}\to \bP^1$ 
is a rational elliptic surface and 
$\kappa(X^{\rm Alb})=-\infty$. 

No converse statement holds either.
It is easy to produce an elliptic fibration
$f\colon X\to \bP^1$ of Kodaira dimension 
$\kappa(X)=1$, for which $f^{\rm Alb}\colon 
X^{\rm Alb}\to \bP^1$ satisfies 
$K_{X^{\rm Alb}}\sim 0$, e.g.~by
taking logarithmic transforms on an elliptic 
K3 surface.
\end{remark}

\subsection{Albanese of a Lagrangian fibration}\label{sec:ho} 
We now explicitly
analyze the structure of the Albanese fibration 
in codimension $1$ for a Lagrangian fibration. 
The results here 
exemplify Theorem \ref{thm:alb}.  In particular, in Theorem \ref{ho-thm}, we show that all the fibers over $Y^{+}$ are of Hwang--Oguiso type, which is used in \S \ref{sec:hk-twist}. 
Let $E_\triangle$ and 
$E_\square$ denote
the triangular and square elliptic 
curves
$\bC/\bZ[\zeta_k]$ for $k=3,4$, respectively. 
We cite:

\begin{theorem}[Hwang--Oguiso]
\label{ho-thm}
Let $f\colon X\to Y$ be a Lagrangian
fibration of a terminal primitive symplectic variety
of dimension $2g$. The singular
fibers $X_y\mapsto y$ over $y\in Y^+$
fall into one of the following classes. 
Here $m$ is the gcd of the multiplicities
the irreducible components of $X_y$
and $A$ is an abelian $(g-1)$-fold.
\begin{enumerate}
\item[($m=6$)] a $C_6$-quotient of an 
$E_\triangle$-bundle over $A$,
\item[($m=5$)] a $C_5$-quotient of an 
$II$-bundle over $A$,
\item[($m=4$)] a $C_4$-quotient of an 
$E_\square$- or $IV$-bundle over $A$, 
\item[($m=3$)] a $C_3$-quotient of an 
$E_\triangle$-, $III$-, or $I_0^\ast $-bundle over $A$, 
\item[($m=2$)] a $C_2$-quotient of an
$I_0$-, $I_0^\ast $-, $IV$-, or $IV^\ast $-bundle over $A$,
\item[($m=2$)] a $C_2$-quotient of twisted
$2n$-wheel over $A$, $n\in \bN$,
\item[($m=1$)] a singular Kodaira fiber 
bundle over $A$,
\item[($m=1$)] a twisted $n$-wheel
over $A$, $n\in \bN$.
\end{enumerate}
The $C_m$-quotients are by free, symplectic actions.
\end{theorem}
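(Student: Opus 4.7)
The plan is to reduce the classification to the Kulikov Albanese model and then transport the Hwang--Oguiso classification \cite{HO2009, HO2011} through the $G$-equivariant twisting machinery developed in \S\ref{sec:ts}. I will work over a big open set $Y^+\subset Y$ containing all codimension $1$ points relevant for the classification.

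First I would apply Proposition \ref{nicest-model} to produce a $G$-Galois cover $Z\to Y$ and a $G$-equivariant Kulikov model $h^+\colon V^+\to Z^+$ of the Albanese fibration, fitting into diagram \eqref{diagram}. Over a codimension $1$ point $z$ in a discriminant component $\partial_i\subset \partial^+\subset Z^+$, the fiber $\cP^0_z$ is a semiabelian variety sitting in the extension \eqref{extension-exact}
\[
0\to (\bG_m)^{g-g_i}\to \cP^0_z\to A\to 0,
\]
where $A$ is an abelian $g_i$-fold, and the full fiber of $V^+$ over $z$ is the equivariant compactification coming from the Mumford tiling $\cT_i$ of $\bR^{g-g_i}/L_i$, see Construction \ref{mumford-construction}. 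Since $X$ is primitive symplectic, Proposition \ref{pure} gives a pure weight $2$ VHS on $R^2 f^o_*\underline{\bZ}$ and the Lagrangian property (Prop.~\ref{prop:base}) forces the nilpotent logarithm $N_i$ of monodromy to have either $g-g_i=1$ or $g_i=0$: either the semiabelian part is $1$-dimensional and the horizontal part is an abelian $(g-1)$-fold $A$, or the fiber is a wheel-type compactification of $(\bG_m)^g$. This gives the dichotomy between fiber bundles over $A$ and twisted $n$-wheels.

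Next, I would analyze the $G$-action in codimension $1$. Let $G_z\subset G$ denote the inertia at $z$; since $Z\to Y$ is branched at most to order $c!$ over the boundary divisor $B$ (see the proof of Prop.~\ref{nicest-model}), $G_z$ is cyclic, say $G_z=C_m$, with $m$ equal to the multiplicity of $f^{-1}(f(z))$. The terminal hypothesis on $X$ forces the $C_m$-action on $\cP_z$ (twisted by the affine-linear class $m(f)_{z}\in \cH^1(G,\cP)_y$ of Proposition~\ref{mult-class}) to be free in codimension $1$ on $V^+$, for otherwise the quotient $X^+\to Y^+$ would acquire non-terminal cyclic quotient singularities along a divisor. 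Moreover, the action is symplectic because the $G$-equivariant identification between the Kulikov and the original models preserves the relative symplectic form on smooth fibers. Working out which cyclic groups $C_m$ admit a free symplectic action on each type of degenerate semi-abelian fiber (or on a smooth abelian fiber giving the $m=1$ entry) recovers exactly the list of Kodaira-type factors $E_\triangle, E_\square, II, III, IV, IV^\ast , I_0^\ast $ and the possible orders $m\in\{1,\dots,6\}$ of the cyclic covering.

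Finally, to complete the classification, I would match the enumeration obtained with the Hwang--Oguiso list. For smooth hyperk\"ahler manifolds, Hwang--Oguiso obtained precisely this classification by a direct local analysis of characteristic foliations; in our setting the reduction to the Kulikov model $h^+$ over $Z^+$ allows us to apply their local analysis verbatim to $V^+/C_m$ at generic points of the discriminant. The $m=1$ cases correspond to the classical singular Kodaira fibers (Tate's table, Ex.~\ref{elliptic-ex}) crossed with $A$, or to the untwisted $n$-wheels; the $m\geq 2$ cases correspond to taking free symplectic cyclic quotients compatible with a symplectic form on $A\times (\text{Kodaira fiber})$ or on a wheel. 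The main technical point, and the step most likely to require care, is ensuring that the $C_m$-quotient is indeed free when $X$ is only assumed terminal (rather than smooth) and that the symplectic form constructed on $X^{\mathrm{Alb}}$ via Theorem \ref{small-comp} induces the compatible symplectic structures needed to invoke Hwang--Oguiso's local analysis; both can be extracted from the construction of the $G$-action on $V^+$ and the bounded-index control on $G_z$ coming from the bound ${\rm lct}(\cX_0)\geq 1/c$ in Lemma \ref{finally-rid-of-B}.
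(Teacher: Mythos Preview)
Your proposal misreads what is being proved. The theorem is labeled ``Hwang--Oguiso'' because the classification itself is a citation: the paper's proof invokes \cite[Thm.~1.1]{HO2011} to obtain the list over \emph{some} big open set $Y'\subset Y$, and the only new content is checking that this $Y'$ can be taken equal to the specific $Y^+$ constructed in \S\ref{sec:ts}. That extension is done by observing that the Kulikov models of the base-changed Hwang--Oguiso fibers over $\partial^+\cap Z'$ are either $E$-bundles over an abelian $(g-1)$-fold or twisted $n$-wheels, that the Kulikov model and the affine-linear $G$-action are analytically-locally trivial along each component of $\partial^+$, and that the Hwang--Oguiso model is the unique minimal model over $Y^+$ (every vertical rational curve sweeps out a divisor). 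You do not address this extension step at all.

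Instead, you attempt to re-derive the Hwang--Oguiso classification from the Kulikov/Tate--Shafarevich machinery, and this attempt has a genuine gap. Your asserted dichotomy ``$g-g_i=1$ or $g_i=0$'' is both incorrect and unjustified. In the actual list, the abelian part $A$ is always $(g-1)$-dimensional, i.e.\ $g_i=g-1$ in every singular case; there is no $g_i=0$ case (the twisted $n$-wheels are over an abelian $(g-1)$-fold, not compactifications of $(\bG_m)^g$---see Definition~\ref{twisted-wheel}). More seriously, the constraint $g-g_i\leq 1$ is the deep result of Hwang--Oguiso: it comes from their analysis of the characteristic $1$-foliation on singular fibers induced by the symplectic form, and cannot be extracted from Proposition~\ref{pure} or Proposition~\ref{prop:base}, neither of which constrains the rank of the monodromy nilpotent. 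Your subsequent step, ``working out which cyclic groups $C_m$ admit a free symplectic action\dots\ recovers exactly the list,'' presupposes this rank constraint and is in any case the substance of \cite{HO2009,HO2011}, not something the paper's machinery provides for free.
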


\begin{definition}\label{twisted-wheel}
For $n>0$, a {\it twisted $n$-wheel} 
$\bigcup_{i=0}^{n-1} \bP_i$ over an abelian $(g-1)$-fold
$A$, is a union of smooth $\bP^1$-bundles 
$\bP_i\simeq \bP_A(\cO_A \oplus \cL)\to A$,
$i=0,\dots,n-1$, for some
fixed $\cL\in {\rm Pic}^0(A)$, which results
from gluing the $\infty$-section of $\bP_i$ to the 
$0$-section of $\bP_{i+1\textrm{ mod }n}$. All gluings 
are via the identity
map on $A$, except for the gluing of the $\infty$-section
of $\bP_{n-1}$ to the $0$-section 
of $\bP_0$, which 
is via some translation $\alpha$, 
possibly non-torsion.
Such a twisted $n$-wheel does not in general
admit a morphism to $A$ (or any abelian variety), when
this translation is non-torsion.
\end{definition}

\begin{proof}
The description of the singular fibers
over a big open set $Y'\subset Y$ follows
from \cite[Thm.~1.1]{HO2011}. Thus it only
remains to check that we may take $Y'=Y^+$
where $Y^+$ is a big open set over which
we have a $G$-base change $Z^+\to Y^+$ 
\'etale-locally birational to a Kulikov model.
Let $Z'$ be the inverse image of $Y'$ in
$Z^+$. The Kulikov models of the base changed
Hwang--Oguiso models 
over $\partial^+\cap Z'$ are either 
$E$-bundles over an abelian $(g-1)$-fold
for $E$ an elliptic curve (when the $j$-invariant
of the relevant Kodaira fiber bundle is finite), 
or a twisted $n$-wheel. 
Thus, by the local triviality of the Kulikov model, the same
description is valid over an entire component
of $\partial^+$. Furthermore, the 
affine-linear $G$-action
on the Kulikov model 
(denoted 
$(\varphi_i^{-1})^*\rho_i^{\rm aff}$ in Prop.~\ref{mult-class}) 
is analytically-locally
trivial over $\partial^+$. We deduce that 
there is, \'etale-locally,
a birational model of $X^+\to Y^+$
which extends the model as in Theorem 
\ref{ho-thm} from $Y'$ to $Y^+$.
Finally, this birational model is the
unique minimal model, because every 
rational curve in it over $Y^+$
sweeps out a divisor.
\end{proof}

\begin{notation} We denote a general singular fiber
by the following notation: $R(m)$ where 
$m$ is the gcd of the multiplities and $R$ is a symbol
for some Kodaira fiber or $E_\triangle$ or $E_\square$. 
Thus, the unique
possibility for $m=6$ is $E_\triangle(6)$ and for $m=5$
is $II(5)$.  

Our notation is slightly
incompatible with \cite{HO2011}, which classifies
singular fibers by their characteristic cycle.
In general, a reduced, twisted $n$-wheel, 
which is denoted $I_n(1)$ in our notation,
rather has a characteristic cycle of type $I_{nk}$
for the
$k\in \bN\cup \{\infty\}$ which is the torsion
order of the translation $\alpha$ in Definition 
\ref{twisted-wheel}.

We call the local models of $f\colon X^+\to Y^+$ 
listed in Theorem 
\ref{ho-thm} the {\it Hwang--Oguiso models}.
\end{notation}

\begin{remark}
We believe that there is a minor error in 
\cite[Thm.~1.1]{HO2011}, and that a fiber with
characteristic cycle
$I_{\infty}$ of multiplicity $2$
is impossible.
Indeed, in \cite[Ex.~6.3, p.~16--17]{HO2011},
the action of $g$ in {\it loc.cit.}~intertwines 
the action of $2\bfZ$ in {\it loc.cit.}~(regardless,
it does not ``commute'' as stated in {\it loc.cit.})
when $\alpha$ is $2$-torsion. The same computation
applies on a twisted $2n$-wheel. Since the general
example of an $I_\infty(2)$ fiber would necessarily be a 
free $C_2$-quotient of a twisted $2n$-wheel, it appears
that the multiple fiber type with multiplicity $m=2$
and characteristic cycle $I_\infty$ does not exist.
\end{remark}

\begin{table}
\renewcommand{\arraystretch}{1.2}
\begin{tabular}{|c||c|c|c|c|c|c|c|c|c|c|c|c|}
\hline
$X$ & $E_\triangle(6)$ & $II(5)$ & $E_\square(4)$ & $IV(4)$ & 
$E_\triangle(3)$ & $III(3)$  \\
\hline
$X^{\rm Alb}$ & $II^\ast (1)$ & $II^\ast (1)$ & $III^\ast (1)$
& $II^\ast (1)$ & $IV^\ast (1)$ & $III^\ast (1)$ \\
\hline
$B^{\rm Alb}=B$ 
& $\tfrac{5}{6}$ & $\tfrac{5}{6}$ & $\tfrac{3}{4}$
 & $\tfrac{5}{6}$ & $\tfrac{2}{3}$ & $\tfrac{3}{4}$ \\
\hline
\hline
$X$ & $I_0^\ast (3)$ & $I_{2n}(2)$ & $I_0^\ast (2)$ 
& $IV(2)$ & $IV^\ast (2)$ & $R(1)$ \\ 
\hline
$X^{\rm Alb}$ & $II^\ast (1)$ & $I_n^\ast (1)$ & $III^\ast (1)$
 & $IV^\ast (1)$ & $II^\ast (1)$ & $R(1)$ \\
 \hline
$B^{\rm Alb}=B$ 
& $\tfrac{5}{6}$ & $\tfrac{1}{2}$ & $\tfrac{3}{4}$
 & $\tfrac{2}{3}$ & $\tfrac{5}{6}$ & equal \\
 \hline
\end{tabular}\vspace{5pt}
\caption{Fibers of the Albanese of a Lagrangian fibration
in codimension $1$, and the coefficient of the boundary
divisor in each case. Observe that the coefficients
are always standard when $m\geq 2$.}
\label{magic}
\end{table}

\begin{proposition}\label{magic-table} The Albanese
fibration of any Hwang--Oguiso model admits a smooth
symplectic resolution over $Y^+$,
and the resulting change of fiber
type is listed in Table \ref{magic}.
\end{proposition}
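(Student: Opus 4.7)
The plan is to verify the table entry by entry, reducing to the case of relative dimension one via the bundle structure of the Hwang--Oguiso models. Each such model is, \'etale-locally on $Y^+$, either a Kodaira-fiber bundle or an elliptic bundle over the abelian $(g-1)$-fold $A$, possibly followed by a free quotient $C_m$ acting by an order-$m$ automorphism $\phi$ of the elliptic factor combined with a translation of $A$. Since taking the relative Albanese commutes with the product (the factor $A$ is already abelian and $\phi$ acts on $H^1$ of the elliptic factor trivially on the $A$-direction), we can compute the Albanese fibration over $Y^+$ as the product of $A$ with the Albanese of an elliptic surface, and may then assume $g=1$.

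For $m=1$ (the bottom-right entry $R(1)\to R(1)$), there is nothing to show: the fibration admits a section in codimension one (the twisted $n$-wheels all receive the zero section from their smooth locus $\cP$ as in Remark~\ref{p-is-kul}), so $f$ agrees with its Albanese in codimension one. For $m\geq 2$, write the Hwang--Oguiso model as $(\wX\times \Delta)/C_m\to\Delta/C_m$ with the $C_m$-action as above; the nearby smooth fibers are abelian varieties isomorphic to $\wX_t\times A$, and the local monodromy of the VHS $R^1(f^+)_*\underline{\bZ}$ around the central fiber equals the pullback along an $m$-fold base change of $\phi_*$ on $H^1$ of the elliptic factor (and trivial on $H^1(A)$). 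Comparing with Kodaira's list, which classifies monodromies in $\SL_2(\bZ)$ by their finite-order conjugacy class and associated limiting mixed Hodge structure, identifies the Kodaira type of the resulting Albanese fiber uniquely as in the table. For example, the order-$6$ monodromy coming from $E_\triangle(6)$, $IV(4)$, $IV^*(2)$, and $I_0^*(3)$ all yield the conjugacy class $ST\in\SL_2(\bZ)$, corresponding to a $II^*$ fiber; the order-$4$ cases $E_\square(4)$ and $I_0^*(2)$ yield the $III^*$ class; the case $I_{2n}(2)$ yields $-T^{n}$, the monodromy of $I_n^*$; etc.

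Once the Kodaira type is identified, Kodaira's classical relatively minimal resolution gives a smooth total space for the Albanese elliptic surface, and this resolution is crepant (the relative canonical bundle of Kodaira's minimal model is trivial). Crossing back with $A$ provides a smooth $K$-trivial model of $f^{\rm Alb}$ on which the relative symplectic form (the wedge of the volume form on $A$ with the $1$-form on the elliptic factor) extends across the exceptional locus, giving the required smooth symplectic resolution over $Y^+$. The coincidence of boundary coefficients $B^{\rm Alb}=B$ is then a direct application of Theorem~\ref{thm:alb}, which asserts that $f$ and $f^{\rm Alb}$ induce the same generalized pair on $Y$; alternatively, one reads them off from Table~\ref{b-vals} once the Kodaira type on the Albanese side is pinned down.

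The main subtlety is the case-by-case monodromy bookkeeping, in particular for the quotients where the elliptic factor itself is already a Kodaira fiber (e.g.~$IV(4)$, $IV^*(2)$, $I_0^*(2), (3)$, $IV(2)$, $I_{2n}(2)$). In these cases one must identify the lifted monodromy after the free $C_m$-quotient, which amounts to composing the monodromy of the underlying Kodaira fiber with $\phi_*$ and then passing to the $m$-th root branched cover of $\Delta$; this composition is routine but requires the table of finite-order elements of $\SL_2(\bZ)$ (equivalently, the automorphism group of the limiting semiabelian variety of the covering fibration). The other mild technical point, handled uniformly, is that the ``twisted'' part of $I_n(1)$ (Def.~\ref{twisted-wheel}) survives intact under taking the Albanese since the twist $\alpha\in\Pic^0(A)$ contributes only to the extension class $\cP^0_z$ of the semiabelian fiber, not to its quotient type, which is what enters the Kodaira classification.
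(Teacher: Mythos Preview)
Your approach is essentially the paper's. Both identify the Albanese fiber type by tracking the $\SL_2(\bZ)$-monodromy through the $m$-fold cover---the paper makes this exact argument at the end of its $II(5)$ example, computing $M'$ from $(M')^5=M$, $M^6=1$---and both build the smooth model by crepantly resolving the resulting cyclic surface singularities. The paper carries out two explicit examples in dimension $2g$ (setting the translation $\alpha_m$ to $0$ in the $C_m$-action and resolving the ADE singularities that appear), whereas you formalize the same step as a reduction to $g=1$; your product observation is precisely that once $\alpha_m=0$ the $C_m$-action is trivial on the $A$-factor and the quotient splits.

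One point needs correcting. Your parenthetical ``the wedge of the volume form on $A$ with the $1$-form on the elliptic factor'' describes a $g$-form, not the symplectic $2$-form. In the paper's local model the symplectic form is $\widetilde\sigma=\frac{dx}{y}\wedge ds + dz\wedge dt$: each fiber $1$-form is paired with a base $1$-form, and the two summands live on the elliptic-surface factor and on the $A\times D_t$ factor separately. What makes the extension work is that $\frac{dx}{y}\wedge ds$ is the holomorphic volume form on the elliptic surface factor, which extends nowhere-vanishing across the crepant (Kodaira minimal) resolution; the second summand is already smooth. Your crepancy remark is exactly what is needed, so the argument survives once the form is described correctly.
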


\begin{proof} The proof is an explicit computation
of a model of the Albanese, using the Hwang--Oguiso 
models. We give two examples; the remaining
cases are similar. The first statement
also follows from Theorem \ref{thm:alb} by passing
to a $\bQ$-factorial terminalization of the Albanese.\end{proof}

\begin{example} The Type $II(5)$ fiber is the free
$C_5$-quotient of a cuspidal curve bundle over
an abelian $(g-1)$-fold. For instance, consider
the degeneration $\{y^2= x^3+s\}\to D_s$ to a cuspidal
curve over a disk $D_s$ and let 
$E\times D_t\to D_t$ be a trivial
elliptic curve bundle over $D_t$. Then 
we obtain an abelian surface 
fibration $\widetilde{f}$
given by
$$W \coloneqq 
\{y^2=x^3+s\}\times E\times D_t
\xrightarrow{\widetilde{f}} D_s\times D_t \eqqcolon Z$$ 
over a bidisk, whose discriminant divisor 
is $ \{0\}\times D_t$.
Let $z$ be a flat coordinate on $E$. We define
a free $C_5$-action by 
\begin{align*}
(x,\,y,\,s,\,z,\,t)&\mapsto 
(\zeta_5^2x,\,\zeta_5^3y , \,\zeta_5s ,\, z+\alpha_5 ,\,t)
\end{align*} where 
$\alpha_5$ is a nontrivial $5$-torsion
point on $E$. This defines a representation
$\rho\colon C_5\to \Aut W$
and we define $f\colon X\to Y$ 
as the quotient $X=Z/C_5$ 
where $Y=D_{s^5}\times D_t$
is the quotient of the base. The symplectic
form $$\widetilde{\sigma} \coloneqq 
\frac{dx}{y}\wedge ds + dz\wedge dt$$ is invariant and 
descends to a symplectic form $\sigma$ on $X$.

More generally, it follows from \cite{HO2011} that
for any Lagrangian fibration with discriminant
of fiber type $II(5)$, there exists, locally,
a degree $5$ \'etale 
cover $W$ that is a smoothing of the total 
space of family over a 
$(g-1)$-dimensional base of
cuspidal curve bundles over abelian
$(g-1)$-folds.
In appropriate analytic-local coordinates,
the $C_5$-action is the same as the model above 
(up to increasing the dimension of the base
and the abelian factor).

Now we compute a model of $X^{\rm Alb}$. This is easy;
we simply define a new representation 
$$\rho^{\rm Alb}\colon C_5\to {\rm Aut}(W)$$ 
to be the same as $\rho$ 
except that $\alpha_5=0$. 
This gives a local bimeromorphic model 
$\overline{f}\,\!^{\rm Alb}\colon \oX\,\!^{\rm Alb}\to Y$ 
of the Albanese, because the resulting abelian fibration has
the same monodromy as $X$ and also has a section,
since $\rho^{\rm Alb}$ now preserves the section at infinity of $W$.
Observe that the section goes through the 
singularities created by the fixed locus.

The form $\widetilde{\sigma}$ still descends along
the action of $\rho^{\rm Alb}$ but we find that the total
space is now singular, since the $C_5$-action stabilizes
the family of cusp points $(0,0,0,z,t)$ and the family of
points at infinity $(\infty, z,t)$. These form two families
of $A_4$ surface singularities over $(z,t)$
and taking a simultaneous resolution 
$X^{\rm Alb}\to \oX\,\!^{\rm Alb}$
(which notably, is crepant), the symplectic form
lifts to a form $\sigma^{\rm Alb}$ which is non-degenerate
and the fiber type becomes the $II^\ast (1)=\wE_8$ type.
See Figure \ref{ho-resolve} for a diagram
of the exceptional divisors.

\begin{figure}
\includegraphics[width=4in]{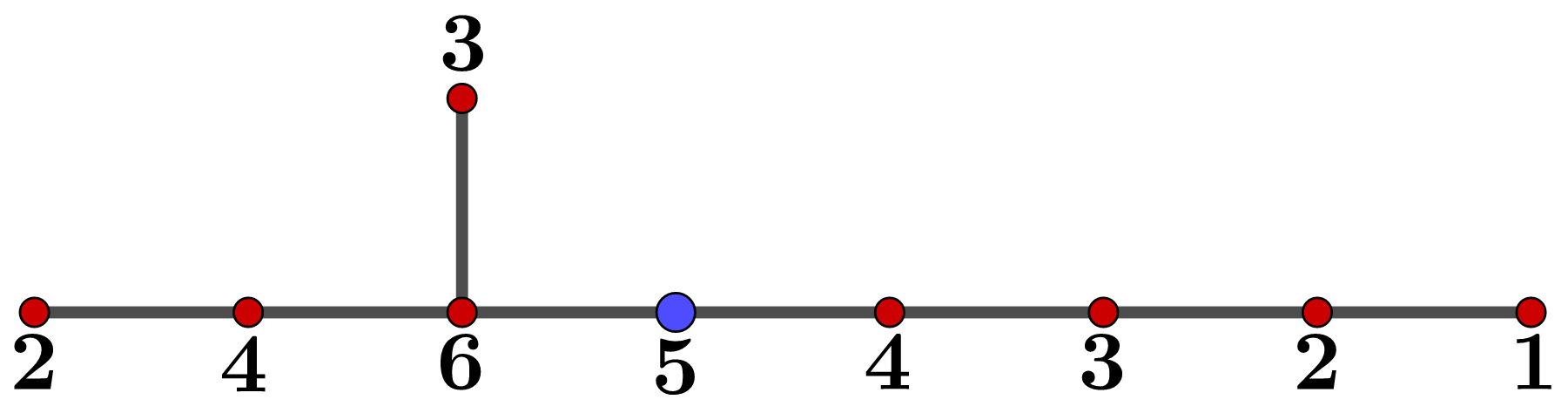}
\caption{Symplectic resolution 
$X^{\rm Alb}\to \oX\,\!^{\rm Alb}$ of the Albanese
fibration of a Type $II(5)$ Hwang--Oguiso fiber, 
resolving two $A_4$ singularities (red) on the
original multiplicity $5$ component (blue).}
\label{ho-resolve}
\end{figure}

The new fiber type can also easily be 
computed by observing
that if $M\in \Sp_2(\bZ)\subset \Sp_{2g}(\bZ)$ is the monodromy of 
the Type $II$ cuspidal fiber on $W$, then
the monodromy of the $C_5$-quotient is necessarily 
some matrix $M'$ for which $(M')^5=M$. But $M^6=1$
and so we also have $M'=M^5$.
Thus, by the classification
of Kodaira fibers via monodromy, $M'$ is necessarily
the monodromy of a Type $II^\ast (1)$ fiber.
\end{example}

\begin{example} The Type $E_\triangle(6)$ 
fiber is the free $C_6$-quotient of a triangular
elliptic curve bundle over an abelian $(g-1)$-fold. 
For example, we may take 
$$
W \coloneqq \{y^2=x^3+1\}\times D_s\times E\times D_t \to 
D_s\times D_t \eqqcolon Z
$$ 
where $E_\triangle\simeq \{y^2=x^3+1\}$
and we define the $C_6$-action as
\begin{align*}
(x,\,y,\,s,\,z,\,t)&\mapsto 
(\zeta_6^2x,\,\zeta_6^3y,\, \zeta_6s,\, z+\alpha_6,\, t)
\end{align*}
with $\alpha_6$ a nontrivial
order $6$ torsion point on $E$.
Then $f\colon X\to Y$ is $W/C_6\to Z/C_6$. 
A model of the Albanese 
$\overline{f}\,\!^{\rm Alb}\colon \oX\,\!^{\rm Alb}\to Y$ 
is defined
by modifying the $C_6$-action so that $\alpha_6=0$ and
the resulting quotient has three (families of)
singularities of types $A_1$, $A_2$, $A_5$ corresponding
to the $C_6$-orbits on $E_\Delta$ of size $3$, $2$, $1$
respectively. The crepant, symplectic resolution
is depicted in Figure \ref{ho-resolve-2}.
\end{example}

\begin{figure}
\includegraphics[width=4in]{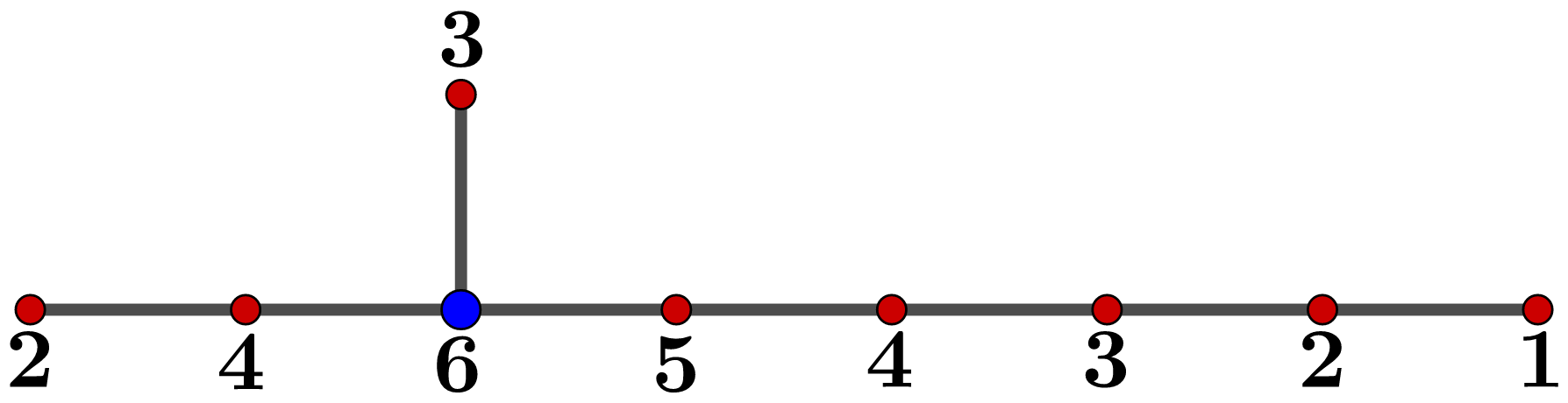}
\caption{Symplectic resolution 
$X^{\rm Alb}\to \oX\,\!^{\rm Alb}$ of the Albanese
fibration of a Type $E_\triangle(6)$ Hwang--Oguiso fiber, 
resolving $A_1$, $A_2$, $A_5$ singularities (red) on the
original multiplicity $6$ component (blue).}
\label{ho-resolve-2}
\end{figure}

Note that the Hwang--Oguiso fibers $R(1)$ 
with $m=1$ are locally isomorphic to themselves
under passing to the Albanese,
since they admit a local
section.

\section{Tate--Shafarevich twists in families}
\label{sec:ts2}

\subsection{Constructibility of the non-divisible 
part}\label{sec:constructibility}

We now discuss how to perform our
constructions in families, and to what extent
the Tate--Shafarevich groups vary in a constructible
manner. This is rather subtle; for instance
the $n$-torsion subgroup $\Sha_G[n]$ does {\it not}
vary in a constructible manner, because $\rk N^1$
is not, in general, a constructible function in families,
due to the density of Noether--Lefschetz loci.

\begin{remark}\label{constructible-remark} 
Fix a finite type family
$(\cY,\cB,\cM,\Phi^o)\to \cT$. A Kulikov model 
as in Proposition \ref{nicest-model} requires
a choice of $G$-base change $\cZ_t\to \cY_t$ for each 
$t\in \cT$ which 
\begin{enumerate}
\item is ramified to sufficient order
over the support of $\cB_t$, and
\item reduces the monodromy
into the neat subgroup $\Sp_\Lambda[3]$.
\end{enumerate}
Additionally we require a combinatorial choice of 
tilings (Construction \ref{mumford-construction})
over each component of the discriminant
to produce the relatively projective Kulikov model
$h^+_t\colon \cV_t^+\to \cZ_t^+$.

All of this data
is parameterized by a relatively finite type morphism 
$\cS\to \cT$. Thus, replacing $\cT$, 
we may as well assume (after stratifying) 
that our family of Albanese fibrations
is endowed with the data of a big open set 
$Y^+\subset Y$, fixed group $G$, 
finite $G$-base change $Z\to Y$,
and Kulikov model, which we notate
as $$\cV^+\xrightarrow{h^+} \cZ^+\subset 
\cZ\xrightarrow{/G}\cY/\cT.$$

After stratifying, the sheaves
$\mathcal{P}_t[n]$, $\mathcal{P}^0_t[n]$, 
$\mu_t$, $\Gamma_t$, etc.~are locally constant,
given restrictions of constructible
sheaves on $\cZ^+$ itself.
\end{remark}

\begin{proposition}\label{kernel-constructible}
The group $N$ of Corollary \ref{cor-ts-twist} varies
constructibly. 

Furthermore, after stratifying 
the base $\cT$ and passing to an \'etale cover,
it is possible to ``uniformly'' choose a splitting
$\Sha_{G,t} = (\Sha_{G,t})^{\rm div} + N$, 
in the following
sense: There is a fixed integer $n$, and 
a locally constant set-map
$N\to H^1_G(Z^+_t,\cP_t[n])$,
such that
the composition $$N\to H^1_G(Z^+_t,\cP_t[n])\to 
H^1_G(Z^+_t,\cP_t)=\Sha_{G,t}\to (\Sha_{G,t})^{\rm ndiv}=N$$
is the identity map ${\rm id}_N$ for all $t\in \cT$.
\end{proposition}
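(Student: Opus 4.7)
The plan is to deduce constructibility of $N$ from the five-term exact sequence $(\ref{leftover})$, treating each of $N^2$, $N^3$, $N^4$, and $N^4 \cap \bQ N^1/N^1$ in turn, and then produce the uniform splitting by exploiting the finiteness of $N$ together with the standard exact sequence $0 \to \cP[n] \to \cP \xrightarrow{\cdot n} \cP \to 0$. By Remark \ref{constructible-remark}, I may first pass to a locally closed stratification of $\cT$ (and an \'etale cover) so that the branched cover $\cZ \to \cY$, the big open $\cZ^+ \subset \cZ$, the Kulikov model $h^+ \colon \cV^+ \to \cZ^+$, and the $G$-action are all defined in families, and so that the constructible sheaves $\Gamma_t$, $\mu_t$, $\cP_t[n]$, $\cP^0_t[n]$ are restrictions of global constructible sheaves on $\cZ^+$ whose stalks have constant isomorphism type.

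Next I would establish constructibility of each piece of $(\ref{leftover})$. The group $N^3$ sits inside $H^1_G(Z^+_t, \mu_t)$ for a constructible sheaf $\mu_t$ of \emph{finite} abelian groups; by proper base change (applied to a compactification of $\cZ^+$, with the image in $\mu_t$ being the image in cohomology of a proper constructible piece) these cohomology groups are themselves constructible in $t$, and the image from $\Sha_{G,t}$ is a constructibly varying subgroup. The group $N^2 = H^2_G(Z^+_t, \Gamma_t)_{\rm tors}$ is the torsion of a constructibly varying finitely generated abelian group, so is constructible; reduction to finite coefficients via $\Gamma \twoheadrightarrow \Gamma/n$ for $n$ larger than any exponent of torsion that arises (finite in number by constructibility) lets one apply the previous paragraph's technique. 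The group $N^4$ is the image of the finite constructible group $H^0_G(Z^+_t, \mu_t)$, and $N^4 \cap \bQ N^1/N^1 = \ker(N^4 \to N^2)$ is the kernel of a constructibly varying map between finite groups, hence constructible. Combining in $(\ref{leftover})$ yields constructibility of $N$.

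For the uniform splitting, after a further stratification, $N$ has locally constant isomorphism type; since only finitely many finite groups occur on $\cT$, their exponents are bounded above by some integer $n$. From the cohomology long exact sequence of $0 \to \cP_t[n] \to \cP_t \xrightarrow{\cdot n} \cP_t \to 0$, the image of $H^1_G(Z^+_t, \cP_t[n]) \to \Sha_{G,t}$ is exactly $\Sha_{G,t}[n]$. Since the divisible subgroup of a torsion abelian group is always a direct summand, $\Sha_{G,t} \simeq (\Sha_{G,t})^{\rm div} \oplus N_t$ non-canonically; under such a splitting, $N_t \subseteq \Sha_{G,t}[n]$, so each element of $N_t$ lifts to $H^1_G(Z^+_t, \cP_t[n])$. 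The group $H^1_G(Z^+_t, \cP_t[n])$ is finite and (after one more stratification and \'etale cover of $\cT$ to trivialize its monodromy as a constructible sheaf) constant, so one may choose a set-theoretic section $N \to H^1_G(Z^+_t, \cP_t[n])$ of the composition $H^1_G(Z^+_t, \cP_t[n]) \to \Sha_{G,t} \twoheadrightarrow N$ globally on the stratum; this is the required ``locally constant'' splitting.

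The main technical obstacle I anticipate is the constructibility of $G$-equivariant \'etale cohomology of the sheaves $\Gamma$ and $\mu$ on the big open $Z^+$, since $Z^+$ is not proper over $\cT$: one cannot directly invoke proper base change for the morphism $\cZ^+ \to \cT$. The cleanest route is to factor through the smooth compactification $\cZ \to \cT$ (proper by construction), argue constructibility on $\cZ$ for an extension of the sheaf by zero along $\cZ \setminus \cZ^+$, and then identify the cohomology of $\cZ^+$ with a subgroup or quotient using the Gysin/excision sequence for the complement, whose codimension is $\geq 2$ and whose structure has been carefully controlled in the proof of Proposition \ref{nicest-model} so as to be independent of $t$ after stratification. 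With this set-up, the constructibility statements for all pieces follow from the Artin--Grothendieck constructibility theorems in \'etale cohomology applied to constructible sheaves of finite groups.
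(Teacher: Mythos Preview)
Your outline follows the paper's strategy---use the exact sequence (\ref{leftover}) and reduce to finite coefficients---but there are two genuine gaps.

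First, the sequence $0 \to \cP[n] \to \cP \xrightarrow{\cdot n} \cP \to 0$ is \emph{not} short exact: multiplication by $n$ on $\cP$ has image $\cP^0$, not $\cP$, once $n$ is divisible by the exponent of the component group $\mu$. The correct sequence is $0 \to \cP[n] \to \cP \xrightarrow{\cdot n} \cP^0 \to 0$, and the image of $H^1_G(Z^+,\cP[n])$ in $\Sha_G$ is therefore $\ker(\Sha_G \xrightarrow{\cdot n} \Sha_G^0)$. Since $\Sha_G \xrightarrow{\cdot n} \Sha_G$ factors as $\Sha_G \xrightarrow{\cdot n} \Sha_G^0 \to \Sha_G$, this kernel is \emph{contained in} $\Sha_G[n]$ but may be strictly smaller: the discrepancy comes from elements hitting $N^4 = \ker(\Sha_G^0 \to \Sha_G)$. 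The paper fixes this by noting that $N^4$ is itself $n$-torsion, whence $\Sha_G[n] \subset \ker(\Sha_G \xrightarrow{\cdot n^2} \Sha_G^0)$; one must pass to $\cP[n^2]$ to ensure every element of $N$ lifts. Your splitting argument, as written, does not go through with $n$.

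Second, your constructibility claims for $N^3$ and $N^4$ are incomplete in exactly the place where the work lies. Both are images of maps whose source or target is the \emph{infinite} group $\Sha_G$ or $\Sha_G^0$, and cohomology of $\cP$, $\cP^0$ does not vary constructibly (indeed $\rk N^1$ jumps on Noether--Lefschetz loci). Asserting that ``the image from $\Sha_{G,t}$ is a constructibly varying subgroup'' of $H^1_G(Z^+,\mu)$ is the whole problem. The paper handles $N^4$ by factoring the coboundary through the finite constructible group $H^1_G(Z^+,\cP^0[n])$ and then identifying the kernel of $H^1_G(Z^+,\cP^0[n]) \to \Sha_G^0$ with $H^0_G(Z^+,\cP^0)^{\rm ndiv}_{\rm tors} \simeq H^1_G(Z^+,\Gamma)_{\rm tors}$ via the exponential sequence---a topological, hence constructible, invariant. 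For $N^3$, the paper rewrites it as $\ker(H^1_G(Z^+,\mu) \to H^2_G(Z^+,\cP^0))$, factors the coboundary through $H^2_G(Z^+,\cP^0[n])$, and identifies the kernel of $H^2_G(Z^+,\cP^0[n]) \to H^2_G(Z^+,\cP^0)$ with $(\Sha_G^0)^{\rm ndiv} = N^2$, thereby reducing the constructibility of $N^3$ to that of $N^2$. These reductions through finite-coefficient sheaves, together with the identification of the resulting error terms via $\Gamma$, are the technical core of the argument and cannot be elided.
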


\begin{proof} For notational simplicity, we will
drop the subscript $t$. By (\ref{leftover}),
it suffices, for the first statement, to prove that
\begin{align*}
&N^2 \coloneqq {\rm im}(H^1_G(Z^+,\cP^0)\to 
H^1_{G,\,{\rm an}}(Z^+,\cP^0)\to H^2_G(Z^+,\Gamma)) \\
&N^3 \coloneqq {\rm im}(H^1_G(Z^+,\cP)\to 
H^1_G(Z^+,\mu)) =\coker(\Sha^0_G\to \Sha_G) \\
&N^4 \coloneqq {\rm im}(H^0_G(Z^+,\mu)\to H^1_G(Z^+,\cP^0))=
\ker(\Sha^0_G\to \Sha_G)
\end{align*}
all vary constructibly (along with the maps 
and extension data between them).
There is a sufficiently divisible integer $n\in \bN$
so that, for all $t\in \cT$, 

\begin{enumerate}
\item\label{trick2} 
$0\to \cP^0[n]\to\cP[n]\to \mu\to 0$ is exact. Furthermore,
\item\label{trick3} cohomologies of constructible sheaves
of finite groups $\cP[n]$, $\cP^0[n]$ vary constructibly,
as do maps between them, induced by
exact sequences of constructible sheaves. Finally,
\item\label{trick4} finite topological invariants, such as $H^i_G(Z^+,\Gamma)_{\rm tors}$, vary constructibly.
\end{enumerate}

Additionally, for any abelian
group $H$ with finite non-divisible part, we have

\begin{enumerate}[resume]
\item\label{trick1} 
$H^{\rm ndiv} = \coker(H\xrightarrow{\cdot n}H)$,
\end{enumerate}
and the integer $n$ may be chosen uniformly 
once $H^{\rm ndiv}$ has been shown to have
a uniformly bounded size.

{\bf Constructibility of $N^4$:}
First, we show that 
$N^4 \coloneqq  {\rm im}(H^0_G(Z^+,\mu)\to H^1_G(Z^+,\cP^0))$ 
varies constructibly. By (\ref{trick2}), we have a morphism
of short exact sequences
$$\xymatrix{ 0 \ar[r]  & \cP^0[n] \ar[r] \ar[d] & 
\cP[n] \ar[r] \ar[d] & \mu \ar[r] \ar[d] & 0 \\
0 \ar[r] & \cP^0 \ar[r] & \cP \ar[r] & \mu \ar[r] & 0}$$
once the exponent of $\mu$ divides $n$.
Taking the induced long exact sequences, 
the coboundary map
$H^0_G(Z^+,\mu)\to H^1_G(Z^+,\cP^0)$ factors as
 $$H^0_G(Z^+,\mu)\to H^1_G(Z^+,\cP^0[n])
 \to H^1_G(Z^+,\cP^0).$$
Now $N^4(n) \coloneqq {\rm im}(H^0_G(Z^+,\mu)
\to H^1_G(Z^+,\cP^0[n]))$ varies
constructibly by (\ref{trick3}). 
Next, consider the exact
sequence \begin{align}\label{trick5}
0\to \cP^0[n]\to \cP^0\xrightarrow{\cdot n}\cP^0\to 0.
\end{align}
The image $N^4(n)\twoheadrightarrow N^4$ 
is the quotient of $N^4(n)$ by the subgroup
$$
N^4(n)\cap \coker(H^0_G(Z^+,\cP^0)
\xrightarrow{\cdot n} 
H^0_G(Z^+,\cP^0))=
N^4(n)\cap H^0_G(Z^+,\cP^0)^{\rm ndiv}_{\rm tors}
$$
by (\ref{trick1}), once the exponent
of the group $H^0_G(Z^+,\cP^0)^{\rm ndiv}_{\rm tors}$
divides $n$. 
So we bound the size 
of this group uniformly over $\cT$.
By the exponential exact sequence (\ref{ses}),
meromorphic extendibility of sections
over codimension $2$, and 
GAGA, we have an exact sequence 
$$0\to H^0_G(Z^+,\Gamma)\to 
H^0_G(Z^+,\mathfrak{p})
\to H^0_G(Z^+,\cP^0)\to 
H^1_G(Z^+,\Gamma)\to 
H^1_{G,\,{\rm an}}(Z^+,\mathfrak{p}).$$
Passing to torsion, we deduce the exact sequence
$$0\to 
H^0_G(Z^+,\Gamma_\bQ)/H^0_G(Z^+,\Gamma)\to 
H^0_G(Z^+,\cP^0)_{\rm tors}\to 
H^1_G(Z^+,\Gamma)_{\rm tors}\to 0,$$
where $\Gamma_\bQ=\Gamma\otimes \bQ$.
As the first group is divisible,
we conclude that 
$H^0_G(Z^+,\cP^0)_{\rm tors}^{\rm ndiv}=
H^1_G(Z^+,\Gamma)_{\rm tors}$ which is finite
and varies
constructibly by (\ref{trick4}). 

It follows that, as subgroups of $H^1_G(Z^+,\cP^0[n])$,
the groups $H^0_G(Z^+,\cP^0)^{\rm ndiv}_{\rm tors}$
and $N^4(n)$ vary constructibly. Hence
so does their intersection, and we deduce
that the quotient $$0\to N_4(n)\cap 
H^0_G(Z^+,\cP^0)^{\rm ndiv}_{\rm tors}
\to N_4(n)\to N_4\to 0$$ also varies constructibly.
\smallskip

{\bf Reduction from $N^3$ to $N^2$:}
Consider the group
$$N^3 \coloneqq {\rm im}(H^1_G(Z^+,\cP)\to 
H^1_G(Z^+,\mu))
=\ker(H^1_G(Z^+,\mu)\to H^2_G(Z^+,\cP^0)).$$
Again by (\ref{trick2}), we have a
factorization of the map 
$$H^1_G(Z^+,\mu)\to H^2_G(Z^+,\cP^0[n])\to 
H^2_G(Z^+,\cP^0)$$ for sufficiently divisible $n$.
It follows that $N^3$ is the preimage 
in $H^1_G(Z^+,\mu)$ of the kernel $$
\ker (H^2_G(Z^+,\cP^0[n])\to 
H^2_G(Z^+,\cP^0)).$$
So by (\ref{trick3}), it suffices to prove
that this kernel varies constructibly,
as a subgroup of $H^2_G(Z^+,\cP^0[n])$. By (\ref{trick5}) and
(\ref{trick1}), we have
$$
\ker (H^2_G(Z^+,\cP^0[n])\to 
H^2_G(Z^+,\cP^0))\simeq \coker 
(H^1_G(Z^+,\cP^0)\xrightarrow{\cdot n} 
H^1_G(Z^+,\cP^0))\simeq (\Sha_G^0)^{\rm ndiv}
$$
once the exponent of 
$(\Sha_G^0)^{\rm ndiv}$ divides $n$.
Note that $(\Sha_G^0)^{\rm ndiv}\simeq N^2$,
see (\ref{smallerest}).
Thus,
we reduce the constructibility of $N^3$
to that of $N^2$. 
\smallskip

{\bf Constructibility of $N^2$:}
Since
$\Sha_G^0\to (\Sha_{G,\,{\rm an}}^0)_{\rm tors}$
is an isomorphism (Thm.~\ref{ray-mult}), we have
$N^2 = 
(\Sha_{G,\,{\rm an}}^0)_{\rm tors}^{\rm ndiv} $
which by (\ref{biggg}) coincides with 
$H_G^2(Z^+,\Gamma)_{\rm tors}$
by Lemma \ref{identify-N2}.
This group
varies constructibly by (\ref{trick4}).

\smallskip

For the second part of the proposition,
we first construct a splitting of
$$
H^1_G(Z^+,\cP)\to N^3\subset H^1_G(Z^+,\mu)$$
in families.
We begin by observing that we have a
commutative square of group schemes
\[\xymatrix{ \cP[n] \ar[r] \ar[d] & \cP\ar[d] \\ 
\cP[n]/\cP^0[n]\ar[r]^{\sim} & \cP/\cP^0=\mu}\]
so long as the exponent
of the component group scheme $\mu$ divides $n$.
Since $\cP[n]$ and $\mu$ are constructible 
sheaves on the total space of the family,
we may, after stratifying 
and passing to a finite \'etale cover of $\cT$,
produce a set-theoretic splitting of the map 
$H^1_G(Z^+,\cP[n])\to N^3(n)$,
where by definition,
$$N^3(n) \coloneqq {\rm im}(H^1_G(Z^+,\cP[n])
\to H^1_G(Z^+,\mu)).$$ Thus, we have a sequence of maps
$$N^3(n)\to H^1_G(Z^+,\cP[n])\to 
H^1_G(Z^+,\cP)\to H^1_G(Z^+,\mu)$$ which is an isomorphism
onto $N^3(n)$. We will have constructed 
a splitting of $N^3$, so long as $N^3\subset N^3(n)$.
The map 
$\Sha_G \xrightarrow{\cdot n}
\Sha_G$ factors as
$$\Sha_G \xrightarrow{\cdot n}
\Sha_G^0 \to \Sha_G$$ since
${\rm im}(\cP\xrightarrow{\cdot n} \cP)=\cP^0$.
The exponent of the group
$H^0_G(Z^+,\mu)$ divides $n$,
and thus, the kernel
of
$\Sha_G^0\to \Sha_G$ is $n$-torsion.
We conclude that we have a containment 
$$\ker(\Sha_G
\xrightarrow{\cdot n} \Sha_G^0\xrightarrow{\cdot n}\Sha_G^0)=
\ker(\Sha_G \xrightarrow{\cdot n^2} \Sha_G^0)\supset 
\ker(\Sha_G
\xrightarrow{\cdot n} \Sha_G).$$
That is, the image of $H^1_G(Z^+,\cP[n^2])$ 
in $H^1_G(Z^+,\cP)=\Sha_G$ contains $\Sha_G[n]$.
Thus $N^3(n^2)\supset N^3$, so long as
$\Sha_G[n]\to N^3$ is surjective. By the bound
on the size of $N$ (Thm.~\ref{identify-N1-2}), 
there is some uniform $n$ for which 
$N\subset \Sha_G[n]$, though note that
this group splitting is non-canonical.
For such
an $n$, we have the desired surjection
$\Sha_G[n]\twoheadrightarrow N^3$.
Let $$\alpha\colon 
N^3\to H^1_G(Z^+,\cP[n^2])\to \Sha_G$$
denote the splitting we have constructed
of $N^3$.

Since $N$ varies constructibly, we may again stratify
and pass to an \'etale cover for which we have
a coset trivialization 
$$
N = 
N^3 + (N^2/{\rm im}\,N^4),
$$ see (\ref{leftover}).
It suffices to produce a set-theoretic
splitting of the map
$$
H^1_G(Z^+,\cP^0) = \Sha_G^0\to 
(\Sha_G^0)^{\rm ndiv}=N^2,
$$
see Lemma \ref{identify-N2}.
Again,
by constructibility of $N^2$, we have 
$N^2\subset \Sha_G^0[n]$ (non-canonically). So by
(\ref{trick5}), we have a surjection 
(the same would hold with $n$ instead of $n^2$)
$$H^1_G(Z^+,\cP^0[n^2])\twoheadrightarrow 
H^1_G(Z^+,\cP^0)[n^2]\twoheadrightarrow N^2.$$ 
Passing to a stratification
and \'etale cover of $\cT$, we get a set-theoretic
splitting
of the homomorphism
$H^1_G(Z^+,\cP^0[n^2])\to N^2$, and compose
with the map to $\Sha_G$ to get a map 
$$\beta \colon N^2\to 
H^1_G(Z^+,\cP^0[n^2])\to 
H^1_G(Z^+,\cP[n^2])\to \Sha_G.$$
Choosing coset representatives of 
${\rm im}\,N^4\subset N^2$, we thus have a 
set-map 
$$\alpha+\beta\colon N\to 
H^1_G(Z^+,\cP[n^2])\to \Sha_G$$ which splits
$N=(\Sha_G)^{\rm ndiv}$ set-theoretically (and
in a locally constant manner) and factors
through $H^1_G(Z^+,\cP[n^2])$, as 
desired.
\end{proof}

\begin{remark} A priori,
there is no reason why a 
group-theoretic splitting $N\to H^1_G(Z^+,\cP[n])$
can be constructed. For instance, perhaps for all
$n$, the group $H^1_G(Z^+,\cP[n])$ contains a copy
of $\bZ/4\bZ$ which surjects onto a copy of
$\bZ/2\bZ\subset N\subset \Sha_G$. Note that the number
of group-theoretic splittings 
$\Sha_G = \Sha_G^{\rm div}\oplus N$ varies wildly in families,
as it depends on $\rk N^1$, which is not 
generally a constructible function.
\end{remark}

Similar techniques can be used
to verify that both $\Ash^G$
and $\Ash^{G,\,{\rm an}}$ vary constructibly. Thus,
the property of whether a given pre-multiplicity 
class $m(f)\in \Ash^{G,\,{\rm pre}}$ is realizable,
either algebraically or analytically, is an
essentially topological property. 

\begin{theorem}[cf.~Theorem 
\ref{thm:icy-ab}(AV)]\label{thm:bir-bdd-abelian-CY}
Abelian-fibrations of irreducible
Calabi--Yau varieties, in a fixed dimension $d$,
form a birationally bounded class.
\end{theorem}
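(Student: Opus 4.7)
The plan is to combine three ingredients already developed in the paper: birational boundedness of the Albanese fibration (Theorem~\ref{bd-families-2}(AV)), finiteness of the Tate--Shafarevich group when the total space is irreducible Calabi--Yau (Corollary~\ref{cor-ts-twist}), and constructibility of its non-divisible part in families (Proposition~\ref{kernel-constructible}).

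First, I would reduce to the Albanese. Given $f\colon X \to Y$ an abelian fibration of an irreducible Calabi--Yau variety of dimension $d \geq 3$, Theorem~\ref{thm:alb} furnishes a $K$-trivial birational model of the Albanese fibration $f^{\rm Alb}\colon X^{\rm Alb} \to Y$ inducing the same generalized pair $(Y, B, \mathbf{M})$ and the same classifying morphism $\Phi^o$. Since $X$ has rational singularities and $h^0(X, \Omega_X^{[2]}) = 0$, Remark~\ref{pic-aut} shows that $f$ is of Picard type, a property inherited by $f^{\rm Alb}$ via the identified variation of Hodge structure on the smooth locus. By construction $f^{\rm Alb}$ has a rational section, and by Example~\ref{rc-base-ex}(1) the base $Y$ is rationally connected. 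Theorem~\ref{bd-families-2}(AV) then yields a finite-type family $\mathcal{X}^{\rm Alb} \to \mathcal{Y}/\mathcal{S}$ into which every such $f^{\rm Alb}$ fits birationally over the generic point of $\mathcal{Y}_s$, for some $s \in \mathcal{S}$.

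Second, I would invoke the Tate--Shafarevich theory of Section~\ref{sec:ts}. By Hodge symmetry $h^2(X, \mathcal{O}_X) = h^0(X, \Omega_X^{[2]}) = 0$, so Proposition~\ref{closer} and Corollary~\ref{cts} give $\mathbb{C} N^1 \simeq H^2(X, \mathcal{O}_X)/H^2(Y, \mathcal{O}_Y) = 0$, hence $N^1 = 0$. By Corollary~\ref{cor-ts-twist}, $\Sha_G = N$ is finite, of size bounded solely in terms of $(Y, B, \mathbf{M}, \Phi^o)$. Theorem~\ref{identify-N1-2} then classifies the birational class of $f$ among abelian fibrations inducing $(Y, B, \mathbf{M}, \Phi^o)$ by its twist class $t(f) \in \Sha_G$. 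Finally, I would globalize the twist construction over the bounded family of Albanese fibrations. After stratifying $\mathcal{S}$ and equipping the family with a Galois cover $\mathcal{Z} \to \mathcal{Y}$ and a Kulikov model $\mathcal{V}^+ \to \mathcal{Z}^+$ as in Remark~\ref{constructible-remark}, Proposition~\ref{kernel-constructible} provides, after further stratification and a bounded-degree \'etale base change, a uniform integer $n$ and a locally constant set-theoretic section $\Sha_{G,s} \hookrightarrow H^1_G(\mathcal{Z}_s^+, \mathcal{P}_s[n])$. Since $\mathcal{P}[n]$ is a finite \'etale group scheme and $|\Sha_{G,s}|$ is uniformly bounded, each class in the image is realized by the twist construction of Proposition~\ref{alg-mult} in a relative, finite-type manner. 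Enumerating over the finitely many strata and finitely many twist classes per stratum yields a single finite-type family into which every $X$ embeds birationally.

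The main obstacle is the last step. The twist construction of Proposition~\ref{alg-mult} a priori takes \v{C}ech cocycles in $H^1_G(Z^+, \mathcal{P})$ for a positive-dimensional group scheme $\mathcal{P}$, so it is unclear how to perform this construction uniformly in families---neither $\mathcal{P}$'s cohomology nor the algebraic realizability of twists is manifestly finite-type. The unlocking input is Proposition~\ref{kernel-constructible}'s uniform factorization through the finite \'etale group scheme $\mathcal{P}[n]$; once the twist is encoded as a class in $H^1_G(\mathcal{Z}^+, \mathcal{P}[n])$, algebraic realizability and finite-typeness follow from standard descent, and birational boundedness follows by enumeration.
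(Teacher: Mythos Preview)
Your proposal is correct and follows essentially the same approach as the paper's proof: bound the Albanese fibration in a finite-type family, use the vanishing $h^2(X,\cO_X)=0$ to conclude $\Sha_G$ is finite, invoke Proposition~\ref{kernel-constructible} to lift the twist classes uniformly to $H^1_G(\cZ^+,\cP[n])$, and construct the universal twist relatively. The only cosmetic differences are that the paper cites Lemma~\ref{cor:bases} and Theorem~\ref{bd-families}(AV) directly rather than the packaged Theorem~\ref{bd-families-2}(AV), and it phrases the finiteness of $\Sha_G$ by restricting the parameter space to the open locus where $h^2(\cX^{\rm Alb}_t,\cO)=0$ (appealing to Theorem~\ref{thm:alb}) rather than computing $\bC N^1=0$ on the $X$-side; both amount to the same thing.
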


\begin{proof} By Lemma \ref{cor:bases}
and Example \ref{rc-base-ex},
there is a finite type family 
$(\cY,\cB,\cM)/\cT$
such that the base 
$(Y,B,\Gamma)$ with $\Gamma\sim_\bQ \bfM_Y$ of any abelian
fibration $f\colon X\to Y$ of an irreducible
CY variety of dimension $d$
is isomorphic in codimension $1$
to some $(\cY_t,\cB_t+\Gamma_t)$. It follows
from Theorem \ref{bd-families}(AV) that 
there is a finite type family 
$\cX^{\rm Alb}\to \cY/\cS$ of abelian
fibrations with rational section, 
of Picard type, for which
$f^{\rm Alb}\colon X^{\rm Alb}\to Y$
is birational to
$\cX^{\rm Alb}_s\to \cY_s$ 
for some $s\in \cS$, via a birational map of bases
$\phi\colon Y\dashrightarrow \cY_t$
which is an isomorphism in 
codimension $1$. So replace
$\cT$ with this new base $\cS$.
We note that $f^{\rm Alb}$ indicates 
the same tuple $(Y,B,\bfM, \Phi^o)$
as $f$, see Theorem \ref{thm:alb}.

There is a birational model
$f'\colon X'\to \cY_t$ of 
$f\colon X\to Y$ satisfying
the hypotheses of Theorem 
\ref{identify-N1-2} (and also inducing
the same tuple);
in particular, that $K_{X'}\sim 0$,
and $X'$ is $\bQ$-factorial
with canonical singularities. 
For instance, see \cite[Prop.~2.9]{Fil24}.

We may replace $\cT$ with the open subset
of $t\in \cT$ for which $h^2(\cX^{\rm Alb}_t,
\cO_{\cX^{\rm Alb}_t})=0$. 
This necessarily holds for the Albanese
fibration of any abelian fibration of an irreducible
Calabi--Yau variety, by Theorem
\ref{thm:alb}.
Now pass to a family of Kulikov models
of the Albanese fibration as in Remark 
\ref{constructible-remark}.
We may replace $\cT$ with an 
\'etale cover of 
a stratification, which records the additional data
of a class in the finite group $\Sha_{G,t}=N$ 
(Cor.~\ref{cor-ts-twist}). This group varies constructibly
over $\cT$ by Proposition \ref{kernel-constructible}.

Now we construct relatively the universal
twist. Here, to perform the 
constructions in a relative
setting, one may use
Proposition \ref{kernel-constructible},
to lift all elements of $\Sha_{G,t}$
in a locally constant manner
to twists in $H^1_G(Z^+_t, \cP_t[n])$, i.e.,
valued in the constructible
subgroup scheme $\cP_t[n]\subset \cP_t$.
The resulting universal twist 
is (upon compactification and resolution)
a finite type family of 
projective varieties $\cX\to \cY/\cQ$.
Finally, we apply Corollary \ref{twist-cor}
to conclude that $f'\colon X'\to \cY_t$
and in turn $f\colon X\to Y$,
are birational to a fiber
of this universal twist, the one
whose twist class is $t(f)\in \Sha_{G,t}$.
\end{proof}

\subsection{Special twistor lines}\label{sec:hk-twist}

To prove Theorem \ref{thm:ps}, it remains to understand
the meaning of the group
$\bC N^1\simeq H^2(X,\cO_X)\simeq \bC$,
when $f\colon X\to Y$ is a 
Lagrangian fibration
of a primitive symplectic variety.
References for this section
are \cite{markman2014}, 
\cite[Sec.~3.4, 3.5]{AR2021}, and
\cite{abasheva}.

\begin{proposition}\label{abasheva}
Let $f\colon X\to Y$ be a 
Lagrangian fibration of a $\bQ$-factorial
terminal primitive symplectic variety.
Then, $H^1(Y, f_*T_{X/Y})\simeq 
H^1(Y,\Omega^{[1]}_Y)\simeq \bC$.
\end{proposition}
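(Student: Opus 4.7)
The plan is to identify $f_*T_{X/Y}$ with $\Omega^{[1]}_Y$ via the Lagrangian structure, and then to compute $H^1(Y,\Omega^{[1]}_Y)$ using Hodge theory on the klt Fano base. Since $X$ is $\bQ$-factorial terminal and $f$ is equidimensional by Matsushita (Proposition \ref{prop:base}), the open set $X^o \coloneqq X^{\rm reg}\cap f^{-1}(Y^{\rm reg}\setminus \Sing(f))$ is a big open in $X$ on which everything is smooth. On $X^o$, contraction with the symplectic form $\sigma$ defines an isomorphism $T_X \xrightarrow{\sim} \Omega^1_X$, and the Lagrangian condition means exactly that $T_{X^o/Y^o} \subset T_{X^o}$ is sent to $f^*\Omega^1_{Y^o}\subset \Omega^1_{X^o}$ under this identification. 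Hence $T_{X^o/Y^o}\simeq f^{*}\Omega^1_{Y^o}$, and since both $T_{X/Y}^{[*]}$ and $f^{[*]}\Omega^{[1]}_Y$ are reflexive sheaves on $X$ agreeing on the big open $X^o$, this isomorphism extends to all of $X$.

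Next I would push forward. By the projection formula applied on the big open $X^o$, together with $f_*\cO_X=\cO_Y$ (which follows from $f$ being a fibration with connected fibers and $Y$ being normal), one gets
\[
f_*T^{[*]}_{X/Y} \simeq f_*(f^{[*]}\Omega^{[1]}_Y)\simeq \Omega^{[1]}_Y \otimes f_*\cO_X = \Omega^{[1]}_Y,
\]
where both sides are reflexive and agree over $Y^{\rm reg}\setminus \Sing(f)$, a big open in $Y$. This gives the first isomorphism $H^1(Y,f_*T_{X/Y})\simeq H^1(Y,\Omega^{[1]}_Y)$.

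It remains to show $H^1(Y,\Omega^{[1]}_Y)\simeq \bC$. By Matsushita (Proposition \ref{prop:base}\eqref{item:base}), $Y$ is a $\bQ$-factorial Fano klt variety with Picard number $\rho(Y)=1$. In particular $Y$ has rational singularities, so $H^2(Y,\cO_Y)$ injects into $H^2(\widetilde{Y},\cO_{\widetilde{Y}})$ for any resolution, which vanishes because $Y$ (and hence $\widetilde{Y}$) is rationally connected; similarly $H^1(Y,\cO_Y)=0$. The Hodge theory of klt varieties then gives a first Chern class map $c_1\colon \NS(Y)\otimes\bC \to H^1(Y,\Omega^{[1]}_Y)$ which is injective in general and surjective once $H^2(Y,\cO_Y)=0$ (via the exponential/truncated de Rham complex on reflexive differentials; this can be made rigorous using e.g. the mixed Hodge structure on $H^2(\widetilde{Y},\bC)$ and the decomposition $H^2(\widetilde{Y},\bC)=c_1(\NS(Y))\oplus \mathrm{span}\{[E_i]\}$ as in Proposition \ref{ih-to-h}). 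Since $\rho(Y)=1$, we conclude $\dim H^1(Y,\Omega^{[1]}_Y)=1$.

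The main obstacle is the third step: making the identification $H^1(Y,\Omega^{[1]}_Y)\simeq \NS(Y)_\bC$ rigorous for the singular base $Y$, which requires the Hodge decomposition and $(1,1)$-lifting property for $\bQ$-factorial klt Fano varieties. The reflexive Lagrangian isomorphism in Steps 1--2, while requiring care to ensure $X^o$ is big, is essentially formal from the definitions.
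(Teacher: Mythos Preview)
There is a genuine gap in your Steps 1--2: the open set $X^o = X^{\rm reg}\cap f^{-1}(Y^{\rm reg}\setminus \Sing(f))$ is \emph{not} big in $X$, and $Y^{\rm reg}\setminus \Sing(f)$ is not big in $Y$. The discriminant locus of $f$ is a divisor in $Y$, so its preimage is codimension $1$ in $X$. Your reflexive extension arguments therefore fail. Moreover, even on $X^{\rm reg}$ the identification $\iota_\sigma(T_{X/Y})=f^*\Omega^1_Y$ breaks down over multiple fibers: in local coordinates where $f=(x_1^m,x_2,\dots,x_n)$ with $m\geq 2$, one computes $\iota_\sigma(T_{X/Y})=\langle dx_1,\dots,dx_n\rangle$ whereas $f^*\Omega^1_Y=\langle mx_1^{m-1}dx_1,dx_2,\dots,dx_n\rangle$, a strict subsheaf. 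So the ``essentially formal'' part of your argument is exactly where the difficulty lies.

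The paper's proof proceeds differently: it first shows $f_*T_{X/Y}$ is reflexive on $Y$ (since $T_{X/Y}$ is reflexive and $f$ is equidimensional, \cite[Cor.~1.7]{hartshorne80}), then works over the big open $Y^+\subset Y$ of Theorem~\ref{ho-thm}. The key nontrivial inputs you are missing are: (i) over $Y^+$, the total space $X^+$ is \emph{smooth} --- either from the Hwang--Oguiso models, or from \cite[Thm.~3.1]{Matsushita2015} which shows $\Sing(X)$ is Lagrangian-fibered over a codimension $\geq 2$ locus in $Y$; and (ii) for smooth total space, \cite[Thm.~2.1.11]{abasheva} establishes $f_*T_{X/Y}\simeq\Omega^1_Y$ \emph{including over the discriminant}. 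Only then does reflexivity extend the isomorphism across $Y\setminus Y^+$. You have also misidentified the main obstacle: your Step 3 is essentially the standard Hodge theory of klt Fano varieties and is what the paper dispatches in one line via Proposition~\ref{prop:base}(3).
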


Here $T_{X/Y} \coloneqq \ker(T_X\to (f^*T_Y)_{\rm tf})$,
as in \cite[Def.~1.1.3]{abasheva}.

\begin{proof}
We copy the argument in \cite[Thm.~2.1.11]{abasheva},
where this result is proven under the hypothesis
that $X$ is smooth.
Since $T_X$ is reflexive and $(f^*T_Y)_{\rm tf}$
is torsion-free, $T_{X/Y}$ is reflexive. Since
the fibers of $f$ are equidimensional,
$f_*T_{X/Y}$ is reflexive as well, by 
\cite[Cor.~1.7]{hartshorne80}. 

Over $Y^+$, the total space $X^+$ is smooth, 
since the models in codimension $1$ 
are smooth.
Note also 
that there are no floppable curves on the 
Hwang--Oguiso models
(see~Thm.~\ref{ho-thm}), 
since every vertical rational curve deforms 
to sweep out a divisor. 
Hence the models are unique over $Y^+$.
Alternatively,
we may use that the singular locus has
codimension $\geq 4$,
and is itself Lagrangian-fibered over its image
under $f$ \cite[Thm.~3.1]{Matsushita2015}. 
Thus \cite[Thm.~2.1.11]{abasheva} directly applies
over $Y^+$,
to show that interior multiplication with the $2$-form
defines an isomorphism $$\iota\colon 
f_*T_{X/Y}\vert_{Y^+}\xrightarrow{\sim} \Omega^{1}_{Y^+}.$$ 
Since both $f_*T_{X/Y}$
and $\Omega^{[1]}_Y$ are reflexive, and $Y^+\subset Y$
is a big open set, $\iota$ 
extends to an isomorphism $f_*T_{X/Y}\to \Omega^{[1]}_{Y}$.
Proposition \ref{prop:base}(3) implies 
$H^1(Y,\Omega^{[1]}_Y)=\bC$. 
\end{proof}

The sheaf $f_*T_{X/Y}$ is the sheaf
of vertical infinitesimal automorphisms
and is the Lie algebra of the sheaf
${\rm Aut}^0_{X/Y}$, the identity
component of the sheaf of vertical
biholomorphisms.

\begin{remark} We have hitherto only used
the existence of $P$ as a sheaf of abelian groups
on $Y^+$, but here, $P$ is the sheaf of sections
of a group scheme---the N\'eron model of 
$$f^{{\rm Alb}+}\colon X^{{\rm Alb}+}\to Y^+.$$
We are unsure of a general existence result
for the N\'eron model over $Y^+$, but by 
\cite[Thm.~1.2]{yoonjoo}, the N\'eron model
does exist if we assume $f\colon X\to Y$ is a 
Lagrangian fibration, 
since then $f^{{\rm Alb}+}\colon X^{{\rm Alb}+}\to Y^+$ 
can be assumed a smooth projective 
Lagrangian fibration with every fiber
having a reduced component, by either Theorem \ref{thm:alb}, 
\cite[Thm.~3.1]{Matsushita2015},
or \S~\ref{sec:ho}.

Recall that $P(U) \coloneqq \cP(\wU)^G$ where $\wU\to U$
is the inverse image of $U$ under the quotient
map $Z^+\to Y^+$ by $G$. Since $\cP$
is the N\'eron model of the Kulikov model
$h^+\colon V^+\to Z^+$,
see \cite[Prop.~5.18]{HM18} and  
discussion in \cite[(23)]{kollar_new1},
any section of the N\'eron model over $U$
defines a unique
$G$-invariant section of $\cP(\wU)$,
and the converse is also true by the 
universal property of the N\'eron model.
Hence $P$ is the sheaf of sections
of the N\'eron model.
\end{remark}

Let $P^{00}$ denote
the identity component of the group scheme
$P$ (which
may not agree with $P^0(U) \coloneqq \cP^0(\wU)^G$ 
because the $G$-invariants of a connected group
scheme need not be connected). Let
$\mathfrak{p}^G$ denote the Lie algebra of $P^{00}$.

We have
a natural map of sheaves 
${\rm Aut}^0_{X/Y}\vert_{Y^+}\to
P^{00}$:
Any vertical
biholomorphism over $U\subset Y^+$ pulls back
to a $G$-invariant vertical biholomorphism of 
the normalized base change $g^+\colon W^+\to Z^+$
over the inverse image $\wU\to U$. In particular,
it pulls back to a translation-bimeromorphic map
in the identity component of $\cP^0(\wU)^G$.
Conversely, any element of $P^{00}(U)$ 
defines not just a bimeromorphic, 
but a biholomorphic automorphism of
$X_U\to U$, see \cite[Thm.~1.3(2)]{yoonjoo}. 
So ${\rm Aut}^0_{X/Y}\vert_{Y^+}
\to P^{00}$ 
is an
isomorphism of group schemes over $Y^+$.

\begin{proposition}\label{compactify}
Let $f\colon X\to Y$ satisfy the same
hypotheses as Proposition \ref{abasheva}. 
There is a natural
map 
$H^1(Y, f_*T_{X/Y})\to 
H^1_{\rm an}(Y^+,\mathfrak{p}^G)$
mapping 
$\bC\to \bC N^1$ 
isomorphically.
\end{proposition}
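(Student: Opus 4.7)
The plan is to construct the map via the canonical identification of Lie algebras $f_*T_{X/Y}|_{Y^+} \simeq \mathfrak{p}^G$ inherited from the isomorphism of group schemes ${\rm Aut}^0_{X/Y}|_{Y^+} \xrightarrow{\sim} P^{00}$ noted in the paragraph immediately preceding the statement. Concretely, I would define the morphism as the composition of analytification with restriction:
\[
H^1(Y, f_*T_{X/Y}) \to H^1_{\rm an}(Y, f_*T_{X/Y}) \to H^1_{\rm an}(Y^+, f_*T_{X/Y}|_{Y^+}) \xrightarrow{\sim} H^1_{\rm an}(Y^+, \mathfrak{p}^G).
\]
Well-definedness only requires the coherent-sheaf restriction and the Lie-algebra identification, both of which are formal.

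Next I would verify that both the source and the target $\bC N^1$ are one-dimensional. Proposition \ref{abasheva} gives $H^1(Y, f_*T_{X/Y}) \simeq H^1(Y, \Omega^{[1]}_Y) \simeq \bC$, the last isomorphism by Proposition \ref{prop:base}(3) since $Y$ is Fano with Picard rank one. For the target, Corollary \ref{cts} combined with $H^2(X,\cO_X) = \bC$ (as $X$ is primitive symplectic) and $H^2(Y,\cO_Y) = 0$ (as $Y$ is Fano with klt singularities, by \cite[Thm.~7]{schwald2016} applied to $\Omega^{[2]}_Y$ which vanishes for Fano $Y$) yields $\bC N^1 \simeq \bC$. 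Thus the statement reduces to showing (a) that the image lies in $\bC N^1$ and (b) that the map is nonzero.

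For step (a) I would assemble the commutative diagram comparing the map above with the chain of identifications in Section \ref{sec:continuous-ts}. Interior multiplication by the symplectic form, restricted to the Lagrangian direction, identifies $f_*T_{X/Y}|_{Y^+}$ with the Hodge-theoretic object $R^1 f^+_* \cO_{X^+}$ compatibly with the isomorphism $\mathfrak{p}^G \simeq R^1 f^{{\rm Alb}+}_*\cO_{X^{{\rm Alb}+}}$ from Propositions \ref{isog} and \ref{hi}. Under this identification, the restriction-analytification map factors as the Leray $E_2^{1,1}$ edge map of $f^+$, whose image in $H^2_{\rm an}(X^+,\cO_{X^+})/H^2_{\rm an}(Y^+,\cO_{Y^+})$ is, by Corollaries \ref{all-same} and \ref{cor-n1} together with Proposition \ref{image-infinity} and the vanishing $H^2(Y,\cO_Y)=0$, precisely the subspace $\bC N^1$. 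This is where I expect the main obstacle: tracing the polarization-dependent identifications carefully and verifying that the symplectic-form-twisted restriction really does correspond to the edge map on $E_\infty^{1,1}$ in the Leray spectral sequence, rather than differing by some spurious Leray-filtration quotient.

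For step (b), since both spaces are one-dimensional, it suffices to exhibit a single class with nonzero image. I would take a nonzero $\beta \in H^1(Y, \Omega^{[1]}_Y)$ and exponentiate it, via the identification $f_*T_{X/Y}|_{Y^+} \simeq \mathfrak{p}^G$, to produce an analytic Tate--Shafarevich twist of $f^{{\rm Alb}+}$. Nontriviality of this twist follows because a trivial class in $H^1_{\rm an}(Y^+,\mathfrak{p}^G)$ would have to lie in the kernel of Proposition \ref{h2-injective}'s injective map $H^2(X,\cO_X)/H^2(Y,\cO_Y) \hookrightarrow H^2_{\rm an}(X^+,\cO_{X^+})/H^2_{\rm an}(Y^+,\cO_{Y^+})$, contradicting the nontriviality of the holomorphic symplectic class on $X$ under the identifications of step (a).
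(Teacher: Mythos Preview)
Your construction of $\phi$ and the reduction to ``(a) image lies in $\bC N^1$'' plus ``(b) map is nonzero'' match the paper's approach. Two substantive differences:

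\textbf{Injectivity.} The paper argues this independently of step (a), which decouples the two halves. Since $f_*T_{X/Y}\simeq\Omega^{[1]}_Y$ (Proposition~\ref{abasheva}), injectivity of $\phi$ reduces to injectivity of $H^1(Y,\Omega^{[1]}_Y)\to H^1_{\rm an}(Y^+,\Omega^1_{Y^+})$, and this holds because the ample generator of $H^2(Y,\bC)$ pairs nontrivially with any smooth projective curve $C\subset Y^+$. Your proposed route through Proposition~\ref{h2-injective} is problematic: it presupposes step (a) gives a factorization $H^1(Y,f_*T_{X/Y})\to H^2(X,\cO_X)\to H^1_{\rm an}(Y^+,\mathfrak{p}^G)$, and even then you still need the first arrow to be nonzero---the assertion that vanishing ``contradicts the nontriviality of the holomorphic symplectic class'' does not follow from anything you have established.

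\textbf{Image in $\bC N^1$.} The identification $\mathfrak{p}\simeq R^1h^+_*\cO_{V^+}$ of Proposition~\ref{isog} is via the \emph{polarization} $L$, not the symplectic form, so your proposed symplectic-form identification $f_*T_{X/Y}|_{Y^+}\simeq R^1f^+_*\cO_{X^+}$ is a priori a different isomorphism; reconciling the two is precisely the obstacle you flagged. The paper resolves it in one line: composing $\phi$ with the $L$-identification is nothing other than the contraction map
\[
c(L\otimes -)\colon H^1(X,\Omega^{[1]}_X)\otimes H^1(Y,f_*T_{X/Y})\to H^2(X,\cO_X),
\]
which by construction lands in $H^2(X,\cO_X)\simeq_L \bC N^1$ (Corollary~\ref{cts}). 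This is the missing key idea.
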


\begin{proof}

The isomorphism 
${\rm Aut}^0_{X/Y}\vert_{Y^+}
\to P^{00}$ of group schemes over $Y^+$ induces an isomorphism of Lie 
algebras $f_*T_{X/Y}\vert_{Y^+}\to 
\mathfrak{p}^G\coloneqq {\rm Lie}(P^{00})$ and in turn,
a map on cohomology $$\phi\colon H^1(Y, 
f_*T_{X/Y})
\to H^1_{\rm an}(Y^+,\mathfrak{p}^G).$$
First, we claim that $\phi$ is 
injective. The restriction map $H^1(Y, 
f_*T_{X/Y})\to H^1_{\rm an}(Y^+,f_*^+T_{X^+/Y^+})$
is injective by Proposition \ref{abasheva}, 
because the map $H^1(Y,\Omega^{[1]}_{Y})
\to H^1_{\rm an}(Y^+,\Omega^1_{Y^+})$ is---for instance 
$H^2(Y,\bC)=H^1(Y,\Omega^{[1]}_{Y})=\bC$ is generated by
an ample divisor class, which pairs nontrivially
with any smooth projective curve $C\subset Y^+$.

Let $\Xi \coloneqq \ker({\rm exp}\colon 
f_*T_{X/Y}\to {\rm Aut}^0_{X/Y})$.
We have an isomorphism of kernels 
$\Xi\vert_{Y^+}\simeq\ker({\rm exp}\colon 
\mathfrak{p}^G \to P^{00})$. Furthermore,
this isomorphism identifies 
$$\textrm{im}(H^1(Y^+,\Xi_{Y^+,\,\bC})\to 
H^1(Y^+, f^+_*T_{X^+/Y^+}))\simeq \bC N^1$$
by the definition of $\bC N^1$.
By Corollary \ref{cts}, a polarization $L$
provides us with an isomorphism
$ H^2(X,\cO_X)\simeq_L \bC N^1$, 
and these spaces can respectively be viewed as sitting
inside $$H^1_{\rm an}(Y^+,R^1f^{{\rm Alb}+}_*
\cO_{X^{{\rm Alb}+}})\simeq_L 
H^1_{G,\,{\rm an}}(Z^+, \mathfrak{p}),$$
by Corollary \ref{all-same}.
We have $H^1_{G,\,{\rm an}}(Z^+, \mathfrak{p})
\simeq H^1_{\rm an}(Y^+,\mathfrak{p}^G)$ 
by the infinitesimal version of (\ref{first-answer}).
Thus, to prove that $\phi(
H^1(Y,f_*T_{X/Y}))= \bC N^1$,
it suffices to show that the polarization $L$
{\it also} sends $H^1(Y,f_*T_{X/Y})$
into $H^2(X,\cO_X).$
This is so---the isomorphism defined by 
$L\in H^1(X,\Omega^{[1]}_X)$ is the contraction
map $c(L\otimes -)$,
$c\colon  H^1(X,\Omega^{[1]}_X)\otimes 
H^1(Y,f_*T_{X/Y})\to H^2(X,\cO)$.
\end{proof}

\begin{corollary}\label{analytic-ps-twist}
Under the hypotheses of Proposition \ref{abasheva},
there is a fibration
of proper analytic spaces $F\colon \cX\to Y\times \bC N^1$,
such that $F_s \colon \cX_s\to Y$ is an (analytic)
abelian fibration for all $s\in \bC N^1$, 
which is a twist of $F_0 = f$ over $Y^+\subset Y$, 
and whose Tate--Shafarevich
class is $t(F_s)=[s]\in \bC N^1/N^1\subset 
\Sha_{\rm an}^0$.
Furthermore:
\begin{enumerate}
\item there is an analytic
open neighborhood $N_\epsilon(0)\subset \bC N^1$
of the origin,
for which $\cX_s$ is an analytic primitive 
symplectic variety\footnote{{\it Analytic primitive 
symplectic varieties} are proper analytic spaces
satisfying Def.~\ref{def:CY}(PS), of Fujiki class $\cC$.
}, for all $s\in N_\epsilon(0)$, and
\item $F$ is relatively minimal and locally trivial.
\end{enumerate}
\end{corollary}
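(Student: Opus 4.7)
The plan is to construct the family $F\colon \cX\to Y\times \bC N^1$ by exponentiating Čech cocycles representing classes in $H^1(Y, f_*T_{X/Y})\simeq \bC N^1$, using the identification of Proposition~\ref{compactify}. First I would set up exponentiation: the sheaf $f_*T_{X/Y}$ is the Lie algebra of the analytic sheaf of groups $\Aut^0_{X/Y}$, and since vertical vector fields on a proper family integrate to vertical biholomorphisms, there is an exponential map of sheaves of groups $\exp\colon f_*T_{X/Y}\to \Aut^0_{X/Y}$ well-defined on sufficiently small sections.

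Next I would fix a Stein cover $\{U_i\}$ of $Y$ refined enough that Čech $1$-cocycles representing elements of $H^1(Y,f_*T_{X/Y})$ can be chosen with arbitrarily small sections on overlaps. For each $\lambda\in \bC N^1$, I represent $\lambda$ by a Čech cocycle $\{\lambda_{ij}\}\in Z^1(\{U_i\},f_*T_{X/Y})$ depending linearly (hence holomorphically) on $\lambda$, and set $\varphi_{ij}^\lambda \coloneqq \exp(\lambda_{ij})\in \Aut^0_{X/Y}(U_{ij})$. Regluing copies of $X|_{U_i}\times \{\lambda\}$ via $\varphi_{ij}^\lambda$ yields a proper analytic space $\cX_\lambda\to Y$, and performing the construction in families gives $F\colon \cX\to Y\times \bC N^1$. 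Relative minimality ($K_{\cX_\lambda}\sim_{\bQ,F_\lambda}0$) and local triviality both follow automatically, since over each $U_i$ the restriction of $\cX$ is analytically the product $X|_{U_i}\times \bC N^1$.

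Then I would verify the Tate--Shafarevich class computation. Since $P^{00}=\Aut^0_{X/Y}|_{Y^+}$, the cocycle $\{\varphi_{ij}^\lambda\}$ also defines a class in $\Sha^0_{{\rm an}}=H^1(Y^+,P)$, and by Proposition~\ref{compactify} the induced map $H^1(Y,f_*T_{X/Y})\to H^1_{\rm an}(Y^+,\mathfrak{p}^G)\to \Sha^0_{G,\,{\rm an}}$ is the inclusion $\bC\hookrightarrow \bC N^1/N^1\subset \Sha^0_{G,\,{\rm an}}$, giving $t(F_\lambda)=[\lambda]$ as required. For the primitive symplectic conclusion near $\lambda=0$, the gluing maps $\varphi_{ij}^\lambda$ are vertical biholomorphisms and so preserve the restriction to fibers of the symplectic form, hence $\sigma$ reglues to a symplectic form $\sigma_\lambda$ on $\cX_\lambda^{\rm reg}$; the singularities of $\cX_\lambda$ are analytically locally isomorphic to those of $X$, hence $\bQ$-factorial terminal. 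Primitivity of $\sigma_\lambda$, the Hodge number constraints in Definition~\ref{def:CY}(PS), and the Fujiki class $\cC$ property are open in locally-trivial analytic families, so $\cX_\lambda$ remains analytically primitive symplectic on a neighborhood $N_\epsilon(0)$.

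The main obstacle I anticipate is the integrability step in the first paragraph: exponentiating a Čech cocycle valued in $f_*T_{X/Y}$ to actual vertical biholomorphisms in $\Aut^0_{X/Y}$, rather than only bimeromorphisms, requires controlling the flow of vertical vector fields across the singular locus $Y\setminus Y^+$, where the fiber structure degenerates and $P^{00}$ need not extend as a group scheme. Properness of $f$ guarantees global integrability of small sections on $X$ itself, but what is essential is that $f_*T_{X/Y}$ is genuinely reflexive on all of $Y$ (Prop.~\ref{abasheva}), forcing the resulting twists to extend across $Y\setminus Y^+$ as proper analytic spaces, not merely over $Y^+$. This is precisely the Lagrangian input that is unavailable for abelian fibrations in general.
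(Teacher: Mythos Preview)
Your approach coincides with the paper's: exponentiate Čech cocycles in $H^1(Y,f_*T_{X/Y})\simeq\bC N^1$ (via Proposition~\ref{compactify}) to classes in $H^1_{\rm an}(Y,\Aut^0_{X/Y})$ and reglue cylinders over a cover of $Y$; local triviality and relative minimality are immediate, and the PS property for small $s$ follows from openness in locally trivial families \cite[Cor.~4.11]{BL2022}, exactly as in the paper.

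One step of your argument is incorrect, though your fallback repairs it. The gluing maps $\varphi_{ij}^\lambda$ do \emph{not} preserve $\sigma$, so $\sigma$ does not reglue directly to a $2$-form on $\cX_\lambda$. In action-angle coordinates over $Y^o$ with $\sigma=\sum dz_i\wedge dw_i$, a fiberwise translation $z\mapsto z+a(w)$ pulls $\sigma$ back to $\sigma+\sum da_i\wedge dw_i$; the discrepancy is a nonzero form pulled back from the base. (Preservation of the restriction of $\sigma$ to fibers is vacuous, since the fibers are Lagrangian.) The paper does not attempt to reglue $\sigma$; it simply cites \cite[Cor.~4.11]{BL2022}, which is what your final sentence about openness amounts to. So drop the regluing claim and keep only the openness argument. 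For the existence of a symplectic form on \emph{all} $\cX_s$, see Remark~\ref{is-hol-symp}, which uses a trace argument as in Lemma~\ref{edge-same}.

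Your worry about exponentiation is also misplaced. Since the general fiber is abelian, sections of $f_*T_{X/Y}$ commute on the dense open $Y^o$ and hence everywhere, so $f_*T_{X/Y}$ is an abelian Lie algebra sheaf; and vertical holomorphic vector fields on a proper family integrate for all time to biholomorphisms. Thus $\exp\colon f_*T_{X/Y}\to\Aut^0_{X/Y}$ is a globally defined sheaf homomorphism, with no smallness hypothesis needed on the cocycle or the cover.
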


\begin{proof} By Proposition
\ref{compactify}, a class in $\bC N^1$,
which a priori only lives only on $Y^+$,
can be extended to an element of 
$H^1(Y,f_*T_{X/Y})$.
Analytifying and
exponentiating
to an element of $H^1_{\rm an}(Y, {\rm Aut}^0_{X/Y})$,
we see that it is possible to 
glue analytic cylinders over 
all of $Y$, in a manner which reproduces
the Tate--Shafarevich twist over the open
subset $Y^+$.

Since $F$ is constructed
by regluing cylinders, it
is a locally trivial family which is still
relatively minimal.
The second statement follows from
the fact that every small locally
trivial deformation of a primitive symplectic
variety is analytic primitive symplectic,
see \cite[Cor.~4.11]{BL2022}. \end{proof}

\begin{remark}\label{is-hol-symp}
In fact, by a similar
proof to Lemma \ref{edge-same},
see also \cite[Cor.~3.7]{abasheva},
one has that all fibers $\cX_s$ admit
a non-degenerate reflexive $2$-form 
$\sigma_s\in H^0(\cX_s,\Omega^{[2]}_{\cX_s})$. In particular,
for the $s\in \bC N^1$ such that $[s]\in \bQ N^1/N^1$,
the fiber $\cX_s$ is of Fujiki class $\cC$ (because
it is an algebraic space, see Prop.~\ref{mult-surj}), 
and is thus a primitive symplectic algebraic space.

It follows immediately, as in 
\cite[Thm.~B, Thm.~5.5]{AR2021},
that $\cX_s$ is 
analytic primitive symplectic 
for all $s\in \bC N^1$
if $H^{2,0}(X)\oplus H^{0,2}(X)$
contains no integral vectors
(a condition which 
implies that $\bC N^1/N^1$ has
trivial Hausdorff reduction).
\end{remark}

\begin{theorem}[cf.~Theorem \ref{thm:ps}] \label{theorem-ps}
The set of
$\bQ$-factorial, terminal primitive symplectic 
varieties of fixed dimension $2d$, which deform to one 
admitting a fibration, lie within a finite number 
of locally trivial deformation classes of 
(algebraic) primitive symplectic varieties.
\end{theorem}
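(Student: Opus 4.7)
The plan is to reduce the statement to boundedness of Lagrangian fibrations modulo Tate--Shafarevich twists, and then to show that the countably many algebraic twists in the divisible part $\bQ N^1/N^1 \subset \Sha_G$ are all locally trivial deformations of one another via the analytic family of Corollary~\ref{analytic-ps-twist}.

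First, since ``locally trivially deformation equivalent'' is an equivalence relation on primitive symplectic varieties, it suffices to bound the number of locally trivial deformation classes of $\bQ$-factorial, terminal primitive symplectic varieties which themselves admit a fibration---by Proposition~\ref{prop:base}, this is necessarily a Lagrangian fibration $f\colon X\to Y$. By Theorem~\ref{thm:alb}, the Albanese fibration $f^{\rm Alb}\colon X^{\rm Alb}\to Y$ is again primitive symplectic, admits a rational section, and induces the same tuple $(Y,B,\bfM,\Phi^o)$ as $f$. Theorem~\ref{bd-families-2}(PS) applied to $X^{\rm Alb}$ yields finitely many analytic deformation classes of Albanese fibrations, and Corollary~\ref{cor:basesLagrbound} gives a bounded family of bases. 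So, after passing to finitely many cases, we may assume $(Y,B,\bfM,\Phi^o)$ and the birational class of $X^{\rm Alb}\to Y$ are fixed, and $X$ is determined by a twist class $t(f)\in \Sha_G$.

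By Corollary~\ref{cor-ts-twist}, $\Sha_G \simeq (\bQ/\bZ)^r \oplus N$, with $N$ a finite group and $r=b_2(X)-b_2(Y)-\rho(X)+\rho(Y)$. Proposition~\ref{kernel-constructible} shows that, after stratification and a finite \'etale cover of the parameter space of bases, the non-divisible part $N$ varies constructibly and admits a locally-constant set-theoretic lift into $H^1_G(Z^+,\cP[n])$ for some uniform $n$. Running the Tate--Shafarevich twist construction relatively (as in the proof of Theorem~\ref{thm:bir-bdd-abelian-CY}), the twists with a given $N$-coset value yield a bounded algebraic family of primitive symplectic varieties, hence lie in finitely many locally trivial deformation classes.

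It remains to handle the divisible part $\bQ N^1/N^1$, which is precisely where primitive-symplectic geometry is used, and where the main obstacle lies. Here Proposition~\ref{abasheva} gives $\bC N^1 \simeq H^1(Y,\Omega_Y^{[1]})\simeq \bC$, and Proposition~\ref{compactify} with Corollary~\ref{analytic-ps-twist} produces a proper analytic family $F\colon \cX\to Y\times \bC N^1$, locally trivial over the $\bC N^1$-direction, whose fibers include, at the points $[s]\in \bQ N^1/N^1\subset \bC N^1/N^1$, exactly the algebraic Tate--Shafarevich twists of $f$. By Corollary~\ref{analytic-ps-twist}(1) and Remark~\ref{is-hol-symp}, every fiber is an analytic primitive symplectic variety, and by (2) the family is a locally trivial deformation. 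Hence all twists in $\bQ N^1/N^1$ lie within a \emph{single} locally trivial deformation class. Combined with the finitely many $N$-classes and the finite list of Albanese classes and bases, we conclude there are only finitely many locally trivial deformation classes in total. The delicate point is that, although $\bQ N^1/N^1$ is dense and not constructible inside $\bC N^1/N^1$, the analytic extension of infinitesimal automorphisms from $Y^+$ to all of $Y$ provided by Proposition~\ref{compactify} allows one to interpolate all the countably many algebraic twists by a single connected locally trivial analytic family.
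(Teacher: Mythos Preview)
Your overall architecture matches the paper's: bound the bases (Corollary~\ref{cor:basesLagrbound}), bound the Albanese fibrations, handle the non-divisible part $N$ of $\Sha_G$ via Proposition~\ref{kernel-constructible} and a relative twist construction, and then treat the divisible part $\bQ N^1/N^1$ using the analytic family of Corollary~\ref{analytic-ps-twist}. Two small slips: your citation of Theorem~\ref{bd-families-2}(PS) should be (AV), since $X^{\rm Alb}\to Y$ is an \emph{abelian} fibration with rational section; and the paper constructs the universal non-divisible twist algebraically first and then runs a relative MMP to get $\bQ$-factorial terminal PS models, a step you elide.

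The genuine gap is in your treatment of the divisible part. You assert that ``by Corollary~\ref{analytic-ps-twist}(1) and Remark~\ref{is-hol-symp}, every fiber is an analytic primitive symplectic variety'', and hence all algebraic twists in $\bQ N^1/N^1$ lie in a single locally trivial deformation class. But Corollary~\ref{analytic-ps-twist}(1) only guarantees this on a small open neighborhood $N_\epsilon(0)\subset \bC N^1$, and Remark~\ref{is-hol-symp} only upgrades this to all of $\bC N^1$ under the extra hypothesis that $H^{2,0}(X)\oplus H^{0,2}(X)$ contains no integral vector (equivalently $\rk N^1=1$). The paper explicitly flags this problem in a footnote: even after removing the (possibly empty) non-Fujiki locus, connectedness of the remaining base is unclear in the $b_2=4$ case, and moreover the theorem asks for deformations through \emph{algebraic} primitive symplectic varieties, which the analytic family does not directly supply.

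The paper sidesteps both issues. It uses the analytic family $\cX\to \cY/\bC\cN^1$ only to conclude that the purely topological function $\nu\colon H^2(X,\bZ)\to\bZ$, $\gamma\mapsto\gamma^{2d}$, is bounded (this is locally constant even through non-Fujiki fibers). It then invokes a lattice-theoretic lemma (Lemma~\ref{lemma_smaller_norm}) to produce, inside the BBF lattice, a class $L'$ of bounded positive norm spanning a hyperbolic plane with the polarization $L$; via polarized Torelli and surjectivity of the period map, $X$ deforms locally trivially to a projective $\bQ$-factorial terminal PS variety $X''$ of Picard rank~$1$ with bounded $\nu(L')$. Boundedness of such $X''$ then follows from \cite[Cor.~10]{MST20}. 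This extra detour---from the analytic twist family to a topological bound, and from there to a bounded polarized model---is precisely what is missing from your argument.
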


\begin{proof}
Note that any fibration of a PS variety 
is a Lagrangian abelian fibration.

The proof is similar to that of 
Theorem \ref{thm:bir-bdd-abelian-CY}.
By Lemma \ref{cor:basesLagrbound},
there is a finite type family 
$(\cY,\cB,\cM)/\cR$
such that the base 
$(Y,B,\Gamma)$ with $\Gamma\sim_\bQ \bfM_Y$ of any 
Lagrangian fibration $f\colon X\to Y$ of a PS
variety of dimension $2d$
is isomorphic
to some $(\cY_r,\cB_r,\cM_r)$, $r\in \cR$. It follows
from Theorem \ref{bd-families}(AV) that 
there is a finite type family 
$\cX^{\rm Alb}\to \cY/\cS$ of abelian
fibrations with rational section, 
of Picard type, for which
$f^{\rm Alb}\colon X^{\rm Alb}\to Y$
is birational to
$\cX^{\rm Alb}_s\to \cY_r$ 
for some $s\in \cS$, $s\mapsto r$, 
via an isomorphism
$\phi\colon Y\to \cY_r$
on the base. 

Replacing $\cS$ with a stratification and running
a relative MMP, we may assume without loss
of generality that 
$\cX^{\rm Alb}\to \cY/\cS$ is itself
a family of PS varieties,
by Theorem \ref{thm:alb}.
Now pass to a family of Kulikov models
$$\cV^+\xrightarrow{h^+} 
\cZ^+\subset \cZ \xrightarrow{/G}\cY/\cT$$
as in Remark 
\ref{constructible-remark}.

We have that 
$\bC N^1_t={\rm im}(H^1_G(\cZ^+_t, R^1h^+_*\underline{\bC}_{\cV^+_t})\to
H^1_G(\cZ^+_t, R^1h^+_*\cO_{\cV^+_t}))$ is 1-dimensional,
and the group $\Sha_{G,\,{\rm an},\,t}$ contains
a subgroup of the form
$(\bC N^1_t/N^1_t) \oplus N_t$, with
$N_t\simeq (\Sha_{G,\,t})^{\rm ndiv}$.
We may assume
$N_t=N$ is constant on connected
components of $\cT$ by Proposition 
\ref{kernel-constructible}. By the second part 
of Proposition \ref{kernel-constructible}, we
may produce
\begin{enumerate}
\item a set-theoretic splitting $(\bC N^1_t/N^1_t) 
+ N$ defined globally on $\cT$, which
\item 
factors through a locally constant map
$\gamma\colon N\to H^1_G(\cZ_t^+,\cP_t[n])$ 
for some fixed $n$.
\end{enumerate}

Using $\gamma$,
we may construct a universal 
{\it non-divisible twist} 
$$(\cX^+)^{\rm ndiv}\to \cY^+/\cT\times N,$$
relatively over $\cT\times N$.
For a given $t\in N$, the twist by $\gamma(t)$ 
is a $G$-equivariant $n$-torsion \'etale twist,
which may be constructed relatively by 
the constructiblity of $\cP_t[n]$.
Since $(\cX^+)^{\rm ndiv}\to \cY^+/\cT\times N$ 
is (upon taking appropriate birational models)
a family of fibrations of 
quasi-projective varieties,
we may produce, after further stratification
of the base, a relative compactification\footnote{We
conjecture, but do not know, 
in general, whether
the analytic twists over $Y^+$ with class $t(f)\in 
\underline{\overline{\Sha}}_{G,\,{\rm an}}\coloneqq
\bC N^1/N^1\oplus N$,
cf.~(\ref{smaller}), 
are compactifiable analytic spaces, 
or of Fujiki
class $\cC$. We avoid this issue
by first producing the universal non-divisible
twist in the algebraic category, then applying
Corollary \ref{analytic-ps-twist}.}
$$\cX^{\rm ndiv}\to \cY/\cT\times N.$$ 

Again replacing $\cT\times N$ with a stratification
$\cQ$ of a Zariski open subset,
we may run a relative MMP and
pass to a relative $\bQ$-factorial
terminalization, so that
the fibers of
$\cX^{\rm ndiv}\to \cY/\cQ$ are Lagrangian
fibrations of $\bQ$-factorial,
terminal PS varieties.\footnote{
For the $\bQ$-factoriality, we note
that if a twist $X'$ of $X$ were not $\bQ$-factorial,
then we can untwist a small $\bQ$-factorialization
$(X')^{\rm fact}\to X'$.
The result will be a PS variety 
admitting a nontrivial crepant birational
morphism to $X$, which is impossible, by
the $\bQ$-factorial terminality of $X$.}
By Corollary \ref{analytic-ps-twist}
and the fact that the non-divisible twist
universally
represents every element of $N$, every Lagrangian
fibration $f\colon X\to Y$ of a PS variety,
is a twist of $\cX_q^{\rm ndiv}\to \cY_q$ for 
some $q\in \cQ\subset \cT\times N$,
whose twist class $t(f)\in \bQ N^1/N^1=
(\Sha_G)^{\rm div}$ lies in the divisible subgroup.
The groups $\bC N^1$ for $q\in \cQ$ form a $\bC$-bundle
$\bC \cN^1\to \cQ$, and by Corollary \ref{analytic-ps-twist},
we may construct the universal analytic twist
$$
\cX\to \cY/\bC \cN^1.
$$
The fibers
are proper, abelian-fibered,
$\bQ$-factorial (cf.~\cite[Thm.~1.1(4)]{BL2022}),
terminal analytic spaces. They are not, a priori, 
of Fujiki class $\cC$ when $\rk N^1=2$,
see Remark \ref{is-hol-symp}, but they are
for all fibers we ``care'' about, over 
the set $\bQ \cN^1\subset \bC \cN^1$.

Note that the base
$\bC \cN^1$ is a quasi-projective variety.
It follows from Corollary \ref{twist-cor} that
$f\colon X\to Y$ is birational to some fiber
of $\cX\to \cY$ over a point in $\bQ \cN^1$.
Furthermore,
birational
primitive symplectic varieties 
are deformation-equivalent. It follows
that $X$ deforms analytically, and locally 
trivially,
to some fiber of 
$\cX\to \cY/\bC \cN^1$.\footnote{
Our base
$\bC \cN^1$ has only finitely
many connected components, but the
theorem does not follow
immediately. In the theorem statement, 
we require our deformation to 
be {\it through algebraic primitive 
symplectic varieties}, but even
removing the word ``algebraic'', 
it is unclear whether all fibers 
are of Fujiki class $\cC$. 
This issue can only arise
when $H^{2,0}(X)\oplus H^{0,2}(X)$
contains an integral vector. By Appendix
\ref{sec:deformation}, when $b_2\geq 5$ 
we may throw away
the (closed, and likely empty, set of) 
non-Fujiki class fibers, and the base
$$\bC \cN^1\setminus 
(\bC \cN^1)_{\textrm{non-Fujiki}}$$ 
will still be connected. Thus, the only
special case is $b_2=4$, where
$\bC \cN^1\setminus 
(\bC \cN^1)_{\textrm{non-Fujiki}}$
is a dense open set of a genus
$1$ curve $\bC N^1/N^1$
containing all torsion
points $\bQ N^1/N^1$. It
seems preposterous that
such a dense open set would not
equal all of $\bC N^1/N^1$. But it
was not clear to us how to rule 
this out, see also 
\cite[Rem.~5.12]{AR2021}.
So we argue in a manner 
sidestepping the issue.}
We conclude
that
the function 
\begin{align*} 
\nu \colon H^2(X,\bZ)&\to \bZ \\
\gamma &\mapsto \gamma^{2d}
\end{align*}
is bounded, i.e., only finitely many such
functions occur for some $\bQ$-factorial,
terminal primitive symplectic variety $X$, 
deforming to one admitting a fibration. 
This follows
from the fact that $\nu$ is 
defined topologically and so is locally
constant in a topologically trivial deformation
(even if one deforms through analytic spaces
which are not of Fujiki class $\cC$).

Let $L$ be an ample line bundle
on $X$. The Torelli
theorem for $L$-polarized PS varieties holds,
even in the $b_2=4$ case not covered by
\cite{BL2022}---(polarized) local Torelli holds
when $b_2=4$ by
\cite[Prop.~5.5, Cor.~5.9]{BL2022}, and 
in the polarized setting,
we have surjectivity of the period mapping
by \cite[Sec.~4, Proof of Thm.~0.7]{KLSV}, 
see also Proposition \ref{extend4}.
Thus, $X$ admits a locally
trivial deformation, through primitive
symplectic varieties, to one whose period
point is an arbitrary element of the polarized
period domain
$$\bD_L\subset \bP\{x\in L^\perp\otimes \bC \,:\,
x\cdot x=0, \,x\cdot \overline{x}>0\}.$$

We require
the following lemma:

\begin{lemma}\label{lemma_smaller_norm}
Let $(M_{\rm np},\cdot)$ be an
integral lattice of signature $(3,b_2-3)$,
$b_2\geq 4$, and 
fix elements $L, L'\in M_{\rm np}$ with 
$L\cdot L>0$, $L'\cdot L'>0$. 
Let $\Gamma\subset O(M_{\rm np})$ be any finite index subgroup.
Then, there 
exists an isometry
$\gamma\in \Gamma$ for which 
$\langle L,\gamma(L')\rangle$ has hyperbolic
signature $(1,1)$.
\end{lemma}

\begin{proof}
Passing to a sublattice, we may
assume without loss of generality
that our lattice has signature $(3,1)$.
The Gram matrix of $\langle L,\gamma(L')\rangle$ 
is $$\twobytwo{L\cdot L}{L\cdot \gamma(L')}{\gamma(L')\cdot L}{L'\cdot L'}$$ 
which defines a hyperbolic lattice if and
only if its determinant
is negative.
Thus, it suffices to prove that
$\sup_{\gamma\in \Gamma}|L
\cdot \gamma(L')|=\infty$. 
Replacing
$L$, $L'$ by nonzero rescalings leaves
this statement invariant.
So we may as well assume $L$, $L'$ define
points on the hypersurface 
\begin{align}\label{hypers}
x_1^2+x_2^2+x_3^2-x_4^2=1
\end{align}
in $\bR^{3,1}$,
and say $L=(1,0,0,0)$. 
In $\bP(\bR^{3,1})$, 
the accumulation set of any $\Gamma$-orbit
contains the projectivized light cone
$\bP\{x_1^2+x_2^2+x_3^2-x_4^2=0\}$.
Thus, we may take a sequence of 
$\gamma_i \in \Gamma$ such that $\gamma_i(L')$
lies on the hypersurface (\ref{hypers})
and $\bP\gamma_i(L')\rightarrow [1:0:0:1]$
as $i\to \infty$. For such a sequence, the
$x_1$-coordinate $L\cdot \gamma(L')$ necessarily
grows arbitrarily large in absolute value.
\end{proof}

Since $\nu$ is bounded, there
exists some vector $L'\in H^2(X,\bZ)$
for which $\nu(L')$ is bounded and positive.
Note that we may restrict to the case
$b_2\geq 4$ because $X$
is Lagrangian-fibered. So by Lemma 
\ref{lemma_smaller_norm}, there exists
an element $\gamma\in O(M_{\rm np})$ for which
$\langle L, \gamma(L')\rangle$ has hyperbolic
signature. Note that $\nu(\gamma(L'))=\nu(L')$
since the BBF form and $\nu$ differ
by a fixed scalar (the Fujiki constant).

Thus, we may choose a period $x\in \bD_L$ for which
$\langle L,\gamma(L')\rangle\subset x^\perp$.
So $X$ deforms in a locally
trivial manner to a $\bQ$-factorial terminal
PS variety $X'$
admitting a line bundle $\gamma(L')$
with bounded $\nu(\gamma(L'))>0$. Applying local
 Torelli, we may take a further small deformation
 to a $\bQ$-factorial terminal
PS variety $X''$
of Picard rank $1$, generated
by $\gamma(L')$, which is necessarily projective.
Such $X''$ are bounded by
\cite[Cor.~10]{MST20}.
The theorem follows.
\end{proof}

\begin{remark} Our proof also shows that when 
$b_2(X)\geq 4$, the projective $\bQ$-factorial
terminal PS varieties, which lie in one locally
trivial analytic deformation class,
lie in finitely many locally trivial 
algebraic deformation classes (even though the 
locus of projective PS varieties in an analytic 
family may not be of finite type). 
This is visibly
false when $b_2(X)=3$---the unpolarized 
period domain is a two-sphere
$\bS^2$, which is connected, 
but the algebraic PS varieties 
inside it form a countable set.
\end{remark}

\begin{proposition}\label{triv-to-loc-triv} 
A primitive symplectic variety $X$
of fixed dimension $2d$, with $b_2(X)\geq 4$,
whose $\bQ$-factorial terminalization lies in a fixed
analytic, locally trivial deformation class,
itself lies within a finite number of analytic, 
locally trivial deformation classes. \end{proposition}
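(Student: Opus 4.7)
The plan is to recover $X$ from the pair $(X^{\rm term}, F_\pi)$, where $F_\pi \subset \overline{\mathrm{Nef}}(X^{\rm term})$ is a face of the nef cone, and then to bound the number of such faces modulo monodromy on $\cD^{\rm term}$. Fix an analytic locally trivial deformation class $\cD^{\rm term}$ of $\bQ$-factorial terminal primitive symplectic varieties of dimension $2d$ with $b_2 \geq 4$. Given a primitive symplectic $X$ with $\bQ$-factorial terminalization $X^{\rm term} \in \cD^{\rm term}$, the crepant projective birational morphism $\pi\colon X^{\rm term}\to X$ is determined by the face $F_\pi$ whose relative interior consists of the classes $\pi^\ast L$ with $L$ ample on $X$: the variety $X$ is then the relative ample model of any such class, so it is determined up to isomorphism by $(X^{\rm term}, F_\pi)$.

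First I would bound the set of possible faces $F_\pi$ modulo monodromy. By \cite[Prop.~7.7]{LMP24}, the potential walls of the nef cone of $X^{\rm term}$ are of the form $\beta^\perp$ with $\beta \in M_{\rm np}^{\rm term} \coloneqq H^2(X^{\rm term},\bZ)$ satisfying $-B \leq \beta^2 < 0$ for a constant $B$ depending only on $\cD^{\rm term}$ (when $b_2(X^{\rm term}) \geq 5$). This yields finitely many orbits of such classes $\beta$ under the monodromy group $\mathrm{Mon}(\cD^{\rm term}) \subset O(M_{\rm np}^{\rm term})$, and hence only finitely many orbits of faces $F_\pi$, exactly as in Def.~\ref{def:small-cone} and \cite[Thm.~4.7]{AE2025}. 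In the borderline case $b_2(X^{\rm term}) = 4$, the primitive sublattice $\Lambda \subset M_{\rm np}^{\rm term}$ of classes descending to $X$ has hyperbolic signature $(1, \rk\Lambda - 1)$ with $\rk\Lambda \leq 4$, and the standard finiteness of locally finite hyperplane arrangements in finite-rank hyperbolic spaces modulo their isometry group \cite[\S~5(a)]{borel_finite} gives the same conclusion.

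Next I would promote the finiteness of orbits of $F_\pi$ to finiteness of locally trivial deformation classes of $X$. By Namikawa \cite[Main Thm.]{Namikawa2006}, locally trivial deformations of $X^{\rm term}$ are unobstructed, and by property \eqref{surjective} in the proof of Proposition~\ref{extend4}, the induced map of Kuranishi spaces $\mathrm{Def}^{\mathrm{lt}}(X^{\rm term}) \to \mathrm{Def}^{\mathrm{lt}}(X)$ is a surjective finite Galois cover whose Galois group acts on $M_{\rm np}^{\rm term}$ by Hodge-structure-preserving monodromy. For each monodromy orbit representative $F$, the sublocus of $\mathrm{Def}^{\mathrm{lt}}(X^{\rm term})$ over which $F$ remains a face of the nef cone (i.e.,~over which the relevant walls remain of Hodge type) is a closed analytic subset containing the basepoint, and over this sublocus the contraction $\pi$ deforms as a crepant morphism to a locally trivial deformation of $X$. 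Combining the finiteness of monodromy orbits of $F$ with the finiteness of the Galois fibers yields finitely many locally trivial analytic deformation classes of $X$.

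The main obstacle I anticipate is the last point: confirming that, in a locally trivial analytic family of $X^{\rm term}$, whenever the face $F$ persists as a face of the nef cone of every fiber, the contraction $\pi$ deforms crepantly and fiberwise to produce a locally trivial family of $X$. This combines Namikawa's theory of locally trivial deformations \cite{Namikawa2006, BL2022} with the existence of relative ample models in such families, and amounts to a simultaneous relative MMP across the family of the kind tacitly invoked several times in this paper.
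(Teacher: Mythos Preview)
Your approach shares the paper's key input---\cite[Prop.~7.7]{LMP24} bounds the self-intersections of wall divisors, so the negative-definite lattice $N \coloneqq H^2(X)^\perp \subset H^2(X^{\rm term})$ has bounded Gram matrix and hence lies in finitely many ${\rm Mon}$-orbits---but the execution diverges at the crucial global step.

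The gap is your passage from local Kuranishi theory to finiteness of deformation classes. You show that $\pi$ deforms over the sublocus of $\Deflt(X^{\rm term})$ where $F_\pi$ persists, and then assert that ``combining the finiteness of monodromy orbits of $F$ with the finiteness of the Galois fibers yields finitely many locally trivial analytic deformation classes of $X$.'' This does not follow. Two primitive symplectic varieties $X_1,X_2$ whose faces $F_1,F_2$ lie in the same ${\rm Mon}$-orbit may have terminalizations $X_1^{\rm term}, X_2^{\rm term}$ at distant points of the (non-Hausdorff) marked moduli space of $\cD^{\rm term}$, and your local argument provides no path between them along which the face, and hence the contraction, survives. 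The obstacle you flag---that $\pi$ deforms over the locus where $F$ persists---is in fact available from \cite[Prop.~5.22]{BL2022} and \cite[Prop.~2.12]{LMP24}; the missing piece is the \emph{global} connectivity.

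The paper's proof avoids this by a reference-point argument: having bounded the embedding $N\hookrightarrow M_{\rm np}$ up to ${\rm Mon}$, it uses surjectivity of the period map \cite[Thm.~1.1(3)]{BL2022} to deform any $X$ locally trivially to some $X_N$ with a \emph{fixed} non-maximal-Picard-rank period $[x_N]\in \bD_N$. At that single period, the birational Torelli theorem \cite[Thm.~1.1(2)]{BL2022} together with finiteness of birational models and of their crepant contractions \cite[Cor.~1.3, 1.4]{LMP24} gives finitely many isomorphism classes of $X_N$. Since each $X$ deforms to one of these, finiteness follows. To salvage your approach you would need the analogous global input: that the locus in marked moduli where $N$ remains of $(1,1)$-type and supports the given contraction has only finitely many components.

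A minor point: your treatment of $b_2(X^{\rm term})=4$ is unnecessary. Since $b_2(X^{\rm term})\geq b_2(X)\geq 4$ and $N$ is negative-definite, equality $b_2(X^{\rm term})=4$ forces $N=0$, hence $X^{\rm term}=X$ and there is nothing to prove; otherwise $b_2(X^{\rm term})\geq 5$ and \cite[Prop.~7.7]{LMP24} applies directly.
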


\begin{proof} 
Let 
$X^{\rm term}\to X$
be a $\bQ$-factorial terminalization.
Now \cite[Lem.~5.21]{BL2022}
implies that $$H_2(X^{\rm term},\bQ) = 
H^2(X,\bQ)\oplus N_\bQ$$ where $N$ is 
negative-definite sub-lattice for the BBF form.
For any contracted curve $C$,
the class $[C]\in 
N_1(X^{\rm term}/X)\subset H_2(X^{\rm term},\bQ)$ 
defines a nonzero 
element $[C]^*\in N_\bQ\subset H^2(X^{\rm term},\bQ)$
via the BBF form.
Thus, if $b_2(X)=b_2(X^{\rm term})=4$, then 
$X^{\rm term}=X$ and there is nothing to prove.
So assume without loss of generality that 
$b_2(X^{\rm term})\geq 5$.

In fact, 
$N\subset H^2(X^{\rm term},\bZ)_{\rm tf}$ 
is the saturated sub-lattice generated
by such classes $[C]^*$.
By \cite[7.2, 7.4, 7.7]{LMP24},
$N$ is generated by ``primitive wall
divisors'' $v_i\in N$ for which $0>v_i\cdot v_i\geq -B$ for
some fixed integer $B$ depending only on the locally
trivial deformation class of $X^{\rm term}$, which
is bounded by hypothesis.
Since $\textrm{span}\{v_i\}$ is negative-definite,
we must have $|v_i\cdot v_j|<B$,
to ensure
that the rank $2$ sublattice $\langle v_i,v_j\rangle\subset N$ 
is negative-definite. So the Gram
matrix of $\{v_i\}$ has bounded entries, and there
are only finitely many possible abstract isometry
types for $N$.

Since ${\rm Mon}\subset O(M_{\rm np},\cdot)$
has finite index \cite[Thm.~1.1(1)]{BL2022}, we conclude
that there are only finitely many ${\rm Mon}$-orbits
of lattice embeddings $N\hookrightarrow M_{\rm np}$.
Thus, up to a change of marking,
we may choose
a marking $H^2(X^{\rm term},\bZ)_{\rm tf}\to M_{\rm np}$
for which the lattice $N$ is identified with
one of finitely many fixed reference lattices. 
Now, by \cite[Prop.~2.12]{LMP24},
\cite[Prop.~5.22]{BL2022} the locally trivial analytic
deformation space of $X$ is identified locally
with the subdomain
$$\bD_N \subset 
\bP\{x\in N^\perp\otimes 
\bC\,:\,x\cdot x=0,\,x\cdot \bar x>0\},$$
which is a period domain for 
locally trivial analytic deformations
of $X^{\rm term}$ keeping $N$ of $(1,1)$-type.
By \cite[Thm.~1.1(3)]{BL2022},
the period mapping for the connected
component of the moduli space of analytic primitive
symplectic varieties, which are locally-trivially deformation
equivalent to $X$, surjects onto the complement
of a countable union of maximal
Picard rank points in $\bD_N$.

Thus, we may deform $X$ in a locally trivial manner
to a primitive symplectic variety $X_N$ with a
fixed reference period $[x_N]\in \bD_N$
of non-maximal Picard rank.
Furthermore, this deformation lifts to a locally
trivial deformation of $X^{\rm term}$ to a 
$\bQ$-factorial terminalization 
$(X_N)^{\rm term}$.
By \cite[Thm.~1.1(2)]{BL2022}, any two such
$\bQ$-factorial terminalizations are birational
because they have the same period and lie in the same
locally trivial deformation class.
So by \cite[Cor.~1.4]{LMP24}, there are only
finitely many possible isomorphism
classes of $(X_N)^{\rm term}$. In turn by
\cite[Cor.~1.3]{LMP24}, there are only finitely
many possible isomorphism classes for the
contraction $X_N$.
Since $X$ deforms to $X_N$ locally trivially,
the proposition follows.
\end{proof}

Since $b_2(X)\geq 4$, Theorem \ref{thm:ps} follows
from combining Theorem \ref{theorem-ps} and
Proposition \ref{triv-to-loc-triv}.

\begin{remark}[Special twistor lines] 
Let $f\colon X\to Y$ be a Lagrangian
fibration of a PS variety,
and let $\delta\in H^2(X,\bZ)$ be the pullback of the
class of an ample line bundle on $Y$. When 
$b_2(X)\geq 5$ and $H^{2,0}(X)\oplus H^{0,2}(X)$
contains no integral vector, the fibration 
$F\colon \cX\to Y\times \bC N^1$
of Corollary \ref{analytic-ps-twist}
is  a locally trivial 
analytic fibration of PS varieties,
over a contractible base $\bC N^1$, 
see Remark \ref{is-hol-symp}.
So there is a holomorphic period map
$$\bC N^1\to \bD_\delta \subset
\bP\{x\in 
\delta^\perp\otimes \bC\,:\,
x\cdot x=0,\,x\cdot \overline{x}>0\}.$$
The domain $\bD_\delta$ admits a fibration 
$\bD_\delta\to \bD_0$ over the Type IV 
Hermitian symmetric domain 
$$\bD_0 \subset 
\bP\{y\in \delta^\perp/\delta\otimes \bC\,:\,
y\cdot y=0,\,y\cdot \overline{y}>0\}$$ sending 
$[x]\mapsto
[x]\textrm{ mod }\bC\delta$. The fibers of 
$\bD_\delta\to \bD_0$ are copies of 
$\bC=\{[x+a\delta]\,:\,a\in \bC\}$,
called {\it special twistor lines}.
It is easy to see that $\bC N^1$
must map to a special twistor line---the
composed period mapping $\bC N^1\to \bD_\delta\to \bD_0$
is a holomorphic map from $\bC N^1\simeq \bC$ 
to a bounded domain,
and hence constant.

It is natural to expect that the map from $\bC N^1$
to a fiber of $\bD_\delta\to \bD_0$ is an isomorphism;
indeed, this is proven in \cite[Prop.~3.10]{AR2021}
when the total space is smooth, but the proof
appears to rely on the existence of a 
hyperk\"ahler metric \cite[Thm.~2.7]{SV}.
Our results suggest that
$\bC N^1$ maps isomorphically to a special twistor
line, since $\bC N^1/N^1$ and the period point
both record the bimeromorphism
class of the twist (note that 
any special twistor line is stabilized by the 
unipotent stabilizer of $\delta$ in ${\rm Mon}$, 
which under the hypothesis $b_2\geq 5$ acts 
by translation by a finite index subgroup of $N^1$
on $\bC N^1$).
\end{remark}

\section{From birational boundedness to boundedness}
\label{sec:bir-bd-to-bd}

In this section we discuss how the boundedness of fibered 
$K$-trivial varieties can be deduced from their 
birational boundedness. One important and fruitful 
approach for solving this problem is provided by the 
so-called {\it Kawamata--Morrison cone conjecture}.
In our context, the version of the conjecture that 
we use is the following:
\begin{conjecture}[Geometric Movable Cone Conjecture]
\label{conj:KM.mov}
Let $X$ be a klt variety and $f \colon X \to Y$ 
an lc-trivial fibration.
The number of orbits of the action of 
${\rm PsAut}(X/Y)$
on the set 
\begin{align}
\label{eqn:conj.KM.set.models}
\left \{
(X', \alpha)
\, 
\middle \vert
\, 
\begin{array}{l}
\text{$X'$ 
is 
klt $\mathbb Q$-factorial 
and 
$\alpha \colon X' \dashrightarrow X$
is an}
\\
\text{isomorphism in codimension 1 over Y}\,
\end{array}
\right \}
\end{align}
is finite.
\end{conjecture}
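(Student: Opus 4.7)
My plan is not to tackle Conjecture \ref{conj:KM.mov} in full generality, where it remains wide open, but rather to establish the cases actually needed in this paper: abelian- or primitive-symplectic-fibered Calabi--Yau varieties of bounded dimension. This would suffice, together with Lemma \ref{cor:bases} and the effective b-semiampleness Theorem \ref{thm:b-semi}, to upgrade the birational boundedness statements of \S\ref{sec:bd} and \S\ref{sec:ts2} to honest boundedness, in the spirit of \cite{FHS2024}.

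First, I would reformulate the conjecture in terms of a chamber decomposition. By standard MMP arguments, each pair $(X',\alpha)$ in \eqref{eqn:conj.KM.set.models} is uniquely determined by the pullback along $\alpha$ of the relative ample cone $\operatorname{Amp}(X'/Y) \subset N^1(X/Y)_{\bR}$, and these cones form a locally polyhedral chamber decomposition of the interior of the effective relative movable cone $\overline{\operatorname{Mov}}^e(X/Y)$. The group ${\rm PsAut}(X/Y)$ permutes the chambers, so the conjecture becomes the assertion that this action admits a rational polyhedral fundamental domain on $\overline{\operatorname{Mov}}^e(X/Y)$.

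Next, I would exploit the fibration structure via the natural exact sequence
\[
0 \to f^{*} N^1(Y)_{\bR} \to N^1(X/Y)_{\bR} \to N^1(X_{\eta})^{\mathrm{mon}}_{\bR} \to 0,
\]
where the quotient is the monodromy-invariant part of the Néron--Severi space of the generic fiber. On the fiber side: when $X_\eta$ is abelian, translations in ${\rm PsAut}(X/Y)$ act trivially on $N^1(X/Y)_{\bR}$, and the remaining fiberwise pseudo-automorphisms are controlled by an arithmetic subgroup of $\operatorname{GL}(N^1(X_\eta))$ preserving the polarization of bounded type recovered in Proposition \ref{bd-unzarhin}; this yields a rational polyhedral fundamental domain in the fiber direction by classical arithmeticity. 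When $X_\eta$ is primitive symplectic, the analogous input is the Markman--Morrison-type cone theorem in the fibers, whose hypotheses are controlled by the bounded BBF lattice data. On the base side, the canonical bundle formula (Theorem \ref{base-same-sing}) induces a generalized klt pair $(Y, B_Y, \bfM)$ on $Y$, which is bounded by Lemma \ref{cor:bases}, and I would apply the cone conjecture for bounded generalized pairs proved in \cite{BDCS} to extract a rational polyhedral fundamental domain for the induced action on $N^1(Y)_{\bR}$.

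The hard part, as usual in relative cone conjectures, will be gluing the fiber and base pieces together. Pseudo-automorphisms of $X/Y$ do not in general descend to pseudo-automorphisms of either $Y$ or the generic fiber separately; the failure is precisely measured by the Tate--Shafarevich group $\Sha_G$ studied in \S\ref{sec:ts}, which acts by translation-birational automorphisms interpolating between the two extremes. My main obstacle will be proving that the resulting extension
\[
1 \to (\text{fiberwise translations}) \to {\rm PsAut}(X/Y) \to (\text{monodromy}\ \times\ {\rm PsAut}(Y, B_Y, \bfM)) \to 1
\]
remains arithmetic and acts on the cone with rational polyhedral fundamental domain. Here I expect the finiteness of the non-divisible part of $\Sha_G$ (Corollary \ref{cor-ts-twist}, Proposition \ref{kernel-constructible}) to be decisive in the irreducible Calabi--Yau case, since it bounds the cohomological twisting; in the primitive symplectic case, the divisible part contributes a continuous family of twists, and handling this uniformly will likely require the Picard-type hypothesis (Definition \ref{def:pic-type}) together with the refined period-theoretic control from Theorem \ref{bd-families}. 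The non-Picard-type case and the general statement of Conjecture \ref{conj:KM.mov} appear to require substantial new ideas beyond what the present machinery supplies.
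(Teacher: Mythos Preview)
The statement is a \emph{conjecture}, and the paper does not attempt to prove it. It is stated as an open problem and used only as a hypothesis in Theorem~\ref{thm:kawamata-morrison}. Where the paper needs actual instances---Corollary~\ref{cor.bounded.3folds.CY} (relative dimension~$2$) and Corollary~\ref{cor:fano-base} (abelian fibrations over a Fano-type base)---it cites them from the literature: \cite{Li23,MS24} for the former, and a combination of \cite{Li23}, \cite{Prendergast12}, and \cite{gachet} for the latter. There is no proof in the paper for you to compare against.

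Your proposal, by contrast, sketches an independent attack on the special cases. Several steps are not supported by the tools you invoke. First, \cite{BDCS} does not prove a cone conjecture for bounded generalized pairs; it proves log boundedness results, which is a different statement. Second, the exact sequence you write for ${\rm PsAut}(X/Y)$ is not established anywhere in the paper, and the kernel you label ``fiberwise translations'' is not the Tate--Shafarevich group $\Sha_G$: the latter parameterizes \emph{twists} of the fibration (different birational classes of total space), not pseudo-automorphisms of a fixed $X$ over $Y$. The finiteness results of \S\ref{sec:ts} (Corollary~\ref{cor-ts-twist}, Proposition~\ref{kernel-constructible}) therefore do not bound the translation subgroup of ${\rm PsAut}(X/Y)$ in the way you suggest. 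Third, the gluing of fiber and base fundamental domains into a rational polyhedral domain on $\overline{\operatorname{Mov}}^e(X/Y)$ is precisely the hard step in all known proofs of relative cone conjectures, and nothing in the paper's machinery addresses it; the actual argument in \cite{Li23} proceeds by descent from the geometric generic fiber, not by the extension you propose. If you want to establish the cases used here, the correct route is the one the paper takes: cite the existing results rather than reprove them.
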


We recall that 
\begin{align*}
{\rm PsAut}(X/Y) \coloneqq 
\left \{
\phi \colon X \dashrightarrow X
\,
\middle \vert 
\,
\begin{array}{l}
\text{$\phi$ is an isomorphism in codimension}
\\ 
\text{one and it is a relative map over $Y$}
\end{array}
\right \}.
\end{align*}
Conjecture~\ref{conj:KM.mov} implies
the finiteness of the set 
\begin{align}
\label{eqn:conj.KM.set.models.no.marking}
\left \{
X'
\, 
\middle \vert
\, 
\begin{array}{l}
\text{$X'$ 
is 
klt $\mathbb Q$-factorial 
and there exists
$\alpha \colon X' \dashrightarrow X$}
\\
\text{an isomorphism in codimension 1 over }Y\,
\end{array}
\right \}
\end{align}
obtained from the set 
\eqref{eqn:conj.KM.set.models}
by dropping the marking 
$\alpha'$.
It is important to observe that, when keeping 
track of the marking 
$\alpha \colon X' \dashrightarrow X$, 
the set defined in 
\eqref{eqn:conj.KM.set.models}
can certainly be infinite.
Hence, only considering the 
orbits for the natural action of 
${\rm PsAut}(X/Y)$ 
on the effective part of the relative movable 
cone of 
$X$
over 
$Y$
can we hope to obtain finiteness.

Conjecture~\ref{conj:KM.mov} is one of the many
versions of the Kawamata--Morrison 
Cone Conjecture. For more details and versions of the conjecture,
we refer the interested reader to the survey  \cite{FS20b}.

\medskip

In order to realize our stated goal of deducing boundedness for 
classes of fibered Calabi--Yau varieties once we know that they are 
birationally bounded, we will need to use a version of the above 
conjecture that works for families of algebraic varieties.

The main result of this section is the following resolution of 
Conjecture~\ref{conj:KM.mov} under some natural cohomological 
restrictions on the total space.
In particular, such restrictions are naturally satisfied in the 
case of Calabi--Yau varieties.

\begin{theorem}[{cf.~\cite[Thm.~6.18]{FHS2024}}]
\label{thm:kawamata-morrison}
Let $\mathfrak{F}$ be a collection of fibrations 
between projective varieties $\{X_i \to Y_i\}_{i \in I}$.
Let $\mathfrak{M}$ denote the set of general fibers 
of the fibrations in $\mathfrak{F}$.
Assume that
every fibration $f\colon X \to Y$ in $\mathfrak{F}$
satisfies the following properties:
\begin{itemize}
    \item 
$X$ is a  projective terminal 
$\QQ$-factorial variety; 
    \item 
$h^1(X,\O X.)=h^2(X, \O X.)=0$; and
    \item 
$f \colon X \rar Y$ is an lc-trivial fibration.
\end{itemize}
If $\mathfrak F$ is generically bounded with 
bounded base and the 
geometric
movable
cone conjecture
holds
for the fibrations whose general fiber 
is in $\mathfrak{M}$, then $\mathfrak{F}$ is 
bounded.
\end{theorem}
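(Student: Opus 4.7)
The plan is to deduce boundedness from the generic data by exploiting the fact that each $X$ is already a $\QQ$-factorial terminal minimal model of its generic fiber, so that the only birational flexibility left is captured by the cone conjecture.

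First I would fix a witness $\cX \to \cY \to \cT$ of generic boundedness with bounded base. After stratifying $\cT$ and running a relative MMP over $\cY$ (using \cite{HX13} in its lc-trivial form, cf.~\Cref{prop:bestKtrivialmodel}), I may assume each fiber $\cX_t \to \cY_t$ is itself a $\QQ$-factorial terminal lc-trivial fibration with $\K {\cX_t}. \sim_{\QQ,f_t} 0$, carrying a fixed relatively ample line bundle $\cL$ on $\cX/\cY$ of bounded relative volume. For each $f\colon X\to Y$ in $\mathfrak F$, identify $Y \simeq \cY_t$ and consider the birational map $\alpha_X\colon X\dashrightarrow \cX_t$ over $Y$, which by hypothesis is an isomorphism over $\Spec \bC(Y)$.

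Next I would observe that $\alpha_X$ is automatically an isomorphism in codimension $1$. Both $X$ and $\cX_t$ are $\QQ$-factorial, terminal, with $K$-trivial relative canonical divisor over the same base, so the negativity lemma rules out any divisorial contraction or extraction: any $\alpha_X$-exceptional divisor on either side would have positive discrepancy on the terminal target while supporting a $K$-trivial pullback relation, a contradiction. Hence $X$ defines a class in the set \eqref{eqn:conj.KM.set.models.no.marking} attached to $\cX_t\to \cY_t$, to which Conjecture \ref{conj:KM.mov} applies and yields finiteness for each $t$.

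The main obstacle is making this finiteness uniform in $t$. I would address it by spreading out: stratifying $\cT$ and using the relative movable cone plus the Cartier indices of the extremal rays (bounded by \cite{FS20} or the effective $b$-semiampleness \Cref{thm:weakeffective}), I would produce finitely many finite-type families $\cX^{(1)},\dots,\cX^{(N)} \to \cY/\cT$ such that every $X$ arising in $\mathfrak F$ appears as some $\cX^{(j)}_{t'}$. This is the analogue of \cite[Thm.~6.18]{FHS2024} in the present setting and is where the cohomological hypotheses $h^1(X,\O X.) = h^2(X,\O X.) = 0$ enter crucially: the first gives $\Pic^0(X)=0$ so that numerical and linear equivalence agree, while the second, combined with the exponential sequence, forces $\Pic(X)_\QQ = H^2(X,\QQ)$, so that the Cartier indices of classes transported across small modifications are controlled by topological invariants (which are themselves constructible in families).

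Finally I would conclude via a volume bound. Since $\alpha_X$ is an isomorphism in codimension $1$, the proper transform $L_X$ of $\cL_t$ is a movable $\QQ$-Cartier (because $X$ is $\QQ$-factorial) divisor with $\vol(L_X) = \vol(\cL_t)$, uniformly bounded. Adding $f^*H_\cY$ for a bounded relatively ample $H_\cY$ on $\cY_t$ yields a big and nef divisor of bounded volume on $X$; an effective Matsusaka-type theorem (e.g.,~in the form of \cite[Thm.~1.3]{HJ22} or \cite[Thm.~1.5]{BDCS}) supplies a uniform $m$ with $m(L_X + f^*H_\cY)$ very ample and $h^0$ bounded. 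The resulting embeddings $X\hookrightarrow \bP^N$ have bounded Hilbert polynomials, so the family $\mathfrak F$ lies in a finite-type portion of the relevant Hilbert scheme, and is therefore bounded.
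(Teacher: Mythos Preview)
Your argument has a genuine gap at the ``spreading out'' step, and the paper's proof closes it by a mechanism you do not invoke.

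You apply Conjecture~\ref{conj:KM.mov} fiberwise to each $\cX_t \to \cY_t$, obtaining finiteness of models for each fixed $t$, and then attempt to make this uniform over $\cT$. But the cone conjecture gives no control on how the number of chambers varies with $t$; your appeal to Cartier indices of extremal rays via effective $b$-semiampleness does not bound the number of models, only the indices of individual contractions. The paper instead applies the cone conjecture \emph{once}, to the total family $\cX \to \cY$: the general fiber of this fibration coincides with the general fiber of $\cX_t \to \cY_t$ for generic $t$, hence lies in $\mathfrak{M}$, so the hypothesis applies directly. The missing ingredient is \cite[Thm.~4.2]{FHS2024}, which lifts the small modification $\cX_t \dashrightarrow X$ over $\cY_t$ to a small modification $\cX \dashrightarrow \cX'$ of the total family over $\cY$ with $\cX'_t \simeq X$; this is exactly where the hypotheses $h^1(X,\cO_X)=h^2(X,\cO_X)=0$ are used (via \cite[Thm.~4.1]{FHS2024}, to spread divisor classes from fibers to the family). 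Finiteness of the $\cX'$ up to isomorphism over $\cY$ then follows from the cone conjecture for $\cX/\cY$, and boundedness is immediate.

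Your final Matsusaka-type step is also flawed: the proper transform $L_X$ of $\cL_t$ is movable but not nef on $X$, and adding $f^*H_{\cY}$ does not repair this, so $L_X + f^*H_{\cY}$ is big and movable but not nef in general. A birational Matsusaka theorem would only recover birational boundedness, which you already have.
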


Throughout the rest of this section, we shall follow 
the exposition in \cite[\S~6.5]{FHS2024}, where the 
above version of the conjecture was proven in the case 
of relative dimension 1, i.e., for elliptic fibrations.
In {\it loc.cit.}, only the case of elliptic
fibrations is treated;
on the other hand, the methods developed 
are completely general and the proofs can 
be extended to cover a much 
broader generality.
For this reason,  we limit ourselves to only 
mentioning the main differences between the two proofs.

\begin{proof}
The proof of \cite[Thm.~6.18]{FHS2024} is phrased for 
elliptic fibrations but conceptually applies verbatim 
to our setup. We limit ourselves to mentioning the main steps.

By assumption,
there exist quasi-projective varieties 
$\mathcal X$, $\mathcal Y$, $\cT$ 
and a commutative diagram
\begin{align}
\label{diag.meta.thm}
\xymatrix{
\mathcal X \ar[rr]^{\psi} \ar[dr]_\pi& & 
\mathcal Y \ar[dl]^\phi \\
& \cT &
}
\end{align}
of projective morphisms such that for any fibration 
$f \colon X \to Y$ 
in 
$\mathfrak{F}$ 
there exists a closed point 
$t \in \cT$ 
such that 
\begin{enumerate}
    \item
\label{cond:isom.km.fam.1}
there exists an isomorphism 
$h_t \colon Y \to \mathcal{Y}_t$;
    \item
\label{cond:isom.km.fam.2}
there exists an isomorphism 
$\gamma_t$ 
making the following diagram commutative
\begin{align*}
\xymatrix{
X \times_Y {\rm Spec} \ \mathbb C(Y)
\ar[rr]^{\gamma_{t}}
\ar[d]
& &
\mathcal X_t \times_{\mathcal Y_t} {\rm Spec} 
\ \mathbb C(\mathcal Y_t)
\ar[d]
\\
{\rm Spec} \ \mathbb C(Y)
\ar[rr]^{h_t}
& &
{\rm Spec} \ \mathbb C(\mathcal Y_t),
}
\end{align*}
i.e., $X$ and $\mathcal{X}_t$ are isomorphic 
over the generic point of $Y=\mathcal{Y}_t$.
\end{enumerate}

Up to passing to a locally closed stratification of $\cT$, 
which is then finite by Noetherian induction, we can  
resolve singularities of the fibers and 
run appropriate instances of the relative MMP 
(see \cite{HMX18,HX13}) and we can assume that
$\mathcal{X}$ is terminal, 
$\mathbb Q$-factorial, and 
$\mathcal{X} \to \mathcal{Y}$ is an 
lc-trivial fibration.
In particular, for every $X \to Y$, 
we may find a closed point $t \in \cT$ such that the 
condition \eqref{cond:isom.km.fam.1} above still 
holds and moreover 
\begin{enumerate}
    \item[(2')]
\label{cond:isom.km.fam.2'}
$X$ and $\mathcal{X}_t$ are isomorphic in codimension 1.
\end{enumerate} 

Up to passing to a stratification and an \'{e}tale 
base change of the original parameter space $\cT$, 
we may suppose 
that the restriction map of class groups 
$\mathrm{Cl}(\cX) \to \mathrm{Cl}(\cX_t)$ is 
surjective by \cite[Thm.~4.1]{FHS2024}.
In particular, the 
$\mathbb Q$-factoriality 
of 
$\mathcal{X}$ 
implies that of $\mathcal{X}_t$, so $X$ and 
$\mathcal{X}_t$ are connected by a sequence of 
flops over $Y=\mathcal{Y}_t$, by \cite{Kaw08}.

Now, we may conclude as in the proof of \cite[Thm.~6.18]{FHS2024}:
indeed, we can apply \cite[Thm.~4.2]{FHS2024} to lift 
the isomorphism in codimension 1 
$\mathcal{X}_t \dashrightarrow X$ 
over 
$Y \simeq \mathcal{Y}_t$ (by the identification 
\eqref{cond:isom.km.fam.1}) to an isomorphism in 
codimension 1 $\mathcal{X} \dashrightarrow \mathcal{X}'$ 
of families over 
$\mathcal{Y}$ 
such that 
$\mathcal{X}'_t$ 
is isomorphic to 
$X$.
In turn, the assumption regarding the geometric 
Kawamata--Morrison cone conjecture guarantees that 
there are only finitely many such models $\mathcal{X}'$, 
up to isomorphism over $\mathcal{Y}$, and thus we conclude.
\end{proof}

\begin{corollary}[Corollary \ref{thm:cy3}]
\label{cor.bounded.3folds.CY}
The set of 
Calabi--Yau 
3-folds admitting a fibration is bounded.
\end{corollary}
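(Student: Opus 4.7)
The plan is to combine the birational boundedness statement of Theorem~\ref{thm:icy-ab} with the relative Kawamata--Morrison-type argument of Theorem~\ref{thm:kawamata-morrison}, treating the fibration by its fiber type.

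First, I would reduce to the possible fiber types. A fibration $f\colon X\to Y$ from a Calabi--Yau $3$-fold has general fiber that is $K$-trivial of dimension $1$ or $2$, hence either an elliptic curve, an abelian surface, or a K3 surface. In the surface-fibered cases, $\dim Y=1$, and the irreducible Calabi--Yau hypothesis forces $Y\cong\bP^1$ by the argument of Example~\ref{rc-base-ex}(1). In the elliptic case, $Y$ is a rational surface or singular Enriques surface, cf.~Example~\ref{rc-base-ex}(2). Replacing $X$ with a $\bQ$-factorial terminal minimal model preserves the Calabi--Yau condition, the fibration, and the relative $K$-triviality $K_X\sim_{\bQ,f}0$, while ensuring $X$ is terminal and $\bQ$-factorial. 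Birational boundedness of the resulting collection follows from Theorem~\ref{thm:icy-ab}(AV) in the elliptic and abelian surface cases, and from Theorem~\ref{thm:icy-ab}(PS) in the K3 case (since all K3 surfaces form a single analytic deformation class).

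Next, I would upgrade to ``generically bounded with bounded base.'' In the surface-fibered cases the base is $\bP^1$, which is evidently bounded; in the elliptic case, boundedness of the base is provided by Lemma~\ref{cor:bases} together with Example~\ref{rc-base-ex}(2), invoking \cite[Thm.~6.3]{FHS2024} for the singular Enriques case. The refinement in Theorem~\ref{bd-families} ensures the birational map on bases is an isomorphism in codimension $1$ (and in fact an isomorphism for $\bP^1$), so the fibration is generically isomorphic to a member of a finite type family $\cX\to\cY/\cS$ over bounded bases. Hence the collection of fibered CY $3$-folds is generically bounded with bounded base in the sense required by Theorem~\ref{thm:kawamata-morrison}.

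Finally, I would apply Theorem~\ref{thm:kawamata-morrison} to conclude boundedness. The hypotheses are satisfied: $X$ is projective, terminal, and $\bQ$-factorial by construction; the Calabi--Yau condition (Def.~\ref{def:CY}) yields $h^1(X,\cO_X)=h^2(X,\cO_X)=0$; and $f$ is lc-trivial because $K_X\sim_{\bQ,f}0$. The main obstacle, and the only external input, is the geometric Kawamata--Morrison movable cone conjecture (Conjecture~\ref{conj:KM.mov}) for the classes of fibrations in question: elliptically fibered CY $3$-folds (established in \cite{FHS2024} and references therein), and K3- or abelian-surface-fibered CY $3$-folds over $\bP^1$ (due to Kawamata). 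Granted these, Theorem~\ref{thm:kawamata-morrison} immediately upgrades birational boundedness to boundedness, completing the proof.
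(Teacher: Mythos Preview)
Your approach coincides with the paper's for the terminal $\bQ$-factorial case: separate by fiber type, observe that surface fibrations have base $\bP^1$, invoke the birational boundedness of Theorems~\ref{thm:icy-ab} and~\ref{bd-families}, and feed everything into Theorem~\ref{thm:kawamata-morrison}. (One small point: for the geometric movable cone conjecture in relative dimension $2$, the paper cites \cite{Li23,MS24} rather than Kawamata; Kawamata's original results concern the nef cone and do not directly give Conjecture~\ref{conj:KM.mov} in the form needed.)

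There is, however, a genuine gap. When you ``replace $X$ with a $\bQ$-factorial terminal minimal model,'' you have only bounded those terminal models---not the original Calabi--Yau $3$-folds, which by Definition~\ref{def:CY} are allowed canonical singularities. A canonical $X$ is recovered from its terminalization $X'$ as the target of a crepant birational contraction $X'\to X$, and a bounded family of varieties $X'$ can a priori admit infinitely many non-isomorphic such contractions. The paper devotes a separate paragraph to closing this: one pulls back an ample Cartier divisor $D$ on $X$ to $D'$ on $X'$, spreads $D'$ to a relatively big and movable divisor $\cD'$ on the bounding family $\cX'\to\cY\to T$ (via \cite[Thms.~4.1, 4.2]{FHS2024}), and realizes $X$ as a fiber of the relative ample model of $(\cX',\epsilon\cB')$ over $\cY$ for $\cB'\sim_{\bQ,\cY}\cD'$. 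Finiteness of the resulting models $\cX\to\cY$---equivalently, of the relevant faces of the relative effective movable cone of $\cX'$ over $\cY$---is then deduced from \cite[Thm.~3.4]{FHS2024} and \cite[Thm.~1.2]{MS24}. Without this step your argument only establishes the corollary for terminal $\bQ$-factorial Calabi--Yau $3$-folds, which is strictly weaker than the claim.
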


\begin{proof}
First, we settle the statement under the additional assumption that the Calabi--Yau 3-folds are terminal and $\mathbb Q$-factorial.

The case of elliptic fibrations is settled in \cite{FHS2024}.
Thus, it suffices to show that fibrations of 
relative dimension 2 satisfy the assumption of 
Theorem~\ref{thm:kawamata-morrison}.
In this case, the general fiber of such a fibration 
$X \to Y$ is either an abelian surface or a K3 surface.
Furthermore, 
$Y$ 
is 
$\mathbb P ^1$ 
which definitely forms a bounded collection.
Furthermore, Conjecture~\ref{conj:KM.mov} is known 
to hold in relative dimension 2, see \cite{Li23,MS24}.
Thus, we may conclude by Theorem \ref{thm:kawamata-morrison}, 
in view of the birational boundedness proven in Theorem 
\ref{thm:bir-bdd-abelian-CY} for the case of abelian 
fibrations, and in Theorem \ref{bd-families} for the 
case of K3 fibrations.

Now, we settle the general case.
Let $X \to Y$ be a fibered Calabi--Yau 3-fold, and let $X'$ denote a $\mathbb Q$-factorial terminalization of $X$.
Furthermore, let $D$ denote an ample Cartier divisor on $X$ and $D'$ its pullback to $X'$.
Then, by the above special case, the 
fibration $X' \to Y$ appears as a closed fiber of a 
family of fibered varieties $\cX' \to \cY \to T$, 
and say that $X \to Y$ corresponds to the fiber 
over $t_0 \in T$. 
Up to a stratification and \'etale base change 
of $T$, we may suppose that the 
divisor $D'$ is the restriction of a divisor 
$\cD'$ on $\cX'$ by \cite[Thm.~4.1]{FHS2024}.

By \cite[Thm.~4.2]{FHS2024}, $\cD'$ is relatively big and movable over $T$ and $\cY$.
Let $\cX$ denote the relatively ample model of $(\cX',\epsilon \cB')$ over $\cY$, where $0 \leq \cB' \sim_{\mathbb Q , \cY} \cD'$ and $\epsilon$ is sufficiently small.
By Kawamata--Viehweg vanishing applied to the nef and big divisor $D'$ on $X'$ and cohomology and base change, it follows that $X \to Y$ is isomorphic to the fiber of $\cX \to \cY \to T$ over $t_0$.

By \cite[Thm.~4.2]{FHS2024}, $\cB'$ is movable.
Thus, there exists a sequence of $\K \cX'.$-flops over $\cY$ that terminates with a relatively good minimal model for $(\cX',\epsilon \cB')$ over $\cY$.
In particular, $\cX' \drar \cX$ is a birational contraction that is an isomorphisms over a neighborhood of $t_0$ as $D'$ is big and nef on $X'$, 
and that corresponds to a face of the relative effective movable cone of $\cX'$ over $\cY$.
Then, by \cite[Thm.~3.4]{FHS2024} and \cite[Thm.~1.2]{MS24}, there are finitely many such models $\cX \to \cY$ up to isomorphism.
Thus, the claim follows.
\end{proof}

\begin{remark}
The same strategy as in the proof of Corollary \ref{cor.bounded.3folds.CY} shows that, for fixed a positive integer $d$, boundedness of a collection $\mathfrak C$ of $d$-dimensional  fibered terminal $\mathbb Q$-factorial Calabi--Yau varieties holds once we have the following three ingredients:
\begin{enumerate}
\item 
$\mathfrak C$ is birationally bounded;

\item 
the collection 
$\mathfrak B$ 
of the 
bases of the fibrations in 
$\mathfrak C$ 
is bounded; and
\item \label{cond.3.rem.birbound.bound} Conjecture~\ref{conj:KM.mov} holds in relative dimension at most $d-1$.
\end{enumerate}
Notice that, if we are interested in all crepant models and not only the terminal $\mathbb Q$-factorial ones, we need to require a stronger version of Conjecture~\ref{conj:KM.mov}, as used in the proof of Corollary~\ref{cor.bounded.3folds.CY}.
We observe also that it is possible to weaken condition 
\eqref{cond.3.rem.birbound.bound}
to just require that 
all generic fibers of the fibrations in 
$\mathfrak C$
satisfy the analogous version of the Kawamata--Morrison Conjecture over their field of definition, cf.
\cite{Li23, li.KM}.
\end{remark}

\begin{corollary}\label{cor:fano-base}
The set of terminal, $\QQ$-factorial irreducible
Calabi--Yau varieties of dimension $d$,
admitting an abelian fibration 
over a variety of Fano type, is bounded.
\end{corollary}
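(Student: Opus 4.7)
The approach mirrors the proof of Corollary~\ref{thm:cy3}: combine the birational boundedness provided by Theorem~\ref{thm:bir-bdd-abelian-CY} with Theorem~\ref{thm:kawamata-morrison} to upgrade to honest boundedness, after first bounding the collection of bases.

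First, I would bound the bases $Y$. Since $Y$ is of Fano type, it is rationally connected. The canonical bundle formula for $f\colon X\to Y$ furnishes a generalized klt pair $(Y,B_Y,\bfM)$ with $K_Y+B_Y+\bfM_Y\sim_\bQ 0$, and Theorem~\ref{thm:weakeffective} guarantees that $\bfM$ is effectively b-semiample with Cartier index depending only on $d$. By Lemma~\ref{lemma:gentopair}, I can then choose $\Gamma_Y\sim_\bQ\bfM_Y$ with coefficients in a fixed finite set so that $(Y,B_Y+\Gamma_Y)$ is klt log Calabi--Yau with $K_Y+B_Y+\Gamma_Y\sim_\bQ 0$. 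Lemma~\ref{cor:bases} then yields log boundedness in codimension~$1$ of these pairs via \cite[Thm.~1.5]{BDCS}. The Fano-type assumption upgrades this to honest boundedness: on a Fano-type $\bQ$-factorial variety, the small modifications between log minimal models of $(Y,B_Y+\Gamma_Y)$ are controlled, and Birkar's boundedness results for log Calabi--Yau pairs with Fano-type ambient space, combined with boundedness of complements, deliver an honest bounded family of bases.

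Next, Theorem~\ref{thm:bir-bdd-abelian-CY} supplies birational boundedness of the total spaces, so the resulting collection of fibrations is generically bounded with bounded base. I would then apply Theorem~\ref{thm:kawamata-morrison}. Its hypotheses are: $X$ terminal $\bQ$-factorial (assumption); $h^1(X,\cO_X)=h^2(X,\cO_X)=0$, which is immediate for an irreducible Calabi--Yau variety of dimension $d\geq 3$ via the identification $h^k(X,\cO_X)=h^0(X,\Omega_X^{[k]})$ for varieties with rational singularities, applied to Definition~\ref{def:CY}(CY); $f$ lc-trivial, since $K_X\sim 0$; generic boundedness with bounded base, established above; and the geometric movable cone conjecture for abelian fibrations of irreducible Calabi--Yau varieties. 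The main obstacle will be verifying this last cone conjecture, which is not a routine consequence of known results in higher relative dimension. In our setting, however, the Tate--Shafarevich machinery of Section~\ref{sec:ts} provides the crucial leverage: by Corollary~\ref{cor-ts-twist}, for an abelian-fibered irreducible Calabi--Yau variety the group $\Sha_G$ classifying relative torsors is finite, and the $\bQ$-factorial minimal models of $X$ isomorphic in codimension~$1$ over $Y$, modulo the action of $\mathrm{PsAut}(X/Y)$, are parameterized by a quotient of $\Sha_G$ together with the (trivial) cone conjecture on the abelian generic fiber, which has no non-trivial small birational modifications. This finiteness yields the required form of the cone conjecture and completes the proof.
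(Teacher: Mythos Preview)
Your overall architecture is correct and matches the paper: bound the bases, apply Theorem~\ref{thm:bir-bdd-abelian-CY} to get generic boundedness with bounded base, and then invoke Theorem~\ref{thm:kawamata-morrison}. The base boundedness step is a bit imprecise (the paper simply cites \cite[Thm.~1.3]{HX15} together with Corollary~\ref{cor:bases} to upgrade codimension-$1$ boundedness to honest boundedness under the Fano-type hypothesis), but that can be fixed.

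The genuine gap is your verification of the geometric movable cone conjecture. You invoke the finiteness of $\Sha_G$ from Corollary~\ref{cor-ts-twist}, but $\Sha_G$ parameterizes \emph{translation-birational equivalence classes} of abelian fibrations with a fixed Albanese --- that is, genuinely different birational classes of $X$. By contrast, Conjecture~\ref{conj:KM.mov} concerns $\bQ$-factorial models $X'$ that are \emph{isomorphic to the given $X$ in codimension $1$}; every such $X'$ is birational to $X$ and hence corresponds to the \emph{same} element $t(f)\in\Sha_G$. So the finiteness of $\Sha_G$ gives you no information whatsoever about the number of such models. Your parenthetical that the abelian generic fiber ``has no non-trivial small birational modifications'' is true but also does not settle the relative cone conjecture: knowing the generic fiber is its own unique minimal model is far weaker than controlling $\mathrm{Mov}^e(X/Y)$ modulo $\mathrm{PsAut}(X/Y)$.

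The paper's argument for this step is entirely different and does not use Tate--Shafarevich theory at all. It proceeds by (i) invoking \cite{Prendergast12} to get a rational polyhedral fundamental domain for $\mathrm{PsAut}$ acting on $\mathrm{Mov}^e$ of the \emph{geometric} generic fiber (an abelian variety over $\overline{\bC(\cY_i)}$, where $\mathrm{PsAut}=\mathrm{Aut}$ and $\mathrm{Mov}^e=\mathrm{Eff}$), (ii) descending this to the generic fiber over $\bC(\cY_i)$ using \cite[Thm.~1.7 and Ex.~3.6(7)]{gachet}, and then (iii) lifting to the relative statement over $\cY_i$ via \cite[Thm.~1.4]{Li23}, whose MMP hypotheses are supplied by \cite{HX13}. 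This chain of reductions is the substance you are missing.
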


\begin{proof}
By Theorem 
\ref{thm:bir-bdd-abelian-CY}, the set of such
fibrations is generically bounded, where we can assume that the identification among the bases of the abelian fibrations and the fibers of a bounding family is an isomorphism
in codimension 
$1$.
On the other hand, since we are assuming that the base of the abelian fibrations we are considering are Fano type, then the set  of bases is bounded by 
Corollary~\ref{cor:bases} 
and 
\cite[Thm. 1.3]{HX15}.
Thus, the proof of Theorem 
\ref{thm:bir-bdd-abelian-CY} applies to obtain that the 
set of such Calabi--Yau varieties is generically bounded 
with bounded base.

Let $\{\mathcal{X}_i \to \mathcal{Y}_i \to T_i\}_{i \in I}$ denote the finitely many families thus obtained.
Then, we want to conclude by applying Theorem 
\ref{thm:kawamata-morrison}.

To this end, we observe that the needed version of the 
geometric movable cone conjecture follows by combining 
\cite[Thm.~1.4]{Li23}, \cite{Prendergast12}, and \cite{gachet}.
Indeed, by \cite{Prendergast12}, we may find a rational
polyhedral fundamental domain for the action of 
$\mathrm{PsAut}(\mathcal{X}_{i,\overline{\eta}})$
on 
$\mathrm{Mov}^e(\mathcal{X}_{i,\overline{\eta}})$, i.e.,
the effective movable cone of the geometric generic fiber.
Now, since the fundamental domain is rational polyhedral,
the divisors defining its rays are defined over a finite Galois extension of 
$\bC(\cY_i)$.
Similarly, the component group of the automorphism group of an abelian variety is finitely presented by 
\cite{borel_finite}, 
and thus its generators are defined over a finite extension of $\bC(\cY_i)$.
Thus, we may descend the conclusions of \cite{Prendergast12} to a finite Galois extension of 
$\bC(\cY_i)$.
Then, by 
\cite[Thm.~1.7 and Ex.~3.6(7)]{gachet}, 
we may descend the polyhedral fundamental domain to one for the action of 
$\mathrm{PsAut}(\mathcal{X}_{i,\eta})$ 
on 
$\mathrm{Mov}^e(\mathcal{X}_{i,\eta})$.

Since 
$\mathcal{X}_{i,\overline{\eta}}$ 
is an abelian variety, 
$\mathrm{PsAut}=\mathrm{Aut}$ 
and 
$\mathrm{Mov}^e=\mathrm{Eff}$.
We conclude by observing that the full assumption of the MMP in 
\cite[Thm.~1.4]{Li23} is not necessary:
in the proof of the cited result, the author only uses the existence of the relative MMP for
the fibration under consideration. 
Since the general fiber of $\mathcal X_i \to \mathcal Y_i$ is 
an abelian variety, 
then the conclusion follows by \cite{HX13}.
\end{proof}

\section{Fundamental group of the regular 
locus of an irreducible symplectic variety}\label{sec:pi1}
Recall that irreducible symplectic varieties admit a universal 
quasi-\'{e}tale cover by the following results due to 
Greb--Guenancia--Kebekus 
\cite[Cor.~13.3, Cor.~13.2]{GrebGuenanciaKebekus} 
and Campana \cite[Cor.~5.3]{Campana1995}.

\begin{theorem}\label{thm:algebraicfundamentalgroup} 
Any irreducible symplectic variety $X$ is simply-connected. 
Moreover, the algebraic fundamental group of its regular 
locus is finite, i.e.,
\begin{align*}|\hat{\pi}_1(X^{\mathrm{reg}})|< \infty.\end{align*}

\end{theorem}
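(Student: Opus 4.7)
The plan is to reduce $X$ to a strict irreducible symplectic variety via the singular Beauville--Bogomolov decomposition (Theorem~\ref{thm:decomp}) and then apply the SIS definition directly. By that theorem, there exists a finite quasi-\'etale Galois cover $\pi\colon \widetilde X\to X$ together with a splitting
\[
\widetilde X\;\simeq\; T\times\prod_i Y_i\times\prod_j Z_j
\]
into a complex torus $T$, irreducible Calabi--Yau factors $Y_i$, and strict irreducible symplectic factors $Z_j$. Since $X$ is IS, $\widetilde X$ is itself primitive symplectic by Def.~\ref{def:CY}(IS), which forces $h^0(\widetilde X,\Omega^{[1]}_{\widetilde X})=0$ and $h^0(\widetilde X,\Omega^{[2]}_{\widetilde X})=1$, with the unique reflexive $2$-form $\sigma$ non-degenerate on $\widetilde X^{\rm reg}$.

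A K\"unneth-type analysis of reflexive $p$-forms on products of normal varieties then rules out all but a single SIS factor. A non-trivial torus $T$ would contribute to $H^0(\Omega^{[1]})$, forcing $T$ to be a point. An ICY factor $Y_i$ has $H^0(Y_i,\Omega^{[2]}_{Y_i})=0$ by definition (since $\dim Y_i\geq 3$), so $\sigma$ would vanish on every tangent direction along $Y_i$, violating fiberwise non-degeneracy; hence no ICY factor appears. Since each SIS factor contributes one independent summand to $H^0(\Omega^{[2]})$, there is exactly one, so $\widetilde X$ is itself SIS. By Def.~\ref{def:CY}(SIS), $\hat\pi_1(\widetilde X^{\rm reg})=1$; since the restriction of $\pi$ to the preimage of $X^{\rm reg}$ is a finite \'etale Galois cover with group $G=\mathrm{Gal}(\pi)$, the group $\hat\pi_1(\widetilde X^{\rm reg})$ sits as a finite-index subgroup of $\hat\pi_1(X^{\rm reg})$, forcing $\hat\pi_1(X^{\rm reg})$ to be a quotient of $G$, and in particular finite.

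For the simple connectedness of $X$ itself, the strategy is to upgrade the finiteness of $\hat\pi_1(X^{\rm reg})$ to topological simple connectedness of the SIS cover $\widetilde X$, and then use invariance of $\pi_1$ under resolution of rational singularities ($\pi_1(X)\simeq\pi_1(\widehat X)$ for any resolution $\widehat X\to X$) to transfer simple connectedness down to $X$. The hard part is exactly this upgrade: establishing topological (not merely profinite) simple connectedness of an SIS variety requires the singular Calabi--Yau theorem to produce a Ricci-flat K\"ahler metric on $\widetilde X^{\rm reg}$ and a Cheeger--Gromoll-type splitting of its universal Riemannian cover, as carried out by Greb--Guenancia--Kebekus \cite[Cor.~13.2--13.3]{GrebGuenanciaKebekus} building on Campana \cite[Cor.~5.3]{Campana1995}. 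Without this differential-geometric input one cannot rule out analytically non-trivial but profinitely trivial covers of $\widetilde X^{\rm reg}$, so this step --- rather than the algebraic decomposition --- is the principal obstacle in the proof.
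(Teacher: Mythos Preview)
The paper does not give its own proof of this statement; it is quoted directly from \cite[Cor.~13.2, Cor.~13.3]{GrebGuenanciaKebekus} and \cite[Cor.~5.3]{Campana1995}. Your reduction for the finiteness of $\hat\pi_1(X^{\rm reg})$ is essentially correct, though note that it leans entirely on Theorem~\ref{thm:decomp} as stated in the paper (with SIS factors), and the assertion that the symplectic factors can be taken SIS \emph{is} exactly \cite[Cor.~13.3]{GrebGuenanciaKebekus}---so you are invoking the cited result rather than reproving it. One minor point: the Galois \'etale cover you actually obtain is $\pi^{-1}(X^{\rm reg})\to X^{\rm reg}$, and $\pi^{-1}(X^{\rm reg})$ is only a big open subset of $\widetilde X^{\rm reg}$; you should remark that its complement has codimension $\geq 2$ in the smooth variety $\widetilde X^{\rm reg}$, so the fundamental groups agree.

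There is a genuine gap in your argument for $\pi_1(X)=1$. Simple connectedness of the quasi-\'etale cover $\widetilde X$ does not descend to $X$: if $\deg\pi>1$ then the restriction of $\pi$ over $X^{\rm reg}$ already exhibits a non-trivial cover, and there is no direct transfer of $\pi_1=1$ along finite maps in that direction. Invoking $\pi_1(X)\simeq\pi_1(\widehat X)$ for a resolution does not bridge this either, since you still have to pass from $\widetilde X$ (or its resolution) down to $\widehat X$. What one actually obtains from $\pi_1(\widetilde X)=1$ and finiteness of $\pi$ is only that the image of $\pi_1(\widetilde X)\to\pi_1(X)$ has finite index, hence $\pi_1(X)$ is \emph{finite}. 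To conclude triviality one needs a separate step: any connected finite \'etale cover $X'\to X$ is again IS of dimension $2n$, so by the remark following Definition~\ref{def:CY} its reflexive Hodge algebra is generated by a symplectic form and $\chi(\cO_{X'})=n+1=\chi(\cO_X)$, forcing $\deg(X'\to X)=1$ and hence $\pi_1(X)=1$. You correctly identify the differential-geometric input as the principal obstacle, but this Euler-characteristic argument is the missing algebraic ingredient.
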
 This means that, up to a quasi-\'{e}tale cover, 
we can suppose that the irreducible symplectic factor in the 
Beauville--Bogomolov decomposition is algebraically simply-connected. 
It is natural to ask whether the same result holds for 
the topological fundamental group.

\begin{conjecture}[Finiteness conjecture]\label{finitenessconj} 
The topological fundamental group of the regular locus of an 
irreducible symplectic variety $X$ is finite, i.e., we have
\begin{align*}|\pi_1(X^{\mathrm{reg}})|< \infty.\end{align*}
\end{conjecture}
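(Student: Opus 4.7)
The plan is to reduce the conjecture, via the hyperk\"ahler SYZ conjecture (\ref{SYZ conjecture}), to the case where $X$ admits a Lagrangian fibration $f\colon X \to Y$. This is legitimate because $\pi_1(X^{\mathrm{reg}})$ is invariant under locally trivial deformations of primitive symplectic varieties; conditional on SYZ, every $X$ with $b_2 \geq 5$ deforms locally trivially to one admitting such a fibration, while the $b_2 \leq 4$ cases are either rigid or handled separately. So the heart of the matter is the Lagrangian-fibered case, which is Theorem \ref{thm:pi1}.

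The first step is to establish an orbifold enhancement of Nori's exact sequence \cite[Lem.~1.5]{Nori1983} adapted to $f$, namely
$$\mathbb Z^{\dim X} \to \pi_1(X^{\mathrm{reg}}) \to \pi_1^{\mathrm{orb}}(Y, \Delta_f) \to 1,$$
where $\Delta_f$ is the multiplicity divisor from \eqref{eq:multiplicitycouple} and $\mathbb Z^{\dim X} = H_1(X_y, \mathbb Z)$ for a smooth abelian fiber $X_y$ (of dimension $g = (\dim X)/2$, so of $H_1$-rank $\dim X$). The orbifold enhancement is essential because $\pi_1(X^{\mathrm{reg}})$ detects the torsion loops around multiple fibers, which are encoded on the base only through the orbifold pair $(Y, \Delta_f)$. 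Campana's work \cite[Prop.~11.7]{Campana2011} provides the right framework; the technical point is to track how the singularities of $X$ affect the kernel of restriction to a smooth neighborhood of a general fiber.

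The second step is to show that $(Y, \Delta_f)$ is a klt pair of Fano type. By Matsushita's theorem (Proposition \ref{prop:base}), $Y$ is $\mathbb Q$-factorial Fano with klt singularities and $\rho(Y) = 1$. The canonical bundle formula (Theorem \ref{base-same-sing}) gives $K_Y + B_Y + \bM Y. \sim_{\mathbb Q} 0$, with $\Delta_f \leq B_Y$. Since $\bM Y.$ is $\mathbb Q$-effective and semiample after birational modification (Theorem \ref{thm:b-semi}), I would choose an effective $\Gamma \sim_{\mathbb Q} \bM Y. + (B_Y - \Delta_f)$ with $(Y, \Delta_f + \Gamma)$ klt, via a Bertini-type argument as in Lemma \ref{lemma:gentopair}, so that $-(K_Y + \Delta_f) \sim_{\mathbb Q} \Gamma$ is effective and semiample. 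This gives the Fano-type conclusion and hence, by Braun's theorem \cite{Braun2021}, $|\pi_1^{\mathrm{orb}}(Y, \Delta_f)| < \infty$. Combined with the orbifold Nori sequence, $\pi_1(X^{\mathrm{reg}})$ is virtually abelian, and in fact a quotient of a rank-$\dim X$ abelian group by a finite-index subgroup.

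The hardest step is to promote ``virtually abelian'' to ``finite,'' which will crucially use the irreducibility hypothesis. Since $X$ is irreducible symplectic, every quasi-\'etale cover $\widetilde X \to X$ is again primitive symplectic, so $h^{0}(\widetilde X, \Omega^{[1]}_{\widetilde X}) = 0$, and by Hodge-theoretic vanishing on varieties with rational singularities one gets $H^1(\widetilde X, \mathbb Q) = 0$. If the virtually abelian group $\pi_1(X^{\mathrm{reg}})$ admitted a finite-index abelian subgroup $A$ of positive rank, its torsion-free quotient would correspond to an infinite tower of \'etale covers of $X^{\mathrm{reg}}$ extending to quasi-\'etale covers $\widetilde X \to X$ (using purity of the branch locus and the Zariski--Nagata purity theorem, since $X$ is normal). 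The Hurewicz theorem would give $H_1(\widetilde X^{\mathrm{reg}}, \mathbb Q) \neq 0$, and rational singularities plus $X^{\mathrm{reg}} \hookrightarrow X$ being a big open subset would force $H^1(\widetilde X, \mathbb Q) \neq 0$, contradicting irreducibility. The technical heart of this final step is the extension of \'etale covers from $X^{\mathrm{reg}}$ to quasi-\'etale covers of $X$ and the careful comparison of $H^1$ between $X$ and $X^{\mathrm{reg}}$, essentially the content of Lemma \ref{lem:virtuallyabelianfinite}.
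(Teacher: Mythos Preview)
Your overall strategy matches the paper's: reduce (conditionally on SYZ) to the Lagrangian-fibered case, then combine the orbifold Nori sequence with Braun's theorem, and finish with the ``virtually abelian $\Rightarrow$ finite'' step using irreducibility. Steps~1 and~3 are essentially the paper's Proposition~\ref{prop:exact sequence} and Lemma~\ref{lem:virtuallyabelianfinite}.

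There is, however, a genuine gap in Step~2. Your Bertini-type argument only yields that $-(K_Y + \Delta_f) \sim_{\bQ} \Gamma$ is $\bQ$-effective; since $\rho(Y)=1$, this forces $-(K_Y+\Delta_f)$ to be either ample or $\bQ$-linearly trivial, and you do not rule out the latter. If $K_Y+\Delta_f \sim_{\bQ} 0$, then $(Y,\Delta_f)$ is a klt Calabi--Yau pair, Braun's theorem does not apply, and $\pi_1^{\rm orb}(Y,\Delta_f)$ can genuinely be infinite---the paper exhibits exactly such an example (the singular Kummer quotient $E\times E'/C_2 \to \bP^1$, where $\Delta_f=\sum_{i=1}^4 \tfrac{1}{2}p_i$) precisely to show that irreducibility is essential already at this step, not only in Step~3. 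The paper's Proposition~\ref{prop:basepair} closes this gap by an index-one cover argument: if $K_Y + \Delta_f \sim_{\bQ} 0$, the index-one cover $\widetilde Y \to Y$ of the pair lifts, via normalized base change and using that $\Delta_f$ has standard coefficients matching the fiber multiplicities, to a quasi-\'etale cover $\widetilde X \to X$; irreducibility forces $\widetilde X$ to be irreducible symplectic, so its base $\widetilde Y$ is Fano by Matsushita, contradicting $K_{\widetilde Y}\sim 0$. Thus irreducibility is used \emph{twice} in the paper's argument, and your Step~2 as written would equally ``prove'' the Fano-type conclusion for the Kummer example, where it is false.
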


The finiteness conjecture is implicit in 
\cite[\S~1.4,\S~13]{GrebGuenanciaKebekus}, and explicitly 
stated in \cite[Rem.~1.5 and Rem.~1.7]{PR2018} and in 
\cite[Conj.~3]{J2022}. When $X$ has only quotient 
singularities, e.g., if $\dim X =2$, then the finiteness 
conjecture holds. Indeed, in this case, 
$\pi_1(X^{\mathrm{reg}})$ is virtually abelian by 
\cite[Cor.~6.3]{Campana2004}, thus actually 
finite by \Cref{thm:algebraicfundamentalgroup}. 

In this section, we prove \Cref{finitenessconj}, 
conditionally to the existence of a Lagrangian 
fibration, or equivalently to the SYZ 
conjecture (\Cref{SYZ conjecture}).

\begin{theorem}[\Cref{thm:pi1}]
\label{thm:finiteness} \label{thm:Lagrangianpi1}
Let $X$ be an irreducible symplectic variety admitting a Lagrangian fibration 
$f \colon X \to Y$. 
Then the topological fundamental group of the 
regular locus of $X$ is finite.
\end{theorem}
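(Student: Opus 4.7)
The plan is to follow the four-step strategy outlined in the introduction: first produce an orbifold Nori-type exact sequence, then exploit the Fano-type structure of the base to get virtual abelianness, and finally use the irreducibility hypothesis to upgrade virtual abelianness to finiteness.

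First, I would establish an orbifold version of Nori's fundamental exact sequence
\[
\mathbb{Z}^{\dim X} \longrightarrow \pi_1(X^{\mathrm{reg}}) \longrightarrow \pi_1^{\mathrm{orb}}(Y,\Delta_f) \longrightarrow 1,
\]
where $\Delta_f$ is the multiplicity divisor defined in \eqref{eq:multiplicitycouple}. The left-hand term is $\pi_1$ of a general fiber, which is an abelian variety of complex dimension $g=\dim X/2$, hence has fundamental group $\mathbb{Z}^{2g}=\mathbb{Z}^{\dim X}$. The construction follows the strategy of Nori \cite[Lem.~1.5]{Nori1983} (see also Campana \cite[Prop.~11.7]{Campana2011}), but I must take care in two places: first, to work on the regular locus $X^{\mathrm{reg}}$ rather than on $X$, exploiting the fact that the singularities of $X$ have codimension at least~$4$ by Matsushita's result combined with \cite[Thm.~3.1]{Matsushita2015} (so that restriction maps on $\pi_1$ are surjective); and second, to properly encode the multiple fibers of $f$ in codimension~$1$ as an orbifold structure on the base, which is what the multiplicity divisor $\Delta_f$ records.

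Next, I would show that $(Y,\Delta_f)$ is a klt pair of Fano type (this is the content of Prop.~\ref{prop:basepair}). By the canonical bundle formula (Thm.~\ref{base-same-sing}) applied to $f$, we have $K_Y+B_Y+\mathbf{M}_Y \sim_{\bQ} 0$ with $B_Y\geq \Delta_f$ by \eqref{eq:multiplicitycouple}, so
\[
-(K_Y+\Delta_f) \sim_{\bQ} (B_Y-\Delta_f) + \mathbf{M}_Y.
\]
Here $B_Y-\Delta_f$ is effective and $\mathbf{M}_Y$ is nef. Since by Matsushita (Prop.~\ref{prop:base}) the base $Y$ has Picard number one and is $\bQ$-factorial klt with $-K_Y$ ample, and since an effective $\bQ$-Cartier divisor on a Picard-rank-one Fano variety is automatically big, $(Y,\Delta_f)$ is of Fano type. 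Then by \cite{Braun2021}, the orbifold fundamental group $\pi_1^{\mathrm{orb}}(Y,\Delta_f)$ is finite. Combined with the exact sequence above, this shows that $\pi_1(X^{\mathrm{reg}})$ is virtually abelian: it contains a finite-index subgroup which is a quotient of $\mathbb{Z}^{\dim X}$.

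The final step is to upgrade virtual abelianness to finiteness using the irreducibility of $X$; this is Lem.~\ref{lem:virtuallyabelianfinite}. A finite-index abelian subgroup $A \subseteq \pi_1(X^{\mathrm{reg}})$ corresponds to a finite topological \'etale cover $X'\to X^{\mathrm{reg}}$. By purity (and the fact that $X$ has klt, hence rational, singularities of codimension $\geq 4$), this cover extends uniquely to a quasi-\'etale cover $\widetilde{X}\to X$, with $\widetilde{X}^{\mathrm{reg}}$ containing $X'$ as a big open. Since $X$ is irreducible symplectic, $\widetilde{X}$ is a primitive symplectic variety, so $H^1(\widetilde{X},\bQ) = 0$; moreover, because $\widetilde{X}\setminus \widetilde{X}^{\mathrm{reg}}$ has codimension $\geq 2$ and $\widetilde{X}$ is normal, the restriction $H^1(\widetilde{X},\bQ)\to H^1(\widetilde{X}^{\mathrm{reg}},\bQ)$ is an isomorphism, whence $H^1(\widetilde{X}^{\mathrm{reg}},\bQ)=0$. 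By Hurewicz, the abelianization of $\pi_1(\widetilde{X}^{\mathrm{reg}})$ is torsion; since this group contains (up to finite index) the finitely generated abelian group $A$, we conclude $A\otimes \bQ = 0$, so $A$ is finite, hence $\pi_1(X^{\mathrm{reg}})$ is finite.

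The main obstacle I anticipate is the careful set-up of the orbifold Nori exact sequence in Step~1: ensuring the correct treatment of singular fibers of $f$ over higher-codimensional strata of $Y$, and verifying that the classical proof (which uses transversal slices and the long exact sequence of a fibration) goes through when the fiber can only be assumed to behave well after removing a big open locus. The Hwang--Oguiso classification of codimension-$1$ fibers recalled in Thm.~\ref{ho-thm} is likely crucial here to control the local monodromy contributions into $\pi_1(X^{\mathrm{reg}})$ and to justify that they are exhausted by translations in $\mathbb{Z}^{\dim X}$ together with the orbifold relations recorded by $\Delta_f$.
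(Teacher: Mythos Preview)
Your overall strategy matches the paper's exactly: orbifold Nori sequence, then Fano type of the base plus Braun's theorem, then irreducibility to upgrade virtual abelianness to finiteness. However, two of your justifications have genuine gaps.

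First and most importantly, your argument that $(Y,\Delta_f)$ is of Fano type is incomplete. From $-(K_Y+\Delta_f)\sim_{\bQ} (B_Y-\Delta_f)+\mathbf{M}_Y$ you only get that $-(K_Y+\Delta_f)$ is pseudoeffective; on a Picard-rank-one variety this means it is either ample or $\bQ$-linearly trivial, and you have not excluded the latter. The singular Kummer example $E\times E'/C_2\to \bP^1$ (discussed after Cor.~\ref{cor:finiteorbfund}) shows that the trivial case genuinely occurs for primitive symplectic varieties that are not irreducible, and there $\pi_1^{\mathrm{orb}}(Y,\Delta_f)$ is infinite. The paper's proof of Prop.~\ref{prop:basepair} disposes of this case by an index-$1$ cover argument: if $K_Y+\Delta_f\sim_\bQ 0$, one builds the cyclic cover $\widetilde Y\to Y$ determined by a root of $\omega_Y^{[m]}(m\Delta_f)$, checks via a local computation that the normalized base change $\widetilde X\to X$ is quasi-\'etale (this uses that $\Delta_f$ has standard coefficients), and obtains a contradiction because $\widetilde X$ is again irreducible symplectic yet fibers over a base with $K_{\widetilde Y}\sim 0$. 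So the irreducibility hypothesis is already essential at the base level, not only in the final step.

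Second, in your final step the claim that ``codimension $\geq 2$ and normal'' implies $H^1(\widetilde X,\bQ)\simeq H^1(\widetilde X^{\mathrm{reg}},\bQ)$ is false: the cone over an elliptic curve is normal with singular locus a point, yet $H^1$ jumps from $0$ to $\bQ^2$ upon restriction. What is needed is that $\widetilde X$ has rational singularities (Prop.~\ref{reg=totcohom}), which holds since $\widetilde X$ is klt; the paper proves this via a log resolution and the negativity of the intersection form on exceptional divisors. Finally, your worries about the Nori sequence are misplaced: Prop.~\ref{prop:exact sequence} only uses that $f$ is equidimensional with smooth connected general fiber, so that one may freely delete codimension-$2$ loci on both $X^{\mathrm{reg}}$ and $Y$ and reduce to the classical fiber-bundle case. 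Neither the codimension-$4$ structure of $\Sing X$ nor the Hwang--Oguiso classification plays any role.
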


\begin{proof}
By \Cref{lem:virtuallyabelianfinite} or 
\Cref{thm:algebraicfundamentalgroup}, it suffices to 
show that $\pi_1(X^{\mathrm{reg}})$ is virtually abelian. 
Since Lagrangian fibrations are equidimensional and the 
general fiber of $f$ is an abelian variety 
(\Cref{prop:base}), \Cref{prop:exact sequence} gives 
the exact sequence
\begin{equation}\label{eq:exactsequencefund}
\ZZ^{\dim X} \to \pi_1(X^{\mathrm{reg}}) \to 
\pi_1^{\mathrm{orb}}(Y, \Delta_f) \to 1,
\end{equation}
where $\pi_1^{\mathrm{orb}}(Y, \Delta_f)$ is the 
orbifold fundamental group of the 
multiplicity pair $(Y, \Delta_f)$, see \Cref{defn:orbifoldpi1} 
and \eqref{eq:multiplicitycouple}. 
Finally, note that $\pi_1^{\mathrm{orb}}(Y, \Delta_f)$ 
is finite by \Cref{cor:finiteorbfund}.  
\end{proof}

\begin{remark}[Calabi--Yau 3-folds]
The same proof of \Cref{thm:Lagrangianpi1} shows 
that the topological fundamental group of the 
regular locus of an irreducible Calabi--Yau 
3-fold $X$, fibred over $\PP^1$, is finite. 
When $X$ has terminal singularities, an 
alternative proof is provided in 
\cite[Thm.~6.1]{BF2024} 
(without assuming the existence of a fibration). 
\end{remark}

The fundamental group $\pi_1(X^{\mathrm{reg}})$ is 
invariant under locally trivial deformations, so the 
SYZ conjecture implies its finiteness.

 \begin{corollary}
 Let $X$ be an irreducible symplectic variety with 
 $b_2(X) \geq 5$. If Conjecture \ref{gen-abund} or 
\ref{SYZ conjecture} is true,
then the topological fundamental group of the 
regular locus of $X$ is finite.
\end{corollary}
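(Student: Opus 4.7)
The strategy is to reduce to \Cref{thm:Lagrangianpi1} via a locally trivial deformation argument. First I would observe that under either hypothesis, $X$ admits a locally trivial deformation to an irreducible symplectic variety $X'$ admitting a Lagrangian fibration. Indeed, by \cite[Thm.~1.1(3)]{BL2022}, the hypothesis $b_2(X)\geq 5$ guarantees that $X$ deforms in a locally trivial manner to a primitive symplectic variety $X''$ carrying a non-trivial nef line bundle $L$ with $c_1(L)^{d+1}=0$. Under \Cref{gen-abund}, $L$ is semiample; for $m$ sufficiently large, the morphism defined by $|mL|$ is a fibration, which by \Cref{prop:base} is a Lagrangian fibration. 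Thus \Cref{gen-abund} implies \Cref{SYZ conjecture}, and under either hypothesis we may fix such an $X'$ and a Lagrangian fibration $f \colon X' \to Y'$.

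Next, I would argue that $\pi_1(X^{\mathrm{reg}}) \simeq \pi_1((X')^{\mathrm{reg}})$. A locally trivial deformation $\mathcal{X} \to S$ of $X$ is, by definition, one for which each point admits an analytic neighborhood $U \subset \mathcal{X}$ with $U$ a trivial deformation over $S$; in particular the relative singular locus $\mathcal{X}_{\mathrm{sing}} \subset \mathcal{X}$ is itself a locally trivial deformation (see the footnote after \Cref{thm:ps}). Consequently, restricting to a sufficiently small contractible $S$, the pair $(\mathcal{X}, \mathcal{X}_{\mathrm{sing}}) \to S$ is topologically a product, and hence so is its complement $\mathcal{X}^{\mathrm{reg}} \to S$. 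Connecting $X$ to $X'$ by a chain of such local pieces in the analytic moduli space of locally trivial deformations of $X$ (which is path-connected within a single locally trivial deformation class), we conclude that $\pi_1(X^{\mathrm{reg}})$ and $\pi_1((X')^{\mathrm{reg}})$ are isomorphic. This is precisely the assertion made in the paragraph preceding the corollary.

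Finally, since $X'$ is an irreducible symplectic variety admitting a Lagrangian fibration, \Cref{thm:Lagrangianpi1} yields $|\pi_1((X')^{\mathrm{reg}})|<\infty$, and therefore $|\pi_1(X^{\mathrm{reg}})|<\infty$. Note that for $X'$ to inherit the irreducibility property, one uses that locally trivial deformations preserve membership in the class of irreducible symplectic varieties (equivalently, being covered by a strict irreducible symplectic variety; see the remark following \Cref{thm:decomp}), which is a consequence of the fact that a locally trivial deformation of $X$ is naturally induced by one of any quasi-\'etale cover of $X$.

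The only mildly delicate point is the deformation invariance of $\pi_1(X^{\mathrm{reg}})$, which reduces cleanly to local triviality of the pair $(\mathcal{X}, \mathcal{X}_{\mathrm{sing}})$; all other ingredients are direct citations of results established earlier in the paper.
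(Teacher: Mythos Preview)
Your proposal is correct and follows the same route as the paper, which offers only the one-line justification preceding the corollary: ``The fundamental group $\pi_1(X^{\mathrm{reg}})$ is invariant under locally trivial deformations, so the SYZ conjecture implies its finiteness.'' You have simply unpacked this sentence, supplying the Ehresmann-type argument for topological triviality of the pair $(\mathcal{X},\mathcal{X}_{\mathrm{sing}})$ and the reduction of \Cref{gen-abund} to \Cref{SYZ conjecture}. Your additional remark that the deformed variety $X'$ must remain \emph{irreducible} symplectic (not merely primitive symplectic) in order to invoke \Cref{thm:Lagrangianpi1} is a point the paper leaves implicit; your justification via lifting locally trivial deformations to quasi-\'etale covers is the natural one, though it ultimately rests on the same $\pi_1$-invariance you have already established together with the deformation invariance of the reflexive Hodge numbers on those covers.
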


In addition, \Cref{thm:ps} implies that the cardinality of 
$\pi_1(X^{\mathrm{reg}})$ is bounded in each dimension.

\begin{corollary}[Boundedness of fundamental groups of 
the regular locus]\label{conj:boundpi1} The set of 
fundamental groups of the regular locus of irreducible 
symplectic varieties of dimension $2n$ endowed with 
a Lagrangian fibration is finite.
\end{corollary}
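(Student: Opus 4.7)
The strategy is to combine Theorem \ref{thm:ps} with Theorem \ref{thm:Lagrangianpi1} together with a standard fact that locally trivial deformations preserve the topology of the regular locus. The plan has three steps, none of which require substantial new ideas.

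First, I would invoke Theorem \ref{thm:ps}: since any irreducible symplectic variety is in particular primitive symplectic, the collection of irreducible symplectic varieties of dimension $2n$ admitting a Lagrangian fibration lies within a finite union of locally trivial deformation classes $\cC_1, \dots, \cC_N$ of primitive symplectic varieties. It therefore suffices to show that within each $\cC_i$, the group $\pi_1(X^{\rm reg})$ takes only one value (for those $X\in \cC_i$ which are irreducible symplectic), and to note that this single value is finite by Theorem \ref{thm:Lagrangianpi1}.

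Second, I would establish the following topological invariance: if $\cX\to B$ is a locally trivial family of primitive symplectic varieties over a connected base $B$, then the fundamental group of $X_b^{\rm reg}$ is independent of $b\in B$. By definition of local triviality (see the footnote at the statement of Theorem \ref{thm:ps}), every point of $\cX$ admits an analytic neighborhood isomorphic to a product $U\times V$ with $V\subset B$ open; in particular, the locus $\cX^{\rm sing}\subset \cX$ is locally a product, and hence $\cX^{\rm reg}\to B$ is a locally trivial topological fiber bundle with fiber $X_b^{\rm reg}$. Connectedness of $B$ then forces $\pi_1(X_b^{\rm reg})$ to be constant up to isomorphism. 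The deformation class $\cC_i$ is a union of connected locally trivial families (and possibly identifications via isomorphisms); each connected component yields a single fundamental group, and deformation equivalence by definition produces a chain of such components linking any two members of $\cC_i$, so the group is constant across $\cC_i$.

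Third, I combine the two previous steps: pick for each class $\cC_i$ an irreducible symplectic representative $X_i\in \cC_i$ admitting a Lagrangian fibration (if any exists). By Theorem \ref{thm:Lagrangianpi1}, $\pi_1(X_i^{\rm reg})$ is finite; by the previous step, any irreducible symplectic $X\in \cC_i$ admitting a Lagrangian fibration satisfies $\pi_1(X^{\rm reg})\simeq \pi_1(X_i^{\rm reg})$. Hence at most $N$ isomorphism classes of fundamental groups arise. I would expect the only subtle point to be the second step, and even that is essentially tautological: one needs to verify that the definition of ``locally trivial deformation'' used in Theorem \ref{thm:ps} really does produce a fiber bundle structure on the regular locus, which follows immediately from the definition recalled in the footnote to Theorem \ref{thm:ps}.
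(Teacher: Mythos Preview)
Your proposal is correct and follows exactly the paper's approach: the paper observes (in the sentence immediately preceding the corollary) that $\pi_1(X^{\rm reg})$ is invariant under locally trivial deformations and then invokes Theorem~\ref{thm:ps} to bound the number of deformation classes, which is precisely your three-step argument spelled out in detail. Your care in Step~2, verifying that a locally trivial family restricts to a topological fiber bundle on the regular locus, is the only point the paper leaves implicit, and your justification of it is correct.
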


In the following sections, we provide full details 
of the proof of \Cref{thm:finiteness}. 

\subsection{From finite to virtually abelian fundamental group} 

\begin{definition}
A group is \emph{virtually abelian} if it contains 
an abelian subgroup of finite index.
\end{definition}

\begin{proposition}\label{reg=totcohom}
Let $X$ be a complex
variety with rational singularities. Then, we have
\begin{align*}
H^1(X^{\mathrm{reg}}, \QQ)\simeq H^1(X, \QQ).
\end{align*}
\end{proposition}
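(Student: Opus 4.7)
The plan is to pass through a resolution $\pi\colon \widetilde{X}\to X$ of $X$. I would choose it so that $E \coloneqq \pi^{-1}(X^{\rm sing})$ is a simple normal crossings divisor and $\pi$ restricts to an isomorphism $\widetilde{X}\setminus E \xrightarrow{\sim} X^{\rm reg}$. Then I would factor the restriction map as
\[
H^1(X,\bQ) \xrightarrow{\pi^*} H^1(\widetilde{X},\bQ) \xrightarrow{\mathrm{res}} H^1(\widetilde{X}\setminus E,\bQ) = H^1(X^{\rm reg},\bQ),
\]
and argue separately that both arrows are isomorphisms.

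For the first arrow, I would use the Leray spectral sequence for $\pi$. Since $X$ is normal (rational singularities are normal) and $\pi$ has connected fibers, $\pi_*\underline{\bQ}_{\widetilde X} = \underline{\bQ}_X$, so the five-term sequence reads
\[
0 \to H^1(X,\bQ) \to H^1(\widetilde{X},\bQ) \to H^0(X, R^1\pi_*\underline{\bQ}_{\widetilde X}) \to H^2(X,\bQ).
\]
Injectivity of $\pi^*$ is immediate, while surjectivity reduces to $R^1\pi_*\underline{\bQ}_{\widetilde X}=0$, equivalently $H^1(F_x,\bQ) = 0$ for every fiber $F_x = \pi^{-1}(x)$, $x \in X^{\rm sing}$. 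The rational singularity hypothesis provides $R^1\pi_*\mathcal{O}_{\widetilde{X}} = 0$, which by formal functions yields $H^1(F_x,\mathcal{O}_{F_x}) = 0$. One would then upgrade this to $\bQ$-cohomology by exploiting the mixed Hodge structure on the proper (possibly reducible, nonreduced) variety $F_x$: its $(0,1)$-piece is controlled by $H^1(F_x,\mathcal{O}_{F_x})$, and the remaining Hodge/weight pieces vanish for fibers of a resolution of a rational singularity.

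For the second arrow, I would invoke the Gysin/Deligne weight exact sequence for the open complement of the SNC divisor $E = \bigcup E_i$:
\[
0 \to H^1(\widetilde{X},\bQ) \to H^1(\widetilde{X}\setminus E,\bQ) \to \bigoplus_i H^0(E_i,\bQ)(-1) \xrightarrow{c} H^2(\widetilde{X},\bQ),
\]
where the cycle class map $c$ sends $1_{E_i}$ to $[E_i]$. It suffices to establish that the classes of $\pi$-exceptional prime divisors are linearly independent in $H^2(\widetilde{X},\bQ)$, which follows by intersecting with general curves contained in fibers of $\pi$ and invoking negative-definiteness of the local intersection matrix, a relative version of Mumford's classical theorem.

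The hard part is the last step of the second paragraph: bridging the coherent vanishing $H^1(F_x,\mathcal{O}_{F_x}) = 0$ to the $\bQ$-cohomological vanishing $H^1(F_x,\bQ) = 0$ on a possibly reducible and nonreduced fiber. One natural way to handle this is via Kov\'acs--Schwede Du Bois theory for resolutions of rational singularities, or alternatively by routing through intersection cohomology as already used elsewhere in the paper (e.g., Proposition~\ref{h2-surjection}).
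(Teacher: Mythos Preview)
Your proposal is correct and follows essentially the same route as the paper: pass to a log resolution $\widetilde X$ that is an isomorphism over $X^{\rm reg}$, identify $H^1(X,\bQ)\simeq H^1(\widetilde X,\bQ)$, and then use the Gysin sequence for $\widetilde X\setminus E\hookrightarrow\widetilde X$ together with linear independence of the classes $[E_i]$ in $H^2(\widetilde X,\bQ)$.

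Two small points of comparison. For the first arrow, the paper does not attempt the fiberwise vanishing $R^1\pi_*\underline{\bQ}_{\widetilde X}=0$; it simply \emph{cites} $H^1(X,\bQ)\simeq H^1(\widetilde X,\bQ)$ as a known consequence of rational singularities (see \cite[Lem.~2.1]{BL2021}). Your detour via $H^1(F_x,\bQ)=0$ is stronger than what is needed (from the five-term sequence you only need injectivity of the edge map into $H^2(X,\bQ)$), and the Du Bois/MHS bridge you sketch, while in the right spirit, has to be handled with some care on possibly nonreduced fibers. For the second arrow, the paper packages the independence of the exceptional classes into Lemma~\ref{lem:injectiveexc}, proved by blowing up chosen contracted curves $C_i\subset E_i$, cutting by general ample divisors down to a surface, and invoking negative-definiteness there---exactly the ``relative Mumford'' you allude to.
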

\begin{remark}
\Cref{reg=totcohom} is proved in \cite[Thm.~6.3]{J2022} 
when $X$ has klt singularities (not necessarily 
in the quasi-projective case).
This suffices for our applications.
Here, however, we provide an alternative and more 
elementary proof and observe that the statement relies 
only on the rationality of the singularities. 
\end{remark}

\begin{proof}
The Leray spectral sequence for the inclusion 
$j \colon X^{\mathrm{reg}}
\,\hookrightarrow X$ gives the exact sequence
\begin{align*}
0 \to H^1(X, \QQ) \to H^1(X^{\mathrm{reg}}, \QQ) 
\to H^0(X,R^1j_\ast \QQ).
\end{align*}
Let $(\widetilde{X}, E = \sum_{i \in I} E_i) \to X$ be a 
projective log resolution which is an isomorphism on the 
regular locus $X^{\mathrm{reg}}$, where $E$ denotes the 
reduced exceptional divisor and each $E_i$ is a prime divisor.
The following cohomological isomorphisms hold: 
\begin{itemize}
\item $H^\ast (X^{\mathrm{reg}}, \QQ)\simeq 
H^\ast (\widetilde{X} \setminus E, \QQ)$ by construction;
\item $H^1(X, \QQ)\simeq H^1(\widetilde{X}, \QQ)$ since 
$X$ has rational singularities, see e.g.~\cite[Lem.~2.1]{BL2021}; 
\item $\bigoplus_{i \in I} H^{0}(E_i, \QQ) 
\simeq H^{2n-2}_{c}(E, \QQ)^{\vee} 
\simeq H^{2}(\widetilde{X}, \widetilde{X} \setminus E, \QQ)$. 
\end{itemize}
Then the long exact sequence of the pair 
$(\widetilde{Y}, \widetilde{X} \setminus E)$ gives
\begin{align*}
0 \to H^1(X, \QQ)\to H^1(X^{\mathrm{reg}}, \QQ)\to 
\bigoplus_{i \in I} H^{0}(E_i, \QQ) \to H^2(\widetilde{X}, \QQ).
\end{align*}
The result then follows from \Cref{lem:injectiveexc}.
\end{proof}

\begin{lemma}
\label{lem:injectiveexc}
Let $f\colon \widetilde{X} \to X$ be a projective birational morphism of complex quasi-projective varieties with reduced exceptional divisors $\{E_i\}_{i \in I}$.
Then, the Gysin morphism
\begin{align*}
\bigoplus_{i \in I} H^0(E_i, \QQ) \, \hookrightarrow H^2(\widetilde{X}, \QQ)
\end{align*}
is injective.
\end{lemma}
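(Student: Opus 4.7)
The Gysin morphism sends the natural generator of $H^0(E_i, \QQ) \simeq \QQ$ (the fundamental class of the prime divisor $E_i$) to its cycle class $[E_i] \in H^2(\widetilde X, \QQ)$, so injectivity amounts to the $\QQ$-linear independence of the classes $\{[E_i]\}_{i \in I}$. As a preliminary reduction, I would pass to a resolution of $\widetilde X$ if necessary in order to assume that $\widetilde X$ is smooth; the $E_i$ then pull back to irreducible $f$-exceptional Cartier divisors whose linear independence in $H^2$ of the resolution implies the original statement via pushforward. In the application to Proposition~\ref{reg=totcohom}, $\widetilde X$ is already a log resolution, so this step is automatic.

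Assume henceforth that $\widetilde X$ is smooth. Suppose $\sum_i a_i [E_i] = 0$ in $H^2(\widetilde X, \QQ)$; equivalently, the $\QQ$-Cartier divisor $D \coloneqq \sum_i a_i E_i$ is numerically trivial on $\widetilde X$, so $D \cdot C = 0$ for every proper curve $C \subset \widetilde X$. In particular $D \cdot C = 0$ for every $f$-contracted (hence proper) curve, so both $D$ and $-D$ are $f$-nef; since each $E_i$ is $f$-exceptional, $f_* D = 0 = f_*(-D)$. The core input is then the Negativity Lemma in birational geometry (see \cite[Lem.~3.39]{KM98}), which asserts that an $f$-nef $\QQ$-Cartier divisor whose pushforward is effective is itself effective. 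Applying this simultaneously to $D$ and to $-D$ yields $D \geq 0$ and $-D \geq 0$, forcing $D = 0$ as a $\QQ$-divisor, and hence $a_i = 0$ for every $i \in I$.

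I do not anticipate any serious obstacle: once the cohomological hypothesis is translated into $f$-numerical triviality, the conclusion is an immediate application of the Negativity Lemma. The only mild point of care is the $\QQ$-Cartier condition required by the Negativity Lemma, which is precisely what motivates the initial reduction to smooth $\widetilde X$ (where every Weil divisor is automatically Cartier). An alternative route would slice by a general complete intersection surface and invoke the Zariski--Mumford negative-definiteness of exceptional curves on surfaces, but this is more delicate when some $f(E_i)$ has codimension greater than two in $X$, since then a general such surface need not meet $E_i$ in a contracted curve; the Negativity Lemma approach sidesteps this and works uniformly in every dimension.
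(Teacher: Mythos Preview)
Your proof is correct and takes a genuinely different route from the paper. The paper reduces to the surface case by slicing: for each $E_i$ it chooses a contracted curve $C_i \subset E_i$ not contained in any other $E_{i'}$, blows up $\widetilde{X}$ along $\bigcup_i C_i$, takes a general complete-intersection surface $S$ in the blow-up, and resolves its image $g(S) \subset \widetilde{X}$ to obtain $\widetilde{S}$; negative definiteness of the intersection form on the exceptional curves of $\widetilde{S} \to f(g(S))$ then gives the result via a commutative square comparing Gysin maps. This is precisely the alternative you flagged as ``more delicate'', and the blow-up step is the paper's device for ensuring the surface meets every $E_i$ in a contracted curve even when $\codim_X f(E_i) > 2$. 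Your Negativity Lemma argument bypasses this trick entirely and is more direct, though of course \cite[Lem.~3.39]{KM98} is itself proved by reduction to surfaces, so the underlying input is the same. One minor remark: your reduction to smooth $\widetilde{X}$ is harmless in the intended application (where $\widetilde{X}$ is already a log resolution), and if $X$ is not assumed normal one should replace it by its normalization before invoking the Negativity Lemma, which leaves the set of exceptional divisors unchanged since the normalization map is finite.
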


\begin{proof}
If $X$ is a surface, the result follows from the fact that the 
intersection form on the vector space 
$\bigoplus_{i \in I} H^0(E_i, \QQ)$ is negative definite.
We reduce to the surface case as follows. 
 
For any $i \in I$, choose a curve $C_i \subseteq E_i$ contracted 
by the morphism $f$ and not contained in any other exceptional 
divisor $E_{i'}$ with $i' \neq i$.
Let $g \colon \widetilde{X}' \to \widetilde{X}$ be the 
blow-up of $X$ along $ \bigcup_{i \in I} C_i$.
Take a surface $S$ that is a general complete 
intersection of ample divisors in $ \widetilde{X}'$, and 
let $h \colon \tilde{S} \to g(S)$ be a resolution of 
singularities.
By construction, $S$ intersects all $g$-exceptional 
divisors, $g(S)$ contains all the curves $C_i$, 
and any $h^{-1}(C_i)$
is a union of $(f \circ h)$-exceptional curves.
Denote the $(f \circ h)$-exceptional curves by $F_j$ with 
$j \in J$. Then, the pullback map 
$\bigoplus_{i \in I} H^0(E_i, \QQ) \hookrightarrow 
\bigoplus_{j \in J} H^0(F_j, \QQ)$ is injective, 
and the commutative square
\begin{align*}
\xymatrix{
\bigoplus_{i \in I} H^0(E_i, \QQ) \ar@{^{(}->}[d] 
\ar[r] & H^2(\widetilde{X}, \QQ) \ar[d]\\
\bigoplus_{j \in J} H^0(F_j, \QQ) \ar@{^{(}->}[r] & 
H^2(\widetilde{S}, \QQ)
}
\end{align*} 
gives the required injectivity. 
\end{proof}
 
\begin{lemma}\label{lem:virtuallyabelianfinite}
If $X$ is an irreducible symplectic variety such that 
$\pi_1(\Xreg)$ is virtually abelian, then 
$\pi_1(\Xreg)$ is finite.
\end{lemma}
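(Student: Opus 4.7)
The plan is to pass to a quasi-\'etale cover on which the fundamental group of the smooth locus becomes abelian, and then invoke the vanishing of $H^1$ for primitive symplectic varieties to force this abelian group to be finite.

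First, I would fix a finite-index abelian subgroup $A \leq \pi_1(\Xreg)$, let $U \to \Xreg$ be the connected finite \'etale cover corresponding to $A$ (so that $\pi_1(U) = A$), and let $\pi \colon \widetilde X \to X$ be the normalization of $X$ in the function field $\bC(U)$. This is a finite quasi-\'etale cover with $U = \pi^{-1}(\Xreg)$, and by Definition \ref{def:CY}(IS) the variety $\widetilde X$ is primitive symplectic; in particular, $\widetilde X$ is a normal projective variety with rational singularities.

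Next, I would identify $\pi_1(\widetilde X^{\mathrm{reg}}) \simeq A$. The complement $\widetilde X^{\mathrm{reg}} \setminus U$ is contained in $\pi^{-1}(X^{\mathrm{sing}})$, which has complex codimension $\geq 2$ in $\widetilde X$ because $\pi$ is finite and $X^{\mathrm{sing}}$ has codimension $\geq 2$ in $X$. Since removing a closed analytic subset of complex codimension $\geq 2$ from a smooth complex manifold does not affect $\pi_1$, the inclusion $U \hookrightarrow \widetilde X^{\mathrm{reg}}$ induces an isomorphism $A = \pi_1(U) \xrightarrow{\sim} \pi_1(\widetilde X^{\mathrm{reg}})$, so $\pi_1(\widetilde X^{\mathrm{reg}})$ is abelian and finitely generated.

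Finally, Proposition \ref{reg=totcohom} applied to $\widetilde X$ gives $H^1(\widetilde X^{\mathrm{reg}}, \bQ) \simeq H^1(\widetilde X, \bQ)$. Picking a resolution $\rho \colon \widetilde Y \to \widetilde X$, the rationality of the singularities together with the identification $\Omega^{[1]}_{\widetilde X} = \rho_\ast \Omega^1_{\widetilde Y}$ from \cite{KS2021} and the vanishing $H^0(\widetilde X, \Omega^{[1]}_{\widetilde X}) = 0$ from the primitive symplectic condition yield $h^{1,0}(\widetilde Y) = 0$, whence $H^1(\widetilde X, \bQ) = 0$ by Hodge theory on the smooth projective variety $\widetilde Y$. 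Therefore $\Hom(A, \bQ) = 0$, so the finitely generated abelian group $A$ is torsion and hence finite; since $A$ has finite index in $\pi_1(\Xreg)$, we conclude that $\pi_1(\Xreg)$ is finite. The only technical point that requires care is the invariance of $\pi_1$ under removal of a complex codimension $\geq 2$ analytic subset from a smooth manifold, but this is a standard transversality argument; everything else is a formal assembly of the cited results.
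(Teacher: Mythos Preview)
Your proof is correct and follows essentially the same approach as the paper's: pass to a quasi-\'etale cover $\widetilde X$ with abelian $\pi_1(\widetilde X^{\mathrm{reg}})$, apply Proposition \ref{reg=totcohom}, and use $H^1(\widetilde X,\bQ)=0$ (from the primitive symplectic condition) to conclude that the finitely generated abelian group is finite. You have simply spelled out the construction of the cover and the vanishing of $H^1(\widetilde X,\bQ)$ in more detail than the paper does.
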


\begin{proof}
By hypothesis, there exists a quasi-\'{e}tale cover 
$\widetilde{X} \to X$ such that 
$\pi_1(\widetilde{X}^{\mathrm{reg}})=A$ is abelian.
Since $\widetilde{X}$ has rational singularities, we 
have $A \otimes \QQ \simeq 
H^1(\widetilde{X}^{\mathrm{reg}}, \QQ)
\simeq H^1(\widetilde{X}, \QQ)$ by \Cref{reg=totcohom}.
But $H^1(\widetilde{X}, \QQ)=0$ since 
$\widetilde{X}$ is irreducible symplectic, which 
implies that $A$ is finite, since it 
is also finitely generated.
\end{proof}

\subsection{Orbifold fundamental group of the 
base of an equidimensional fibration}

\begin{definition}\label{defn:orbifoldpi1}
Let $Y$ be a complex space that is regular in codimension 
$1$, and let 
$(Y, \Delta_Y = \sum_{i}(1 - \frac{1}{m_i}) \Delta_i)$ 
be a couple\footnote{Contrary to the definition of pair, 
we do not require for a couple that $K_{Y}+\Delta_Y$ 
is $\RR$-Cartier.} with standard coefficients. 
The \emph{orbifold fundamental group} of the couple 
$(Y, \Delta)$, denoted by $\pi_1^{\mathrm{orb}}(Y, \Delta)$, 
is the quotient $\pi_1(Y^{\mathrm{reg}}\setminus 
\mathrm{Supp}(\Delta))/\langle \delta^{m_i}_i\rangle$,
where $\langle \delta^{m_i}_i\rangle$ is the normal subgroup 
generated by the loops $\delta^{m_i}_i$.
Here, $\delta_i$ is a 
simple loop winding one time counterclockwise
around $D_j$, and it is uniquely 
determined up to conjugation.
\end{definition}

\Cref{prop:exact sequence} is a special case 
of \cite[Proposition 11.7]{Campana2011}.
We recall the proof for completeness.

\begin{proposition}[Campana]\label{prop:exact sequence}
Let $f \colon X \to Y$ be an equidimensional surjective
morphism of normal complex algebraic varieties
with smooth connected general fiber $F$.
Then, there exists an exact sequence
\begin{equation}\label{eq:Campana}
  \pi_1(F) \to \pi_1(X^{\mathrm{reg}}) 
  \to \pi_1^{\mathrm{orb}}(Y, \Delta_f) \to 1,  
\end{equation}
where the multiplicity couple $(Y, \Delta_f)$
is defined in \eqref{eq:multiplicitycouple}.
\end{proposition}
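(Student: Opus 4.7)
The plan is to adapt the standard homotopy exact sequence for smooth fibrations, then identify the correct "orbifold" corrections that arise from enlarging the open sets on source and target. First I would set $Y^o\subset Y$ equal to the maximal open locus over which $f$ is smooth, and $X^o\coloneqq f^{-1}(Y^o)\subset X^{\mathrm{reg}}$. By equidimensionality of $f$, the complements $Y^{\mathrm{reg}}\setminus Y^o$ and $X^{\mathrm{reg}}\setminus X^o$ are purely of codimension one in the smooth loci (away from higher-codimension pieces which contribute nothing to $\pi_1$). Since $f$ is projective in our setup, $f|_{X^o}\colon X^o\to Y^o$ is a proper smooth submersion and hence a locally trivial fiber bundle with connected fiber $F$ by Ehresmann's theorem; the standard long exact sequence of homotopy groups yields
\[
\pi_1(F)\to \pi_1(X^o)\to \pi_1(Y^o)\to 1.
\]

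Next I would compare $\pi_1(X^o)$ to $\pi_1(X^{\mathrm{reg}})$ and $\pi_1(Y^o)$ to $\pi_1(Y^{\mathrm{reg}}\setminus \Supp\Delta_f)$. Since $X^{\mathrm{reg}}$ is smooth and $X^o\subset X^{\mathrm{reg}}$ is the complement of a divisor (plus higher codimension), $\pi_1(X^o)\twoheadrightarrow \pi_1(X^{\mathrm{reg}})$ is surjective with kernel $K_X$ normally generated by meridians $\gamma_D$ around the codimension-one components $D$ of $X^{\mathrm{reg}}\setminus X^o$. Each such $D$ is a component of some fiber $f^{-1}(P)$ with multiplicity $n_D$ in $f^*P$, and $f_*(\gamma_D)=\delta_P^{n_D}$, where $\delta_P$ is the meridian around $P$. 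Similarly, $\pi_1(Y^o)\twoheadrightarrow \pi_1(Y^{\mathrm{reg}}\setminus\Supp\Delta_f)$ has kernel normally generated by $\delta_P$ for the prime divisors $P\subset Y^{\mathrm{reg}}$ in the discriminant of $f$ with $m_f(P)=1$ (which disappear from $\Supp\Delta_f$).

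The key algebraic matching is then: since $m_f(P)=\gcd_D n_D$, by B\'ezout there exist integers $a_D$ with $m_f(P)=\sum_D a_D n_D$, so
\[
\delta_P^{m_f(P)}\;=\;\prod_D f_*(\gamma_D)^{a_D}\;\in\; f_*(K_X),
\]
and similarly for $m_f(P)=1$ some $\gamma_D$ has $n_D=1$, hence $\delta_P=f_*(\gamma_D)\in f_*(K_X)$. Conversely $m_f(P)\mid n_D$ forces $f_*(\gamma_D)=\delta_P^{n_D}$ to vanish in $\pi_1^{\mathrm{orb}}(Y,\Delta_f)$. Thus $f_*$ descends to a well-defined surjection $\pi_1(X^{\mathrm{reg}})\twoheadrightarrow \pi_1^{\mathrm{orb}}(Y,\Delta_f)$, and the composition with $\pi_1(F)\to \pi_1(X^{\mathrm{reg}})$ is trivial because $\pi_1(F)\to \pi_1(Y^o)$ is trivial by Nori's sequence.

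The main content is exactness at $\pi_1(X^{\mathrm{reg}})$. Given $\alpha\in\pi_1(X^{\mathrm{reg}})$ mapping to $1$ in $\pi_1^{\mathrm{orb}}(Y,\Delta_f)$, I would lift $\alpha$ to some $\tilde\alpha\in\pi_1(X^o)$; then $f_*(\tilde\alpha)\in\pi_1(Y^o)$ lies in the kernel of $\pi_1(Y^o)\to \pi_1^{\mathrm{orb}}(Y,\Delta_f)$, which by the B\'ezout matching above is precisely $f_*(K_X)$. Choose an element $\beta\in K_X$ with $f_*(\beta)=f_*(\tilde\alpha)$; then $\tilde\alpha\beta^{-1}$ has trivial image in $\pi_1(Y^o)$, and by Nori's exact sequence for $f^o$ it lies in the image of $\pi_1(F)$. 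Since $\beta\in K_X$ dies in $\pi_1(X^{\mathrm{reg}})$, the image of $\tilde\alpha\beta^{-1}$ in $\pi_1(X^{\mathrm{reg}})$ is still $\alpha$. The hardest step is really the correct bookkeeping of the two kernels $K_X$ and the kernel in $\pi_1(Y^o)$, together with verifying that the B\'ezout relation $m_f(P)=\sum a_D n_D$ (rather than $\mathrm{lcm}$ or some other combination) is exactly what matches the orbifold presentation---this is what forces $\Delta_f$ to be the multiplicity divisor as defined in \eqref{eq:multiplicitycouple}, and not some other divisor built from the $n_D$.
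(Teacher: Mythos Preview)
Your argument is essentially the same as the paper's: reduce to the fiber-bundle exact sequence over the smooth locus $Y^o$, then identify the normal subgroup generated by the images $f_*(\gamma_D)=\delta_P^{n_D}$ of the meridians with the normal subgroup $\langle \delta_P^{m_f(P)}\rangle$ defining $\pi_1^{\mathrm{orb}}(Y,\Delta_f)$. One small slip: the claim ``for $m_f(P)=1$ some $\gamma_D$ has $n_D=1$'' is false in general (e.g.\ $\gcd(2,3)=1$), but you do not need it---your B\'ezout argument already shows $\delta_P^{m_f(P)}\in f_*(K_X)$ for every $P$, including those with $m_f(P)=1$.
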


\begin{proof}
Let $Y^{o}$ be the largest non-empty open set 
$U \subset Y$ such that the restriction 
$f|_{f^{-1}(U)}$ is a fiber bundle, see, e.g.,
\cite[Lem.~1.5.A]{Nori1983}.
Since the sequence \eqref{eq:Campana} is stable under 
the deletion of codimension 2 subsets, we can suppose 
$Y^{o} \subseteq Y^{\mathrm{reg}}$ and that the 
complement of $Y^{o}$ has pure codimension 1.
Write $f^{o}\colon X^{o}\coloneqq f^{-1}(Y^{o}) \to Y^{o}$, 
and set $D= \sum_{j \in J} D_j \coloneqq X^{\mathrm{reg}} 
\setminus X^{o}$.
The kernel of the restriction $\pi_1(X^{o}) 
\twoheadrightarrow \pi_1(X^{\mathrm{reg}})$ is the normal 
subgroup generated by simple loops $\gamma_j$ around 
$D_j$, see, e.g., \cite[Fact 1.2]{Nori1983}.
For any $\Delta_i  \subseteq \mathrm{Supp}(\Delta_f)$,
write $f^\ast (\Delta_i)=\sum_{j \in J} m_j D_j$, 
and $m(\Delta_i) \coloneqq \gcd_{j \in J}\{m_j\}$ 
for the multiplicity of the general fiber over $\Delta_i$.
Let $\delta_i$ be a simple loop around $\Delta_i$ in 
$\mathrm{Supp}(\Delta_f)$, and $\xi_k$ be a simple 
loop around the divisor $\Delta_k$ in 
$f(D) \setminus \mathrm{Supp}(\Delta_f)$.
Note that $\pi_1(Y^{o})/\langle \xi_k\rangle \simeq 
\pi_1(Y^{\mathrm{reg}}\setminus \mathrm{Supp}(\Delta_f))$.
By the definition of orbifold fundamental group, 
we have the following commutative diagram
\begin{align*}
\xymatrix{
& \langle \gamma_j \rangle \ar@{->>}[r] \ar@{^{(}->}[d] & 
\langle f(\gamma_j)=
\delta_j^{m_j} \rangle = 
\langle \delta_i^{m(\Delta_i)}, \xi_k \rangle \ar@{^{(}->}[d] \\
\pi_1(F) \ar[r] \ar[d]_-{\simeq} &  
\pi_1(X^{o})\ar@{->>}[r] \ar@{->>}[d] & \pi(Y^{o})
 \ar@{->>}[d]\\
\pi_1(F) \ar[r] &  \pi_1(X^{\mathrm{reg}})\ar@{->>}[r] & 
\pi_1^{\mathrm{orb}}(Y, \Delta_f),\\
}
\end{align*}
where the second row is the standard long exact 
sequence in homotopy for fiber bundles. 
In particular, the third row is exact.
\end{proof}

\Cref{prop:basepair} is a refinement of 
\Cref{prop:base}.\eqref{item:base}, i.e., the base of a 
Lagrangian fibration is a Fano variety with 
$\QQ$-factorial klt singularities. 

\begin{proposition}\label{prop:basepair}
Let $X$ be a projective irreducible symplectic variety 
endowed with a Lagrangian fibration $f \colon X \to Y$.
 The pair $(Y, \Delta_f)$ is of Fano type, i.e., it is
 klt and $-(K_{X}+\Delta_{f})$ is ample.
\end{proposition}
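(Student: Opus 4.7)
The plan is to use the canonical bundle formula to translate $K$-triviality of $X$ into the two conditions defining Fano type for $(Y, \Delta_f)$, leveraging the klt-Fano and Picard rank one properties of $Y$ from Proposition~\ref{prop:base}.

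First, I would apply Theorem~\ref{base-same-sing} to $f \colon X \to Y$: since $X$ is primitive symplectic, $K_X \sim 0$, producing a generalized klt pair $(Y, B_Y, \bfM)$ with $K_Y + B_Y + \bfM_Y \sim_\QQ 0$ by \cite[Prop.~5.3]{FM20}, and the bound $0 \leq \Delta_f \leq B_Y$ from \eqref{eq:multiplicitycouple}. Using the effective b-semiampleness of $\bfM$ from Theorem~\ref{thm:weakeffective} together with Lemma~\ref{lemma:gentopair}, I can then pick an effective $\Gamma \sim_\QQ \bfM_Y$ such that $(Y, B_Y + \Gamma)$ is a klt log Calabi--Yau pair. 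Since $\Delta_f \leq B_Y + \Gamma$, the pair $(Y, \Delta_f)$ is a fortiori klt, settling the first of the two defining conditions.

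For the ampleness of $-(K_Y + \Delta_f)$, I would rewrite
\[
-(K_Y + \Delta_f) \;\sim_\QQ\; (B_Y - \Delta_f) + \Gamma,
\]
which is a sum of effective $\QQ$-divisors, hence pseudo-effective. Since $\rho(Y) = 1$ by Proposition~\ref{prop:base}(\ref{item:base}), this class is either numerically trivial or ample, so the proposition reduces to ruling out the trivial case.

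The hard part is exactly this ruling-out: numerical triviality would force $B_Y = \Delta_f$ and $\Gamma \equiv 0$ (equivalently $\bfM_Y \equiv 0$) simultaneously. I would close this by invoking Matsushita's isomorphism $R^g f_* \cO_X \cong \Omega_Y^{[g]}$ for Lagrangian fibrations of projective primitive symplectic varieties \cite{Matsushita2000, Matsushita2015}; by relative Serre duality this gives $f_* \omega_{X/Y} \cong \cO_Y(-K_Y)$. Because $-K_Y$ is ample, the Hodge line bundle is ample; on a semistable birational model the moduli b-divisor $\bfM$ descends to a positive rational multiple of $\det f_*\omega_{X/Y}$, hence remains ample and in particular numerically non-trivial. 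This forces $-(K_Y + \Delta_f)$ to be a strictly positive multiple of the ample generator of $\NS(Y)_\QQ$, concluding the argument.
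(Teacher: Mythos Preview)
Your reduction to ruling out $-(K_Y+\Delta_f)\equiv 0$ is fine and matches the paper. The gap is in the final step, where you conflate $f_*\omega_{X/Y}$ with the moduli part $\bfM_Y$. The identity $f_*\omega_{X/Y}\cong\cO_Y(-K_Y)$ follows from $K_X\sim 0$ and the projection formula alone; it holds for \emph{any} $K$-trivial fibration and says nothing about the variation of fibers. The moduli b-divisor $\bfM$, by contrast, is the Deligne extension of the Hodge bundle on a semistable model, and on $Y$ one has $\bfM_Y\sim_\QQ -K_Y-B_Y$, not a positive multiple of $-K_Y$. So the ampleness of $f_*\omega_{X/Y}$ gives no obstruction to $\bfM_Y\equiv 0$.

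The example immediately following the proposition in the paper makes this concrete: for the singular Kummer surface $X=(E\times E')/C_2\to\PP^1$, one has $f_*\omega_{X/\PP^1}=\cO_{\PP^1}(2)$ ample, yet $\bfM_{\PP^1}=0$ (the fibration is isotrivial) and $(\PP^1,\Delta_f)$ is \emph{not} of Fano type. Your argument invokes only properties valid for primitive symplectic varieties (Matsushita's isomorphism, Proposition~\ref{prop:base}), never the \emph{irreducibility} of $X$; since the statement fails without irreducibility, any such argument must be incomplete. The paper's proof uses irreducibility essentially: assuming $K_Y+\Delta_f\sim_\QQ 0$, it builds the index-one cover $\widetilde Y\to Y$, shows the normalized pullback $\widetilde X\to X$ is quasi-\'etale (this is where the standard coefficients of $\Delta_f$ matter), and then irreducibility forces $\widetilde X$ to be symplectic, so $\widetilde Y$ must be Fano by Proposition~\ref{prop:base}---contradicting $K_{\widetilde Y}\sim 0$.
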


\begin{proof}
By the canonical bundle formula, there exist an 
effective $\QQ$-divisor divisor $B_Y \geq \Delta_{f}$ 
and a $\QQ$-effective linear series $\mathbf{M}_Y$ 
such that
\begin{align*}
0 \sim K_{X} \sim f^\ast (K_Y + B_Y + \mathbf{M}_Y)
\end{align*}
and the $\QQ$-factorial pair $(Y, B_Y)$ has klt 
singularities, see \eqref{eq:multiplicitycouple}, 
\Cref{base-same-sing} and \Cref{prop:base}. 
Since $\rho(Y)=1$ by \Cref{prop:base}, 
$-(K_Y + \Delta_f)$ is either ample or 
$\QQ$-linearly trivial.

We show that the latter case cannot occur.
Otherwise, let $m$ be the smallest positive integer 
such that $m\Delta_f$ is a $\ZZ$-divisor and 
$\omega^{[m]}_Y(m\Delta_f)$ is a trivial line bundle.
An $m$-pluricanonical form 
$s \in H^0(Y, \omega^{[m]}_Y(m\Delta_f))$ with 
pole divisor $m\Delta_f$
determines a degree $m$ cover $p \colon \widetilde{Y} \to Y$, 
called index 1 cover, see \cite[Def.~2.49]{Kollar2013}.
At the generic point of $\Delta_i$, $p$ factors as 
$p=p_{\mathrm{ram}} \circ p_{\text{\'{e}t}}$, where 
$p_{\mathrm{ram}}$ is a finite morphism totally ramified 
along $\Delta_i$ of degree $m(\Delta_i)$, and 
$p_{\text{\'{e}t}}$ is \'{e}tale.
Thus, it follows 
$K_{\widetilde{Y}} \sim p^\ast (K_Y + \Delta_f) \sim 0$ by 
the Riemann--Hurwitz formula, see 
\cite[\S~2.48 and (2.41.6)]{Kollar2013}.
For this step, we crucially use that $\Delta_f$ 
has standard coefficients.

Let $\widetilde{X}$ be the normalization of the 
(irreducible) fiber product $X \times_{Y} \widetilde{Y}$. 
Write $f^{*}(\Delta_i)=\sum_j m_j D_j$.
At a general point of $D_j$, the equidimensional 
morphism $f$ is given by
$$
(x_1, x_2, \ldots, x_{2n}) 
\mapsto (x^{m_j}_1, x_2, \ldots, x_{n})$$
in suitable local coordinates.
Hence, $\widetilde{X}$ is \'{e}tale locally 
isomorphic over $D_i$ to the normalization of 
$y^{m(\Delta_i)}=x_1^{m_j}$ in 
$\mathbb{A}^{2n+1}_{x_1, \ldots, x_{2n}, y}$, 
where $m(\Delta_i) = \gcd \{m_j : f(D_j)=\Delta_i\}$.
We conclude that the map $\widetilde{X} \to X$ is \'{e}tale 
at the generic point of $D_j$, and so $\widetilde{X} \to X$ 
is quasi-\'{e}tale. Now, since $\widetilde{X} \to X$ is 
quasi-\'{e}tale and $X$ is an irreducible symplectic 
variety, then $\widetilde{X}$ is an  irreducible symplectic 
variety as well.
However, by \Cref{prop:base}, the base of the fibration 
$\widetilde{X} \to \widetilde{Y}$ is of Fano type, 
which is a contradiction, since 
$K_{\widetilde{Y}} \sim 0$ 
by construction. 
\end{proof}

\begin{corollary}\label{cor:finiteorbfund}
Let $X$ be a projective irreducible symplectic 
variety endowed with a Lagrangian fibration $f \colon X \to Y$.
Then, the orbifold fundamental group 
$\pi^{\mathrm{orb}}_1(Y, \Delta_f)$ is finite.
\end{corollary}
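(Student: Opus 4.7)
The plan is to combine the Fano-type structure of the multiplicity pair $(Y,\Delta_f)$ established in Proposition~\ref{prop:basepair} with Braun's finiteness result \cite{Braun2021} for orbifold fundamental groups of klt Fano-type pairs.

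First, I would recall from Proposition~\ref{prop:basepair} that $(Y,\Delta_f)$ is a klt pair with $-(K_Y+\Delta_f)$ ample; in particular, $(Y,\Delta_f)$ is of Fano type. Note that $\Delta_f$ has standard coefficients by \eqref{eq:multiplicityccouple}, so the orbifold fundamental group $\pi_1^{\mathrm{orb}}(Y,\Delta_f)$ from Definition~\ref{defn:orbifoldpi1} is the ``right'' invariant to which Braun's theorem applies.

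The next step is to invoke the main theorem of \cite{Braun2021}, which asserts that the orbifold fundamental group of a klt Fano-type pair is finite. Applying this to $(Y,\Delta_f)$ yields immediately that $\pi_1^{\mathrm{orb}}(Y,\Delta_f)$ is finite.

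There is no real obstacle here once Proposition~\ref{prop:basepair} and Braun's result are in place; the only subtlety worth a brief sentence is the compatibility between our notion of orbifold fundamental group (Def.~\ref{defn:orbifoldpi1}), which records ramification along $\Delta_f = \sum (1 - 1/m_f(\Delta_i))\Delta_i$, and the notion used in \cite{Braun2021}, both of which coincide since $\Delta_f$ has standard coefficients.
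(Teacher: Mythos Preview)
Your proposal is correct and follows exactly the same route as the paper: apply Proposition~\ref{prop:basepair} to see that $(Y,\Delta_f)$ is a klt Fano-type pair with standard coefficients, then invoke Braun's theorem \cite[Thm.~2]{Braun2021}. The only cosmetic point is a typo in your reference \eqref{eq:multiplicitycouple}.
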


\begin{proof}
It follows from \Cref{prop:basepair} and 
\cite[Thm.~2]{Braun2021}.
\end{proof}

\begin{example}
The irreducibility in the hypothesis of \Cref{prop:basepair} 
cannot be dropped.
The singular Kummer variety $X = A/C_2$ is a quartic 
surface in $\PP^3$ with 16 nodes obtained as the 
quotient of an abelian surface $A$ by the inverse 
of the group structure. 
Suppose that $A \simeq E \times E'$ is the 
product of two elliptic curves.
Then, the fibration 
$f \colon X = E \times E'/C_2 \to E/C_2 \simeq \mathbb{P}^1$ 
is Lagrangian, but the multiplicity pair 
$(\PP^1, \Delta_f = \sum^4_{i=1} \frac{1}{2}p_i)$, 
where $p_i$ is the branch locus of the 
quotient $E \to \PP^1$, is not of Fano type.
The index 1 cover of $(\PP^1, \Delta_f)$ is the 
double cover $p \colon E \to \PP^1$, and the 
normalized fiber product in the proof of 
\ref{prop:basepair} is 
\begin{align*}
\xymatrix{
\widetilde{X} = E \times E' \ar[d] \ar[r] & X = 
E \times E'/C_2 \ar[d]\\
E \ar[r] & \mathbb{P}^1= E/C_2.
}
\end{align*}
Note that $\pi_1^{\mathrm{orb}}(\PP^1, \Delta_f)$ 
and $\pi_1(X^{\mathrm{reg}})$ are infinite groups, 
as they are extensions of $C_2$ by 
$\ZZ^2 = \pi_1(E)$ and 
$\ZZ^4=\pi_1(E \times E')$ respectively.
\end{example}

\appendix
\section{Deformation of Lagrangian fibrations on hyperk\"{a}hler 
varieties}\label{sec:deformation}

Let $X$ be a primitive symplectic variety endowed with a 
Lagrangian fibration $f \colon X \to Y$. Let $L$ be the 
pullback of an ample line bundle on $Y$.
Let $\pi \colon (\mathcal{X}^{\mathrm{uni}}, \mathcal{L}) 
\to \Deflt(X, L)$ be the universal family of locally 
trivial deformations of the pair $(X, L)$; 
see also \cite[Cor.~4.11, Lem.~4.13]{BL2022}.
The germ $\Deflt(X, L)$ is smooth of dimension 
$h^{1,1}(X)-1$, and the period map
\begin{equation}\label{eq:perioddeflt}
\Deflt(X, L) \to \PP(L^{\perp}_{\CC}) \cap \bD
\end{equation}
is a local isomorphism, see 
\cite[Thm.~4.7, Cor.~5.9]{BL2022}.

We show that the Lagrangian fibration $f$ 
is unobstructed and deforms to the whole $\Deflt(X, L)$.
In the smooth case, this is the content of 
\cite{Matsushita2016}.
In this appendix, we adapt the proof to the singular setting. 

\begin{theorem}\label{thm:basechangelocallyfree}
$R^k \pi_* \mathcal{L}$ is locally free. 
\end{theorem}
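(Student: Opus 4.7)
The plan is to apply Grauert's base-change theorem on the smooth base $\Deflt(X, L)$ to reduce the local freeness of $R^k \pi_* \mathcal{L}$ to the local constancy of the function $t \mapsto h^k(\mathcal{X}^{\mathrm{uni}}_t, \mathcal{L}_t)$. The first observation is that the Euler characteristic $\chi(\mathcal{X}^{\mathrm{uni}}_t, \mathcal{L}_t)$ is locally constant, since $\pi$ is a locally trivial deformation of the pair $(X, L)$: the topology of the pair is preserved, and $\chi$ is a (Hirzebruch--Riemann--Roch) topological invariant.

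Next, at the central fiber I would compute $H^k(X, L)$ from the Lagrangian structure. Since $L \simeq f^* H$ with $H$ ample on $Y$, the projection formula gives $R^j f_* L \simeq R^j f_* \mathcal{O}_X \otimes H$. Combining Matsushita's identification $R^j f_* \mathcal{O}_X \simeq \Omega_Y^{[j]}$ \cite{Matsushita2015} with the degeneration of the Leray spectral sequence for the semi-small morphism $f$ (via the relative hard Lefschetz theorem) yields
\begin{equation*}
H^k(X, L) \simeq \bigoplus_{i + j = k} H^i(Y, \Omega_Y^{[j]} \otimes H),
\end{equation*}
which can be further controlled via Kodaira-type vanishing on the Fano-type klt base $Y$ (\Cref{prop:basepair}).

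To extend this computation to local constancy on all of $\Deflt(X, L)$, the plan is to show that the fiberwise dimensions $h^k(\mathcal{X}^{\mathrm{uni}}_t, \mathcal{L}_t)$ are governed by a Hodge-theoretic structure that is itself locally constant in the family. Since $\pi$ is locally trivial, the local system $R^k \pi_* \bC$ is locally constant. By Saito's theory of mixed Hodge modules, applied via the rational singularities of the fibers and the equisingularity of $\pi$, this local system carries a locally constant Hodge filtration whose graded pieces sum to $h^k(\mathcal{X}^{\mathrm{uni}}_t, \mathcal{L}_t)$. Combined with constancy of $\chi$, this forces each $h^k$ to be locally constant, and Grauert's theorem then gives the desired local freeness of $R^k \pi_* \mathcal{L}$.

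The main obstacle is implementing Saito's Hodge-theoretic framework in the \emph{singular} primitive symplectic setting: one must construct the mixed Hodge module on each $\mathcal{X}^{\mathrm{uni}}_t$ associated to $\mathcal{L}_t$ and verify its compatibility with the relative direct image under the locally trivial family $\pi$. For smooth primitive symplectic manifolds this is classical \cite{Matsushita2016}; the singular extension requires combining Saito's $E_1$-degeneration theorem with the equisingularity and rational singularities of the fibers, following a strategy analogous to the one used for \Cref{thm:weakeffective}.
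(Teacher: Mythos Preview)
Your strategy has a genuine gap at the crucial step. You propose to show that $h^k(\mathcal{X}^{\mathrm{uni}}_t, \mathcal{L}_t)$ is locally constant by relating it to the Hodge filtration on the local system $R^k\pi_*\underline{\bC}$. But there is no such relationship: the graded pieces of the Hodge filtration on $H^k(\mathcal{X}_t,\bC)$ are the Hodge numbers $h^{p,q}(\mathcal{X}_t)$, which sum to the Betti number $b_k$, not to the twisted cohomology $h^k(\mathcal{X}_t,\mathcal{L}_t)$ for a nontrivial, non-flat line bundle $\mathcal{L}_t$. Saito's machinery attaches Hodge filtrations to objects of perverse-sheaf origin, not to coherent cohomology with coefficients in an arbitrary $\cO$-module. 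Your computation at the central fiber via $R^jf_*\cO_X\simeq \Omega_Y^{[j]}$ is fine, but it uses the Lagrangian fibration $f$, which does not exist on nearby fibers a priori --- establishing that is exactly the content of the companion semiampleness theorem, which in the paper is proved \emph{after} and \emph{using} the local freeness. So the envisioned Hodge-theoretic control is circular.

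The paper's argument runs in the opposite order. It first proves, by a geometric argument independent of local freeness, that $\mathcal{L}_t$ is semiample on a \emph{very general} fiber: one deforms a smooth Lagrangian torus fiber $A\subset X$ into nearby $\mathcal{X}_t$ (via smoothness of the relative Douady space and surjectivity of the forgetful map to $\Deflt(X,L)$), and then analyzes the algebraic reduction of the non-projective K\"ahler variety $\mathcal{X}_t$: the covering family of projective Lagrangian tori forces the algebraic dimension to be $n$, hence $\mathcal{L}_t$ is abundant and, by Fujino's criterion, semiample. With semiampleness in hand along a very general disk through $0$, Nakayama's result on higher direct images (applied to a simultaneous resolution of the locally trivial family) gives local freeness of $R^k\pi_*\mathcal{L}$ along that disk; upper-semicontinuity then propagates constancy of $h^k$ to all of $\Deflt(X,L)$. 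The key missing idea in your proposal is this independent semiampleness argument on nearby fibers.
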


\begin{theorem}\label{thm:semiampleLagr}
The line bundle $\mathcal{L}$ is $\pi$-semiample,
i.e., there is a morphism of complex spaces 
$\pi_U \colon 
(\mathcal{X}^{\mathrm{uni}}_U, \mathcal{L}_U) \to U$ 
representing $\pi$, and 
a morphism $F \colon \mathcal{X}^{\mathrm{uni}}_{U} \to 
\mathbb{P}(\pi_* \mathcal{L}_U^{\otimes k})$ over $U$ such that: 
\begin{enumerate}
\item $F|_{\mathcal{X}^{\mathrm{uni}}_{t}}$ is a 
Lagrangian fibration for all $t \in U$, and
\item $F|_{\mathcal{X}^{\mathrm{uni}}_{0}}$ 
coincides with the original fibration $f$.
\end{enumerate}
\end{theorem}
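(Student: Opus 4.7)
The plan is to produce $F$ via the relative linear system of a sufficiently large power of $\mathcal{L}$, and then to verify fiberwise that the resulting map is a Lagrangian fibration by combining Matsushita's structure theorem (Proposition \ref{prop:base}) with a Hilbert polynomial computation.

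As a preliminary step, I will extend Theorem \ref{thm:basechangelocallyfree} to $\mathcal{L}^{\otimes k}$ for all $k \geq 1$, obtaining that $\pi_*\mathcal{L}^{\otimes k}$ is locally free of rank $P_Y(k) \coloneqq h^0(Y, A^{\otimes k})$, where $L = f^*A$ for $A$ ample on $Y$, and that its formation commutes with base change. The argument of Theorem \ref{thm:basechangelocallyfree} should go through verbatim for $\mathcal{L}^{\otimes k}$, since powers of a pullback line bundle remain pullbacks from $Y$, and the vanishing $R^i \pi_* \mathcal{L}^{\otimes k} = 0$ for $i>0$ follows from Kawamata--Viehweg (or Koll\'ar) vanishing on the klt base $Y$ combined with the Leray spectral sequence and the hypothesis on locally trivial deformations.

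Next, I will fix $k \gg 0$ so that $A^{\otimes k}$ is very ample on $Y$; then on the central fiber $|L^{\otimes k}|$ is basepoint-free and its associated morphism factors as $f$ followed by a closed embedding $Y \hookrightarrow \mathbb{P}^N$ with $N = P_Y(k)-1$. The relative base locus $\mathrm{Bs}(\mathcal{L}^{\otimes k}/\pi) \subset \mathcal{X}^{\mathrm{uni}}$, defined as the support of the cokernel of $\pi^*\pi_*\mathcal{L}^{\otimes k} \to \mathcal{L}^{\otimes k}$, is closed and empty on the central fiber by constant-rank base change. Since $\pi$ is proper, shrinking $\Deflt(X,L)$ to a neighborhood $U$ of $0$, this locus is empty on $\mathcal{X}^{\mathrm{uni}}_U$, yielding the morphism $F \colon \mathcal{X}^{\mathrm{uni}}_U \to \mathbb{P}(\pi_*\mathcal{L}_U^{\otimes k})$ required by item (2), which restricts to $f$ (composed with $Y \hookrightarrow \mathbb{P}^N$) on the central fiber.

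For item (1), I will argue as follows. By constancy of the Hilbert function $m \mapsto h^0(\mathcal{X}^{\mathrm{uni}}_t, \mathcal{L}_t^{\otimes mk}) = P_Y(mk)$, the image $F_t(\mathcal{X}^{\mathrm{uni}}_t) \subset \mathbb{P}^N$ has the same Hilbert polynomial as $Y \subset \mathbb{P}^N$, in particular dimension $n$ and degree $A^n$. Taking the Stein factorization $F_t = g_t \circ h_t$ with $h_t$ of connected fibers, Proposition \ref{prop:base} applied to the primitive symplectic variety $\mathcal{X}^{\mathrm{uni}}_t$ implies that $h_t$ is a Lagrangian fibration with $n$-dimensional Fano klt base $\mathcal{Y}_t$ of Picard number $1$, and $g_t\colon \mathcal{Y}_t \to F_t(\mathcal{X}^{\mathrm{uni}}_t)$ is finite surjective. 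A degree comparison, using $F_t^*\mathcal{O}_{\mathbb{P}^N}(1) = \mathcal{L}_t^{\otimes k}$ together with the topological (Fujiki) invariance of $\mathcal{L}_t^{2n}$ in the locally trivial family $\pi$ and Matsushita's identity $\mathcal{L}_t^{n+1} = 0$, forces $\deg g_t$ to be constant in $t$ and hence equal to $\deg g_0 = 1$; normality of $\mathcal{Y}_t$ then yields that $g_t$ is an isomorphism, so $F_t$ coincides with the Lagrangian fibration $h_t$ up to the embedding $\mathcal{Y}_t \hookrightarrow \mathbb{P}^N$.

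The most delicate step will be the degree comparison for $g_t$ in families. Ensuring that $\deg g_t$ is locally constant on $U$ will require either flatness of the image subscheme $\overline{F(\mathcal{X}^{\mathrm{uni}}_U)} \subset \mathbb{P}(\pi_*\mathcal{L}_U^{\otimes k})$ over $U$ (which follows from constancy of Hilbert polynomial once the scheme structure is chosen correctly), or an intersection-theoretic identity extracting $\deg g_t$ from topologically invariant Fujiki data. A secondary technical point is the extension of Theorem \ref{thm:basechangelocallyfree} to powers, which should be straightforward but requires care in controlling the multiplier ideals along the multiple fibers of $f$ in the canonical bundle formula on $Y$.
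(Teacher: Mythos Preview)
Your proposal has a genuine gap in the preliminary step. The vanishing $R^i\pi_*\mathcal{L}^{\otimes k}=0$ for $i>0$ that you claim is false already on the central fiber: since the general fiber of $f$ is an abelian $n$-fold, $R^qf_*\mathcal{O}_X$ has generic rank $\binom{n}{q}\neq 0$ for $0\le q\le n$, and for $k\gg 0$ the Leray spectral sequence contributes a nonzero $H^0(Y,A^{\otimes k}\otimes R^qf_*\mathcal{O}_X)$ to $H^q(X,L^{\otimes k})$. More fundamentally, Kawamata--Viehweg does not apply to the nef-but-not-big class $L$ on $X$, and your justification ``powers of a pullback line bundle remain pullbacks from $Y$'' is only meaningful at $t=0$; for $t\neq 0$ there is no $Y$ yet, which is precisely what you are trying to produce. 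Without an independent reason for constancy of $h^0(\mathcal{X}_t,\mathcal{L}_t^{\otimes k})$, your relative base-locus argument does not start, and the constancy of the Hilbert function you invoke in item (1) is equally unavailable.

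The paper supplies the missing input by working on the very general fiber rather than the special one. It first deforms a smooth fiber $A\subset X$ of $f$ to a Lagrangian torus inside each nearby $\mathcal{X}_t$ (a $T^1$-lifting argument adapted to the singular setting). For very general $t$ the variety $\mathcal{X}_t$ is non-projective; the presence of this torus, combined with the algebraic reduction, forces $\kappa(\mathcal{X}_t,\mathcal{L}_t)=\nu(\mathcal{X}_t,\mathcal{L}_t)=n$, and Fujino's abundance for nef line bundles on klt K\"ahler $K$-trivial varieties then gives that $\mathcal{L}_t$ is already semiample there. Nakayama's invariance theorem now yields local freeness of all $R^i\pi_*\mathcal{L}^{\otimes k}$, hence constancy of $h^0$. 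From this point your base-locus argument goes through; and since $\mathcal{L}_t$ is known to be semiample on every fiber, item (1) follows directly from Matsushita's theorem, making the degree comparison for $g_t$ that you flag as delicate essentially redundant.
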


\begin{corollary}\label{cor:projectivedeformation}
Any Lagrangian fibration $f \colon X \to Y$ is deformation 
equivalent to a projective Lagrangian fibration via a 
locally trivial deformation.
\end{corollary}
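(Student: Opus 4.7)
The plan is to combine \Cref{thm:semiampleLagr} with a Noether--Lefschetz density argument on the period domain $\PP(L^{\perp}_{\CC}) \cap \bD$. First I would invoke \Cref{thm:semiampleLagr} directly: it provides a neighborhood $U \subset \Deflt(X,L)$ of $0$ and a relative morphism $F \colon \mathcal{X}^{\mathrm{uni}}_U \to \PP(\pi_* \mathcal{L}_U^{\otimes k})$ whose restriction to every fiber $\mathcal{X}^{\mathrm{uni}}_t$ is a Lagrangian fibration, specializing to $f$ at the central point. Because $\pi_U$ is a locally trivial deformation of the pair $(X,L)$, each fiber is an (a priori only analytic) primitive symplectic variety carrying a Lagrangian fibration. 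It therefore suffices to find some $t \in U$ for which $\mathcal{X}^{\mathrm{uni}}_t$ is projective.

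Next I would transport the question to the period domain via the local isomorphism \eqref{eq:perioddeflt}, so that, after shrinking, $U$ is identified with an open subset of $\PP(L^{\perp}_{\CC}) \cap \bD$. Since $L^2 = 0$ by Matsushita's theorem, the lattice $L^{\perp}/\ZZ L \subset M_{\rm np}/\ZZ L$ has signature $(2, b_2 - 3)$, so $L^\perp$ contains integral classes $\eta$ that are linearly independent from $L$ and satisfy $\eta^2 > 0$ in the BBF form. For each such $\eta$, the Noether--Lefschetz divisor
\[
NL_\eta \coloneqq \{[x] \in \PP(L^{\perp}_{\CC}) \cap \bD \,:\, x \cdot \eta = 0\}
\]
is a non-empty analytic hypersurface in the period domain, and the countable union $\bigcup_{\eta^2 > 0,\,\eta \in L^\perp} NL_\eta$ is dense. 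I would therefore pick a point $t \in U$ in this union, corresponding to some $\eta$ with $\eta^2 > 0$.

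At such a $t$, the fiber $\mathcal{X}^{\mathrm{uni}}_t$ carries both the Lagrangian class $L_t$ and an additional integral $(1,1)$-class of positive BBF square. By the projectivity criterion for analytic primitive symplectic varieties (the analog of Huybrechts' projectivity criterion; in the range $b_2 \geq 5$ this is established via the period-theoretic results of \cite{BL2022}, and can be handled by direct arguments in the remaining low $b_2$ cases), it follows that $\mathcal{X}^{\mathrm{uni}}_t$ is projective. The restriction of $F$ to this fiber is then a projective Lagrangian fibration, deformation equivalent to $f$ through the connected locally trivial family $\pi_U$ restricted to any path from $0$ to $t$ in $U$.

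The main obstacle is the projectivity step. In the smooth hyperk\"{a}hler case (treated in \cite{Matsushita2016}), this is automatic via Huybrechts' criterion, but for the singular primitive symplectic setting one must verify the analogous criterion: the existence of an integral $(1,1)$-class of positive BBF square guarantees algebraicity. This may require passing through a $\QQ$-factorial terminalization to apply results of \cite{BL2022, LMP24}, and one must check that projectivity descends back from the terminalization without disturbing the Lagrangian fibration. A secondary, minor issue is handling small $b_2$, where the period domain may have low dimension; here one can either appeal to known classification results or observe that the Noether--Lefschetz density argument still applies whenever $L^\perp$ has positive norm vectors, which holds for any Lagrangian fibration of positive-dimensional base.
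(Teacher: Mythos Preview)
Your approach is correct and follows the same strategy as the paper: invoke \Cref{thm:semiampleLagr} to deform the Lagrangian fibration over $\Deflt(X,L)$, then find a projective fiber. The paper's proof is simply more economical: rather than reconstructing the Noether--Lefschetz density argument and the singular projectivity criterion by hand, it directly cites \cite[Cor.~6.11]{BL2022} for the density of projective fibers in the locally trivial deformation space, together with the observation $\dim \Deflt(X,L) = h^{1,1}(X)-1 > 0$ (since $H^{1,1}(X)$ contains both a K\"ahler class and the nef, non-K\"ahler class $L$). The ``main obstacle'' you flag---the projectivity criterion for singular primitive symplectic varieties---is precisely what the cited result in \cite{BL2022} handles, so there is no need to route through a $\QQ$-factorial terminalization or to treat low $b_2$ separately.
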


\begin{proof}
Combine \ref{thm:semiampleLagr} and 
\cite[Cor.~6.11]{BL2022} because $\dim \Deflt(X, L) = 
h^{1,1}(X)-1 > 0$. 
Note that $h^{1,1}(X)\geq 2$, as $H^{1,1}(X)$ 
contains at least a K\"{a}hler class  
and the class of $L$, 
which is nef but not K\"ahler.
\end{proof}

The family $\pi \colon \mathcal{X}^{\mathrm{uni}} \to \Deflt(X, L)$ 
is a proper complex analytic family that is not algebraizable; 
cf.~\cite[Ex.~2.5.12]{Sernesi2006}. 
However, if $X$ is projective and polarized by an ample class 
$H$, the universal locally trivial family of 
$\langle H, L \rangle$-polarized deformations of $X$ can be 
algebraized to a projective morphism 
$\xi \colon \mathcal{X} \to S$ of quasi-projective 
varieties by Artin's approximation, see \cite[Thm.~1.6]{Artin1969} 
and \cite[Thms.~2.5.13 and 2.5.14]{Sernesi2006}. 
Recall that the deformation space of 
$\langle H, L \rangle$-polarized locally trivial 
deformation of $X$, denoted $\Deflt(X, \langle H, L \rangle)$, 
is locally isomorphic to 
$\PP(\langle H, L \rangle^{\perp}_{\CC}) \cap \bD$, 
thus smooth of dimension $h^{1,1}(X)-2$. 

\begin{corollary}\label{cor:projectivefamilywithfibr}
Suppose that $X$ is projective, polarized by an ample 
class $H$, and endowed with the Lagrangian fibration 
$f \colon X \to Y$.
Then there exist
\begin{enumerate} 
\item a quasi-projective variety $S$ of dimension 
$h^{1,1}(X)-2$ with a reference point $0 \in S$, 
\item a projective locally trivial family 
$\xi \colon \mathcal{X} \to S$ of $H$-polarized 
primitive symplectic varieties,  
\item a projective fibration 
$\cF \colon \mathcal{X} \to \mathcal{Z}$ over $S$ 
\end{enumerate}
with the following properties:
\begin{enumerate} 
\item $\cF|_{\mathcal{X}_{t}}$ is a Lagrangian fibration 
for all $t \in S$;
\item $\cF|_{\mathcal{X}_{0}}$ coincides with the 
original fibration $\pi$; and
\item $\xi$ is universal at each point of $S$ with 
respect to all $H$-polarized 
deformations of $X$ admitting a Lagrangian 
fibration deformation equivalent to $f$.
In particular, the classifying map from the 
germ of $S$ at $0$ to $\Deflt(X, \langle H, L \rangle)$ 
is an isomorphism.
\end{enumerate}
\end{corollary}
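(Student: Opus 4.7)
The strategy is to combine Theorem \ref{thm:semiampleLagr} with the algebraization discussion that precedes the corollary. First, I would consider the germ $\Deflt(X, \langle H, L \rangle) \subset \Deflt(X, L)$. Via the period map \eqref{eq:perioddeflt} this sub-germ is cut out by the single additional linear condition that $H$ remains of $(1,1)$-type, hence it is smooth of dimension $h^{1,1}(X) - 2$. Pulling back the universal triple $\pi \colon (\mathcal{X}^{\mathrm{uni}}_U, \mathcal{L}_U) \to U$ from Theorem \ref{thm:semiampleLagr} together with the $H$-polarization provided by the construction of $\Deflt(X, \langle H, L \rangle)$ produces an analytic locally trivial family $\pi' \colon (\mathcal{X}', \mathcal{L}', \mathcal{H}') \to U \cap \Deflt(X, \langle H, L \rangle)$, equipped with the relative Lagrangian fibration $F' \colon \mathcal{X}' \to \mathbb{P}(\pi'_\ast (\mathcal{L}')^{\otimes k})$ coming from Theorem \ref{thm:semiampleLagr}(1)-(2). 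Note that $\pi'$ is a proper analytic family polarized by $\mathcal{H}'$, so it is projective over its base.

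Second, I would algebraize via Artin's approximation as indicated in the paragraph following Corollary \ref{cor:projectivedeformation}. The polarization $\mathcal{H}'$ makes the $\langle H, L \rangle$-polarized universal family algebraizable to a projective morphism of quasi-projective schemes $\xi \colon \mathcal{X} \to S$ with a distinguished point $0 \in S$ whose henselization is isomorphic to the henselization of $\Deflt(X, \langle H, L \rangle)$ at $0$. The class of $\mathcal{L}'$ descends to an algebraic line bundle $\mathcal{L}^{\mathrm{alg}}$ on $\mathcal{X}$: one either invokes representability of the relative Picard functor for the projective morphism $\xi$, or observes that $\mathcal{L}'$ is determined (modulo $\mathcal{H}$-multiples) by its first Chern class in the constant local system $R^2\pi'_\ast \underline{\mathbb{Z}}$, which is locally constant and hence algebraic. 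By Theorem \ref{thm:basechangelocallyfree}, the push-forwards $\xi_\ast (\mathcal{L}^{\mathrm{alg}})^{\otimes km}$ are locally free of the expected rank for all $m$ (after shrinking $S$), and the relative section ring $\bigoplus_{m \geq 0} \xi_\ast (\mathcal{L}^{\mathrm{alg}})^{\otimes km}$ is a finitely generated sheaf of $\mathcal{O}_S$-algebras since this holds fiberwise by GAGA. Setting $\mathcal{Z} \coloneqq \operatorname{Proj}_S \bigoplus_{m \geq 0} \xi_\ast (\mathcal{L}^{\mathrm{alg}})^{\otimes km}$ yields the required projective fibration $\cF \colon \mathcal{X} \to \mathcal{Z}$ over $S$, whose restriction to $0$ recovers $f$ after using that it agrees with the analytic construction on the fiber.

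The main obstacle I expect is the algebraic descent of the fibration data: Artin's approximation produces the algebraic family polarized by $\mathcal{H}$, but one must argue that $\mathcal{L}$ and the morphism $F$ descend as well. This reduces to two checks: (i) the relative Picard scheme of $\xi$ is representable and the section of it defined analytically by $\mathcal{L}'$ is algebraic (which follows since $\mathcal{L}'$ is characterized topologically up to $\mathcal{H}$-multiples, cf.\ the behavior of $\mathcal{L}$ under the period map); and (ii) relative semiampleness in the algebraic setting follows from semiampleness on each fiber, itself guaranteed by the locally trivial structure since the restriction to the central fiber is $f$, which is semiample, and semiampleness of a line bundle on a primitive symplectic variety is constant in locally trivial deformations that preserve the class (Theorem \ref{thm:semiampleLagr}). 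Finally, property (3) is immediate: the germ of $S$ at any point $t$ is a universal locally trivial deformation space of $(\mathcal{X}_t, H, L_t)$ by construction, and since by Theorem \ref{thm:semiampleLagr} every class in $\Deflt(X, \langle H, L \rangle)$ admits a Lagrangian fibration deforming $f$, the family $\xi$ is universal among projective $H$-polarized deformations that carry such a deformed Lagrangian fibration. Applying the same argument at every point of $S$ (locally trivial families behave the same at each closed point) yields universality pointwise on $S$, as claimed.
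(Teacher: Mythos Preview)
Your proposal is essentially correct and follows the approach the paper indicates (the paper does not spell out a detailed proof; the strategy is sketched in the paragraph preceding the corollary and the subsequent Remark). You correctly invoke Theorem \ref{thm:semiampleLagr} to obtain the relative Lagrangian fibration on the analytic side, restrict to the $\langle H, L \rangle$-polarized locus, and then algebraize via Artin's approximation as the paper directs.

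Two minor points to tighten. First, the line bundle $\mathcal{L}$ is already part of the $\langle H, L \rangle$-polarized data being algebraized by Artin, so no separate descent argument via the relative Picard functor or topological characterization is needed. Second, the assertion that the relative section ring $\bigoplus_{m \geq 0} \xi_\ast (\mathcal{L}^{\mathrm{alg}})^{\otimes km}$ is a finitely generated sheaf of $\mathcal{O}_S$-algebras ``since this holds fiberwise by GAGA'' is not valid as stated: fiberwise finite generation does not imply relative finite generation, and GAGA does not bridge that gap. Fortunately you do not need the $\operatorname{Proj}$ construction at all. By Theorem \ref{thm:basechangelocallyfree} and cohomology and base change, once $\mathcal{L}_t^{\otimes k}$ is globally generated on each fiber (which you have from Theorem \ref{thm:semiampleLagr}), the evaluation map $\xi^*\xi_*\mathcal{L}^{\otimes k} \to \mathcal{L}^{\otimes k}$ is surjective by Nakayama, so you directly obtain a morphism $\mathcal{X} \to \mathbb{P}(\xi_*\mathcal{L}^{\otimes k})$ over $S$; take $\mathcal{Z}$ to be its image (or the Stein factorization). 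This is precisely how the paper's proof of Theorem \ref{thm:semiampleLagr} proceeds in the analytic setting.
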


\begin{remark}
Although the statement of \ref{cor:projectivefamilywithfibr} 
is purely algebraic, its proof is not algebraic.
In fact, it is unclear how to deform the Lagrangian 
fibration without first showing the semiampleness of the 
corresponding nef line bundle on a very general 
non-projective deformation of $X$, see 
\S~\ref{sec:semiamplenoproj}.
For this reason, we work with the whole deformation 
space $\Deflt(X, L)$ and not just with the 
subspace $\Deflt(X, \langle H, L \rangle)$.
\end{remark}

\subsection{Deformation of Lagrangian tori} In this section, 
we show that a small deformation of $X$ in $\Deflt(X, L)$ 
contains a Lagrangian torus deformation equivalent to a 
smooth fiber of $f$.
We follow closely \cite[\S~2]{Lehn2016}.

Let $i \colon A \, \hookrightarrow X$ be the inclusion 
of a smooth fiber of $f$. Recall that $A$ is a Lagrangian 
torus lying in the regular locus of $X$. 
Let $\mathcal{D}(i)$ be a complex analytic germ of the 
relative Douady space 
of $\mathcal{X}^{\mathrm{uni}}$ over 
$\Deflt(X, L)$  near $[i \colon A \, \hookrightarrow X ]$, 
which is the complex analytic analogue of the Hilbert scheme, 
see \cite{Douady1966} or \cite[Ch.~VIII]{GPR1994}. 
There exists a commutative square
\[
\xymatrix{
\mathcal{A}^{\mathrm{uni}} \, \ar[r]^{\epsilon} \ar[d]_{u} &
\mathcal{X}^{\mathrm{uni}} \ar[d]^{\pi}\\
\Deflti \ar[r]^-{p} & \Deflt(X, L),
}
\]
where $u \colon \mathcal{A}^{\mathrm{uni}} \to \mathcal{D}(i)$ 
is the universal family, 
$\epsilon \colon \mathcal{A}^{\mathrm{uni}} 
\to \mathcal{X}^{\mathrm{uni}}$ the evaluation morphism, 
and $p \colon \Deflti \to \Deflt(X, L)$ the forgetful 
morphism $[\mathcal{A}^{\text{uni}}_{t}] \mapsto f(\mathcal{A}^{\text{uni}}_{t})$. 
The inclusion $i$ corresponds to the 
reference point $0 \in \Deflti$.  

\begin{proposition}\label{prop:deformtori} 
(1) The forgetful morphism $p$ is a submersion of 
smooth germs of complex spaces, and (2) 
$\epsilon_{t} \colon \mathcal{A}^{\text{uni}}_t \, 
\hookrightarrow \mathcal{X}^{\text{uni}}_{t}$ is the 
inclusion of a Lagrangian torus for any $t$ in a 
sufficiently small complex space 
representing the germ $\Deflti$. 
\end{proposition}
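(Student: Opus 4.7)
The plan is to adapt the deformation-theoretic argument of \cite[\S~2]{Lehn2016} (and Matsushita \cite{Matsushita2016}) from the smooth hyperk\"ahler setting, leveraging crucially that the Lagrangian torus $A$ is contained in $X^{\rm reg}$.

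First I would reduce to a smooth deformation problem: since $A$ is disjoint from $X_{\rm sing}$ and $\pi$ is locally trivial near the singular locus, I choose a relatively Stein open neighborhood $\mathcal{U} \subset \mathcal{X}^{\rm uni}$ of $A$ such that $\mathcal{U} \to \Deflt(X, L)$ is smooth and $\mathcal{U}_0 \subset X^{\rm reg}$. The germ $\Deflti$ then agrees with the germ of relative deformations of $A$ inside the smooth family $\mathcal{U} \to \Deflt(X, L)$, so standard smooth deformation theory applies.

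Next, the reflexive symplectic form $\sigma$ is non-degenerate on $X^{\rm reg}$, and since $A$ is Lagrangian, contraction with $\sigma$ yields $N_{A/X} \simeq \Omega^1_A$. In particular $H^0(A, N_{A/X}) \simeq \bC^g$ and $H^1(A, N_{A/X}) \simeq H^{1,1}(A)$. The forgetful morphism $p$ fits into the tangent-obstruction sequence
\begin{align*}
0 \to H^0(A, N_{A/X}) \to T_{[i]}\Deflti \to T_0 \Deflt(X, L) \xrightarrow{\mathrm{obs}} H^1(A, N_{A/X}).
\end{align*}
For part (1), I would verify that $\mathrm{obs} = 0$ via Bloch's semi-regularity principle: the cycle class $[A] \in H^{2g}(X, \bQ)$ is a positive rational multiple of $L^g$ (since $L = f^* H$ for $H$ ample on $Y$, so $L^g = \deg(H^g) \cdot [A]$), hence $[A_t] = \deg(H^g)^{-1} L_t^g$ remains of Hodge type $(g, g)$ throughout $\Deflt(X, L)$, where by construction $L_t$ stays of type $(1,1)$. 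Semi-regularity then gives unobstructedness of $\Deflti$ over $\Deflt(X, L)$ and surjectivity of $dp$.

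For part (2), any sufficiently small deformation of the abelian variety $A$ remains a compact complex torus $A_t$, on which $\sigma_t|_{A_t} \in H^0(A_t, \Omega^2_{A_t})$ is harmonic and hence vanishes iff its cohomology class in $H^2(A_t, \bC)$ does. By Gauss--Manin, this class coincides with the image of $\sigma_t \in L^\perp \otimes \bC$ under the topological restriction $H^2(X, \bC) \to H^2(A, \bC)$; the Lagrangian condition at $t = 0$ and the holomorphic variation of $\sigma_t$ across the period domain $\bP(L^\perp_{\bC}) \cap \bD$ force this class to vanish on a neighborhood of the reference point, as in \cite[Prop.~2.3]{Lehn2016}. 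The principal obstacle I foresee is the semi-regularity step: after Step~1 the problem is reduced to smooth deformation theory near $A$, so Lehn's and Matsushita's arguments transfer, but one still must identify $T_0 \Deflt(X, L)$ with the appropriate subspace of $H^{1,1}(X)$ via \eqref{eq:perioddeflt} and verify injectivity of the semi-regularity map, which in the Lagrangian-torus situation becomes essentially the identity on $H^{1,1}(A)$ under the symplectic identification.
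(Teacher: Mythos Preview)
Your argument for part (2) is correct and slightly more direct than the paper's: since $A_t$ is a torus for $t$ near $0$, the holomorphic $2$-form $\sigma_t|_{A_t}$ vanishes iff its cohomology class does, and by $L^\perp = \ker(H^2(X,\bZ)\to H^2(A,\bZ))$ (the paper's Lemma~\ref{eq:Lperprestr}) together with $[\sigma_t]\in L^\perp_\bC$ from \eqref{eq:perioddeflt}, this class is zero. The paper instead checks that the Lagrangian locus agrees with $\Deflti$ on every infinitesimal thickening and hence formally.

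Part (1) has a genuine gap. Your exact sequence and the vanishing of the first-order map $\mathrm{obs}$ only give surjectivity of $dp$ on tangent spaces; this does not prove that $\Deflti$ is smooth, and without smoothness you cannot conclude that $p$ is a submersion. Invoking ``Bloch's semi-regularity principle'' for unobstructedness to all orders is not a black box: it requires (i) injectivity of the semi-regularity map $H^1(A,N_{A/X})\to H^{g+1}(X,\Omega^{[g-1]}_X)$, and your assertion that this is ``essentially the identity on $H^{1,1}(A)$'' is unjustified (the source has dimension $g^2$ and the target is a global Hodge group of $X$), and (ii) a version of the Buchweitz--Flenner theorem for the singular $X$. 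Your Step~1 reduction to a smooth neighborhood $\mathcal{U}$ does not help here, since semi-regularity uses global Hodge theory on a compact space, not local data near $A$.

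The paper takes a different route: it proves smoothness of $\Deflti$ directly via Ran's $T^1$-lifting criterion, showing that $H^1(\mathcal{X},T_{\mathcal{X}/S}\langle\mathcal{A}\rangle)$ is a free $R$-module over every Artinian base $S$. The key inputs are the symplectic identification $T_{\mathcal{X}/S}\simeq j_*\Omega^1_{\mathcal{X}^{\rm reg}/S}$ and the freeness results for $H^1(\mathcal{X},j_*\Omega^1_{\mathcal{X}^{\rm reg}/S})$ on locally trivial families from \cite{BL2021}, which is precisely where the singular-case work happens. Surjectivity of $dp$ is then a separate short computation, essentially the one you sketch, using $L^\perp=\ker(H^2(X)\to H^2(A))$.
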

\begin{proof}
It follows from \Cref{prop:defismooth}, 
\Cref{lem:subforget}, and \Cref{lem:Lagrangiandef}.
\end{proof}

\begin{lemma}\label{prop:defismooth}
$\Deflti$ is smooth.
\end{lemma}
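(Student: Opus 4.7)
My plan is to combine the classical identification of the normal bundle of a Lagrangian submanifold with the known smoothness of $\Deflt(X,L)$ from \cite[Thm.~4.7, Cor.~5.9]{BL2022}, reducing smoothness of $\Deflti$ to the vanishing of a single obstruction map that can be computed from the Hodge theory of Lagrangian fibrations. Because $A$ is smooth and contained in $X^{\mathrm{reg}}$, the classical Douady deformation theory applies and the relative tangent (resp.\ obstruction) space of $\Deflti$ over $\Deflt(X,L)$ at $[i]$ is $H^{0}(A,N_{A/X})$ (resp.\ $H^{1}(A,N_{A/X})$).

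First, I would compute $N_{A/X}$. Contraction with the symplectic form $\sigma$, which is regular and non-degenerate on a neighborhood of $A\subset X^{\mathrm{reg}}$, yields the classical isomorphism $\iota_{\sigma}\colon N_{A/X}\xrightarrow{\sim}\Omega^{1}_{A}$, and since $A$ is an abelian variety of dimension $n=\tfrac{1}{2}\dim X$ by Proposition~\ref{prop:base}, $\Omega^{1}_{A}\simeq \mathcal{O}_{A}^{\oplus n}$. Hence $h^{k}(A,N_{A/X})=n\binom{n}{k}$, and these dimensions remain constant in any locally trivial deformation, since local triviality of $\mathcal{X}_{k}$ around the smooth family $\mathcal{A}_{k}$ still produces $N_{\mathcal{A}_{k}/\mathcal{X}_{k}}\simeq \Omega^{1}_{\mathcal{A}_{k}/\mathrm{Spec}\,A_{k}}\simeq \mathcal{O}_{\mathcal{A}_{k}}^{\oplus n}$.

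Next, I would apply the Ran--Kawamata $T^{1}$-lifting criterion to the Douady deformation functor of the pair $(X,A)$. Because $\Deflt(X,L)$ is already known to be smooth and the normal-bundle computation of the previous paragraph produces a locally free (in fact trivial) first-order deformation module of constant rank $n$ at each order, cohomology and base change give freeness of the relative $T^{1}$-module over $A_{k}$. Consequently, the only remaining input for the smoothness of $\Deflti$ is the vanishing of the obstruction map
$$ob\colon T_{0}\Deflt(X,L)\longrightarrow H^{1}(A,N_{A/X})\simeq H^{1}(A,\Omega^{1}_{A})$$
controlling whether a first-order deformation of $X$ preserving $L$ can be lifted to a first-order deformation of the pair $(X,A)$.

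The main obstacle, and the step where the Lagrangian structure is genuinely used, is the vanishing of $ob$. Via the isomorphism $T_{X}\simeq \Omega^{[1]}_{X}$ induced by $\sigma$, the tangent space $T_{0}\Deflt(X,L)$ is identified with $L^{\perp}\subset H^{1}(X,\Omega^{[1]}_{X})=H^{1,1}(X)$ (BBF-orthogonal), and $ob$ is identified with the $(1,1)$-component of the topological restriction $H^{2}(X,\mathbb{C})\to H^{2}(A,\mathbb{C})$. By Deligne's theorem of the fixed part, the image of this restriction on rational cohomology is exactly the monodromy-invariant subspace $H^{0}(Y^{o},R^{2}f^{o}_{\ast}\underline{\mathbb{Q}})\subset H^{2}(A,\mathbb{Q})$, which by Remark~\ref{pic-aut} (following \cite[Lem.~2.2]{Matsushita2016}) is one-dimensional and generated by $L|_{A}$; but $L=f^{\ast}H$ for an ample $H$ on $Y$ forces $L|_{A}=0$. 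Hence the entire restriction $H^{1,1}(X)\to H^{1,1}(A)$ vanishes, $ob=0$ identically, and $\Deflti$ is smooth.
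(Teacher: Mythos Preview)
Your proof contains a genuine error in the final step. You claim that the monodromy-invariant subspace of $H^{2}(A,\mathbb{Q})$ is ``generated by $L|_{A}$'', and since $L|_{A}=0$ you conclude that the entire restriction $H^{1,1}(X)\to H^{1,1}(A)$ vanishes. This is false. The image of $H^{2}(X,\mathbb{Q})\to H^{2}(A,\mathbb{Q})$ is indeed one-dimensional, but it is \emph{not} spanned by $L|_{A}$: since $q(L,L)=0$, the class $L$ lies in $L^{\perp}$, and by Lemma~\ref{eq:Lperprestr} the kernel of the restriction is exactly $L^{\perp}$. So $L|_{A}=0$ simply reflects $L\in\ker$, while the one-dimensional image is spanned by $\omega|_{A}$ for any K\"ahler class $\omega$ (nonzero, as $A$ is a positive-dimensional subvariety). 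Thus $H^{1,1}(X)\to H^{1,1}(A)$ has rank one, not zero, and your argument collapses.

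What you actually need---and what Lemma~\ref{eq:Lperprestr} gives---is that the restriction vanishes on $L^{\perp}\cap H^{1,1}(X)=T_{0}\Deflt(X,L)$, so $ob=0$ on the correct domain. But even with this fix, your reduction is incomplete: vanishing of $ob$ at first order, together with freeness of the relative $T^{1}$, does not by itself force smoothness of $\Deflti$; Ran--Kawamata requires freeness of the tangent module over \emph{every} Artinian base. The paper handles this by working with the full exact sequence over $\Spec R$ and invoking \cite[Proof of Thm.~4.17]{Lehn2015} to show that the cokernel of $H^{1}(\mathcal{X},j_{\ast}\Omega^{1}_{\mathcal{X}^{\mathrm{reg}}/S})\to H^{1}(\mathcal{A},\Omega^{1}_{\mathcal{A}/S})$ is a free $R$-module, from which freeness of $H^{1}(\mathcal{X},T_{\mathcal{X}/S}\langle\mathcal{A}\rangle)$ follows. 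Your approach can be salvaged if you observe that the topological restriction vanishes on $L^{\perp}$ at every infinitesimal order (the kernel identification is independent of the Artinian thickening), but this requires tracking the exact sequence over all $R$, not just $\mathbb{C}[\epsilon]$.
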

\begin{proof} The proof of \cite[Theorem 3.8]{Lehn2016} 
works in the singular context too with minor adjustments, 
as follows.   Let $\iota \colon \mathcal{A} \, 
\hookrightarrow \mathcal{X}$ be a locally trivial 
deformation of $i$ over the spectrum $S=\mathrm{Spec} R$ 
of an Artinian local $\mathbb{C}$-algebra $R$.
Define $T_{\mathcal{X}/S}\langle \mathcal{A}\rangle$ 
via the exact sequence of sheaves over $\mathcal{X}$
\begin{equation}\label{eq:relativetangentsequence}
0 \to T_{\mathcal{X}/S}\langle \mathcal{A}\rangle \to  
T_{\mathcal{X}/S} \to \iota_* N_{\mathcal{A}/\mathcal{X}} \to 0.
\end{equation}
To prove the smoothness of $\Deflti$, it suffices 
to show that 
$H^1(\mathcal{X}, T_{\mathcal{X}/S}\langle \mathcal{A}\rangle)$ 
is a free $R$-module for all infinitesimal deformations 
of $i$ over a local Artinian scheme,  
via Ran’s $T^1$-lifting principle; cf. \cite[\S~3.7]{Lehn2016} 
and references therein. 

Taking cohomology in \eqref{eq:relativetangentsequence}, 
we obtain the exact sequence
\begin{equation}\label{eq:defomcohom}
H^0(\mathcal{X}, T_{\mathcal{X}/S}) \to H^0(\mathcal{A}, 
N_{\mathcal{A}/\mathcal{X}}) \to H^1(\mathcal{X}, 
T_{\mathcal{X}/S}\langle \mathcal{A}\rangle) \to  H^1(\mathcal{X}, 
T_{\mathcal{X}/S}) \to H^1(\mathcal{A}, N_{\mathcal{A}/\mathcal{X}}).
\end{equation}
Let $j \colon \mathcal{X}^{\mathrm{reg}} \, 
\hookrightarrow \mathcal{X}$ be the inclusion of the 
regular locus. The non-degenerate pairing defined by the 
holomorphic symplectic form induces the following isomorphisms:
\begin{enumerate}
\item $T_{\mathcal{X}/S} 
\simeq j_* \Omega^1_{\mathcal{X}^{\mathrm{reg}}/S}$ 
by \cite[Lem.~4.5.(2)]{BL2022};
\item $N_{\mathcal{A}/\mathcal{X}} \simeq 
\Omega^{1}_{\mathcal{A}/\mathcal{X}} \simeq 
\mathcal{O}^{\otimes \dim A}_{\mathcal{A}}$, 
since $\mathcal{A}$ is Lagrangian in $\mathcal{X}$ 
by \cite[Lem.~3.4]{Lehn2016}.
\end{enumerate}
In particular, $H^0(\mathcal{X}, T_{\mathcal{X}/S})=
H^0(\mathcal{X}, j_* \Omega^1_{\mathcal{X}^{\mathrm{reg}}/S})=0$, 
and \eqref{eq:defomcohom} can be rewritten as follows:
\begin{equation}\label{eq:defomcohII}
0 \to H^0(\mathcal{A}, \Omega^{1}_{\mathcal{A}/\mathcal{X}}) \to
H^1(\mathcal{X}, T_{\mathcal{X}/S}\langle \mathcal{A}\rangle) \to 
H^1(\mathcal{X}, j_* \Omega^1_{\mathcal{X}^{\mathrm{reg}}/S}) \to 
H^1(\mathcal{A}, \Omega^{1}_{\mathcal{A}/\mathcal{X}}).
\end{equation}
Note that $H^1(\mathcal{X}, 
j_* \Omega^1_{\mathcal{X}^{\mathrm{reg}}/S})$, 
$H^0(\mathcal{A}, \Omega^{1}_{\mathcal{A}/\mathcal{X}})$ 
and $H^1(\mathcal{A}, \Omega^{1}_{\mathcal{A}/\mathcal{X}})$ 
are free $R$-modules by 
\cite[Lemma 2.4.(3)]{BL2021} and \cite[Thm.~5.5]{Deligne68}. 
The cokernel of the map 
$H^1(\mathcal{X}, j_* \Omega^1_{\mathcal{X}^{\mathrm{reg}}/S}) 
\to H^1(\mathcal{A}, \Omega^{1}_{\mathcal{A}/\mathcal{X}})$ 
is free; cf \cite[Proof of Thm.~4.17]{Lehn2015}. 
From \eqref{eq:defomcohII}, we deduce that 
$H^1(\mathcal{X}, T_{\mathcal{X}/S}\langle \mathcal{A}\rangle)$ 
is free, so $\Deflti$ is smooth.
\end{proof}

\begin{lemma}\label{lem:subforget} The forgetful morphism
 $p \colon \Deflti \to \Deflt(X, L)$ is a submersion.  
\end{lemma}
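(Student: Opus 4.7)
The plan is to show that the differential $dp \colon T_0\mathcal{D}(i) \to T_0\Deflt(X,L)$ is surjective at the reference point $[i]$; combined with the smoothness of $\mathcal{D}(i)$ established in \ref{prop:defismooth}, this suffices. The key tool is the cohomology sequence \eqref{eq:defomcohII}, already derived in that proof via the symplectic identification $T_X \simeq j_*\Omega^1_{X^{\rm reg}}$ and the Lagrangian identification $N_{A/X} \simeq \Omega^1_A$. From this sequence, the image of the natural map $T_0\mathcal{D}_0(i) \to H^1(X, j_*\Omega^1_{X^{\rm reg}}) = T_0\Deflt(X)$, where $\mathcal{D}_0(i)$ denotes the absolute Douady germ over $\Deflt(X)$, is exactly $\ker r$, where
\[
r \colon H^1(X, j_*\Omega^1_{X^{\rm reg}}) \longrightarrow H^1(A, \Omega^1_A).
\]
Since $\mathcal{D}(i) = \mathcal{D}_0(i) \times_{\Deflt(X)} \Deflt(X,L)$, the image of $dp$ equals $\ker r \cap T_0\Deflt(X,L)$, and so the lemma reduces to showing the inclusion $T_0\Deflt(X,L) \subseteq \ker r$.

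I would then establish the stronger statement $r = 0$. Under the Hodge-theoretic identification of $H^1(X, j_*\Omega^1_{X^{\rm reg}})$ with $H^{1,1}(X, \bC)$—valid because $X$ has rational singularities and $H^2(X)$ carries a pure Hodge structure, see \cite[Lem.~2.1]{BL2021} and \cite{Schwald2020}—the map $r$ coincides with the $(1,1)$-piece of the topological restriction $H^2(X, \bC) \to H^2(A, \bC)$. Since $A \subseteq X^{\rm reg}$, this restriction factors through $H^2(X^{\rm reg}, \bC) \to H^2(A, \bC)$, and classes pulled back from $X^{\rm reg}$ are invariant under the monodromy of the smooth fibration $f^o$ over the locus $Y^o \subseteq Y$ where $f$ is smooth. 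A Matsushita-type argument following \cite[Lem.~2.2]{Matsushita2016}, which depends only on the abelian structure of the smooth fiber $A$ and the primitivity of the monodromy on $R^1f^o_*\bQ$ over $Y^o$, then shows that the invariants in $H^2(A,\bQ) = \wedge^2 H^1(A,\bQ)$ have rank at most one, generated by $[L|_A]$. Since $L = f^*H$ and $f(A)$ is a point, $[L|_A] = 0$, and therefore $r = 0$.

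The main obstacle is establishing the monodromy-invariance statement in the singular primitive symplectic setting. In the smooth case, Matsushita's Lemma~2.2 uses the action of $\pi_1(Y^o)$ on $H^1(A, \bQ)$ combined with the natural isomorphism $H^2(A, \bQ) = \wedge^2 H^1(A, \bQ)$ for an abelian variety, so it transfers to our setting once we restrict to the smooth locus of the fibration; the singularities of $X$ and $Y$ enter only via the restriction maps $H^2(X, \bC) \to H^2(X^{\rm reg}, \bC) \to H^2(A, \bC)$, which are well-defined and strictly compatible with the Hodge decompositions involved. The remaining steps—namely the compatibility of $r$ with the $(1,1)$-restriction under the symplectic identification, and the identification of $\ker r \cap T_0\Deflt(X,L)$ with $\mathrm{im}(dp)$—are formal and already implicit in the framework developed for \ref{prop:defismooth}.
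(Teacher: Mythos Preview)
Your overall architecture coincides with the paper's: identify $dp$ on tangent spaces with the map $H^1(X,T_X\langle A\rangle)\to H^1(X,T_X)$ coming from \eqref{eq:defomcohII}, and show the image contains $T_0\Deflt(X,L)$. The paper does this in one line by invoking Lemma~\ref{eq:Lperprestr}, namely $\ker\{H^2(X,\bZ)\to H^2(A,\bZ)\}=L^\perp$, then intersecting with $H^{1,1}(X)$ to get $\ker r = L^\perp_\bC\cap H^{1,1}(X)=T_0\Deflt(X,L)$.

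Your attempt to prove the stronger statement $r=0$ contains a genuine error. The monodromy invariants in $H^2(A,\bQ)$ do have rank one, but they are \emph{not} generated by $[L|_A]$. As you yourself observe, $L=f^*H$ restricts to zero on the fiber $A$; the rank-one invariant subspace is instead generated by the restriction of an \emph{ample} class on $X$, which gives a nonzero polarization on $A$. Hence $r\colon H^{1,1}(X)\to H^{1,1}(A)$ has image of rank exactly one, and $r\neq 0$. Your Matsushita-type argument only bounds the rank of the image, it does not force it to vanish.

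The fix is not to aim for $r=0$ but to identify $\ker r$ precisely. This is what Lemma~\ref{eq:Lperprestr} does: using Fujiki's relations and a Hodge-index argument (valid in the singular setting by \cite[Prop.~5.15]{BL2021}), one shows that a class $\alpha\in H^2(X,\bZ)$ restricts to zero on $A$ if and only if $q(\alpha,L)=0$. From this, $T_0\Deflt(X,L)=L^\perp_\bC\cap H^{1,1}(X)\subseteq\ker r$ follows immediately, which is all you need.
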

\begin{proof}
The differential $dp_{[i]} \colon T_{[i]}\Deflti \to T_0 \Deflt(X)$
at the point $[i] \in \Deflti$ can be identified with the
cohomological map $H^1(X, T_{X}\langle A\rangle) \to  
H^1(X, T_{X})$ by \eqref{eq:defomcohom}. 
It surjects onto $T_0 \Deflt(X, L)$, since
\begin{align*}
\mathrm{Im}(dp_{[i]}) & = \ker \{ H^{1}(X, \Omega^{[1]}_X) 
\to H^{1}(A, \Omega^1_{A})\} = 
\ker \{ H^{2}(X, \mathbb{C}) \to 
H^{2}(A, \mathbb{C})\} \cap H^{1}(X, \Omega^{[1]}_X)\\
& = L^{\perp}_{\mathbb{C}} \cap H^{1}(X, \Omega^{[1]}_X) = 
T_0 \Deflt(X, L).
\end{align*}
Here the identities follow from \eqref{eq:defomcohII}, the 
Lagrangianity of $A$, 
\Cref{eq:Lperprestr} and \eqref{eq:perioddeflt}. 
\end{proof}

\Cref{lem:Lagrangiandef} asserts that small 
deformations of $i$ are Lagrangian inclusions. 

\begin{lemma}\label{lem:Lagrangiandef} The universal family 
$u \colon \mathcal{A}^{\mathrm{uni}} \to \mathcal{D}(i)$ is a 
family of Lagrangian submanifolds.
\color{black}
\end{lemma}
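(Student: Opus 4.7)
The plan is to deduce the Lagrangian property fiberwise from a Hodge-theoretic argument: I will show that the cohomology class $\iota_t^*[\sigma_t] \in H^2(A_t,\mathbb{C})$ vanishes identically in $t$, and then invoke the injectivity of the $H^{2,0} \hookrightarrow H^2$ map on a compact Kähler manifold to conclude that $\iota_t^*\sigma_t = 0$ as a form.

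First I would shrink $\mathcal{D}(i)$ to a contractible neighborhood of $[i]$ small enough that every $A_t$ is a smooth compact submanifold of $\mathcal{X}^{\mathrm{uni,\,reg}}|_{\Deflti}$. Both conditions hold at $[i]$, since $A$ is smooth and contained in $X^{\mathrm{reg}}$, and both are open in $\mathcal{D}(i)$: smoothness of the parametrized subspace is open in the Douady space, and containment in $\mathcal{X}^{\mathrm{uni,\,reg}}$ is open because $u$ is proper, $A_0 \subset X^{\mathrm{reg}}$, and the singular locus of $\pi$ is closed. Each $X_t$ is a primitive symplectic variety by \cite[Cor.~4.11]{BL2022}, so $X_t^{\mathrm{reg}}$ is Kähler and $A_t \subset X_t^{\mathrm{reg}}$ inherits a Kähler structure.

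Next, by the first paragraph and the smoothness of $\mathcal{D}(i)$ established in \Cref{prop:defismooth}, $u$ is a proper submersion of complex manifolds, hence topologically locally trivial by Ehresmann's theorem; the same holds for $\pi|_{\Deflti}$, which is a topologically locally trivial family of primitive symplectic varieties. Over the contractible base, both $R^2u_*\mathbb{C}$ and $R^2\pi_*\mathbb{C}$ are trivial local systems, and the pullback maps $\iota_t^*$ assemble into a morphism of local systems which, in a fixed flat trivialization, is the constant map $\iota^* \colon H^2(X,\mathbb{C}) \to H^2(A,\mathbb{C})$. On the other hand, the period section $t \mapsto [\sigma_t]$ takes values in $L^{\perp}_{\mathbb{C}}$ because $p(\Deflti) \subset \Deflt(X,L)$ and the period map \eqref{eq:perioddeflt} lands in $\mathbb{P}(L^{\perp}_{\mathbb{C}}) \cap \mathbb{D}$. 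Invoking the identification $L^{\perp}_{\mathbb{C}} = \ker(\iota^*)$ used already in the proof of \Cref{lem:subforget}, we conclude $\iota_t^*[\sigma_t] = \iota^*([\sigma_t]) = 0$ in $H^2(A_t,\mathbb{C})$ for every $t$.

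Finally, $\iota_t^*\sigma_t$ is a holomorphic $2$-form on the compact Kähler manifold $A_t$ whose de Rham class vanishes; since every holomorphic form on a compact Kähler manifold is harmonic, the natural map $H^0(A_t,\Omega^2_{A_t}) \hookrightarrow H^2(A_t,\mathbb{C})$ is injective, forcing $\iota_t^*\sigma_t = 0$. Hence $A_t$ is isotropic in $X_t^{\mathrm{reg}}$, and since $\dim A_t = \dim A = \tfrac{1}{2}\dim X_t$, it is Lagrangian. The main delicate point is the global identification $L^{\perp}_{\mathbb{C}} = \ker(\iota^*)$ on the full $H^2$, combining the Lagrangian property $\iota^*[\sigma] = 0$ with Matsushita's constraint on the rank of $\iota^*$; once this is in hand, the cohomological constancy argument above is essentially formal.
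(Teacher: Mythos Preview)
Your proof is correct and takes a genuinely different route from the paper's.

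The paper argues by defining the Lagrangian locus $\DefltL(i)\subset\Deflti$ as the vanishing locus of the section $\sigma|_{\mathcal{A}^{\mathrm{uni}}}$ of $u_*\Omega^{[2]}_{\mathcal{A}^{\mathrm{uni}}/\Deflti}$, and then invokes \cite[Lem.~3.4]{Lehn2016} to show that every infinitesimal truncation of the inclusion $\DefltL(i)\subset\Deflti$ is an isomorphism; the conclusion follows by passing from formal completions to analytic germs. Your argument instead observes that under the flat trivialization the restriction map $\iota_t^*$ is constantly $\iota^*$, that the period section stays in $L^{\perp}_{\bC}$ by \eqref{eq:perioddeflt}, and that $L^{\perp}_{\bC}=\ker(\iota^*)$ by \Cref{eq:Lperprestr}; hence $\iota_t^*[\sigma_t]=0$ in cohomology, and Hodge theory on the compact K\"ahler manifold $A_t$ forces $\iota_t^*\sigma_t=0$ as a form.

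Your approach is more elementary in that it avoids the infinitesimal deformation input from \cite{Lehn2016} and stays entirely within the Hodge-theoretic framework already set up in the appendix (in particular it uses \Cref{eq:Lperprestr} directly, which is logically independent of \Cref{lem:Lagrangiandef}). The paper's approach, on the other hand, does not rely on the special identity $L^{\perp}=\ker(\iota^*)$ tying the polarization to the fiber cohomology, and would transport more readily to situations where no such Matsushita-type rank-one statement is available. One small presentational point: you invoke \Cref{eq:Lperprestr}, which in the paper appears textually after the lemma you are proving; there is no logical circularity (its proof is independent), but in a standalone write-up you would want to state it first.
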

\begin{proof}
Consider the inclusion $\mathcal{A}^{\mathrm{uni}} \, 
\hookrightarrow \mathcal{X}^{\mathrm{uni}} \times \Deflti$, 
and let $\sigma$ be a nonvanishing section of 
$\Omega^{[2]}_{\mathcal{X}^{\mathrm{uni}}\times \Deflti/\Deflti}$.
The locus $\DefltL(i)$ of Lagrangian inclusions is an analytic 
subset of $\Deflti$ cut out by the vanishing of the section
$\sigma|_{\mathcal{A}^{\mathrm{uni}}}$ of the locally free 
sheaf $u_*\Omega^{[2]}_{\mathcal{A}^{\mathrm{uni}}/\Deflti}$.
Infinitesimal truncations of the inclusion 
$\DefltL(i)\subseteq \Deflti$ are isomorphisms by 
\cite[Lem.~3.4]{Lehn2016} 
(cf.~also \cite[Lem.~3.9, Prop.~4.13]{Lehn2015}), 
so the formal completions of $\DefltL(i)$ and $\Deflti$ 
are isomorphic. Hence, $\DefltL(i) = \Deflti$.
\end{proof}

\begin{lemma}\label{eq:Lperprestr} 
$L^{\perp} = \ker \{ H^2(X, \ZZ) \to H^2(A, \ZZ)\}$. 
\end{lemma}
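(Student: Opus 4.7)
The plan is to show that $L^\perp$ and $K \coloneqq \ker\{H^2(X,\ZZ) \to H^2(A,\ZZ)\}$ are primitive (saturated) sublattices of $H^2(X,\ZZ)$ that agree after tensoring with $\QQ$, from which the integral equality follows immediately. Primitivity is automatic: $L^\perp$ is the kernel of the homomorphism $q(\cdot,L)\colon H^2(X,\ZZ)\to \ZZ$, and $K$ is the kernel of restriction to the torsion-free group $H^2(A,\ZZ)$ (torsion-free since $A$ is a smooth abelian variety). Moreover $L$ lies in both sublattices: the restriction $L|_A = (f|_A)^*H$ vanishes as $f|_A$ is constant, while the vanishing $L^{n+1}=f^*H^{n+1}=0$ (since $\dim Y = n$) together with Fujiki's relation $\int_X L^{2n} = c_X\,q(L,L)^n$ forces $q(L,L)=0$.

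The heart of the argument is the inclusion $K\otimes \QQ \subseteq L^\perp\otimes \QQ$, which I plan to derive from the polarized Fujiki formula applied to the $2n$-tuple of classes $(\alpha, L, \ldots, L, \omega, \ldots, \omega)$ consisting of $\alpha \in K$, $n$ copies of $L$, and $n-1$ copies of a K\"ahler class $\omega$ on $X$. Because $q(L,L)=0$, every perfect matching containing an $L$--$L$ pair contributes zero; the only surviving matchings pair one copy of $L$ with $\alpha$ and bijectively pair the remaining $n-1$ copies of $L$ with the $n-1$ copies of $\omega$. Summing their contributions gives
\[
\int_X \alpha \cdot L^n \cdot \omega^{n-1} \;=\; C_n\,q(\alpha,L)\,q(L,\omega)^{n-1}
\]
for a nonzero combinatorial constant $C_n$. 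On the other hand, $L^n = f^*H^n = (\deg_H Y)[A]$, so the same integral equals $(\deg_H Y)\int_A \alpha|_A \cdot (\omega|_A)^{n-1}$, which vanishes by the assumption $\alpha|_A=0$. An analogous Fujiki calculation applied to $L\cdot\omega^{2n-1}$ shows that $q(L,\omega)>0$ (using that $L$ is a nonzero nef class, $\omega$ is K\"ahler, and hence the intersection $\int_X L\,\omega^{2n-1}$ is strictly positive). Consequently $q(\alpha,L)=0$, establishing the claimed inclusion.

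To upgrade this inclusion to equality, I would compare dimensions: $\codim_\QQ(L^\perp\otimes \QQ) = 1$ by non-degeneracy of the BBF form and $L\neq 0$, while $\codim_\QQ(K\otimes \QQ) = 1$ by Matsushita's theorem \cite[Lem.~2.2]{Matsushita2016} that the restriction $H^2(X,\QQ)\to H^2(A,\QQ)$ has one-dimensional image. The principal technical steps are the combinatorial count of surviving matchings in the polarized Fujiki formula and, more substantively, the appeal to Matsushita's one-dimensional image theorem in the (possibly singular) primitive symplectic setting; the remainder of the argument—primitivity, the vanishing of $q(L,L)$, and the dimension count—is formal.
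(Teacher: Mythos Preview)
Your Fujiki-relation computation showing $K\otimes\QQ\subseteq L^\perp\otimes\QQ$ is correct and is indeed half of Matsushita's argument. The gap is in the reverse direction: you appeal to \cite[Lem.~2.2]{Matsushita2016} for the one-dimensional image of $H^2(X,\QQ)\to H^2(A,\QQ)$, but that lemma \emph{is} the statement being proved here (the paper's lemma is its extension to singular $X$). So the dimension count is circular as written. The paper's one-line proof points to exactly what is missing: Matsushita uses not only Fujiki's relations but also the \emph{Hodge index theorem}, and the content of the present lemma is that both ingredients survive in the singular setting via \cite[Prop.~5.15]{BL2021}.

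Concretely, the reverse inclusion $L^\perp\otimes\QQ\subseteq K\otimes\QQ$ goes as follows. Since $A$ is Lagrangian, the symplectic form $\sigma\in H^{2,0}(X)$ restricts to zero on $A$, so the image of $H^2(X,\RR)\to H^2(A,\RR)$ lies in $H^{1,1}_\RR(A)$. Now take $\alpha\in L^\perp$. Your Fujiki identity, applied once with $\eta_1=\cdots=\eta_{n-1}=\omega$ and once with $\eta_1=\alpha$, $\eta_2=\cdots=\eta_{n-1}=\omega$, gives
\[
\int_A (\alpha|_A)\,(\omega|_A)^{n-1}=0
\quad\text{and}\quad
\int_A (\alpha|_A)^2\,(\omega|_A)^{n-2}=0.
\]
The first says $\alpha|_A$ is primitive with respect to the K\"ahler class $\omega|_A$; the Hodge--Riemann bilinear relations on the abelian variety $A$ then force $(\alpha|_A)^2(\omega|_A)^{n-2}\leq 0$ with equality iff $\alpha|_A=0$. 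The second equation gives equality, hence $\alpha|_A=0$. This replaces your citation of Matsushita and makes the argument self-contained in the singular case; with it, your primitivity and dimension remarks finish the proof.
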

\begin{proof}
The proof by Matsushita \cite[Lem.~2.2]{Matsushita2016} 
in the smooth case uses the Hodge index theorem and Fujiki's 
relations, which hold in the singular cases too by 
\cite[Prop.~5.15]{BL2021}.
\end{proof}

\subsection{A semiampleness result for 
non-projective symplectic klt varieties}\label{sec:semiamplenoproj}
\begin{definition} Given a complex normal variety $X$ and $L$ a 
$\mathbb{Q}$-Cartier nef divisor, the numerical Iitaka dimension 
$\nu(X, L)$ is the maximal integer $m$ such that 
$L^{m} \cdot Y \neq 0$ for some $m$-dimensional subvariety $Y$ 
in $X$. Note that $\kappa(X, L) \leq \nu(X, L)$, where 
$\kappa(X, L)\coloneqq \dim 
\mathrm{Proj} \bigoplus_{k \geq 0} \newline H^0(X, L^{\otimes k})$ 
is the Iitaka dimension of $L$. The 
divisor $L$ is abundant if $\kappa(X, L)= \nu(X, L)$.
\end{definition} 
\begin{proposition}\label{prop:non-projective}
Let $(\mathcal{X}_{t}, \mathcal{L}_{t})$ be a very 
general fiber of 
$\pi_{U} \colon 
(\mathcal{X}^{\mathrm{uni}}_U, \mathcal{L}_U) \to U$. 
Then $\mathcal{L}_{t}$ is semiample.
\end{proposition}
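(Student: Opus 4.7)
The plan is to adapt Matsushita's argument \cite{Matsushita2016} from the smooth hyperkähler case to our singular setting, using the deformation of Lagrangian tori established in Proposition \ref{prop:deformtori}. Shrinking $\Deflt(X,L)$ to an open neighborhood $U$ of $0$, that proposition supplies a submersion $p\colon \mathcal{D}(i)\to U$ whose fiber over $t$ parametrizes Lagrangian tori on $\mathcal{X}_t$ deforming from the smooth fiber $A$ of $f$, together with a universal family $u\colon \mathcal{A}^{\mathrm{uni}}\to \mathcal{D}(i)$ and evaluation map $\epsilon\colon \mathcal{A}^{\mathrm{uni}}\to \mathcal{X}^{\mathrm{uni}}$. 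First, I would show that for each $t$, the space $\mathcal{D}(i)_t$ admits an analytic compactification to a proper normal complex space $\mathcal{Z}_t$, and the evaluation $\epsilon_t$ extends to a proper surjective morphism $F_t\colon \mathcal{X}_t\to \mathcal{Z}_t$ with connected Lagrangian fibers; at $t=0$ this recovers the original $f\colon X\to Y$. The existence of this compactification and extension follows by analytifying Matsushita's argument, together with the codimension-one extension results for Lagrangian fibrations from Section~\ref{sec:ho} to handle the degenerate fibers.

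Next, I would verify that $\mathcal{L}_t = F_t^{\ast}H_t$ for a unique line bundle $H_t$ on $\mathcal{Z}_t$. Since the Lagrangian torus $\mathcal{A}^{\mathrm{uni}}_{t,s}$ deforms continuously from $A$ and $c_1(L)|_A = 0$, topological invariance gives $c_1(\mathcal{L}_t)|_{\mathcal{A}^{\mathrm{uni}}_{t,s}} = 0$. Combined with Theorem \ref{thm:basechangelocallyfree} and the analytic base-change theorem for the proper map $F_t$, this implies that $F_{t,\ast}\mathcal{L}_t$ is an invertible sheaf whose pullback recovers $\mathcal{L}_t$ up to twist by an $F_t$-flat line bundle, and a suitable normalization (using that $H_0$ is uniquely determined at $t=0$) identifies this twist and produces the desired $H_t$.

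Finally, since $\mathcal{Z}_0 = Y$ is Fano with $\bQ$-factorial klt singularities by Proposition \ref{prop:base}(\ref{item:base}), and $H_0$ is ample by construction, I would conclude that $H_t$ remains ample on $\mathcal{Z}_t$ in an open neighborhood of $0\in \Deflt(X,L)$: ampleness is an open condition in families of klt Fano varieties via Kawamata--Viehweg vanishing and upper semicontinuity, so $\mathcal{L}_t = F_t^{\ast}H_t$ is semiample throughout this neighborhood, which is sufficient since the statement concerns very general $t$. The main obstacle will be the first step, performed rigorously in the complex analytic category, because a very general $\mathcal{X}_t$ is not Moishezon: one must construct $\mathcal{Z}_t$ and the Lagrangian fibration structure without any recourse to algebraic methods. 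This is the technical heart of the argument, and the singular setting demands a careful extension of Matsushita's smooth case using the singular Hodge theory of Bakker--Lehn \cite{BL2022}, in particular the reflexive Hodge decomposition and the triviality of $H^0(X, T_X)$ used in Proposition \ref{prop:defismooth}.
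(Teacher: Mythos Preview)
Your proposal has a genuine gap and a circularity.

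The circularity: in your second step you invoke Theorem~\ref{thm:basechangelocallyfree}, but in the paper's logical structure Proposition~\ref{prop:non-projective} is an \emph{input} to the proof of Theorem~\ref{thm:basechangelocallyfree} (see the joint proof of Theorems~\ref{thm:basechangelocallyfree} and~\ref{thm:semiampleLagr} immediately following). So you cannot use it here.

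More seriously, your first step --- compactifying $\mathcal{D}(i)_t$ to a proper $\mathcal{Z}_t$ and extending the evaluation to a surjective Lagrangian fibration $F_t\colon \mathcal{X}_t\to \mathcal{Z}_t$ --- is essentially the conclusion of Theorem~\ref{thm:semiampleLagr} itself, and you give no argument for it beyond ``analytifying Matsushita's argument'' and citing Section~\ref{sec:ho}. This is not what Matsushita does, and Section~\ref{sec:ho} analyzes the fibers of an \emph{existing} Lagrangian fibration; it does not construct one. On a very general $\mathcal{X}_t$, which is not Moishezon, there is no Hilbert/Chow machinery to produce the compactification, and no a priori reason the deformed tori even cover $\mathcal{X}_t$. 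You correctly flag this as the ``main obstacle,'' but you do not resolve it.

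The paper sidesteps all of this. Its argument: by \cite[Thm.~4.8]{Fujino2011b}, a nef line bundle on a klt K\"ahler variety with trivial canonical class is semiample once it is \emph{abundant}. Nefness of $\mathcal{L}_t$ and $\nu(\mathcal{L}_t)\le n$ follow by deformation from $\mathcal{L}_0$. For $\kappa(\mathcal{L}_t)\ge n$, take the algebraic reduction $h\colon \mathcal{X}_t\dashrightarrow B$; the dichotomy of \cite{COP2010,GLR2013} (applied to a resolution) says either $\dim B=n$, or a general fiber of $h$ cannot be covered by positive-dimensional projective subvarieties. The second case is excluded because Proposition~\ref{prop:deformtori} produces a Lagrangian torus in $\mathcal{X}_t$, which is projective by \cite[Prop.~2.1]{Campana2006}, and its deformations sweep out the required covering. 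Thus the only role of the deformed tori is their existence and projectivity --- no global fibration structure is built from them at this stage.
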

\begin{proof} 
A nef and abundant line bundle on a klt K\"{a}hler 
variety with trivial canonical bundle is semiample 
by \cite[Thm.~4.8]{Fujino2011b}.
Since $\mathcal{L}_{t}$ is a deformation of the semiample 
line bundle $\mathcal{L}_{0}$,  $\mathcal{L}_{t}$ is nef 
by \cite[Prop.~1.4.14]{Laz} and 
$\kappa(\mathcal{X}_{t}, \mathcal{L}_{t}) 
\leq \nu(\mathcal{X}_{t}, \mathcal{L}_{t}) 
\leq \nu(\mathcal{X}_{0}, \mathcal{L}_{0})=n$, 
where $\dim \mathcal{X}_{t}=2n$. Therefore, it suffices 
to show that $\mathcal{L}_{t}$ is abundant, i.e., 
$\kappa(\mathcal{X}_{t}, \mathcal{L}_{t}) \geq n$. 

Recall that $\mathcal{X}_{t}$ is a non-projective klt 
K\"{a}hler variety. An algebraic reduction 
$h \colon \mathcal{X}_t \dashrightarrow B$ is a 
meromorphic map from $\mathcal{X}_t$ to a normal 
projective variety $B$ with the same field of global 
meromorphic functions, that is, 
$h^\ast \CC(B)=\CC(\mathcal{X}_t)$. Let $H$ 
be a general fiber of $h$.
By \cite[Cor.~2.5]{COP2010} and \cite[Prop.~4.5]{GLR2013} 
applied to a resolution of $\mathcal{X}_t$, 
we have the following dichotomy:
\begin{enumerate}
\item $\dim B = n$, and so 
$\kappa(\mathcal{X}_{t}, \mathcal{L}_{t})\geq n$, or 
\item $\dim B < n$ and $H$ cannot be covered by 
positive-dimensional projective varieties.
\end{enumerate}
The latter case cannot occur, since 
$\mathcal{X}_{t}$ contains a Lagrangian torus by 
\Cref{prop:deformtori}, which is projective by 
\cite[Prop.~2.1]{Campana2006}, and its deformations 
cut out on $H$ a covering family of positive-dimensional 
subvarieties, see for instance the proof of 
\cite[Thm.~4.1]{GLR2013}.
We conclude that the nef line bundle
$\mathcal{L}_{t}$ is abundant, and so semiample. 
\end{proof}

\begin{proof}[Proof of \Cref{thm:basechangelocallyfree} and \Cref{thm:semiampleLagr}]
Let $(D,0) \subset U$ be a very general small disk 
passing through the reference point, and $\mathcal{X}_{D}$ 
be the restriction of $\mathcal{X}^{\mathrm{uni}}_{U}$ over 
$D$. Observe that $\mathcal{L}_t$ is semiample on 
$\mathcal{X}_t$ for a very general $t \in D$ by 
\Cref{prop:non-projective}, hence for any 
$t \in D \setminus \{0\}$ by 
\cite[Lem.~2.5]{Matsushita}. Then 
$R^k (\pi_{U})_* \mathcal{L}_U|_{D}$ is locally free 
by applying \cite[Cor.~3.14]{Nakayama1987} to a simultaneous 
resolution of singularities of $\mathcal{X}_{D}$, 
see \cite[Lem.~4.9]{BL2022} or \cite[\S~2.24]{BGL2022}.

The function $t \to h^{k}(\mathcal{X}_t, \mathcal{L}_t)$ is 
then constant along $D$ and, as usual, uppersemicontinous 
on $U$, so constant up to shrinking $U$.
By cohomology and base change, $R^k \pi_* \mathcal{L}$ 
is now locally free, and the sections 
$(\pi_* \mathcal{L}^{\otimes k})_{0} \simeq H^0(X, L^{\otimes k})$, 
defining $\pi$, extend in a neighborhood of $X$ in 
$\mathcal{X}^{\mathrm{uni}}$ with no base point. 
See also \cite[Proof of Thm.~1.2 and Cor.~1.3]{Matsushita} 
for further details.
\end{proof}

\bibliographystyle{mrl}
\bibliography{HyperHM}

\end{document}